\numberwithin{equation}{section}
\newcommand{\Real}{\mathbb R}
\newcommand{\Torus}{\mathbb T}
\newcommand{\Integers}{\mathbb Z}
\newcommand{\Integer}{\mathbb Z}
\newcommand{\norm}[1]{\left\lVert #1 \right\rVert}
\newcommand{\abs}[1]{\left\vert#1\right\vert}
\newcommand{\set}[1]{\left\{#1\right\}}
\newcommand{\grad}{\nabla}
\newcommand{\G}{\mathcal{G}}
\newcommand{\I}{\mathbf{I}}
\newcommand{\jap}[1]{\left\langle #1 \right\rangle} 
\newtheorem{theorem}{Theorem}
\theoremstyle{definition}
\newtheorem{remark}{Remark}
\theoremstyle{lemma}
\newtheorem{proposition}{Proposition}[section]
\theoremstyle{definition}
\theoremstyle{lemma}
\newtheorem{lemma}{Lemma}[section]
\numberwithin{remark}{section}
\begin{document}

\title{Dynamics near the subcritical transition of the 3D Couette flow II: Above threshold case}
\author{Jacob Bedrossian\footnote{\textit{jacob@cscamm.umd.edu}, University of Maryland, College Park} \, and Pierre Germain\footnote{\textit{pgermain@cims.nyu.edu}, Courant Institute of Mathematical Sciences} \, and Nader Masmoudi\footnote{\textit{masmoudi@cims.nyu.edu}, Courant Institute of Mathematical Sciences}}
\date{\today}
\maketitle

\begin{abstract} 
This is the second in a pair of works which study small disturbances to the plane, periodic 3D Couette flow in the incompressible Navier-Stokes equations at high Reynolds number \textbf{Re}. 
In this work, we show that there is constant $0 < c_0 \ll 1$, independent of $\textbf{Re}$, such that sufficiently regular disturbances of size $\epsilon \lesssim \textbf{Re}^{-2/3-\delta}$ for any $\delta > 0$ exist at least until $t = c_0\epsilon^{-1}$ and in general evolve to be $O(c_0)$ due to the lift-up effect.  
Further, after times $t \gtrsim \textbf{Re}^{1/3}$, the streamwise dependence of the solution is rapidly diminished by a mixing-enhanced dissipation effect and the solution is attracted back to the class of ``2.5 dimensional'' streamwise-independent solutions (sometimes referred to as ``streaks'').
The largest of these streaks are expected to eventually undergo a secondary instability at $t \approx \epsilon^{-1}$. Hence, our work strongly suggests, for \emph{all} (sufficiently regular) initial data, the genericity of the ``lift-up effect $\Rightarrow$ streak growth $\Rightarrow$ streak breakdown'' scenario for turbulent transition of the 3D Couette flow near the threshold of stability forwarded in the applied mathematics and physics literature. 
\end{abstract}

\setcounter{tocdepth}{1}
{\small\tableofcontents}

\section{Introduction}

This work is the second paper in our study of the 3D Navier-Stokes equation near the (plane, periodic) Couette flow, following our work \cite{BGM15I}.
In these works, we study the 3D Navier-Stokes equations near the Couette flow in the idealized domain $(x,y,z) \in \Torus \times \Real \times \Torus$: if $u + \left(y, 0, 0 \right)^T$ solves the Navier-Stokes equation, $u$ solves 
\begin{subequations}\label{def:3DNSE}
\begin{align} 
\partial_t u + y \partial_x u + u\cdot \grad u + \grad p^{NL} & = \begin{pmatrix} - u^2 \\ 0 \\ 0 \end{pmatrix} - \grad p^L  + \nu\Delta u \\ 
\Delta p^{NL} & = -\partial_i u^j \partial_j u^i \\ 
\Delta p^L & = -2\partial_x u^2 \\ 
\grad \cdot u & = 0,
\end{align}
\end{subequations}
where $\nu = \textbf{Re}^{-1}$ denotes the inverse Reynolds number, $p^{NL}$ is the nonlinear contribution to the pressure due to the disturbance 
and $p^L$ is the linear contribution to the pressure due to the interaction between the disturbance and the Couette flow.
The purpose of this work, along with \cite{BGM15I}, is to further the mathematically rigorous understanding of the qualitative  behavior of \eqref{def:3DNSE} for small perturbations and small $\nu$. 
This second work is focused on characterizing the dynamics of solutions above the stability threshold (but still not too large). 

A major focus of the theory of hydrodynamic stability is the study of laminar flow configurations and understanding when they are stable or when they may transition to a turbulent state (or a nonlinear intermediate state). 
The terminology \emph{subcritical transition} refers to a situation when the linear theory predicts stability below some critical Reynolds number or at all Reynolds number (the latter is the case here) 
but spontaneous transition to a turbulent state is observed in laboratory or computer experiments at a much lower Reynolds number than what this theory predicts.  
To our knowledge, the first quantitative study of this process in fluid mechanics was performed by Reynolds in 1883 \cite{Reynolds83}, and since then, subcritical transition has been observed to be a ubiquitous phenomenon in 3D hydrodynamics, repeated by countless physical experiments (see e.g. \cite{Nishioka1975,klebanoff1962,Tillmark92,Daviaud92,Elofsson1999,bottin98,HofJuelMullin2003,Mullin2011,LemoultEtAl2012}) and computer simulation (e.g. \cite{Orszag80,HLJ93,ReddySchmidEtAl98,DuguetEtAl2010} and the references therein) on subcritical transition phenomena have been performed in many different settings. 
See the texts \cite{DrazinReid81,Yaglom12,SchmidHenningson2001} and part I of our work \cite{BGM15I} for further discussion and references.

As discussed in \cite{BGM15I}, a natural expectation is that while the flow may be stable for all finite Reynolds number, the basin of stability is shrinking as $\nu \rightarrow 0$.  
Hence, it becomes of interest to, given a norm, determine the threshold of stability, sometimes called the ``transition threshold'', as a function of $\nu$. For example, one would like, given a norm $\norm{\cdot}_N$, to find a $\gamma = \gamma(N)$ such that $\norm{u_{in}}_N \ll \nu^\gamma$ implies stability and $\norm{u_{in}}_N \gg \nu^\gamma$ in general permits instability. Further, one would like to identify the possible pathways the solution can take towards transition. A great deal of work has been dedicated to identifying $\gamma$ and estimates from experiments, computer simulations, and formal analysis suggest a threshold somewhere between $1 \leq \gamma \leq 7/4$ for a variety of different initial data and configurations similar to the set-up in \eqref{def:3DNSE}  (see \cite{BGM15I} for more references and some of the representative works \cite{TTRD93,Waleffe95,BT97,LHRS94,ReddySchmidEtAl98,Chapman02,Mullin2011} or the text \cite{SchmidHenningson2001} and the references therein). 
In \cite{BGM15I}, we proved that, for sufficiently regular initial data, $\gamma = 1$ for \eqref{def:3DNSE} (that is, for a sufficiently strong norm $N$, $\gamma(N) = 1$). 

In this work our goal is to characterize the instabilities of above threshold solutions. 
We prove that there is a universal constant $c_0$ with  $0 < c_0 \ll 1$ such that for sufficiently regular initial data (in the same sense as \cite{BGM15I}) of size $\epsilon$, if $\epsilon \lesssim \nu^{2/3+\delta}$ for $\delta> 0$, then the solution to \eqref{def:3DNSE} exists until at least time $t = c_0\epsilon^{-1}$ and is rapidly attracted to the class of $x$-independent solutions known as \emph{streaks} for times $t \gtrsim \nu^{-1/3}$.
Due to a non-modal instability known as the lift-up effect, the streaks (and hence all solutions) will in general grow linearly as $O(\epsilon t)$ and by the final time can be $O(c_0)$ (which is independent of $\nu$).  
In our companion work \cite{BGM15I}, we studied solutions below the $\epsilon \ll \nu$ threshold and proved that these solutions are global, return to Couette flow, and also converge to the set of streak solutions. 
While our previous analysis did include solutions which get $O(c_0)$ from the Couette flow, all solutions never deviate farther from the Couette flow and are demonstrably not involved in any transition processes. 

The foremost interest of this work is that the threshold solutions we study can converge to streaks that, due to the lift-up effect, eventually become as large as the Couette flow itself (although we cannot follow our solutions to this point). 
These large streaks induce an unstable shear flow and are expected to become linearly unstable, sometimes referred to as a \emph{secondary instability} \cite{ReddySchmidEtAl98,Chapman02,SchmidHenningson2001}. 
The instability is observed to involve the rapid growth of $x$-dependent modes. 
The process by which large streaks exhibit instabilities and drive $x$-dependent flows is sometimes referred to as \emph{streak breakdown} and  is well-documented as one of the primary routes towards turbulent transition observed experimentally \cite{klebanoff1962,bottin98,Elofsson1999} and in computer simulations \cite{ReddySchmidEtAl98,DuguetEtAl2010}, in agreement with a variety of formal asymptotic calculations \cite{ReddySchmidEtAl98,Chapman02,SchmidHenningson2001}.  
That is, it is an expectation that a general route towards transition is the multi-step process ``lift-up effect $\Rightarrow$ streak growth $\Rightarrow$ streak breakdown $\Rightarrow$ transition''.   
Moreover, the general process of streak breakdown is thought to play an important role in sustaining turbulence near the transition threshold and in both the creation and decay of ``turbulent spots'' (see \cite{SchmidHenningson2001} and the references therein). 
While we cannot take our solutions through the secondary instability, we prove that solutions above the threshold (but not too far above) can in general converge to unstable streaks and that this is the only instability that possible, which is suggestive of the genericity of the above multi-step process as the first step towards turbulent transition near the threshold (for sufficiently regular data -- see Remark \ref{ref:lowreg} below for more discussion on rougher data).  

Unlike in \cite{BGM15I}, the solutions we are concerned with are unstable in the sense that they might transition for $t \gg \epsilon^{-1}$, and we are identifying that the streamwise vortex/streak instability associated with the lift-up effect is dominant whereas all other dynamics are suppressed. 
At the linear level, another important effect is the vortex stretching, which in particular, causes a direct cascade of energy to high frequencies in the $u^1$ and $u^3$ components and creates growth which is difficult to control.   
The stabilizing mechanisms suppressing the more complicated nonlinear effects are the mixing-enhanced dissipation and the inviscid damping, both due to the mixing from the background shear flow. 
Enhanced dissipation was first observed in \eqref{def:3DNSE} by Lord Kelvin \cite{Kelvin87} and has been observed in many contexts in fluid mechanics (see e.g. \cite{DrazinReid81,RhinesYoung83,DubrulleNazarenko94,LatiniBernoff01,BernoffLingevitch94,ReddySchmidEtAl98} and the mathematically rigorous works \cite{BeckWayne11,ConstantinEtAl08,BMV14}). 
In \eqref{def:3DNSE}, the mixing due to Couette drives information to high frequencies, enhancing the dissipation of $x$-dependent modes such that they decay on a time-scale like $\tau_{ED} \sim \nu^{-1/3}$, far faster than the natural ``heat equation'' time-scale $O(\nu^{-1})$. 
The idea that the enhanced dissipation effect has an important role to play in \eqref{def:3DNSE} dates back at least to \cite{DubrulleNazarenko94}. 
Indeed, in \cite{DubrulleNazarenko94}, an idea similar to the heuristic  \eqref{ineq:heur23} below appears. However, the expectation that a large mean shear should suppress certain kinds of instabilities has been suggested at varying levels of precision in many contexts (see e.g. \cite{DrazinReid81,Yaglom12,ReddySchmidEtAl98,Chapman02} and the references therein).  
Inviscid damping in fluid mechanics was first observed by Orr \cite{Orr07} in 1907 and turned out to be the hydrodynamic analogue of Landau damping in plasma physics; see \cite{BM13,BGM15I} for more discussion. 
Here, inviscid damping will suppress the $x$-dependence of $u^2$, key to controlling certain components of the nonlinearity that would otherwise be uncontrollable.

The fact that we prove results for initial data as large as $\nu^{2/3+\delta}$  shows that the streak growth scenario
is generic even for initial data which is far larger than the $O(\nu)$ threshold, at least for data which is sufficiently regular. 
Moreover, we are not aware of this exponent appearing anywhere in the applied mathematics or physics literature previously despite being a threshold of natural interest.
The $2/3$ threshold can be explained from heuristics. Formal analysis of the weakly nonlinear resonances, described in \S\ref{sec:Toy}, suggests that the natural time-scale before a general $x$-dependent solution could potentially become fully nonlinear, $\tau_{NL}$, is \emph{at least} $\tau_{NL} \gtrsim \epsilon^{-1/2}$. 
On the other hand, the enhanced dissipation occurs on time-scales like $\tau_{ED} \sim \nu^{-1/3}$. 
  Hence, if the enhanced dissipation is to dominate the 3D effects and relax the solution to the manifold of streaks, then we need the latter time scale to be shorter than the former, or rather: 
\begin{align} 
\tau_{ED} \sim \nu^{-1/3} \ll \epsilon^{-1/2} \lesssim \tau_{NL}. \label{ineq:heur23}
\end{align} 
This is the origin of the requirement $\epsilon \lesssim \nu^{2/3+\delta}$; the small $\delta> 0$ is to provide a little technical room to work with in the estimates (although we do not know if it can be removed).
We emphasize that getting a convincing estimate on $\tau_{NL}$ is challenging, which may explain why this threshold does not appear in the literature (moreover, the heuristics of \S\ref{sec:Toy} are likely only convincing when backed by Theorem \ref{thm:SRS} and its proof). 
After $t \gg \tau_{ED}$ the solution is very close to a streak and, due to the lift-up effect, in general $u^1_0(t)$ is growing like $\epsilon \jap{t}$ until times $t \sim \epsilon^{-1}$, at which point the streak will become fully nonlinear (see \cite{BGM15I,ReddySchmidEtAl98,Chapman02} and the references therein).  Below we discuss several other ways to derive the $\epsilon \sim \nu^{2/3}$ cut-off which are in some ways more straightforward but also a bit more ad-hoc (see \S\ref{sec:NonlinHeuristics} and \S\ref{sec:Toy}).  

As discussed in \cite{BGM15I,BM13}, if there is decay-via-mixing then, since mixing is time-reversible (at infinite Reynolds number),  necessarily there is growth-via-unmixing. This non-modal effect was first pointed out by Orr \cite{Orr07} and is now known as the \emph{Orr mechanism}. 
Some of the more subtle and problematic nonlinear effects here are 3D variants of the nonlinear manifestation of the Orr mechanism referred to as an ``echo''. These are resonances (perhaps more accurately ``pseudo-resonances'' \cite{Trefethen2005}) involving the excitation of unmixing modes (see \cite{Craik1971,VMW98,Vanneste02,BGM15I,BM13} and the references therein for discussion about this effect in the context of fluid mechanics and \cite{YuDriscoll02,YuDriscollONeil} for physical experiments isolating them in 2D Euler).
A similar resonance is also observed in plasmas, known there as a ``plasma echo'' \cite{MalmbergWharton68}.  
A key facet of the proof in \cite{BGM15I} was the use of careful weakly nonlinear analysis to estimate the possible effects of resonances of this general type (and also others).  

Relative to our previous work \cite{BGM15I}, this work will need more precision in the weakly nonlinear analysis and uses more detailed structure of the nonlinearity.  
In \cite{BGM15I}, a toy model was derived to model the ``worst possible'' behaviors due to the lift-up effect, the ``resonances'' associated with the Orr mechanism (e.g. echo-like), and the vortex stretching, accounting also for the stabilizing mechanisms of enhanced dissipation and inviscid damping (see \S\ref{sec:Toy}).
An approximate super-solution of this toy model was used to derive a set of good norms with which to measure the solution. 
The super-solution used in \cite{BGM15I} required $\epsilon \lesssim \nu$; here we will derive a super-solution which only requires $\epsilon \lesssim \nu^{2/3}$ but (A) it is more subtle than that of \cite{BGM15I} and (B) is only valid for $t \lesssim \epsilon^{-1}$. This latter point is not surprising: at around this point, the solution is expected to suffer streak breakdown and transition to turbulence (or at least escape a weakly nonlinear regime).   
One of the new complexities that the super-solution will introduce is that the norm used to measure $u^3$ will need to unbalance the regularity of different frequencies in the $x$-dependent modes of $u^3$ in a subtle and precise way. This turns out to be similar to a technique applied to the scalar vorticity in 2D \cite{BM13,BMV14}, however, 
here it is not so much the imbalances within $u^3$ itself which are important, but rather the imbalances between $u^3$ and the \emph{other} components. 
Together with the much smaller dissipation, the additional precision in the norm will noticeably complicate the proof of Theorem \ref{thm:SRS} below (relative to \cite{BGM15I}). 
Many terms here will require a more detailed treatment than that used in \cite{BGM15I}, either because of the more complicated norms or because the dissipation is weaker.
The additional precision will require some new techniques and better technical tools, including more precise multiplier inequalities relating time and frequency (see \S\ref{sec:nrmuse}) and several new elliptic estimates (see Appendix \ref{sec:PEL}). Another adjustment we will make here is a nonlinear coordinate transform which is more precise than the one employed in \cite{BGM15I}; in particular, we will need to account for mixing caused by $(0,0,u^3_0)^T$ as well as $(y + u^1_0,0,0)^T$ and hence treat the entire streak in an essentially Lagrangian fashion.     
In order to carry out this line of attack we will need to use more structure in the nonlinearity than \cite{BGM15I} and understand better certain ``null'' or ``non-resonant'' structures, in particular, detailed information about how certain frequencies interact.

\subsection{Linear behavior and streaks} \label{sec:LinStreak} 
Recall the following notation from \cite{BGM15I}: the projections of a function
$f$ onto zero and non-zero frequencies in $x$ are denoted, respectively, by 
\begin{align*}
f_0(y,z) & = \frac{1}{2\pi}\int f(x,y,z) dx \\ 
f_{\neq} & = f - f_0.  
\end{align*}
Next, we recall from \cite{BGM15I} the following Proposition, which regards the behavior of the linearized Navier-Stokes equations. 
There is a corresponding result also for the linearized Euler equations; see \cite{BGM15I} for more details.  
Without making any attempt to be optimal in terms of regularity, this proposition emphasizes the stabilizing mechanisms of enhanced dissipation and inviscid damping, and the destabilizing mechanisms of the lift-up effect and the vortex stretching due to the Couette flow. 
The lift up effect is seen in the transient growth in \eqref{eq:liftup_vs}, the enhanced dissipation in the exponentials $e^{-c\nu t^3}$, the inviscid damping in the $\jap{t}^{-2}$ decay in \eqref{ineq:U2LinearID_vs} which is uniform in $\nu$, and the vortex stretching in the lack of inviscid damping in \eqref{ineq:U1LinearID_vs} and \eqref{ineq:U3LinearID_vs} (which is sharp).     

\begin{proposition} \label{prop:linear} 
Consider the linearized Navier-Stokes equations 
\begin{subequations}\label{def:3DNSE_Linear}
\begin{align} 
\partial_t u + y \partial_x u  & = \begin{pmatrix} - u^2 \\ 0 \\ 0 \end{pmatrix} - \grad p^L + \nu \Delta u \\ 
\Delta p^L & = -2\partial_x u^2 \\ 
\grad \cdot u & = 0. 
\end{align}
\end{subequations}
Let $u_{in}$ be a divergence free vector field with $u_{in} \in H^7$. Then the solution to the linearized Navier-Stokes equations $u(t)$ with initial data $u_{in}$ satisfies the following for some $c \in (0,1/3)$, 
\begin{subequations}  
\begin{align} 
\norm{u^{2}_{\neq}(t)}_{2} + \norm{u^{2}_{\neq}(t,x+ty,y,z)}_{H^3}  & \lesssim \jap{t}^{-2} e^{-c\nu t^3} \norm{u_{in}^2}_{H^7} \label{ineq:U2LinearID_vs} \\ 
\norm{u^1_{\neq}(t,x+ty,y,z)}_{H^1} & \lesssim e^{-c\nu t^3} \norm{u_{in}}_{H^7} \label{ineq:U1LinearID_vs} \\ 
\norm{u^3_{\neq}(t,x+ty,y,z)}_{H^1} & \lesssim e^{-c\nu t^3}\norm{u_{in}}_{H^7}, \label{ineq:U3LinearID_vs} 
\end{align} 
\end{subequations} 
and the formulas
\begin{subequations} \label{def:NSEstreak}
\begin{align}
u^1_0(t,y,z) & = e^{\nu t \Delta} \left(u^1_{in \; 0} - tu_{in \; 0}^2\right)  \label{eq:liftup_vs} \\ 
u^2_0(t,y,z) & = e^{\nu t \Delta} u^2_{in \; 0} \\
u^3_0(t,y,z) & = e^{\nu t \Delta} u^3_{in \; 0}. 
\end{align}
\end{subequations}
\end{proposition}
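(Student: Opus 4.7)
The plan is to split the analysis into zero modes and non-zero $x$-modes. For the zero modes, observe that the pressure $p^L_0$ satisfies $\Delta_{\perp} p^L_0 = 0$ and is therefore constant, so the equations decouple into three heat equations, one of which is forced:
\begin{align*}
\partial_t u^2_0 &= \nu \Delta u^2_0, \qquad \partial_t u^3_0 = \nu \Delta u^3_0, \qquad \partial_t u^1_0 = -u^2_0 + \nu \Delta u^1_0.
\end{align*}
Solving the first two gives the heat flow formulas for $u^2_0,u^3_0$, and Duhamel applied to the third, using that $e^{\nu(t-s)\Delta}e^{\nu s\Delta} = e^{\nu t\Delta}$, produces the explicit lift-up formula \eqref{eq:liftup_vs}.

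For the non-zero modes I would pass to the moving frame $\bar u(t,x,y,z) = u(t,x+ty,y,z)$, under which $\partial_t + y\partial_x \mapsto \partial_t$, derivatives become $(\partial_x, \partial_y - t\partial_x, \partial_z)$, and the Laplacian becomes the time-dependent symbol $\bar\Delta^L = \partial_x^2 + (\partial_y - t\partial_x)^2 + \partial_z^2$. Because $U(y)=y$ has $U''=0$, applying the curl-curl identity to eliminate the pressure yields the Orr--Sommerfeld-type equation $(\partial_t + y\partial_x)\Delta u^2 = \nu\Delta^2 u^2$, i.e.\ $\partial_t \overline{\Delta u^2} = \nu\bar\Delta^L \overline{\Delta u^2}$. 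In Fourier this is explicit:
\begin{align*}
|\widehat{\overline{\Delta u^2}}_k(t,\eta,l)| = \exp\!\Big(-\nu\int_0^t (k^2 + (\eta - sk)^2 + l^2)\,ds\Big)\,|\widehat{\Delta u_{in}^2}_k(\eta,l)|.
\end{align*}
Minimizing the time-integral in $\eta$ gives the enhanced dissipation factor $e^{-c\nu k^2 t^3}$ with $c = 1/12$, which is what seeds the $e^{-c\nu t^3}$ present in all three estimates.

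To recover $u^2_\neq$ I invert $\bar\Delta^L$, so $|\widehat{\bar u^2_k}| = (k^2+(\eta-tk)^2+l^2)^{-1}|\widehat{\overline{\Delta u^2}}_k|$. The inviscid damping factor $\jap{t}^{-2}$ in \eqref{ineq:U2LinearID_vs} then follows by estimating the Fourier multiplier $\jap{t}^2(k^2+\eta^2+l^2)/(k^2+(\eta-tk)^2+l^2)$: if $|\eta|\le |tk|/2$ the denominator controls $t^2$, while if $|\eta| > |tk|/2$ the numerator is controlled by $\jap{k,\eta,l}$; in both cases one pays a loss of 4 derivatives, explaining the $H^7 \to H^3$ gap in the estimate. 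The main obstacle is this non-uniform-in-frequency inversion around the critical time $t\approx \eta/k$, where one must trade regularity for decay carefully.

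Finally, for $u^1_\neq, u^3_\neq$ I would use the Squire equation $(\partial_t + y\partial_x)\omega^2 = -\partial_z u^2 + \nu\Delta\omega^2$ for $\omega^2 = \partial_z u^1 - \partial_x u^3$, which in the moving frame is a heat equation in the time-dependent symbol $\bar\Delta^L$ forced by $\partial_z\bar u^2$. Duhamel's formula gives
\begin{align*}
\widehat{\bar\omega^2_k}(t) = e^{-\nu\int_0^t |(k,\eta-sk,l)|^2 ds}\widehat{\omega^2_{in,k}} - \int_0^t e^{-\nu\int_s^t |(k,\eta-\tau k,l)|^2 d\tau}\, il\,\widehat{\bar u^2_k}(s)\,ds,
\end{align*}
from which an $e^{-c\nu t^3}H^1$ bound follows by combining the enhanced dissipation of the kernel with the $\jap{t}^{-2}$-integrable bound already established for $\bar u^2$. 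Then $u^1_\neq, u^3_\neq$ are reconstructed from $(u^2_\neq,\omega^2_\neq)$ via the algebraic relations
\begin{align*}
\widehat{u^1_k} = \frac{-k\,\partial_y\widehat{u^2_k} + l\,\widehat{\omega^2_k}}{i(k^2+l^2)}, \qquad \widehat{u^3_k} = \frac{-l\,\partial_y\widehat{u^2_k} - k\,\widehat{\omega^2_k}}{i(k^2+l^2)},
\end{align*}
valid for $k\ne 0$ since $k^2+l^2\ge 1$. Since this reconstruction only loses one derivative and costs no factor of $t$, the $H^7$-to-$H^1$ estimates in \eqref{ineq:U1LinearID_vs}--\eqref{ineq:U3LinearID_vs} without inviscid damping follow immediately, reflecting the vortex-stretching obstruction.
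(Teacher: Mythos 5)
Your proposal is correct. Note first that this paper does not actually prove Proposition \ref{prop:linear}: it is recalled verbatim from the companion work \cite{BGM15I}, so there is no in-paper proof to match line by line. Your argument is a complete and standard one, and it reproduces exactly the mechanisms the proposition is meant to encode: the zero-mode computation (harmonic, hence constant, $p^L_0$, heat flows for $u^2_0,u^3_0$, Duhamel with the semigroup identity giving the lift-up formula \eqref{eq:liftup_vs}) is the same in any treatment; for the non-zero modes the core is the decoupled equation $\partial_t\,\overline{\Delta u^2}=\nu\Delta_L\overline{\Delta u^2}$ in the sheared frame, from which $\int_0^t(\eta-sk)^2\,ds\geq k^2t^3/12$ yields the $e^{-c\nu t^3}$ factor (your $c=1/12$, or $1/48$ after the convexity bound $(t-s)^3+s^3\geq t^3/4$ in the Duhamel term, both lie in $(0,1/3)$), and inversion of $\Delta_L$ yields the $\jap{t}^{-2}$ inviscid damping at the cost of the four derivatives visible in the $H^7\to H^3$ gap. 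Where you differ in bookkeeping is the treatment of $u^1_{\neq},u^3_{\neq}$: you route through the Squire variable $\omega^2=\partial_z u^1-\partial_x u^3$, whose forced equation has no pressure, and then reconstruct $u^1,u^3$ algebraically; one can equally well run Duhamel directly on the $u^1,u^3$ momentum equations with the explicit pressure symbol $2ik\,\widehat{\bar u^2}/(k^2+(\eta-tk)^2+l^2)$, and the two are essentially equivalent in length and in the regularity they consume. Two small points you should make explicit for completeness: (i) the $L^2$ part of \eqref{ineq:U2LinearID_vs} follows since the shear change of variables is measure preserving; (ii) in the moving frame the divergence-free condition involves $(\partial_Y-t\partial_X)\bar u^2$, so your reconstruction formulas pick up a factor $|\eta-tk|$, which is harmless either because $|\eta-tk|/(k^2+(\eta-tk)^2+l^2)\leq 1$ in the exact formula for $\widehat{\bar u^2}$, or more crudely because one power of $t$ is absorbed by the already-established $\jap{t}^{-2}$ decay; neither observation changes the outcome, but as written the claim that the reconstruction ``costs no factor of $t$'' deserves this one-line justification.
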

Associated with the linear problem is the Laplacian expressed in the coordinates $X = x-ty$: 
\begin{align}
\Delta_L := \partial_{XX} + (\partial_Y - t\partial_X)^2 + \partial_{ZZ}. \label{def:DeltaL}
\end{align} 
The power of $t$ in this operator is responsible both for the inviscid damping of $u^2$ and the enhanced dissipation; see \cite{BGM15I} for more information. 

The next Proposition from \cite{BGM15I} recalls the nature of the streak solutions: 
\begin{proposition}[Streak solutions] \label{prop:Streak} 
Let $\nu \in [0,\infty)$, $u_{in} \in H^{5/2+}$ be divergence free and independent of $x$, that is, $u_{in}(x,y,z) = u_{in}(y,z)$, and denote by $u(t)$ the corresponding unique strong solution to \eqref{def:3DNSE} with initial data $u_{in}$. Then $u(t)$ is global in time and for all $T > 0$, $u(t) \in L^\infty( (0,T);H^{5/2+}(\Real^3))$. 
Moreover, the pair $(u^2(t),u^3(t))$ solves the 2D Navier-Stokes/Euler equations on $(y,z) \in \Real \times \Torus$:
\begin{subequations} \label{def:2DNSEStreak} 
\begin{align} 
\partial_t u^i + (u^2,u^3)\cdot \grad u^i &  = -\partial_i p + \nu \Delta u^i \\ 
\partial_y u^2 + \partial_z u^3 & = 0,  
\end{align} 
\end{subequations} 
and $u^1$ solves the (linear) forced advection-diffusion equation
\begin{align} 
\partial_t u^1 + (u^2,u^3)\cdot \grad u^1 = -u^2 + \nu \Delta u^1. \label{eq:u1streak}
\end{align}  
\end{proposition}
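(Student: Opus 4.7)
The plan is to exploit the translation invariance in $x$ together with uniqueness of strong solutions to reduce the problem to a classical 2D statement. First I would plug the ansatz $u(t,x,y,z) = u(t,y,z)$ directly into \eqref{def:3DNSE}. Because $u$ is $x$-independent, the transport term $y \partial_x u$ vanishes; the Poisson equation $\Delta p^L = -2\partial_x u^2 = 0$ lets us take $p^L \equiv 0$; and $\Delta p^{NL} = -\partial_i u^j \partial_j u^i$ has an $x$-independent right hand side, so $p^{NL}$ may be chosen $x$-independent as well. The divergence-free condition reduces to $\partial_y u^2 + \partial_z u^3 = 0$. The momentum equation for $(u^2,u^3)$ then decouples into the 2D Navier--Stokes (or Euler when $\nu=0$) system \eqref{def:2DNSEStreak} on $(y,z) \in \Real \times \Torus$, while the first component gives exactly \eqref{eq:u1streak} since $\partial_x p^{NL} = 0$ and the forcing from the $-u^2$ term is linear.

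Second, I would invoke the classical global well-posedness theory for the 2D Navier--Stokes/Euler system on $\Real \times \Torus$ to produce $(u^2,u^3)$ globally in $C([0,\infty); H^{5/2+})$. For $\nu > 0$ this is standard energy theory; for $\nu = 0$ one uses the transport of the scalar vorticity $\omega = \partial_z u^2 - \partial_y u^3$ and the Beale--Kato--Majda / Wolibner argument, noting that $H^{5/2+}$ embeds into $W^{1,\infty}$ so $\omega \in L^\infty$ is preserved and Sobolev regularity propagates. Given this velocity field, the equation \eqref{eq:u1streak} for $u^1$ is a linear scalar transport--diffusion equation with $W^{1,\infty}$ (and divergence-free) drift and $H^{5/2+}$ forcing, so a straightforward energy estimate in $H^{5/2+}$ yields a unique global solution in the same class.

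Third, I would close the argument by uniqueness: the $x$-independent triple $(u^1,u^2,u^3)$ just constructed is a strong solution of the full system \eqref{def:3DNSE} in $L^\infty_{\text{loc}}([0,\infty);H^{5/2+})$ with the prescribed initial data, and strong solutions in this regularity class are unique by the usual energy/Gronwall argument (the $H^{5/2+}$ regularity is well within the Fujita--Kato / Kato--Ponce framework). Hence it coincides with the solution referred to in the statement, and the representation \eqref{def:2DNSEStreak}--\eqref{eq:u1streak} follows.

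The main potential obstacle is nothing fundamentally new, but rather the bookkeeping around the mixed geometry $\Real \times \Torus$ and the inviscid case $\nu=0$: one must verify that the 2D theory applies on an unbounded-in-$y$ domain (it does, since the classical arguments are local-in-space and only require Sobolev regularity plus $L^\infty$ control on vorticity), and that the pressure $p^{NL}$ is well-defined from its Poisson equation on this geometry when $(u^2,u^3)$ is merely in $H^{5/2+}$. Everything else is a direct reduction to standard 2D results.
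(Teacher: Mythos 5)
Your proposal is correct and follows essentially the same route as the original argument: the paper itself only recalls this proposition from its companion work \cite{BGM15I}, where the proof is precisely the symmetry reduction you describe (persistence of $x$-independence, vanishing of $y\partial_x u$ and $p^L$, decoupling into 2D Navier--Stokes/Euler for $(u^2,u^3)$ plus the linear forced advection--diffusion equation for $u^1$), combined with classical 2D global well-posedness and uniqueness of strong solutions to identify the 3D solution. Nothing further is needed.
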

Suppose the streak is initially of size $\epsilon \gg \nu$. 
From \eqref{def:2DNSEStreak}, we see that the dissipation does not completely dominate the streak until $t \gtrsim \nu^{-1}$, before which it could be behaving like fully nonlinear 2D Navier-Stokes.  
Due to the lift-up effect in \eqref{eq:u1streak}, in general $u^1(t)$ is growing like $\epsilon \jap{t}$ until times $t \gtrsim \epsilon^{-1}$, at which point the streak will be on the same order as the Couette flow itself.
As discussed above, it is expected that sufficiently large streaks should suffer a secondary instability and break down into more complicated $x$-dependent flows (see e.g. \cite{ReddySchmidEtAl98,Chapman02,SchmidHenningson2001} and the references therein). 

\subsection{Statement of main results}
As in \cite{BGM15I}, our theorem requires the use of Gevrey regularity class \cite{Gevrey18}, defined on the Fourier side for $\lambda > 0$ and $s \in (0,1]$ as: 
\begin{align} 
\norm{f}_{\G^{\lambda;s}}^2 = \sum_{k,l}\int \abs{\widehat{f_k}(\eta,l)}^2e^{2\lambda\abs{k,\eta,l}^s} d\eta.  
\end{align}
For $s = 1$ the class coincides with real analytic, however, for $s < 1$ it is less restrictive, for example, compactly supported functions can still be Gevrey class with $s < 1$.
As discussed in \cite{BGM15I}, this regularity class arises in nearly all mathematically rigorous studies involving inviscid damping \cite{BM13,BMV14,BGM15I} or Landau damping \cite{CagliotiMaffei98,HwangVelazquez09,MouhotVillani11,BMM13,Young14} in nonlinear PDE. 
In these previous works, the Gevrey regularity arises naturally when studying echo resonances, and like \cite{BGM15I}, it arises here as well when controlling related weakly nonlinear resonances.    
 
\begin{theorem}[Above threshold dynamics] \label{thm:SRS}
For all $s \in (1/2,1)$, all $\lambda_0 > \lambda^\prime > 0$, all integers $\alpha \geq 10$ and all $\delta>0$,
there exists a constant $c_{00} = c_{00}(s,\lambda_0,\lambda^\prime,\alpha,\delta)$, a constant $K_0 = K_0(s,\lambda_0,\lambda^\prime)$, and a constant $\nu_0 = \nu_0(s,\lambda_0,\lambda^\prime,\alpha,\delta)$ such that for all $\delta_1 >0 $ sufficiently small relative to $\delta$, all $\nu \leq \nu_0$, $c_{0} \leq c_{00}$, and $\epsilon < \nu^{2/3+\delta}$, if $u_{in} \in L^2$ is a divergence-free vector field that can be written $u_{in} = u_S + u_R$ (both also divergence-free) with
\begin{align} 
\norm{u_S}_{\mathcal{G}^{\lambda_0;s}} + e^{K_0\nu^{-\frac{3s}{2(1-s)}}}\norm{u_R}_{H^{3}} & \leq \epsilon, \label{ineq:QuantGev2}
\end{align} 
then the unique, classical solution to \eqref{def:3DNSE} with initial data $u_{in}$ exists at least until time $T_F = c_0 \epsilon^{-1}$ 
and the following estimates hold with all implicit constants independent of $\nu$, $\epsilon$, $c_0$ and $t$:
\begin{itemize} 
\item[(i)] Transient growth of the streak for $t < T_F$: 
\begin{align}
\norm{u^1_0(t) -  e^{\nu t\Delta}\left(u_{in \; 0}^1 - tu_{in \; 0}^2\right) }_{\G^{\lambda^\prime;s}} & \lesssim c_{0}^2 \label{ineq:u01grwth} \\ 
\norm{u^2_0(t) -  e^{\nu t\Delta} u_{in \; 0}^2}_{\G^{\lambda^\prime;s}} + \norm{u^3_0(t) -  e^{\nu t\Delta} u_{in \; 0}^3}_{\G^{\lambda^\prime;s}} & \lesssim c_{0} \epsilon; \label{ineq:u023Duhamel}
\end{align}
\item[(ii)] uniform control of the background streak for $t < T_F$:
\begin{subequations}  
\begin{align} 
\norm{u^1_0(t)}_{\G^{\lambda^\prime;s}} & \lesssim \epsilon \jap{t} \\ 
\norm{u^2_0(t)}_{\G^{\lambda^\prime;s}} + \norm{u^3_0(t)}_{\G^{\lambda^\prime;s}} & \lesssim \epsilon; 
\end{align}
\end{subequations}  
\item[(iii)] the rapid convergence to a streak by the mixing-enhanced dissipation and inviscid damping of $x$-dependent modes:
\begin{subequations} 
\begin{align} 
\norm{u_{\neq}^{1}(t,x + ty + t\psi(t,y,z),y,z)}_{\G^{\lambda^\prime;s}} & \lesssim \frac{\epsilon t^{\delta_1}}{\jap{\nu t^3}^\alpha} \\ 
\norm{u_{\neq}^{3}(t,x + ty + t\psi(t,y,z),y,z)}_{\G^{\lambda^\prime;s}} & \lesssim \frac{\epsilon}{\jap{\nu t^3}^\alpha} \\ 
\norm{u^2_{\neq}(t,x + ty + t\psi(t,y,z),y,z)}_{\G^{\lambda^\prime;s}} & \lesssim \frac{\epsilon}{\jap{t} \jap{\nu t^3}^\alpha}, \label{ineq:u2damping}
\end{align}  
\end{subequations}
where $\psi(t,y,z)$ is an $O(\epsilon t)$ correction to the mixing which depends on the disturbance 
(defined below to satisfy the PDE \eqref{def:psi2}) and satisfies the estimate:   
\begin{align} 
\norm{\psi(t) - u_0^1(t)}_{\G^{\lambda^\prime;s}} \lesssim \epsilon \jap{t}^{-1}.  \label{ineq:psiest}
\end{align}
\end{itemize}
\end{theorem}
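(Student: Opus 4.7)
The plan is to run a bootstrap/continuity argument in a carefully engineered Gevrey-type energy framework, built around the nonlinear Lagrangian-style coordinate change $(X,Y,Z) = (x-ty-t\psi(t,y,z),\, y,\, z)$ already foreshadowed in the statement of the theorem. The first preparatory step is to specify $\psi$ via its PDE \eqref{def:psi2}, so that transport by the full streak $(y+u^1_0, 0, u^3_0)^T$ is absorbed into the moving frame; the new unknowns $U^i(t,X,Y,Z)$ then satisfy evolution equations in which the only remaining ``fast'' mixing is the explicit $\Delta_L$-type diffusion/damping, while $u^3_0$-induced transport is hidden inside coefficients controlled by \eqref{ineq:psiest}. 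One then decomposes $U = U_0 + U_{\neq}$; the zero mode satisfies an equation close to the 2D Navier--Stokes streak system of Proposition \ref{prop:Streak} together with a forced advection-diffusion equation for $U^1_0$ driven by $-U^2_0$ (the lift-up effect), while $U_{\neq}$ sees the enhanced-dissipation operator $\Delta_L$ and should relax on the time-scale $\nu^{-1/3}$.

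Next I would define the energy functionals. Following the toy-model heuristics of \S\ref{sec:Toy} and mimicking \cite{BGM15I,BM13}, I would introduce Gevrey-class multipliers $A(t,k,\eta,l)$ of the form $e^{\lambda(t)|k,\eta,l|^s}$ with a slowly decreasing $\lambda(t)$ (``Gevrey loss''), combined with a resonance-tracking factor $w(t,k,\eta,l)$ that pays regularity near the critical times $t\approx\eta/k$ where echo-type pseudo-resonances occur. The new feature compared with \cite{BGM15I} is that the multiplier used on $U^3_{\neq}$ must be unbalanced relative to the ones used on $U^1_{\neq}$ and $U^2_{\neq}$, in a frequency-dependent way, to absorb the vortex-stretching growth exhibited in \eqref{ineq:U3LinearID_vs} and to allow the smaller dissipation budget $\epsilon \lesssim \nu^{2/3+\delta}$ to still close. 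Concretely I would introduce separate norms $\|\cdot\|_{X_1},\|\cdot\|_{X_2},\|\cdot\|_{X_3}$ for the three components plus $\|\cdot\|_{Y}$-type norms for the streak data $U^1_0, U^2_0, U^3_0$ and for the coordinate correction $\psi$, and assemble bootstrap hypotheses stating, roughly, that each of the quantities on the right-hand side of (i)--(iii) holds with constant $C$ replaced by $2C$ up to some maximal $T_\star\leq T_F$.

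The core of the proof is then the energy inequality $\tfrac{d}{dt}\|\cdot\|_X^2 \leq \text{(dissipation)} + \text{(nonlinear terms)}$ in each norm, plus analogous inequalities for the streak and for $\psi$. Standard paraproduct decomposition splits the nonlinear terms into transport (low-high), reaction (high-low), and remainder pieces; the transport terms are handled by commutator estimates against the Gevrey/echo multiplier exactly as in \cite{BGM15I}, while the reaction terms are where the weakly nonlinear structure matters. Here I would exploit the divergence-free constraint and the ``null'' interaction between the pressure-free part of $u\cdot\nabla u$ and $\nabla p^{NL}$, together with the new elliptic estimates alluded to in Appendix \ref{sec:PEL}, so that interactions which would naively blow up in the $\epsilon\sim \nu^{2/3}$ regime are in fact cancelled or damped. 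The stabilizing contributions are the $CK$ (Cauchy--Kovalevskaya) terms coming from $-\dot\lambda(t)|\nabla|^s$ and from the echo multiplier, together with the explicit $\nu|\nabla_L|^2$ dissipation, which gives the key enhanced factor $e^{-c\nu t^3}$. Summed up, one recovers estimates for (i)--(iii) with original constant $C$, closing the bootstrap and giving $T_\star = T_F$.

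The main obstacle, and the step where essentially all of the new work lies, is controlling the $U^3_{\neq}$-driven reaction terms in the equations for $U^1_{\neq}$ and $U^2_{\neq}$ (and in the streak equations) under the weaker dissipation budget. At the linear level, $U^3_{\neq}$ does not decay via inviscid damping (cf.\ \eqref{ineq:U3LinearID_vs}), yet it multiplies $\nabla U^i$ in the nonlinearity; without the frequency-asymmetric Gevrey multiplier on $U^3_{\neq}$, the reaction of a high-frequency $U^3_{\neq}$ against a medium-frequency $U^j$ would overwhelm the $\nu^{1/3}$-enhanced dissipation whenever $\epsilon \gg \nu$. The fine-tuned multiplier for $U^3_{\neq}$, together with the more precise time-frequency multiplier inequalities of \S\ref{sec:nrmuse}, is exactly what makes these terms integrable in time on $[0,c_0\epsilon^{-1}]$, and verifying this requires a term-by-term paraproduct analysis paying careful attention to which mode is ``resonant'' (near $t=\eta/k$) and which is not. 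Once this reaction estimate is in hand, the remaining pieces---transport, pressure, streak control, and propagation of $\psi$ through \eqref{def:psi2}---are more in line with \cite{BGM15I} and can be closed by the same type of commutator and product estimates in Gevrey spaces.
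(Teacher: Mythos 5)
Your overall architecture (Lagrangian-type coordinate change, Gevrey multipliers with echo/critical-time corrections, an unbalanced norm for the third component, bootstrap with paraproducts, CK terms and enhanced dissipation) is in the spirit of the paper, but two of your central steps contain genuine gaps. First, the coordinate change you propose, $(X,Y,Z)=(x-ty-t\psi,\,y,\,z)$, only shifts the streamwise variable, and a shift in $x$ alone cannot absorb advection by $(0,0,u_0^3)$ (nor by $u_0^2$): your claim that transport by the full streak $(y+u_0^1,0,u_0^3)^T$ is ``absorbed into the moving frame'' is not possible with this ansatz, and \eqref{ineq:psiest} controls $\psi-u_0^1$, not the leftover $u_0^3$-transport. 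In the below-threshold regime $\epsilon\lesssim\nu$ one can leave $u_0^3$ in the equations and absorb the resulting gradient growth with the dissipation, but here $u_0^3=O(\epsilon)\gg\nu$ acts over times $t\sim\epsilon^{-1}$, and the induced growth of gradients (seen through the norms, which have nontrivial angular dependence in frequency) cannot be balanced. The paper's proof uses the full transform $Y=y+\psi$, $Z=z+\phi$ with $\phi$ solving \eqref{def:phi}, together with the auxiliary unknown $g=\frac1t(U_0^1-C^1)$, so that the only remaining zero-mode advection is by $g$, which decays like $\jap{t}^{-2}$; without this (or an equivalent device) your transport estimates for the zero-mode velocity do not close on $[1,c_0\epsilon^{-1}]$.

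Second, the mechanism of the regularity imbalance is mis-located. You place the danger in ``reaction of a high-frequency $U^3_{\neq}$ against $U^{1},U^{2}$'' and propose weakening the norm on $U^3_{\neq}$ to tame it; but weakening the $U^3$ norm makes precisely those terms \emph{worse} (they are instead controlled by $\epsilon t\le c_0$, by the inviscid damping of $U^2$, and by losses bounded by a single power of $t$). The place where the imbalance is actually needed is the opposite direction: in the toy model, the resonant modes of $Q^2$ (and $Q^1$) force the \emph{non-resonant} and \emph{zero} modes of $Q^3$ with an $\epsilon t^3$ coefficient (the \textbf{(3DE)} and \textbf{(F)} interactions), and the gain $w^3_{k'}/w\sim t/(\abs{k}+\abs{\eta-kt})$ near the critical times converts $\epsilon t^3$ into $\epsilon t^2\lesssim\jap{\nu t^3}$ exactly when $\epsilon\lesssim\nu^{2/3}$; moreover the imbalance must be switched off for $\abs{l}\gtrsim\abs{\eta}$ (the $+e^{\mu\abs{l}^{1/2}}$ correction), or the transport estimates for $Q^3$ fail at high $Z$ frequencies. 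Related to this, your sketch implicitly keeps the linear $t^{-2}$ inviscid damping of $u^2_{\neq}$, whereas at $\epsilon\sim\nu^{2/3}$ the nonlinear pressure term $\partial_Y^t(\partial_XU^3_{\neq}\,\partial_Z^tU^1_0)$ forces one to allow $Q^2$ to grow linearly at low frequencies and to settle for the $t^{-1}$ decay in \eqref{ineq:u2damping}; several reaction and stretching estimates (and the enhanced-dissipation estimates) only close with this weakened hypothesis, so a bootstrap built on the stronger $t^{-2}$ decay would break down. Finally, note the paper runs the scheme on $Q^i=\Delta u^i$ rather than on the velocities, which is what makes the elliptic ``precision'' lemmas and the enhanced-dissipation multipliers workable; if you insist on velocity variables you need a substitute for that elliptic bookkeeping.
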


\begin{remark} \label{ineq:nulesseps} 
Without loss of generality we will assume for the remainder of the paper that $\nu \lesssim \epsilon$ as otherwise, Theorem \ref{thm:SRS} is covered by our previous work \cite{BGM15I}. 
\end{remark} 

\begin{remark}
If $u_{in \; 0}^2$ is such that $\norm{u_{in \; 0}^2}_{\G^{\lambda^\prime;s}} \geq \frac{1}{4}\epsilon \geq \frac{1}{16}\nu^{2/3+\delta}$ then \eqref{ineq:u01grwth} shows that for $c_{0}$ small (but independent of $\epsilon$ and $\nu$) and $\epsilon$ small, the streak $u_0^1(t)$ reaches the maximal amplitude of $\norm{u_0^1(t_{m})}_2 \gtrsim c_{0}$ at times $t_m \sim T_F = c_0\epsilon^{-1}$. Hence, the solution has grown from $O(\epsilon)$ to $O(c_0)$ over this time interval. 
Moreover, this time-scale is far shorter than the $\nu^{-1}$ time-scale over which $u_0$ will decay by viscous dissipation (at least the low frequencies) and so 
in general the solution will become fully nonlinear for $t \gtrsim T_F$.      
\end{remark}

\begin{remark} 
Notice that linear theory in Proposition \ref{prop:linear} suggests the $O(t^{-2})$ inviscid damping of $u^2$, whereas we only have $t^{-1}$ in \eqref{ineq:u2damping}. 
This discrepancy arises from a 3D nonlinear pressure effect and is explained in \S\ref{sec:Toy} (this discrepancy did not occur in \cite{BGM15I}). 
\end{remark} 

\begin{remark}
Note that the solutions in Theorem \ref{thm:SRS} are not only large solutions to 3D NSE, but also in general they are very far from equilibrium (relative to $\nu$). 
Using naive methods, one would only be able to prove existence until $T_F \sim \log\epsilon^{-1}$ or perhaps some polynomial such as $T_F \sim \epsilon^{-\beta}$ for $\beta \ll 1$ since the Couette flow is rapidly driving large gradients in the solution as well as amplifying the solution via the lift-up effect.  
It is the inviscid damping and enhanced dissipation, together with the 
precise structure of the nonlinearity, which allow us to prove existence all the way until $T_F \sim \epsilon^{-1}$ for these large, far-from-equilibrium, solutions. 
\end{remark} 

\begin{remark} 
As in \cite{BGM15I}, the solutions described in Theorem \ref{thm:SRS} can exhibit a roughly linear-in-time transfer of kinetic energy to high frequencies where it is ultimately dissipated at $\tau_{ED} \sim \nu^{-1/3}$. 
\end{remark}

\begin{remark} \label{ref:lowreg}
In experiments and computer simulations, ``lift-up effect $\Rightarrow$ streak growth $\Rightarrow$ streak breakdown'' is commonly observed, however there are a number of pathways to transition that have also been observed (see \cite{SchmidHenningson2001} and the references therein).  
 Further, it has been observed that the transition threshold in general can depend on the kind of perturbation being made (see e.g. \cite{ReddySchmidEtAl98,SchmidHenningson2001,FaisstEckhardt2004,Mullin2011} and the references therein -- in fact, this was even observed by Reynolds \cite{Reynolds83}). 
Theorem \ref{thm:SRS} and \cite{BGM15I} are not in contradiction with experimental observations, but instead suggest that this is partly related to the regularity of the perturbations.
Indeed, authors conducting computer simulations have explicitly related the transition threshold with the regularity of the initial data and determined different answers \cite{ReddySchmidEtAl98}. 
It may also be illuminating to note that while the works \cite{BM13,BMV14} rule out subcritical transition of Couette flow in 2D for sufficiently regular perturbations, the works of \cite{LinZeng11,LiLin11} suggest it is likely that for sufficiently rough disturbances (about $H^{5/2}$) one can observe subcritical transition even in 2D via a roll-up instability (and hence \eqref{def:3DNSE} should, in principle, admit a pathway to transition which is purely 2D at low enough regularities).     
\end{remark} 

\subsection{Notations and conventions} \label{sec:Notation}
We use superscripts to denote vector components and subscripts such as $\partial_i$ to denote derivatives with respect to the components $x,y,z$ (or $X,Y,Z$) with the obvious identification $\partial_1 = \partial_X$, $\partial_2 = \partial_Y$, and $\partial_3 = \partial_Z$. Summation notation is assumed: in a product, repeated vector and differentiation indices are always summed over all possible values.   

See Appendix \ref{apx:Gev} for the Fourier analysis conventions we are taking.
A convention we generally use is to denote the discrete $x$ (or $X$) frequencies as subscripts.   
By convention we always use Greek letters such as $\eta$ and $\xi$ to denote frequencies in the $y$ or $Y$ direction, frequencies in the $x$ or $X$ direction as $k$ or $k^\prime$ etc, and frequencies in the $z$ or $Z$ direction as $l$ or $l^\prime$ etc.
Another convention we use is to denote dyadic integers by $M,N \in 2^{\Integer}$ where  
\begin{align*} 
2^\Integer & = \set{...,2^{-j},...,\frac{1}{4},\frac{1}{2},1,2,...,2^j,...}, \\ 
\end{align*}
This will be useful when defining Littlewood-Paley projections and paraproduct decompositions. See \S\ref{sec:paranote} for more information on the paraproduct decomposition and the associated short-hand notations we employ. 
Given a function $m \in L^\infty_{loc}$, we define the Fourier multiplier $m(\grad) f$ by 
\begin{align*} 
(\widehat{m(\grad)f})_k(\eta) =  m( (ik,i\eta,il) ) \widehat{f_k}(\eta,l). 
\end{align*}   
We use the notation $f \lesssim g$ when there exists a constant $C > 0$ independent of the parameters of interest 
such that $f \leq Cg$ (we analogously define $f \gtrsim g$). 
Similarly, we use the notation $f \approx g$ when there exists $C > 0$ such that $C^{-1}g \leq f \leq Cg$. 
We sometimes use the notation $f \lesssim_{\alpha} g$ if we want to emphasize that the implicit constant depends on some parameter $\alpha$.
We also employ the shorthand $t^{\alpha+}$ when we mean that there is some small parameter $\gamma >0$ such that $t^{\alpha+\gamma}$ and that we can choose $\gamma$ as small as we want at the price of a constant (e.g. $\norm{f}_{L^\infty} \lesssim \norm{f}_{H^{3/2+}}$). 
We will denote the $\ell^1$ vector norm $\abs{k,\eta,l} = \abs{k} + \abs{\eta} + \abs{l}$, which by convention is the norm taken in our work. 
Similarly, given a scalar or vector in $\Real^n$ we denote
\begin{align*} 
\jap{v} = \left( 1 + \abs{v}^2 \right)^{1/2}. 
\end{align*} 
We denote the standard $L^p$ norms by $\norm{f}_{p}$ and Sobolev norms $\norm{f}_{H^\sigma} := \norm{\jap{\grad}^\sigma f}_2$.  
We make common use of the Gevrey-$\frac{1}{s}$ norm with Sobolev correction defined by 
\begin{align*} 
\norm{f}_{\G^{\lambda,\sigma;s}}^2 = \sum_{k,l}\int \abs{\widehat{f_k}(\eta,l)}^2 e^{2\lambda\abs{k,\eta,l}^s}\jap{k,\eta,l}^{2\sigma} d\eta. 
\end{align*} 
Since in most of the paper we are taking $s$ as a fixed constant, it is normally omitted. Also, if 
$\sigma =0$, it is omitted. 
We refer to this norm as the $\mathcal{G}^{\lambda,\sigma;s}$ norm and occasionally refer to the space of functions
\begin{align*} 
\mathcal{G}^{\lambda,\sigma;s} = \set{f \in L^2 :\norm{f}_{\G^{\lambda,\sigma;s}}<\infty}. 
\end{align*}
See Appendix \ref{apx:Gev} for a discussion of the basic properties of this norm and some related useful inequalities.

For $\eta \geq 0$, we define $E(\eta)\in \Integer$ to be the integer part.  
We define for $\eta \in \Real$ and $1 \leq \abs{k} \leq E(\sqrt{\abs{\eta}})$ with $\eta k > 0$, 
$t_{k,\eta} = \abs{\frac{\eta}{k}} - \frac{\abs{\eta}}{2\abs{k}(\abs{k}+1)} =  \frac{\abs{\eta}}{\abs{k}+1} +  \frac{\abs{\eta}}{2\abs{k}(\abs{k}+1)}$ and $t_{0,\eta} = 2 \abs{\eta}$ and 
the critical intervals  
\begin{align*} 
I_{k,\eta} = \left\{
\begin{array}{lr}
[t_{\abs{k},\eta},t_{\abs{k}-1,\eta}] & \textup{ if } \eta k \geq 0 \textup{ and } 1 \leq \abs{k} \leq E(\sqrt{\abs{\eta}}), \\ 
\emptyset & otherwise.
\end{array} 
\right. 
\end{align*} 
For minor technical reasons, we define a slightly restricted subset as the \emph{resonant intervals}
\begin{align*} 
\mathbf I_{k,\eta} = \left\{
\begin{array}{lr} 
I_{k,\eta} & 2\sqrt{\abs{\eta}} \leq t_{k,\eta}, \\ 
\emptyset & otherwise.
\end{array} 
\right. 
\end{align*} 
Note this is the same as putting a slightly more stringent requirement on $k$: $k \leq \frac{1}{2}\sqrt{\abs{\eta}}$.

\section{Outline of the proof} \label{sec:proof}
In this section we give an outline of the main steps of the proof of Theorem \ref{thm:SRS} and set up the main energy estimates, focusing on exposition, intuition, and organization. 
We will try to give specific emphasis to what is new relative to \cite{BGM15I}, and discuss fewer details on issues that are common to both works for the sake of brevity.   
After \S\ref{sec:proof}, the remainder of the paper is dedicated to the proof of the major energy estimates required and the analysis of the various norms and Fourier analysis tools being employed. 

\subsection{Summary and weakly nonlinear heuristics}
\subsubsection{New dependent variables}
As in \cite{BGM15I}, we find it natural to define the full set of auxiliary unknowns $q^i = \Delta u^i$ for $i = 1,2,3$. A computation shows that $(q^i)$ solves
\begin{equation} \label{def:qi}
\left\{ \begin{array}{l} 
\partial_t q^1 + y \partial_x q^1 +  2\partial_{xy} u^1 + u \cdot \grad q^1  = -q^2 + 2\partial_{xx} u^2 - q^j \partial_j u^1 + \partial_x\left(\partial_i u^j \partial_j u^i\right) - 2\partial_{i} u^j \partial_{ij}u^1 + \nu \Delta q^1  \\
\partial_t q^2 + y \partial_x q^2 + u \cdot \grad q^2  = -q^j \partial_j u^2 + \partial_y\left(\partial_i u^j \partial_j u^i\right) - 2\partial_{i} u^j \partial_{ij}u^2 + \nu \Delta q^2  \\
\partial_t q^3 + y \partial_x q^3 +  2\partial_{xy} u^3 + u \cdot \grad q^3  =  2\partial_{zx} u^2 -q^j \partial_j u^3 + \partial_z\left(\partial_i u^j \partial_j u^i\right) - 2\partial_{i} u^j \partial_{ij}u^3 + \nu \Delta q^3 .
\end{array}\right.
\end{equation}
Note that the linear terms have disappeared in the PDE for $q^2$ but not $q^1$ and $q^3$.  

\subsubsection{New independent variables} \label{sec:indepC} 
As in \cite{BGM15I}, the need for a change of independent variables can be understood by considering the convection term $y\partial_x q^i + u \cdot \nabla q^i$ which appears in \eqref{def:qi} above. 
Due to the mixing, any classical energy estimates on $q$ in (say) Sobolev spaces will rapidly grow. 
Via the lift-up effect, $u_0^1$ will be very large, whereas even the other contributions of the streak, $u_0^{2,3}$, will not be decaying and cannot be balanced by the dissipation as they are far larger than $\nu$. More specifically, the \emph{growth of gradients} caused by mixing due to the streak cannot be balanced. 
In \cite{BGM15I}, the coordinate system was modified to account for the mixing action of $u_0^1$ (and $u_0^2$ as a by-product); here we will go further and also account for $u_0^3$, effectively treating the entire streak in a sort of Lagrangian fashion so that norm growth due to these velocities is not seen in our coordinate system.  

A full study of the coordinate transformation is done in \S\ref{sec:coordinates} below, but let us just make a quick summary here. 
We start with the ansatz 
\begin{align*}
\left\{ \begin{array}{l} X  = x - ty - t \psi(t,y,z) \\ Y  = y + \psi(t,y,z) \\ Z  = z + \phi(t,y,z), \end{array} \right. 
\end{align*} 
The shift $\psi$ is chosen as in \cite{BGM15I}, however $\phi$ is chosen to eliminate the contributions of $u_0^3$ from the transport term. 
Indeed, consider the simple convection diffusion equation on a passive scalar $f(t,x,y,z)$
$$
\partial_t f + y \partial_x f + u \cdot \nabla f = \nu \Delta f.
$$
Denoting $F(t,X,Y,Z) = f(t,x,y,z)$ and $U(t,X,Y,Z) = u(t,x,y,z)$, and $\Delta_t$ and $\nabla^t$ for the expressions for $\Delta$ and $\nabla$ in the new coordinates, this simple equation becomes
\begin{align} 
\partial_t F + \begin{pmatrix}u^1 - t (1+\partial_y\psi) u^2 - t \partial_z\psi u^3 - \frac{d}{dt}(t\psi) + \nu t \Delta \psi \\ (1+\partial_y\psi) u^2 + \partial_z \psi u^3 + \partial_t \psi - \nu\Delta \psi  \\ (1 + \partial_z \phi)u^3 + \partial_y \phi u^2 + \partial_t \phi - \nu \Delta \phi \end{pmatrix} \cdot \grad_{X,Y,Z} F = \nu \tilde{\Delta_t} F,  \label{ineq:transf}
\end{align}
where $\tilde{\Delta_t}$ is a variant of $\Delta_t$ without lower order terms; it is given below in \eqref{def:tildeDel1}. 
To eliminate the zero frequency contribution of the first component of the velocity field, as in \cite{BGM15I}, we will choose $u^1_0 - t (1+\partial_y\psi) u^2_0 - t \partial_z\psi u^3_0 - \frac{d}{dt}(t\psi) + \nu t\Delta \psi = 0$. 
To eliminate the zero frequency contribution of the third component, we further choose $(1 + \partial_z \phi)u_0^3 + \partial_y \phi u^2_0 + \partial_t \phi = \nu \Delta \phi$. As in \cite{BGM15I}, we now recast the equations on $\psi,\phi$ in terms of $C^1(t,Y,Z)=\psi(t,y,z)$, $C^2(t,Y,Z)=\phi(t,y,z)$ and the auxiliary unknown $g = \frac{1}{t}(U_0^1 - C)$. 
After cancellations are carefully accounted for we have 
\begin{equation} \label{def:Cgintro}
\left\{
\begin{array}{l}
\partial_t C^1 + \tilde U_0 \cdot \grad_{Y,Z} C^1 = g - U_0^2 + \nu \tilde{\Delta_t} C^1, \\
\partial_t C^2 + \tilde U_0 \cdot \grad_{Y,Z} C^2 = - U_0^3 + \nu \tilde{\Delta_t} C^2, \\
\partial_t g  + \tilde U_0 \cdot \grad_{Y,Z}g = -\frac{2}{t}g -\frac{1}{t} \left(U_{\neq} \cdot \grad^t U^1_{\neq}\right)_0 + \nu \tilde{\Delta_t}  g,
\end{array}
\right.
\end{equation}
and 
\begin{align} \label{def:Qiintro}
\left\{
\begin{array}{l}
Q^1_t + \tilde U \cdot \grad_{X,Y,Z} Q^1 = -Q^2 - 2\partial_{XY}^t U^1 + 2\partial_{XX} U^2 - Q^j \partial_j^t U^1 - 2\partial_i^t U^j \partial_{ij}^t U^1 + \partial_X(\partial_i^t U^j \partial_j^t U^i) + \nu \tilde{\Delta_t} Q^1 \\ 
Q^2_t + \tilde U \cdot \grad_{X,Y,Z} Q^2 = -Q^j \partial_j^t U^2 - 2\partial_i^t U^j \partial_{ij}^t U^2 + \partial_Y^t(\partial_i^t U^j \partial_j^t U^i) + \nu \tilde{\Delta_t} Q^2 \\ 
Q^3_t + \tilde U \cdot \grad_{X,Y,Z} Q^3 = -2\partial_{XY}^t U^3 + 2\partial_{XZ}^t U^2 - Q^j \partial_j^t U^3 - 2\partial_i^t U^j \partial_{ij}^t U^3 + \partial_Z^t(\partial_i^t U^j \partial_j^t U^i) + \nu \tilde{\Delta_t} Q^3, 
\end{array}
\right.
\end{align} 
where $\partial_i^t$ denote derivatives including the Jacobian factors $\partial_z \psi,\partial_y \psi, \partial_y\phi,\partial_z\phi$ (see \S\ref{sec:coordinates} below) and 
$$\tilde U = \begin{pmatrix} U_{\neq}^1 - t(1 + \partial_y\psi ) U^2_{\neq} - t \partial_z\psi U^3_{\neq}  \\ (1 + \partial_y\psi)U^2_{\neq} + \partial_z\psi U^3_{\neq}+ g \\ (1 + \partial_z\phi)U^3_{\neq} + \partial_y\phi U_{\neq}^2 \end{pmatrix}.$$ 
Notice that this transformation almost completely eliminates the zero frequency contribution of $\tilde U_0$, so we are treating the advection by the evolving streak $u_0^1(t,y,z),u_0^2(t,y,z),u_0^3(t,y,z)$ in a nearly Lagrangian way (as in \cite{BGM15I}, $g$ is rapidly decaying independently of $\nu$). 

\subsection{Choice of the norms} 
The highest norms we use are of the general type $\norm{ A^i(t,\grad) Q^i(t)}_2$, where the $A^i$ are specially designed Fourier multipliers.  
See \eqref{def:A} below for the definitions of $A^i$. 
For $i = 1,2$ the norms are similar to \cite{BGM15I}, however, here they need to be adjusted at high frequencies in $Z$. 
For $i = 3$ the difference is more pronounced as the $w$ multiplier is replaced with a specially adjusted $w^3$.  
Recall that these factors are estimates on the ``worst-possible'' growth of high frequencies due to weakly nonlinear effects. 
Roughly speaking, here they are taken to satisfy the following for $\abs{k}^2 \lesssim \abs{\eta}$ (and hence $\sqrt{\abs{\eta}} \lesssim t \lesssim \abs{\eta}$), 
\begin{subequations} \label{def:approxw}  
\begin{align}
\frac{\partial_t {w(t,\eta)}}{w(t,\eta)}&  \sim \frac{1}{1 + |t-\frac{\eta}{k}|}, \qquad \mbox{when $\left| t-\frac{\eta}{k} \right| \lesssim \frac{\eta}{k^2}$} \quad \mbox{and} \quad w(1,\eta) = 1 \\ 
w^3_k(t,\eta) & \sim w(t,\eta), \qquad \mbox{when $\left| t-\frac{\eta}{k} \right| \lesssim \frac{\eta}{k^2}$} \\ 
w^3_{k^\prime}(t,\eta) &  \sim \frac{t}{\abs{k} + \abs{\eta - kt}} w(t,\eta), \qquad \mbox{when $\left| t-\frac{\eta}{k} \right| \lesssim \frac{\eta}{k^2}$} \quad \mbox{and} \quad k \neq k^\prime; 
\end{align}
\end{subequations}
see Appendix \ref{sec:def_nrm} for the full definition and \S\ref{sec:Toy} for the heuristic derivation.  
We see that $w^3$ unbalances the regularity in a way that enforces more control over frequencies near the critical times than away from the critical times. 
This is closely matched by the loss of ellipticity in $\Delta_L$ and allows to trade ellipticity and regularity back and forth in a specific way. 

Finally, as pointed out in \cite{BGM15I}, one can read off the requirement $s > 1/2$ from \eqref{def:approxw}. Indeed, integration over each critical time gives for some $C >0$,  
\begin{align}
\frac{w(2\eta,\eta)}{w(\sqrt{\eta},\eta)} \approx \left(\frac{\eta^{\sqrt{\eta}}}{(\sqrt{\eta}!)^2}\right)^C, \label{ineq:wloss}
\end{align} 
which predicts a growth like $O(e^{2C\sqrt{\eta}})$ up to a polynomial correction by Stirling's formula. 

\subsubsection{Weakly nonlinear heuristics} \label{sec:NonlinHeuristics}
First, let us point out another heuristic for deriving the requirement $\epsilon \lesssim \nu^{2/3}$. Many nonlinear terms in the proof are naturally estimated in the following general manner: 
\begin{align}
NL & \lesssim \frac{\epsilon}{\jap{\nu t^3}^\alpha}\norm{\sqrt{-\Delta_L} A^i Q^i}_2\norm{A^j Q^j}_2 \lesssim \nu\norm{\sqrt{-\Delta_L} A^i Q^i}_2^2 + \frac{\epsilon^2}{\nu \jap{\nu t^3}^{2\alpha}}\norm{A^j Q^j}_2^2 \label{ineq:32heurs} 
\end{align}
where recall from \S\ref{sec:LinStreak} that $\Delta_L = \partial_{XX} + (\partial_Y - t\partial_X)^2 + \partial_{ZZ}$, the leading order dissipation that comes from the linearized problem. The $\jap{\nu t^3}^{-\alpha}$ comes from a `low-frequency' factor that was estimated via the enhanced dissipation. 
Since $\int_0^\infty \frac{1}{\jap{\nu t^3}^{\alpha}} dt \approx \nu^{-1/3}$, it is apparent that $\nu \sim \epsilon^{3/2}$ is the smallest choice of $\nu$ such that \eqref{ineq:32heurs} can be integrated uniformly in $\nu \rightarrow 0$. 

Now, let us quickly recall some terminology from \cite{BGM15I} and some discussion on the weakly nonlinear effects. 
The behavior in Theorem \ref{thm:SRS} comes in essentially two phases. During early times $t \lesssim \tau_{NL} \sim \epsilon^{-1/2}$, the solution has strong 3D effects 
and the dissipation cannot control the leading order nonlinear terms. On this time scale, the regularity unbalancing in $w^3$ and insight from the toy model of \S\ref{sec:Toy} is crucial. After times $t \gtrsim \tau_{ED} \sim \nu^{-1/3}$ the enhanced dissipation begins to dominate and the solution converges to a streak; the main growth from then on is due to the lift-up effect. The assumption of $\epsilon \leq \nu^{2/3+\delta}$ is what ensures the two regimes overlap since then $\tau_{NL} \gtrsim \tau_{ED}$; moreover since $\delta > 0$, by picking $\nu$ small we can make sure that the overlap regime is large (that is, we can ensure $\tau_{NL} \gg \tau_{ED}$ so the dissipation dominates comfortably before the nonlinear time-scale). 
  
As in \cite{BGM15I}, we classify the nonlinear terms by the zero, or nonzero, $x$ frequency of the interacting functions: denote for instance $0 \cdot \neq \,\to\, \neq$ for the interaction of a zero mode (in $x$) and a non-zero mode (in $x$) giving a non-zero mode (in $x$), and similarly, with obvious notations, $0 \cdot 0 \to 0$, $\neq \cdot \neq \, \to \,\neq$, and $\neq \cdot \neq \,\to 0$.

\begin{itemize} 
\item[\textbf{(2.5NS)}]  ($0 \cdot 0 \to 0$)  For  \emph{2.5D Navier-Stokes}, this corresponds to self-interactions of the streak. We will see that there are new complexities to these terms here: due to the regularity imbalancing in $w^3$, the regularity of $u_0^3$ and $u_0^2$ are not the same and terms that were straightforward in \cite{BGM15I} are not so here. 
\item[\textbf{(SI)}] ($0 \cdot \neq \,\to\, \neq$) For \emph{secondary instability}, this effect is the transfer of momentum from the large $u_0^1$ mode to other modes. 
Actually, even more here than in \cite{BGM15I}, $u_0^2$ and $u_0^3$ will matter; especially the latter due to the regularity unbalances in $w^3$.  
These interactions are those that would arise when linearizing an $x$-dependent perturbation of a streak and so are what ultimately give rise to the secondary instabilities observed in larger streaks  (hence the terminology) \cite{ReddySchmidEtAl98,Chapman02}. 

\item[\textbf{(3DE)}] ($\neq \cdot \neq \,\to \,\neq$) For \emph{three dimensional echoes}, these effects are 3D variants of the 2D hydrodynamic echo phenomenon as observed in \cite{YuDriscoll02,YuDriscollONeil}. These are understood as weakly nonlinear interactions of $x$-dependent modes forcing unmixing modes \cite{VMW98,Vanneste02,BM13}. 
We will see in \S\ref{sec:Toy} that these are the primary reason for the regularity imbalances in $w^3$ and hence are the source of most of the additional difficulties in the proof of Theorem \ref{thm:SRS}. 
This involves two non-zero frequencies $k_1$, $k_2$ interacting to force mode $k_1 + k_2$ with $k_{1},k_2,k_1 + k_2 \neq 0$. 

\item[\textbf{(F)}] ($\neq \cdot \neq \,\to 0$) For \emph{nonlinear forcing}, this is the effect of the forcing from $x$-dependent modes back into $x$-independent modes. This involves two non-zero frequencies $k$ and $-k$ interacting to force a zero frequency (and as usual, in general this could involve a variety of the components). Similar to \textbf{(3DE)}, it is $u_0^3$ that is most strongly affected by these terms, and it is these that are responsible for altering the regularity of $u_0^3$ relative to $u_0^2$. 
 \end{itemize} 
As in \cite{BGM15I}, these nonlinear interactions are coupled to one another and can precipitate nonlinear cascades. 
The need to consider possible nonlinear bootstraps both precipitates the Gevrey-$2$ regularity requirement as in \cite{BGM15I} and the regularity imbalances in $u^3$, as we will derive formally in \S\ref{sec:Toy}. 

We will now begin a detailed outline of the proof of Theorem \ref{thm:SRS} and set up the main energy estimates that will comprise the majority of the paper. 

\subsection{Instantaneous regularization and continuation of solutions}
The first step is to see that our initial data becomes small in $\G^{\frac{3\lambda_0}{4} + \frac{\lambda^{\prime}}{4}}$ after a short time. 
We state without proof the appropriate lemma, see \cite{BMV14,BGM15I} for analogous lemmas. 

\begin{lemma}[Local existence and instanteous regularization] \label{lem:Loc} 
Let $u_{in} \in L^2$ satisfy \eqref{ineq:QuantGev2}. 
Then for all $\nu\in (0,1]$, $c_{0}$ sufficiently small, $K_0$ sufficiently large, and all $\lambda_0 > \lambda^\prime > 0$, if $u_{in}$ satisfies \eqref{ineq:QuantGev2}, then there exists a time $t_\star = t_\star(s,K_0,\lambda_0,\lambda^\prime) > 0$ and a unique classical solution to \eqref{def:3DNSE} with initial data $u_{in}$ on $[0,t_\star]$ which is real analytic on $(0,t_\star]$, and satisfies
\begin{align} 
\sup_{t \in [t_\star/2,t_\star]} \norm{u(t)}_{\G^{\bar{\lambda}}} \leq 2\epsilon, 
\end{align} 
where $\bar{\lambda} = \frac{3\lambda_0}{4} + \frac{\lambda^\prime}{4}$. 
\end{lemma}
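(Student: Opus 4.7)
The plan is to combine classical local well-posedness in $H^3$ with heat-semigroup Gevrey regularization, using the exponential smallness of $u_R$ in \eqref{ineq:QuantGev2} to absorb the degeneration as $\nu\to 0$ of the Gevrey smoothing constant. From $\|u_S\|_{H^3}\lesssim\|u_S\|_{\G^{\lambda_0;s}}$ and the bound on $\|u_R\|_{H^3}$, the initial datum satisfies $\|u_{in}\|_{H^3}\lesssim\epsilon$, so classical Kato-type theory for 3D Navier--Stokes on $\Torus\times\Real\times\Torus$ furnishes a unique strong solution $u\in C([0,T];H^3)$ with $T$ depending only on $\|u_{in}\|_{H^3}$. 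Since $\epsilon\leq 1$, this yields a lower bound on $T$ uniform in $\nu\leq 1$, so the target time $t_\star$ to be fixed lies in $[0,T]$. Real analyticity on $(0,T]$ follows from Foias--Temam analyticity estimates, which propagate an analytic radius $\sim\sqrt{\nu t}$ in $H^3$.

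For the Gevrey bound at $t\in[t_\star/2,t_\star]$, I would work from the Duhamel representation
\begin{equation*}
u(t) = e^{\nu t\Delta}u_S + e^{\nu t\Delta}u_R - \int_0^t e^{\nu(t-\tau)\Delta}\mathbb{P}\nabla\!\cdot\!(u\otimes u)(\tau)\,d\tau
\end{equation*}
and estimate each piece in $\G^{\bar\lambda;s}$. The first term is bounded by $\|u_S\|_{\G^{\lambda_0;s}}\leq\epsilon$ using $\bar\lambda<\lambda_0$ and contractivity of the heat semigroup on Gevrey spaces. For the second term, the elementary smoothing estimate
\begin{equation*}
\|e^{\nu t\Delta}f\|_{\G^{\bar\lambda;s}} \lesssim \exp\!\big(C_s\,\bar\lambda^{2/(2-s)}\,(\nu t)^{-s/(2-s)}\big)\|f\|_{L^2},
\end{equation*}
obtained by pointwise maximization of $\bar\lambda|\xi|^s - c\nu t|\xi|^2$, combines with the bound on $\|u_R\|_{L^2}$ to give a contribution of order $\exp\!\big(C(\nu t_\star/2)^{-s/(2-s)} - K_0\nu^{-3s/(2(1-s))}\big)\epsilon$. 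Since $\tfrac{s}{2-s}<\tfrac{3s}{2(1-s)}$ for every $s\in(0,1)$, the prefactor dominates the smoothing blow-up as $\nu\to 0$, so taking $K_0$ large enough (depending on $t_\star,s,\lambda_0,\lambda'$) drives this below $\epsilon$ uniformly in $\nu$.

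The nonlinear term is then handled by a standard continuity argument, using the Gevrey algebra property with Sobolev correction together with the one-derivative heat estimate $\|e^{\nu\sigma\Delta}\nabla\|_{\G^{\bar\lambda;s}\to\G^{\bar\lambda;s}}\lesssim(\nu\sigma)^{-1/2}$. To avoid the blow-up of $\|u(\tau)\|_{\G^{\bar\lambda;s}}$ as $\tau\to 0^+$, I would bootstrap a time-dependent Gevrey radius $\lambda(\tau)$ interpolating from a negligible value near $\tau=0$ up to $\bar\lambda$ at $\tau=t_\star/2$, using viscous dissipation to balance $\lambda'(\tau)$ at high frequencies and the exponential smallness of $u_R$ at low frequencies. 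The resulting Duhamel bound is of order $\epsilon^2\nu^{-1/2}t_\star^{1/2}\leq\nu^{5/6+2\delta}t_\star^{1/2}$, which is $\leq\epsilon/2$ for $\nu$ small, closing the bootstrap.

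The main obstacle is the calibration of $t_\star$ and $K_0$ so that $t_\star$ depends only on $(s,K_0,\lambda_0,\lambda')$ while absorbing the $\nu$-dependent smoothing constant for $u_R$, and simultaneously keeping the time-dependent Gevrey radius bootstrap consistent with the available viscous dissipation. The strict inequality $\tfrac{s}{2-s}<\tfrac{3s}{2(1-s)}$ makes this balance possible for every $s\in(0,1)$, but all constants must be tracked explicitly in order to expose $t_\star$ as a function of the stated parameters only. Once this is arranged, the remainder of the argument parallels the analogous instantaneous-regularization lemmas in \cite{BMV14,BGM15I}.
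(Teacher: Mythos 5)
The paper itself gives no proof of this lemma (it is stated without proof, deferring to the analogous regularization lemmas of \cite{BMV14,BGM15I}), so the comparison is with that standard route; your skeleton — Sobolev local theory, heat-type Gevrey smoothing applied to the exponentially small rough part $u_R$, propagation of the Gevrey norm of $u_S$ at a radius strictly below $\lambda_0$, and a short-time bootstrap for the nonlinearity — is the same mechanism those lemmas rest on, and you correctly identify the exponent comparison $\tfrac{s}{2-s}<\tfrac{3s}{2(1-s)}$ as what allows $K_0$ to absorb the $\nu$-degenerate smoothing constant uniformly in $\nu\in(0,1]$.

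The genuine gap is that your Duhamel formula is the one for plain Navier--Stokes, whereas \eqref{def:3DNSE} contains the linear Couette terms $y\partial_x u$, the lift-up term $(-u^2,0,0)^T$ and $\nabla p^L$. The last two are bounded on the relevant spaces and can be swept into the integral, but $y\partial_x u$ cannot: $y$ is unbounded on $\Real$, so this term is not controlled by $\norm{u}_{H^3}$ or by any Gevrey norm of $u$, and it is not a perturbation of $e^{\nu t\Delta}$. One must either conjugate by the Couette flow (work in $X=x-ty$, replacing $\Delta$ by $\Delta_L$, whose semigroup retains the required short-time Gevrey smoothing), or run Gevrey energy estimates with a decreasing radius $\lambda(t)$ in which the commutator of the multiplier $e^{\lambda\abs{k,\eta,l}^s}$ with $y\partial_x$ (acting as $-k\partial_\eta$ on the Fourier side) is absorbed by $\dot\lambda<0$, using $\abs{k}\abs{k,\eta,l}^{s-1}\leq\abs{k,\eta,l}^s$. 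This is also exactly where the drop from $\lambda_0$ to $\bar\lambda$ is consumed: the shear shifts $\eta\mapsto\eta+kt$, so even on $[0,t_\star]$ the linear evolution does not preserve the radius $\lambda_0$, and your estimate of the first Duhamel term by $\norm{u_S}_{\G^{\lambda_0;s}}$ via semigroup contractivity is not valid for the actual linear flow of \eqref{def:3DNSE}; this bookkeeping is never performed in your argument. Two smaller points: your nonlinear closure invokes $\epsilon\leq\nu^{2/3+\delta}$, which is available in the context of Theorem \ref{thm:SRS} but is not among the hypotheses of the lemma as stated (the cited arguments close without it), and the $t_\star$--$K_0$ calibration you flag is resolved by fixing $t_\star$ from the $H^3$ local theory (hence from $\epsilon\leq 1$ alone) before choosing $K_0$.
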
  

Once we have a solution we want to be able to continue it and ensure that it propagates analyticity based on low norm controls. 
This will allow us to rigorously justify our a priori estimates and that these a priori estimates allow us to propagate the solutions. 
See \cite{BGM15I} for more discussion. 
We remark here that analyticity itself is not important, we only need a regularity class which is a few derivatives stronger than the regularities we work in below, so that we may easily justify that the norms applied to the solution take values continuously in time. 

\begin{lemma}[Continuation and propagation of analyticity] \label{lem:Cont} 
Let $T > 0$ be such that the classical solution $u(t)$ to \eqref{def:3DNSE} constructed in Lemma \ref{lem:Loc} exists on $[0,T]$ and is real analytic for $t \in (0,T]$. 
Then there exists a maximal time of existence $T_0$ with $T < T_0 \leq \infty$ such that the solution $u(t)$ remains unique and real analytic on $(0,T_0)$. 
Moreover, if for some $\tau \leq T_0$ and $\sigma > 5/2$ we have $\limsup_{t \nearrow \tau} \norm{u(t)}_{H^{\sigma}} < \infty$, then $\tau < T_0$.  
\end{lemma}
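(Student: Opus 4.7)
The plan is to establish the two assertions in sequence: first, the existence of a maximal interval $(0, T_0)$ of unique real-analytic existence, and second, the $H^\sigma$ blow-up criterion.

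For the first claim, I would proceed by re-applying Gevrey-class local existence. Since $u(T)$ is real analytic by hypothesis, by Paley--Wiener there exists $\bar\lambda(T) > 0$ with $\|u(T)\|_{\mathcal{G}^{\bar\lambda(T)}} < \infty$. Feeding $u(T)$ as initial data into a local existence result for \eqref{def:3DNSE} in the Gevrey class (essentially Lemma \ref{lem:Loc} applied from time $T$) produces an extension on $[T, T+\tau_\star]$ which remains real analytic. Setting
\[
T_0 := \sup\bigl\{T' \geq T : \eqref{def:3DNSE} \text{ has a unique classical solution on } [0,T') \text{ which is real analytic on } (0,T')\bigr\},
\]
we obtain $T_0 > T$ and, by construction, uniqueness and real analyticity on $(0,T_0)$.

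For the blow-up criterion, suppose for contradiction that $\tau = T_0$ while $M := \sup_{t \in [0,\tau)} \|u(t)\|_{H^\sigma} < \infty$ for some $\sigma > 5/2$. The key is to show that the radius of analyticity does not collapse as $t \nearrow \tau$. Fix $t_0 \in (0,\tau)$, set $\lambda_0 := \bar\lambda(t_0) > 0$, and run an energy estimate on $\|u\|_{\mathcal{G}^{\lambda(t);s}}$ with $s=1$ and a monotone decreasing $\lambda(t)$ to be chosen. A direct computation yields
\[
\tfrac{1}{2}\tfrac{d}{dt}\|u\|_{\mathcal{G}^{\lambda(t)}}^2 + \nu\|u\|_{\mathcal{G}^{\lambda(t);1}}^2 \leq \dot\lambda(t)\|u\|_{\mathcal{G}^{\lambda(t);1/2}}^2 + C\|u\|_{H^\sigma}\|u\|_{\mathcal{G}^{\lambda(t);1/2}}^2 + C\|u\|_{H^\sigma}\|u\|_{\mathcal{G}^{\lambda(t)}}^2,
\]
where the Couette term $y\partial_x u$ and the linear lift-up contribution are integrated by parts harmlessly (the $y$-weight commutes with $\partial_x$ up to a bounded Fourier multiplier), the viscosity on the left is discarded, and the cubic terms from $u \cdot \nabla u$ and $\nabla p$ are controlled via a Chemin-type paraproduct that places the low-frequency factor in $H^\sigma$ and absorbs one derivative loss on the high-frequency factor using the Sobolev embedding $H^\sigma \hookrightarrow W^{1,\infty}$ (valid precisely for $\sigma > 5/2$). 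Choosing $\dot\lambda(t) = -C\|u(t)\|_{H^\sigma}$ absorbs the first nonlinear contribution, and Gr\"onwall on the residual gives
\[
\|u(t)\|_{\mathcal{G}^{\lambda(t)}}^2 \leq e^{CM(t-t_0)}\,\|u(t_0)\|_{\mathcal{G}^{\lambda_0}}^2, \qquad \lambda(t) \geq \lambda_0 - CM(t-t_0),
\]
uniformly on $[t_0,\tau)$. Picking $t_0$ close enough to $\tau$ so that $\lambda_0 > CM(\tau - t_0)$ keeps $\lambda(\tau^-) > 0$; lower semicontinuity of the Gevrey norm together with continuity of the $L^2$ solution then places $u(\tau)$ in some $\mathcal{G}^{\lambda_\star}$ with $\lambda_\star > 0$. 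One further application of Gevrey-class local existence extends the real-analytic solution past $\tau$, contradicting $\tau = T_0$ and giving $\tau < T_0$.

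The main obstacle is the nonlinear paraproduct estimate in the analytic norm: the weight $e^{\lambda|k,\eta,l|}$ satisfies subadditivity $|k_1+k_2,\eta_1+\eta_2,l_1+l_2| \leq |k_1,\eta_1,l_1| + |k_2,\eta_2,l_2|$ as an equality for $s=1$, so one cannot gain any smoothing from the triangle inequality and must instead split dyadically, bound the low-frequency factor by the $H^\sigma$ norm, and transfer the remaining half-derivative onto the high-frequency Gevrey factor. This is the only computational step that is not essentially bookkeeping, and it is the classical device of Foias--Temam and Chemin adapted to the discrete/continuous mixed Fourier setting used throughout the paper.
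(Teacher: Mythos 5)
Your first step (restarting the analytic local theory from $u(T)$ and taking a supremum to define $T_0$) is fine in spirit, and is essentially what the paper intends; note, though, that the paper itself omits the proof and defers to the companion work \cite{BGM15I}, where the continuation criterion is obtained by the softer route: the $H^\sigma$ bound with $\sigma>5/2$ gives, by standard Sobolev local well-posedness for \eqref{def:3DNSE} (with existence time depending only on $\norm{u}_{H^\sigma}$), an extension of the solution past $\tau$, and then the instantaneous Gevrey/parabolic regularization mechanism of Lemma \ref{lem:Loc} (available because $\nu>0$) makes the extension real analytic immediately after any time at which the data is merely $H^\sigma$; uniqueness in the Sobolev class glues this to the original solution, so $\tau<T_0$. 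In particular there is no need to show that the analyticity radius does not collapse as $t\nearrow\tau$, which is the entire burden of your argument.

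The genuine gap is in your key differential inequality. For $s=1$ the triangle inequality $\abs{k,\eta,l}\le\abs{k',\xi,l'}+\abs{k-k',\eta-\xi,l-l'}$ can be an equality, so in every paraproduct piece the low-frequency factor necessarily retains the full weight $e^{\lambda\abs{k-k',\eta-\xi,l-l'}}$; it cannot be bounded by $\norm{u}_{H^\sigma}$ alone, only by an analytic norm of $u$ (this is exactly why the paper's Lemma \ref{gevreyparaproductlemma} is stated for $s<1$ and even then keeps the low factor in $\G^{c\mu,3/2+}$, and why Foias--Temam's Gevrey estimate has the \emph{analytic} norm, not a Sobolev norm, as the Gr\"onwall coefficient, yielding only local-in-time radius control). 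Consequently the choice $\dot\lambda(t)=-C\norm{u(t)}_{H^\sigma}$ does not absorb the nonlinearity, and the claimed uniform bound on $[t_0,\tau)$ does not follow from the manipulation you describe; controlling the radius by $\int\norm{\grad u}_{L^\infty}$ is possible but requires the genuinely more delicate commutator arguments of Levermore--Oliver/Kukavica--Vicol (which the authors themselves single out as nontrivial in \S\ref{sec:indepC}). A second, smaller error: $y\partial_x$ does not commute with $e^{\lambda\abs{\grad}}$ up to a bounded multiplier; the commutator has symbol $\lambda k\,\mathrm{sgn}(\eta)e^{\lambda\abs{k,\eta,l}}$, i.e.\ it is a first-order term with small prefactor $\lambda$, which must be absorbed by the viscous term you discarded (or removed by passing to the sheared variable $X=x-ty$). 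Both issues disappear if you abandon the radius-persistence strategy and instead use the viscous smoothing route described above.
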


\subsection{$Q^i$ formulation, the coordinate transformation, and some key cancellations}  \label{sec:coordinates}
As in \cite{BGM15I}, we remove the fast mixing action of \emph{both} the Couette flow \emph{and} $u_0^1(t)$. 
However, we go further and essentially treat the entire streak in a Lagrangian way so that we do not see the large gradient  growth due to the zero frequencies in the velocity field.  
In this work we need:
\begin{enumerate}  
\item to control the regularity loss due to transport effects in our special set of of norms until $t \sim \epsilon^{-1}$;
\item to be able to treat the Laplacian in the new coordinates as a perturbation from $\Delta_L$, so that we can take advantage of the inviscid damping and enhanced dissipation effects;
\item to be able to make practical estimates on the behavior of the coordinate system and the coordinate transformation needs to treat the dissipation in a natural way, instead of losing derivatives. 
\end{enumerate}
The latter two are the same as \cite{BGM15I} but the first one is potentially far more difficult since the streak is far larger than $\nu$ and so cannot be balanced by viscous effects. 
The middle requirement suggests the form 
\begin{subequations} \label{def:XYZ}
\begin{align} 
X & = x - ty - t \psi(t,y,z) \\ 
Y & = y + \psi(t,y,z) \\ 
Z & = z + \phi(t,y,z),  
\end{align}
\end{subequations} 
however, unlike \cite{BGM15I}, we will not take $\phi = 0$. 
Provided $\psi$ and $\phi$ is sufficiently small in a suitable sense, one can invert \eqref{def:XYZ} for $x,y,z$ as functions of $X,Y,Z$ (see \S\ref{sec:RegCont} and \cite{BGM15I} for more information).  
In keeping with the notation in \cite{BGM15I} , denote the Jacobian factors (by abuse of notation),
\begin{align*} 
\psi_t(t,Y,Z) & = \partial_t \psi(t,y(t,Y,Z),z(t,Y,Z)) \\ 
\psi_y(t,Y,Z) & = \partial_y \psi(t,y(t,Y,Z),z(t,Y,Z)) \\ 
\psi_z(t,Y,Z) & = \partial_z \psi(t,y(t,Y,Z),z(t,Y,Z)) \\
\phi_t(t,Y,Z) & = \partial_t \phi(t,y(t,Y,Z),z(t,Y,Z)) \\ 
\phi_y(t,Y,Z) & = \partial_y \phi(t,y(t,Y,Z),z(t,Y,Z)) \\ 
\phi_z(t,Y,Z) & = \partial_z \phi(t,y(t,Y,Z),z(t,Y,Z)). 
\end{align*}
In what follows we will usually omit the arguments of $y(t,Y,Z)$ and $z(t,Y,Z)$ and use a more informal notation, such as $\psi_t(t,Y,Z) = \partial_t \psi(t,y,z)$.  

Define the following notation for the $(x,y,z)$ derivatives in the new coordinate systems
\begin{subequations} 
\begin{align} 
\partial_X^t & = \partial_X \\ 
\partial_Y^t & = (1 + \psi_y)(\partial_Y - t\partial_X) + \phi_y \partial_Z \\ 
\partial_Z^t & = (1 + \phi_z) \partial_Z  + \psi_z(\partial_Y - t\partial_X) \\ 
\grad^t & = (\partial_X, \partial_Y^t, \partial_Z^t)^{T}.  
\end{align}
\end{subequations}
Note that these necessarily commute. 
Consider the transport of a passive scalar by a perturbation of the Couette flow: 
\begin{align}
\partial_t f + y \partial_x f + u\cdot \grad f = \nu \Delta f. \label{def:ftrans}
\end{align}
Denoting $F(t,X,Y,Z) = f(t,x,y,z)$, the transport equation \eqref{def:ftrans} in the new coordinate system is given by
\begin{align} 
\partial_t F + \begin{pmatrix}u^1 - t (1+\partial_y\psi) u^2 - t \partial_z\psi u^3 - \frac{d}{dt}(t\psi) + t\nu \Delta \psi \\ (1+\partial_y\psi) u^2 + \partial_z \psi u^3 + \partial_t \psi - \nu\Delta \psi  \\ (1+\partial_z \phi)u^3 + \partial_y\phi u^2 + \partial_t \phi - \nu \Delta \phi \end{pmatrix} \cdot \grad_{X,Y,Z} F = \nu \tilde{\Delta_t} F,  \label{def:transf2}
\end{align}
where the upper-case letters are evaluated at $(X,Y,Z)$ and the lower case letters are evaluated at $(x,y,z)$ and we are denoting
\begin{subequations} \label{def:Deltat}
\begin{align} 
\Delta_t F & = \partial_{XX} + \partial_Y^t\left(\partial_Y^t F\right) + \partial_Z^t\left(\partial_Z^t F\right) \\ 
\tilde{\Delta_t}F & = \Delta_t F - \Delta \psi (\partial_Y - t\partial_X)F - \Delta \phi \partial_Z F. \label{def:tildeDel1}
\end{align}
\end{subequations} 
Eliminating the zero frequency of the first component of the  velocity field in \eqref{def:transf2} provides the requirement on $\psi$ (the same as in \cite{BGM15I}), 
\begin{align} 
u_0^1 - t\left( 1 + \partial_y \psi \right)u_0^2 - t \partial_z \psi u_0^3 - \frac{d}{dt}(t\psi) & = -\nu t\Delta \psi. \label{def:psi2} 
\end{align}

In \cite{BGM15I}, $\phi$ was chosen to be zero for simplicity and the transport due $u_0^3$ was absorbed by the dissipation. 
Even with no dissipation at all, in standard regularity classes one could attempt to deal with $u_0^3$ until $t \sim \epsilon^{-1}$ by using the commutator trick employed in e.g. \cite{LevermoreOliver97,KukavicaVicol09}, however, armed with our complicated norms, which in particular, have a non-trivial angular dependence in frequency, this could become hard (see \cite{BM13} for what kind of issues could arise).  
Instead, we will shift our coordinate system along with $u_0^3$ by eliminating the third component of the velocity field in \eqref{def:transf2} via: 
\begin{align} 
(1+\partial_z \phi)u^3_0 + \partial_y\phi u^2_0 + \partial_t \phi = \nu \Delta \phi,  \label{def:phi}
\end{align} 
which, as mentioned above, is effectively a Lagrangian treatment of the background streak. 
Below we denote
\begin{align*} 
C^1(t,Y,Z) & = \psi(t,y,z)\\
C^2(t,Y,Z) & = \phi(t,y,z) \\ 
C(t,Y,Z) & = (C^1(t,Y,Z), C^2(t,Y,Z))^{T}. 
\end{align*} 
From the chain rule we derive:
\begin{subequations} \label{eq:psiyzt}
\begin{align} 
\psi_y & = \partial_Y^tC^1  = \left(1 + \psi_y\right) \partial_Y C^1 + \phi_y \partial_Z C^1 \\  
\psi_z & = \partial_Z^t C^1 = (1 + \phi_z) \partial_Z C^1 + \psi_z \partial_YC^1 \\ 
\phi_y & = \partial_Y^tC^2  = \left(1 + \psi_y\right) \partial_Y C^2 + \phi_y \partial_Z C^2 \\  
\phi_z & = \partial_Z^t C^2 = (1 + \phi_z) \partial_Z C^2 + \psi_z \partial_YC^2 \\ 
\psi_t & = \partial_t C^1 + \psi_t \partial_YC^1 + \phi_t \partial_ZC^1 \label{ineq:psit} \\ 
\phi_t & = \partial_t C^2 + \psi_t \partial_YC^2 + \phi_t \partial_ZC^2. \label{ineq:phit} 
\end{align} 
\end{subequations}
Analogous to \cite{BGM15I}, we will get estimates on $C^i$ and use them to deduce estimates on $\psi$ and $\phi$. 
This necessitates solving \eqref{eq:psiyzt} for $\psi_y,\psi_z,\phi_y,\phi_z$ -- note that these form a $4 \times 4$ linear system: 
\begin{align*}
\begin{pmatrix} 
1-\partial_YC^1 & 0 & -\partial_Z C^1 & 0 \\ 
0 & 1-\partial_YC^1 & 0 & -\partial_Z C^1  \\ 
-\partial_YC^2 & 0 & 1-\partial_ZC^2 & 0 \\ 
0 & -\partial_Y C^2 & 0 & 1-\partial_ZC^2
\end{pmatrix} 
\begin{pmatrix} 
\psi_y \\ 
\psi_z \\ 
\phi_y \\
\phi_z
\end{pmatrix}  
= 
\begin{pmatrix}
\partial_Y C^1 \\ 
\partial_Z C^1 \\ 
\partial_Y C^2 \\ 
\partial_Z C^2
\end{pmatrix}.   
\end{align*} 
For $\grad C^i$ sufficiently small we can solve the linear system and derive
\begin{subequations} \label{def:psi2sqrBrack} 
\begin{align} 
\phi_z & = \frac{\left(\partial_Z C^2 + \frac{\partial_Y C^2 \partial_Z C^1}{1 - \partial_Y C^1} \right)}{1 - \left(\partial_Z C^2 + \frac{\partial_Y C^2 \partial_Z C^1}{1 - \partial_Y C^1} \right) } = \sum_{n=1}^\infty \left(\partial_Z C^2 + \frac{\partial_Y C^2 \partial_Z C^1}{1 - \partial_Y C^1} \right)^n \\ 
\phi_y & = \frac{\partial_Y C^2}{\left(1 - \partial_Y C^1\right)\left(1 - \left(\partial_Z C^2 + \frac{\partial_Y C^2 \partial_Z C^1}{1 - \partial_Y C^1}\right) \right)} \\  
\psi_z & = \frac{(1 + \phi_z) \partial_Z C^1}{1 - \partial_Y C^1} \\  
\psi_y & = \frac{\partial_Y C^1 + \partial_Z^1 C \phi_y}{1 - \partial_Y C^1}; 
\end{align} 
\end{subequations} 
The precise form of \eqref{def:psi2sqrBrack} is not interesting and it is straightforward to recover estimates on the Jacobian factors from estimates on $C^i$ using \eqref{def:psi2sqrBrack} and the appropriate product rules.
Note that $\Delta_t C^1 = \Delta \psi$ and $\Delta_t C^2 = \Delta \phi$, and hence 
\begin{align}
\Delta_t f & = \tilde{\Delta_t}f + \Delta_t C^1 (\partial_Y - t\partial_X)f + \Delta_tC^2 \partial_Zf. \label{eq:tildeDeltaf}
\end{align} 
From the chain rule together with \eqref{def:psi2}, \eqref{def:phi}, and \eqref{eq:tildeDeltaf}, we derive 
\begin{subequations} 
\begin{align} 
\partial_t C^1 + \begin{pmatrix} \left(1 + \psi_y\right)U_0^2 + \psi_zU^3_0 + \psi_t - \nu\Delta_tC^1 \\ (1 + \phi_z)U_0^3 + \phi_y U_0^2 + \phi_t - \nu\Delta_t C^2 \end{pmatrix} \cdot \grad C^1  & = \frac{1}{t}\left(U_0^1 - tU_0^2 - C^1\right) + \nu \tilde{\Delta_t} C^1 \label{def:C} \\ 
\partial_t C^2 + \begin{pmatrix} \left(1 + \psi_y\right)U_0^2 + \psi_zU^3_0 + \psi_t - \nu \Delta_t C^1 \\ (1 + \phi_z)U_0^3 + \phi_y U_0^2 + \phi_t - \nu\Delta_tC^2  \end{pmatrix} \cdot \grad C^2  & =  -U_0^3  + \nu \tilde{\Delta_t} C^2. \label{def:C2}
\end{align} 
\end{subequations}
As in \cite{BGM15I}, we will define another auxiliary unknown $g$, 
\begin{align} 
g = \frac{1}{t}\left(U_0^1 - C^1\right), \label{def:g}
\end{align}
which, as in \cite{BGM15I}, roughly speaking, measures the time oscillations of $U_0^1$ and satisfies
\begin{align}
\partial_t g  + \begin{pmatrix} (1  + \psi_y)U_0^2 + \psi_z U_0^3 + \psi_t - \nu \Delta_t C^1 \\ (1 + \phi_z)U_0^3 + \phi_y U_0^2 + \phi_t - \nu\Delta_tC^2 \end{pmatrix} \cdot \grad_{Y,Z}g = -\frac{2g}{t} -\frac{1}{t} \left(\tilde{U}_{\neq} \cdot \grad U^1_{\neq}\right)_0 + \nu \tilde{\Delta_t} g.  \label{def:gPDE1}
\end{align} 
Next, from \eqref{def:C}, \eqref{def:g}, \eqref{ineq:psit}, and \eqref{ineq:phit}, we derive 
\begin{subequations}
\begin{align} 
\psi_t & = g - U_0^2 - \begin{pmatrix} \left(1 + \psi_y\right)U_0^2 + \psi_z U^3_0  \\ (1 + \phi_z)U_0^3 + \phi_y U_0^2 \end{pmatrix} \cdot \grad C^1 + \nu\Delta_t C^1 \label{def:dtpsi} \\ 
\phi_t & = -U_0^3 - \begin{pmatrix} \left(1 + \psi_y\right)U_0^2 + \psi_z U^3_0  \\ (1 + \phi_z)U_0^3 + \phi_y U_0^2 \end{pmatrix} \cdot \grad C^2 + \nu \Delta_t C^2.   \label{def:dtphi}
\end{align}  
\end{subequations} 
and equivalently, from \eqref{eq:psiyzt},  
\begin{subequations}
\begin{align} 
\psi_t + (1 + \psi_y) U_0^2 + \phi_z U_0^3 & = g + \nu \Delta_t C^1 \\ 
\phi_t + (1 + \phi_z)U_0^3 + \phi_y U_0^2 & = \nu \Delta_t C^2. 
\end{align}
\end{subequations}
Deriving the resulting cancellations as in \cite{BGM15I}, we have that the following velocity field will ultimately govern our equations: 
\begin{align} 
\tilde U = \tilde U_0 + \tilde U_{\neq } = \begin{pmatrix} 0 \\ g \\ 0 \end{pmatrix}  + \begin{pmatrix} U_{\neq}^1 - t(1 + \psi_y ) U^2_{\neq} - t \psi_z U^3_{\neq}  \\ (1 + \psi_y)U^2_{\neq} + \psi_z U^3_{\neq}\\ (1 + \phi_z)U^3_{\neq} + \phi_y U^2_{\neq} \end{pmatrix}. \label{def:tildeU2} 
\end{align} 
We also derive the governing equations
\begin{subequations} \label{def:CReal}
\begin{align} 
\partial_t C^1 +  g\partial_Y C^1  & = g - U_0^2  + \nu \tilde{\Delta_t} C^1  \label{def:C1Real} \\ 
\partial_t C^2 +  g\partial_Y C^2  & = -U_0^3  + \nu \tilde{\Delta_t} C^2,   \label{def:C2Real}  
\end{align} 
\end{subequations} 
and 
\begin{align}
\partial_t g  + g \partial_Y g = -\frac{2g}{t} -\frac{1}{t}\left(\tilde{U}_{\neq} \cdot \grad U^1_{\neq}\right)_0 + \nu \tilde{\Delta_t}  g.   \label{def:gPDE2}
\end{align} 
Further notice that the forcing term from non-zero frequencies can be written as 
\begin{align}
\left(\tilde{U}_{\neq} \cdot \grad U^1_{\neq}\right)_0 = \left(U_{\neq} \cdot \grad^t U^1_{\neq}\right)_0.  
\end{align} 
Furthermore, as in \cite{BGM15I} we have, denoting $Q^i(t,X,Y,Z) = q^i(t,x,y,z)$: 
\begin{align} \label{def:MainSys}
\left\{
\begin{array}{l}
Q^1_t + \tilde U \cdot \grad Q^1 = -Q^2 - 2\partial_{XY}^t U^1 + 2\partial_{XX} U^2 - Q^j \partial_j^t U^1 - 2\partial_i^t U^j \partial_{ij}^t U^1 + \partial_X(\partial_i^t U^j \partial_j^t U^i) + \nu \tilde{\Delta_t} Q^1 \\  
Q^2_t + \tilde U \cdot \grad Q^2 = -Q^j \partial_j^t U^2 - 2\partial_i^t U^j \partial_{ij}^t U^2 + \partial_Y^t(\partial_i^t U^j \partial_j^t U^i) + \nu \tilde{\Delta_t} Q^2 \\ 
Q^3_t + \tilde U \cdot \grad Q^3 = -2\partial_{XY}^t U^3 + 2\partial_{XZ}^t U^2 - Q^j \partial_j^t U^3 - 2\partial_i^t U^j \partial_{ij}^t U^3 + \partial_Z^t(\partial_i^t U^j \partial_j^t U^i) + \nu \tilde{\Delta_t} Q^3,
\end{array}
\right.
\end{align} 
where we use the following to recover the velocity fields: 
\begin{subequations} \label{eq:Ui}
\begin{align} 
U^i & = \Delta_t^{-1} Q^i \label{eq:UiFromQi} \\ 
\partial_i^t U^i & = 0. 
\end{align} 
\end{subequations} 
For the majority of the remainder of the proof, \eqref{def:MainSys}, together with \eqref{def:CReal}, \eqref{def:gPDE2} and \eqref{eq:Ui}, will be the main governing equations. 
The one exception will be in the treatment of the low frequencies of $X$ independent modes, where the use of \eqref{eq:UiFromQi} can be problematic. 
For these we use $X$ averages of the momentum equation. 

As in \cite{BGM15I}, from now on we will use the following vocabulary and shorthands 
\begin{subequations} 
\begin{align} 
\tilde U \cdot \grad Q^{\alpha} & = \textup{``transport nonlinearity''} & \mathcal{T} \\  
-Q^j \partial_j^t U^\alpha - 2\partial_i^t U^j \partial_{ij}^t U^\alpha & = \textup{``nonlinear stretching''} & NLS\\  
\partial_\alpha^t(\partial_i^t U^j \partial_j^t U^i)  & = \textup{``nonlinear pressure''} & NLP\\
-2\partial_{XY}^t U^\alpha & = \textup{``linear stretching''} & LS  \\
2\partial_{X\alpha}^t U^2 & = \textup{``linear pressure''} & LP \\ 
\left(\tilde{\Delta_t} - \Delta_L\right)Q^\alpha & =  \textup{``dissipation error''} & \mathcal{D}_E; 
\end{align} 
\end{subequations}
see \cite{BGM15I} for an explanation of the terminologies. 
As in \cite{BGM15I}, each of the nonlinear terms will be further sub-divided into as many as four pieces in accordance with the different types of nonlinear effects described in \S\ref{sec:NonlinHeuristics}. 
Furthermore, each of the three components of the solution are qualitatively different and measured with different norms, which means certain combinations of $i$ and $j$ need to be treated specially.  

As in \cite{BGM15I}, we need to take advantage of a special structure in the equations which reduces the potential strength of interactions of type \textbf{(F)}.  
By considering the interaction of two non-zero frequencies, $k$ and $-k$, and putting together the contributions from transport, stretching, and nonlinear pressure we get the terms  which we refer to as \emph{forcing}, corresponding to the nonlinear interactions of type \textbf{(F)}, 
\begin{align} 
\mathcal{F}^\alpha & := -\Delta_t \left(U^j_{\neq} \partial_j^t U^\alpha_{\neq}\right)_0 + \partial_\alpha^t \left(\partial_i^t U^j_{\neq} \partial_j^t U^i_{\neq}\right)_0  = -\partial_i^t \partial_i^t \partial_j^t \left(U^j_{\neq} U^\alpha_{\neq}\right)_0 + \partial_{\alpha}^t \partial_j^t \partial_i^t \left(U^i_{\neq} U^j_{\neq}\right)_0,  \label{eq:XavgCanc}
\end{align}
the advantage being that the $X$ averages remove the $-t\partial_X$ from the derivatives. 

\subsection{The toy model and design of the norms} \label{sec:Toy}
Following up on the approach discussed in \cite{BGM15I}, in this section we 
want to perform a weakly nonlinear analysis and determine both $\tau_{NL}$, the characteristic time-scale associated with fully 3D nonlinear effects, and the norms with which we want to measure the solution. 

Denote the Fourier dual variables of $(X,Y,Z)$ as $(k,\eta,l)$. 
As in \cite{BGM15I}, a time which satisfies $kt = \eta$ is called a \emph{critical time} (Orr's original terminology \cite{Orr07}) or \emph{resonant time} (after modern terminology \cite{Craik1971,YuDriscoll02,YuDriscollONeil,SchmidHenningson2001}). 
Notice that these are precisely the points in time/frequency where $\Delta_L$ loses ellipticity in $Y$ (recall \eqref{def:DeltaL}). 
Recall the definition of $\I_{k,\eta}$ from \S\ref{sec:Notation}, which denotes the resonant intervals $t \approx \frac{\eta}{k}$ with $k^2 \lesssim \abs{\eta}$.
This latter restriction is possible due to the uniform ellipticity of $\Delta_L$ with respect to $\partial_X$ which implies the larger the $k$, the weaker the effect of the resonance. 

From \cite{BGM15I}, we recall the toy model for the behavior of \eqref{def:MainSys} near critical times for $Q^2$ and $Q^3$ at frequency $(k,\eta,l)$ and $(k^\prime,\eta,l)$ for $kt \approx \eta$ and $k \neq k^\prime \approx k$:  
\begin{subequations} \label{def:Q2Q3_ToyFinal}
\begin{align} 
\partial_t \widehat{Q^2_{k}}(t,\eta,l) & = \max(\epsilon t, c_0) \frac{k}{k +\abs{\eta-kt}} \widehat{Q^3_k} - \nu\left(k^2 + \abs{\eta-kt}^2\right)\widehat{Q^2_k}  \label{eq:Q2k} \\   
\partial_t \widehat{Q^2_{k^\prime}}(t,\eta,l) & = \max(\epsilon t, c_0) \frac{k^\prime}{\jap{k^\prime, t}} \widehat{Q^3_{k^\prime}} - \nu\left(k^2 + \abs{\eta-kt}^2\right)\widehat{Q^2_{k^\prime}} \label{eq:Q2kp}  \\  
\partial_t \widehat{Q^3_{k^\prime}}(t,\eta,l) & = \frac{\epsilon t^3}{\jap{\nu t^3}^\alpha} \frac{\widehat{Q^{2}_{k}} }{k^2 +\abs{\eta-kt}^2} - \nu\left(k^2 + \abs{\eta-kt}^2\right)\widehat{Q^3_{k^\prime}}   \label{eq:Q3Toykprime}   \\  
\partial_t \widehat{Q^3_{k}}(t,\eta,l) & = \frac{k}{k +\abs{\eta-kt}}\widehat{Q^3_k} + \frac{k}{k +\abs{\eta-kt}} \widehat{Q^2_k} - \nu\left(k^2 + \abs{\eta-kt}^2\right)\widehat{Q^3_k} \\ 
\partial_t\widehat{Q^2_0}(t,\eta,l) & = \epsilon \widehat{Q^3_0} +  \frac{\epsilon t^2}{\jap{\nu t^3}^{\alpha}}\frac{\widehat{Q^2_k}}{k^2 +\abs{\eta-kt}^2} - \nu \eta^2 \widehat{Q^2_0} \\ 
 \partial_t\widehat{Q^3_0}(t,\eta,l) & = \epsilon \widehat{Q^3_0} + \frac{\epsilon t^3}{\jap{\nu t^3}^{\alpha}}\frac{\widehat{Q^2_k}}{k^2 +\abs{\eta-kt}^2} - \nu \eta^2 \widehat{Q^3_0}, \label{eq:Q3Toy0} 
\end{align}
\end{subequations} 
 where all unknowns are evaluated at frequency $(\eta,l)$. 

Let us first use \eqref{def:Q2Q3_ToyFinal} to get an estimate on $\tau_{NL}$. 
If we first consider the case $\nu = 0$, then we can estimate $\tau_{NL}$ from below if we can find an \emph{approximate} super-solution to \eqref{def:Q2Q3_ToyFinal} which will result in a reasonable regularity requirement (say analytic or weaker). 
Even with $\nu = 0$, we can verify that the following is a viable super-solution to \eqref{def:Q2Q3_ToyFinal} over $t \in \I_{k,\eta}$ provided $\epsilon t^2 \lesssim 1$:
\begin{subequations}  \label{def:badnrm}
\begin{align} 
\partial_t w(t,\eta)  \approx \frac{1}{1 + \abs{t - \frac{\eta}{k}}}w(t,\eta) \label{eq:wapprox} \\
Q^2_k \approx Q^2_{k^\prime}  \approx Q_0^2 \approx w(t,\eta) \\  
Q^3_k \approx Q^3_{k^\prime} \approx  Q^3_{0} \approx t w(t,\eta)
\end{align}
\end{subequations} 
Due to the fact that \emph{both} the resonant and non-resonant frequencies experience the same total growth $(\abs{\eta}\abs{k}^{-2})^c$, for some $c$,     
for all $\abs{k} \lesssim \sqrt{\abs{\eta}}$, the loss is multiplicatively amplified through \emph{each} critical time (to see this, take $k^\prime = k-1$ and consider the critical times $\eta/k,\eta/(k-1),\eta/(k-2),\ldots$). 
From this, one sees that this super solution predicts Gevrey-2 regularity loss (see \eqref{ineq:wloss} above or \cite{BGM15I,BM13} for more information). 
Therefore, even with no viscosity, according to the super-solution \eqref{def:badnrm}, a sufficiently regular solution could remain under control until at least $\tau_{NL} \gtrsim \epsilon^{-1/2}$.  
It would be more difficult to derive a good heuristic to estimate $\tau_{NL}$ from above; the toy model \eqref{def:Q2Q3_ToyFinal} is designed to give robust upper bounds on the dynamics, not necessarily to make a good model for any particular realization of the true dynamics, hence even if we explicitly solved \eqref{def:Q2Q3_ToyFinal} exactly, perhaps the toy model itself throws away too much information. 

In order to prove Theorem \ref{thm:SRS}, we will need a more accurate super-solution than \eqref{def:badnrm}. 
Notice further that the super-solution used in \cite{BGM15I} does not work here due to the terms in \eqref{eq:Q3Toykprime} and \eqref{eq:Q3Toy0} with $\epsilon t^3$ present. 
The idea is to take better advantage of the denominators in \eqref{def:Q2Q3_ToyFinal} to recover the extra $t$ in the numerators of these terms. 
Quite precisely, we will trade one power of the denominator for a power of $t$. 
To do this, one must permit the regularities to become unbalanced: \eqref{eq:Q3Toykprime} and \eqref{eq:Q3Toy0} both indicate that $Q^3_{k^\prime}$, for $k^\prime \neq k$ (e.g. \emph{non-critical} or \emph{non-resonant}) should be $t (k + \abs{\eta-kt})^{-1}$ larger than $Q^2_k$.  
Accordingly, we see that for $\epsilon \lesssim \nu^{2/3}$ and $\epsilon t \lesssim 1$, the following is an approximate super-solution for \eqref{def:Q2Q3_ToyFinal} 
over $\I_{k,\eta}$: 
\begin{subequations} \label{def:unstablesuper} 
\begin{align} 
\partial_t w(t,\eta)  \approx \frac{1}{1 + \abs{t - \frac{\eta}{k}}}w(t,\eta) \label{eq:wapprox} \\
w^{3}(t,k,\eta)  = w(t,\eta) \\ 
w^{3}(t,k^\prime,\eta)  = \frac{t}{\abs{k} + \abs{\eta - kt}}w(t,\eta) \\ 
w^{3}(t,0,\eta)  =  \frac{t}{\abs{k} + \abs{\eta - kt}} w(t,\eta) \\  
Q^2_k \approx Q^2_{k^\prime}  \approx Q^3_k \approx w(t,\eta) \\  
Q^3_{k^\prime} \approx  Q^3_{0} \approx w^3(t,k^\prime,\eta) \\ 
Q^1_k \approx Q^1_{k^\prime} \approx tQ^2_k. 
\end{align}
\end{subequations} 
The last line is not deduced directly from \eqref{def:Q2Q3_ToyFinal}, but is deduced (heuristically) in the derivation of \eqref{def:Q2Q3_ToyFinal} via the lift-up effect (see \cite{BGM15I}). 
Notice that when $Q^2_k$ forces $Q^3_{k^\prime}$ and $Q_0^3$ near the critical time, we will gain the factor of $t^{-1}(\abs{k} + \abs{\eta-kt})$, precisely what is needed to exchange the $\epsilon t^3$ in the leading terms in \eqref{eq:Q3Toykprime} and \eqref{eq:Q3Toy0} into $\epsilon t^2$. 
This suffices since $\epsilon t^2 \jap{\nu t^3} \lesssim 1$ when $\epsilon \lesssim \nu^{2/3}$ (another equivalent way of seeing the $2/3$ threshold).   
The regularity loss in \eqref{def:unstablesuper} is peaked near the critical times, and as in \cite{BGM15I}, we will further modify $w$ and $w^3$ to include additional steady, gradual losses of Gevrey-2 regularity over $1 \leq t \leq 2\abs{\eta}$ (see \eqref{def:wextraloss} in Appendix \ref{sec:Defw}). This will further unify the treatment of many estimates, and its potential usefulness is also suggested by the toy model (e.g. the first term in \eqref{eq:Q2kp}).   

As discussed in \cite{BGM15I}, the toy model \eqref{def:Q2Q3_ToyFinal} only provides an estimate on \eqref{def:MainSys} near the critical times. 
For $t \gg \abs{\eta,l}$ it does not apply. As in \cite{BGM15I}, we know from Proposition \ref{prop:linear} that $Q^3_{\neq}$ and $Q^1_{\neq}$ must grow quadratically at these `low' frequencies due to the vortex stretching inherent in the linear problem.  
On the other hand, Proposition \ref{prop:linear} predicts that $u^2$ decays like $\jap{t}^{-2}$, or equivalently, that $Q^2$ is uniformly bounded.  
This behavior was nearly preserved in the below threshold case \cite{BGM15I}, however, it turns out that the nonlinear effects here are strong enough to possibly cause a large growth in $Q^2$. 
The RHS of \eqref{eq:Q2k} originally came from the nonlinear pressure term in the $Q^2$ equation:  
\begin{align}
\partial_t Q^2_{\neq} & = \partial_Y^t\left(\partial_X U^3_{\neq} \partial_Z^t U^1_0\right) + ... \label{def:N}
\end{align}
For times/frequencies with $t \gg \abs{\grad_{Y,Z}}$, we can ignore any issues regarding the critical times and just estimate the size of this term
based on the predictions of Proposition \ref{prop:linear} and we have 
\begin{align*}
\norm{\partial_t Q^2} & \lesssim \frac{\epsilon^2 t^2}{\jap{\nu t^3}^\alpha} + ...
\end{align*} 
Therefore, if $\epsilon \sim \nu^{2/3}$ then we predict that $Q^2$ can be at best bounded by only $\approx \epsilon \jap{t} \jap{\nu t^3}^{-\alpha}$, which suggests a transient growth due to nonlinear effects, in contrast to \cite{BGM15I}. 
Further, this suggests the following inviscid damping/enhanced dissipation estimate: 
\begin{align} 
\norm{U^2_{\neq}} \lesssim \frac{\epsilon}{\jap{t} \jap{\nu t^3}^\alpha}, \label{ineq:u2growth}
\end{align} 
consistent with Theorem \ref{thm:SRS}. 
When considering the ubiquitous $U^j \partial^t_j$ and $\partial_i U^j \partial^t_j$ structure of the nonlinearity in \eqref{def:MainSys}, we see that \eqref{ineq:u2growth} is borderline in a certain sense. 
Indeed, we normally have factors like $U^2 (\partial_Y - t\partial_X)$ and so this will be just enough damping to ensure that 
(regularity issues aside) the $- t\partial_X$ derivatives do not completely dominate the nonlinearity and hence destroy the very special ``non-resonance'' structures available (indeed, this is the main role inviscid damping plays in the proof of Theorem \ref{thm:SRS}). This is also another way to derive the $2/3$ threshold. 

\subsection{Design of the norms based on the toy model} \label{def:designnorm}
The above heuristics suggests that we use a set of norms which is more complicated than the norms in \cite{BGM15I}. 
The high norms will be of the following form, for a time-varying $\lambda(t)$ defined below, $s > 1/2$, $0 < \delta_1 \ll \delta$, and corrector multipliers $w$, $w^3$, and $w_L$ (here $(t,k,\eta,l)$ are now arbitrary):
\begin{subequations} \label{def:A}
\begin{align} 
A^Q_k(t,\eta,l) & = e^{\lambda(t)\abs{k,\eta,l}^s}\jap{k,\eta,l}^\sigma\frac{1}{w_L(t,k,\eta,l)}\left(\frac{e^{\mu \abs{\eta}^{1/2}}}{w(t,\eta)} + e^{\mu\abs{l}^{1/2}}\right) \\ 
A^{1}_k(t,\eta,l) & = \frac{1}{\jap{t}}\left(\mathbf{1}_{k \neq 0} \min\left(1, \frac{\jap{\eta,l}^{1+\delta_1}}{\jap{t}^{1+\delta_1}}\right) + \mathbf{1}_{k = 0} \right) A^Q_k(t,\eta,l) \\ 
A^{2}_k(t,\eta,l) & = \left(\mathbf{1}_{k \neq 0} \min\left(1, \frac{\jap{\eta,l}}{t}\right) + \mathbf{1}_{k = 0} \right) A^Q_k(t,\eta,l) \\ 
A^{3}_k(t,\eta,l) & = \left(\mathbf{1}_{k \neq 0} \min\left(1, \frac{\jap{\eta,l}^2}{t^2}\right) + \mathbf{1}_{k = 0} \right) e^{\lambda(t)\abs{k,\eta,l}^s}\jap{k,\eta,l}^\sigma \nonumber \\ & \quad\quad \times \frac{1}{w_L(t,k,\eta,l)}\left(\frac{e^{\mu \abs{\eta}^{1/2}}}{w^3_k(t,\eta)} + e^{\mu\abs{l}^{1/2}}\right)   \\ 
A(t,\eta,l) & = \jap{\eta,l}^2 A^Q_0(t,\eta,l),
\end{align}
\end{subequations}
where $\mu$, $w$, and $w^3$ are defined precisely in Appendix \ref{sec:def_nrm} and $w_L$ is defined in Appendix \ref{sec:Nmult} ($w$ and $w^3$ are derived approximately in \eqref{def:unstablesuper} above).
As in \cite{BGM15I}, the multiplier $A$ is used to measure $C^i$ and $g$ whereas $A^i$ is used to measure $Q^i$. 
Here $\delta_1$ is chosen sufficiently small depending only on $\delta$. 
We choose the radius of Gevrey-$\frac{1}{s}$ regularity to satisfy 
\begin{align*} 
\dot{\lambda}(t) & =  - \frac{\delta_\lambda}{\jap{t}^{\min(2s,3/2)}} \\ 
\lambda(1) & = \frac{3 \lambda_0}{4} + \frac{\lambda^\prime}{4},   
\end{align*}
where we fix $\delta_\lambda \ll \min(1,\lambda_0 - \lambda^\prime)$ small such that $\lambda(t) > (\lambda_0 + \lambda^\prime)/2$. 

Let us briefly mention some implications of using $w^3$ in \eqref{def:A}. 
Note first of all from \eqref{def:unstablesuper} that $w^3$ is the same as $w$ except near the critical times, however, near the critical times, $w^3_k(t,\eta)$ for \emph{non-resonant} modes is larger, and hence \eqref{def:A} will assign them \emph{less} regularity (see \eqref{def:wNR3} in \S\ref{sec:Defw} for the precise definition). 
This will create a gain in energy estimates when $Q^2$ or $Q^1$ force $Q^3$ and will be a loss when the vice-versa occurs. 
It will also create a similar imbalance in nonlinear interactions between resonant and non-resonant modes in $Q^3$. 
The last detail to notice is that, due to the $+ e^{\mu \abs{l}^{1/2}}$, the effects of $w$ and $w^3$ are only visible in the subset of frequencies such that $\abs{\eta} \gtrsim \abs{l}$. This additional precision was not necessary in \cite{BGM15I}, however, it is necessary here due to problems with regularity imbalances at high frequencies in $Z$ (for example, in \S\ref{sec:Q3_TransNon}). Note it is natural that the resonances should not be relevant for high $Z$ frequencies, due to the uniform ellipticity in $Z$ of $\Delta_t$, however, this detail will make certain aspects of the proof more technical.   
We will need the following definition:
\begin{subequations} \label{def:Atilde}
\begin{align} 
\tilde{A}^Q_k(t,\eta,l) & = e^{\lambda(t)\abs{k,\eta,l}^s}\jap{k,\eta,l}^\sigma\frac{1}{w_L(t,k,\eta,l)} \frac{e^{\mu \abs{\eta}^{1/2}}}{w(t,\eta)}  \\ 
\tilde{A}^{1}_k(t,\eta,l) & = \frac{1}{\jap{t}}\left(\mathbf{1}_{k \neq 0} \min\left(1, \frac{\jap{\eta,l}^{1+\delta_1}}{\jap{t}^{1+\delta_1}}\right) + \mathbf{1}_{k = 0} \right) \tilde{A}^Q_k(t,\eta,l) \\ 
\tilde{A}^{2}_k(t,\eta,l) & = \left(\mathbf{1}_{k \neq 0} \min\left(1, \frac{\jap{\eta,l}}{t}\right) + \mathbf{1}_{k = 0} \right) \tilde{A}^Q_k(t,\eta,l) \\ 
\tilde{A}^{3}_k(t,\eta,l) & = \left(\mathbf{1}_{k \neq 0} \min\left(1, \frac{\jap{\eta,l}^2}{t^2}\right) + \mathbf{1}_{k = 0} \right) \tilde{A}^Q_k(t,\eta,l) \frac{w(t,\eta)}{w^3_k(t,\eta)} \\ 
\tilde{A}(t,\eta,l) & = \jap{\eta,l}^2 \tilde{A}^Q_0(t,\eta,l). 
\end{align}
\end{subequations}
Notice that $\tilde{A}^i \lesssim A^i$ and for $\abs{l} < \frac{1}{5}\abs{\eta}$ there holds $A^i \approx \tilde{A}^i$ (by Lemma \ref{lem:totalGrowthw}). 
Therefore, the difference between them is only visible if $\abs{l}$ is comparable to or larger than $\abs{\eta}$.    

To quantify the enhanced dissipation, we use a scheme similar to that used in \cite{BGM15I}, which itself was an expansion of the scheme of \cite{BMV14}, adjusted now to the larger expected growth of $Q^2$. 
Define $D$ as in \cite{BMV14}, 
\begin{align} 
D(t,\eta) & = \frac{1}{3\alpha}\nu \abs{\eta}^3 + \frac{1}{24 \alpha} \nu\left(t^3 - 8\abs{\eta}^3\right)_+. \label{def:D}
\end{align} 
Note this multiplier satisfies 
\begin{align}
D(t,\eta) \gtrsim \max(\nu \abs{\eta}^3, \nu t^3).\label{ineq:DLowB}
\end{align}
For some $\beta > 3\alpha+7$, we define the enhanced dissipation multipliers: 
\begin{subequations} \label{def:Anu} 
\begin{align}
A^{\nu}_k(t,\eta,l) & = e^{\lambda(t)\abs{k,\eta,l}^s}\jap{k,\eta,l}^{\beta} \jap{D(t,\eta)}^\alpha \frac{1}{w_L(t,k,\eta,l)} \mathbf{1}_{k \neq 0} \\ 
A^{\nu;1}_k(t,\eta,l) & = \jap{t}^{-1}\min\left(1, \frac{\jap{\eta,l}^{1+\delta_1}}{t^{1+\delta_1}}\right) A^{\nu}_k(t,\eta,l) \\  
A^{\nu;2}_k(t,\eta,l) & = \min\left(1, \frac{\jap{\eta,l}}{t}\right) A^{\nu}_k(t,\eta,l) \\
A^{\nu;3}_k(t,\eta,l) & = \min\left(1, \frac{\jap{\eta,l}^2}{t^2}\right) A^{\nu}_k(t,\eta,l). 
\end{align} 
\end{subequations} 
Fix $\gamma > \beta + 3\alpha + 12$ and $\sigma > \gamma + 6$. 
Note that we do not need $w$ or $w^3$ (or the associated regularity imbalances) in \eqref{def:Anu}.
Indeed, the Orr mechanism (and related nonlinear effects) does not play a major role in the enhanced dissipation estimates; they are instead mainly determined by careful estimates on how the vortex stretching manifests in the nonlinearity.  

\subsection{Main energy estimates} \label{sec:energy} 
In this section, we set up the main bootstrap argument to extend our estimates from $O(1)$ in time (from Lemma \ref{lem:Loc}) to $T_F = c_0 \epsilon^{-1}$.
Equipped with the norms defined in \eqref{def:Anu} and \eqref{def:A}, we will be able to propagate estimates via a bootstrap argument for as long as the solution to \eqref{def:3DNSE} exists and remains analytic; by un-doing the coordinate transformation (possible as long as it remains a small deformation in $yz$), this in turn allows us to continue the solution of \eqref{def:3DNSE} via Lemma \ref{lem:Cont}. The analyticity itself is not important, it only needs to be a regularity class slightly stronger than the norms defined in \S\ref{def:designnorm} to ensure they take values continuously in time.  See \S\ref{sec:RegCont}  below for more details on this procedure. 

It turns out that $\partial_t w^3/w^3 \approx \partial_t w/w$ (see Lemma \ref{dtw}) and so this will simplify 
the notation when defining the following high norm ``dissipation energies'': for $i \in \set{2,3}$,  
\begin{subequations} 
\begin{align} 
\mathcal{D}Q^i & = \nu \norm{\sqrt{-\Delta_{L}}A^{i} Q^i}_2^2 + CK_\lambda^i + CK_w^i + CK_{wL}^i \nonumber  \\ 
& = \nu \norm{\sqrt{-\Delta_{L}}A^{i} Q^i}_2^2  d\tau + \dot{\lambda}\norm{\abs{\grad}^{s/2}A^i Q^i}_2^2 + \norm{\sqrt{\frac{\partial_t w}{w}} \tilde{A}^i Q^i}_2^2 + \norm{\sqrt{\frac{\partial_t w_L}{w_L}}A^i Q^i}_2^2 \\ 
\mathcal{D}Q^1_{\neq} & = \nu \norm{\sqrt{-\Delta_{L}}A^{1} Q^1_{\neq} }_2^2 + CK_{\lambda;\neq}^{1} + CK_{w;\neq}^{1} + CK_{wL;\neq}^{1} \nonumber  \\ 
& = \nu \norm{\sqrt{-\Delta_{L}}A^{1} Q^1}_2^2 + \dot{\lambda}\norm{\abs{\grad}^{s/2}A^1 Q^1_{\neq}}_2^2 + \norm{\sqrt{\frac{\partial_t w}{w}} \tilde{A}^1 Q^1_{\neq}}_2^2 + \norm{\sqrt{\frac{\partial_tw_L}{w_L}}A^1 Q^1_{\neq}}_2^2 \\ 
\mathcal{D}g & = \nu \norm{\sqrt{-\Delta_{L}}A g }_2^2 + CK_L^g + CK_{\lambda}^{g} + CK_{w}^{g} \nonumber \\
& = \nu \norm{\sqrt{-\Delta_{L}}A g }_2^2 + \frac{2}{t}\norm{Ag}_2^2 + \dot{\lambda}\norm{\abs{\grad}^{s/2}A g}_2^2 + \norm{\sqrt{\frac{\partial_t w}{w}} \tilde{A} g}_2^2 \\ 
\mathcal{D}C^{i} & = \nu \norm{\sqrt{-\Delta_{L}}A C^i }_2^2 + CK_{\lambda}^{Ci} + CK_{w}^{Ci} \nonumber \\
& = \nu \norm{\sqrt{-\Delta_{L}}A C^i }_2^2 + \dot{\lambda}\norm{\abs{\grad}^{s/2}A C^i}_2^2 + \norm{\sqrt{\frac{\partial_t w}{w}} \tilde{A} C^i}_2^2 \\ 
CK_L^i & = \frac{1}{t}\norm{\mathbf{1}_{t \geq \jap{\grad_{Y,Z}}} A^i Q^i_{\neq}}_2^2 \\ 
\mathcal{D}Q^{\nu;i} & = \nu \norm{\sqrt{-\Delta_{L}}A^{\nu;i} Q^i}_2^2 + CK_\lambda^{\nu;i} + CK_{wL}^{\nu;i} \nonumber  \\ 
& := \nu \norm{\sqrt{-\Delta_{L}}A^{\nu;i} Q^i}_2^2 + \dot{\lambda}\norm{\abs{\grad}^{s/2}A^{\nu;i} Q^i}_2^2 + \norm{\sqrt{\frac{\partial_t w_L}{w_L}}A^{\nu;i} Q^i}_2^2 \\ 
\mathcal{D}Q^{\nu;1} & =  \nu \norm{\sqrt{-\Delta_{L}}A^{\nu;1} Q^{\nu;1} }_2^2 + CK_{\lambda}^{\nu;1} + CK_{wL}^{\nu;1} \nonumber  \\ 
& :=  \nu \norm{\sqrt{-\Delta_{L}}A^{\nu;1} Q^1_{\neq}}_2^2 + \dot{\lambda}\norm{\abs{\grad}^{s/2}A^{\nu;1} Q^1_{\neq}}_2^2 + \norm{\sqrt{\frac{\partial_t w_L}{w_L}}A^{\nu;1} Q^1_{\neq}}_2^2 \\ 
CK_L^{\nu;i} & := \frac{1}{t}\norm{\mathbf{1}_{t \geq \jap{\grad_{Y,Z}}} A^{\nu;i} Q^i}_2^2. 
\end{align} 
\end{subequations} 
Note the presence of $\tilde{A}^i$; this will mean that, unlike \cite{BGM15I}, the $CK_w$ terms only provide control in the range of frequencies $\abs{\partial_Y} \gtrsim \abs{\partial_Z}$. 
    
Using a bootstrap/continuity argument, we will propagate the following estimates. 
Fix constants $K_{Hi}, K_{H1\neq}, K_{HC1},K_{HC2}, K_{EDi}, K_{Li}, K_{ED2},K_{LC}$ for $i \in \set{1,3}$, sufficiently large determined by the proof, depending only on $\delta,\delta_1,s,\sigma,\gamma,\beta,\lambda^\prime,\lambda_0$ and $\alpha$.   
Further, fix $\sigma^\prime > 3$. 
Let $1 \leq T^\star < T^0$ be the largest time such that the following \emph{bootstrap hypotheses} hold (that $T^\star \geq 1$ is discussed below):
the high norm controls on $Q^i$,
\begin{subequations} \label{ineq:Boot_Hi} 
\begin{align} 
\norm{A^{1} Q^1_{0}(t)}_2^2  & \leq 4K_{H1} \epsilon^2 \label{ineq:Boot_Q1Hi1} \\
\norm{A^{1} Q^1_{\neq}(t)}_2^2 + \frac{1}{2}\int_1^t \mathcal{D}Q^1_{\neq}(\tau) d\tau  & \leq 4K_{H1\neq} \epsilon^2 \label{ineq:Boot_Q1Hi2} \\   
\norm{A^{2} Q^2}^2_2 + \int_1^t\frac{1}{2}\mathcal{D}Q^2(\tau) + CK_L^2(\tau) d\tau & \leq 4\epsilon^2 \label{ineq:Boot_Q2Hi} \\
\norm{A^{3} Q^3}^2_2 + \int_1^t\frac{1}{2} \mathcal{D}Q^3(\tau) d\tau & \leq 4K_{H3}\epsilon^2; \label{ineq:Boot_Q3Hi}
\end{align} 
\end{subequations}
the coordinate system controls, 
\begin{subequations} \label{ineq:Boot_CgHi}
\begin{align} 
\norm{A C^i}_2^2 + \frac{1}{2}\int_1^t \mathcal{D}C^i(\tau) d\tau  &\leq 4 K_{HC1} c^2_{0} \label{ineq:Boot_ACC}\\ 
\jap{t}^{-2}\norm{A C^i}_2^2 + \frac{1}{2} \int_1^t \jap{\tau}^{-2}\mathcal{D}C^i(\tau) d\tau &\leq 4K_{HC2}\epsilon^2 \log\jap{t} \label{ineq:Boot_ACC2}\\
\norm{Ag}_2^2 + \frac{1}{2}\int_1^t \mathcal{D}g  d\tau &\leq 4\epsilon^2 \label{ineq:Boot_Ag} \\ 
\norm{g}_{\G^{\lambda,\gamma}} & \leq  4 \frac{\epsilon}{\jap{t}^{2}} \label{ineq:Boot_gLow} \\
\norm{C}_{\G^{\lambda,\gamma}} & \leq  4K_{LC} \epsilon \jap{t} \label{ineq:Boot_LowC} 
\end{align} 
\end{subequations}
the enhanced dissipation estimates,
\begin{subequations} \label{ineq:Boot_ED}
\begin{align} 
\norm{A^{\nu;1} Q^1}_2^2 + \frac{1}{10} \int_1^t \mathcal{D}Q^{\nu;1}(\tau) d\tau & \leq 4K_{ED1}\epsilon^2 \label{ineq:Boot_ED1} \\ 
\norm{A^{\nu;2} Q^2}_2^2 + \int_1^t\frac{1}{10}\mathcal{D}Q^{\nu;2}(\tau) + CK_{L}^{\nu;2}(\tau) d\tau & \leq 4K_{ED2}\epsilon^2 \\ 
\norm{A^{\nu;3} Q^3}_2^2 + \frac{1}{10}\int_1^t \mathcal{D}Q^{\nu;3}(\tau) d\tau & \leq  4K_{ED3}\epsilon^2; \label{ineq:Boot_ED3}
\end{align} 
\end{subequations}
and the additional low frequency controls on the background streak  
\begin{subequations} \label{ineq:Boot_LowFreq} 
\begin{align} 
\norm{U_0^1}_{H^{\sigma^\prime}} & \leq 4K_{L1} \epsilon \jap{t} \label{ineq:Boot_U01_Low} \\ 
\norm{U_0^2}_{H^{\sigma^\prime}} & \leq 4 \epsilon  \label{ineq:Boot_U02_Low} \\ 
\norm{U_0^3}_{H^{\sigma^\prime}}  & \leq 4K_{L3}\epsilon. \label{ineq:Boot_U03_Low}
\end{align} 
\end{subequations}
For most steps of the proof we do not need to differentiate so precisely between different bootstrap constants so we define 
\begin{align} 
K_B = \max\left(K_{Hi}, K_{H1\neq}, K_{HC1},K_{HC2}, K_{EDi}, K_{Li}, K_{LC}\right). \label{def:KB}
\end{align}
 
By Lemma \ref{lem:Loc}, we have that $T^\star > t_\star > 0$ and it is a consequence 
of Lemma \ref{lem:Cont} that $T^\star < T^0$. 
It is relatively straightforward to prove that for $\epsilon$ sufficiently small, we have $1 \leq T^\star$; see \cite{BGM15I} for more discussion. 
Due to the real analyticity of the solution on $(0, T^0)$, it will follow from the ensuing proof that the quantities in the bootstrap hypotheses take values continuously in time for as long as the solution exists.  
Therefore, we may deduce $T^\star = T_F = c_0 \epsilon^{-1} < T^0$ via the following proposition, the proof of which is the main focus of the remainder of the paper. 

\begin{proposition}[Bootstrap] \label{prop:Boot}
Let $\epsilon < \nu^{2/3+\delta}$. 
For the constants appearing in the right-hand side of \eqref{def:KB} chosen sufficiently large and for $\nu$ and $c_0$ both chosen sufficiently small (depending only on $s,\lambda_0,\lambda^\prime,\alpha,\delta_1,\delta$ and arbitrary parameters such as  $\sigma,\beta, \ldots$), if $T^\star < T_F = c_0\epsilon^{-1}$ is such that the bootstrap hypotheses \eqref{ineq:Boot_Hi} \eqref{ineq:Boot_CgHi} \eqref{ineq:Boot_ED} \eqref{ineq:Boot_LowFreq} hold on $[1,T^\star]$, then on the same time interval all the inequalities in \eqref{ineq:Boot_Hi} \eqref{ineq:Boot_CgHi} \eqref{ineq:Boot_ED} \eqref{ineq:Boot_LowFreq} hold with constant `$2$' instead of `$4$'. 

\end{proposition}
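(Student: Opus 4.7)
The plan is to prove Proposition \ref{prop:Boot} by a standard continuity/bootstrap argument: on $[1,T^\star]$ we establish each of the inequalities in \eqref{ineq:Boot_Hi}, \eqref{ineq:Boot_CgHi}, \eqref{ineq:Boot_ED}, \eqref{ineq:Boot_LowFreq} with constant $2$ in place of $4$, using only the hypotheses with constant $4$. Each inequality is closed independently by a weighted energy estimate. For the high norms of $Q^i$, I would compute $\tfrac{1}{2}\tfrac{d}{dt}\norm{A^i Q^i}_2^2$ using \eqref{def:MainSys}; the time-derivative falling on $A^i$ produces the $CK_\lambda^i$, $CK_w^i$, $CK_{wL}^i$ terms (all with favorable signs) together with the transport-derived $CK_L^i$ for $i=2$, while the $\nu\tilde\Delta_t Q^i$ term yields $\nu\norm{\sqrt{-\Delta_L}A^iQ^i}_2^2$ modulo the dissipation-error remainder $\mathcal{D}_E$. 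The goal is then to show that all the remaining nonlinear and linear forcing contributions are bounded by $\epsilon^2/10$ plus a small multiple of the $\mathcal{D}Q^i$ and $CK$ quantities, so that integration in $t$ and the assumption $T^\star<c_0\epsilon^{-1}$ close the bootstrap.

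The RHS of \eqref{def:MainSys} is decomposed as $LS + LP + NLS + NLP + \mathcal{T} + \mathcal{D}_E$, and each nonlinear piece is further split according to the $0\cdot 0\to 0$, $0\cdot\ne\to\ne$, $\ne\cdot\ne\to\ne$, $\ne\cdot\ne\to 0$ classification of \S\ref{sec:NonlinHeuristics}. The strategies are: for $(2.5\text{NS})$ use \eqref{ineq:Boot_LowFreq} together with paraproduct decompositions that put the Gevrey regularity on the low-frequency streak factor and the regularity imbalance $w/w^3$ on whichever $Q^i$ factor it helps, after commuting $A^i$ through the product. For $(SI)$ the dominant term $U^1_0\partial_X$ commutes with $A^i$ up to a $CK_w$-controlled error; the subdominant terms involving $U^{2,3}_0$ use the estimates $\norm{u^2_0}_{H^{\sigma'}}\lesssim\epsilon$, $\norm{u^3_0}_{H^{\sigma'}}\lesssim\epsilon$, together with the enhanced dissipation factor $\jap{\nu t^3}^{-\alpha}$ from the $A^{\nu;i}$ norms and the heuristic \eqref{ineq:32heurs} which absorbs them into $\nu\norm{\sqrt{-\Delta_L}A^iQ^i}_2^2$. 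For $(3DE)$ one splits into resonant and non-resonant pieces; at resonance the crucial cancellation is that $w^3_{k'}/w^3_k \sim t/(|k|+|\eta-kt|)$ on $\mathbf I_{k,\eta}$ (by \eqref{def:approxw}) which is exactly the gain predicted by the toy model \eqref{def:Q2Q3_ToyFinal}, and this factor trades one power of the lost denominator in \eqref{eq:Q3Toykprime} for the extra power of $t$; off-resonance one uses $CK_w$. For $(F)$ the cancellation \eqref{eq:XavgCanc} removes the dangerous $-t\partial_X$ factors and lets the inviscid damping of $U^2_{\neq}$ provide the $\jap{t}^{-1}$ decay.

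The low-frequency Sobolev bounds \eqref{ineq:Boot_LowFreq} are established from the momentum equation in original variables: \eqref{ineq:Boot_U02_Low}, \eqref{ineq:Boot_U03_Low} come from the 2.5D streak structure of Proposition \ref{prop:Streak} with a nonlinear source term $(U_{\ne}\cdot\grad U^{2,3}_{\ne})_0$ which is controlled by the high-norm $Q^{2,3}_{\ne}$ estimates and inviscid damping \eqref{ineq:u2damping}, while \eqref{ineq:Boot_U01_Low} also absorbs the lift-up forcing $-u_0^2$ producing the $\epsilon\jap{t}$ growth. The bounds on $C$ and $g$ in \eqref{ineq:Boot_CgHi} are propagated from \eqref{def:CReal}, \eqref{def:gPDE2} using that the transport velocity $g$ itself decays by $\jap{t}^{-2}$ and that the forcing for $g$ involves $(U_{\ne}\cdot\grad^t U^1_{\ne})_0$ which is doubly controlled by enhanced dissipation. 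The enhanced-dissipation estimates \eqref{ineq:Boot_ED} are closed analogously using the $A^{\nu;i}$ multipliers, whose extra $\jap{D(t,\eta)}^\alpha$ factor ensures that $\nu\int^t\jap{D}^{2\alpha}\abs{\grad}^2\,d\tau$ dominates the temporal integrals coming from $\jap{\nu t^3}^{-\alpha}$-type bounds.

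The hardest step will be closing \eqref{ineq:Boot_Q3Hi} against the combined effect of (3DE) and (F) contributions to $Q^3$, since these are precisely the terms whose ``worst-possible'' behavior in the toy model forces the non-trivial imbalance $w^3\ne w$ and the $\epsilon\lesssim\nu^{2/3+\delta}$ threshold. Rigorously exploiting the $w^3_{k'}/w^3_k$ gain requires a careful paraproduct decomposition in which the resonant critical interval $\mathbf I_{k,\eta}$ is identified with high precision and the frequency-support conditions are preserved through the commutator with $A^3$, using the multiplier inequalities of \S\ref{sec:nrmuse} and the elliptic estimates of Appendix \ref{sec:PEL} to handle the loss of ellipticity in $\Delta_L$ at critical times. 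A related subtle point is maintaining the inviscid-damping bound $\norm{U^2_{\neq}}\lesssim \epsilon/(\jap{t}\jap{\nu t^3}^\alpha)$ under the 3D nonlinear pressure feedback \eqref{def:N}, which is what makes the $\min(1,\jap{\eta,l}^{1+\delta_1}/t^{1+\delta_1})$ factor in $A^1$ and the $\delta_1$ slack necessary, and which ultimately dictates why the threshold has to be $\nu^{2/3+\delta}$ rather than $\nu^{2/3}$.
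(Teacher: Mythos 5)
Your proposal follows essentially the same route as the paper: weighted energy estimates for $\norm{A^iQ^i}_2$, $\norm{A^{\nu;i}Q^i}_2$, $\norm{AC^i}_2$, $\norm{Ag}_2$ with the nonlinearity split into the \textbf{(2.5NS)}, \textbf{(SI)}, \textbf{(3DE)}, \textbf{(F)} classes, paraproduct decompositions combined with the multiplier/ratio lemmas and the precision elliptic lemmas, the $w^3_{k'}/w^3_k\sim t/(\abs{k}+\abs{\eta-kt})$ gain at resonance for the $Q^3$ terms, the cancellation \eqref{eq:XavgCanc} plus inviscid damping for the \textbf{(F)} terms, and momentum-equation Sobolev estimates for \eqref{ineq:Boot_LowFreq}. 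Two small inaccuracies in your sketch do not change this assessment but are worth noting: the $CK_L^i$ terms come from the time derivative of the $\min(1,\jap{\eta,l}/t)$-type factors in the norms rather than from transport, and the transport by $U_0^1$ (and $U_0^3$) is already eliminated by the nonlinear coordinate change \eqref{def:psi2}, \eqref{def:phi}, so the \textbf{(SI)} contributions enter only through the pressure/stretching terms such as $NLP(1,3,0,\neq)$ rather than through a commutator of $U_0^1\partial_X$ with $A^i$.
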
 

That Proposition \ref{prop:Boot} implies Theorem \ref{thm:SRS} is discussed briefly in \S\ref{sec:RegCont} below. 

\subsubsection{Bootstrap constants}
The relationship between the constants are similar to \cite{BGM15I} (although slightly simpler here since there are fewer). 
First, $K_{L1}$ and $K_{L3}$ are chosen sufficiently large relative to a universal constant depending only on $\sigma^\prime$. 
These in turn set $K_{H1},K_{H1\neq}$ and $K_{H3}$. These then imply $K_{HC1}$ which then implies $K_{HC2}$ and $K_{LC}$ followed finally by $K_{ED2}$ and then $K_{ED1}$ and $K_{ED3}$. 
Finally, $c_0$ and $\nu$ are chosen sufficiently small with respect to $K_B$, the max of all the bootstrap constants (as well as the parameters $s,\lambda_0,\lambda^\prime,\alpha,\delta_1$, and arbitrary parameters such as $\sigma,\beta$ etc).

\subsubsection{A priori estimates from the bootstrap hypotheses} \label{sec:AprioriBoot}
The motivation for the enhanced dissipation estimates \eqref{ineq:Boot_ED} is the following observation (which follows from \eqref{ineq:DLowB}): for any $f$,  
\begin{subequations} \label{ineq:AnuDecay}
\begin{align}
\norm{f_{\neq} (t)}_{\mathcal{G}^{\lambda(t),\beta}} & \lesssim_\alpha  \jap{t}^{2+\delta_1}\jap{\nu t^3}^{-\alpha} \norm{A^{\nu;1} f(t)}_2  \\ 
\norm{f_{\neq}(t)}_{\mathcal{G}^{\lambda(t),\beta}} & \lesssim_\alpha \jap{t} \jap{\nu t^3}^{-\alpha} \norm{A^{\nu;2} f(t)}_2 \\
\norm{f_{\neq}(t)}_{\mathcal{G}^{\lambda(t),\beta}} & \lesssim_\alpha \jap{t}^{2} \jap{\nu t^3}^{-\alpha} \norm{A^{\nu;3} f(t)}_2. 
\end{align} 
\end{subequations} 
Hence, \eqref{ineq:Boot_ED} expresses a rapid decay of $Q^i_{\neq}$ for $t \gtrsim \nu^{-1/3}$.  
Together with the ``lossy elliptic lemma'', Lemma \ref{lem:LossyElliptic}, we then get (under the bootstrap hypotheses), 
\begin{subequations}  \label{ineq:AprioriUneq}
\begin{align} 
\norm{U^1_{\neq} (t)}_{\mathcal{G}^{\lambda(t),\beta-2}} & \lesssim  \frac{\epsilon \jap{t}^{\delta_1}}{\jap{\nu t^3}^{\alpha}} \\ 
\norm{U^2_{\neq}(t)}_{\mathcal{G}^{\lambda(t),\beta-2}} & \lesssim \frac{\epsilon}{\jap{t}\jap{\nu t^3}^{\alpha}} \\ 
\norm{U^3_{\neq}(t)}_{\mathcal{G}^{\lambda(t),\beta-2}} & \lesssim \frac{\epsilon}{\jap{\nu t^3}^{\alpha}}.  
\end{align}
\end{subequations} 
 
For the zero frequencies of the velocity field we get from \eqref{ineq:Boot_Hi}, \eqref{ineq:Boot_LowFreq} and Lemma \ref{lem:PELbasicZero} (which allows to understand $\Delta_t^{-1}$ at zero $x$ frequencies) the matching a priori estimates
\begin{subequations} \label{ineq:AprioriU0}  
\begin{align} 
\norm{A U^1_0 (t)}_{2} & \lesssim \epsilon \jap{t} \\ 
\norm{A U^2_0 (t)}_{2} & \lesssim \epsilon \\ 
\norm{A^3 \jap{\grad}^2 U^3_0 (t)}_{2} & \lesssim \epsilon. 
\end{align} 
\end{subequations} 
Notice that no regularity loss is required to get the `correct' a priori estimates on the zero frequencies. 
However, unlike in our previous work \cite{BGM15I}, the natural regularity of the zero-frequency velocity fields are not all the same. 

\section{Regularization and continuation} \label{sec:RegCont} 
There are three preliminaries: (A) the instantaneous analytic regularization with initial data of the type \eqref{ineq:QuantGev2} (B) how to move estimates on these classical solutions between coordinate systems, and (C) the proof that Proposition \ref{prop:Boot} implies Theorem \ref{thm:SRS}.
The issues here are essentially the same as in \cite{BGM15I} so we will just give a brief summary. 

The proofs of Lemmas \ref{lem:Loc} and \ref{lem:Cont} are sketched in \cite{BGM15I}. 
Similarly, the following lemma is a variant of [Lemma 3.1 \cite{BGM15I}]. The proof is omitted for brevity as it follows via the same arguments.
\begin{lemma} \label{lem:BootStart}
We may take $2 \leq T^\star$ (defined in \S\ref{sec:energy} above) and for $t \leq 2$, the bootstrap estimates in \eqref{ineq:Boot_Hi}, \eqref{ineq:Boot_CgHi}, \eqref{ineq:Boot_ED}, and \eqref{ineq:Boot_LowFreq}, all hold with constant $5/4$ instead of $4$. 
\end{lemma}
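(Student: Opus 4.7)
The plan is to combine the instantaneous regularization of Lemma \ref{lem:Loc} with a short-time propagation of Gevrey regularity, and then verify each bootstrap hypothesis directly, exploiting the fact that on the bounded window $[1,2]$ the elaborate Fourier multipliers defined in \S\ref{def:designnorm} collapse to standard Gevrey norms. First I would apply Lemma \ref{lem:Loc} to obtain a classical solution on $[0, t_\star]$ satisfying $\norm{u(t)}_{\G^{\bar{\lambda}}} \leq 2\epsilon$ for $t \in [t_\star/2, t_\star]$, where $\bar{\lambda} = 3\lambda_0/4 + \lambda^\prime/4 = \lambda(1)$. Since $\delta_\lambda$ is chosen so that $\lambda(t) > (\lambda_0+\lambda^\prime)/2$ for all $t$, a standard Cauchy--Kovalevskaya-type Gevrey energy estimate on \eqref{def:3DNSE} (identical to the one sketched for Lemma 3.1 in \cite{BGM15I}) extends this bound to $\norm{u(t)}_{\G^{\lambda(t),\sigma}} \lesssim \epsilon$ on $[t_\star, 2]$, since only an $O(1)$ loss of radius can occur on a bounded time interval.

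Next I would set up the coordinates by solving \eqref{def:psi2}, \eqref{def:phi} on $[0, 2]$ via a fixed-point argument in $\G^{\lambda(t),\gamma}$, using the Gevrey bounds on $u_0^1, u_0^2, u_0^3$ from the previous step. Since $U_0^2, U_0^3 = O(\epsilon)$ and $U_0^1 = O(\epsilon\jap{t})$, the resulting $C^1, C^2, g$ obey $\norm{C^i(t)}_{\G^{\lambda(t),\gamma}} \lesssim \epsilon\jap{t}$ and $\norm{g(t)}_{\G^{\lambda(t),\gamma}} \lesssim \epsilon/\jap{t}^2$, the Jacobian factors $\psi_y,\psi_z,\phi_y,\phi_z$ remain $O(\epsilon)$ via \eqref{def:psi2sqrBrack}, and the map $(y,z) \mapsto (Y,Z)$ is invertible on $[0,2]$. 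Passing to $(X,Y,Z)$, the transformed $Q^i = \Delta_t U^i$ inherit matching Gevrey bounds. The key observation for the bootstrap verification is that on the window $[1,2]$ no critical time $t_{k,\eta}$ with $k^2 \lesssim \abs{\eta}$ has been significantly traversed (these require $\abs{\eta} \lesssim 4$, a compact frequency range), so $w, w^3, w_L$ are each bounded above and below by universal constants depending only on $s,\lambda_0,\lambda^\prime,\mu$. Similarly $\jap{D(t,\eta)}^\alpha \lesssim \jap{\nu \eta^3}^\alpha$ is absorbed into an arbitrarily small loss of Gevrey radius. Consequently every multiplier $A^i, A^{\nu;i}, A$ is pointwise dominated on $[1,2]$ by a single Gevrey multiplier $e^{\lambda(t)\abs{\cdot}^s}\jap{\cdot}^{O(1)}$, and all bootstrap quantities reduce to standard Gevrey norms of $U^i, Q^i, C^i, g$.

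By the smallness established above, each left-hand side in \eqref{ineq:Boot_Hi}--\eqref{ineq:Boot_LowFreq} is bounded by $C\epsilon^2$ (respectively $Cc_0^2$ for \eqref{ineq:Boot_ACC}) for a universal constant $C$; the time-integrated $CK$ and viscous dissipation terms over $[1,2]$ contribute only $O(\epsilon^2)$, since $\partial_t w/w$, $\partial_t w_L/w_L$, $\dot{\lambda}$ and $\nu\abs{\grad_L}^2$ are all bounded on the frequencies and time window in question. Choosing $\epsilon$ sufficiently small relative to $K_B$, and $c_0$ small relative to $K_{HC1}$, delivers each inequality with constant $5/4$ in place of $4$. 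Continuity in $t$ of every bootstrap quantity, hence $T^\star \geq 2$, then follows from the real-analyticity on $(0, T^0)$ asserted by Lemma \ref{lem:Cont}. The main technical obstacle is checking that the coordinate change is well-defined and small on $[0,2]$ despite the linear-in-$t$ forcing $u_0^1$ in \eqref{def:psi2}; this is handled by a contraction argument essentially identical to the one initializing the bootstrap in \cite{BGM15I}, so no substantially new analysis is required beyond careful bookkeeping of the weights.
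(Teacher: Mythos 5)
Your overall architecture — instantaneous regularization via Lemma \ref{lem:Loc}, construction of $C^i,g$ on an $O(1)$ time window, crude estimates of all bootstrap quantities there, and continuity in time from analyticity (Lemma \ref{lem:Cont}) — is exactly the argument the paper appeals to (it omits the proof and cites the analogue of [Lemma 3.1, \cite{BGM15I}]). The gap is in the step where you claim that on $[1,2]$ the weights $w,w^3$ are ``bounded above and below by universal constants'' because no critical time has been traversed, so that $A^i,A^{\nu;i},A$ collapse to $e^{\lambda(t)\abs{\cdot}^s}\jap{\cdot}^{O(1)}$. This is false: $w$ is constructed \emph{backwards} in time, normalized to $w(t,\eta)=1$ only for $t\geq 2\abs{\eta}$, so at $t\in[1,2]$ (which is $\leq\sqrt{\abs{\eta}}$ for all but finitely many $\eta$) the weight already encodes the entire future growth, and by Lemma \ref{lem:totalGrowthw} one has $1/w(t,\eta)\sim \eta^{-p}e^{\frac{\mu}{2}\abs{\eta}^{1/2}}$. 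Together with the explicit factors $e^{\mu\abs{\eta}^{1/2}}$ and $e^{\mu\abs{l}^{1/2}}$ in \eqref{def:A}, the multipliers at $t\approx 1$ carry a genuine Gevrey-$2$ weight of size $e^{\frac{3\mu}{2}\abs{\eta}^{1/2}}$; the bootstrap quantities on $[1,2]$ are therefore strictly stronger than any $\G^{\lambda(t),\sigma}$ norm, not equivalent to one.

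Consequently the verification step as written does not close. To initialize \eqref{ineq:Boot_Hi}--\eqref{ineq:Boot_LowFreq} one must absorb this Gevrey-$2$ factor using $s>1/2$ and \eqref{ineq:IncExp}, which costs a \emph{strict} surplus of Gevrey-$\frac{1}{s}$ radius over $\lambda(t)$ for $t\in[1,2]$; since Lemma \ref{lem:Loc} delivers control exactly at $\bar{\lambda}=\lambda(1)$, the propagation step from $t_\star$ to $2$ has to be run so as to retain a quantifiable radius margin (this bookkeeping — not the $\jap{D}^\alpha$ factor, which is merely polynomial since $\nu\leq 1$ — is the actual content of the cited lemma, and is the reason the hypotheses require $s>1/2$ and Gevrey data in the first place). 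You already use the correct absorption device for $\jap{D(t,\eta)}^\alpha$, so the repair is along lines you know, but the statement that the weights are $O(1)$ on $[1,2]$ must be replaced by this absorption argument with an explicit radius gap; as it stands, that is the missing (and essential) step.
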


In order to move estimates from $(X,Y,Z)$ to $(x,y,z)$ we may use the same methods described in \cite{BGM15I} (which are themselves essentially the same as those in \cite{BM13,BMV14}). 
we will first move to the coordinate system $(X,y,z)$. 
Writing $\bar{q}^i(t,X,y,z)  = Q^i(t,X,Y,Z) = q(t,x,y,z)$ and $\bar{u}^i(t,X,y,z) = U^i(t,X,Y,Z) = u^i(t,x,y,z)$ we derive the following, noting that $\bar{u}^i_0 = u^i_0$: 
\begin{align} 
\partial_t u_0^i + (u_0^2,u_0^3) \cdot \grad u_0^i & = (-u_0^2,0,0)^T - (0,\partial_y p_0^{NL0}, \partial_z p^{NL0}_0)^T + \nu \Delta u^i_0 + \mathcal{F}^i, \label{eq:u0i}
\end{align} 
where 
\begin{align*}
\Delta p_0^{NL0} = -\partial_i u_0^j \partial_j u^i_0
\end{align*}
and (using cancellations as in \eqref{eq:XavgCanc}), 
\begin{align} 
\mathcal{F}^i & = -\partial_y \left(\bar{u}^2_{\neq}\bar{u}_{\neq}^i\right)_0 - \partial_z\left(\bar{u}^3_{\neq} \bar{u}_{\neq}^i \right)_0. \label{eq:Fbaru}
\end{align} 

We then have the following lemma, analogous to [Lemma 3.2 \cite{BGM15I}], which holds here with an analogous proof. 
\begin{lemma} \label{lem:intermedSob}
For $\epsilon < \nu^{2/3 + \delta}$ and $c_0$ and $\nu$ sufficiently small (depending only on $s,\lambda_0,\lambda^\prime,\alpha$, $\delta_1$, and $\delta$), the bootstrap hypotheses imply the following for some $c \in (0,1)$ chosen such that $c\lambda(t) \in (\lambda^\prime,\lambda(t))$ for all $t$:   
\begin{subequations} \label{ineq:Xyzubds}
\begin{align} 
\norm{\bar{u}^1_{\neq}}_{\G^{c\lambda(t)}} & \lesssim \epsilon \jap{t}^{\delta_1}\jap{\nu t^3}^{-\alpha}  \\ 
\norm{\bar{u}_{\neq}^2}_{\G^{c\lambda(t)}} & \lesssim \epsilon \jap{t}^{-1} \jap{\nu t^3}^{-\alpha}  \\ 
\norm{\bar{u}_{\neq}^3}_{\G^{c\lambda(t)}} & \lesssim \epsilon  \jap{\nu t^3}^{-\alpha}, 
\end{align}  
\end{subequations}
and
\begin{subequations} \label{ineq:uzAPriori}
\begin{align} 
\norm{u^1_0(t)}_{\G^{c\lambda(t)}} & \lesssim \epsilon \jap{t} \label{ineq:uzApriori1} \\ 
\norm{u^2_0(t)}_{\G^{c\lambda(t)}} + \norm{u^3_0(t)}_{\G^{c\lambda(t)}} & \lesssim \epsilon.  \label{ineq:uzApriori23}
\end{align}  
\end{subequations} 
\end{lemma}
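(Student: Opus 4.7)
The plan is to derive the bounds \eqref{ineq:Xyzubds} and \eqref{ineq:uzAPriori} by transferring the estimates already available in the capital-letter coordinates via the coordinate change $(y,z) \mapsto (Y,Z) = (y+\psi(t,y,z), z+\phi(t,y,z))$. Directly from the definitions one has $\bar u^i(t,X,y,z) = U^i(t,X,y+\psi(t,y,z), z+\phi(t,y,z))$, and since integration in $X$ at fixed $(y,z)$ agrees with integration in $x$ at fixed $(y,z)$ (the two differ only by a constant shift in $X$), also $u^i_0(t,y,z) = U^i_0(t, y+\psi(t,y,z), z+\phi(t,y,z))$. Thus everything reduces to showing that composition with $\mathrm{id}+(\psi,\phi)$ is bounded from $\G^{\lambda(t)}$ to $\G^{c\lambda(t)}$ for some fixed $c \in (0,1)$ with $c\lambda(t) \in (\lambda',\lambda(t))$ throughout $[1,T_F]$.

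First, I would establish that the coordinate change is a small perturbation of the identity. The bootstrap hypothesis \eqref{ineq:Boot_LowC} gives $\norm{C(t)}_{\G^{\lambda(t),\gamma}} \leq 4K_{LC}\epsilon \jap{t}$; combined with $\epsilon \jap{t} \leq \epsilon T_F = c_0$ throughout $[1,T_F]$, this produces the uniform smallness $\norm{(\psi,\phi)}_{\G^{\lambda(t),\gamma}} \lesssim K_{LC}c_0$. For $c_0$ chosen small enough relative to $K_{LC}$, a standard contraction-mapping/implicit function argument in Gevrey classes shows $\mathrm{id} + (\psi,\phi)$ is a bi-Lipschitz diffeomorphism of the $(y,z)$-domain and that its inverse enjoys comparable Gevrey bounds. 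This makes the composition above genuinely well-defined and reversible.

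Next, I would invoke a Gevrey composition lemma of the same type as [Lemma 3.3, BGM15I]: for any $F$ defined on the $(Y,Z)$-domain and any deformation $(\psi,\phi)$ with sufficiently small norm in $\G^{\lambda(t),\gamma}$ (with $\gamma > 3$, easily satisfied since we fixed $\gamma > \beta + 3\alpha + 12$), one has
\[
\norm{F \circ (\mathrm{id} + (\psi,\phi))}_{\G^{c\lambda(t)}} \lesssim \norm{F}_{\G^{\lambda(t)}},
\]
where $c \in (0,1)$ is chosen so that $c\lambda(t) > \lambda'$ uniformly in $t \in [1,T_F]$. Applying this with $F = U^i_{\neq}$ and $F = U^i_0$ and plugging in the a priori bounds \eqref{ineq:AprioriUneq} and \eqref{ineq:AprioriU0} (the latter recovered from \eqref{ineq:Boot_Hi} and \eqref{ineq:Boot_LowFreq} via the elliptic Lemma \ref{lem:PELbasicZero}) yields \eqref{ineq:Xyzubds} and \eqref{ineq:uzAPriori} directly, noting that the $A, A^i$ multipliers in \eqref{ineq:AprioriU0} comfortably dominate $e^{\lambda(t)|\cdot|^s}$ together with several derivatives of Sobolev margin.

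The principal technical obstacle is the Gevrey composition estimate itself: expanding $F \circ (\mathrm{id}+(\psi,\phi))$ via the multilinear chain rule and summing term-by-term in the Gevrey norm requires careful bookkeeping of the combinatorics and repeated use of the algebra property of $\G^\lambda$, with the small loss of radius from $\lambda(t)$ to $c\lambda(t)$ precisely what absorbs the factorial growth in the expansion. However, this is essentially the same technical lemma already developed in \cite{BGM15I,BM13,BMV14} and adapts here without modification; the only thing to verify quantitatively is the smallness of $(\psi,\phi)$ in the appropriate Sobolev-corrected Gevrey space, which is exactly what the bootstrap hypothesis \eqref{ineq:Boot_LowC} provides.
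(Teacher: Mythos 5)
Your proposal is correct and follows essentially the same route the paper intends: Lemma \ref{lem:intermedSob} is proved exactly as the analogous lemma in \cite{BGM15I}, namely by writing $\bar{u}^i$ and $u^i_0$ as compositions of $U^i$ with the small coordinate deformation, invoking the Gevrey composition/inverse-function estimates (paying a fixed fraction of the radius $\lambda(t)\to c\lambda(t)$), and inserting the a priori bounds \eqref{ineq:AprioriUneq} and \eqref{ineq:AprioriU0} together with the smallness of $C$ from \eqref{ineq:Boot_LowC} and $\epsilon\jap{t}\lesssim c_0$. Nothing essential is missing.
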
 
 
Finally, the following lemma also follows analogously to the corresponding result in \cite{BGM15I}. 
Hence, the proof is omitted for the sake of brevity. 
 
\begin{lemma} \label{lem:PropBootThm}
For $\epsilon < \nu^{2/3+\delta}$ and $c_0$ and $\nu$ sufficiently small (depending only on $s,\lambda_0,\lambda^\prime,\alpha$, $\delta_1$, and $\delta$), Proposition \ref{prop:Boot} 
implies Theorem \ref{thm:SRS}.  
\end{lemma}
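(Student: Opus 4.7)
The plan is to activate Proposition~\ref{prop:Boot} on the full time interval, then unravel the nonlinear coordinate change to read off each assertion of Theorem~\ref{thm:SRS}. First, I would combine Proposition~\ref{prop:Boot} with Lemma~\ref{lem:BootStart} in a standard continuity argument: by the real analyticity granted by Lemma~\ref{lem:Cont}, the bootstrap quantities depend continuously on $t$; by Lemma~\ref{lem:BootStart} they hold with constant $5/4$ for $t \leq 2$; and by Proposition~\ref{prop:Boot} whenever they hold with constant $4$ on $[1,T^\star]$ for some $T^\star < T_F$ they actually hold with constant $2$. Hence the maximal such $T^\star$ must equal $T_F = c_0\epsilon^{-1}$, and the estimates \eqref{ineq:Boot_Hi}--\eqref{ineq:Boot_LowFreq} hold on the entire interval $[1,T_F]$.

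For part (ii), note that zero-$x$-frequency projections commute with the shift $x \mapsto x-ty$ and hence with the full coordinate change in its $X$-direction, so $u_0^i = \bar u_0^i = U_0^i$. The claimed bounds then follow from \eqref{ineq:AprioriU0}, combined with Lemma~\ref{lem:intermedSob} (the inequalities \eqref{ineq:uzAPriori}) which provide $\G^{c\lambda(t)}$ control for some $c$ with $c\lambda(t) \in (\lambda',\lambda(t))$. For part (iii), I would use the identity
\begin{align*}
u_{\neq}^i(t, x + ty + t\psi(t,y,z), y, z) = U_{\neq}^i\!\left(t, x, y+\psi(t,y,z), z+\phi(t,y,z)\right),
\end{align*}
which reduces matters to composing $U_{\neq}^i$ (controlled in $(X,Y,Z)$ via \eqref{ineq:AprioriUneq}) with the near-identity diffeomorphism $(y,z) \mapsto (Y,Z)$. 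The bootstrap bound $\|C\|_{\G^{\lambda,\gamma}} \lesssim \epsilon\jap{t}$ together with \eqref{def:psi2sqrBrack} makes $\psi,\phi$ small in Gevrey norm, so the composition loses only a fixed fraction of Gevrey radius, which is absorbed by the margin $\lambda(t) > (\lambda_0+\lambda')/2 > \lambda'$. The decay factors $\jap{t}^{-1}$ and $\jap{\nu t^3}^{-\alpha}$ then come directly from the corresponding prefactors built into $A^i$ and $A^{\nu;i}$ via \eqref{ineq:AnuDecay}.

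For part (i), I would Duhamel the streak equations \eqref{eq:u0i}. Writing
\begin{align*}
u_0^1(t) = e^{\nu t\Delta}\bigl(u_{in\;0}^1 - t u_{in\;0}^2\bigr) + R^1(t),
\end{align*}
and similarly $u_0^j(t) = e^{\nu t\Delta}u_{in\;0}^j + R^j(t)$ for $j=2,3$, the remainders collect the transport term $(u_0^2,u_0^3)\cdot\grad u_0^i$, the pressure term $\partial_j p_0^{NL0}$, and the zero-frequency forcing $\mathcal{F}^i$ from \eqref{eq:Fbaru}. By part (ii), the transport and pressure terms are bounded in $\G^{\lambda';s}$ by $\lesssim \epsilon^2 \jap{t}$ (for $i=1$) and $\lesssim \epsilon^2$ (for $i=2,3$), while Lemma~\ref{lem:intermedSob} gives $\|\mathcal{F}^i\|_{\G^{\lambda';s}} \lesssim \epsilon^2 \jap{t}^{-1+2\delta_1} \jap{\nu t^3}^{-2\alpha}$, which is time-integrable uniformly in $\nu$. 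Integrating to $T_F$ yields $\|R^1\|_{\G^{\lambda';s}} \lesssim \epsilon^2 T_F^2 \lesssim c_0^2$ and $\|R^{2,3}\|_{\G^{\lambda';s}} \lesssim \epsilon^2 T_F \lesssim c_0\epsilon$, which are exactly \eqref{ineq:u01grwth}--\eqref{ineq:u023Duhamel}; the short initial interval $[0,1]$ is absorbed by Lemma~\ref{lem:Loc}. Finally, \eqref{ineq:psiest} follows from the definition $g = (U_0^1 - C^1)/t$, the identification of $C^1$ with $\psi$ under the pullback, and the bootstrap estimate $\|g\|_{\G^{\lambda,\gamma}} \lesssim \epsilon\jap{t}^{-2}$.

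The main obstacle is not any single estimate but the bookkeeping in the composition step of part (iii): the multipliers $A^i$ have highly anisotropic regularity assignments (different for $u^1,u^2,u^3$ because of the $w^3$ imbalance) and carry polynomial prefactors tied to the critical intervals $\I_{k,\eta}$, so one must carefully verify that, after both composition with the diffeomorphism $(y,z)\mapsto(Y,Z)$ and the intermediate passage provided by Lemma~\ref{lem:intermedSob}, the resulting bounds in the \emph{weight-free} Gevrey norm $\G^{\lambda';s}$ still exhibit the sharp decay rates $\jap{t}^{-1}\jap{\nu t^3}^{-\alpha}$ demanded by \eqref{ineq:u2damping}. Because Theorem~\ref{thm:SRS} asks only for the lossier radius $\lambda'$ and discards all weights, however, the composition lemmas employed in \cite{BM13,BMV14,BGM15I} apply essentially verbatim, and no genuinely new ingredient beyond what already appears in \cite{BGM15I} is needed.
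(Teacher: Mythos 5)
Your argument is correct and follows essentially the route the paper intends: the paper omits this proof and defers to the analogous lemma in \cite{BGM15I}, whose argument is precisely your continuity/bootstrap step, the transfer of estimates via Lemma \ref{lem:intermedSob} (which already contains parts (ii)--(iii) through the bounds on $\bar{u}^i$ and $u_0^i$), a Duhamel expansion of \eqref{eq:u0i} for part (i), and the bound on $g$ for \eqref{ineq:psiest}. The only slips are cosmetic and absorbed by the estimates you already invoke: $u_0^i$ equals $U_0^i$ only after composing with the near-identity map $(y,z)\mapsto(Y,Z)$ (Lemma \ref{lem:intermedSob} handles this), and the remainder $R^1$ in the Duhamel formula also contains the term $-\int_0^t e^{\nu(t-s)\Delta}\bigl(u_0^2(s)-e^{\nu s\Delta}u_{in\;0}^2\bigr)\,ds$, which is $O(c_0^2)$ by \eqref{ineq:u023Duhamel}.
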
 

\section{Multiplier and paraproduct tools} \label{sec:nrmuse}
In this section we outline some basic general inequalities regarding the multipliers which are used in the sequel. 
As in \cite{BGM15I}, the purpose is to set up a general framework that will make the large number of energy estimates later in the paper easier. 
Most of the estimates come in the general form $\int A^i Q^i A^i\left(f g\right) dV$. 
The goal of this section is to break the treatment of these terms into a four step procedure: 
\begin{enumerate} 
\item As in \cite{BGM15I}, the first step is to separate out zero/non-zero frequency interactions according to \S\ref{sec:NonlinHeuristics} and then expand with a paraproduct to divide the terms based on which of the nonlinear factors is dominant from the standpoint of frequency (paraproducts are explained in \S\ref{sec:paranote} below). 
\item Compare the norm for $Q^i$ with the norm of the dominant factor (also adding $\Delta_L^{-1} \Delta_L$ if the dominant factor is a velocity field) and commute it past the low frequency factor. Lemma \ref{lem:ABasic} below is the primary tool for this.  
\item Use Lemmas \ref{lem:MainFreqRat} and \ref{lem:MainFreqRat_RegImbalance} below to convert the ratio of the norms (together with possibly $\Delta_L^{-1}$) into multipliers that appear in the dissipation energies or integrate to $\lesssim \epsilon^2$ until $T_F = c_0\epsilon^{-1}$.  
\item Use Lemma \ref{gevreyparaproductlemma} or \ref{lem:ParaHighOrder} to re-combine the paraproduct decomposition into multiples of terms in the dissipation energy or other integrable errors. 
\end{enumerate}

\subsection{Basic inequalities regarding the multipliers} \label{sec:basicmult}
This section covers the key properties of the multipliers we are using and forms the core of the technical tools, however, it is very tedious and will likely appear unmotivated at first. 
A reader should consider skipping this section on the first reading and refer back to it whenever specific inequalities are needed. 
Note that this section is significantly more technical than the corresponding section in \cite{BGM15I}. 

In the lemmas which follow, one should imagine that frequencies $(k^\prime,\xi,l^\prime)$ and $(k-k^\prime,\eta-\xi,l-l^\prime)$ are interacting to force $(k,\eta,l)$, as will be occurring in the quadratic energy estimates.  

The first lemma gives us general estimates for how the $A$ and $A^i$ are related at different frequencies.
It is designed specifically for dealing with $f_{Hi}g_{Lo}$-type terms in the paraproducts (see \eqref{def:parapp}). 
 
\begin{lemma}[Frequency ratios for $A$ and $A^i$] \label{lem:ABasic}  
Let $\theta < 1/2$ and suppose 
\begin{align} 
\abs{k-k^\prime,\eta-\xi,l-l^\prime} \leq \theta\abs{k,\eta,l}. \label{ineq:AFreqLoc}
\end{align}
In what follows, define the frequency cut-offs (all functions of $(t,k,k^\prime,\eta,\xi,l,l^\prime)$),    
\begin{subequations} \label{def:freqcuts}
\begin{align} 
\chi^{R,NR} & = \mathbf{1}_{t \in \I_{k,\eta} \cap \I_{k,\xi}} \mathbf{1}_{k^\prime \neq k} \mathbf{1}_{\abs{l} < 5\abs{\eta}} \mathbf{1}_{\abs{l^\prime} < 5\abs{\xi}} \\ 
\chi^{NR,R} & = \mathbf{1}_{t \in \I_{k^\prime,\xi} \cap \I_{k^\prime,\eta}} \mathbf{1}_{k^\prime \neq k} \mathbf{1}_{\abs{l} < \frac{1}{5}\abs{\eta}}\mathbf{1}_{\abs{l^\prime} < \frac{1}{5}\abs{\xi}} \\
\chi^{r,NR} & = \mathbf{1}_{t \in \I_{r,\eta} \cap \I_{r,\xi}} \mathbf{1}_{k^\prime \neq r} \mathbf{1}_{\abs{l} < 5\abs{\eta}} \mathbf{1}_{\abs{l^\prime} < 5\abs{\xi}}  \\ 
\chi^{NR,r} & = \mathbf{1}_{t \in \I_{r,\eta} \cap \I_{r,\xi}} \mathbf{1}_{k \neq r} \mathbf{1}_{\abs{l} < \frac{1}{5}\abs{\eta}}\mathbf{1}_{\abs{l^\prime} < \frac{1}{5}\abs{\xi}}  \\ 
\chi^{\ast;33} & = 1 - \mathbf{1}_{t \in \I_{k,\eta} \cap \I_{k,\xi}} \mathbf{1}_{k \neq k^\prime}\mathbf{1}_{\abs{l} < \frac{1}{5}\abs{\eta}}\mathbf{1}_{\abs{l^\prime} < \frac{1}{5}\abs{\xi}}  - \chi^{NR,R} \\ 
\chi^{\ast;23} & = 1 - \sum_{r}\mathbf{1}_{t \in \I_{r,\eta} \cap \I_{r,\xi}} \mathbf{1}_{k^\prime \neq r}\mathbf{1}_{\abs{l} < \frac{1}{5}\abs{\eta}}\mathbf{1}_{\abs{l^\prime} < \frac{1}{5}\abs{\xi}} \\ 
\chi^{\ast;32} & = 1 - \sum_{r}\chi^{NR,r}, 
\end{align}
\end{subequations}
and for $i,j \in \set{1,2,3}$ and $a,b \in \set{0,\neq}$, the weight $\Gamma(i,j,a,b)$ given by, 
\begin{align*}
\Gamma(i,i,a,a) & = 1, & \Gamma(i,j,a,b)  & = \Gamma(j,i,b,a)^{-1}, \\
\Gamma(1,2,0,0) & = \jap{t}^{-1}, & \Gamma(1,2,\neq,\neq) & = \jap{t}^{-1} \jap{\frac{t}{\jap{\xi,l^\prime}}}^{-\delta_1}, \\ 
\Gamma(1,2,0,\neq) & = \jap{t}^{-1} \jap{\frac{t}{\jap{\xi,l^\prime}}}, & \Gamma(1,2,\neq,0) & = \jap{t}^{-1} \jap{\frac{t}{\jap{\xi,l^\prime}}}^{-1-\delta_1}, \\
\Gamma(1,3,0,0) & = \jap{t}^{-1},   & \Gamma(1,3,\neq,\neq) & = \jap{t}^{-1} \jap{\frac{t}{\jap{\xi,l^\prime}}}^{1-\delta_1}, \\ 
\Gamma(1,3,0,\neq) & = \jap{t}^{-1} \jap{\frac{t}{\jap{\xi,l^\prime}}}^2, & \Gamma(1,3,\neq,0) & = \jap{t}^{-1} \jap{\frac{t}{\jap{\xi,l^\prime}}}^{-1-\delta_1}, \\
\Gamma(2,3,\neq,\neq) & = \jap{\frac{t}{\jap{\xi,l^\prime}}}, & \Gamma(2,3,0,\neq) & = \jap{\frac{t}{\jap{\xi,l^\prime}}}^2, \\
\Gamma(2,3,\neq,0) & = \jap{\frac{t}{\jap{\xi,l^\prime}}}^{-1}, & \Gamma(2,3,0,0) & = 1, \\ 
\Gamma(1,1,0,\neq)&  = \jap{\frac{t}{\jap{\xi,l^\prime}}}^{1+\delta_1}, & \Gamma(2,2,0,\neq) & = \jap{\frac{t}{\jap{\xi,l^\prime}}}, \\ 
\Gamma(3,3,0,\neq) & = \jap{\frac{t}{\jap{\xi,l^\prime}}}^2. 
\end{align*}
Then there exists a $c = c(s) \in (0,1)$ such that for all $t$ we have the following for $i \in \set{1,2}$ and $a = \neq$ if $k \neq 0$ (otherwise $a=0$) and $b = \neq$ if $k^\prime \neq 0$ (otherwise $b = 0$), 
\begin{subequations} \label{ineq:ABasic}
\begin{align} 
A^{i}_k(t,\eta,l) & \lesssim \Gamma(i,j,a,b) A^{j}_{k^\prime}(t,\xi,l^\prime) e^{c\lambda\abs{k - k^\prime,\eta-\xi,l-l^\prime}^s} \label{ineq:ABasic12}\\ 
\left(A^3_k(t,\eta,l)\right)^2 & \lesssim \Gamma(3,3,a,b)\left(\tilde{A}^3_k(t,\eta,l) \tilde{A}^3_{k^\prime}(\xi,l^\prime)\chi^{R,NR}\frac{t}{\abs{k} + \abs{\eta-kt}} \right. \nonumber \\ & \left. \quad\quad + \tilde{A}^3_k(t,\eta,l)\tilde{A}^3_{k^\prime}(t,\xi,l^\prime) \chi^{NR,R}\frac{\abs{k^\prime} + \abs{\eta - k^\prime t}}{t} \right. \nonumber \\ & \quad\quad + \chi^{\ast;33} A^3_k(t,\eta,l) A^3_{k^\prime}(t,\xi,l^\prime)  \bigg) e^{c\lambda\abs{k - k^\prime,\eta-\xi,l-l^\prime}^s} \label{ineq:A3A3neqneq}\\  
\left(A^{i}_k(t,\eta,l)\right)^2 & \lesssim \Gamma(i,3,a,b)\left(\sum_{r} \tilde{A}^i_k(t,\eta,l)\tilde{A}^3_{k^\prime}(t,\xi,l^\prime)\chi^{r,NR}\frac{t}{\abs{r} + \abs{\eta- rt}} \right. \nonumber \\ &  \quad\quad + A^i_k(t,\eta,l) A^3_{k^\prime}(t,\xi,l^\prime) \chi^{\ast;23} \Bigg) e^{c\lambda\abs{k - k^\prime,\eta-\xi,l-l^\prime}^s} \label{ineq:A2A3neqneq} \\
\left(A^{3}_k(t,\eta,l)\right)^2 & \lesssim \Gamma(3,i,a,b)\left(\sum_{r} \tilde{A}^3_k(t,\eta,l) \tilde{A}^i_{k^\prime}(t,\xi,l^\prime)  \chi^{NR,r}\frac{\abs{r} + \abs{\eta-rt}}{t} \right. \nonumber \\ &  \quad\quad + \chi^{\ast;32} A^3_k(t,\eta,l) A^i_{k^\prime}(t,\xi,l^\prime) \Bigg)  e^{c\lambda\abs{k - k^\prime,\eta-\xi,l-l^\prime}^s}. \label{ineq:A3A2neqneq}
\end{align}
\end{subequations}
Analogous inequalities hold also with $A(t,\eta,l)$ using that $A(t,\eta,l) = \jap{\eta,l}^2 A_0^2(t,\eta,l)$. 
\end{lemma}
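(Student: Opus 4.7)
The plan is to perform a pointwise multiplier comparison, organized according to the factorization
\[
A^i_k(t,\eta,l) = \mathcal{P}^i_k(t,\eta,l)\, e^{\lambda(t)\abs{k,\eta,l}^s}\, \jap{k,\eta,l}^\sigma\, \frac{1}{w_L(t,k,\eta,l)}\, \mathcal{M}^i_k(t,\eta,l),
\]
where $\mathcal{P}^i_k$ denotes the $\min(1,\jap{\eta,l}^{\beta_i}/t^{\beta_i})$-type prefactor from \eqref{def:A}, and
\[
\mathcal{M}^{1,2}_k = \frac{e^{\mu\abs{\eta}^{1/2}}}{w(t,\eta)}+e^{\mu\abs{l}^{1/2}},\qquad \mathcal{M}^{3}_k = \frac{e^{\mu\abs{\eta}^{1/2}}}{w^3_k(t,\eta)}+e^{\mu\abs{l}^{1/2}}.
\]
Comparing each factor separately at $(k,\eta,l)$ and $(k',\xi,l')$ and multiplying the pointwise bounds gives \eqref{ineq:ABasic}.

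The localization \eqref{ineq:AFreqLoc} implies $\abs{k',\xi,l'}\geq (1-\theta)\abs{k,\eta,l}$, so concavity of $x\mapsto x^s$ produces $c=c(s,\theta)\in(0,1)$ with $\abs{k,\eta,l}^s\leq \abs{k',\xi,l'}^s+c\abs{k-k',\eta-\xi,l-l'}^s$; this supplies the Gevrey factor $e^{c\lambda\abs{k-k',\eta-\xi,l-l'}^s}$. The polynomial ratio $\jap{k,\eta,l}^\sigma/\jap{k',\xi,l'}^\sigma$, the correction $e^{\mu(\abs{\eta}^{1/2}-\abs{\xi}^{1/2})}$ between the high-$l$ pieces of $\mathcal{M}^i$, and the ratio $w_L(t,k,\eta,l)/w_L(t,k',\xi,l')$ (bounded via the smoothness and monotonicity properties of $w_L$ recorded in Appendix \ref{sec:Nmult}) are absorbed by a slight shrinkage of $c$. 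Next, the prefactor ratio $\mathcal{P}^i_a/\mathcal{P}^j_b$ produces exactly the $\Gamma(i,j,a,b)$ weights: using $\jap{\eta,l}\approx \jap{\xi,l'}$, a direct computation verifies each entry of the table. For example, $(i,j,a,b)=(1,3,\neq,\neq)$ gives
\[
\frac{\jap{t}^{-1}\min\bigl(1,\jap{\eta,l}^{1+\delta_1}/t^{1+\delta_1}\bigr)}{\min\bigl(1,\jap{\xi,l'}^2/t^2\bigr)}\lesssim \jap{t}^{-1}\jap{t/\jap{\xi,l'}}^{1-\delta_1}=\Gamma(1,3,\neq,\neq),
\]
and the asymmetric $(a,b)=(0,\neq)$ or $(\neq,0)$ entries reflect that $\mathcal{P}^i_0\equiv 1$ while $\mathcal{P}^j_\neq$ still contributes its $\min$-factor.

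The genuine content is the comparison of the $\mathcal{M}^i$, where the resonance structure enters. For $i,j\in\{1,2\}$, the uniform ratio bound $w(t,\eta)/w(t,\xi)\approx 1$ under $\abs{\eta-\xi}\leq\theta\abs{\eta}$ is immediate from the properties of $w$ in Appendix \ref{sec:Defw} (the intervals $I_{k,\eta}$ and $I_{k,\xi}$ either both contain $t$ or both do not, and the staircase of $w$ is stable under $O(\theta)$ perturbations of $\eta$), which yields \eqref{ineq:ABasic12}. For \eqref{ineq:A3A3neqneq} I split according to the indicators: on $\chi^{R,NR}$ one has $t\in I_{k,\eta}\cap I_{k,\xi}$ and $k'\neq k$, so $w^3_k(t,\eta)\approx w(t,\eta)$ while $w^3_{k'}(t,\xi)\approx \frac{t}{\abs{k}+\abs{\xi-kt}}w(t,\xi)$, and since $\abs{\xi-kt}\approx \abs{\eta-kt}$ inside these narrow intervals,
\[
\frac{1}{w^3_k(t,\eta)^2}\approx \frac{1}{w^3_k(t,\eta)\,w^3_{k'}(t,\xi)}\cdot\frac{t}{\abs{k}+\abs{\eta-kt}},
\]
giving the first summand. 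On $\chi^{NR,R}$ the symmetric computation with $k\leftrightarrow k'$ yields the $(\abs{k'}+\abs{\eta-k't})/t$ summand. On $\chi^{\ast;33}$, either the $e^{\mu\abs{l}^{1/2}}$ piece of $\mathcal{M}^3$ dominates or no common critical time is active at both $(k,\eta)$ and $(k',\xi)$, so $\mathcal{M}^3_k(t,\eta,l)\lesssim \mathcal{M}^3_{k'}(t,\xi,l')$ holds directly. The bounds \eqref{ineq:A2A3neqneq} and \eqref{ineq:A3A2neqneq} are proved identically, with the sum over $r$ covering the possibility that the resonant mode on the $Q^3$ side differs from $k$ and $k'$; the $A$-version follows from $A=\jap{\eta,l}^2 A^Q_0$ and reduces to the $i=2$, $k=0$ case.

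The main obstacle will be the $w^3$ case analysis, specifically verifying on the $\chi^{\ast;\cdots}$ regions that $\mathcal{M}^3_k(t,\eta,l)\lesssim \mathcal{M}^3_{k'}(t,\xi,l')$ uniformly. This forces a careful partition in terms of the critical intervals to ensure that whenever $t$ lies outside the relevant resonant windows (or $\abs{l},\abs{l'}$ are comparable to $\abs{\eta},\abs{\xi}$), $w^3$ has either completed or not yet begun its jump at both $(k,\eta)$ and $(k',\xi)$, so that either the $e^{\mu\abs{l}^{1/2}}$ term takes over or the smooth comparison $w^3\approx w$ applies. This bookkeeping is the price for allowing the regularity imbalance between $u^3$ and the other components introduced in \S\ref{def:designnorm}.
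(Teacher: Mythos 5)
Your overall strategy (factor the multipliers, compare piece by piece, and split into cases according to $Z$-dominance and the resonant intervals) is the same as the paper's, but two of your key claims about the weights are false, and they are exactly where the content of the lemma lives. First, the assertion that $w(t,\eta)/w(t,\xi)\approx 1$ under \eqref{ineq:AFreqLoc} because ``$I_{k,\eta}$ and $I_{k,\xi}$ either both contain $t$ or both do not'' is wrong: the localization only gives $\abs{\eta-\xi}\leq\theta\abs{k,\eta,l}$ (and $\theta$ may be close to $1/2$), while the critical intervals have length $\sim\eta/k^2$ and $\eta$-dependent endpoints, so $t$ can lie in $I_{k,\eta}$ but not $I_{k,\xi}$; this is precisely what Lemma \ref{lem:wellsep} is for. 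The correct statements are Lemma \ref{lem:wRat} and Lemma \ref{lem:Jswap}, which only control the ratios up to factors $e^{K\mu\abs{\eta-\xi}^{1/2}}$ (and $\jap{\eta-\xi}$-type losses, e.g.\ your claim $\abs{\xi-kt}\approx\abs{\eta-kt}$ on the resonant interval also fails at $t=\eta/k$), and these losses must then be absorbed into the factor $e^{c\lambda\abs{k-k^\prime,\eta-\xi,l-l^\prime}^s}$ via \eqref{ineq:IncExp}, using $s>1/2$. Your proposal never accounts for these exponential losses, and since absorbing them is the reason the Gevrey correction and the constraint $s>1/2$ appear at all, this is a genuine gap rather than a cosmetic omission.

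Second, your treatment of the $\chi^{\ast;33}$ region is incorrect in the overlap regime $\tfrac15\abs{\eta}\leq\abs{l}<5\abs{\eta}$ (or similarly for $l^\prime,\xi$). There one can simultaneously have $t\in\I_{k,\eta}\cap\I_{k,\xi}$ with $k\neq k^\prime$, so a common critical time \emph{is} active, and the term $e^{\mu\abs{\eta}^{1/2}}/w^3_k(t,\eta)$, which can be as large as $e^{\frac{3\mu}{2}\abs{\eta}^{1/2}}$, need not be dominated by $e^{\mu\abs{l}^{1/2}}$; so neither of your two alternatives (``$e^{\mu\abs{l}^{1/2}}$ takes over'' or ``$w^3\approx w$'') applies, and $\mathcal{M}^3_k(t,\eta,l)\lesssim\mathcal{M}^3_{k^\prime}(t,\xi,l^\prime)$ does not hold directly. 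The paper's resolution (Case 2 of its proof) is to work with the squared quantity, write $(A^3_k)^2\lesssim\bigl(\tfrac{w^3_{k^\prime}(\xi)}{w^3_k(\eta)}\tilde{A}^3_k\tilde{A}^3_{k^\prime}+A^3_kA^3_{k^\prime}\bigr)e^{c\lambda\abs{\cdot}^s}$, and then use Lemma \ref{lem:Jswap} to confine the ratio loss to $\chi^{R,NR}$ with the \emph{wide} cutoffs $\abs{l}<5\abs{\eta}$, $\abs{l^\prime}<5\abs{\xi}$, while $\chi^{\ast;33}$ only excludes the \emph{narrow} region; in the overlap both summands are genuinely needed, and the loss term must come paired with $\tilde{A}^3$ so it can later be matched against the $CK_w$ terms. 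This asymmetry of cutoffs and the quadratic formulation are the point of the statement, and your argument as written would not produce them.
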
 
\begin{remark} 
The terms involving $\chi^{R,NR}$, $\chi^{NR,R}$, $\chi^{r,NR}$, and $\chi^{NR,r}$ are arising from comparing ratios of $w^3_k$ and $w^3_{k^\prime}$ or $w^3$ and $w$; see e.g. \eqref{def:approxw} above.  
In particular, modulo details regarding the $Z$ frequencies, the three contributions to \eqref{ineq:A3A3neqneq} roughly correspond to the three possible regimes in Lemma \ref{lem:Jswap}: when a resonant frequency forces a non-resonant frequency, vice-versa, and neither.  
The inequalities \eqref{ineq:A2A3neqneq} and \eqref{ineq:A3A2neqneq} generally play a more crucial role in the proof of Theorem \ref{thm:SRS} and  correspond instead to what happens when one compares $w$ and $w^3$, rather than $w^3$ with itself (that is, in terms when $Q^3$ interacts with $Q^{1,2}$). 
We have chosen to write it in this manner as this is the form that is most natural for Lemma \ref{lem:MainFreqRat_RegImbalance} below. 
\end{remark}

\begin{remark} 
Note that a time/frequency combination is only considered truly ``resonant'' if $t \in \I_{k,\eta} \cap \I_{k,\xi}$. 
The reason for this is explained by Lemma \ref{lem:wellsep}: if $t \in \I_{k,\eta}$ but $t \not\in \I_{k,\xi}$, then either $\eta$ and $\xi$ are well-separated or the time/frequency combination is not really resonant, which results in $\jap{\eta-\xi}\jap{kt - \eta}\gtrsim t$.   
\end{remark} 

\begin{remark} 
Note that the definitions in \eqref{def:freqcuts} are not quite symmetric for minor technical reasons and that the decomposition defined by \eqref{def:freqcuts} is not quite a partition of unity, as there is an overlap region when $\abs{l} \approx \abs{\eta}$ or $\abs{l^\prime} \approx \abs{\xi}$.   
When losing due to the regularity imbalances, one must take the larger region $\abs{l} < 5\abs{\eta}$ and $\abs{l^\prime} < 5\abs{\xi}$ but when
gaining due to the regularity imbalances,  one must take the smaller region $\abs{l} < \frac{1}{5}\abs{\eta}$ and $\abs{l^\prime} < \frac{1}{5}\abs{\xi}$.
\end{remark}

\begin{remark} 
Note that some of the inequalities in Lemma \ref{lem:ABasic} are phrased on quadratic quantities (as opposed to \eqref{ineq:ABasic12} and the analogous lemma in \cite{BGM15I}).
This is to treat the overlapping regions $\abs{l} \approx \abs{\eta}$ and $\abs{l^\prime} \approx \abs{\xi}$ more carefully, in particular, it is to make sure that any losses or gains from the ratios of $w$ and $w^3$ come with $\tilde{A}^i$, even if it is a region of frequency where $A^i \not\approx \tilde{A}^i$ (see also Remark \ref{rmk:noFreqRestrict} below). 
This precision is only required in certain places, especially when we need to use the $CK_w^i$ terms, and in other cases less precise inequalities suffice.  
\end{remark}

\begin{proof} 
These inequalities are all more or less easy variants of each other so we will just consider one of the trickier inequalities and omit the rest for brevity.   
We will consider \eqref{ineq:A3A3neqneq}; further, we will consider just the case $a = b = \neq$ as the other cases are analogous.

The proof is divided into three regions (which do not exactly correspond to the three terms in \eqref{ineq:A3A3neqneq}). 
\\

\noindent
\textit{Case 1: $\abs{l} > 5\abs{\eta}$ or $\abs{l^\prime} > 5\abs{\xi}$ } \\ 
In this case, the $Z$ frequencies are dominant and hence one does not see the contributions from $w^3$ multipliers. 
Indeed, $\chi^{R,NR} = \chi^{NR,R} = 0$ and $\chi^{\ast;33} = 1$. 
If $\abs{l^\prime} > 3\abs{\eta}$ then by Lemma \ref{lem:totalGrowthw}, 
\begin{align*}
\frac{\left(\frac{e^{\mu \abs{\eta}^{1/2}}}{w^3_k(t,\eta)} + e^{\mu\abs{l}^{1/2}}\right)}{\left(\frac{e^{\mu \abs{\xi}^{1/2}}}{w^3_{k^\prime}(t,\xi)} + e^{\mu\abs{l^\prime}^{1/2}}\right)} & \lesssim \frac{1}{w_k^3(t,\eta)}e^{\mu\abs{\eta}^{1/2} - \mu\abs{l^\prime}^{1/2}} + e^{\mu\abs{l}^{1/2} - \mu \abs{l^\prime}^{1/2}} \\ 
& \lesssim e^{\frac{3\mu}{2}\abs{\eta}^{1/2} - \mu\abs{l^\prime}^{1/2}} + e^{\mu\abs{l-l^\prime}^{1/2}} \\ 
& \lesssim e^{\mu\abs{l-l^\prime}^{1/2}}.  
\end{align*}
Therefore, by \eqref{ineq:AFreqLoc} and \ref{lem:scon} (and that $w_L$ is $O(1)$ by \eqref{ineq:unifN} and \eqref{def:wL}), there is some $c^\prime = c^\prime(s) \in (0,1)$, 
\begin{align*}
A_k^3(t,\eta,l) & \lesssim e^{\mu\abs{l-l^\prime}^{1/2} + c^\prime\lambda\abs{k-k^\prime,\eta-\xi,l-l^\prime}^s}A_{k^\prime}(t,\xi,l^\prime). 
\end{align*}
Then in this case \eqref{ineq:A3A3neqneq} follows from \eqref{ineq:IncExp} for some $c^\prime <  c  < 1$.
If $\abs{l^\prime} \leq 3\abs{\eta}$ then it follows that either $\abs{l-l^\prime} \gtrsim \abs{\eta}$ or $\abs{\eta-\xi} \gtrsim \abs{l^\prime} \gtrsim \abs{\xi}$.
Therefore, Lemma \ref{lem:totalGrowthw}, for some $K$ there holds,   
\begin{align*}
\left(\frac{e^{\mu \abs{\eta}^{1/2}}}{w^3_k(t,\eta)} + e^{\mu\abs{l}^{1/2}}\right) & \lesssim e^{\frac{3}{2}\mu\abs{\eta}^{1/2}} + e^{\mu\abs{l}^{1/2}}  \lesssim e^{\mu\abs{l^\prime}^{1/2}} e^{K\mu\abs{\eta-\xi,l-l^\prime}^{1/2}}.
\end{align*} 
Therefore, by the frequency localizations, for some $c^\prime = c^\prime(s) \in (0,1)$, 
\begin{align*}
A_k^3(t,\eta,l) & \lesssim e^{K\mu\abs{\eta-\xi, l-l^\prime}^{1/2} + c^\prime\lambda\abs{k-k^\prime,\eta-\xi,l-l^\prime}^s}A^3_{k^\prime}(t,\xi,l^\prime), 
\end{align*}
from which again there follows \eqref{ineq:A3A3neqneq} from \eqref{ineq:IncExp} for some $c^\prime <  c  < 1$.
\\

\noindent
\textit{Case 2: ($\abs{l} < 5\abs{\eta}$ and $\abs{l^\prime} < 5\abs{\xi}$) and ($\abs{l} > \frac{1}{5}\abs{\eta}$ or $\abs{l^\prime} > \frac{1}{5}\abs{\xi}$)}\\ 
In this case, neither $l,l^\prime$ nor $\eta,\xi$ are necessarily dominant, and indeed $\abs{l} \approx \abs{\eta}$ or $\abs{l^\prime} \approx \abs{\xi}$.  
We have $\chi^{NR,R}=0$ but there are regions in frequency where $\chi^{R,NR} = \chi^{\ast;33} = 1$ and we have to consider contributions involving both $A^3$ and $\tilde{A^3}$ at the same time.  
By \eqref{ineq:AFreqLoc} and Lemma \ref{lem:scon} (and that $w_L$ is $O(1)$ by \eqref{ineq:unifN} and \eqref{def:wL}), there is some $c^\prime = c^\prime(s) \in (0,1)$, 
\begin{align*}
\left(A^3_k(t,\eta,l)\right)^2 & \lesssim \left(\frac{e^{2\mu \abs{\eta}^{1/2}}}{\left(w^3_k(t,\eta)\right)^2} + e^{2\mu\abs{l}^{1/2}}\right) \frac{1}{\left(w_L(k,\eta,l)\right)^2}\jap{k,\eta,l}^{2\sigma} e^{2\lambda\abs{k,\eta,l}^s} \\ 
& \lesssim \left(\frac{e^{\mu \abs{\eta}^{1/2} + \mu\abs{\xi}^{1/2} + \mu\abs{\eta-\xi}^{1/2}}}{\left(w^3_k(t,\eta)\right)^2} + e^{\mu\abs{l}^{1/2} + \mu\abs{l^\prime}^{1/2} + \mu\abs{l-l^\prime}^{1/2}}\right) \\ & \quad\quad \times \frac{1}{w_L(k,\eta,l)w_L(k^\prime,\xi,l^\prime)}\jap{k,\eta,l}^{\sigma}\jap{k^\prime,\xi,l^\prime}^{\sigma}  e^{\lambda\abs{k,\eta,l}^s + \lambda\abs{k^\prime,\xi,l^\prime}^s + c^\prime\lambda\abs{k-k^\prime,\eta-\xi,l-l^\prime}^s}
\end{align*}
Then, by \eqref{ineq:IncExp}, we have some $c^\prime < c  < 1$ such that 
\begin{align*}
\left(A^3_k(t,\eta,l)\right)^2 & \lesssim \left(\frac{w^3_{k^\prime}(\xi)}{ w^3_k(\eta)} \tilde{A}^3_k(t,\eta,l)\tilde{A}^3_{k^\prime}(t,\xi,l^\prime) + A^3_k(t,\eta,l) A^3_{k^\prime}(t,\xi,l^\prime)\right) e^{c\lambda\abs{k-k^\prime,\eta-\xi,l-l^\prime}^s}. 
\end{align*} 
Lemma \ref{lem:Jswap} implies for some $K > 0$ (in particular),  
\begin{align*}
\frac{w^3_{k^\prime}(\xi)}{w^3_k(\eta)} & \lesssim \left(1 + \frac{t}{\abs{k} + \abs{\eta-kt}}\mathbf{1}_{t \in \I_{k,\eta} \cap \I_{k,\xi}}\mathbf{1}_{k \neq k^\prime} \right)e^{K\mu \abs{\eta-\xi}^{1/2}}, 
\end{align*}
and so we may restrict the frequencies over which we have a loss involving the $\tilde{A}^3$ to $\chi^{R,NR}$ but there is an overlapping region where both $A^3$ and $\tilde{A}^3$ are necessary.  
This completes the proof of \eqref{ineq:A3A3neqneq} now in the range of frequencies $\abs{l} > \frac{1}{5}\abs{\eta}$ or $\abs{l^\prime} > \frac{1}{5}\abs{\xi}$. 
\\

\noindent
\textit{Case 3: $\abs{l} < \frac{1}{5}\abs{\eta}$ and $\abs{l^\prime} < \frac{1}{5}\abs{\xi}$} \\ 
In this case, we need to be able to gain from the regularity imbalance. 
Here we have $\chi^{\ast;33} = 0$ and the only contributions are those which involve $\tilde{A}^3$. 
We have here,  using $w_{k^\prime}(t,\xi) \leq 1$ by definition (see Appendix \ref{sec:Defw}), 
\begin{align*}
\frac{\left(\frac{e^{\mu \abs{\eta}^{1/2}}}{w^3_k(t,\eta)} + e^{\mu\abs{l}^{1/2}}\right)}{\left(\frac{e^{\mu \abs{\xi}^{1/2}}}{w^3_{k^\prime}(t,\xi)} + e^{\mu\abs{l^\prime}^{1/2}}\right)} & \lesssim \frac{w^3_{k^\prime}(t,\xi)}{w^3_k(t,\eta)}e^{\mu\abs{\eta-\xi}^{1/2}} + w^3_{k^\prime}(t,\xi)e^{\mu\abs{l}^{1/2} - \mu \abs{\xi}^{1/2}} \\ 
& \lesssim \frac{w^3_{k^\prime}(t,\xi)}{w^3_k(t,\eta)}e^{\mu\abs{\eta-\xi}^{1/2}} + e^{\mu\abs{l}^{1/2} - \mu \abs{\xi}^{1/2}} \\      
 & \lesssim \frac{w^3_{k^\prime}(t,\xi)}{w^3_k(t,\eta)}e^{2\mu\abs{\eta-\xi,l-l^\prime}^{1/2}}. 
\end{align*} 
Therefore, in this case we only have contributions from the ratio of $w^3$: as above, we have for some $c^\prime = c^\prime(s) \in (0,1)$: 
\begin{align*}
\left(A_k^3(t,\eta,l)\right)^2 & \lesssim \frac{w^3_{k^\prime}(t,\xi)}{w_k^3(t,\eta)}e^{2\mu\abs{\eta-\xi,l-l^\prime}^{1/2}} A_k^3(t,\eta,l) A_{k^\prime}^3(t,\xi,l^\prime) e^{c^\prime\lambda \abs{k-k^\prime,\eta-\xi,l-l^\prime}^s}. 
\end{align*}
then \eqref{ineq:A3A3neqneq} now follows from Lemma \ref{lem:Jswap} (followed by \eqref{ineq:IncExp}) and the fact that under these restrictions $A^3 \approx \tilde{A}^3$.
We then have that \eqref{ineq:A3A3neqneq} follows from Lemma \ref{lem:Jswap}. 
This completes the proof of \eqref{ineq:A3A3neqneq} over all possible frequencies, and as mentioned above, the other inequalities are similar or easier. 

\end{proof}

We also have the following for remainder terms in the paraproducts (see \eqref{def:parapp}); the proof is the same as the analogous [Lemma 4.2 \cite{BGM15I}], so we omit it here for brevity.  
\begin{lemma} \label{lem:Arem}
For all $K > 0$ there exists a $c = c(s,K) \in (0,1)$ such that if 
\begin{align*} 
\frac{1}{K}\abs{k^\prime,\xi,l^\prime} \leq \abs{k - k^\prime,\eta-\xi,l-l^\prime} \leq K\abs{k^\prime,\xi,l^\prime},
\end{align*}
then 
\begin{subequations} \label{ineq:ARemainderBasic}
\begin{align} 
A^1_k(t,\eta,l) & \lesssim  \jap{t}^{-2-\delta_1} e^{c\lambda \abs{k^\prime,\xi,l^\prime}^s}e^{c\abs{k-k^\prime,\eta-\xi,l-l^\prime}^s} \\ 
A^2_k(t,\eta,l) & \lesssim  \jap{t}^{-1} e^{c \lambda \abs{k^\prime,\xi,l^\prime}^s}e^{c\abs{k-k^\prime,\eta-\xi,l-l^\prime}^s} \\
A^3_k(t,\eta,l) & \lesssim  \jap{t}^{-2} e^{c \lambda \abs{k^\prime,\xi,l^\prime}^s}e^{c\abs{k-k^\prime,\eta-\xi,l-l^\prime}^s}, \label{ineq:A3remainderBasic} 
\end{align}  
\end{subequations}
and if $k = k^\prime = 0$ then
\begin{align} 
A(t,\eta,l) & \lesssim e^{c \lambda \abs{\xi,l^\prime}^s}e^{c \lambda \abs{\eta-\xi,l-l^\prime}^s}. \label{ineq:AARemainderBasic}
\end{align}
All implicit constants depend on $\kappa, \lambda, \sigma$ and $s$. 
\end{lemma}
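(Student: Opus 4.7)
The argument follows the same general template as the analogous remainder lemma in \cite{BGM15I}: peel off the polynomial time decay using the min functions in \eqref{def:A}, then use subadditivity of $x^s$ sharpened by the comparability hypothesis to split the Gevrey exponential with a strict gain $c<1$, and finally absorb lower-order exponential and polynomial factors into the resulting Gevrey loss. The comparability assumption plays the role of confining the ``split ratio'' away from the degenerate endpoints where subadditivity is tight.

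The first step is to extract the prefactors. Writing $|\xi|\approx|\eta|\approx|\eta-\xi|$ (and similarly for the other components) up to a factor depending only on $K$, the min-functions obey in all regimes
\begin{align*}
\min\!\left(1,\tfrac{\jap{\eta,l}^{1+\delta_1}}{\jap{t}^{1+\delta_1}}\right) & \le \jap{t}^{-1-\delta_1}\jap{\eta,l}^{1+\delta_1}, \\
\min\!\left(1,\tfrac{\jap{\eta,l}}{t}\right) & \le \jap{t}^{-1}\jap{\eta,l}, \qquad \min\!\left(1,\tfrac{\jap{\eta,l}^2}{t^2}\right) \le \jap{t}^{-2}\jap{\eta,l}^2,
\end{align*}
so that $A^{1}_k \lesssim \jap{t}^{-2-\delta_1}\jap{\eta,l}^{1+\delta_1} A^Q_k$, $A^{2}_k\lesssim \jap{t}^{-1}\jap{\eta,l}\,A^Q_k$, and $A^{3}_k\lesssim \jap{t}^{-2}\jap{\eta,l}^{2}\tilde{A}^Q_k$ with $\tilde A^Q_k$ the same as $A^Q_k$ but with $w^3_k$ in place of $w$. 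In each case the surplus polynomial factor in $\jap{\eta,l}$ will be absorbed in Step~3. The bounded multipliers $1/w_L$ and $1/w$ (resp.\ $1/w^3_k$) are $O(1)$ uniformly in time and frequency by the definitions in Appendix~\ref{sec:def_nrm}, so they contribute harmless constants.

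The second step is the Gevrey split. The comparability condition $\tfrac{1}{K}|k',\xi,l'|\le|k-k',\eta-\xi,l-l'|\le K|k',\xi,l'|$ together with the triangle inequality gives $|k,\eta,l|=\theta|k',\xi,l'|+(1-\theta)|k-k',\eta-\xi,l-l'|$ for some $\theta\in[\tfrac{1}{K+1},\tfrac{K}{K+1}]$. Since $s\in(1/2,1)$, the function $\theta\mapsto \theta^s+(1-\theta)^s$ is strictly greater than $1$ on this compact subinterval, so there exists $M=M(K,s)>1$ with
\[
|k,\eta,l|^s \;\le\; M^{-1}\bigl(|k',\xi,l'|^s+|k-k',\eta-\xi,l-l'|^s\bigr).
\]
Setting $c_0:=M^{-1}\in(0,1)$ yields $e^{\lambda|k,\eta,l|^s}\le e^{c_0\lambda|k',\xi,l'|^s}e^{c_0\lambda|k-k',\eta-\xi,l-l'|^s}$.

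The third step handles the remaining factors. Using $|\eta|^{1/2}\le|\xi|^{1/2}+|\eta-\xi|^{1/2}$ and similarly for $l$, the half-power exponentials $e^{\mu|\eta|^{1/2}}$, $e^{\mu|l|^{1/2}}$ and the Sobolev factor $\jap{k,\eta,l}^\sigma\cdot\jap{\eta,l}^{2}$ (to cover all three cases) are all dominated by $e^{\varepsilon\lambda|k',\xi,l'|^s}e^{\varepsilon\lambda|k-k',\eta-\xi,l-l'|^s}$ for arbitrarily small $\varepsilon>0$ once $s>1/2$, via the standard inequality $x^{1/2}+\log\jap{x}\ll \lambda x^s$ for $x$ large (and trivial at bounded $x$). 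Setting $c:=c_0+2\varepsilon<1$ for $\varepsilon$ small, and combining with Step~1, delivers the three bounds \eqref{ineq:ARemainderBasic}. The inequality \eqref{ineq:AARemainderBasic} for $k=k'=0$ is strictly easier: there is no min-function prefactor and no factor of $\jap{t}^{-1}$ to track, so Steps~2 and~3 alone give the claim. The main technical point — and the only place one could be tempted to lose $c=1$ rather than $c<1$ — is the strict subadditivity in Step~2, which is why the comparability constant $K$ enters $c$.
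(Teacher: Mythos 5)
Your overall strategy is the right one and is, in essence, what the omitted proof (the paper defers to the analogous [Lemma 4.2, \cite{BGM15I}]) does: use the $\min$ prefactors in \eqref{def:A} to extract $\jap{t}^{-2-\delta_1}$, $\jap{t}^{-1}$, $\jap{t}^{-2}$ at the price of polynomial frequency factors, split the Gevrey exponential with a strict constant $c<1$ coming from the comparability of the two input frequencies, and absorb all lower-order factors ($e^{\mu\abs{\eta}^{1/2}}$, $e^{\mu\abs{l}^{1/2}}$, Sobolev weights) using $s>1/2$ via \eqref{ineq:IncExp}. (Two remarks on scope: the time-decay bounds are only meaningful for $k\neq 0$, which you implicitly assume since the $\min$ factor is absent at $k=0$; and your argument naturally produces $e^{c\lambda\abs{\cdot}^s}$ in \emph{both} exponentials, which is the form actually used downstream in the remainder estimates.)

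However, two of your assertions are wrong as written and need repair. First, $1/w$ and $1/w^3_k$ are emphatically \emph{not} $O(1)$ uniformly in frequency: by Lemma \ref{lem:totalGrowthw}, $1/w(t,\eta)\leq 1/w(1,\eta)\sim \abs{\eta}^{-p}e^{\mu\abs{\eta}^{1/2}/2}$ (and likewise for $w^3_k$), so the bracket in \eqref{def:A} is only bounded, up to polynomial factors, by $e^{\frac{3\mu}{2}\abs{\eta}^{1/2}}+e^{\mu\abs{l}^{1/2}}$. This does not sink the proof, since such square-root exponentials are absorbed exactly as in your Step~3 (using $\abs{\eta}^{1/2}\leq\abs{\xi}^{1/2}+\abs{\eta-\xi}^{1/2}$ and \eqref{ineq:IncExp}), but the claim must be replaced by an appeal to Lemma \ref{lem:totalGrowthw}; were it literally true, half of your Step~3 would be pointless. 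Second, your justification of the strict subadditivity is garbled: the triangle inequality does not yield the identity $\abs{k,\eta,l}=\theta\abs{k',\xi,l'}+(1-\theta)\abs{k-k',\eta-\xi,l-l'}$ with $\theta\in[\tfrac{1}{K+1},\tfrac{K}{K+1}]$ — when the two input frequencies nearly cancel, $\abs{k,\eta,l}$ lies strictly below the minimum of the two and no such $\theta$ exists. The correct route is exactly \eqref{lem:strivial}: with $a=\abs{k',\xi,l'}$ and $b=\abs{k-k',\eta-\xi,l-l'}$, the hypothesis gives $\theta:=a/(a+b)\in[\tfrac{1}{K+1},\tfrac{K}{K+1}]$, hence by homogeneity $(a+b)^s\leq(\tfrac{K}{K+1})^{1-s}(a^s+b^s)$, while $\abs{k,\eta,l}^s\leq(a+b)^s$ by the triangle inequality. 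Your conclusion $\abs{k,\eta,l}^s\leq c_0(a^s+b^s)$ with $c_0=c_0(K,s)<1$ is correct; only the intermediate step is not. With these two corrections the argument closes.
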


The following is [Lemma 4.3 \cite{BGM15I}], see therein for a proof. 
\begin{lemma} [Frequency ratios for $\partial_t w$ and $\partial_t w_L$] \label{lem:CKwFreqRat} 
For all $t \geq 1$ we have
\begin{subequations} 
\begin{align} 
\left(\sqrt{\frac{\partial_t w(t,\eta)}{w(t,\eta)}} + \frac{\abs{k,\eta,l}^{s/2}}{\jap{t}^{s}}\right) & \lesssim \left(\sqrt{\frac{\partial_t w(t,\xi)}{w(t,\xi)}} + \frac{\jap{k^\prime,\xi,l^\prime}^{s/2}}{\jap{t}^{s}}\right) \jap{k-k^\prime,\eta-\xi,l-l^\prime}^2 \label{ineq:dtwBasicBrack} \\ 
\left(\sqrt{\frac{\partial_t w(t,\eta)}{w(t,\eta)}} + \frac{\abs{k,\eta,l}^{s/2}}{\jap{t}^{s}}\right) & \lesssim \left(\sqrt{\frac{\partial_t w(t,\xi)}{w(t,\xi)}} + \frac{\abs{k^\prime,\xi,l^\prime}^{s/2} + \abs{k-k^\prime,\eta-\xi,l-l^\prime}^{s/2}}{\jap{t}^{s}}\right) \nonumber \\ & \quad\quad \times \jap{k-k^\prime,\eta-\xi,l-l^\prime}^2 \label{ineq:dtwBasicBrack2} \\ 
\sqrt{\frac{\partial_t w_L(t,k,\eta,l)}{w_L(t,k,\eta,l)}} & \lesssim \sqrt{\frac{\partial_t w_L(t,k,\xi,l^\prime)}{w_L(t,k,\xi,l^\prime)}} \jap{\eta-\xi,l-l^\prime}^{3/2}. \label{ineq:dtNBasic}
\end{align}
\end{subequations}
Further, if $\abs{k^\prime,\xi,l^\prime} \gtrsim 1$ then \eqref{ineq:dtwBasicBrack} implies
\begin{align}
\left(\sqrt{\frac{\partial_t w(t,\eta)}{w(t,\eta)}} + \frac{\abs{k,\eta,l}^{s/2}}{\jap{t}^{s}}\right) & \lesssim \left(\sqrt{\frac{\partial_t w(t,\xi)}{w(t,\xi)}} + \frac{\abs{k^\prime,\xi,l^\prime}^{s/2}}{\jap{t}^{s}}\right) \jap{k-k^\prime,\eta-\xi,l-l^\prime}^2.  \label{ineq:dtwBasic}
\end{align}
Moreover, both \eqref{ineq:dtwBasicBrack} and \eqref{ineq:dtwBasic} hold if we replace $\abs{k,\eta,l}$ and $\abs{k,\xi,l^\prime}$ by $\abs{\eta}$ and $\abs{\xi}$ (respectively). 
\end{lemma}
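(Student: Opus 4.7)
The plan is to compare the peak/tail structure of $\partial_t w/w$ and $\partial_t w_L/w_L$ at two nearby frequencies, exploiting that these multipliers are localized in $t$ near the critical times $t \approx \eta/k$ and that any frequency shift $(\eta-\xi, l-l')$ is paid for by the polynomial factor on the right-hand side. Recalling the construction of $w$ from Appendix \ref{sec:Defw}, one writes $\partial_t w(t,\eta)/w(t,\eta)$ as the sum of (i) a ``critical time'' contribution of size $\sim 1/(1+|t-\eta/k|)$ whenever $t$ lies in a resonant interval $\I_{k,\eta}$ with $1 \leq |k| \leq \sqrt{|\eta|}$, and (ii) a uniform-in-$t$ ``gradual Gevrey loss'' contribution (coming from \eqref{def:wextraloss}) which is bounded pointwise by the additive term $|k,\eta,l|^s/\jap{t}^{1+s}$. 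Under the square root, (ii) is absorbed by $|k,\eta,l|^{s/2}/\jap{t}^s$ on both sides of \eqref{ineq:dtwBasicBrack}, so all the non-trivial content is the comparison of the peak contribution (i) at the two frequencies.

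For (i) I would do a case analysis. If $t \notin \I_{k,\eta}$ for any admissible $k$, the estimate is immediate. Otherwise, fix the unique $k_*$ with $t \in \I_{k_*,\eta}$ and distinguish: if $t \in \I_{k_*,\xi}$ as well, then both $\partial_t w/w$ values are of order $1/(1+|t-\eta/k_*|)$ and $1/(1+|t-\xi/k_*|)$ respectively, and the ratio is controlled by $(1 + |\eta-\xi|/k_*) \lesssim \jap{\eta-\xi}^2$. If $t \notin \I_{k_*,\xi}$, then the well-separation Lemma~\ref{lem:wellsep} gives $\jap{\eta-\xi}\jap{t - \eta/k_*} \gtrsim t$, so $1/(1+|t-\eta/k_*|) \lesssim \jap{\eta-\xi}/t$, easily absorbed by either term on the right. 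Inequality \eqref{ineq:dtwBasicBrack2} then follows from \eqref{ineq:dtwBasicBrack} via the $s$-subadditivity $|k,\eta,l|^{s/2} \lesssim |k',\xi,l'|^{s/2} + |k-k',\eta-\xi,l-l'|^{s/2}$ valid for $s \in (0,1]$, and \eqref{ineq:dtwBasic} is the specialization of \eqref{ineq:dtwBasicBrack} when $\jap{k',\xi,l'} \approx |k',\xi,l'|$. Replacing $|k,\eta,l|$ by $|\eta|$ (and similarly for the primed variables) makes no difference to any step, since at frequencies where $|\eta|$ is the dominant component the two quantities are comparable and otherwise $|k|$ or $|l|$ contribute only to the absorbable difference term.

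For \eqref{ineq:dtNBasic}, the multiplier $w_L(t,k,\eta,l)$ from Appendix \ref{sec:Nmult} is a single smooth ``transition'' factor whose $t$-derivative is supported on a scale depending on $|k,\eta,l|$ but with a polynomial profile rather than the sharp $1/(1+|t-\eta/k|)$ peak of $w$. A direct computation of $\partial_t w_L/w_L$ at $(k,\eta,l)$ versus $(k,\xi,l')$, keeping $k$ fixed, shows that switching $(\eta,l)$ to $(\xi,l')$ costs at most $\jap{\eta-\xi,l-l'}^{3/2}$; the exponent $3/2$ reflects the polynomial rate at which the peak height of $w_L$ depends on $|\eta,l|^{1/2}$, which is weaker than the derivative loss for $w$ and hence needs a larger polynomial prefactor.

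The main obstacle is the case analysis for (i) when $t$ is near the boundary of a resonant interval and when $k_*^2$ is comparable to $|\eta|$, precisely the edge of validity of the definition of $\I_{k_*,\eta}$. Lemma~\ref{lem:wellsep} is designed to make this clean, but verifying that its hypotheses hold in each sub-case (and tracking the absolute constants carefully, especially on the transition between the peaked and gradual contributions to $\partial_t w/w$) is where the proof becomes genuinely delicate. Since exactly the same structural lemmas were used to prove the corresponding [Lemma 4.3 \cite{BGM15I}] in the below-threshold paper, I would follow that argument line-by-line, merely updating the frequency-localization accounting for the slightly refined form of $w$ used here.
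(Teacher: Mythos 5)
Your overall strategy matches the paper's: the paper gives no self-contained proof of Lemma \ref{lem:CKwFreqRat} but cites [Lemma 4.3, \cite{BGM15I}], and the argument there is exactly what you describe — split $\partial_t w/w$ into the peaked contribution $\sim \kappa/(1+|t-\eta/k|)$ on resonant intervals plus the gradual losses from \eqref{def:wextraloss}, absorb the gradual part into the Sobolev correction $\abs{\eta}^{s/2}/\jap{t}^s$ (using $t\lesssim\sqrt{|\eta|}$ resp.\ $\sqrt{|\eta|}\lesssim t\lesssim|\eta|$ on its support), compare the peaks at $\eta$ and $\xi$ by the case analysis resonant/resonant, resonant/non-resonant via Lemma \ref{lem:wellsep}, and obtain \eqref{ineq:dtNBasic} by the explicit ratio computation from \eqref{def:wL} (the exponent $3/2$ coming from $\jap{l}/\jap{l'}\lesssim\jap{l-l'}$ times the quadratic denominator swap, then a square root).

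Two points need repair, one of them a genuine logical error. First, \eqref{ineq:dtwBasicBrack2} does \emph{not} follow from \eqref{ineq:dtwBasicBrack}: the right-hand side of \eqref{ineq:dtwBasicBrack} carries $\jap{k',\xi,l'}^{s/2}\geq 1$, whereas in \eqref{ineq:dtwBasicBrack2} the Sobolev contribution $\abs{k',\xi,l'}^{s/2}+\abs{k-k',\eta-\xi,l-l'}^{s/2}$ can be arbitrarily small at low frequencies, so \eqref{ineq:dtwBasicBrack2} is the strictly stronger statement and cannot be "deduced" from the weaker one. Both must instead be extracted from the same core comparison: subadditivity of $x\mapsto x^{s/2}$ handles the Sobolev term on the left, and the $w$-comparison must be established in a form whose right-hand side uses only $\sqrt{\partial_t w(t,\xi)/w(t,\xi)}$, $\abs{\xi}^{s/2}/\jap{t}^s$, and powers of the frequency difference (here one uses $t\lesssim 2\abs{\eta}$ on the support of $\partial_t w(t,\eta)$ so that, when $\eta$ and $\xi$ are not comparable, everything is absorbed by $\jap{\eta-\xi}$ factors alone). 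Second, and more minor: your pointwise bound for the gradual losses, $\abs{k,\eta,l}^{s}/\jap{t}^{1+s}$, is false for $t\sim\sqrt{|\eta|}$ (there $\kappa|\eta|/t^2\sim\kappa$, while $|\eta|^s/t^{1+s}\sim |\eta|^{(s-1)/2}\ll 1$); the correct and sufficient bound is $\abs{\eta}^{s}/\jap{t}^{2s}$, i.e.\ precisely the square of the Sobolev correction, which is what the absorption step actually requires since $s>1/2$. With these adjustments — and with the "easily absorbed" step in the non-resonant case carried out quantitatively, again using $t\lesssim|\eta|$ on the support and Lemma \ref{lem:wellsep} to trade $1/\jap{t-\eta/k}$ for $\jap{\eta-\xi}/t$ — your outline coincides with the cited proof.
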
 

The next lemma is [Lemma 4.4, \cite{BGM15I}] and is immediate from the definition of $D$ \eqref{def:D}, but useful for separating the pre and post critical times in the enhanced dissipation estimates. 
\begin{lemma} 
For all $p \geq 0$ and $(k,\eta,l)$ there holds the following inequalities
\begin{subequations} 
\begin{align}
A^{\nu;i}_k(t,\eta,l) & \lesssim \jap{t}^{-p}\jap{k,\eta,l}^{\beta + 3\alpha + p}e^{\lambda\abs{k,\eta,l}^{s}} + A^{\nu;i}_k(t,\eta,l) \mathbf{1}_{t \geq 2\abs{\eta}} \label{ineq:AnuHiLowSep} \\ 
A^{\nu;i}_k(t,\eta,l) & \lesssim \jap{t}^{-p}\jap{k,\eta,l}^{\beta + 3\alpha + p}e^{\lambda\abs{k,\eta,l}^{s}} + \jap{t}^{-1}\left(\abs{k} + \abs{\eta-kt}\right)A^{\nu;i}_k(t,\eta,l) \mathbf{1}_{t \geq 2\abs{\eta}}.  \label{ineq:AnuHiLowSep2}
\end{align}
\end{subequations}
\end{lemma}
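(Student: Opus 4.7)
The plan is to split into the two regions $t \geq 2\abs{\eta}$ and $t < 2\abs{\eta}$. In the former, the second summand on the right-hand side of each inequality, which carries the indicator $\mathbf{1}_{t \geq 2\abs{\eta}}$, dominates $A^{\nu;i}_k$ trivially (for \eqref{ineq:AnuHiLowSep2} only after verifying its prefactor is $\gtrsim 1$). So the only real content lies in bounding $A^{\nu;i}_k$ by the ``polynomial'' term on the complementary region.

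In the region $t < 2\abs{\eta}$, the explicit formula \eqref{def:D} collapses to $D(t,\eta)=\tfrac{1}{3\alpha}\nu\abs{\eta}^3$ because $(t^3-8\abs{\eta}^3)_+ = 0$. Using $\nu \leq 1$ (Remark \ref{ineq:nulesseps}), this gives $\jap{D(t,\eta)}^\alpha \lesssim \jap{\abs{\eta}}^{3\alpha} \lesssim \jap{k,\eta,l}^{3\alpha}$. Combined with $w_L \gtrsim 1$ (as invoked already in the proof of Lemma \ref{lem:ABasic}) and the fact that the min-factors in the definitions \eqref{def:Anu} of $A^{\nu;i}$ are all bounded by $1$, we obtain
\[ A^{\nu;i}_k(t,\eta,l) \lesssim e^{\lambda\abs{k,\eta,l}^s}\jap{k,\eta,l}^{\beta+3\alpha}. \]
Since $t<2\abs{\eta}$ forces $\jap{t}\lesssim \jap{k,\eta,l}$, we have $1\lesssim \jap{k,\eta,l}^p\jap{t}^{-p}$ for every $p\geq 0$, which converts the above into the claimed bound $\lesssim \jap{t}^{-p}\jap{k,\eta,l}^{\beta+3\alpha+p}e^{\lambda\abs{k,\eta,l}^s}$. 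This proves \eqref{ineq:AnuHiLowSep}.

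For the refinement \eqref{ineq:AnuHiLowSep2}, the region $t<2\abs{\eta}$ is handled identically. In the complementary region $t\geq 2\abs{\eta}$ it suffices to show $\jap{t}^{-1}(\abs{k}+\abs{\eta-kt})\gtrsim 1$, after which $A^{\nu;i}_k\mathbf{1}_{t\geq 2\abs{\eta}}\lesssim \jap{t}^{-1}(\abs{k}+\abs{\eta-kt})A^{\nu;i}_k\mathbf{1}_{t\geq 2\abs{\eta}}$ is automatic. Because $A^{\nu;i}$ contains the factor $\mathbf{1}_{k\neq 0}$ (inherited from $A^\nu$ in \eqref{def:Anu}), we have $\abs{k}\geq 1$, and the triangle inequality gives $\abs{\eta-kt}\geq \abs{k}t-\abs{\eta}\geq t-t/2 = t/2$. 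Hence $\abs{k}+\abs{\eta-kt}\geq t/2$, which yields the required $\gtrsim 1$ lower bound for $t\geq 1$; the sub-case of very small $t$ is handled by the polynomial summand directly, since there $\jap{t}\approx 1$ and $\jap{D}^\alpha\lesssim 1$.

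There is no serious obstacle here: the lemma is, as advertised, just bookkeeping from the explicit formula \eqref{def:D}, the uniform lower bound on $w_L$, and the boundedness of the prefactors appearing in \eqref{def:Anu}. The one point worth being careful about is the exchange $\jap{t}^p\lesssim \jap{k,\eta,l}^p$ used to absorb $\jap{t}^{-p}$ into the polynomial loss, which is only valid under the restriction $t<2\abs{\eta}$ and motivates precisely the choice of $2\abs{\eta}$ as the splitting threshold.
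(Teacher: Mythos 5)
Your proof is correct and is exactly the argument the paper has in mind: the paper omits the proof as ``immediate from the definition of $D$'' in \eqref{def:D}, and your bookkeeping — $D\lesssim \nu\abs{\eta}^3$ and hence $\jap{D}^\alpha\lesssim\jap{k,\eta,l}^{3\alpha}$ for $t<2\abs{\eta}$ together with $\jap{t}\lesssim\jap{k,\eta,l}$ there, plus $\abs{k}+\abs{\eta-kt}\gtrsim\jap{t}$ (using $\abs{k}\geq 1$) for $t\geq 2\abs{\eta}$ — is precisely that argument. No gaps.
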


The next lemma tells us how to treat ratios involving $\Delta_L$. 
This lemma is a technical improvement of [Lemma 4.5, \cite{BGM15I}]. The adjustments are necessary as here we can only use the $CK_w$ terms in a certain sector of frequency due to the more non-trivial angular dependence of the norms we are employing. 

\begin{lemma}[Frequency ratios for $\Delta_L$] \label{lem:MainFreqRat}
If $t \gtrsim 1$ then for all $\eta,\xi,l,l^\prime, k^\prime$ and $k$ define the following 
\begin{align}
\chi_{NR;k} = 1- \mathbf{1}_{t \in \I_{k,\eta} \cap \I_{k,\xi}}\mathbf{1}_{\abs{l} < \frac{1}{5}\abs{\eta}}\mathbf{1}_{\abs{l^\prime} < \frac{1}{5}\abs{\xi}}. \label{def:chiNR}
\end{align}
Then, we have the following 
\begin{itemize}
\item Basic characterizations of non-resonance: for all $k \neq 0$,  
\begin{align}
\left(\frac{1}{\abs{k,\eta-kt,l}} + \frac{1}{\abs{k,\xi-kt,l^\prime}}\right)\chi_{NR;k} & \lesssim \frac{1}{\jap{k,t,l^\prime}} \jap{\eta-\xi,l-l^\prime}; \label{ineq:basicNR} 
\end{align}
\item Approximate integration by parts: for all $k \neq 0$,
\begin{align} 
\abs{\eta-kt} \lesssim \jap{\eta-\xi}\left(\abs{k} + \abs{\xi-kt}\right); \label{ineq:TriTriv}
\end{align}
\item For absorbing long-time losses: for all $k \neq 0$, 
\begin{align} 
\frac{1}{\abs{k,\eta-kt,l}} \jap{\frac{t}{\jap{\xi,l^\prime}}} & \lesssim \jap{\eta-\xi,l-l^\prime}; \label{ineq:ratlongtime}
\end{align}
\item For the linear stretching terms, for all $k \neq 0$, 
\begin{align}
\frac{\abs{k} \mathbf{1}_{t \leq 2\abs{\eta}}}{\abs{k} + \abs{l} + \abs{\eta-kt}} & \lesssim \kappa^{-1}\frac{\partial_t w(t,\eta)}{w(t,\eta)}\mathbf{1}_{\abs{l} \leq \frac{1}{5}\abs{\eta}} + \frac{\abs{l}^{1/2}}{t^{3/2}}; \label{ineq:CKwLS}
\end{align}
\item For nonlinear terms involving $\partial_X$ (for \textbf{(SI)} terms): if $p \in \mathbb{R}$ and $k \neq 0$,  
\begin{align} 
\frac{\abs{k,\eta-kt,l} \abs{k}}{k^2 + (l^\prime)^2 + \abs{\xi-kt}^2} \jap{\frac{t}{\jap{\xi,l^\prime}}}^p & \lesssim \left(\left(\sqrt{\frac{\partial_t w(t,\eta)}{w(t,\eta)}}\mathbf{1}_{\abs{l} \leq \frac{1}{5}\abs{\eta}}  + \frac{\abs{k,\eta}^{s/2}}{\jap{t}^s} \right)\left(\sqrt{\frac{\partial_t w(t,\xi)}{w(t,\xi)}}\mathbf{1}_{\abs{l^\prime} \leq \frac{1}{5}\abs{\xi}}  + \frac{\abs{k,\xi}^{s/2}}{\jap{t}^s} \right) \right. \nonumber \\ & \left. \quad +  \frac{\chi_{NR;k}}{\jap{t}}\min\left(1,\frac{\abs{k, \eta-kt,l}}{\jap{kt}}  \right) \jap{\frac{t}{\jap{\xi,l^\prime}}}^p \right) \jap{\eta-\xi,l-l^\prime}^{4}; \label{ineq:AiPartX}
\end{align}  
\item For terms with fewer derivatives (for \textbf{(3DE)} terms): if $a \in \set{1,2}$, $p \in \mathbb{R}$, and $k^\prime,k \neq 0$, then
\begin{align} 
\frac{1}{\abs{k^\prime,\xi-k^\prime t,l^\prime}^a} \jap{\frac{t}{\jap{\xi,l^\prime}}}^p & \lesssim \nonumber \\ & \hspace{-3cm} \left(\sqrt{\frac{\partial_t w(t,\eta)}{w(t,\eta)}} \mathbf{1}_{\abs{l} \leq \frac{1}{5}\abs{\eta}} + \frac{\abs{k,\eta}^{s/2}}{\jap{t}^s} \right)\left(\sqrt{\frac{\partial_t w(t,\xi)}{w(t,\xi)}} \mathbf{1}_{\abs{l^\prime} \leq \frac{1}{5}\abs{\xi}} + \frac{\abs{k^\prime, \xi}^{s/2}}{\jap{t}^s} \right) \jap{k-k^\prime, \eta-\xi,l-l^\prime}^3  \nonumber \\ & \hspace{-3cm} \quad\quad  + \frac{1}{\jap{t}^{a}}\jap{\frac{t}{\jap{\xi,l^\prime}}}^p \jap{k-k^\prime, \eta-\xi,l-l^\prime}^3  \label{ineq:AikDelLNoD} 
\end{align} 
\item For \textbf{(3DE)} terms in the nonlinear pressure and stretching: if $p \in \mathbb{R}$, $k k^\prime(k-k^\prime)  \neq 0$,  
\begin{subequations} \label{ineq:AikDelL2D}
\begin{align}
\frac{\abs{k,\eta-k t,l}\abs{k,\xi-k^\prime t,l^\prime}}{(k^\prime)^2 + (l^\prime)^2 + \abs{\xi-k^\prime t}^2} \jap{\frac{t}{\jap{\xi,l^\prime}}}^p & \lesssim \left(\jap{t} + \jap{\frac{t}{\jap{\xi,l^\prime}}}^p\right)\jap{k-k^\prime,\eta-\xi,l-l^\prime}^{2}  \\ 
\frac{\abs{k,\eta-k t,l}\abs{k^\prime,\xi-k^\prime t,l^\prime}}{(k^\prime)^2 + (l^\prime)^2 + \abs{\xi-k^\prime t}^2} \jap{\frac{t}{\jap{\xi,l^\prime}}}^p & \lesssim \left(\jap{t}\left(\sqrt{\frac{\partial_t w(t,\eta)}{w(t,\eta)}}\mathbf{1}_{\abs{l} \leq \frac{1}{5}\abs{\eta}} + \frac{\abs{k,\eta}^{s/2}}{\jap{t}^s} \right)\right. \nonumber \\ & \left.  \quad\quad \times \left(\sqrt{\frac{\partial_t w(t,\xi)}{w(t,\xi)}}\mathbf{1}_{\abs{l^\prime} \leq \frac{1}{5}\abs{\xi}} + \frac{\abs{k^\prime, \xi}^{s/2}}{\jap{t}^s} \right) \right. \nonumber  \\  & \left. \quad\quad + \min\left(1,\frac{\abs{k,\eta-kt,l}}{\jap{kt}} \right)\jap{\frac{t}{\jap{\xi,l^\prime}}}^p\right)\jap{k-k^\prime,\eta-\xi,l-l^\prime}^{2}. \label{ineq:AikDelL2D_CKw}  \\ 
\frac{\abs{l^\prime}\abs{k,\eta-kt,l}}{(k^\prime)^2 + (l^\prime)^2 + \abs{\xi-k^\prime t}^2} \jap{\frac{t}{\jap{\xi,l^\prime}}} & \lesssim \left(\jap{t}\left(\sqrt{\frac{\partial_t w(t,\eta)}{w(t,\eta)}}\mathbf{1}_{\abs{l} \leq \frac{1}{5}\abs{\eta}} + \frac{\abs{k,\eta}^{s/2}}{\jap{t}^s} \right) \right. \nonumber \\ & \left. \quad\quad \times \left(\sqrt{\frac{\partial_t w(t,\xi)}{w(t,\xi)}}\mathbf{1}_{\abs{l^\prime} \leq \frac{1}{5}\abs{\xi}} + \frac{\abs{k^\prime, \xi}^{s/2}}{\jap{t}^s} \right) \right. \nonumber  \\  &  \quad\quad + 1\bigg) \jap{k-k^\prime,\eta-\xi,l-l^\prime}^{2}. \label{ineq:AikDelL2D_CKw2}  
\end{align} 
\end{subequations}
\item For triple derivative terms (these arise in the treatment of \textbf{(F)} terms): if $p \in \mathbb{R}$ and $k \neq 0$,    
\begin{subequations} \label{ineq:AdelLij} 
\begin{align} 
\frac{\abs{l}^3}{(k)^2 + (l^\prime)^2 + \abs{\xi-k t}^2} \jap{\frac{t}{\jap{\xi,l^\prime}}}^p & \lesssim \abs{l}\left(\jap{l-l^\prime}^2  + \frac{\abs{l}^2}{\jap{l^\prime,t}^2} \jap{\frac{t}{\jap{\xi,l^\prime}}}^p\right)  \label{ineq:AdeZZZ} \\ 
\frac{\abs{\eta}\abs{l}^2 + \abs{\eta}^2\abs{l}}{(k)^2 + (l^\prime)^2 + \abs{\xi-k t}^2} \jap{\frac{t}{\jap{\xi,l^\prime}}}^p & \lesssim \left(\jap{t}^2\left(\sqrt{\frac{\partial_t w(t,\eta)}{w(t,\eta)}}\mathbf{1}_{\abs{l} \leq \frac{1}{5}\abs{\eta}} + \frac{\abs{\eta}^{s/2}}{\jap{t}^s}\right) \right. \nonumber \\ & \left. \quad\quad \times \left(\sqrt{\frac{\partial_t w(t,\xi)}{w(t,\xi)}}\mathbf{1}_{\abs{l^\prime} \leq \frac{1}{5}\abs{\xi}} + \frac{\abs{\xi}^{s/2}}{\jap{t}^s}\right) \right. \nonumber \\ & \left. \quad + \abs{l}\left(1  + \frac{\abs{\eta}\abs{l} + \abs{\eta}^2}{\jap{\xi,l^\prime,t}^2} \jap{\frac{t}{\jap{\xi,l^\prime}}}^p\right)\right) \jap{k,\eta-\xi,l-l^\prime}^3  \label{ineq:AdeYZZ}  \\ 
\frac{\abs{\eta}^3}{k^2 + (l^\prime)^2 + \abs{\xi-k t}^2} \jap{\frac{t}{\jap{\xi,l^\prime}}}^p & \lesssim \left(\jap{t}^3\left(\sqrt{\frac{\partial_t w(t,\eta)}{w(t,\eta)}}\mathbf{1}_{\abs{l} \leq \frac{1}{5}\abs{\eta}} + \frac{\abs{\eta}^{s/2}}{\jap{t}^s}\right) \right. \nonumber \\ & \left. \quad\quad \times \left(\sqrt{\frac{\partial_t w(t,\xi)}{w(t,\xi)}}\mathbf{1}_{\abs{l^\prime} \leq \frac{1}{5}\abs{\xi}} + \frac{\abs{\xi}^{s/2}}{\jap{t}^s}\right) \right. \nonumber \\ & \left. \quad  + \min(\abs{\eta},\jap{\xi-kt})\left(1  + \frac{\abs{\eta}^2}{\jap{\xi,l^\prime,t}^2} \jap{\frac{t}{\jap{\xi,l^\prime}}}^p\right) \right) \jap{k,\eta-\xi,l-l^\prime}^3. \label{ineq:AdeYYY}
\end{align} 
\end{subequations} 
\end{itemize} 
\end{lemma}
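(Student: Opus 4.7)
The proof would proceed inequality-by-inequality; each item is a self-contained reduction of a ratio built from $\Delta_L$ (i.e.\ from $|k,\eta-kt,l|^2$ on the Fourier side) to a sum of two building blocks: ``Cauchy--Kovalevskaya'' weights $(\partial_t w/w)^{1/2}$ and $|k,\eta|^{s/2}/\langle t\rangle^s$ coming from the design of $A^i$, plus nonresonant polynomial-in-$t^{-1}$ pieces. The common framework I would use is a case split based on whether $t\in \mathbf{I}_{k,\eta}\cap \mathbf{I}_{k,\xi}$ and on the angular cuts $\mathbf{1}_{|l|\le |\eta|/5}$, $\mathbf{1}_{|l'|\le |\xi|/5}$. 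On the doubly-resonant sector, the heuristic \eqref{def:approxw}, $\partial_t w(t,\eta)/w(t,\eta)\sim 1/(1+|t-\eta/k|)$ when $|t-\eta/k|\lesssim |\eta|/k^2$ and $|l|\ll|\eta|$, converts negative powers of $|k|+|\eta-kt|$ directly into a $CK_w$-weight; on the complementary set, the lower bound $|\eta-kt|\gtrsim t/\langle k\rangle^2$ converts the same factors into $\langle t\rangle^{-1}$ or better. The commutator $\langle k-k',\eta-\xi,l-l'\rangle^N$ on the right accommodates passages between the two frequency pairs.

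I would begin with the triangle-inequality items \eqref{ineq:TriTriv}, \eqref{ineq:basicNR}, \eqref{ineq:ratlongtime}, which require only the observation that on the support of $\chi_{NR;k}$, at least one of $|t-\eta/k|$ and $|t-\xi/k|$ is bounded below by $t/\langle k\rangle^2$ modulo a commutator absorbed in $\langle \eta-\xi,l-l'\rangle$. Then \eqref{ineq:CKwLS}: on $\mathbf{1}_{|l|\le |\eta|/5}$ the ratio is $\lesssim |k|/(|k|+|\eta-kt|)$, which matches $\partial_t w/w$ near a critical time and is $\lesssim 1/t$ away; when $|l|>|\eta|/5$ the denominator is at least $|l|$, giving $\lesssim |k|/|l|\le |l|^{1/2}/t^{3/2}$, using $|k|^2\lesssim |\eta|\lesssim t$ (the regime $t>2|\eta|$ is truncated by the indicator).

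The heart of the lemma is the family \eqref{ineq:AiPartX}, \eqref{ineq:AikDelLNoD}, \eqref{ineq:AikDelL2D}, \eqref{ineq:AdelLij}, in which the target has the pattern ``product of two $CK_w^{1/2}$ weights plus a nonresonant remainder with the long-time loss $\langle t/\langle \xi,l'\rangle\rangle^p$ attached''. I would prove each by the same scheme: on the intersection of both resonant intervals, distribute the denominator $(k')^2+(l')^2+|\xi-k't|^2\sim (|k'|+|l'|+|\xi-k't|)^2$ so that one power of $(|k'|+|\xi-k't|)^{-1}$ is assigned to each side, then apply \eqref{def:approxw} to each side separately and transfer $w(t,\xi)\leftrightarrow w(t,\eta)$ via Lemma \ref{lem:Jswap}. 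Off the resonant set, one power of the denominator yields $\langle t\rangle^{-1}$ and the remaining factor is bounded by $\min(1,|k,\eta-kt,l|/\langle kt\rangle)$; here \eqref{ineq:TriTriv} is used to switch $\eta-kt\leftrightarrow \xi-k't$ at the price of $\langle k-k',\eta-\xi\rangle$. For \eqref{ineq:AdelLij}, the same scheme applies but one must carefully separate the $Z$-direction (no $w$-weight available) from the $Y$-direction, explaining why \eqref{ineq:AdeZZZ} has no CK-term on its right-hand side while \eqref{ineq:AdeYZZ} and \eqref{ineq:AdeYYY} do.

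The main obstacle throughout is cut-off bookkeeping. The $CK_w$-weights are only genuinely comparable to $1/(|k|+|\eta-kt|)$ on the angular sector $|l|\le |\eta|/5$, yet several of the inequalities purport to hold on all of frequency space, with the CK piece carrying an explicit $\mathbf{1}_{|l|\le |\eta|/5}$ and the complement absorbed by the remainder. Verifying that the $|l|\gtrsim |\eta|$ region is always controlled either by $\langle t\rangle^{-a}$ (via the uniform ellipticity of $\Delta_L$ in $Z$) or by the $|l|^{1/2}/t^{3/2}$-type piece of \eqref{ineq:CKwLS} is the most delicate step; in particular, in \eqref{ineq:AikDelL2D_CKw} one must check that the ``no-CK'' remainder $\min(1,|k,\eta-kt,l|/\langle kt\rangle)\langle t/\langle \xi,l'\rangle\rangle^p$ simultaneously absorbs both the $|l|\sim |\eta|$ contribution and the nonresonant contribution without losing an extra factor of $t$, which in turn constrains the exact placement of the $\jap{t}$ versus $\min$ split on the right-hand side.
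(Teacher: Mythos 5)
Your overall skeleton (split on the resonant intervals and on the angular cuts, convert inverse powers of $\abs{k}+\abs{\eta-kt}$ into $\sqrt{\partial_t w/w}$ weights on the doubly resonant core, push losses into the commutator weights, and handle the doubly low-$Z$ sector essentially as in the analogous lemma of \cite{BGM15I}) is the same as the paper's, which indeed delegates the sector $\abs{l}\le\tfrac15\abs{\eta}$, $\abs{l'}\le\tfrac15\abs{\xi}$ to [Lemma 4.5, \cite{BGM15I}]. However, there is a genuine gap exactly where you flag the ``most delicate step.'' The paper's proof rests on two new observations: \eqref{ineq:Znonres} (uniform ellipticity in $Z$ makes the sector $\abs{l'}\gtrsim\abs{\xi}$ strongly non-resonant) and \eqref{ineq:WellSepLEeta} (if $\abs{l}\gtrsim\abs{\eta}$ while $\abs{l'}\ll\abs{\xi}$, then $\abs{\eta,l}+\abs{\xi,l'}\lesssim\abs{\eta-\xi,l-l'}$, so the \emph{entire} multiplier, including the time factors since $t\lesssim\abs{\eta}$ or $t\lesssim\abs{\xi}$ in the relevant regimes, is absorbed by $\jap{\eta-\xi,l-l'}^m$ after multiplying and dividing by $\jap{\xi,l',kt}^2$, landing in the $\chi_{NR;k}$ remainder). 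Your plan only contains the first mechanism: you claim the $\abs{l}\gtrsim\abs{\eta}$ region is controlled ``either by $\jap{t}^{-a}$ via the uniform ellipticity of $\Delta_L$ in $Z$ or by the $\abs{l}^{1/2}/t^{3/2}$-type piece,'' but in the mixed sector the $Z$-dominant frequency is \emph{not} the one sitting in the denominator, so ellipticity gives nothing, and inequalities such as \eqref{ineq:AiPartX}, \eqref{ineq:AikDelLNoD}, \eqref{ineq:AikDelL2D} have no $\abs{l}^{1/2}/t^{3/2}$ term to fall back on (and the CK product is unusable there because of the indicators $\mathbf{1}_{\abs{l}\le\frac15\abs{\eta}}$, $\mathbf{1}_{\abs{l'}\le\frac15\abs{\xi}}$). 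Without the well-separation inequality \eqref{ineq:WellSepLEeta} this sub-case of essentially every estimate in the family is left unproved.

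Two smaller points. First, the frequency-transfer tool you invoke is wrong: Lemma \ref{lem:Jswap} compares the imbalanced weights $w^3_{k'}(\eta)/w^3_k(\xi)$ and is used for Lemmas \ref{lem:ABasic} and \ref{lem:MainFreqRat_RegImbalance}; in the present lemma the transfer of $\sqrt{\partial_t w/w}$ between $\eta$ and $\xi$ is done with Lemma \ref{lem:CKwFreqRat} (equivalently Lemma \ref{lem:WtFreqCompare}). Second, your justification of \eqref{ineq:CKwLS} in the region $\abs{l}>\tfrac15\abs{\eta}$ uses ``$\abs{k}^2\lesssim\abs{\eta}\lesssim t$,'' neither half of which is available: on the support one has $t\le2\abs{\eta}$ (so $t\lesssim\abs{\eta}$, not the reverse) and $k$ is unrestricted. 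The correct route is to use $t\lesssim\abs{\eta}\lesssim\abs{l}$ and split on whether $\abs{kt}\ge2\abs{\eta}$ (then $\abs{\eta-kt}\gtrsim\abs{kt}$ and the ratio is $\lesssim1/t$) or $\abs{kt}<2\abs{\eta}$ (then $\abs{k}\lesssim\abs{\eta}/t\lesssim\abs{l}/t$), after which $1/t\lesssim\abs{l}^{1/2}/t^{3/2}$ follows from $t\lesssim\abs{l}$. These are fixable, but as written those steps do not go through.
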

\begin{remark} 
As in \cite{BGM15I}, \eqref{ineq:AdelLij} implies 
\begin{align}
\frac{\abs{\eta,l}\jap{\eta,l}^2}{(k)^2 + (l^\prime)^2 + \abs{\xi-k t}^2} \jap{\frac{t}{\jap{\xi,l^\prime}}}^p & \lesssim \left(\jap{t}^3\left(\sqrt{\frac{\partial_t w(t,\eta)}{w(t,\eta)}}\mathbf{1}_{\abs{l} \leq \frac{1}{5}\abs{\eta}} + \frac{\abs{\eta}^{s/2}}{\jap{t}^s}\right) \right. \nonumber \\ & \left. \quad\quad \times \left(\sqrt{\frac{\partial_t w(t,\xi)}{w(t,\xi)}}\mathbf{1}_{\abs{l^\prime} \leq \frac{1}{5}\abs{\xi}} + \frac{\abs{\xi}^{s/2}}{\jap{t}^s}\right) \right. \nonumber \\ & \left. \quad  + \abs{\eta,l}\left(1  + \frac{\jap{\eta,l}^2}{\jap{\xi,l^\prime,t}^2} \jap{\frac{t}{\jap{\xi,l^\prime}}}^p\right)\right)\jap{k,\eta-\xi,l-l^\prime}^3 \label{ineq:AdeGen}
\end{align}
\end{remark}

\begin{proof}
First, note that for any fixed number $N \geq 1$, 
\begin{align}
\frac{\mathbf{1}_{\abs{l^\prime} \geq \frac{1}{N} \abs{\xi}} }{\abs{k^\prime,l^\prime,\xi-k^\prime t}} & \lesssim_{N} \frac{1}{\abs{l^\prime, k^\prime t}}, \label{ineq:Znonres}
\end{align}
and hence the sector in frequency where $l^\prime$ is dominant or comparable to $\xi$ is strongly non-resonant.  
Further, observe that for any $N \geq 1$, 
\begin{align*}
\abs{l}  \geq \frac{1}{N}\abs{\eta} \quad \textup{and} \quad\abs{l^\prime}  \leq \frac{1}{N+1}\abs{\xi}, 
\end{align*}
imply
\begin{align}
\abs{\xi,l^\prime} + \abs{\eta,l} \lesssim_N \abs{\eta-\xi,l-l^\prime}. \label{ineq:WellSepLEeta}
\end{align}
This ensures that if $(\eta,l)$ and $(\xi,l^\prime)$ are in separated sectors in frequency, then the entire multiplier can generally be absorbed by the $\jap{\eta-\xi,l-l^\prime}^m$ factors and one will not need $\partial_t w/w$.   
Furthermore, from \eqref{ineq:Znonres} and \eqref{ineq:WellSepLEeta}, we can derive \eqref{ineq:basicNR}. 
These observations allow us to refine the analogous lemma of \cite{BGM15I} to deduce Lemma \ref{lem:MainFreqRat}.
 
As a representative example, let us consider the proof of \eqref{ineq:AiPartX}. 
First consider the case $\abs{l^\prime} \leq \frac{1}{5}\abs{\xi}$ and $\abs{l} \leq \frac{1}{5}\abs{\eta}$.
Then, as in \cite{BGM15I} (see therein for a proof), we have
\begin{align*}
\frac{\abs{k,\eta-kt,l} \abs{k}}{k^2 + (l^\prime)^2 + \abs{\xi-kt}^2} \jap{\frac{t}{\jap{\xi,l^\prime}}}^p \mathbf{1}_{\abs{l^\prime} \leq \frac{1}{5}\abs{\xi}} \mathbf{1}_{\abs{l} \leq \frac{1}{5}\abs{\eta}} & \lesssim \\ &  \hspace{-6cm}  \mathbf{1}_{\abs{l^\prime} \leq \frac{1}{5}\abs{\xi}} \mathbf{1}_{\abs{l} \leq \frac{1}{5}\abs{\eta}}\left(\left(\sqrt{\frac{\partial_t w(t,\eta)}{w(t,\eta)}}\mathbf{1}_{\abs{l} \leq \frac{1}{5}\abs{\eta}}  + \frac{\abs{k,\eta}^{s/2}}{\jap{t}^s} \right)\left(\sqrt{\frac{\partial_t w(t,\xi)}{w(t,\xi)}}\mathbf{1}_{\abs{l^\prime} \leq \frac{1}{5}\abs{\xi}}  + \frac{\abs{k,\xi}^{s/2}}{\jap{t}^s} \right) \right.  \\ & \left. \hspace{-6cm} \quad +  \frac{\mathbf{1}_{t > 2\min(\abs{\eta},\abs{\xi})}}{\jap{t}}\min\left(1,\frac{\abs{k, \eta-kt,l}}{\jap{kt}}  \right) \jap{\frac{t}{\jap{\xi,l^\prime}}}^p \right) \jap{\eta-\xi,l-l^\prime}^{4},  
\end{align*}
which is consistent with \eqref{ineq:AiPartX}. 
 
Next, consider the case ($\abs{l^\prime} > \frac{1}{5}\abs{\xi}$ or $\abs{l} > \frac{1}{5}\abs{\eta}$).
If the former is true than we immediately have the following by \eqref{ineq:Znonres}:  
\begin{align}
\frac{\abs{k,\eta-kt,l} \abs{k}}{k^2 + (l^\prime)^2 + \abs{\xi-kt}^2} \jap{\frac{t}{\jap{\xi,l^\prime}}}^p & \lesssim \frac{\abs{k,\eta-kt,l} \abs{k}}{\jap{l^\prime,\xi,kt}^2} \jap{\frac{t}{\jap{\xi,l^\prime}}}^p\jap{\eta-\xi,l-l^\prime}^2,  \label{ineq:nonres1}  
\end{align}
which is consistent with \eqref{ineq:AiPartX}. 
Next, consider instead $\abs{l} > \frac{1}{5}\abs{\eta}$. If $\abs{l^\prime} > \frac{1}{6}\abs{\xi}$ then \eqref{ineq:nonres1} (and hence \eqref{ineq:AiPartX}) follows again by \eqref{ineq:Znonres}. 
However, if $\abs{l^\prime} < \frac{1}{6}\abs{\xi}$ then by \eqref{ineq:WellSepLEeta}, $\abs{\eta,l} + \abs{\xi,l^\prime} \lesssim \abs{\eta-\xi,l-l^\prime}$, and we again have \eqref{ineq:nonres1} by multiplying and dividing by $\jap{\xi,l^\prime}^2$. 

The other inequalities are dealt with in a similar fashion. 
\end{proof}

For the current work, we need an analogue of Lemma \ref{lem:MainFreqRat} which is more precise in order to handle (and take advantage of) the regularity imbalances in $A^3$. 

\begin{lemma}[Frequency ratios for $\Delta_L$ involving regularity imbalances] \label{lem:MainFreqRat_RegImbalance}
For $t \geq 1$ and $k,k^\prime,\eta,\xi,l,l^\prime$,

Then for $p \in \Real$, we have the following:
\begin{itemize} 
\item for \textbf{(SI)} (for $k^\prime = k \neq 0$; recall that definition \eqref{def:freqcuts} depends on both $k$ and $k^\prime$): 
\begin{align} 
\frac{\abs{k,\eta-kt,l} \abs{k}}{k^2 + (l^\prime)^2 + \abs{\xi-kt}^2}\left(\sum_{r}\chi^{r,NR}\frac{t}{\abs{r} + \abs{\eta-tr}}\right) & \nonumber \\ 
&  \hspace{-6cm} \lesssim \left(\sqrt{\frac{\partial_t w(t,\eta)}{w(t,\eta)}} + \frac{\abs{k,\eta}^{s/2}}{\jap{t}^s} \right)\left(\sqrt{\frac{\partial_t w(t,\xi)}{w(t,\xi)}} + \frac{\abs{k,\xi}^{s/2}}{\jap{t}^s} \right) \jap{\eta-\xi,l-l^\prime}^{4}; \label{ineq:AiPartXA23}
\end{align}
\item a simpler variant (for $k^\prime = k \neq 0$): 
\begin{align} 
 \sum_{r}\chi^{r,NR}\frac{t}{\abs{r} + \abs{\eta-tr}} \hspace{1cm}  & \nonumber \\ & \hspace{-4cm} \lesssim \jap{t}\left(\sqrt{\frac{\partial_t w(t,\eta)}{w(t,\eta)}} + \frac{\abs{\eta,l}^{s/2}}{\jap{t}^{s}}\right)\left(\sqrt{\frac{\partial_t w(t,\xi)}{w(t,\xi)}} + \frac{\abs{\xi,l^\prime}^{s/2}}{\jap{t}^{s}}\right)\jap{\eta-\xi,l-l^\prime}^4. \label{ineq:jNRBasic} 
\end{align}
\item if $k^\prime,k \neq 0$, $k \neq k^\prime$, and $a \in [1,2]$ (for \textbf{(3DE)} terms with few derivatives),  
\begin{subequations} \label{ineq:3DEoneRegBal} 
\begin{align}
\frac{1}{\abs{k^\prime}^2 + \abs{l^\prime}^2 + \abs{\xi - k^\prime t}^2} \left(\sum_{r} \chi^{NR,r}\frac{\abs{r} + \abs{\eta-tr}}{t}\right) \jap{\frac{t}{\jap{\xi,l^\prime}}}^p \hspace{4cm} & \nonumber \\ 
& \hspace{-10cm} \lesssim \frac{1}{\jap{t}}\left(\sqrt{\frac{\partial_t w(t,\eta)}{w(t,\eta)}} + \frac{\abs{\eta}^{s/2}}{\jap{t}^s} \right)\left(\sqrt{\frac{\partial_t w(t,\xi)}{w(t,\xi)}} + \frac{\abs{\xi}^{s/2}}{\jap{t}^s} \right) \jap{\eta-\xi,l-l^\prime}^3 \nonumber \\ & \hspace{-10cm} \quad + \frac{1}{\jap{t}^2}\jap{\frac{t}{\jap{\xi,l^\prime}}}^p \jap{\eta-\xi,l-l^\prime}^3 \label{ineq:A3ReacGain}  \\
\frac{1}{\abs{k^\prime}^2 + \abs{l^\prime}^2 + \abs{\xi - k^\prime t}^2}\left(\chi^{R,NR}\frac{t}{\abs{k} + \abs{\eta-kt}} + \chi^{NR,R}\frac{\abs{k^\prime}  + \abs{\eta- k^\prime t}}{t} + \chi^{\ast;33} \right) & \lesssim \frac{\jap{\eta-\xi}^2}{\jap{t}} \label{ineq:A33ReacGain} \\ 
\hspace{-2cm} \frac{1}{\abs{k^\prime, \xi - k^\prime t, l^\prime}^a}\left(\sum_{r}\chi^{r,NR}\frac{t}{\abs{r} + \abs{\eta-tr}} + \chi^{\ast;23} \right) \jap{\frac{t}{\jap{\xi,l^\prime}}}^a  \lesssim \jap{\eta-\xi}^2; \label{ineq:A1A3ReacGain}
\end{align}
\end{subequations}
\item if $k^\prime,k \neq 0$ and $k \neq k^\prime$ (for \textbf{(3DE)} terms with more derivatives),  
\begin{subequations} \label{ineq:3DEoneRegBalII} 
\begin{align} 
\jap{\frac{t}{\jap{\xi,l^\prime}}} \frac{\abs{k,\eta-kt,l} \abs{k^\prime,\xi-k^\prime t, l^\prime}}{\abs{k^\prime}^2 + \abs{l^\prime}^2 + \abs{\xi-k^\prime t}^2} \left(\sum_{r}\chi^{r,NR}\frac{t}{\abs{r} + \abs{\eta-tr}} + \chi^{\ast;23} \right) \hspace{1.5cm} & \nonumber \\ & \hspace{-4cm} \lesssim \abs{k,\eta-kt,l} \jap{\eta-\xi,l-l^\prime} \label{ineq:jNRPneqneq} \\
\frac{\abs{k^\prime,\xi-tk^\prime,l^\prime} \abs{k}}{\abs{k^\prime}^2 + \abs{l^\prime}^2 + \abs{\xi-tk^\prime}^2}\left(\chi^{R,NR}\frac{t}{\abs{k} + \abs{\eta-kt}} + \chi^{NR,R}\frac{\abs{k^\prime} + \abs{\eta - k^\prime t}}{t}\right) \hspace{1.5cm} & \nonumber \\ 
& \hspace{-11cm} \lesssim \left(\sqrt{\frac{\partial_t w(t,\eta)}{w(t,\eta)}} + \frac{\abs{\eta}^{s/2}}{\jap{t}^s} \right)\left(\sqrt{\frac{\partial_t w(t,\xi)}{w(t,\xi)}} + \frac{\abs{\xi}^{s/2}}{\jap{t}^s} \right) \jap{k-k^\prime, \eta-\xi,l-l^\prime}^4 \label{ineq:A33PartX} \\  
\frac{\abs{k^\prime,\xi-tk^\prime,l^\prime} \abs{l^\prime}}{\abs{k^\prime}^2 + \abs{l^\prime}^2 + \abs{\xi-tk^\prime}^2}\left(\chi^{R,NR}\frac{t}{\abs{k} + \abs{\eta-kt}} + \chi^{NR,R}\frac{\abs{k^\prime} + \abs{\eta - k^\prime t}}{t}\right) \hspace{1.5cm} & \nonumber \\ 
& \hspace{-11cm} \lesssim \jap{t}\left(\sqrt{\frac{\partial_t w(t,\eta)}{w(t,\eta)}} + \frac{\abs{\eta}^{s/2}}{\jap{t}^s} \right)\left(\sqrt{\frac{\partial_t w(t,\xi)}{w(t,\xi)}} + \frac{\abs{\xi}^{s/2}}{\jap{t}^s} \right) \jap{k-k^\prime, \eta-\xi,l-l^\prime}^4 \label{ineq:A33PartXZ1} \\  
\frac{\abs{l k}}{\abs{k^\prime}^2 + \abs{l^\prime}^2 + \abs{\xi-tk^\prime}^2} \left(\chi^{R,NR}\frac{t}{\abs{k} + \abs{\eta-kt}} + \chi^{NR,R}\frac{\abs{k^\prime} + \abs{\eta - k^\prime t}}{t} + \chi^{\ast;33} \right) \nonumber \\
& \hspace{-10cm} \lesssim \jap{k-k^\prime,\eta-\xi,l-l^\prime}^3. \label{ineq:A3neqA3neqZX}
\end{align}
\end{subequations} 
\item for terms of type \textbf{(F)}, (with $k = 0$ and $k^\prime \neq 0$),   
\begin{subequations} \label{ineq:AdelLijRegBal}  
\begin{align} %
\frac{\abs{l} \jap{\eta,l}^2 \jap{\frac{t}{\jap{\xi,l^\prime}}}^{2}}{(k^\prime)^2 + (l^\prime)^2 + \abs{\xi-k^\prime t}^2}\left(\sum_{r}\chi^{r,NR}\frac{t}{\abs{r} + \abs{\eta - tr}}\right) \hspace{3.5cm} & \nonumber  \\ 
&  \hspace{-10cm} \lesssim \jap{t}^2\left(\sqrt{\frac{\partial_t w(t,\eta)}{w(t,\eta)}} + \frac{\abs{\eta}^{s/2}}{\jap{t}^s} \right)\left(\sqrt{\frac{\partial_t w(t,\xi)}{w(t,\xi)}} + \frac{\abs{\xi}^{s/2}}{\jap{t}^s} \right) \jap{k^\prime,\eta-\xi,l-l^\prime}^3 \label{ineq:AA3Z} \\ 
\frac{\abs{\eta} \jap{\eta,l}^2}{(k^\prime)^2 + (l^\prime)^2 + \abs{\xi-k^\prime t}^2}\left(\sum_{r}\chi^{NR,r}\frac{\abs{r} + \abs{\eta - tr}}{t} \right) \jap{\frac{t}{\jap{\xi,l^\prime}}} \hspace{2cm} & \nonumber \\ 
& \hspace{-10cm} \lesssim \left(\jap{t}^2\left(\sqrt{\frac{\partial_t w(t,\eta)}{w(t,\eta)}} + \frac{\abs{\eta}^{s/2}}{\jap{t}^s} \right)\left(\sqrt{\frac{\partial_t w(t,\xi)}{w(t,\xi)}} + \frac{\abs{\xi}^{s/2}}{\jap{t}^s} \right)  + \abs{\eta} \right) \jap{k^\prime,\eta-\xi,l-l^\prime}^3; \label{ineq:A03A2YYY} \\
\frac{\abs{\eta} \jap{\eta,l}^2}{(k^\prime)^2 + (l^\prime)^2 + \abs{\xi-k^\prime t}^2} \chi^{\ast;32} \jap{\frac{t}{\jap{\xi,l^\prime}}} \hspace{6cm} & \nonumber \\ 
& \hspace{-12cm} \lesssim \left(\jap{t}^2\left(\sqrt{\frac{\partial_t w(t,\eta)}{w(t,\eta)}}\mathbf{1}_{\abs{l} \leq \frac{1}{5}\abs{\eta}} + \frac{\abs{\eta}^{s/2}}{\jap{t}^s} \right)\left(\sqrt{\frac{\partial_t w(t,\xi)}{w(t,\xi)}}\mathbf{1}_{\abs{l^\prime} \leq \frac{1}{5}\abs{\xi}} + \frac{\abs{\xi}^{s/2}}{\jap{t}^s} \right)  + \abs{\eta} \right) \jap{k^\prime,\eta-\xi,l-l^\prime}^3; \label{ineq:A03A2YYY2}
\end{align} 
\end{subequations} 
\end{itemize} 
\end{lemma}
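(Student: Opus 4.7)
The plan is to prove each inequality by a case split that mirrors the decomposition in Lemma~\ref{lem:MainFreqRat}, but augmented with a dedicated treatment of the new resonance-amplification factors $\chi^{r,NR}\,\frac{t}{\abs{r}+\abs{\eta-rt}}$ and $\chi^{NR,r}\,\frac{\abs{r}+\abs{\eta-rt}}{t}$ that arise from the $w^3/w$ imbalance. Before starting the case analysis, I would record two preliminary facts that will be used over and over. First, the approximate ODE $\partial_t w(t,\eta)/w(t,\eta)\sim 1/(1+\abs{t-\eta/r})$ on each resonant interval $\mathbf{I}_{r,\eta}$ (see \eqref{def:approxw} and Appendix~\ref{sec:Defw}) yields the pointwise lower bound
\begin{align*}
\frac{\partial_t w(t,\eta)}{w(t,\eta)} \gtrsim \frac{\mathbf{1}_{t\in\mathbf{I}_{r,\eta}}}{\abs{r}+\abs{\eta-rt}},
\end{align*}
and analogously for $w(t,\xi)$, valid whenever $\abs{l}\leq\tfrac{1}{5}\abs{\eta}$ and $\abs{l^\prime}\leq\tfrac{1}{5}\abs{\xi}$ — precisely the frequency sector cut out by \eqref{def:freqcuts}. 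Second, on the support of $\chi^{r,NR}$ we have $t\in\mathbf{I}_{r,\eta}\cap\mathbf{I}_{r,\xi}$, so by Lemma~\ref{lem:wellsep} and the definition of $\mathbf{I}_{r,\eta}$ we get $r^2\lesssim \min(\abs\eta,\abs\xi)$, $t\approx\abs{\eta/r}\approx\abs{\xi/r}$, $\abs{r}+\abs{\eta-rt}\approx \abs{r}+\abs{\xi-rt}$ up to $\jap{\eta-\xi}$, and $\abs{r}+\abs{\eta-rt}\lesssim t$.

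With these in hand, the \textbf{(SI)} inequalities \eqref{ineq:AiPartXA23} and \eqref{ineq:jNRBasic} (where $k^\prime=k$) follow by splitting on whether $r=k$ or $r\neq k$. In the case $r=k$, the preliminary bound gives
\begin{align*}
\sqrt{\tfrac{\partial_t w(t,\eta)}{w(t,\eta)}}\sqrt{\tfrac{\partial_t w(t,\xi)}{w(t,\xi)}} \gtrsim \tfrac{1}{\abs{k}+\abs{\eta-kt}}\quad(\text{up to }\jap{\eta-\xi}),
\end{align*}
which exactly compensates the amplification $t/(\abs{k}+\abs{\eta-kt})$, leaving a plain $t$ to pair with $\abs{k,\eta-kt,l}\abs{k}/(k^2+(l^\prime)^2+\abs{\xi-kt}^2)$ and closing the argument just as in \eqref{ineq:AiPartX}. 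If $r\neq k$, then on $\mathbf{I}_{r,\eta}\cap\mathbf{I}_{r,\xi}$ one has $\abs{k}+\abs{\xi-kt}\gtrsim t$ by Lemma~\ref{lem:Jswap}, which by itself controls $t/(\abs{r}+\abs{\eta-rt})\lesssim t$ against the denominator $k^2+(l^\prime)^2+\abs{\xi-kt}^2$.

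For the \textbf{(3DE)} inequalities \eqref{ineq:3DEoneRegBal}, the compensation factor $(\abs{r}+\abs{\eta-rt})/t\lesssim 1$ present in $\chi^{NR,r}$ is in our favor, so these estimates reduce essentially to the corresponding bounds of Lemma~\ref{lem:MainFreqRat} with $\jap{t}^{-1}$ (or $\jap{t}^{-2}$) gained from the compensation; \eqref{ineq:A1A3ReacGain} and \eqref{ineq:A33ReacGain} are consequences of \eqref{ineq:basicNR} and \eqref{ineq:ratlongtime}. The higher-derivative bounds \eqref{ineq:3DEoneRegBalII} have $\abs{k^\prime,\xi-k^\prime t,l^\prime}$ in the numerator, which cancels one power from the denominator; then \eqref{ineq:jNRPneqneq} is immediate, while \eqref{ineq:A33PartX}–\eqref{ineq:A3neqA3neqZX} are handled exactly as the \textbf{(SI)} terms above, extracting the compensating $\sqrt{\partial_t w/w}\sqrt{\partial_t w/w}$ pair on $\chi^{R,NR}$ (and $\chi^{NR,R}$) and dumping everything else onto $\jap{k-k^\prime,\eta-\xi,l-l^\prime}^4$ via \eqref{ineq:WellSepLEeta}-type separations.

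The most delicate case will be the \textbf{(F)} estimates \eqref{ineq:AdelLijRegBal}, where $k=0$ and $k^\prime\neq 0$, so the denominator $(k^\prime)^2+(l^\prime)^2+\abs{\xi-k^\prime t}^2$ is never truly resonant at frequency $k=0$ but the amplification sum $\sum_r\chi^{r,NR}\,t/(\abs{r}+\abs{\eta-rt})$ can still reach size $\sim \abs{\eta}/r^2 \sim t$. The idea is to write $\abs\eta=\abs{r t} + O(\abs{\eta-rt})$ on $\mathbf{I}_{r,\eta}$ (giving $\abs\eta\lesssim t\abs{r}$), use $\abs{\eta,l}^2\lesssim \jap{t}^2+\abs{\xi,l^\prime}^2$ modulo $\jap{k^\prime,\eta-\xi,l-l^\prime}^2$, and then spend the $\sqrt{\partial_t w/w}$ budget on $\eta$ and $\xi$ simultaneously; for \eqref{ineq:AdelLijRegBal} this lets one pay the $\jap{t}^2$-prefactor while the leftover multipliers are absorbed by the denominator as in \eqref{ineq:AdeYYY}. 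The outside-$\chi$ term \eqref{ineq:A03A2YYY2} reduces directly to \eqref{ineq:AdeYZZ}–\eqref{ineq:AdeYYY} from Lemma~\ref{lem:MainFreqRat}. The main obstacle throughout is bookkeeping: ensuring that the factor $t/(\abs{r}+\abs{\eta-rt})$ is paid for by a $CK_w$ term of the correct strength \emph{and} that the residual polynomial loss lands on a difference $\jap{k-k^\prime,\eta-\xi,l-l^\prime}^m$ rather than on $\jap{\xi,l^\prime}$ or $\jap{\eta,l}$, which requires invoking Lemma~\ref{lem:wellsep} and \eqref{ineq:WellSepLEeta} every time the two frequencies sit in different angular sectors.
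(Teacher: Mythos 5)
Your overall skeleton (the lower bound $\frac{\partial_t w(t,\eta)}{w(t,\eta)}\gtrsim \frac{1}{1+\abs{t-\eta/r}}\gtrsim \frac{1}{\abs{r}+\abs{\eta-rt}}$ on $\mathbf{I}_{r,\eta}$, plus a case analysis mirroring Lemma \ref{lem:MainFreqRat}) is the right starting point and is indeed how the paper argues, but the execution of the central \textbf{(SI)} bounds \eqref{ineq:AiPartXA23}--\eqref{ineq:jNRBasic} has a genuine gap. First, your case split $r=k$ versus $r\neq k$ misreads the cutoff: $\chi^{r,NR}$ carries the factor $\mathbf{1}_{k'\neq r}$, and in the (SI) application $k'=k$, so the case $r=k$ — the only one in which you invoke the $CK_w$ compensation — is empty (and your argument there would not close anyway, since after cancelling $\tfrac{t}{\abs k+\abs{\eta-kt}}$ you are left with an uncompensated factor $t$ near the resonance). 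Second, and more seriously, in the case that actually occurs ($r\neq k$) you claim the non-resonance gain $\abs{\xi-kt}\gtrsim t$ ``by itself'' controls the amplification after bounding it crudely by $t$. This is false: take $\eta\approx\xi$, $l=l'=0$, $r=1$, $k=2$, and $t\approx\eta$ at the center of $\mathbf{I}_{1,\eta}\cap\mathbf{I}_{1,\xi}$. Then the left side of \eqref{ineq:AiPartXA23} is of size $\approx \frac{\abs{k}}{\abs{k-r}\abs{r}}\approx 1$, while the Sobolev corrections alone give only $\frac{\abs{k,\eta}^{s/2}\abs{k,\xi}^{s/2}}{\jap{t}^{2s}}\approx \eta^{-s}\ll 1$; the inequality only holds because both $\sqrt{\partial_t w/w}$ factors are $\approx\sqrt{\kappa}$ there. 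In other words, the resonant decay $\frac{1}{\abs{r}+\abs{\eta-rt}}$ must not be discarded: the correct mechanism (the paper's) keeps it, writes $\frac{1}{\abs{r}+\abs{\eta-rt}}\lesssim \frac{1}{\abs{r}}\frac{\partial_t w(t,\eta)}{w(t,\eta)}$ via Lemma \ref{lem:dtw}, uses $r\neq k$ to extract $\frac{\abs{k}}{t\abs{k-r}}$ from $\frac{\abs{k,\eta-kt,l}\abs{k}}{k^2+(l')^2+\abs{\xi-kt}^2}$ (up to $\jap{\eta-\xi,l-l'}$), so that the product is $\lesssim \frac{\abs{k}}{\abs{k-r}\abs{r}}\frac{\partial_t w(t,\eta)}{w(t,\eta)}\jap{\eta-\xi,l-l'}\lesssim \frac{\partial_t w(t,\eta)}{w(t,\eta)}\jap{\eta-\xi,l-l'}$, and then symmetrizes one factor onto the $\xi$ side by Lemma \ref{lem:CKwFreqRat}. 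Non-resonance in $k$ and the $CK_w$ factors must be used together, not either alone.

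The same misunderstanding would propagate to \eqref{ineq:jNRBasic}, \eqref{ineq:A33PartX} and \eqref{ineq:A33PartXZ1}, which are proved by the same dtw-plus-CKwFreqRat mechanism. Your treatment of the $\chi^{NR,r}$ items \eqref{ineq:3DEoneRegBal} (the compensation $\frac{\abs r+\abs{\eta-rt}}{t}\lesssim 1$ is favorable; split on $t\in\I_{k',\eta}\cap\I_{k',\xi}$ or not, using Lemma \ref{lem:wellsep} in the latter case) and your sketch of the \textbf{(F)} bounds \eqref{ineq:AdelLijRegBal} ($\abs{\eta}\lesssim \abs r t$ on the resonant interval, then dtw for $r=k'$ and non-resonance $\jap{rt}^3/(t^2\abs{k'-r}^2)$ for $r\neq k'$) are in the right spirit and close to the paper's, but they too must retain the $CK_w$ product on the right-hand side rather than treating the gain as absolute.
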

\begin{remark} \label{rmk:noFreqRestrict}
Note the lack of frequency restrictions to $\abs{l} < \frac{1}{5}\abs{\eta}$ and $\abs{l^\prime} < \frac{1}{5}\abs{\xi}$. This is due to the fact that these inequalities need to sometimes be applied in the overlap regions where $\abs{l} \approx \abs{\eta}$ and $\abs{l^\prime} \approx \abs{\xi}$. 
\end{remark}
\begin{proof} 
The proofs are very similar to Lemma \ref{lem:MainFreqRat} with some minor changes. 
Consider \eqref{ineq:AiPartXA23} (the analogue of \eqref{ineq:AiPartX}). 
We have, by Lemma \ref{lem:dtw},
\begin{align}
\frac{\abs{k,\eta-kt,l} \abs{k}}{k^2 + (l^\prime)^2 + \abs{\xi-kt}^2}\left(\sum_{r}\chi^{r,NR} \frac{t}{\abs{r} + \abs{\eta-tr}}\right) 
 & \nonumber \\ & \hspace{-6cm} 
\lesssim \sum_{r} \chi^{r,NR} \frac{\abs{k}t}{t\abs{k-r}\abs{r}}\frac{\partial_t w(t,\eta)}{w(t,\eta)}\jap{\eta-\xi,l-l^\prime} 
\label{ineq:jNRBasicPfineq}
\end{align}
from which the result follows by Lemma \ref{lem:CKwFreqRat} (and that $\chi^{r,NR}$ form a partition of unity for a certain region of frequencies).
The proof of \eqref{ineq:jNRBasic} is essentially the same. 

Consider \eqref{ineq:A3ReacGain}; the other inequalities in \eqref{ineq:3DEoneRegBal} are easy  variants of this and the proofs of \eqref{ineq:jNRBasic} above.
First, in the case $t \not\in \I_{k^\prime,\eta} \cap \I_{k^\prime,\xi}$, we have $\jap{\eta-\xi}\jap{\xi - k^\prime t} \gtrsim t$ by Lemma \ref{lem:wellsep}, and so \eqref{ineq:A3ReacGain} follows.    
Next, consider the case that $t \in \I_{k^\prime,\eta} \cap \I_{k^\prime,\xi}$. 
Then, since $k \neq k^\prime$, $t \not\in \I_{k,\eta}$ and this contribution appears in the sum as $\chi^{NR,k^\prime}$ (recall the definition \eqref{def:freqcuts}). 
In this case \eqref{ineq:A3ReacGain} follows by Lemma \ref{lem:CKwFreqRat}. This now covers all cases.  

Let us comment briefly on the proof of \eqref{ineq:A03A2YYY}. The term such that $r = k^\prime$ follows due to the Lemma \ref{dtw} together with the 
frequency restrictions ensuring $\abs{\eta}\jap{\eta,l}^2 \lesssim \jap{kt}^3$. For the terms $r \neq k^\prime$, we have
\begin{align*}
\frac{\abs{\eta} \jap{\eta,l}^2}{(k^\prime)^2 + (l^\prime)^2 + \abs{\xi-k^\prime t}^2}\chi^{NR,r} \lesssim \frac{\jap{rt}^3}{t^2 \abs{k-r}^2} \lesssim \jap{t}^2 \frac{\abs{r}}{t} \abs{k}^2, 
\end{align*}
which is consistent with \eqref{ineq:A03A2YYY} by Lemma \ref{dtw} again. 

The remaining estimates follow by similar arguments combined with the arguments used in the proof of Lemma \ref{lem:MainFreqRat} (see also \cite{BGM15I}). 
Hence, these are omitted for the sake of brevity. 
\end{proof} 

\subsection{Paraproducts and related notations} \label{sec:paranote}
We briefly recall the short-hands introduced in \cite{BGM15I}. 
For paraproducts we use the homogeneous variant of the paraproduct and utilize the following short-hand to suppress the appearance of Littlewood-Paley projections:
\begin{align} 
fg & = f_{Hi} g_{Lo} + f_{Lo} g_{Hi} + (fg)_{\mathcal{R}} \nonumber \\ 
& = \sum_{M \in 2^\Integers} f_{M} g_{<M/8} + \sum_{M \in 2^{\Integers}} f_{<M/8} g_{M} + \sum_{M \in 2^{\Integers}} \sum_{M/8 \leq M^\prime \leq 8M} f_{M} g_{M^\prime}. \label{def:parapp}
\end{align} 
We recall the following lemma from \cite{BGM15I} for using the paraproducts in $L^2$ estimates. 
\begin{lemma}[Paraproducts for quadratic nonlinearities] \label{gevreyparaproductlemma}
Let $s\in[0,1)$, $\mu \geq 0$, $p \geq 0$.  Then, there exists a $c = c(s) \in (0,1)$ such that the following holds,   
\begin{subequations} \label{ineq:paraquad} 
\begin{align}
\norm{f_{Hi} g_{Lo}}_{\G^{\mu,p}} & \lesssim \norm{f}_{\G^{\mu,p}}\norm{g}_{\G^{c\mu,3/2+}}  \label{ineq:quadHL} \\ 
\norm{(fg)_{\mathcal{R}}}_{\G^{\mu,p}} & \lesssim \norm{f}_{\G^{c\mu,p}} \norm{g}_{\G^{c\mu,3/2+}} \label{ineq:quadR} \\ 
\int e^{\mu\abs{\grad}^s}\jap{\grad}^{p} h \, e^{\mu\abs{\grad}^s}\jap{\grad}^{p} \left(f_{Hi} g_{Lo}\right) dV & \lesssim \norm{h}_{\G^{\mu,p}}\norm{f}_{\G^{\mu,p}}\norm{g}_{\G^{c\mu,3/2+}}.   \label{ineq:triQuadHL}
\end{align}
\end{subequations}  
\end{lemma}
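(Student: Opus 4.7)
The proof is a standard Gevrey paraproduct argument. The cornerstone is a favorable frequency inequality for the Gevrey weight: for $s \in [0,1)$ and $|\xi_2| \leq |\xi_1|/8$, the mean-value theorem applied to $t \mapsto t^s$ yields
\begin{align*}
|\xi_1+\xi_2|^s \leq |\xi_1|^s + s\,|\xi_2|\, |\xi_1|^{s-1} \leq |\xi_1|^s + c\,|\xi_2|^s, \qquad c := s\cdot 8^{s-1}\in (0,1),
\end{align*}
so $e^{\mu|\xi|^s} \leq e^{\mu|\xi_1|^s} e^{c\mu|\xi_2|^s}$ with a strict factor $c<1$ available on the low-frequency side. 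On the symmetric remainder branch where $|\xi_1|\sim|\xi_2|\sim M$, subadditivity $(a+b)^s \leq a^s+b^s$ combined with $|\xi|\leq 2M$ gives $|\xi|^s \leq 2^{s-1}(|\xi_1|^s+|\xi_2|^s)$, which plays an analogous role.

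For \eqref{ineq:quadHL}, write $f_{Hi}g_{Lo}=\sum_M f_M g_{<M/8}$. The Fourier support of each term lies in an annulus $|\xi|\sim M$, so the dyadic blocks are almost orthogonal in $\G^{\mu,p}$. On a fixed block, use $|\xi|\sim|\xi_1|$ to move the Sobolev weight via $\jap{\xi}^p\lesssim\jap{\xi_1}^p$, then apply the weight-transfer above, and finish with H\"older in physical space:
\begin{align*}
\|e^{\mu|\grad|^s}\jap{\grad}^p(f_M g_{<M/8})\|_2 \lesssim \|e^{\mu|\grad|^s}\jap{\grad}^p f_M\|_2 \cdot \|e^{c\mu|\grad|^s}g_{<M/8}\|_\infty.
\end{align*}
The Sobolev embedding $H^{3/2+}\hookrightarrow L^\infty$ in three dimensions bounds the last factor by $\|g\|_{\G^{c\mu,3/2+}}$ uniformly in $M$; squaring and summing in $M$ by Plancherel then yields \eqref{ineq:quadHL}. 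The trilinear estimate \eqref{ineq:triQuadHL} follows immediately from this by Cauchy-Schwarz in $L^2(dV)$.

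For the remainder \eqref{ineq:quadR}, the frequencies $|\xi_1|,|\xi_2|\sim M$ are comparable but the output frequency $|\xi|$ can be strictly smaller, so one can no longer transfer $\jap{\xi}^p \lesssim \jap{\xi_1}^p$ by proximity. Using instead $\jap{\xi}^p \lesssim \jap{M}^p \lesssim \jap{\xi_1}^p$, the symmetric weight-transfer above, and H\"older with $L^\infty$ on $g$ followed by Sobolev embedding, each pair $(f_M,g_{M'})$ is controlled as before; the double sum converges because for each output dyadic scale only $O(1)$ pairs $(M,M')$ contribute. The principal difficulty throughout is really organizational: carefully tracking Fourier supports to verify the almost-orthogonality of the dyadic sums, and handling the polynomial weight $\jap{\xi}^p$ simultaneously with the nonlinear Gevrey weight (including the low-frequency interactions near the origin, which in the homogeneous paraproduct split must be absorbed by hand into the remainder). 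These steps are entirely standard and are carried out in detail in \cite{BGM15I}.
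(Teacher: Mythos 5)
Your proposal is correct in architecture and is the standard argument; note that the paper itself gives no proof of this lemma (it is recalled from \cite{BGM15I}), and the argument used there is exactly the one you outline: frequency-wise transfer of the Gevrey weight using an inequality of the type \eqref{lem:scon}/\eqref{lem:strivial} (your constant $c=s\,8^{s-1}<1$ is the same mechanism), almost orthogonality of the dyadic blocks as in \eqref{ineq:GeneralOrtho}, and then a bilinear bound placing the low-frequency factor in $\G^{c\mu,3/2+}$, with \eqref{ineq:triQuadHL} following from \eqref{ineq:quadHL} by Cauchy--Schwarz.

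The one step you should tighten is the ``H\"older in physical space'' display. The inequality
$\norm{e^{\mu\abs{\grad}^s}\jap{\grad}^p(f_M g_{<M/8})}_2 \lesssim \norm{e^{\mu\abs{\grad}^s}\jap{\grad}^p f_M}_2\,\norm{e^{c\mu\abs{\grad}^s}g_{<M/8}}_\infty$
is not literally H\"older, because the weighted multiplier does not distribute over the product; after the pointwise weight transfer on the Fourier side you are left with a convolution of $e^{\mu\abs{\cdot}^s}\jap{\cdot}^p\abs{\hat f_M}$ against $e^{c\mu\abs{\cdot}^s}\abs{\hat g_{<M/8}}$, and the bounded symbol alone does not give an $L^2\times L^\infty\to L^2$ bound. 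The clean (and standard) conclusion of that step is Young's inequality $L^2\ast L^1\subset L^2$, with $\norm{e^{c\mu\abs{\cdot}^s}\hat g}_{L^1}\lesssim \norm{g}_{\G^{c\mu,3/2+}}$ by Cauchy--Schwarz; this is precisely inequality \eqref{ineq:L2L1} of the appendix, and it lands you at the same right-hand side you claim. With that substitution (and the same remark for the remainder term, where \eqref{lem:smoretrivial}--\eqref{lem:strivial} give the factor $c<1$ on both comparable frequencies), your proof is complete and coincides with the cited one.
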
 
\begin{remark} 
In most places in the proof, $\mu = 0$ as normally the multipliers $A^i$ or $A^{\nu;i}$ are playing the role of the norm.
\end{remark} 

Many of the nonlinear terms are higher order (up to quintic). 
For expanding cubic nonlinear terms, we use the short-hand from \cite{BGM15I}:  
\begin{align} 
fgh & = \sum_{N \in 2^{\Integers}} f_N g_{<N/8}h_{<N/8} + g_{N} f_{<N/8} h_{<N/8} + f_{< N/8} g_{<N/8} h_N + (fgh)_{\mathcal{R}}  \nonumber \\ 
& := f_{Hi}(gh)_{Lo} + g_{Hi}(fh)_{Lo}  + h_{Hi}(gf)_{Lo} + (fgh)_{\mathcal{R}}, \label{def:cubic}
\end{align} 
where the remainder term $(fgh)_{\mathcal{R}}$, includes all of the frequency contributions not included in the leading order terms. 
Note the short-hand $(gh)_{Lo} = g_{Lo} h_{Lo}$. 
By iterating this pattern, we obtain also decompositions for quartic and quintic terms. 
We also have the equivalents of \eqref{ineq:quadHL}, \eqref{ineq:quadR} and \eqref{ineq:triQuadHL}. 
\begin{lemma}[Paraproducts for higher order nonlinear terms] \label{lem:ParaHighOrder}
For all $\mu \geq 0$ and $p \geq 0$, there is some $c = c(s) \in (0,1)$ such that 
\begin{subequations}  \label{ineq:quin}
\begin{align} 
\norm{g_{Hi} (fhkj)_{Lo}}_{\G^{\mu,p}} & \lesssim_{p} \norm{g}_{\G^{\mu,p}}\norm{f}_{\G^{c\mu,3/2+}} \nonumber \\ & \quad\quad \times \norm{h}_{\G^{c\mu,3/2+}} \norm{k}_{\G^{c\mu,3/2+}} \norm{j}_{\G^{c\mu,3/2+}} \label{ineq:quinHL} \\ 
\norm{(fghkj)_{\mathcal{R}}}_{\G^{\mu,p}} & \lesssim_{p} \norm{g}_{\G^{c\mu,3/2+}}\norm{f}_{\G^{c\mu,3/2+}} \norm{h}_{\G^{c\mu,3/2+}} \nonumber \\ & \quad\quad \times \norm{k}_{\G^{c\mu,3/2+}} \norm{j}_{\G^{c\mu,3/2+}} \label{ineq:quinR} \\  
\int e^{\mu \abs{\grad}^s}\jap{\grad}^p q e^{\mu\abs{\grad}^s} \jap{\grad}^p (g_{Hi}(fhkj)_{Lo}) dV & \lesssim_{p} \norm{q}_{\G^{\mu,p}}\norm{g}_{\G^{\mu,p}} \norm{f}_{\G^{c\mu,3/2+}} \nonumber \\ & \quad\quad \times \norm{h}_{\G^{c\mu,3/2+}} \norm{k}_{\G^{c\mu,3/2+}} \norm{j}_{\G^{c\mu,3/2+}}. 
\end{align}
\end{subequations}
Analogous estimates hold also for the cubic and quartic decompositions. 
\end{lemma}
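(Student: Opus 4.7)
The plan is to reduce Lemma \ref{lem:ParaHighOrder} to iterated applications of Lemma \ref{gevreyparaproductlemma}, using the observation that $\G^{c\mu,3/2+}$ is a Banach algebra. The latter is standard: the sub-additivity $|\xi|^s \leq |\xi-\eta|^s + |\eta|^s$ valid for $s \in [0,1]$, together with Young's inequality on Fourier side and the Sobolev embedding $H^{3/2+} \hookrightarrow L^\infty$, yields
\[
\norm{fg}_{\G^{c\mu,3/2+}} \lesssim \norm{f}_{\G^{c\mu,3/2+}} \norm{g}_{\G^{c\mu,3/2+}}.
\]
Iterating this gives the corresponding bound for products of any fixed number of factors. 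This is the only genuinely new ingredient beyond Lemma \ref{gevreyparaproductlemma}.

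For \eqref{ineq:quinHL}, I would observe that in the term $g_{Hi}(fhkj)_{Lo}$, the factor $g$ is localized at a dyadic frequency $N$ while each of $f,h,k,j$ is restricted to frequencies $< N/8$; hence the pointwise product $(fhkj)_{Lo}$ is supported at frequencies $\lesssim N/2$. This means the bilinear pair $\bigl(g_{Hi}, (fhkj)_{Lo}\bigr)$ sits exactly in the ``high–low'' configuration of \eqref{ineq:quadHL}, yielding
\[
\norm{g_{Hi}(fhkj)_{Lo}}_{\G^{\mu,p}} \lesssim \norm{g}_{\G^{\mu,p}} \norm{(fhkj)_{Lo}}_{\G^{c\mu,3/2+}},
\]
and then the algebra property distributes the last factor into the product of the four individual Gevrey norms. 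The trilinear estimate follows identically after replacing \eqref{ineq:quadHL} by \eqref{ineq:triQuadHL}.

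For the remainder bound \eqref{ineq:quinR}, I would further decompose $(fghkj)_{\mathcal{R}}$ into sub-pieces by identifying, for each frequency configuration appearing in the remainder, two factors whose dyadic scales $N, N'$ satisfy $N/8 \leq N' \leq 8N$ and which dominate the remaining three (if there is a genuine tie among more than two factors, choose any two dominant ones). Each such sub-piece has the form $(\phi_N \psi_{N'})_{\mathcal{R}} \cdot (\text{pointwise product of three low-frequency factors})$, so applying \eqref{ineq:quadR} to the remainder pair and then distributing the low factors by the algebra property closes the estimate (summing over the $O(1)$ choices of dominant pairs). The quartic and cubic variants are handled by the same reduction with fewer low-frequency factors.

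The only place where care is required is to ensure that the constant $c \in (0,1)$ produced by the repeated applications remains bounded away from $1$: each use of \eqref{ineq:quadHL} or \eqref{ineq:quadR} produces its own shrinkage $c(s)$, and the algebra inequality for four factors contributes another such shrinkage. Since only a fixed, finite number of such steps are composed (three or four, depending on the estimate), the final constant $c = c(s) \in (0,1)$ is a universal function of $s$, as claimed. This bookkeeping of constants is the main, though essentially cosmetic, obstacle; no new analysis beyond Lemma \ref{gevreyparaproductlemma} and the algebra property is needed.
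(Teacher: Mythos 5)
Your overall strategy --- reduce everything to Lemma \ref{gevreyparaproductlemma} together with the algebra property of $\G^{c\mu,3/2+}$ --- is exactly what the paper intends (the lemma is stated without proof as the multilinear analogue of \eqref{ineq:quadHL}--\eqref{ineq:triQuadHL}), but two of your steps do not go through as the black-box reductions you describe. For \eqref{ineq:quinHL}: in $g_{Hi}(fhkj)_{Lo}=\sum_N g_N f_{<N/8}h_{<N/8}k_{<N/8}j_{<N/8}$ the low block is a product of truncations, not a truncation of a fixed function, and its Fourier support reaches $\sim 3N/4$ (four factors each supported up to $3N/16$), not $N/2$. Hence the output of the $N$-th piece is \emph{not} localized at scale $N$ (it can reach frequency zero), so neither \eqref{ineq:quadHL} nor the almost-orthogonality \eqref{ineq:GeneralOrtho} underlying it can be invoked verbatim. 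One has to re-run the Fourier-side argument: transfer the weights with \eqref{lem:scon} (each low frequency is at most $(3/8)$ of the high one), and on the non-localized region --- where necessarily at least two of the four low factors sit at dyadic scale comparable to $N$ --- pay part of the ``$+$'' in $3/2+$ to extract a factor $N^{0-}$ that makes the dyadic sum converge; equivalently, prove the estimate directly as a five-linear convolution bound via \eqref{ineq:L2L2L1L1} and its iterates. This is routine, but it is not the ``exact high--low configuration'' you claim.

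The more substantive gap is in the remainder bound \eqref{ineq:quinR}. In a remainder configuration the three non-dominant factors are only bounded by, not small relative to, the comparable pair (all five scales may coincide), so ``apply \eqref{ineq:quadR} to the pair and distribute the rest by the algebra property'' is not available: the only applicable product estimate \eqref{ineq:GProduct} produces a term in which the triple product is measured in $\G^{\mu,p}$, which is not controlled by the $3/2+$ norms of its factors. Moreover, even formally, \eqref{ineq:quadR} returns one of the two dominant factors in $\G^{c\mu,p}$ rather than $\G^{c\mu,3/2+}$, which is not the form of \eqref{ineq:quinR}; for $\mu>0$ one can downgrade via \eqref{ineq:IncExp}--\eqref{ineq:SobExp}, but you never address this. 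The fix is to argue directly on the quintic term: on the remainder set the largest dyadic scale and the sum of the remaining four frequencies are comparable within a fixed factor, so \eqref{lem:strivial} transfers $e^{\mu\abs{\cdot}^s}$ onto all five factors with a common $c(s)<1$; the Sobolev weight $\jap{\cdot}^p$ is then placed on the largest factor (or absorbed into the exponential gap when $\mu>0$), and one concludes with \eqref{ineq:L2L2L1L1}-type bounds and Cauchy--Schwarz over the comparable pair. With these two repairs your reduction does yield the lemma, and the bookkeeping of the constant $c=c(s)$ is, as you say, harmless.
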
 

One final short-hand we recall from \cite{BGM15I} involves the inner products that appear naturally in energy estimates. 
Consider, for example, a typical Gevrey energy estimate involving three quantities $f,g,h$, where generally $h$ will be a product of several low frequency terms: 
\begin{align*} 
\int e^{\lambda\abs{\grad}^s}f  e^{\lambda\abs{\grad}^s}\left(g_{Hi} h_{Lo}\right) dV & = \frac{1}{(2\pi)^{3/2}}\sum_{k,l,k^\prime,l^\prime} \int_{\eta,\xi} e^{\lambda\abs{k,\eta,l}^s}\overline{\hat{f}}_k(\eta,l)  e^{\lambda\abs{k,\eta,l}^s} \hat{g}_{k^\prime}(\xi,l^\prime)_{Hi} \hat{h}_{k-k^\prime}(\eta-\xi,l-l^\prime)_{Lo} d\eta d\xi. 
\end{align*} 
By the frequency localizations inherent in the shorthand and \eqref{lem:scon}, for some $c = c(s) \in (0,1)$ we have (by \eqref{ineq:triQuadHL}),  
\begin{align*}
\int e^{\lambda\abs{\grad}^s}f  e^{\lambda\abs{\grad}^s}\left(g_{Hi} h_{Lo}\right) dV & \lesssim  \sum_{k,l,k^\prime,l^\prime} \int_{\eta,\xi} e^{\lambda\abs{k,\eta,l}^s}\abs{\hat{f}_k(\eta,l)}  e^{\lambda\abs{k^\prime,\xi,l^\prime}^s} \abs{\hat{g}_{k^\prime}(\xi,l^\prime)_{Hi}} \\  & \quad\quad \times e^{c\lambda\abs{k-k^\prime,\eta-\xi,l-l^\prime}^s}\abs{\hat{h}_{k-k^\prime}(\eta-\xi,l-l^\prime)_{Lo}} d\eta d\xi \\ 
& \lesssim \norm{f}_{\G^{\lambda}}\norm{g}_{\G^{\lambda}} \norm{h}_{\G^{c\lambda,3/2+}}.
\end{align*}  
The low frequency factors will generally all be put in a norm $\G^{\lambda,3/2+}$ (once the estimates are over we do not need to worry about the $c$) and hence it makes sense 
to use a short-hand for the low-frequency factor as $\norm{h}_{\G^{\lambda,3/2+}}Low(k-k^\prime,\eta-\xi,l-l^\prime)$ where the function $Low$ is taken as an $O(1)$ function in $\G^{\lambda,3/2+}$ (and which can change line-to-line as implicit constants). 
For example,
\begin{align} 
\int e^{\lambda\abs{\grad}^s}f  e^{\lambda\abs{\grad}^s}\left(g_{Hi} h_{Lo}\right) dV & := \norm{h}_{\G^{\lambda,3/2+}}\sum_{k,l,k^\prime,l^\prime} \int_{\eta,\xi} e^{\lambda\abs{k,\eta,l}^s}\overline{\hat{f}}_k(\eta,l)  e^{\lambda\abs{k,\eta,l}^s} \hat{g}_{k^\prime}(\xi,l^\prime)_{Hi} \nonumber \\ & \quad\quad \times Low(k-k^\prime,\eta-\xi,l-l^\prime) d\eta d\xi \nonumber \\ 
 & \lesssim \norm{h}_{\G^{\lambda,3/2+}}\sum_{k,l,k^\prime,l^\prime} \int_{\eta,\xi} e^{\lambda\abs{k,\eta,l}^s}\abs{\hat{f}_k(\eta,l)}  e^{\lambda\abs{k^\prime,\xi,l^\prime}^s} \abs{\hat{g}_{k^\prime}(\xi,l^\prime)_{Hi}}\nonumber \\ & \quad\quad \times Low(k-k^\prime,\eta-\xi,l-l^\prime) d\eta d\xi \nonumber \\
& \lesssim \norm{f}_{\G^{\lambda}}\norm{g}_{\G^{\lambda}} \norm{h}_{\G^{c\lambda,3/2+}}. \label{def:Low}
\end{align} 
The utility of this short-hand will quickly become clear in the course of the proof. 

\subsection{Product lemmas and a few immediate consequences} 
First, note the following product lemma is an immediate consequence of Lemma \ref{gevreyparaproductlemma}. 
\begin{lemma}[Gevrey product lemma] \label{lem:GevProdAlg}
For all $s \in (0,1)$, $\mu \geq 0$, and $p \geq 0$, there exists $c = c(s) \in (0,1)$ such that the following holds for all $f,g \in \mathcal{G}^{\mu,p}$:
\begin{subequations}
\begin{align} 
\norm{fg}_{\G^{\mu,p}} & \lesssim_{p} \norm{f}_{\G^{c\mu,3/2+}} \norm{g}_{\G^{\mu,p}} + \norm{g}_{\G^{c\mu,3/2+}} \norm{f}_{\G^{\mu,p}}, \label{ineq:GProduct}
\end{align}
\end{subequations}
in particular, if $\mu > 0$, then $\mathcal{G}^{\mu,p}$ is an algebra for all $p \geq 0$ by \eqref{ineq:SobExp}:  
\begin{align} 
\norm{f g}_{\G^{\mu,\sigma}} & \lesssim_{p,\mu} \norm{f}_{\G^{\mu,p}} \norm{g}_{\G^{\mu,p}}. \label{ineq:GAlg} 
\end{align} 
\end{lemma}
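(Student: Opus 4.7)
The plan is to reduce everything to Lemma \ref{gevreyparaproductlemma} via the paraproduct decomposition introduced in \S\ref{sec:paranote}. Writing $fg = f_{Hi}g_{Lo} + f_{Lo}g_{Hi} + (fg)_{\mathcal{R}}$ according to \eqref{def:parapp}, each of these three pieces is precisely of the type for which Lemma \ref{gevreyparaproductlemma} supplies a bound in $\G^{\mu,p}$.

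Applying \eqref{ineq:quadHL} to the high$\cdot$low piece yields
$$\norm{f_{Hi} g_{Lo}}_{\G^{\mu,p}} \lesssim \norm{f}_{\G^{\mu,p}}\norm{g}_{\G^{c\mu,3/2+}},$$
and by the formal symmetry of the decomposition in $f$ and $g$ the same estimate with the roles swapped controls $f_{Lo} g_{Hi}$. The remainder term is bounded using \eqref{ineq:quadR} by
$$\norm{(fg)_{\mathcal{R}}}_{\G^{\mu,p}} \lesssim \norm{f}_{\G^{c\mu,p}}\norm{g}_{\G^{c\mu,3/2+}},$$
which is already dominated (up to a constant, after possibly shrinking $c$) by the sum of the two previous bounds, since $c\mu \le \mu$ and so $\norm{\cdot}_{\G^{c\mu,p}} \lesssim \norm{\cdot}_{\G^{\mu,p}}$. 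Summing the three pieces produces \eqref{ineq:GProduct}.

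For the algebra property \eqref{ineq:GAlg} in the case $\mu > 0$, the idea is to absorb the low-regularity factor $\norm{\cdot}_{\G^{c\mu,3/2+}}$ into $\norm{\cdot}_{\G^{\mu,p}}$ using \eqref{ineq:SobExp}. Concretely, since $\mu - c\mu = (1-c)\mu > 0$, the exponential weight $e^{(1-c)\mu|\xi|^s}$ dominates any fixed polynomial $\jap{\xi}^{3/2+ - p}$ uniformly in $\xi$, at the cost of a constant depending on $\mu$ and $p$. This gives $\norm{g}_{\G^{c\mu,3/2+}} \lesssim_{\mu,p} \norm{g}_{\G^{\mu,p}}$, and similarly for $f$, so \eqref{ineq:GProduct} collapses to \eqref{ineq:GAlg}.

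I do not anticipate a genuine obstacle: Lemma \ref{gevreyparaproductlemma} has already done the real work of handling the paraproduct pieces in the Gevrey scale, and the only mildly delicate step is verifying the absorption of the Sobolev correction into the Gevrey exponential, which is the content of \eqref{ineq:SobExp}. The proof is therefore essentially a bookkeeping exercise on top of the paraproduct lemma.
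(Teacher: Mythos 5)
Your proof is correct and follows exactly the route the paper intends: the paper states Lemma \ref{lem:GevProdAlg} as an immediate consequence of the paraproduct bounds \eqref{ineq:quadHL} and \eqref{ineq:quadR} applied to the decomposition \eqref{def:parapp}, with the algebra property \eqref{ineq:GAlg} then following from absorbing the Sobolev correction via \eqref{ineq:SobExp}, just as you argue.
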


Next we have the following, which is a simple variant of the analogous lemma from \cite{BGM15I}. 
\begin{lemma}[Product lemma for $A$ and $A^i$] \label{lem:AAiProd}
Let $p \geq 0$ and $r \geq -\sigma$. Then there exists a $c = c(s) \in (0,1)$ such that for $i \in \set{1,2}$, for all $f,g$, 
\begin{subequations} 
\begin{align} 
\norm{\abs{\grad}^p \jap{\grad}^{r}A^i(fg)}_2 & \lesssim \norm{f}_{\G^{c\lambda,3/2+}}\norm{\abs{\grad}^p \jap{\grad}^{r} A^ig}_2 \nonumber \\ & \quad + \norm{g}_{\G^{c\lambda,3/2+}}\norm{\abs{\grad}^p \jap{\grad}^{r} A^if}_2 \label{ineq:Aprodi} \\ 
\norm{\left(\sqrt{\frac{\partial_t w}{w}}\tilde{A}^i + \frac{\abs{\grad}^{s/2}}{\jap{t}^s}A^i \right)(fg)}_2 & \lesssim \norm{f}_{\G^{c\lambda,3/2+}}\norm{\left(\sqrt{\frac{\partial_t w}{w}}\tilde{A}^i + \frac{\abs{\grad}^{s/2}}{\jap{t}^s}A^i \right) g}_2 \nonumber \\ & \quad + \norm{g}_{\G^{c\lambda,3/2+}}\norm{\left(\sqrt{\frac{\partial_t w}{w}}\tilde{A}^i + \frac{\abs{\grad}^{s/2}}{\jap{t}^s}A^i \right)f}_2. 
\end{align}
\end{subequations}
If $f$ and $g$ are both independent of $X$, then the above holds also with $A^i$ replaced by either $A$ or $A^3$.  
\end{lemma}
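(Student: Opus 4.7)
The plan is to prove both estimates via a standard paraproduct decomposition combined with the multiplier comparison Lemmas \ref{lem:ABasic} and \ref{lem:Arem}, mirroring the strategy of Lemma \ref{gevreyparaproductlemma}. I would start by decomposing $fg = f_{Hi}g_{Lo} + f_{Lo}g_{Hi} + (fg)_{\mathcal{R}}$ as in \eqref{def:parapp}, applying the weight $A^i$ (respectively $\sqrt{\partial_t w/w}\tilde{A}^i + \jap{\grad}^{s/2}\jap{t}^{-s}A^i$) to each piece and passing to the Fourier representation with output frequency $(k,\eta,l)$, high-frequency factor at $(k',\xi,l')$, and low-frequency factor at $(k-k',\eta-\xi,l-l')$.

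For the high-low piece, the paraproduct support condition places us in the regime $|(k-k',\eta-\xi,l-l')| \leq \tfrac{1}{2}|(k,\eta,l)|$ so that \eqref{ineq:ABasic12} applies with $(i,j)=(i,i)$, giving
\begin{align*}
A^i_k(t,\eta,l) \lesssim \Gamma(i,i,a,b)\, A^i_{k'}(t,\xi,l')\, e^{c\lambda|(k-k',\eta-\xi,l-l')|^s}.
\end{align*}
For $i \in \{1,2\}$ the factor $\Gamma(i,i,a,a)$ equals $1$ when the output $x$-frequency $k$ and the high-frequency factor's $x$-frequency $k'$ lie in the same class ($=0$ or $\neq 0$); the remaining cross-terms are either $\leq 1$ (i.e.\ $\Gamma(i,i,\neq,0)$) or force the low-frequency factor to carry a nonzero integer $x$-frequency $k-k'$, in which case the unfavorable growing factor $\jap{t/\jap{\xi,l'}}^{1+\delta_1}$ is absorbed by the $\jap{\cdot}^{3/2+}$ Sobolev correction on $\norm{g}_{\mathcal{G}^{c\lambda,3/2+}}$ together with the integrality $|k-k'|\geq 1$. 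Using the trivial inequality $\abs{\grad}^p\jap{\grad}^r \lesssim \abs{\grad'}^p\jap{\grad'}^r + \abs{\grad''}^p\jap{\grad''}^r$ (valid since $r \geq -\sigma \geq 0$ in the relevant direction) to move the $\abs{\grad}^p\jap{\grad}^r$ weight onto the high-frequency factor, and then Plancherel plus the $\ell^1$-style convolution argument behind \eqref{ineq:triQuadHL}, yields the high-low contribution bounded by $\norm{\abs{\grad}^p\jap{\grad}^r A^i f}_2 \norm{g}_{\mathcal{G}^{c\lambda,3/2+}}$. The low-high piece is symmetric.

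The remainder $(fg)_{\mathcal{R}}$ is treated with Lemma \ref{lem:Arem}: there the frequencies of $f$ and $g$ are comparable, so \eqref{ineq:ARemainderBasic} distributes $A^i_k(t,\eta,l)$ as a product of two exponential Gevrey factors times a decaying $\jap{t}^{-1}$ or $\jap{t}^{-2-\delta_1}$ prefactor, and the trivial lower bound on $A^i$ at the relevant frequencies allows one to absorb the remainder into either term on the right-hand side. The second estimate, for $\sqrt{\partial_t w/w}\tilde{A}^i + \jap{\grad}^{s/2}\jap{t}^{-s}A^i$, follows by the same paraproduct, additionally invoking \eqref{ineq:dtwBasicBrack} of Lemma \ref{lem:CKwFreqRat} to swap $\sqrt{\partial_t w/w}$ between frequencies at the harmless cost $\jap{k-k',\eta-\xi,l-l'}^2$, which is trivially absorbed by the $3/2+$ Sobolev correction on the low-frequency factor.

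For the final assertion, when $f,g$ are independent of $X$ the convolution only picks up the $k = k' = 0$ contribution, so the $w^3$ versus $w$ and $k=0$ versus $k\neq 0$ asymmetries in $A$ and $A^3$ become inactive and $\Gamma(\cdot,\cdot,0,0)=1$; the same paraproduct argument then applies verbatim with $A^i$ replaced by $A$ or $A^3$. The main technical obstacle throughout is precisely the $\Gamma(1,1,0,\neq)$ and $\Gamma(2,2,0,\neq)$ cases, where the asymmetry between the $k=0$ and $k\neq 0$ branches of $A^i$ produces the growing factor $\jap{t/\jap{\xi,l'}}^{1+\delta_1}$ — this is handled by combining the forced lower bound $|k-k'|\geq 1$ with the Sobolev correction in the low-frequency $\mathcal{G}^{c\lambda,3/2+}$ norm, and this is exactly the reason the statement for general $f,g$ excludes $i=3$ (where $w^3$ introduces an additional angular imbalance that the paraproduct cannot control by the same argument).
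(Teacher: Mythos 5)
Your overall architecture — decompose $fg$ with the paraproduct \eqref{def:parapp}, transfer the multiplier from the output frequency to the high-frequency factor via Lemma \ref{lem:ABasic}, treat the remainder with Lemma \ref{lem:Arem}, use Lemma \ref{lem:CKwFreqRat} for the $\sqrt{\partial_t w/w}$ version, and observe that for $X$-independent $f,g$ only $k=k'=0$ interactions occur so that the $w^3$/$w$ and $k=0$/$k\neq 0$ asymmetries are inert (via \eqref{ineq:BasicJswap} with $k=k'$) — is exactly the type of argument the omitted proof (a variant of the companion paper's) consists of. The genuine gap is in the step you yourself single out as the crux: the cases $\Gamma(1,1,0,\neq)$ and $\Gamma(2,2,0,\neq)$, i.e.\ a nonzero $x$-mode of the high-frequency factor producing a zero output mode. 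The loss there is $\jap{\frac{t}{\jap{\xi,l^\prime}}}^{1+\delta_1}$ (resp.\ $\jap{\frac{t}{\jap{\xi,l^\prime}}}$), which can be of size $\jap{t}^{1+\delta_1}$ when $\jap{\xi,l^\prime}=O(1)$; the $\jap{\cdot}^{3/2+}$ Sobolev correction in $\norm{g}_{\G^{c\lambda,3/2+}}$ only provides decay in the \emph{low} frequency $(k-k',\eta-\xi,l-l')$, and the integrality $\abs{k-k'}\geq 1$ only gives $\jap{k-k',\eta-\xi,l-l'}\gtrsim 1$; neither produces any power of $t$, so the claimed absorption does not work. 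The same case is also untreated in your remainder argument, since \eqref{ineq:AARemainderBasic} requires $k=k'=0$ and the first three bounds of Lemma \ref{lem:Arem} carry $\jap{t}^{-\cdot}$ prefactors that are not available for a zero output mode.

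Moreover, this is not a fixable gap in the argument: with $f=e^{iX}\phi(Y,Z)$, $g=e^{-iX}\phi(Y,Z)$ and $\phi$ supported at $O(1)$ frequencies, one has $fg=\phi^2$ carried by the zero mode, so the left-hand side of \eqref{ineq:Aprodi} (say $p=r=0$, $i=2$) is $\approx\norm{\phi^2}_2$ uniformly in $t$, while both terms on the right carry the factor $\min(1,\jap{\xi,l'}/t)\approx \jap{t}^{-1}$; hence no absorption mechanism can close the $0\cdot\neq$ case with $t$-uniform constants. A correct write-up must therefore exploit that this interaction simply does not occur in the situations the lemma is meant for — in every application in the paper $f$ and $g$ are independent of $X$ (so $a=b=0$ and $\Gamma=1$), and more generally the estimate is only safe when the $x$-frequency classes of output and high-frequency factor agree or the high factor is a zero mode ($\Gamma(i,i,a,a)=1$, $\Gamma(i,i,\neq,0)\leq 1$) — rather than attempt to control the genuine $\jap{t}$ loss. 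Two smaller points: your ``trivial inequality'' for distributing $\abs{\grad}^p\jap{\grad}^r$ is garbled ($r\geq-\sigma$ allows $r<0$, and ``$-\sigma\geq 0$'' is false); what one actually uses is comparability of output and high frequencies on the $f_{Hi}g_{Lo}$ support and the triangle inequality (with the loss dumped on the low factor) on the remainder. Finally, for the $A^3$, $X$-independent case you should note that $w^3_0\neq w$ near critical times, so the comparison still needs Lemma \ref{lem:Jswap} in the $k=k'$ case rather than being literally ``inactive''.
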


\begin{remark} 
Notice the crucial detail that Lemma \ref{lem:AAiProd} does \emph{not} hold for $A^3$ if $f$ or $g$ depend on $X$ due to the regularity imbalances near the critical times.  
\end{remark} 

Together with \eqref{def:psi2sqrBrack}, Lemma \ref{lem:AAiProd} and Lemma \ref{lem:GevProdAlg} imply the following lemma (as long as $C^i$ remains sufficiently small). 
The proof is straightforward so we omit it for the sake of brevity. 

\begin{lemma}[Coefficient control] \label{lem:CoefCtrl} 
Let 
\begin{subequations} \label{def:G}
\begin{align} 
G_{yy} & = \left((1+\psi_y)^2 + \psi_z^2\right) - 1 \\
G_{yz} & =2\phi_y(1+\psi_y) + 2\psi_z(1 + \phi_z) \\ 
G_{zz} & = \left((1 + \phi_z)^2 + \phi_y^2 \right) - 1. 
\end{align}
\end{subequations} 
Under the bootstrap hypotheses, for $c_0$ sufficiently small, we have for any $G \in \set{\psi_y,\psi_z,\phi_y,\phi_z,G_{yy},G_{yz},G_{zz}}$, 
\begin{subequations} 
\begin{align} 
\norm{\jap{\grad}^{-1} A G}_2 & \lesssim \norm{A C}_2 \\ 
\norm{A G}_2 & \lesssim \norm{\grad A C}_2 \\ 
\norm{\jap{\grad}^{-1} \sqrt{\frac{\partial_t w}{w}} \tilde{A} G}_2  & \lesssim \norm{\left(\sqrt{\frac{\partial_t w}{w}}\tilde{A} + \frac{\abs{\grad}^{s/2}}{\jap{t}^s}A\right) C}_2 \\
\norm{\jap{\grad}^{-1} \abs{\grad}^{s/2} A G}_2 & \lesssim \norm{\abs{\grad}^{s/2}A C}_2. 
\end{align}
\end{subequations}
Further, 
\begin{subequations} \label{ineq:CCDeltatC}
\begin{align}
\norm{\jap{\grad}^{-2} A \Delta_t C^i}_2 & \lesssim \norm{A C}_2 \\ 
\norm{\jap{\grad}^{-1} A \Delta_t C^i}_2 & \lesssim \norm{\grad A C}_2 \\ 
\norm{\sqrt{\frac{\partial_t w}{w}} \jap{\grad}^{-2} \tilde{A} \Delta_t C^i}_2  & \lesssim \norm{\left(\sqrt{\frac{\partial_t w}{w}}\tilde{A} + \frac{\abs{\grad}^{s/2}}{\jap{t}^s}A\right) C}_2 \\
\norm{\abs{\grad}^{s/2} \jap{\grad}^{-2} A \Delta_t C^i}_2 & \lesssim \norm{\abs{\grad}^{s/2}A C}_2. 
\end{align}
\end{subequations}
Similarly, for any $\lambda(t) \geq \mu > 0$ and $\sigma \geq p \geq 0$ (the constant can be taken independent of $\mu$  for $p > 1$): 
\begin{align} 
\norm{G}_{\G^{\mu,p}} + \norm{\Delta_t C}_{\G^{\mu,p-1}} \lesssim \norm{\grad C}_{\G^{\mu,p}}. \label{ineq:psiCLow} 
\end{align}
\end{lemma}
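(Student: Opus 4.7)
The proof is essentially an algebraic expansion combined with careful use of the product estimates, so I would organize it in three stages.

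\textbf{Stage 1: Smallness and Neumann series.} The first step is to extract smallness of $\grad C$ in the low norm $\G^{c\lambda,3/2+}$. From the bootstrap hypotheses \eqref{ineq:Boot_ACC} and \eqref{ineq:Boot_LowC}, together with the embedding $\G^{\lambda,\gamma} \hookrightarrow \G^{c\lambda,3/2+}$ (after losing some regularity via $\sigma$ and $\gamma$ being large), I would show
\[
\norm{\grad C}_{\G^{c\lambda,3/2+}} \lesssim c_0,
\]
which, for $c_0$ sufficiently small, makes the denominators $1 - \partial_Y C^1$ and $1 - (\partial_Z C^2 + \partial_Y C^2 \partial_Z C^1/(1-\partial_Y C^1))$ appearing in \eqref{def:psi2sqrBrack} invertible via a geometric Neumann series. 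Hence each Jacobian factor $\psi_y,\psi_z,\phi_y,\phi_z$ is a convergent power series
\[
\mathcal{G} = \sum_{n \geq 1} P_n(\grad C),
\]
where $P_n$ is a multilinear polynomial of degree $n$ in $\partial_Y C^i, \partial_Z C^i$. The quantities $G_{yy}, G_{yz}, G_{zz}$ in \eqref{def:G} are then finite polynomials in these Jacobian factors and hence admit analogous expansions.

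\textbf{Stage 2: Norm estimates on the Jacobian factors.} For each term $P_n(\grad C)$ I would apply the product rules of Lemmas \ref{lem:AAiProd} and \ref{lem:GevProdAlg} $n-1$ times. Since $C$ does not depend on $X$, Lemma \ref{lem:AAiProd} applies with $A$ in place of $A^i$, yielding
\[
\norm{\jap{\grad}^{-1} A P_n(\grad C)}_2 \lesssim \norm{\jap{\grad}^{-1} A \grad C}_2 \, \norm{\grad C}_{\G^{c\lambda,3/2+}}^{n-1} \lesssim \norm{A C}_2 \, c_0^{n-1},
\]
using $\jap{\grad}^{-1}\grad \lesssim 1$, and similarly for the other three inequalities by placing the distinguished factor of $\grad C$ in the top norm. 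Summing the geometric series in $n$ (using $c_0 \ll 1$) gives the four claimed estimates. The estimates involving $\sqrt{\partial_t w/w}\,\tilde A$ and $\abs{\grad}^{s/2}A$ require the second inequality in Lemma \ref{lem:AAiProd}, together with the observation that these multipliers behave well with respect to $\jap{\grad}^{-1}$.

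\textbf{Stage 3: The operator $\Delta_t$ and the low-regularity bound.} Using the definitions of $\partial_Y^t,\partial_Z^t$, I would expand
\[
\Delta_t C^i = \Delta C^i + (\text{terms quadratic or higher in the Jacobian factors and} \grad C, \grad^2 C),
\]
so that $\Delta_t C^i$ splits into $\Delta C^i$ (handled by $\norm{\jap{\grad}^{-2} A \Delta C}_2 \lesssim \norm{A C}_2$ trivially) plus nonlinear contributions which are bilinear products of a Jacobian factor with $\grad^2 C$ or $\grad C$. Each such product is treated again by Lemma \ref{lem:AAiProd}, routing the Jacobian factor to the low norm (where Stage 2 gives $O(c_0)$) and $\grad^2 C$ to the top norm; the two derivative loss $\jap{\grad}^{-2}$ exactly compensates for the $\grad^2$ on $C$. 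The same strategy handles the $\sqrt{\partial_t w/w}$ and $\abs{\grad}^{s/2}$ weighted versions \eqref{ineq:CCDeltatC}. Finally, \eqref{ineq:psiCLow} follows in the same manner but using the purely Gevrey algebra property \eqref{ineq:GAlg} (valid once $\mu > 0$) instead of the $A$-multiplier product lemma.

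\textbf{Main obstacle.} The routine work is bookkeeping; the one place where care is genuinely needed is verifying that the Neumann series converges in all the \emph{different} norms simultaneously, especially for the mixed weights $\sqrt{\partial_t w/w}\,\tilde A$ where $\tilde{A}$ differs from $A$ in the sector $|l| \gtrsim |\eta|$. One must check that Lemma \ref{lem:AAiProd} indeed delivers an estimate in which \emph{only one} factor carries the weight while the others are pushed into $\G^{c\lambda,3/2+}$, so that the smallness $c_0$ from the bootstrap genuinely closes the series. This is the content of Lemma \ref{lem:AAiProd}, and once invoked correctly the whole argument is mechanical.
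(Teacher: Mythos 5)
Your proposal is correct and follows essentially the same route the paper indicates: the paper omits the proof but states that it follows from the algebraic formulas \eqref{def:psi2sqrBrack} together with Lemmas \ref{lem:AAiProd} and \ref{lem:GevProdAlg} and smallness of $C$, which is exactly your Neumann-series expansion plus repeated application of the product lemmas (using that $C$ is $X$-independent so the $A$-version of Lemma \ref{lem:AAiProd} applies). Nothing essential is missing.
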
  

\begin{remark} \label{rmk:CoefCtrlLow} 
As discussed in \cite{BGM15I}, a consequence of \eqref{ineq:psiCLow} together with \eqref{ineq:Boot_LowC} implies that when coefficients appear in `low frequency' in a paraproduct they satisfy the a priori estimate $O(\epsilon \jap{t})$. 
Together with $\epsilon t \jap{\nu t^3}^{-1} \lesssim \jap{t}^{-1}$, this implies that when there is enhanced dissipation present, we generally need only treat the leading order terms that arise from the approximation $\partial_i^t \approx \partial_i^L$ or the terms that arise when the coefficients are in high frequency. 
\end{remark}

\begin{remark} \label{rmk:SIcoefneglect} 
Even when enhanced dissipation is not present, the coefficients do not depend on $X$ and hence the presence of the coefficients do not shift the frequencies in $X$. 
This will mean that even when there are no powers of $\jap{\nu t^3}^{-1}$, terms in which coefficients appear in low frequency are generally treatable with an easy variant of the treatment used on the leading order terms. 
There are a few exceptions, when the structure of the term is changed by the coefficients, and otherwise these terms are generally omitted. 
\end{remark}

We recall the following lemma from \cite{BGM15I}. 
 
\begin{lemma}[$A^\nu$ Product Lemma] \label{lem:AnuProd} 
The following holds for all $f^1$ and $f^2$ such that $f^2_{\neq} = f^2$, 
\begin{align} 
\norm{A^{\nu;i}(f^1 f^2)}_2 & \lesssim \norm{f^1}_{\G^{\lambda,\beta + 3\alpha+ 3/2+}}\norm{A^{\nu;i}f^2}_2. \label{ineq:AnuiDistri}  
\end{align} 
Moreover, if also $f^1_{\neq} = f^1$ then we have the product-type inequalities 
\begin{subequations} \label{ineq:AnuiDistriDecay}
\begin{align} 
\norm{A^{\nu;1}(f^1 f^2)}_2 \lesssim \frac{\jap{t}^{2+\delta_1}}{\jap{\nu t^3}^\alpha}\left(\norm{\jap{\grad}^{2-\beta} A^{\nu;1} f^1}_{2}\norm{A^{\nu;1} f^2}_2 + \norm{A^{\nu;1} f^1}_{2}\norm{\jap{\grad}^{2-\beta} A^{\nu;1} f^2}_2 \right) \label{ineq:AnuiDistriDecay1} \\ 
\norm{A^{\nu;2}(f^1 f^2)}_2 \lesssim \frac{\jap{t}}{\jap{\nu t^3}^\alpha}\left(\norm{\jap{\grad}^{2-\beta} A^{\nu;2} f^1}_{2}\norm{A^{\nu;2} f^2}_2 + \norm{A^{\nu;2} f^1}_{2}\norm{\jap{\grad}^{2-\beta} A^{\nu;2} f^2}_2 \right) \label{ineq:AnuiDistriDecay2} \\ 
\norm{A^{\nu;3}(f^1 f^2)}_2 \lesssim \frac{\jap{t}^{2}}{\jap{\nu t^3}^\alpha}\left(\norm{\jap{\grad}^{2-\beta} A^{\nu;3} f^1}_{2}\norm{A^{\nu;3} f^2}_2 + \norm{A^{\nu;3} f^1}_{2}\norm{\jap{\grad}^{2-\beta} A^{\nu;3} f^2}_2 \right). \label{ineq:AnuiDistriDecay3}
\end{align} 
\end{subequations}  
\end{lemma}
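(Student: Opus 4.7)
The overall plan is to establish the first inequality by paraproduct decomposition combined with frequency-ratio bounds tailored to $A^{\nu;i}$, and then deduce the decay-type bounds by revisiting the same decomposition but exploiting that at least one factor has nonzero $x$-frequency so that we may extract both $\jap{\nu t^3}^{-\alpha}$ (from a $\jap{D(t,\cdot)}^\alpha$ in the denominator) and $\jap{t}^{2+\delta_1}$, $\jap{t}$, or $\jap{t}^2$ (from the $\jap{t}^{-1}\min(\cdot,\cdot)$ prefactors of $A^{\nu;i}$) as a net loss/gain combination.

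For \eqref{ineq:AnuiDistri}, I would write $f^1 f^2 = f^1_{Hi} f^2_{Lo} + f^1_{Lo} f^2_{Hi} + (f^1 f^2)_{\mathcal{R}}$ as in \S\ref{sec:paranote}. In the high--low piece, since $\abs{k',\xi,l'} \leq \frac{1}{8}\abs{k,\eta,l}$ we have $\abs{k-k',\eta-\xi,l-l'} \approx \abs{k,\eta,l}$, and the goal is the pointwise bound
\begin{align*}
A^{\nu;i}_k(t,\eta,l) \lesssim A^{\nu;i}_{k-k'}(t,\eta-\xi,l-l')\, e^{c\lambda\abs{k',\xi,l'}^s}\jap{k',\xi,l'}^{3\alpha+p}
\end{align*}
for some $c=c(s)\in(0,1)$ and $p\in\{1+\delta_1,1,2\}$ depending on $i$. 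The ingredients are: (a) Lemma \ref{lem:scon} for the exponential factor; (b) the splitting $\jap{D(t,\eta)}^\alpha \lesssim \jap{D(t,\eta-\xi)}^\alpha \jap{\xi}^{3\alpha}$, immediate from \eqref{def:D} because $D$ is polynomial in the frequency; (c) the case-analysis inequality $\min(1,\jap{\eta,l}^p/t^p)\lesssim \min(1,\jap{\eta-\xi,l-l'}^p/t^p)\,\jap{\xi,l'}^p$, checked on the four subcases according to whether each of $\jap{\eta,l}, \jap{\eta-\xi,l-l'}$ exceeds $t$; and (d) $w_L^{-1}$ uniformly bounded by \eqref{ineq:unifN}. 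The low--high term is symmetric, and the remainder is handled by distributing the full $\jap{k,\eta,l}^{\beta}$ between the two factors (both of comparable size). Cauchy--Schwarz together with summability in $\Integer\times\Real^2$ (absorbing a $\jap{k',\xi,l'}^{3/2+}$) closes the first estimate with the low-frequency factor in $\mathcal{G}^{\lambda,\beta+3\alpha+3/2+}$.

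For the three decay bounds, the same decomposition applies but now $f^1_{\neq}=f^1$ together with $f^2_{\neq}=f^2$ forces every contributing frequency $(k',\xi,l')$ and $(k-k',\eta-\xi,l-l')$ to carry a nonzero $x$-component, so both $A^{\nu;1}$-multipliers are active and satisfy the pointwise lower bound $A^{\nu;1}_{k'}(\xi,l') \gtrsim \jap{t}^{-1}\min(1,\jap{\xi,l'}^{1+\delta_1}/t^{1+\delta_1}) e^{\lambda\abs{k',\xi,l'}^s}\jap{k',\xi,l'}^\beta \jap{\nu t^3}^\alpha$. Dividing by this (on whichever factor we declare ``low'') converts the Gevrey--Sobolev norm from the first inequality into an $A^{\nu;1}$-norm, producing the $\jap{\nu t^3}^{-\alpha}$ gain and a time loss $\jap{t}\max(1,t^{1+\delta_1}/\jap{\eta,l}^{1+\delta_1})\leq \jap{t}^{2+\delta_1}$. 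After this conversion, the remaining polynomial weight $\jap{k,\eta,l}^{\beta-2}$ is absorbed so that the other factor only needs $\jap{\grad}^{2}$ Sobolev correction inside its $A^{\nu;1}$, which is exactly the $\jap{\grad}^{2-\beta} A^{\nu;1}$ normalization in the statement; the two ways of assigning high/low to $f^1$ vs $f^2$ give the two terms in the RHS. For $i=2,3$ the prefactors $\min(1,\jap{\eta,l}/t)$ and $\min(1,\jap{\eta,l}^2/t^2)$ produce instead $\jap{t}$ and $\jap{t}^2$, respectively, by the same computation.

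The main obstacle will be the careful case analysis of the min-factor when comparing output, $f^1$, and $f^2$ frequencies; four sub-cases arise according to whether each of $\jap{\eta,l}, \jap{\xi,l'}, \jap{\eta-\xi,l-l'}$ is above or below $t$, and in the ``mixed'' cases one must use $|\eta,l|\leq |\xi,l'|+|\eta-\xi,l-l'|$ carefully to ensure the loss is only polynomial in $\jap{\eta-\xi,l-l'}$ (so it can be absorbed into the low-frequency Gevrey--Sobolev norm). A related but milder technicality is verifying that $\jap{D(t,\eta)}^\alpha \lesssim \jap{D(t,\xi)}^\alpha \jap{\eta-\xi}^{3\alpha}$ incurs only polynomial loss; this follows because \eqref{def:D} makes $D$ pointwise bounded by $\nu(\abs{\eta}^3 + t^3)$ while enjoying the lower bound $\nu t^3$ of \eqref{ineq:DLowB} uniformly in $\eta$.
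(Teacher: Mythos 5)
Note first that this paper does not prove Lemma \ref{lem:AnuProd} at all; it is recalled from the companion work \cite{BGM15I}, so your argument must be judged against what any correct proof needs. Your ingredients (the comparison $\jap{D(t,\eta)}^\alpha\lesssim\jap{D(t,\cdot)}^\alpha\jap{\cdot}^{3\alpha}$ via \eqref{ineq:DLowB}, the $\min$-factor case analysis, Lemma \ref{lem:scon}, $w_L\approx 1$, Young) are the right ones, but the way you deploy them has a genuine structural gap in the high--low piece of \eqref{ineq:AnuiDistri}. In the term $f^1_{Hi}f^2_{Lo}$ the high frequency belongs to $f^1$, and your target bound $A^{\nu;i}_k(t,\eta,l)\lesssim A^{\nu;i}_{k-k'}(t,\eta-\xi,l-l')e^{c\lambda\abs{k',\xi,l'}^s}\jap{k',\xi,l'}^{3\alpha+p}$ transfers the multiplier onto $f^1$; that produces $\norm{A^{\nu;i}f^1}_2$ on the right, which is \emph{not} controlled by $\norm{f^1}_{\G^{\lambda,\beta+3\alpha+3/2+}}$, because $A^{\nu;i}$ contains the frequency-nonlocal, $t$-dependent factor $\jap{D(t,\eta)}^\alpha\gtrsim\jap{\nu t^3}^\alpha$ (plus the $\min$ prefactor). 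So "the low--high term is symmetric" is false: the two paraproduct pieces are genuinely asymmetric. In the piece where $f^1$ is high you must instead push the \emph{entire} multiplier (Gevrey exponential, $\jap{D}^\alpha$, $\min$ prefactor, $\mathbf{1}_{k\neq0}$) onto the low-frequency factor $f^2$ -- legitimate precisely because $f^2_{\neq}=f^2$ forces $k'\neq0$ there, and because $D(t,\xi)\gtrsim\nu t^3$ gives $\jap{D(t,\eta)}^\alpha\lesssim\jap{D(t,\xi)}^\alpha\jap{\eta-\xi}^{3\alpha}$ with the loss placed at the \emph{high} ($f^1$) frequency, where it is absorbed by the $+3\alpha$ in the exponent $\beta+3\alpha+3/2+$ (together with the $\beta$ from $\jap{k,\eta,l}^\beta$ and $3/2+$ for the $L^1$ factor in Young); a short case analysis on $\jap{\eta-\xi,l-l'}\lessgtr t$ shows the combined $D$- and $\min$-losses stay below $\jap{\eta-\xi,l-l'}^{3\alpha}$.

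The deduction of \eqref{ineq:AnuiDistriDecay} from \eqref{ineq:AnuiDistri} by "dividing by the lower bound of $A^{\nu;i}$" also does not close. Pointwise one only has $\jap{t}^{q_i}\jap{\nu t^3}^{-\alpha}\jap{\cdot}^{2-\beta}A^{\nu;i}\gtrsim e^{\lambda\abs{\cdot}^s}\jap{\cdot}^{2}$ on nonzero modes (with $q_1=2+\delta_1$, $q_2=1$, $q_3=2$), which does \emph{not} dominate $e^{\lambda\abs{\cdot}^s}\jap{\cdot}^{\beta+3\alpha+3/2+}$; since both sides carry the same Gevrey radius $\lambda$, there is nothing to absorb the leftover weight $\jap{\cdot}^{\beta+3\alpha-1/2}$ into (this fails e.g. for frequencies with $\nu\abs{\eta}^3\lesssim1\lesssim\abs{\eta}$). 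The decay bounds must instead be obtained by re-running the paraproduct so that the low-frequency factor only ever needs $e^{c\lambda\abs{\cdot}^s}$ with $c<1$ (the radius gap then absorbs \emph{all} polynomial losses there), transferring the full $A^{\nu;i}$ multiplier to the high-frequency factor (now allowed, since with $f^1_{\neq}=f^1$ both factors are measured in $A^{\nu;i}$-type norms), and only then converting the low factor via $e^{c\lambda\abs{\cdot}^s}\jap{\cdot}^{O(1)}\lesssim e^{\lambda\abs{\cdot}^s}\lesssim\jap{t}^{q_i}\jap{\nu t^3}^{-\alpha}\jap{\cdot}^{2-\beta}A^{\nu;i}$; the two assignments of high/low then give the two terms on the right of each of \eqref{ineq:AnuiDistriDecay1}--\eqref{ineq:AnuiDistriDecay3}.
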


\section{High norm estimate on $Q^2$}
First compute the time evolution of $A^2 Q^2$ in $L^2$: 
\begin{align} 
\frac{1}{2}\frac{d}{dt}\norm{A^{2} Q^2}_2^2 & \leq \dot{\lambda}\norm{\abs{\grad}^{s/2}A^{2} Q^2}_2^2 - \norm{\sqrt{\frac{\partial_t w}{w}}\tilde{A}^{2} Q^2}_2^2  - \frac{1}{t}\norm{\mathbf{1}_{t > \jap{\grad_{Y,Z}}} A^{2}Q_{\neq}^2}_2^2 \nonumber \\
 & \quad -\norm{\sqrt{\frac{\partial_t w_L}{w_L}} A^{2} Q^2}_2^2  + \nu \int A^{2} Q^2 A^{2} \left(\tilde{\Delta_t} Q^2\right) dV -\int A^{2} Q^2 A^{2} \left( \tilde U \cdot \grad Q^2 \right) dV \nonumber \\
& \quad - \int A^{2} Q^2 A^{2}\left(Q^j \partial_j^t U^2  + 2\partial_i^t U^j \partial_{i}^t \partial_{j}^t U^i - \partial_Y^t\left(\partial_i^t U^j \partial_j^t U^i\right) \right) dV \nonumber \\ 
& = - \mathcal{D}Q^2 - CK^2_L + \mathcal{D}_E + \mathcal{T} + NLS1 + NLS2 + NLP, \label{eq:A2Q2Evo}
\end{align} 
where we used the definition
\begin{align}
\mathcal{D} = -\nu\norm{\sqrt{-\Delta_L}A^2 Q^2}_2^2 + \mathcal{D}_E. 
\end{align} 
Recall the following enumerations from \cite{BGM15I}. 
For $i,j\in \set{1,2,3}$ and $a,b \in \set{0,\neq}$: 
\begin{subequations} \label{def:Q2Enums}
\begin{align}
NLP(i,j,a,b) &= \int A^2 Q^2_{\neq} A^2\left( \partial_Y^t \left(\partial_j^t U^i_a \partial_i^t U^j_b \right) \right) dV \\ 
NLS1(j,a,b) & = -\int A^2 Q^2_{\neq} A^2\left(Q^j_a\partial_j^t U^2_{b}\right) dV \\
NLS2(i,j,a,b) & = -\int A^2 Q^2_{\neq} A^2\left(\partial_i^t U^j_a \partial_i^t\partial_j^t U^2_{b}\right) dV \\
NLP(i,j,0) & = \int A^2 Q^2_{0} A^2\left( \partial_Y^t \left(\partial_j^t U^i_0 \partial_i^t U^j_0 \right) \right) dV \\ 
NLS1(j,0) & = -\int A^2 Q^2_{0} A^2\left(Q^j_0\partial_j^t U^2_{0}\right) dV \\
NLS2(i,j,0) & = -\int A^2 Q^2_{0} A^2\left(\partial_i^t U^j_0 \partial_i^t\partial_j^t U^2_{0}\right) dV \\ 
\mathcal{F} & = -\int A^2 Q^2_{0} A^2\left(\partial_i^t \partial_i^t \partial_j^t \left(U^j_{\neq} U^2_{\neq}\right)_0 - \partial_{Y}^t \partial_j^t \partial_i^t \left(U^i_{\neq} U^j_{\neq}\right)_0\right)dV \\ 
\mathcal{T}_0 & = -\int A^{2} Q_0^2 A^{2} \left( g \partial_Y Q_{0}^2 \right) dV  \\ 
\mathcal{T}_{\neq} & = -\int A^{2} Q_{\neq}^2 A^{2} \left( \tilde U \cdot \grad Q^2 \right) dV 
\end{align}
\end{subequations} 

Note that we have split $\mathcal{T}$ into three contributions: $\mathcal{T}_0$ (the \textbf{(2.5NS)} interactions), $\mathcal{T}_{\neq}$ (the \textbf{(SI)} and \textbf{(3DE)} interactions), and a contribution that is grouped with $\mathcal{F}$ (the \textbf{(F)} interactions).  Similarly, we have split the $NLS$ and $NLP$ terms into several contributions: $NLS1(j,0)$, $NLS2(i,j,0)$, and $NLP(i,j,0)$ (the \textbf{(2.5NS)} interactions), the $NLS1(j,a,b)$, $NLS2(i,j,a,b)$, and $NLP(i,j,a,b)$ (the \textbf{(SI)} and \textbf{(3DE)} interactions), and a contribution that is grouped with $\mathcal{F}$ (the \textbf{(F)} interactions). 
This kind of subdivision will be used repeatedly in the sequel.  

\subsection{Zero frequencies} \label{sec:AQ2Zero}

\subsubsection{Transport nonlinearity} \label{sec:TransQ20}
Turn first to $\mathcal{T}_0$, the \textbf{(2.5NS)} contribution to the transport nonlinearity. 
From Lemma \ref{lem:AAiProd},  
\begin{align*}  
\mathcal{T}_{0} & \lesssim \norm{A^2 Q_0^2}_2 \left( \norm{Ag}_2 \norm{Q^2_0}_{\G^{\lambda,\gamma}} + \norm{g}_{\G^{\lambda,\gamma}} \norm{\grad A^2 Q^2_0}_2 \right) \\ 
& \lesssim \norm{A^2 Q_0^2}_2 \left(\epsilon \norm{Q^2_0}_{\G^{\lambda,\gamma}} + \frac{\epsilon}{\jap{t}^{2}} \norm{\grad A^2 Q^2_0}_2 \right) \\ 
& \lesssim \epsilon^{3/2}\norm{\grad A^2 Q^2}_2^2 + \left(\frac{\epsilon^{1/2}}{\jap{t}^{4}} + \epsilon\right)\norm{A^2 Q^2}_2^2, 
\end{align*}
which is consistent with Proposition \ref{prop:Boot} by absorbing first term with the dissipation and integrating in time, provided $c_0$, and $\epsilon$ (equivalently $\nu$) are chosen sufficiently small. 

\subsubsection{Nonlinear pressure and stretching} \label{sec:NLPSQ20}
These terms correspond to the nonlinear zero frequency interactions in the pressure and stretching terms, and so are of type \textbf{(2.5NS)}.
Unlike in \cite{BGM15I}, $A^2_0 \neq A^{3}_0$: near the critical times, we have less control over $Q^3_0$. 
Therefore, the most difficult contributions will come from terms which involve two derivatives of $Q^3$. 
Consider $NLP(3,3,0)$ as a representative example; the other contributions are all treated with a similar approach (or are easier) and hence are omitted for the sake of brevity. 
We expand with a paraproduct and group any terms where the coefficients appear in low frequency with the remainders: 
\begin{align*}
NLP(3,3,0) & = 2\int A^2 Q^2_0 A^2 \partial_Y\left( (\partial_Z U^3_0)_{Hi} (\partial_Z U^3_0)_{Lo} \right) dV \\ 
& \quad + \int A^2 Q^2_0 A^2\left(\left((\psi_y)_{Hi} \partial_Y + (\phi_y)_{Hi}\partial_Z \right) \left( (\partial_Z U^3_0)_{Lo} (\partial_Z U^3_0)_{Lo} \right) \right) dV \\ 
& \quad + \int A^2 Q^2_0 A^2 \partial_Y\left( \left((\phi_z)_{Hi}\partial_Z + (\psi_z)_{Hi} \partial_Y\right) (U^3_0)_{Lo} (\partial_Z U^3_0)_{Lo} \right) dV \\ 
& \quad + P_{\mathcal{R},C} \\ 
& = P_{HL} + P_{C1} + P_{C2} + P_{\mathcal{R},C}. 
\end{align*}
Turn to $P_{HL}$ first. 
By \eqref{ineq:AprioriU0} and \eqref{ineq:ABasic} we have
\begin{align*}
P_{HL} & \lesssim \epsilon\sum_{l,l^\prime} \int \abs{\widehat{Q_0^2}(\eta,l)} \left(\sum_{r} \tilde{A}^2_0(\eta,l) \tilde{A}_0^3(\xi,l^\prime) \chi^{r,NR}\frac{t}{\abs{r} + \abs{\eta-tr}} + \chi^{\ast;23}  A^2_0(\eta,l) A_0^3(\xi,l^\prime) \right) \\ & \quad\quad \times \abs{\Delta_L \widehat{U^3_0}(\xi,l^\prime)_{Hi}} Low(\eta-\xi,l-l^\prime) d\eta d\xi,  
\end{align*}
which by \eqref{ineq:jNRBasic}, \eqref{ineq:triQuadHL} gives (along with $\epsilon t \leq c_0$), 
\begin{align}
P_{HL} & \lesssim  \epsilon t \norm{\left(\sqrt{\frac{\partial_t w}{w}} \tilde{A}^2 + \frac{\abs{\grad}^{s/2}}{\jap{t}^{s}}A^2\right) Q^2_0}_2 \norm{\left(\sqrt{\frac{\partial_t w}{w}}\tilde{A}^3 + \frac{\abs{\grad}^{s/2}}{\jap{t}^{s}} A^3\right) \Delta_L U^3_0}_2 \nonumber \\ 
& \quad + \epsilon \norm{A^2 Q^2_0}_2 \norm{\Delta_L A^3 U_0^3}_2 \nonumber \\  
& \lesssim c_0 \norm{\left(\sqrt{\frac{\partial_t w}{w}}\tilde{A}^2 + \frac{\abs{\grad}^{s/2}}{\jap{t}^{s}}A^2\right)  Q^2_0}_2^2 +  c_0\norm{\left(\sqrt{\frac{\partial_t w}{w}}\tilde{A}^3 + \frac{\abs{\grad}^{s/2}}{\jap{t}^{s}}A^3\right) \Delta_L U^3_0}_2^2 \nonumber \\ 
& \quad + \epsilon \norm{A^2 Q^2_0}_2^2 + \epsilon \norm{\Delta_L A^3 U_0^3}_2^2. \label{ineq:PHLQ2} 
\end{align}
By Lemmas \ref{lem:PELbasicZero} and \ref{lem:PELCKZero}, this is consistent with Proposition \ref{prop:Boot}  for $c_0$ sufficiently small and $t \leq c_0 \epsilon^{-1}$ by absorbing the leading terms with the dissipation energies and integrating in time. 

Of the coefficient error terms, $P_{C2}$ is the most difficult; we treat only this case and omit the others. 
By \eqref{ineq:ABasic}, \eqref{ineq:AprioriU0}, and \eqref{ineq:triQuadHL}, followed by Lemma \ref{lem:CoefCtrl},
\begin{align*}
P_{C2} & \lesssim \epsilon^2 \sum_{l,l^\prime} \int \abs{A^2_0 \widehat{Q_0^2}(\eta,l) \frac{A(\xi,l^\prime) \abs{\eta}}{\jap{\xi,l^\prime}^2} \left( \abs{\widehat{\psi_y}(\xi,l^\prime)_{Hi}} + \abs{\widehat{\phi_y}(\xi,l^\prime)_{Hi}} \right)}Low(\eta-\xi,l-l^\prime) d\eta d\xi \\   
& \lesssim \epsilon^2 \norm{A^2 Q^2_0}_2 \left( \norm{\jap{\grad}^{-1} A \psi_y}_2 +  \norm{\jap{\grad}^{-1} A \phi_y}_2\right) \\
& \lesssim \epsilon^2 \norm{A^2 Q^2_0}_2 \norm{AC}_2 \\ 
& \lesssim \epsilon \norm{A^2 Q^2_0}_2^2 + \epsilon^3 \norm{AC}_2^2, 
\end{align*}
which is consistent with Proposition \ref{prop:Boot} for $c_0$ sufficiently small after integrating in time.  

The remainder terms are similar, or easier than, the terms treated above and hence these are omitted for brevity. This completes $NLP(3,3,0)$; the other $NLP$ terms are similar or easier and are hence omitted as well.  

\subsubsection{Forcing from non-zero frequencies} \label{sec:NzeroForcing}
Turn next to nonlinear interactions of type \textbf{(F)}: the interaction of two $X$ frequencies $k$ and $-k$ and sub-divide via 
\begin{align*} 
\mathcal{F}  & = -\int A^2 Q^2_0 A^2\left(\partial_Z^t \partial_Z^t \partial_j^t \left(U^j_{\neq} U^2_{\neq}\right)_0 - \partial_Y^t \partial_Y^t \partial_Z^t \left(U^3_{\neq} U^2_{\neq}\right)_0  - \partial_Y^t \partial_Z^t \partial_Z^t \left(U^3_{\neq} U^3_{\neq}\right)_0\right) dV  \\ 
& = F^1 + F^2 + F^3.     
\end{align*}
As in \cite{BGM15I}, all three are treated via variants of the same basic approach which will ultimately come down to applying the appropriate multiplier estimate in \eqref{ineq:AdelLij} or \eqref{ineq:AdelLijRegBal} depending on the combination of derivatives present. However, the situation here is more complicated than in \cite{BGM15I} due to the additional regularity loss in non-resonant modes of $Q^3$ near the critical times. 
 
We expand $F^3$ with a paraproduct and group terms where the coefficients appear in low frequency with the remainder: 
\begin{align*}  
F^3 & = 2\sum_{k\neq 0} \int A^{2}Q^2_0 A_0^{2} \partial_Y\partial_Z\partial_Z\left( \left(U^3_{-k}\right)_{Hi} \left( U^3_k\right)_{Lo}\right)dV \\
& \quad +\sum_{k\neq 0}  \int A^{2}Q^2_0 A_0^{2} \left( \left( (\psi_y)_{Hi} \partial_Y + (\phi_y)_{Hi}\partial_Z \right) \partial_Y\partial_Z\left( \left(U^3_{-k}\right)_{Lo} \left( U^3_k\right)_{Lo}\right)\right) dV \\ 
& \quad + \sum_{k\neq 0} \int A^{2}Q^2_0 A_0^{2} \left( \partial_Y \left( (\psi_z)_{Hi}\partial_Y + (\phi_z)_{Hi}\partial_Z \right) \partial_Z\left( \left(U^3_{-k}\right)_{Lo} \left( U^3_k\right)_{Lo}\right) \right) dV \\
& \quad + \sum_{k\neq 0} \int A^{2}Q^2_0 A_0^{2} \left( \partial_Y \partial_Z \left((\psi_z)_{Hi} \partial_Y + (\phi_z)_{Hi}\partial_Z \right) \left( \left(U^3_{-k}\right)_{Lo} \left( U^3_k\right)_{Lo}\right) \right) dV \\ 
& \quad + F^2_{\mathcal{R},C} \\ 
& = F^3_{HL} + F^3_{C1} + F^3_{C2} + F^3_{C3} + F^3_{\mathcal{R},C},    
\end{align*} 
where here $F^3_{\mathcal{R},C}$ includes all of the remainders from the quintic paraproduct as well as the higher order terms involving coefficients as low frequency factors. 

Turn first to $F^3_{HL}$ (recall \eqref{ineq:AprioriUneq} and the shorthand discussed in \eqref{def:Low} above) which by \eqref{ineq:ABasic} is given by 
\begin{align*} 
F_{HL}^3 & \lesssim \frac{\epsilon}{\jap{\nu t^3}^{\alpha}} \sum_{k\neq 0} \sum_{l,l^\prime} \int \abs{A^{2} \widehat{Q^2_0}(\eta,l) A_0^2(\eta,l) \frac{\abs{l}^2\abs{\eta} }{k^2 + (l^\prime)^2 + \abs{\xi-kt}^2} \Delta_L \widehat{U^3_{k}}(\xi,l^\prime)_{Hi}} Low(-k,\eta-\xi,l-l^\prime) d\eta d\xi \\ 
& \lesssim \frac{\epsilon}{\jap{\nu t^3}^{\alpha}} \sum_{k\neq 0} \sum_{l,l^\prime} \int \abs{\widehat{Q^2_0}(\eta,l)} \frac{\abs{l}^2\abs{\eta} }{k^2 + (l^\prime)^2 + \abs{\xi-kt}^2} \jap{\frac{t}{\jap{\xi,l^\prime}}}^{2} \\ & \quad\quad \times \left(\sum_{r} \chi^{r,NR}\frac{t}{\abs{r} + \abs{\eta-tr}}\tilde{A}_0^2(\eta,l)\tilde{A}^3_k(\xi,l^\prime) + \chi^{\ast;23} A_0^2(\eta,l) A^3_k(\xi,l^\prime) \right) \\ & \quad\quad \times \abs{ \Delta_L \widehat{U^3_{k}}(\xi,l^\prime)_{Hi}} Low(-k,\eta-\xi,l-l^\prime) d\eta d\xi.
\end{align*} 
By \eqref{ineq:AA3Z} and \eqref{ineq:AdeYZZ} (for the $\chi^{\ast;23}$ contribution), followed by \eqref{ineq:triQuadHL}, there holds 
\begin{align*}
F_{HL}^{3} & \lesssim \frac{\epsilon \jap{t}^2}{\jap{\nu t^3}^{\alpha}} \norm{\left(\sqrt{\frac{\partial_t w}{w}}\tilde{A}^2 + \frac{\abs{\grad}^{s/2}}{\jap{t}^{s}}A^2\right)
Q_0^2}_2 \norm{\left(\sqrt{\frac{\partial_t w}{w}}\tilde{A}^3 + \frac{\abs{\grad}^{s/2}}{\jap{t}^{s}}A^3\right) \Delta_L U^3_{\neq}}_2 \\ & \quad +  \frac{\epsilon }{\jap{\nu t^3}^{\alpha}} \norm{\sqrt{-\Delta_L}A^2 Q_0^2}_2\norm{A^3 \Delta_L U^3_{\neq}}_2 \\ 
& \lesssim \frac{\epsilon \jap{t}^2}{\jap{\nu t^3}^{\alpha}} \norm{\left(\sqrt{\frac{\partial_t w}{w}}\tilde{A}^2 + \frac{\abs{\grad}^{s/2}}{\jap{t}^{s}}A^2\right) Q_0^2}_2^2 +  \frac{\epsilon \jap{t}^2}{\jap{\nu t^3}^{\alpha}} \norm{\left(\sqrt{\frac{\partial_t w}{w}}\tilde{A}^3 + \frac{\abs{\grad}^{s/2}}{\jap{t}^{s}}A^3\right) \Delta_L U^3_{\neq}}^2_2 \\ & \quad +\frac{\epsilon^{3/2}}{\jap{\nu t^3}^{\alpha}} \norm{\sqrt{-\Delta_L}A^2 Q_0^2}^2_2 + \frac{\epsilon^{1/2}}{\jap{\nu t^3}^{\alpha}}\norm{A^3 \Delta_L U^3_{\neq}}_2^2, 
\end{align*}
which after Lemmas \ref{lem:SimplePEL} and \ref{lem:PEL_NLP120neq} is consistent with Proposition \ref{prop:Boot} for $c_0$ and $\epsilon$ sufficiently small. 

Turn next to the coefficient error terms. Due to the high number of derivatives, the most difficult one is $F_{C3}^3$, hence, we focus only on this one and omit the others for brevity. 
We have by \eqref{ineq:AprioriUneq}, Lemma \ref{lem:ABasic}, and \eqref{ineq:triQuadHL}, 
\begin{align*}
F_{C3}^3 & \lesssim \frac{\epsilon^2}{\jap{\nu t^3}^{2\alpha}} \sum_{k\neq 0} \sum_{l,l^\prime} \int \abs{A^{2} \widehat{Q^2_0}(\eta,l) \frac{\abs{\eta,l}^2}{\jap{\xi,l^\prime}^2} A\left( \abs{\widehat{\psi_z}(\xi,l^\prime)_{Hi}} + \abs{\widehat{\phi_z}(\xi,l^\prime)_{Hi}} \right) } Low(-k,\eta-\xi,l-l^\prime) d\eta d\xi \\
& \lesssim \frac{\epsilon^2}{\jap{\nu t^3}^{2\alpha}}\norm{\sqrt{-\Delta_L} A^2 Q^2_0}_2 \left(\norm{\jap{\grad}^{-1}A \phi_z}_2 + \norm{\jap{\grad}^{-1}A\psi_z}_2\right) \\ 
& \lesssim \epsilon^{3/2}\norm{\sqrt{-\Delta_L} A^2 Q^2_0}_2^2 +  \frac{\epsilon^{5/2}}{\jap{\nu t^3}^{4\alpha}}\norm{AC}_2^2, 
\end{align*}
which is consistent with Proposition \ref{prop:Boot} for $\epsilon$ sufficiently small. 
The remaining coefficient error terms are similar or easier and are hence omitted. 
The remainder terms are easy variants of the above treatments. The one which may require comment is the error term of the form 
\begin{align*}
2\sum_{k\neq 0} \int A^{2}Q^2_0 A_0^{2}\left((\phi_y)_{Lo}\partial_Z\partial_Z\partial_Z\left( \left(U^3_{-k}\right)_{Hi} \left( U^3_k\right)_{Lo}\right)\right)dV, 
\end{align*}
as the structure of the nonlinearity has changed and it is less clear how to absorb the losses due to the unbalance of regularities. However, since $\norm{C}_{\G^{\lambda,\gamma}} \lesssim \epsilon t$, the presence of the coefficients gains a power of $t$ and absorbs the loss via $\epsilon t \jap{\nu t^3}^{-1}  \lesssim t^{-1}$. From there the proof applies \eqref{ineq:AdeZZZ}; for more details, see the treatment of $F^{1;3}$ below where a similar argument is carried out. 
This completes the treatment of $F^3$. 

Consider next the contribution from $F^{1}$ and $j = 3$ (denoted $F^{1;3}$) which requires further explanation. As above, we expand with a paraproduct, 
\begin{align*}
F^{1;3} & = -\sum_{k\neq 0} \int A^{2}Q^2_0 A_0^{2} \partial_Z\partial_Z\partial_Z\left( \left(U^3_{-k}\right)_{Hi} \left( U^2_k\right)_{Lo}\right)dV \\
& \quad - \sum_{k\neq 0} \int A^{2}Q^2_0 A_0^{2} \partial_Z\partial_Z\partial_Z\left( \left(U^3_{-k}\right)_{Lo} \left( U^2_k\right)_{Hi}\right) dV \\ 
& \quad -\sum_{k\neq 0}  \int A^{2}Q^2_0 A_0^{2} \left( \left( (\psi_z)_{Hi} \partial_Y + (\phi_z)_{Hi}\partial_Z \right) \partial_Z\partial_Z\left( \left(U^3_{-k}\right)_{Lo} \left( U^2_k\right)_{Lo}\right)\right) dV \\ 
& \quad - \sum_{k\neq 0} \int A^{2}Q^2_0 A_0^{2} \left( \partial_Z \left( (\psi_z)_{Hi}\partial_Y + (\phi_z)_{Hi}\partial_Z \right) \partial_Z\left( \left(U^3_{-k}\right)_{Lo} \left( U^2_k\right)_{Lo}\right) \right) dV \\
& \quad - \sum_{k\neq 0} \int A^{2}Q^2_0 A_0^{2} \left( \partial_Z \partial_Z \left((\psi_z)_{Hi} \partial_Y + (\phi_z)_{Hi}\partial_Z \right) \left( \left(U^3_{-k}\right)_{Lo} \left( U^2_k\right)_{Lo}\right) \right) dV \\ 
& \quad - F^{1;3}_{\mathcal{R},C} \\ 
& = F^{1;3}_{HL} + F^{1;3}_{LH} + F^{1;3}_{C1} + F^{1;3}_{C2} + F^{1;3}_{C3} + F^{1;3}_{\mathcal{R},C}, 
\end{align*}
The coefficient error terms $F^{1;3}_{Ci}$ and remainder terms $F^{1;3}_{\mathcal{R},C}$ are all easier than the $F^3$ case treated above and are hence omitted for brevity. 
Of the two leading order terms, $F_{LH}^{1;3}$ is easier as there is no additional regularity loss near critical times (despite the larger low frequency factor); indeed it is treated by a straightforward variant of the treatment of $F_{HL}^{1;3}$. Hence, turn to the latter, which by \eqref{ineq:AprioriUneq} and Lemma \ref{lem:ABasic} is given by 
\begin{align*}
F_{HL}^{1;3} & \lesssim \frac{\epsilon}{\jap{t}\jap{\nu t^3}^{\alpha}} \sum_{k \neq 0} \sum_{l,l^\prime} \int \abs{A^{2} \widehat{Q^2_0}(\eta,l) A_0^2(\eta,l) \frac{\abs{l}^3}{k^2 + (l^\prime)^2 + \abs{\xi-kt}^2} \Delta_L \widehat{U^3_{k}}(\xi,l^\prime)_{Hi}} Low(-k,\eta-\xi,l-l^\prime) d\eta d\xi \\ 
& \lesssim \frac{\epsilon}{\jap{t} \jap{\nu t^3}^{\alpha}} \sum_{k \neq 0} \sum_{l,l^\prime} \int \abs{\widehat{Q^2_0}(\eta,l)}\frac{\abs{l}^3}{k^2 + (l^\prime)^2 + \abs{\xi-kt}^2} \jap{\frac{t}{\jap{\xi,l^\prime}}}^{2} \\ & \quad\quad \times \left(\sum_{r} \chi^{r,NR}\frac{t}{\abs{r} + \abs{\eta-tr}}\tilde{A}^2_0(\eta,l) \tilde{A}^3_k(\xi,l^\prime) + \chi^{\ast;23} A^2_0(\eta,l) A^3_k(\xi,l^\prime) \right)  \\ & \quad\quad \times \abs{\Delta_L \widehat{U^3_{k}}(\xi,l^\prime)_{Hi}} Low(-k,\eta-\xi,l-l^\prime) d\eta d\xi \\ 
& \lesssim \frac{\epsilon}{\jap{\nu t^3}^{\alpha}} \sum_{k \neq 0} \sum_{l,l^\prime} \int \abs{A^{2} \widehat{Q^2_0}(\eta,l)}\frac{\abs{l}^3}{k^2 + (l^\prime)^2 + \abs{\xi-kt}^2} \jap{\frac{t}{\jap{\xi,l^\prime}}}^{2} \\ & \quad\quad \times \abs{A^3_{k} \Delta_L \widehat{U^3_{k}}(\xi,l^\prime)_{Hi}} Low(-k,\eta-\xi,l-l^\prime) d\eta d\xi. 
\end{align*}
By \eqref{ineq:AdeZZZ} (with $p = 2$) and \eqref{ineq:triQuadHL} we have,  
\begin{align*}
F_{HL}^{1;3} & \lesssim \frac{\epsilon}{\jap{\nu t^3}^{\alpha}} \norm{\sqrt{-\Delta_L} A^2 Q_0^2}_2 \norm{A^3 \Delta_L U^3_{\neq}}_2 \lesssim \epsilon^{3/2}\norm{\sqrt{-\Delta_L}A^2 Q_0^2}_2^2 + \frac{\epsilon^{1/2}}{\jap{\nu t^3}^{2\alpha}}\norm{A^3 \Delta_L U^3_{\neq}}^2_2, 
\end{align*}
which by Lemma \ref{lem:SimplePEL} is consistent with Proposition \ref{prop:Boot} for $\epsilon$ sufficiently small. 
This completes $F^{1;3}$. The remaining forcing terms are relatively easy variants of those already treated and are hence omitted for brevity. 

\subsubsection{Dissipation error terms} \label{sec:DEQ02}
Recalling the definitions of the dissipation error terms and the short-hand \eqref{def:G}, we have 
\begin{align} 
\mathcal{D}_E & =  \nu\int A_0^2 Q^2_0 A_0^2\left(G_{yy} \partial_{YY}Q^2_0 + G_{zz}\partial_{ZZ}Q^2_0 + G_{yz}\partial_{YZ}Q^2_0 \right) dV. \label{def:DEQ2}
\end{align} 
All three error terms are essentially the same and are treated in the same manner as the analogous terms in \cite{BGM15I}. Hence,  we omit the treatments and simply state the results
\begin{align} 
\mathcal{D}_{E} & \lesssim c_0^{-1} \nu \epsilon^2 \norm{\grad A C}_2^2 + c_0 \nu \norm{\sqrt{-\Delta_L }A^2 Q^2}_{2}^2. \label{ineq:DEbd}
\end{align} 
Note that as in \cite{BGM15I}, by \eqref{ineq:Boot_ACC}
\begin{align*} 
\int_{1}^{T^\star} c_0^{-1} \nu \epsilon^2 \norm{\grad A C(t)}_2^2 dt & \lesssim c_{0} \epsilon^2 K_B.  
\end{align*} 
Hence, for $c_{0}$ sufficiently small, \eqref{ineq:DEbd} is consistent with Proposition \ref{prop:Boot}. 

\subsection{Non-zero frequencies}
Next we consider the contributions to \eqref{eq:A2Q2Evo} which come from the evolution of non-zero $X$ frequencies.   

\subsubsection{Nonlinear pressure $NLP$} \label{sec:NLPQ2} 
\paragraph{Treatment of $NLP(1,j,0,\neq)$} \label{sec:NLP213}
Here $j \in \set{2,3}$ due to the structure of the nonlinearity.
The case $j = 3$ was singled out in \cite{BGM15I} as one of the leading order nonlinear interactions of type $\textbf{(SI)}$ (see also \S\ref{sec:Toy}). 
We will concentrate on this case and omit the treatment of $j=2$, which is treated with the same method and moreover is simpler due to the lack of a regularity imbalance in $A^2$ near the critical times.

This term is quartic (in the sense that the nonlinearity is order $4$) and we will use the paraproduct decomposition described in \S\ref{sec:paranote}. We will group terms where the coefficients appear in `low  frequency' with the remainder (see Remarks \ref{rmk:CoefCtrlLow} and \ref{rmk:SIcoefneglect}). 
Therefore, the expansion is
\begin{align*} 
NLP(1,3,0,\neq) & =  \sum_{k \neq 0} \int A^2 Q^2_{k} A^2\left( (\partial_Y - t\partial_X) \left( \left(\partial_Z U^1_0\right)_{Lo} (\partial_XU^3_{k})_{Hi}  \right) \right) dV \\ 
& \quad  + \sum_{k \neq 0} \int A^2 Q^2_{k} A^2\left( (\partial_Y - t\partial_X) \left( (\partial_Z U^1_0)_{Hi} (\partial_XU^3_{k})_{Lo} \right) \right) dV \\ 
& \quad + \sum_{k \neq 0} \int A^2 Q^2_{k} A^2\left( \left((\psi_y)_{Hi}(\partial_Y - t\partial_X) + (\phi_y)_{Hi}\partial_Z\right) \left(\partial_XU^3_{k} \partial_Z U^1_0\right)_{Lo} \right) dV \\ 
 & \quad  + \sum_{k \neq 0} \int A^2 Q^2_{k} A^2 \left( (\partial_Y-t\partial_X) \left( (\partial_XU^3_{k})_{Lo} \left( \left((\psi_z)_{Hi}\partial_Y + (\phi_z)_{Hi}\partial_Z \right) (U^1_0)_{Lo}\right) \right) \right) dV \\
& \quad + P_{\mathcal{R},C} \\  
& = P_{LH} + P_{HL} + P_{C1} + P_{C2} + P_{\mathcal{R},C},  
\end{align*} 
where $P_{\mathcal{R},C}$ includes all of the remainders from the quartic paraproduct as well as the higher order terms involving coefficients as low frequency factors.  

Turn first to $P_{LH}$, which by \eqref{ineq:AprioriU0} and \eqref{ineq:A2A3neqneq} is bounded by (recall the shorthand \eqref{def:Low}), 
\begin{align*} 
P_{LH} & \lesssim  \epsilon t \sum_{k \neq 0} \int \abs{A^2 \widehat{Q^2_{k}}(\eta,l) A^2_k(\eta,l) \frac{(\eta - tk)k}{k^2 + \abs{l^\prime}^2 + \abs{\xi-tk}^2} \Delta_L\widehat{U^3_{k}}(\xi,l^\prime)_{Hi}} Low(\eta-\xi,l-l^\prime) d\eta d\xi \\ 
& \lesssim  \epsilon t \sum_{k \neq 0} \int \abs{\widehat{Q^2_{k}}(\eta,l)} \frac{(\eta - tk)k}{k^2 + \abs{l^\prime}^2 + \abs{\xi-tk}^2} \jap{\frac{t}{\jap{\xi,l^\prime}}} \\ 
& \quad\quad \times \left(\sum_r \chi^{r,NR} \frac{t}{\abs{r} + \abs{\eta-tr}} \tilde{A}_k^2(\eta,l)\tilde{A}_k^3(\xi,l^\prime) + \chi^{\ast;23} A_k^2(\eta,l) A_k^3(\xi,l^\prime) \right) \\ & \quad\quad \times \abs{\Delta_L\widehat{U^3_{k}}(\xi,l^\prime)_{Hi}} Low(\eta-\xi,l-l^\prime) d\eta d\xi. 
\end{align*}
Note that by \eqref{ineq:basicNR}, the following holds on the support of the integrand: 
\begin{align}
\chi_{NR;k} & \lesssim \mathbf{1}_{t \leq \epsilon^{-1/2+\delta/100}} + \mathbf{1}_{t \geq \epsilon^{-1/2+\delta/100}}\epsilon^{1/2} \jap{\xi-kt,l^\prime} \jap{\eta-\xi,l-l^\prime}.  \label{ineq:teps12trick}
\end{align}
Hence, by \eqref{ineq:AiPartXA23} (with $p = 1$), followed by \eqref{ineq:triQuadHL}, 
\begin{align} 
P_{LH} & \lesssim \epsilon t\norm{\left(\sqrt{\frac{\partial_t w}{w}}\tilde{A}^2 + \frac{\abs{\grad}^{s/2}}{\jap{t}^s}A^2\right) Q^2}_2\norm{\left(\sqrt{\frac{\partial_t w}{w}}\tilde{A}^3 + \frac{\abs{\grad}^{s/2}}{\jap{t}^s}A^3\right)\Delta_L U^3_{\neq}}_2 \nonumber  \\ 
& \quad + \epsilon^{3/2} \norm{\sqrt{-\Delta_L} A^2 Q^2}_2^2 + \mathbf{1}_{t \leq \epsilon^{-1/2+\delta/100}}\epsilon^{1/2}\norm{A^3 \Delta_L U^3_{\neq}}_2^2 + \epsilon^{3/2-\delta/50}\norm{\sqrt{-\Delta_L }A^3 \Delta_L U^3_{\neq}}_2^2,  \label{ineq:PLHNLP13}
\end{align}
which is consistent with Proposition \ref{prop:Boot} by Lemmas \ref{lem:PEL_NLP120neq}, \ref{lem:SimplePEL}, and \ref{lem:PELED} for $\epsilon$ and $\epsilon t \leq c_0$ sufficiently small. 
 
Turn next to the contribution of $P_{HL}$, which can be treated in the same manner as in \cite{BGM15I}. Indeed, by \eqref{ineq:AprioriUneq} followed by Lemma \ref{lem:ABasic} and \eqref{ineq:triQuadHL}, we have, 
\begin{align*} 
P_{HL} & \lesssim \frac{\epsilon}{\jap{\nu t^3}^{\alpha}}\sum_{k \neq 0} \sum_{l, l^\prime} \int \abs{A^2 \widehat{Q^2_{k}}(\eta,l) A^2_k(\eta,l) \abs{\eta - kt} \abs{l^\prime} \widehat{U^1_0}(\xi,l^\prime)_{Hi} Low(k,\eta-\xi,l-l^\prime)} d\eta d\xi \\ 
 & \lesssim \frac{\epsilon}{\jap{\nu t^3}^{\alpha}} \sum_{k \neq 0} \sum_{l, l^\prime} \int \abs{ A^2 \widehat{Q^2_{k}} \frac{\abs{\eta - kt}}{\jap{\xi,l^\prime}^2 \jap{\frac{t}{\jap{\xi,l^\prime}}}} \abs{l^\prime} A\widehat{U^1_0}(\xi,l^\prime)_{Hi} Low(k,\eta-\xi,l-l^\prime)} d\eta d\xi \\ 
& \lesssim \frac{\epsilon}{\jap{t} \jap{\nu t^3}^{\alpha}} \norm{\sqrt{-\Delta_L} A^2 Q^2}_2 \norm{AU^1_0}_2 \\ 
& \lesssim \epsilon^{3/2} \norm{\sqrt{-\Delta_L} A^2 Q^2}_2^2 + \frac{\epsilon^{1/2}}{\jap{\nu t^3}^{2\alpha}} \left(\frac{1}{\jap{t}^2}\norm{AU^1_0}_2^2\right). 
\end{align*} 
This is consistent with Proposition \ref{prop:Boot} after applying Lemma \ref{lem:PELbasicZero}. 

Turn first to $P_{C1}$, which is also treated in the same manner as in \cite{BGM15I}. By \eqref{ineq:AprioriUneq}, \eqref{ineq:AprioriU0}, and Lemma \ref{lem:ABasic} we have
\begin{align*} 
P_{C1} & \lesssim \frac{\epsilon^2 t^2}{\jap{\nu t^3}^{\alpha}} \sum_{k \neq 0} \sum_{l, l^\prime} \int \abs{A^2 \widehat{Q^2_{k}}(\eta,l) A^2_k(\eta,l) \left(\abs{\widehat{\psi_y}(\xi,l^\prime)_{Hi}} + \abs{\widehat{\phi_y}(\xi,l^\prime)_{Hi}} \right)} Low(k,\eta-\xi,l-l^\prime) d\eta d\xi \\ 
& \lesssim \frac{\epsilon^2 t^2}{\jap{\nu t^3}^{\alpha}} \sum_{k \neq 0} \sum_{l, l^\prime} \int \abs{A^2 \widehat{Q^2_{k}}(\eta,l) \frac{1}{\jap{\xi,l^\prime}^2 \jap{\frac{t}{\jap{\xi,l^\prime}}}} A\left( \abs{\widehat{\psi_y}(\xi,l^\prime)_{Hi}} + \abs{\widehat{\phi_y}(\xi,l^\prime)_{Hi}}\right)} Low(k,\eta-\xi,l-l^\prime) d\eta d\xi\\
 & \lesssim \frac{\epsilon^2 t}{\jap{\nu t^3}^{\alpha}} \norm{A^2 Q^2_{\neq}}_2 \left(\norm{\jap{\grad}^{-1 }A \psi_y}_2  + \norm{\jap{\grad}^{-1}A \phi_y}_2\right) \\ 
& \lesssim \frac{\epsilon}{\jap{\nu t^3}^{\alpha}} \norm{ A^2 Q^2}_2^2 + \frac{\epsilon^{3} t^4}{\jap{\nu t^3}^{\alpha}} \left(\frac{1}{\jap{t}^2} \norm{A C}^2_2\right), 
\end{align*} 
which is consistent with Proposition \ref{prop:Boot} for $\epsilon$ sufficiently small. 
This completes the treatment of $P_{C1}$.  
The second coefficient term, $P_{C2}$, is very similar: there is one extra derivative landing on the coefficient but there is one less power of time from the low frequency factor. By Lemma \ref{lem:ABasic}, we will be able to balance the loss by the gain and apply essentially the same treatment as we did for $P_{C1}$. Hence, this is omitted for the sake of brevity.  

Similarly, the remainder and coefficient terms $P_{\mathcal{R},C}$ are omitted as they are easier or very similar. 
This completes the treatment of $NLP(1,3,0,\neq)$. 

\paragraph{Treatment of $NLP(i,j,0,\neq)$ with $i \in \set{2,3}$} \label{sec:NLPQ2_0neq_notX}
We will demonstrate how to deal with these terms by the example of $NLP(2,3,0,\neq)$ (recall \eqref{def:Q2Enums}), which is one of the leading order terms.
Expanding with a quintic paraproduct and grouping the low frequency coefficient terms with the remainder: 
\begin{align*} 
NLP(2,3,0,\neq) & = \sum_{k \neq 0} \int A^2 Q^2_{k} A^2\left((\partial_Y-t\partial_X)((\partial_Y - t\partial_X)(U^3_{k})_{Hi} (\partial_Z^t U^2_0)_{Lo}) \right) dV \\ 
& \quad +\sum_{k \neq 0} \int A^2 Q^2_{k} A^2\left((\partial_Y-t\partial_X)((\partial_Y-t\partial_X)(U^3_{k})_{Lo} (\partial_Z^t U^2_0)_{Hi}) \right) dV \\
& \quad +\sum_{k \neq 0} \int A^2 Q^2_{k} A^2\left( \left((\psi_y)_{Hi}(\partial_Y - t\partial_X) + (\phi_y)_{Hi}\partial_Z \right)(\partial_Y^t (U^3_{k})_{Lo} (\partial_Z^t U^2_0)_{Lo}) \right) dV \\
& \quad +\sum_{k \neq 0} \int A^2 Q^2_{k} A^2\left( (\partial_Y - t\partial_X)( \left((\psi_y)_{Hi}(\partial_Y-t\partial_X) + (\phi_y)_{Hi}\partial_Z \right) (U^3_{k})_{Lo} (\partial_Z^t U^2_0)_{Lo}) \right) dV \\
& \quad +\sum_{k \neq 0} \int A^2 Q^2_{k} A^2\left( (\partial_Y - t\partial_X)( (\partial_Y^t U^3_{k})_{Lo} ( \left((\phi_z)_{Hi}\partial_Z + (\psi_z)_{Hi}\partial_Y \right) U^2_0)_{Lo}) \right) dV \\
& \quad + P_{\mathcal{R},C} \\  
& = P_{HL} + P_{LH} + P_{C1} + P_{C2} + P_{\mathcal{R},C}, 
\end{align*}
where the term $P_{\mathcal{R},C}$ contains the remainders from the quintic paraproducts and the higher order terms where the coefficients are in low frequency. 

Consider first $P_{HL}$, which by \eqref{ineq:AprioriU0}, \eqref{ineq:TriTriv}, and \eqref{ineq:A2A3neqneq}, 
\begin{align*} 
P_{HL} & \lesssim \epsilon\sum_{k \neq 0} \int \abs{A^2 \widehat{Q^2_{k}}(\eta,l) A^2_k(\eta,l) \abs{\eta - tk} \abs{\xi-tk} \widehat{U^3_{k}}(\xi,l^\prime)_{Hi} } Low(\eta-\xi,l-l^\prime) d\eta d\xi \\ 
& \lesssim \epsilon\sum_{k \neq 0} \int \abs{\widehat{Q^2_{k}}(\eta,l)} \\ & \quad\quad \times \left(\sum_{r}\chi^{r,NR}\frac{t}{\abs{r} + \abs{\eta - tr}}\tilde{A}^2_k(\eta,l)\tilde{A}^3_k(\xi,l^\prime) + \chi^{\ast;23} A^2_k(\eta,l) A^3_k(\xi,l^\prime) \right) \jap{\frac{t}{\jap{\xi,l^\prime}}} \\ & \quad\quad \times \jap{\xi-tk}^2 \abs{\widehat{U^3_{k}}(\xi,l^\prime)_{Hi}} Low(\eta-\xi,l-l^\prime) d\eta d\xi. 
\end{align*}   
By \eqref{ineq:jNRBasic}, \eqref{ineq:teps12trick}, and \eqref{ineq:quadHL} we have,  
\begin{align*} 
P_{HL} & \lesssim \epsilon \jap{t} \norm{\left(\sqrt{\frac{\partial_t w}{w}}\tilde{A}^2 + \frac{\abs{\grad}^{s/2}}{\jap{t}^{s}}A^2\right)  Q^2_{\neq}}_2 \norm{\left(\sqrt{\frac{\partial_t w}{w}}\tilde{A}^3 + \frac{\abs{\grad}^{s/2}}{\jap{t}^{s}}A^3 \right) \Delta_L U^3_{\neq}}_2 \\ 
& \quad + \epsilon^{3/2}\norm{\sqrt{-\Delta_L}A^2 Q^2}_2^2 + \mathbf{1}_{t \leq \epsilon^{-1/2+\delta/100}}\epsilon^{1/2}\norm{\Delta_L A^3 U^3_{\neq}}^2_2 + \epsilon^{3/2-\delta/50}\norm{\sqrt{-\Delta_L} \Delta_L A^3 U^3_{\neq}}^2_2,  
\end{align*}
 which is consistent with Proposition \ref{prop:Boot} by Lemmas \ref{lem:PEL_NLP120neq}, \ref{lem:SimplePEL}, and \ref{lem:PELED}. 

Turn next to $P_{LH}$. As in \cite{BGM15I}, this term is treated as in the analogous term in $NLP(1,3,0,\neq)$, using that extra loss of time from the second $\partial_Y^t$ derivative replaces the gain in $t$ from the presence of $U_0^2$ as opposed to $U_0^1$. 
We omit the analogous details and simply conclude that 
\begin{align*} 
P_{LH} & \lesssim \epsilon^{3/2} \norm{\sqrt{-\Delta_L} A^2 Q^2}_2^2 + \frac{\epsilon^{1/2}}{\jap{\nu t^3}^{2\alpha}}\norm{AU_0^2}_2^2, 
\end{align*} 
which after Lemma \ref{lem:PELbasicZero} is consistent with Proposition \ref{prop:Boot} for $\epsilon$ sufficiently small.  

The coefficient error terms, $P_{Ci}$, are also similar to \cite{BGM15I} and the corresponding terms in the treatment of $NLP(1,3,0,\neq)$ above in \S\ref{sec:NLP213}.
We omit the details for brevity. 
Similarly, the remainder terms and low frequency coefficient terms are relatively easy to deal with or are easy variants of the above treatments and are hence omitted. This completes the treatment of $NLP(2,3,0,\neq)$, which is the leading order term in $NLP(i,j,0,\neq)$ with $i \in \set{2,3}$. 

\paragraph{Treatment of $NLP(i,j,\neq,\neq)$ terms} \label{sec:NLPQ2_neqneq}
These are pressure interactions of type \textbf{(3DE)}.  
All of these terms can be treated in a similar fashion, however the terms involving $U^3$ are slightly harder due to the regularity imbalances. 
We will focus on the case $i=1$ and $j=3$ and omit the others, which follow analogously.  
As usual, this term is quartic, but when we expand with the paraproduct we will keep the coefficients only when they appear in high frequency and group the other terms with the remainder.
Hence, 
\begin{align*} 
NLP(1,3,\neq,\neq) & = \int A^2 Q^2_{\neq} A^2\left( (\partial_Y-t\partial_X)( (\partial_Z U^1_{\neq})_{Lo} (\partial_X U^3_{\neq})_{Hi} )\right) dV \\ 
& \quad + \int A^2 Q^2_{\neq} A^2\left( (\partial_Y-t\partial_X)( (\partial_Z U^1_{\neq})_{Hi} (\partial_X U^3_{\neq})_{Lo} )\right) dV \\ 
& \quad + \int A^2 Q^2_{\neq} A^2\left( \left((\psi_y)_{Hi}(\partial_Y-t\partial_X) + (\phi_y)_{Hi}\partial_Z\right)( (\partial_Z U^1_{\neq})_{Lo} (\partial_X U^3_{\neq})_{Lo} )\right) dV \\ 
& \quad +\sum_{k} \int  A^2 Q^2_{\neq} A^2\left( (\partial_Y-t\partial_X)( \left((\psi_z)_{Hi}(\partial_Y-t\partial_X) + (\phi_z)_{Hi}\partial_Z \right)(U^1_{\neq})_{Lo} (\partial_X U^3_{\neq})_{Lo} )\right) dV \\ 
& = P_{LH} + P_{HL} + P_{C1} + P_{C2} + P_{\mathcal{R},C},
\end{align*}
where $P_{\mathcal{R},C}$ contains the paraproduct remainders and the terms where coefficients appear in low frequency. 
By \eqref{ineq:AprioriUneq}, \eqref{ineq:A2A3neqneq}, and \eqref{ineq:jNRPneqneq},  
\begin{align*} 
P_{LH} & \lesssim \frac{\epsilon \jap{t}^{\delta_1}}{\jap{\nu t^3}^\alpha} \sum_{k}\int \abs{A^2 \widehat{Q^2_k}(\eta,l)A^2_k(\eta,l) (\eta-k t)k^\prime \widehat{U^3_{k^\prime}}(\xi,l^\prime)} Low(k-k^\prime,\eta-\xi,l-l^\prime) d\eta d\xi \\ 
& \lesssim \frac{\epsilon \jap{t}^{\delta_1}}{\jap{\nu t^3}^\alpha} \sum_{k}\int \abs{A^2 \widehat{Q^2_k}(\eta,l)} \frac{(\eta-k t)k^\prime}{\abs{k^\prime}^2 + \abs{l^\prime}^2 + \abs{\xi-k^\prime t}^2} \jap{\frac{t}{\jap{\xi,l^\prime}}} \\ & \quad\quad \times \left(\sum_{r}\chi^{r,NR}\frac{t}{\abs{r} + \abs{\eta-tr}} + \chi^{\ast;23} \right) \abs{A^3\Delta_L\widehat{U^3_{k^\prime}}(\xi,l^\prime)} Low(k-k^\prime,\eta-\xi,l-l^\prime) d\eta d\xi \\
& \lesssim \frac{\epsilon \jap{t}^{\delta_1}}{\jap{\nu t^3}^\alpha}\norm{\sqrt{-\Delta_L}A^2 Q^2_{\neq}}_2 \norm{\Delta_L A^3 U^3_{\neq}}_2 \\ 
& \lesssim \epsilon^{3/2} \norm{\sqrt{-\Delta_L}A^2 Q^2_{\neq}}_2^2 +  \frac{\epsilon^{1/2} \jap{t}^{2\delta_1}}{\jap{\nu t^3}^{2\alpha}}\norm{\Delta_L A^3 U^3_{\neq}}_2^2, 
\end{align*} 
which is consistent with Proposition \ref{prop:Boot} by Lemma \ref{lem:SimplePEL} for $\epsilon$ sufficiently small. 

By \eqref{ineq:AprioriUneq}, \eqref{ineq:ABasic}, and \eqref{ineq:AikDelL2D_CKw}, followed by \eqref{ineq:triQuadHL}, we have
\begin{align*} 
P_{HL} & \lesssim \frac{\epsilon}{\jap{\nu t^3}^\alpha} \sum_{k}\int \abs{A^2 \widehat{Q^2_k}(\eta,l)A^2_k(\eta,l) (\eta-k t)l^\prime \widehat{U^1_{k^\prime}}(\xi,l^\prime)} Low(k-k^\prime,\eta-\xi,l-l^\prime) d\eta d\xi \\ 
& \lesssim \frac{\epsilon}{\jap{\nu t^3}^\alpha} \sum_{k}\int \abs{A^2 \widehat{Q^2_k}(\eta,l) \jap{\frac{t}{\jap{\xi,l^\prime}}}^{\delta_1}\frac{\jap{t} (\eta-k t) l^\prime}{\abs{k^\prime}^2 + \abs{l^\prime}^2 + \abs{\xi-k^\prime t}^2} \Delta_L A^1\widehat{U^1_{k^\prime}}(\xi,l^\prime)} \\ & \quad\quad \times  Low(k-k^\prime,\eta-\xi,l-l^\prime) d\eta d\xi \\
& \lesssim \frac{\epsilon t^2}{\jap{\nu t^3}^{\alpha}}\norm{\left(\sqrt{\frac{\partial_t w}{w}}\tilde{A}^2 + \frac{\abs{\grad}^{s/2}}{\jap{t}^{s}}A^2\right) Q^2}_2 \norm{\left(\sqrt{\frac{\partial_t w}{w}}\tilde{A}^1 + \frac{\abs{\grad}^{s/2}}{\jap{t}^{s}}A^1 \right) \Delta_L U^1_{\neq}}_2 \\ & \quad + \frac{\epsilon \jap{t}^{\delta_1}}{\jap{\nu t^3}^\alpha}\norm{\sqrt{-\Delta_L} A^2 Q^2_{\neq}}_2 \norm{\Delta_L A^1 U^1_{\neq}}_2, 
\end{align*} 
which after Lemmas \ref{lem:SimplePEL} and \ref{lem:PEL_NLP120neq}, is consistent with Proposition \ref{prop:Boot}. 

As in \cite{BGM15I}, the coefficient error terms are straightforward here and are hence omitted for the sake of brevity. 
As discussed above, the remainder terms $P_{\mathcal{R}.C}$ are much easier than the leading order terms, and these are hence omitted. 
This completes the treatment of $NLP(1,3,\neq,\neq)$. 
Other $i,j$ combinations can be treated via a simple variant of this (one will also use \eqref{ineq:jNRPneqneq} for this). 
    
\subsubsection{Nonlinear stretching $NLS$}\label{sec:NLSQ2}

\paragraph{Treatment of $NLS1(j,0,\neq)$ and $NLS1(j,\neq,0)$} \label{sec:NLS1Q20neq}
Recall the definition of $NLS1(j,0,\neq)$ from \eqref{def:Q2Enums}. 
These terms can essentially be treated in the same manner as the $NLP(j,2,0,\neq)$ nonlinear pressure terms in \S\ref{sec:NLP213} and \S\ref{sec:NLPQ2_0neq_notX} and hence we omit them for brevity.
 
Consider the $NLS1(j,\neq,0)$ terms. 
Notice that the $j = 1$ term disappears due to the usual null structure. The $j=3$ term is then the most dangerous remaining term as we must contend with the loss of regularity near critical times as well as a large low-frequency growth.  
Expanding this term with a paraproduct and focusing on the highest order terms gives: 
\begin{align*} 
NLS1(3,\neq,0) & = -\int A^2 Q^2 A^2\left( (Q^3_{\neq})_{Hi} (\partial_Z U^2_{0})_{Lo}\right) dV - \int A^2 Q^2 A^2\left( (Q^3_{\neq})_{Lo} (\partial_Z U^2_{0})_{Hi}\right) dV \\ 
& \quad - \int A^2 Q^2 A^2\left( (Q^3_{\neq})_{Lo} \left((\psi_z)_{Hi} \partial_Y + (\phi_z)_{Hi}\partial_Z\right) (U^2_{0})_{Lo}\right) dV + S_{\mathcal{R},C} \\ 
& = S_{HL} + S_{LH} + S_{C} + S_{\mathcal{R},C}, 
\end{align*}
where $S_{\mathcal{R},C}$ contains the paraproduct remainders and the terms where the coefficients appear in low frequency. 
By \eqref{ineq:AprioriU0}, \eqref{ineq:A2A3neqneq}, \eqref{ineq:jNRBasic}, \eqref{ineq:teps12trick}, and \eqref{ineq:triQuadHL} we have
\begin{align*} 
S_{HL} & \lesssim \epsilon \sum_{k}\int \abs{\widehat{Q^2_k}(\eta,l)} \jap{\frac{t}{\jap{\xi,l^\prime}}}
\\ & \quad\quad \times \left(\sum_{r}\chi^{r,NR}\frac{t}{\abs{r} + \abs{\eta-tr}}\tilde{A}^2_k(\eta,l) \tilde{A}^3_k(\xi,l^\prime) 
 + \chi^{\ast;23} A^2_k(\eta,l) A^3_k(\xi,l^\prime) \right)\\
& \quad\quad \times \abs{\widehat{Q^3_{k}}(\xi,l^\prime)} Low(\eta-\xi,l-l^\prime) d\eta d\xi \\ 
& \lesssim \epsilon t \norm{\left(\sqrt{\frac{\partial_t w}{w}}\tilde{A}^2 + \frac{\abs{\grad}^{s/2}}{\jap{t}^{s}}A^2\right) Q^2}_2\norm{\left(\sqrt{\frac{\partial_t w}{w}}\tilde{A}^3 + \frac{\abs{\grad}^{s/2}}{\jap{t}^{s}}A^3\right)  Q^3_{\neq}}_2 \\ 
& \quad + \epsilon^{3/2}\norm{\sqrt{-\Delta_L}A^2 Q^2}_2^2 + \mathbf{1}_{t \leq \epsilon^{-1/2+\delta/100}}\epsilon^{1/2}\norm{A^3 Q^3_{\neq}}_2^2 + \epsilon^{3/2-\delta/50}\norm{\sqrt{-\Delta_L} A^3 Q^3_{\neq}}_2^2, 
\end{align*}
which is consistent with Proposition \ref{prop:Boot} for $\epsilon$ and $c_0$ sufficiently small. 

The treatment of $S_{LH}$ is the same as \cite{BGM15I}: by \eqref{ineq:Boot_ED}, Lemma \ref{lem:ABasic}, and \eqref{ineq:triQuadHL},
\begin{align*} 
S_{LH} & \lesssim   \frac{\epsilon \jap{t}^2}{\jap{\nu t^3}^\alpha} \sum_{k}\int \abs{A^2 \widehat{Q^2_k}(\eta,l) A^2_k(\eta,l) l^\prime \widehat{U^2_{0}}(\xi,l^\prime)_{Hi}} Low(k,\eta-\xi,l-l^\prime) d\eta d\xi \\ 
& \lesssim \frac{\epsilon \jap{t}}{\jap{\nu t^3}^\alpha} \norm{A^2 Q^2}_2\norm{AU_0^2}_2, 
\end{align*}
which is consistent with Proposition \ref{prop:Boot} for $\epsilon$ sufficiently small. 
The coefficient error term, $S_C$, is treated as in \cite{BGM15I}: by \eqref{ineq:AprioriU0}, \eqref{ineq:AprioriUneq}, and Lemma \ref{lem:ABasic}, and Lemma \ref{lem:CoefCtrl}, 
\begin{align*} 
S_C & \lesssim \frac{\epsilon^2 \jap{t}^2}{\jap{\nu t^3}^\alpha} \sum_{k}\int \abs{A^2 \widehat{Q^2_k}(\eta,l) \frac{1}{\jap{t} \jap{\xi,l^\prime} } A\widehat{\psi_y}(\xi,l^\prime)_{Hi}} Low(k,\eta-\xi,l-l^\prime) d\eta d\xi \\ 
& \lesssim \frac{\epsilon \jap{t}}{\jap{\nu t^3}^\alpha} \norm{A^2 Q^2}_2^2 + \frac{\epsilon^3 \jap{t}}{\jap{\nu t^3}^{\alpha}}\norm{AC}_2^2, 
\end{align*}
which is consistent with Proposition \ref{prop:Boot} for $\epsilon$ sufficiently small. 
    
As usual, the remainders and coefficient error terms in $S_{\mathcal{R},C}$ are significantly easier to treat and hence 
are omitted for brevity. 
This completes the treatment of $NLS1(3,\neq,0)$; the other term, $NLS1(2,\neq,0)$ is easier and is treated the same way, hence we omit this for brevity.

\paragraph{Treatment of $NLS1(j,\neq,\neq)$} 
The most problematic terms are $j = 3$ and $j = 1$. 
The other terms will be treated in a similar fashion, so we focus on the $j = 3$ for brevity.
We expand the term with a paraproduct and only keep the coefficients to leading order when they appear in high frequency:
\begin{align*}
NLS1(3,\neq,\neq) & = -\int A^2 Q^2 A^2\left( (Q^3_{\neq})_{Hi} (\partial_Z U^2_{\neq})_{Lo}\right) dV -\int A^2 Q^2 A^2\left( (Q^3_{\neq})_{Lo} (\partial_Z U^2_{\neq})_{Hi}\right) dV \\ 
& \quad - \int A^2 Q^2 A^2\left( (Q^3_{\neq})_{Lo} \left((\psi_z)_{Hi} (\partial_Y - t\partial_X) + (\phi_z)_{Hi}\partial_Z \right)U^2_{\neq})_{Lo}\right) dV  + S_{\mathcal{R},C} \\ 
& = S_{HL} + S_{LH} + S_{C} + S_{\mathcal{R},C}, 
\end{align*}
where $S_{\mathcal{R},C}$ contains the paraproduct remainders and the terms where the coefficients appear in low frequency. 
By \eqref{ineq:AprioriUneq}, \eqref{ineq:A2A3neqneq}, and \eqref{ineq:triQuadHL} we have
\begin{align*} 
S_{HL} & \lesssim \frac{\epsilon}{\jap{t}\jap{\nu t^3}^{\alpha}} \sum_{k}\int \abs{A^2 \widehat{Q^2_k}(\eta,l) \jap{\frac{t}{\jap{\xi,l^\prime}}}\left(\sum_{r}\chi^{r,NR}\frac{t}{\abs{r} + \abs{\eta-tr}} + \chi^{\ast;23} \right)  A^3 \widehat{Q^3_{k^\prime}}(\xi,l^\prime)}  \\ & \quad\quad \times Low(k-k^\prime,\eta-\xi,l-l^\prime) d\eta d\xi \\ 
& \lesssim \frac{\epsilon}{\jap{\nu t^3}^{\alpha}}\norm{A^2 Q^2}_2\norm{A^3 Q^3}_2
\end{align*}
which is consistent with Proposition \ref{prop:Boot} for $\epsilon$ sufficiently small. 

Turn next to the $S_{LH}$ term. By \eqref{ineq:Boot_Hi}, \eqref{ineq:ABasic}, \eqref{ineq:AiPartX}, and \eqref{ineq:triQuadHL} we have
\begin{align*} 
S_{LH} & \lesssim \frac{\epsilon \jap{t}^2}{\jap{\nu t^3}^{\alpha}} \sum_{k}\int \abs{A^2 \widehat{Q^2_k}(\eta,l) \frac{\abs{l^\prime}}{\abs{k^\prime}^2 + \abs{l^\prime}^2 + \abs{\xi - k^\prime t}^2} \Delta_LA^2 \widehat{U^2_{k^\prime}}(\xi,l^\prime)_{Hi}} \\ & \quad\quad \times  Low(k-k^\prime,\eta-\xi,l-l^\prime) d\eta d\xi \\ 
& \lesssim \frac{\epsilon \jap{t}^2}{\jap{\nu t^3}^{\alpha}}\norm{\left(\sqrt{\frac{\partial_t w}{w}}\tilde{A}^2 + \frac{\abs{\grad}^{s/2}}{\jap{t}^{s}}A^2 \right) Q^2}_2 \norm{\left(\sqrt{\frac{\partial_t w}{w}}\tilde{A}^2 + \frac{\abs{\grad}^{s/2}}{\jap{t}^{s}}A^2\right) \Delta_L U^2_{\neq}}_2 \\ 
& \quad + \frac{\epsilon \jap{t}}{\jap{\nu t^3}^{\alpha}}\norm{A^2 Q^2}_2 \norm{A^2 \Delta_L U^2_{\neq}}_2, 
\end{align*}
which is consistent with Proposition \ref{prop:Boot} for $\epsilon$ small by Lemmas \ref{lem:SimplePEL} and \ref{lem:PEL_NLP120neq}.
For the coefficient error term is treated in the same fashion as the corresponding error term associated with $NLS1(3,\neq,0)$ in \S\ref{sec:NLS1Q20neq} above. 
Hence, the treatment is omitted. Similarly, the remainder and coefficient low frequency terms in $S_{\mathcal{R},C}$ are also omitted. 
This completes the treatment of the $NLS1(3,\neq,\neq)$ term; the other $NLS1(j,\neq,\neq)$ terms are treated similarly. 

\paragraph{Treatment of $NLS2(i,1,0,\neq)$} \label{sec:NLS2i1neq0}
Recall the definition of these terms from \eqref{def:Q2Enums}. 
The non-zero contributions come from $i  = 2$ and $i = 3$ and these can be treated as in \cite{BGM15I} (note $U^3$ does not appear in either).  
We hence omit the treatment for the sake of brevity (it roughly parallels $NLP(1,2,0,\neq)$ in \S\ref{sec:NLP213}, which was omitted since this was slightly easier than the leading order $NLP(1,3,0,\neq)$). 

\paragraph{Treatment of $NLS2(i,j,0,\neq)$ with $j \neq 1$} \label{sec:NLS2ijneq0}
Recall \eqref{def:Q2Enums} and note that $i \neq 1$. Unlike in \cite{BGM15I}, not all the cases are quite the same.
However, the losses due to the regularity imbalances in $Q^3$ can be easily absorbed by the low frequency growth of $Q^2$. 
Otherwise, the treatment is similar to that used in \cite{BGM15I}. Hence the details are omitted for brevity. 

\paragraph{Treatment of $NLS2(i,j,\neq,0)$}
Recall \eqref{def:Q2Enums} and note that $j \neq 1$.
These terms can all be treated in a manner similar to the treatment of $NLS2(i,j,0,\neq)$ above and are hence omitted for the sake of brevity. 

\paragraph{Treatment of $NLS2(i,j,\neq,\neq)$}
First note that the contribution $i = j = 2$ cancels with the $NLP$ terms. 
These terms are treated similar to $NLP(i,j,\neq,\neq)$, however they are generally easier as the regularity imbalances in $Q^3$ and the large growth in $Q^1$ arises 
on the factor with fewer derivatives. 
Moreover, if $U^1$ or $U^3$ are in high frequency, than the decay of the low frequency factor $U^2$ is better by a $t^{-1}$.
Hence, it is straightforward to show that for all choices of $i$ and $j$, 
\begin{align*}
NLS2(i,j,\neq,\neq) & \lesssim \frac{\epsilon t}{\jap{\nu t^3}^{\alpha}}\norm{A^2 Q^2_{\neq}}_2\left(\norm{A^j \Delta_L U^j_{\neq}}_2 +  \norm{A^2 \Delta_L U^2_{\neq}}_2\right), 
\end{align*}  
which is consistent with Proposition \ref{prop:Boot} by Lemma \ref{lem:SimplePEL} for $\epsilon$ sufficiently small.

\subsubsection{Transport nonlinearity $\mathcal{T}$} \label{sec:Q2_TransNon}
Next, we treat $\mathcal{T}_{\neq}$ (recall \eqref{def:Q2Enums}). 
Begin with a paraproduct decomposition: 
\begin{align*} 
\mathcal{T}_{\neq} & = -\int A^{2} Q^2_{\neq} A^{2} \left( \tilde U_{Lo} \cdot \grad Q^2_{Hi} \right) dV  -\int A^{2} Q^2_{\neq} A^{2} \left( \tilde U_{Hi} \cdot \grad Q^2_{Lo} \right) dV - \int A^{2} Q^2_{\neq} A^{2} \left( \tilde U \cdot \grad Q^2 \right)_{\mathcal{R}} dV \\  
& = \mathcal{T}_T + \mathcal{T}_{R} + \mathcal{T}_{\mathcal{R}},
\end{align*}
where, as in \cite{BGM15I}, `T' and `R' stand for \emph{transport} and \emph{reaction} respectively.
Decompose the transport and reaction terms into subcomponents depending on the $X$ frequencies: 
\begin{align*} 
\mathcal{T}_{T} 
 & = -\int A^{2} Q^2_{\neq} A^{2} \left( (\tilde U_{\neq})_{Lo}  \cdot (\grad Q^2_0)_{Hi} \right) dV - \int A^{2}_{\neq} Q^2 A^{2} \left( g_{Lo} \partial_Y (Q^2_{\neq})_{Hi} \right) dV \\ & \quad - \int A^{2} Q^2_{\neq} A^{2} \left( (\tilde U_{\neq})_{Lo} \cdot \grad (Q^2_{\neq})_{Hi} \right) dV \\ 
& = \mathcal{T}_{T;\neq 0}+ \mathcal{T}_{T;0 \neq}+ \mathcal{T}_{T;\neq \neq}, 
\end{align*} 
and,
\begin{align*} 
\mathcal{T}_{R} & =  -\int A^{2} Q^2_{\neq} A^{2} \left( (\tilde U_{\neq})_{Hi}  \cdot (\grad Q^2_0)_{Lo} \right) dV - \int A^{2} Q^2_{\neq} A^{2} \left( g_{Hi} \partial_Y (Q^2_{\neq})_{Lo} \right) dV \\ & \quad - \int A^{2} Q^2_{\neq} A^{2} \left( (\tilde U_{\neq})_{Hi} \cdot \grad (Q^2_{\neq})_{Lo} \right) dV \\ 
& = \mathcal{T}_{R;\neq 0} + \mathcal{T}_{R;0 \neq}+ \mathcal{T}_{R;\neq \neq}. 
\end{align*} 

\paragraph{Transport by zero frequencies: $\mathcal{T}_{T;0 \neq}$}
Turn first to $\mathcal{T}_{T;0 \neq}$, which is the transport by $g$. 
On the Fourier side, 
\begin{align*} 
\mathcal{T}_{T;0 \neq} & \lesssim \sum_k \sum_{l,l^\prime} \int \abs{A^2 \widehat{Q^2_k}(\eta,l) A_k^2(\eta,l)\hat{g}(\eta-\xi,l-l^\prime)_{Lo} \xi \widehat{Q^2_k}(\xi,l^\prime)_{Hi}} d\eta d\xi. 
\end{align*} 
Hence, by \eqref{ineq:ABasic}, $\abs{\xi} \leq \abs{\xi - kt} + \abs{kt}$, and \eqref{ineq:triQuadHL}, 
\begin{align*} 
\mathcal{T}_{T;0 \neq} & \lesssim \norm{g}_{\G^{\lambda}}\norm{A^2 Q^2_{\neq}}_2 \left(\norm{(\partial_Y - t\partial_X) A^2 Q^2}_2 + t\norm{\partial_X A^2 Q^2}_2\right) \\
& \lesssim \jap{t}\norm{g}_{\G^{\lambda}}\norm{A^2 Q^2_{\neq}}_2\norm{\sqrt{-\Delta_L} A^2 Q^2_{\neq}}_2 \\ 
& \lesssim \epsilon^{3/2}\norm{\sqrt{-\Delta_L} A^2 Q^2_{\neq}}_2^2 + \frac{\epsilon^{1/2}}{\jap{t}^{2}}\norm{A^2 Q^2_{\neq}}_2^2, 
\end{align*} 
where the last line followed from the low norm control on $g$, \eqref{ineq:Boot_gLow}. 
This contribution is hence consistent with Proposition \ref{prop:Boot} for $\epsilon$ sufficiently small. 

\paragraph{Transport by non-zero frequencies, $\mathcal{T}_{T;\neq \neq}$ and $\mathcal{T}_{T;\neq 0}$}
Turn next to $\mathcal{T}_{T;\neq \neq}$. 
Indeed, going back to \eqref{def:tildeU2}, 
\begin{align*} 
\mathcal{T}_{T;\neq \neq} & =  \int A^2 Q^2_{\neq} A^2 \left( \begin{pmatrix} (U_{\neq}^1)_{Lo}  \\ \left((1+\psi_y)U^2_{\neq}\right)_{Lo} + \left(\psi_zU^3_{\neq}\right)_{Lo} \\ \left((1 + \phi_z)U^3_{\neq} \right)_{Lo} + \left(\phi_y U^2_{\neq}\right)_{Lo} \end{pmatrix} \cdot \begin{pmatrix} \partial_X  \\ \partial_Y - t\partial_X \\ \partial_Z \end{pmatrix} (Q_{\neq}^2)_{Hi} \right)  dV. 
\end{align*} 
The presence of the coefficients is irrelevant by Lemma \ref{lem:GevProdAlg} and Lemma \ref{lem:CoefCtrl} so let us ignore them. 
By \eqref{ineq:ABasic}, \eqref{ineq:triQuadHL}. and \eqref{ineq:AprioriUneq} we have
\begin{align*} 
\mathcal{T}_{T;\neq \neq} & \lesssim \left(\norm{U^1_{\neq}}_{\G^{\lambda}} +\norm{U^2_{\neq}}_{\G^{\lambda}} + \norm{U^3_{\neq}}_{\G^{\lambda}}\right)\norm{A^{2} Q^2}_2 \norm{\sqrt{-\Delta_L}A^{2} Q^2}_2 \\ 
& \lesssim \frac{\epsilon^{1/2} \jap{t}^{2\delta_{1}}}{\jap{\nu t^3}^{2\alpha}} \norm{A^{2} Q^2}^2_2 +  \epsilon^{3/2}\norm{\sqrt{-\Delta_L}A^{2} Q^2}^2_2, 
\end{align*} 
which is consistent with Proposition \ref{prop:Boot} for $\delta_1$ and $\epsilon$ sufficiently small. 
The contribution from $\mathcal{T}_{T;\neq,0}$ is treated similarly and yields 
\begin{align*} 
\mathcal{T}_{T;\neq 0} & \lesssim \frac{\epsilon }{\jap{\nu t^3}^{\alpha}}\norm{A^2 Q^2}_2\norm{\grad A^2 Q^2_0}_2  \lesssim \frac{\epsilon^{1/2}}{\jap{\nu t^3}^{2\alpha}}\norm{A^2 Q^2}_2^2 + \epsilon^{3/2}\norm{\grad A^2 Q^2_0}^2_2, 
\end{align*}
which is consistent with Proposition \ref{prop:Boot} for $\epsilon$ sufficiently small. 
This completes the treatment of the `transport' contribution to the transport nonlinearity. 
 
\paragraph{Reaction term $\mathcal{T}_{R;0 \neq}$}
It is in the reaction terms where things get more interesting. 
We begin with the trivial one, $\mathcal{T}_{R;0 \neq}$. 
By Lemma \ref{lem:ABasic}, \eqref{ineq:AprioriUneq}, and \eqref{ineq:triQuadHL}, 
\begin{align*} 
\mathcal{T}_{R;0 \neq} & \lesssim \frac{\epsilon t}{\jap{\nu t^3}^{\alpha}} \sum \int \abs{A^2 \widehat{Q^2_k}(\eta,l) \frac{1}{\jap{\frac{t}{\jap{\xi,l^\prime}}} \jap{\xi,l^\prime}^2} A\hat{g}(\xi,l^\prime)_{Hi}} Low(k,\eta-\xi,l-l^\prime) d\xi d\eta \\ 
& \lesssim \frac{\epsilon}{\jap{\nu t^3}^{\alpha}} \norm{Ag}_2 \norm{A^2 Q^2_{\neq}}_2, 
\end{align*} 
which is consistent with Proposition \ref{prop:Boot} for $\epsilon$ sufficiently small. 

\paragraph{Reaction term $\mathcal{T}_{R;\neq 0}$} \label{sec:Q2TRneq0}
First consider $\mathcal{T}_{R;\neq 0}$, which is further divided via  (recall this shorthand notation from \S\ref{sec:paranote} and the a priori estimates \eqref{ineq:AprioriUneq}, \eqref{ineq:AprioriU0})
\begin{align*} 
\mathcal{T}_{R;\neq 0} & \lesssim  \epsilon \sum_{k \neq 0}\int \abs{A^{2} \hat{Q}^2_k(\eta,l) A^{2}_{k}(\eta,l) \hat{U}_k^2 (\xi,l^\prime)_{Hi}} Low(\eta-\xi,l-l^\prime) d\eta d\xi \\ 
& \quad + \epsilon \sum_{k \neq 0}\int \abs{A^{2} \hat{Q}^2_k(\eta,l) A^{2}_{k}(\eta,l) \hat{U}_k^3 (\xi,l^\prime)_{Hi}} Low(\eta-\xi,l-l^\prime) d\eta d\xi \\  
& \quad + \frac{\epsilon^2}{\jap{t}\jap{\nu t^3}^{\alpha}} \sum_{k \neq 0}\int \abs{A^{2} \hat{Q}^2_k(\eta,l) A^{2}_{k}(\eta,l) \left( \abs{\widehat{\psi_y}(\xi,l^\prime)_{Hi}} + \abs{\widehat{\phi_y}(\xi,l^\prime)_{Hi}}\right)} Low(\eta-\xi,l-l^\prime) d\eta d\xi \\  
& \quad + \frac{\epsilon^2}{\jap{\nu t^3}^{\alpha}} \sum_{k \neq 0}\int \abs{A^{2} \hat{Q}^2_k(\eta,l) A^{2}_{k}(\eta,l) \left( \abs{\widehat{\psi_z}(\xi,l^\prime)_{Hi}} + \abs{\widehat{\phi_z}(\xi,l^\prime)_{Hi}} \right) } Low(\eta-\xi,l-l^\prime) d\eta d\xi \\  
& \quad + \mathcal{T}_{R;\neq 0;\mathcal{R}} \\ 
& = \mathcal{T}_{R;\neq 0;2} + \mathcal{T}_{R;\neq 0;3} + \mathcal{T}_{R;\neq 0;C1} + \mathcal{T}_{R;\neq 0;C2}+ \mathcal{T}_{R;\neq 0;\mathcal{R}}. 
\end{align*} 
Turn first to $\mathcal{T}_{R;\neq 0;2}$. 
By \eqref{ineq:ABasic}, \eqref{ineq:triQuadHL}, and the projection to non-zero frequencies,  
\begin{align*} 
\mathcal{T}_{R;\neq 0;2} & \lesssim \epsilon\norm{A^2 Q^2_{\neq}}_2 \norm{A^2 U^2_{\neq}}_2 \lesssim \epsilon\norm{A^2 Q^2_{\neq}}_2 \norm{A^2 \Delta_L U^2_{\neq}}_2, 
\end{align*}  
which by Lemma \ref{lem:SimplePEL} is consistent with Proposition \ref{prop:Boot} for $c_0$ sufficiently small. 

Turn next to $\mathcal{T}_{R;\neq 0;3}$. By \eqref{ineq:A2A3neqneq}, \eqref{ineq:A1A3ReacGain}, and \eqref{ineq:triQuadHL}, 
\begin{align*} 
\mathcal{T}_{R;\neq 0;3} & \lesssim \epsilon \sum_{k \neq 0}\int \abs{A^{2} \hat{Q}^2_k(\eta,l)}  \frac{\jap{\frac{t}{\jap{\xi,l^\prime}}}}{k^2 + (l^\prime)^2 + \abs{\xi-kt}^2} \\ 
& \quad\quad \times \left(\sum_{r}\chi^{r,NR}\frac{t}{\abs{r} + \abs{\eta -tr} } + \chi^{\ast;23} \right)\abs{ \Delta_LA^3 \widehat{U^3_k }(\xi,l^\prime)_{Hi}}  Low(\eta-\xi,l-l^\prime) d\xi d\eta \\ 
& \lesssim \epsilon \norm{A^2 Q^2}_2 \norm{\Delta_L A^3 U^3_{\neq}}_2, 
\end{align*} 
which by Lemma \ref{lem:SimplePEL} is consistent Proposition \ref{prop:Boot} for $c_0$ sufficiently small. 

The two coefficients are straightforward and are hence omitted for the sake of brevity. 
The remainder terms are even simpler and are hence omitted. This completes the treatment of the reaction term
$\mathcal{T}_{R;\neq 0}$. 

\paragraph{Reaction term $\mathcal{T}_{R; \neq \neq}$} \label{sec:Q2TRneqneq}
Turn finally to $\mathcal{T}_{R;\neq \neq}$, which is more problematic here than in \cite{BGM15I} due to the low frequency growth of $Q^2$ and the lower regularity of $Q^3$. 
As in the treatment of  $\mathcal{T}_{R;\neq 0}$ above in \S\ref{sec:Q2TRneq0}, we sub-divide in frequency more carefully,  
\begin{align*} 
\mathcal{T}_{R;\neq \neq} & \lesssim \frac{\epsilon \jap{t}}{\jap{\nu t^3}^\alpha} \sum\int \mathbf{1}_{k k^\prime(k-k^\prime) \neq 0} \abs{A^{2} \hat{Q}^2_k(\eta,l) A^{2}_{k}(\eta,l) \hat{U}_{k^\prime}^1 (\xi,l^\prime)_{Hi}} Low(k-k^\prime, \eta-\xi, l - l^\prime) d\eta d\xi \\   
& \quad +  \frac{\epsilon \jap{t}^{2}}{\jap{\nu t^3}^\alpha} \sum \int \mathbf{1}_{k k^\prime(k-k^\prime) \neq 0} \abs{A^{2} \hat{Q}^2_k(\eta,l) A^{2}_{k}(\eta,l) \hat{U}_{k^\prime}^2 (\xi,l^\prime)_{Hi}} Low(k-k^\prime, \eta-\xi, l - l^\prime) d\eta d\xi \\  
& \quad +  \frac{\epsilon \jap{t}}{\jap{\nu t^3}^{\alpha-1}} \sum \int \mathbf{1}_{k k^\prime(k-k^\prime) \neq 0} \abs{A^{2} \hat{Q}^2_k(\eta,l) A^{2}_{k}(\eta,l) \hat{U}_{k^\prime}^3 (\xi,l^\prime)_{Hi}} Low(k-k^\prime, \eta-\xi, l - l^\prime) d\eta d\xi \\   
& \quad + \frac{\epsilon^2 \jap{t}}{\jap{\nu t^3}^{\alpha}} \sum \int \mathbf{1}_{k k^\prime(k-k^\prime) \neq 0} \abs{A^{2} \hat{Q}^2_k(\eta,l)} A^{2}_{k}(\eta,l) \left(\abs{\hat{\psi_y}(\xi,l^\prime)_{Hi}} + \abs{\hat{\phi_z}(\xi,l^\prime)_{Hi}} + \abs{\hat{\phi_y}(\xi,l^\prime)_{Hi}}\right) \\ & \quad\quad\quad \times Low(k-k^\prime, \eta-\xi, l - l^\prime) d\eta d\xi \\   
& \quad + \frac{\epsilon^2 \jap{t}^{2}}{\jap{\nu t^3}^{\alpha}} \sum \int \mathbf{1}_{k k^\prime(k-k^\prime) \neq 0} \abs{A^{2} \hat{Q}^2_k(\eta,l) A^{2}_{k}(\eta,l) \left(\hat{\psi_z}(\xi,l^\prime)_{Hi} \right)} Low(k-k^\prime, \eta-\xi, l - l^\prime) d\eta d\xi \\   
& \quad + \mathcal{T}_{R;\neq \neq;\mathcal{R}} \\ 
& = \mathcal{T}_{R;\neq\neq}^{1} + \mathcal{T}_{R;\neq\neq}^2 + \mathcal{T}_{R;\neq\neq}^3  + \mathcal{T}_{R;\neq\neq}^{C1} + \mathcal{T}_{R;\neq\neq}^{C2} + \mathcal{T}_{R;\neq\neq;\mathcal{R}}, 
\end{align*} 
where we used $\epsilon t \jap{\nu t^3}^{-1} \lesssim t^{-1}$ in $\mathcal{T}_{R;\neq\neq}^3$ to reduce the power of time of the $(U^3)_{Hi} \left(\psi_z(\partial_Y-t\partial_X)Q^2\right)_{Lo}$ term. 

Turn first to $\mathcal{T}_{R;\neq\neq}^{1}$, which by \eqref{ineq:ABasic}, \eqref{ineq:AikDelLNoD} and \eqref{ineq:triQuadHL} is given by 
\begin{align*} 
\mathcal{T}_{R;\neq \neq}^{1} & \lesssim \frac{\epsilon \jap{t}^2}{\jap{\nu t^3}^\alpha} \sum \int \abs{A^{2} \widehat{Q^2_k}(\eta,l) \frac{1}{(k^\prime)^2 + (l^\prime)^2 + \abs{\xi - k^\prime t}^2} \jap{\frac{t}{\jap{\xi,l^\prime}}}^{1+\delta_1}} \\ & \quad\quad \times \abs{ A^{1} \Delta_L \widehat{U^1_{k^\prime}}(\xi,l^\prime)_{Hi}} Low(k-k^\prime,\eta-\xi,l-l^\prime) d\eta d\xi \\ 
& \lesssim \frac{\epsilon \jap{t}^2}{\jap{\nu t^3}^{\alpha}}\norm{\left(\sqrt{\frac{\partial_t w}{w}}\tilde{A}^2 + \frac{\abs{\grad}^{s/2}}{\jap{t}^{s}} A^2 \right)  Q^2}_2\norm{\left(\sqrt{\frac{\partial_t w}{w}}\tilde{A}^1 + \frac{\abs{\grad}^{s/2}}{\jap{t}^{s}}A^1\right) \Delta_L U^1_{\neq}}_2 \\ 
& \quad + \frac{\epsilon \jap{t}^{1+\delta_1}}{\jap{\nu t^3}^\alpha} \norm{A^2 Q^2_{\neq}}_2 \norm{A^1 \Delta_L U^1_{\neq}}_2, 
\end{align*}  
which by Lemmas \ref{lem:SimplePEL} and \ref{lem:PEL_NLP120neq}, is consistent with Proposition \ref{prop:Boot} by the bootstrap hypotheses for $\epsilon$ and $\delta_1$ sufficiently small. 
The treatment of $\mathcal{T}_{R;\neq\neq}^{2}$ is essentially the same as $\mathcal{T}^1_{R;\neq\neq}$ and yields 
\begin{align*} 
\mathcal{T}_{R;\neq \neq}^{2} & \lesssim \frac{\epsilon \jap{t}^2}{\jap{\nu t^3}^{\alpha}}\norm{\left(\sqrt{\frac{\partial_t w}{w}}\tilde{A}^2 + \frac{\abs{\grad}^{s/2}}{\jap{t}^{s}}A^2\right) Q^2}_2\norm{\left(\sqrt{\frac{\partial_t w}{w}}\tilde{A}^2 + \frac{\abs{\grad}^{s/2}}{\jap{t}^{s}}A^2\right) \Delta_L U^2_{\neq}}_2 \\ 
& \quad + \frac{\epsilon}{\jap{\nu t^3}^\alpha} \norm{A^2 Q^2_{\neq}}_2 \norm{A^2 \Delta_L U^2_{\neq}}_2,  
\end{align*} 
which again by Lemmas \ref{lem:SimplePEL} and \ref{lem:PEL_NLP120neq}, is  consistent with Proposition \ref{prop:Boot} by the bootstrap hypotheses for $\epsilon$ sufficiently small.
 
Turn next to $\mathcal{T}_{R;\neq\neq}^{3}$.
By \eqref{ineq:A2A3neqneq}, \eqref{ineq:A1A3ReacGain}, and \eqref{ineq:triQuadHL}, we have 
\begin{align*}
\mathcal{T}_{R;\neq\neq}^{3} & \lesssim \frac{\epsilon \jap{t}}{\jap{\nu t^3}^\alpha} \int \abs{A^{2} \widehat{Q^2_k}(\eta,l)} \frac{1}{(k^\prime)^2 + (l^\prime)^2 + \abs{\xi - k^\prime t}^2} \jap{\frac{t}{\jap{\xi,l^\prime}}} \\ 
& \quad\quad \times \left(\sum_{r}\chi^{r,NR}\frac{t}{\abs{r} + \abs{\eta-tr}} + \chi^{\ast;23} \right) \abs{A^{3} \Delta_L \widehat{U^3_{k^\prime}}(\xi,l^\prime)_{Hi}} Low(k-k^\prime,\eta-\xi,l-l^\prime) d\eta d\xi \\ 
& \lesssim \frac{\epsilon \jap{t}}{\jap{\nu t^3}^{\alpha}}\norm{A^2 Q^2}_2 \norm{\Delta_L A^3 U^3_{\neq}}_2, 
\end{align*} 
which after Lemma \ref{lem:SimplePEL}, is consistent with Proposition \ref{prop:Boot}. 

The coefficient error terms are treated the same as in \S\ref{sec:Q2TRneq0}; hence we omit the treatments for brevity and simply conclude 
\begin{align*}
\mathcal{T}_{R;\neq\neq}^{C1} + \mathcal{T}_{R;\neq\neq}^{C2} & \lesssim \frac{\epsilon^2 t}{\jap{\nu t^3}^{\alpha-1}}\norm{A^2 Q^2}_2\norm{AC}_2. 
\end{align*}
The remainder terms $\mathcal{T}_{R;\neq\neq}$ are similarly straightforward and are omitted for brevity as well. 
This completes the treatment of the transport nonlinearity for $Q^2$.

\subsubsection{Dissipation error terms $\mathcal{D}$} \label{sec:DEneqQ2}
Recalling the dissipation error terms and the short-hand \eqref{def:G}, we have 
\begin{align*} 
\mathcal{D}_E & = \nu\sum_{k \neq 0}\int A^2 Q^2_k A^2_k\left( G_{yy}(\partial_{Y} - t\partial_X)^2 Q^2_k + G_{yz}(\partial_Y - t \partial_X)\partial_{Z}Q^2_k + G_{zz}\partial_{ZZ} Q^2_k \right) dV. 
\end{align*} 
These terms can be treated in the same manner as the analogous terms in \cite{BGM15I}; therefore, we omit the treatment for brevity 
and simply conclude the final result: 
\begin{align*}
\mathcal{D}_E & \lesssim c_0 \nu \norm{\sqrt{-\Delta_L} A^2 Q^2}_2^2 + \frac{\epsilon^{1/2}}{\jap{\nu t^3}^{\alpha}} \norm{A^2 Q^2_{\neq}}_2^2 + \frac{\nu^2 \epsilon^{3/2} t^4 }{\jap{\nu t^3}^{\alpha}}\norm{AC}_2^2, 
\end{align*}
which is consistent  with Proposition \ref{prop:Boot} for $\epsilon$ sufficiently small. 

\section{High norm estimate on $Q^3$}
Computing the evolution of $A^{3}Q^3$:  
\begin{align} 
\frac{1}{2}\frac{d}{dt}\norm{A^{3} Q^3}_2^2 & \leq \dot{\lambda}\norm{\abs{\grad}^{s/2}A^{3} Q^3}_2^2 - \norm{\sqrt{\frac{\partial_t w}{w}} \tilde{A}^{3} Q^3}_2^2 -\norm{\sqrt{\frac{\partial_t w_L}{w_L}} A^{3} Q^3}_2^2 
-\frac{2}{t}\norm{\mathbf{1}_{t > \jap{\grad_{Y,Z}}} A^{3} Q^3}_2^2\nonumber \\
& \quad -2 \int A^{3} Q^3 A^{3} \partial_{YX}^t U^3 dV + 2 \int A^{3} Q^3 A^{3} \partial_{ZX}^t U^2 dV \nonumber\\  & \quad
 + \nu \int A^{3} Q^{3} A^{3} \left(\Delta_t Q^3\right) dV
-\int A^{3} Q^3 A^{3}\left( \tilde U \cdot \grad Q^3 \right) dv \nonumber \\ 
& \quad - \int A^{3} Q^3 A^{3} \left[ Q^j \partial_j^t U^3 + 2\partial_i^t U^j \partial_{ij}^t U^3  - \partial_Z^t\left(\partial_i^t U^j \partial_j^t U^i\right) \right] dV \nonumber \\ 
& = \mathcal{D}Q^3 - CK_{L}^3 + LS3 + L3 + \mathcal{D}_E + \mathcal{T} + NLS1 + NLS2 + NLP,  \label{ineq:AQ3_Evo} 
\end{align} 
where we are again using 
\begin{align*}
\mathcal{D}_E = \nu \int A^{3} Q^{3} A^{3} \left((\tilde{\Delta_t} - \Delta_L) Q^3\right) dV. 
\end{align*}
As in \eqref{def:Q2Enums}, let us here recall the following enumerations from \cite{BGM15I}: for $i,j\in \set{1,2,3}$ and $a,b \in \set{0,\neq}$: 
\begin{subequations} \label{def:Q3Enums}
\begin{align}
NLP(i,j,a,b) &= \int A^3 Q^3_{\neq} A^3\left( \partial_Z^t \left(\partial_j^t U^i_a \partial_i^t U^j_b \right) \right) dV \\ 
NLS1(j,a,b) & = -\int A^3 Q^3_{\neq} A^3\left(Q^j_a\partial_j^t U^3_{b}\right) dV \\
NLS2(i,j,a,b) & = -\int A^3 Q^3_{\neq} A^2\left(\partial_i^t U^j_a \partial_i^t\partial_j^t U^3_{b}\right) dV \\
NLP(i,j,0) & = \int A^3 Q^3_{0} A^3\left( \partial_Z^t \left(\partial_j^t U^i_0 \partial_i^t U^j_0 \right) \right) dV \\ 
NLS1(j,0) & = -\int A^3 Q^3_{0} A^3\left(Q^j_0\partial_j^t U^3_{0}\right) dV \\
NLS2(i,j,0) & = -\int A^3 Q^3_{0} A^3\left(\partial_i^t U^j_0 \partial_i^t\partial_j^t U^3_{0}\right) dV \\ 
\mathcal{F} & = -\int A^3 Q^3_{0} A^3\left(\partial_i^t \partial_i^t \partial_j^t \left(U^j_{\neq} U^3_{\neq}\right)_0 - \partial_{Z}^t \partial_j^t \partial_i^t \left(U^i_{\neq} U^j_{\neq}\right)_0 \right)dV \\ 
\mathcal{T}_0 & = -\int A^{3} Q_0^3 A^{3} \left( \tilde U_{0} \cdot \grad Q_{0}^3 \right) dV \\ 
\mathcal{T}_{\neq} & = -\int A^{2} Q_{\neq}^3 A^{3} \left( \tilde U \cdot \grad Q^3 \right) dV. 
\end{align}
\end{subequations} 

Note we have split the nonlinearity up analogously to what is done in \eqref{def:Q2Enums} above. 
 
\subsection{Zero frequencies}
As in the treatment of $A^2Q^2$ in \S\ref{sec:AQ2Zero}, the estimate on $Q_0^3$ is very different than the estimate on $Q^3_{\neq}$ and are hence naturally separated. 

\subsubsection{Transport nonlinearity} 
The treatment of $\mathcal{T}_0$, the \textbf{(2.5NS)} contribution to the transport nonlinearity, goes through exactly the same as the corresponding treatment for $Q_0^2$ in \S\ref{sec:TransQ20} (as the main problems in $A^3$ will only arise when changing the $X$ frequencies) and hence, for the sake of brevity this term is omitted. 

\subsubsection{Nonlinear pressure and stretching}
The treatment of zero frequency pressure and stretching contributions in \eqref{ineq:AQ3_Evo} is very similar to the treatment
used for $Q^2_0$ in \S\ref{sec:NLPSQ20} except that since we are estimating with $A^3$, there is no loss on factors involving $U^3$ as there is in \S\ref{sec:NLPSQ20}. 
As the treatment here is analogous (except easier), we omit these terms for brevity. 

\subsubsection{Forcing from non-zero frequencies} \label{sec:NzeroForcingQ3}
Turn next to the treatment of $\mathcal{F}$ (defined above in \eqref{def:Q3Enums}), for nonlinear interactions of type \textbf{(F)}.
In accordance with the toy model in \S\ref{sec:Toy}, we will find that the forcing from non-zero frequencies on $Q^3_0$ is more extreme than those on $Q_0^2$.  In particular, unlike in \S\ref{sec:NzeroForcing} above, in order to treat the case $\nu \ll \epsilon$ we will need the regularity imbalances. 
Write
\begin{align*} 
\mathcal{F} & = -\int A^3 Q^3_0 A^3 \left(\partial_Y^t \partial_Y^t\partial_j^t \left(U^j_{\neq} U^3_{\neq}\right)_0 - \partial_Z^t \partial_Z^t \partial_Y^t \left(U^2_{\neq} U^3_{\neq}\right)_0 - \partial_Z^t \partial_Y^t \partial_Y^t \left(U^2_{\neq} U^2_{\neq}\right)_0 \right) dV \\ 
& = F^1 + F^2 + F^3.
\end{align*}
The most dangerous term is $F^1$; we omit the other two for brevity as they are easy variants of $F^1$ 
and the treatments in \S\ref{sec:NzeroForcing}. 
 Write 
\begin{align*}
F^1 = -\int A^3 Q^3 A^3\left( \partial_Y^t \partial_Y^t\partial_Y^t \left(U^2_{\neq} U^3_{\neq}\right)_0 + \partial_Y^t \partial_Y^t\partial_Z^t \left( U^3_{\neq} U^3_{\neq}\right)_0 \right) dV = F^{1;2} + F^{1;3}.
\end{align*}
The first term, $F^{1;2}$, is the leading order contribution (at least when $U^2$ is in high frequency) due to the $t^3$ that will be present near the critical times due to the $(\partial_Y)^3$ (near the critical times $\partial_Y \sim t\partial_X$), and hence let us focus on this and omit $F^{1;3}$ for brevity.    
Expand $F^{1;2}$ with a quintic paraproduct and group all of the terms where the coefficients appear in low frequency with the remainder:
\begin{align*}  
F^{1;2} & = -\sum_{k\neq 0} \int A^{3}Q^3_0 A_0^{3} \partial_Y\partial_Y\partial_Y\left( \left(U^3_{-k}\right)_{Hi} \left( U^2_k\right)_{Lo}\right)dV \\
& \quad - \sum_{k\neq 0} \int A^{3}Q^3_0 A_0^{3} \partial_Y \partial_Y\partial_Y\left( \left(U^3_{-k}\right)_{Lo} \left( U^2_k\right)_{Hi}\right) dV \\ 
& \quad - \sum_{k\neq 0}  \int A^{3}Q^3_0 A_0^{3} \left((\psi_y)_{Hi}\partial_Y + (\phi_y)_{Hi}\partial_Z\right)\partial_Y\partial_Y\left( \left(U^3_{-k}\right)_{Lo} \left( U^2_k\right)_{Lo}\right) dV \\ 
& \quad - \sum_{k\neq 0} \int A^{3}Q^3_0 A_0^{3} \partial_Y \left( \left((\psi_y)_{Hi}\partial_Y + (\phi_y)_{Hi}\partial_Z\right)\partial_Y\left( \left(U^3_{-k}\right)_{Lo} \left( U^2_k\right)_{Lo}\right)\right) dV \\
& \quad - \sum_{k\neq 0} \int A^{3}Q^3_0 A_0^{3} \partial_Y \partial_Y \left( \left((\psi_y)_{Hi}\partial_Y + (\phi_y)_{Hi}\partial_Z\right) \left( \left(U^3_{-k}\right)_{Lo} \left( U^2_k\right)_{Lo}\right)\right) dV \\ 
& \quad + F^1_{\mathcal{R},C} \\ 
& = F_{HL} + F_{LH} + F_{C1} + F_{C2} + F_{C3} + F_{\mathcal{R},C},    
\end{align*}
where here $F_{\mathcal{R}}$ includes the remainders from the paraproduct and terms where coefficients appear in low frequency. 

Turn first to the easier $F_{HL}$. 
From \eqref{ineq:AprioriUneq}, \eqref{ineq:ABasic}, \eqref{ineq:AdelLij}, and \eqref{ineq:triQuadHL} we have, 
\begin{align*} 
F_{HL} & \lesssim \frac{\epsilon}{\jap{t} \jap{\nu t^3}^{\alpha}} \sum_{k\neq 0} \sum_{l,l^\prime} \int \abs{A^{3} \widehat{Q^3_0}(\eta,l)} \frac{\abs{\eta}^3 \jap{\frac{t}{\jap{\xi,l^\prime}}}^2}{k^2 + (l^\prime)^2 + \abs{\xi-kt}^2} \abs{ \Delta_L A^{3}\widehat{U^3_{k}}(\xi,l^\prime)_{Hi}} Low\left(-k,\eta-\xi,l-l^\prime \right) d\eta d\xi \\ 
& \lesssim \frac{\epsilon \jap{t}^{2}}{\jap{\nu t^3}^{\alpha}}\norm{\left(\sqrt{\frac{\partial_t w}{w}}\tilde{A}^{3} + \frac{\abs{\grad}^{s/2}}{\jap{t}^{s}}A^{3} \right) Q^3}_2 \norm{\left(\sqrt{\frac{\partial_t w}{w}}\tilde{A}^{3} + \frac{\abs{\grad}^{s/2}}{\jap{t}^s}A^{3}\right) \Delta_L U_{\neq}^3}_2
\\ & \quad + \frac{\epsilon}{\jap{t}\jap{\nu t^3}^\alpha}\norm{\sqrt{-\Delta_L} A^{3}Q^3}_2 \norm{A^{3}\Delta_L U^3_{\neq}}_2,    
\end{align*} 
which, after the application of Lemmas \ref{lem:PEL_NLP120neq} and \ref{lem:SimplePEL}, is consistent with Proposition \ref{prop:Boot}. 
Notice the importance of the inviscid damping to reduce the power of $t$. 

Turn next to $F_{LH}$, which is the term appearing in the toy model in \S\ref{sec:Toy} as one of the leading order contributions to the nonlinear interaction \textbf{(F)}. 
Here, it is the regularity imbalance between $Q^2_{\neq}$ and $Q^3_0$ which will reduce the power of $t$. 
By \eqref{ineq:AprioriUneq} and Lemma \ref{lem:ABasic} we have 
\begin{align*}
F_{LH} & \lesssim  \frac{\epsilon}{\jap{\nu t^3}^{\alpha}}\sum_{k\neq 0} \sum_{l,l^\prime} \int \abs{A^{3} \widehat{Q^3_0}(\eta,l) A^3_0(\eta,l) \frac{\abs{\eta}^3}{k^2 + (l^\prime)^2 + \abs{\xi-kt}^2} \Delta_L \widehat{U^2_{k}}(\xi,l^\prime)_{Hi}} Low(-k,\eta-\xi,l-l^\prime) d\eta d\xi \\ 
& \lesssim \frac{\epsilon}{\jap{\nu t^3}^{\alpha}}\sum_{k\neq 0} \sum_{l,l^\prime} \int \abs{ \widehat{Q^3_0}(\eta,l)} \frac{\abs{\eta}^3}{k^2 + (l^\prime)^2 + \abs{\xi-kt}^2} \\ & \quad\quad \times \left(\sum_{r}\chi^{NR,r}\frac{\abs{r} + \abs{\eta-tr}}{t} \tilde{A}^{3}_0(\eta,l) \tilde{A}_k^2(\xi,l^\prime) + \chi^{\ast;32} A^{3}_0(\eta,l) A_k^2(\xi,l^\prime) \right) \\ & \quad\quad \times \jap{\frac{t}{\jap{\xi,l^\prime}}}  \abs{A_k^2 \Delta_L \widehat{U^2_{k}}(\xi,l^\prime)_{Hi}} Low(-k,\eta-\xi,l-l^\prime) d\eta d\xi. 
\end{align*}
Therefore, by \eqref{ineq:A03A2YYY} and \eqref{ineq:A03A2YYY2}, followed by \eqref{ineq:triQuadHL}, we have 
\begin{align*}
F_{LH}^1 & \lesssim \frac{\epsilon \jap{t}^2}{\jap{\nu t^3}^{\alpha}} \norm{\left(\sqrt{\frac{\partial_t w}{w}}\tilde{A}^3 + \frac{\abs{\grad}^{s/2}}{\jap{t}^{s}}A^3\right)  Q^3_0}_2\norm{\left(\sqrt{\frac{\partial_t w}{w}}\tilde{A}^2 + \frac{\abs{\grad}^{s/2}}{\jap{t}^{s}}A^2\right) \Delta_L U^2_{\neq}}_2 \\ 
& \quad + \frac{\epsilon}{\jap{\nu t^3}^{\alpha}}\norm{\sqrt{-\Delta_L}A^3 Q^3_0}_2 \norm{A^2 \Delta_L U_{\neq}^2}_{2}, 
\end{align*}
which by Lemmas \ref{lem:PEL_NLP120neq} and \ref{lem:SimplePEL} is consistent with Proposition \ref{prop:Boot} for $\epsilon$ sufficiently small. 
 
The terms associated with the coefficient terms are treated the same as the corresponding terms in \S\ref{sec:NzeroForcing} and are hence omitted for brevity and we simply conclude the results
\begin{align*} 
F^1_{C1} + F^1_{C2} + F^1_{C3} & \lesssim \epsilon^{3/2}\norm{\sqrt{-\Delta_L} A^{3}Q^3}_2^2 + \frac{\epsilon^{5/2}}{\jap{\nu t^3}^{2\alpha}}\norm{AC}^2_2.   
\end{align*} 
The remainder terms are similarly straightforward or easy variants of the other treatments and are hence omitted as well.  
This completes the treatment of $F^1$. 
As mentioned above, the treatments of $F^2$ and $F^3$ are similar (but easier) and hence also omitted. 

\subsubsection{Zero frequency dissipation error terms}
The treatment of the dissipation error terms for $Q^3_0$ is the same as $Q_0^2$ as outlined in \S\ref{sec:DEQ02}, and therefore is omitted for the sake of brevity.

\subsection{Non-zero frequencies}
\subsubsection{Nonlinear pressure $NLP$} \label{sec:NLP3}

\paragraph{Treatment of $NLP(1,j,0,\neq)$} \label{sec:NLPQ3ij_0neq}
This term is the analogue of the nonlinear terms treated in \S\ref{sec:NLP213}. 
Note that $j \neq 1$ by the zero frequency assumption. 
We can essentially use the same treatment, although here it is easier since $Y$ derivatives are slightly harder than $Z$ derivatives 
and because we are imposing one less power of time control on $Q^3_{\neq}$ than on $Q^2_{\neq}$. 
For this reason, we omit the treatment for brevity and simply conclude the result: 
\begin{align*} 
NLP(1,j,0,\neq) & \lesssim c_{0}\norm{\left(\sqrt{\frac{\partial_t w}{w}}\tilde{A}^3 + \frac{\abs{\grad}^{s/2}}{\jap{t}^{s}}A^3 \right) Q^3}_2\norm{\left(\sqrt{\frac{\partial_t w}{w}}\tilde{A}^j + \frac{\abs{\grad}^{s/2}}{\jap{t}^s}A^j \right) \Delta_L U^j_{\neq}}_2 \\
& \quad + \epsilon \norm{\sqrt{-\Delta_L} A^3 Q^3}_2^2 + \mathbf{1}_{t \ll \epsilon^{-1/2}}\epsilon^{1/2}\norm{A^j \Delta_L U^j_{\neq}}_2^2 + \epsilon^{3/2}\norm{\sqrt{-\Delta_L} A^j \Delta_L U^j_{\neq}}_2^2 \\ 
& \quad + \frac{\epsilon \jap{t}}{\jap{\nu t^3}^{\alpha}}\norm{A^3 Q^3_{\neq}}_2 \norm{A^1 \jap{\grad}^2 U_0^1}_2 + \frac{\epsilon^2}{\jap{\nu t^3}^{\alpha-1}}\norm{A^3 Q^3_{\neq}}_2 \norm{AC}_2,  
\end{align*}   
 which, after Lemmas \ref{lem:PELbasicZero}, \ref{lem:PEL_NLP120neq}, \ref{lem:SimplePEL}, and \ref{lem:PELED}, is consistent Proposition \ref{prop:Boot} for $\epsilon$ sufficiently small. 

\paragraph{Treatment of $NLP(i,j,0,\neq)$ with $i \in \set{2,3}$} \label{sec:NLPQ3_0neq_notX} 
This is the analogue of the nonlinear terms treated in \S\ref{sec:NLPQ2_0neq_notX} above.  
These can treated analogously to the treatment in \S\ref{sec:NLPQ2_0neq_notX}, but in fact it is much easier here due to the fact that $Q^3$ is growing quadratically at `low' frequencies.
In particular, we can deduce (using also $j \neq 1$), 
\begin{align*}
NLP(i,j,0,\neq) & \lesssim \epsilon \norm{A^3Q^3_{\neq}}_2 \norm{\Delta_L A^j U^j_{\neq}}_2 + \frac{\epsilon \jap{t}}{\jap{\nu t^3}^{\alpha-1}} \norm{A^3Q^3_{\neq}}_2 \norm{A U^i_{0}}_2 \\ 
& \quad + \frac{\epsilon^2\jap{t}}{\jap{\nu t^3}^{\alpha-1}}\norm{A^3 Q^3}_2 \norm{AC}_2,  
\end{align*}   
which after Lemmas \ref{lem:PELbasicZero}, \ref{lem:PEL_NLP120neq}, and \ref{lem:SimplePEL}, is consistent with Proposition \ref{prop:Boot} for $\epsilon$ sufficiently small.  

\paragraph{Treatment of $NLP(i,j,\neq,\neq)$} \label{sec:NLPQ3_neqneq}
These terms are fairly straightforward. 
The term with $i = j = 3$ cancels with the $NLS$ terms. 
Let us just treat $NLP(1,3,\neq,\neq)$ and omit the others for brevity, which follow by similar arguments.
Expand with a paraproduct, as usual grouping higher order terms involving the coefficients in low frequency with the remainder 
\begin{align*}
NLP(1,3,\neq,\neq) & =  \int  A^3 Q^3_{\neq} A^3 \partial_Z \left( \left(\partial_Z U^1_{\neq}\right)_{Lo} (\partial_XU^3_{\neq})_{Hi}  \right) dV \\ 
& \quad + \int A^3 Q^3_{\neq} A^3 \partial_Z \left( \left(\partial_Z U^1_{\neq}\right)_{Hi} (\partial_XU^3_{\neq})_{Lo}  \right) dV \\ 
& \quad + \int A^3 Q^3_{\neq} A^3 \left( \left( (\phi_z)_{Hi}\partial_Z + (\psi_z)_{Hi}(\partial_Y - t\partial_X)  \right)  \left( \left(\partial_Z U^1_{\neq}\right)_{Lo} (\partial_XU^3_{\neq})_{Lo}  \right) \right) dV \\ 
& \quad + \int A^3 Q^3_{\neq} A^3 \partial_Z\left(\left( \left( (\phi_z)_{Hi}\partial_Z + (\psi_z)_{Hi}(\partial_Y - t\partial_X)  \right) U^1_{\neq}\right)_{Lo} (\partial_XU^3_{\neq})_{Lo}  \right) dV \\ 
& \quad + P_{\mathcal{R},C} \\  
& = P_{LH} + P_{HL} + P_{C1} + P_{C2} + P_{\mathcal{R},C}, 
\end{align*}
where $P_{\mathcal{R},C}$ includes all of the remainders from the quartic paraproduct as well as the higher order terms involving coefficients as low frequency factors.  

Consider $P_{LH}$ first.
By \eqref{ineq:AprioriUneq} followed by \eqref{ineq:A3A3neqneq}, by \eqref{ineq:A3neqA3neqZX} and \eqref{ineq:triQuadHL} it follows that  
\begin{align*}
P_{LH} & \lesssim \frac{\epsilon \jap{t}^{\delta_1}}{\jap{\nu t^3}^{\alpha}} \sum_{k \neq 0} \int \abs{A^3 \widehat{Q^3_{k}}(\eta,l)} \frac{\abs{l k^\prime}}{\abs{k^\prime}^2 + \abs{l^\prime}^2 + \abs{\xi-tk^\prime}^2} \\ & \quad\quad \times
\left(\chi^{R,NR}\frac{t}{\abs{k} + \abs{\eta-kt}} + \chi^{NR,R}\frac{\abs{k^\prime} + \abs{\eta - k^\prime t}}{t} + \chi^{\ast;33} \right) \abs{A^3\Delta_L\widehat{U^3_{k^\prime}}(\xi,l^\prime)_{Hi}} \\ & \quad\quad \times Low(k - k^\prime,\eta-\xi,l-l^\prime) d\eta d\xi \\ 
& \lesssim \frac{\epsilon \jap{t}^{1+\delta_1}}{\jap{\nu t^3}^{\alpha}}\norm{A^3Q^3}_2\norm{A^3 \Delta_L U_{\neq}^3}_2, 
\end{align*}
which after Lemma \ref{lem:SimplePEL}, is consistent with Proposition \ref{prop:Boot} for $\delta_1$ and $\epsilon$ sufficiently small. 

Consider next $P_{HL}$. 
By \eqref{ineq:AprioriUneq} followed by \eqref{ineq:ABasic}, we have
\begin{align*}
P_{HL} & \lesssim \frac{\epsilon}{\jap{\nu t^3}^{\alpha}} \sum_{k \neq 0} \int \abs{A^3 \widehat{Q^3_{k}}(\eta,l)} \frac{\abs{l l^\prime} \jap{t}}{\abs{k^\prime}^2 + \abs{l^\prime}^2 + \abs{\xi-tk^\prime}^2} \jap{\frac{t}{\jap{\xi,l^\prime}}}^{\delta_1-1} \\ & \quad\quad \times \abs{A^1\Delta_L\widehat{U^1_{k^\prime}}(\xi,l^\prime)_{Hi}} Low(k - k^\prime,\eta-\xi,l-l^\prime) d\eta d\xi \\ 
& \lesssim \frac{\epsilon \jap{t}}{\jap{\nu t^3}^{\alpha}}\norm{A^3 Q^3}_2 \norm{A^1 \Delta_L U^1_{\neq}}_2, 
\end{align*}
which is consistent with Proposition \ref{prop:Boot} for $\epsilon$ sufficiently small after applying Lemma \ref{lem:SimplePEL}. 

The coefficient error terms and the remainder terms are straightforward (easier) variants of the treatment in \S\ref{sec:NLPQ2_neqneq} or of the above treatments of $P_{HL}$ and $P_{LH}$, and hence are omitted for brevity.   
The other nonlinear pressure terms are similar to, or easier than, the above, and are hence omitted for brevity. 

\subsubsection{Nonlinear stretching $NLS$} \label{sec:NLSQ3}
Controlling the $NLS$ terms in the evolution of $Q^3$ is in general 
slightly harder than for $Q^2$ (treated above in \S\ref{sec:NLSQ2}), due to the fact that $U^3$ is larger than $U^2$. 
Moreover, we occasionally have to deal with the imbalance in the regularities inherent to $A^3$. 

\paragraph{Treatment of $NLS1(j,\neq,0)$ and  $NLS1(j,0,\neq)$} \label{sec:NLS1Q3_neq0} 
Consider first the  $NLS1(j,0,\neq)$ terms. 
Due to the large size of $Q_0^1$, it turns out $j = 1$ is the hardest case, and hence we only treat this case (the case $j = 3$ is complicated by the regularity imbalance of $A^3_k$ compared to $A^3_0$ (see Lemma \ref{lem:ABasic}), however, even at the critical time, the loss is at most $\jap{t}$, which is still not more than what is lost when comparing $A^3_k$ to $A^1_k$).  
Expanding with a paraproduct
\begin{align*}
NLS1(1,0,\neq) & = -\sum_{k \neq 0}\int A^3 Q^3_k A^3_k \left( (Q^1_0)_{Hi} (\partial_X U^3_{k})_{Lo}    \right) dV - \sum_{k \neq 0}\int A^3 Q^3_k A^3_k \left( (Q^1_0)_{Lo} (\partial_X U^3_{k})_{Hi}    \right) dV + S_{\mathcal{R}}  \\ 
& = S_{HL} + S_{LH} + S_{\mathcal{R}}.
\end{align*}
For the $S_{HL}$ term, it follows from \eqref{ineq:AprioriUneq}, Lemma \ref{lem:ABasic}, and \eqref{ineq:triQuadHL},  
\begin{align*}
S_{HL} & \lesssim \frac{\epsilon t}{\jap{\nu t^3}^{\alpha}}\norm{A^3 Q^3}_2 \norm{A^1 Q^1_0}_2.  
\end{align*}
For the $S_{LH}$ term, by \eqref{ineq:AprioriU0} and \eqref{ineq:ABasic}, followed by \eqref{ineq:AiPartX} and \eqref{ineq:triQuadHL},   
\begin{align*}
S_{LH} & \lesssim \epsilon t \sum \int \abs{A^3 \widehat{Q^3_k}(\eta,l)} \frac{\abs{k}}{k^2 + \abs{l^\prime}^2 + \abs{\xi-kt}^2} \abs{A^3 \Delta_L \widehat{U^3_k}(\xi,l^\prime)_{Hi}} Low(\eta-\xi,l-l^\prime) d\eta d\xi \\ 
& \lesssim \epsilon t \norm{\left(\sqrt{\frac{\partial_t w}{w}}\tilde{A}^3 + \frac{\abs{\grad}^{s/2}}{\jap{t}^{s}}A^3\right) Q^3}_2 \norm{\left(\sqrt{\frac{\partial_t w}{w}}\tilde{A}^3 + \frac{\abs{\grad}^{s/2}}{\jap{t}^{s}}A^3\right) \Delta_L U^3_{\neq}}_2 + \epsilon\norm{A^3 Q^3}_2\norm{A^3 \Delta_L U^3_{\neq}}_2,
\end{align*}
which is consistent with Proposition \ref{prop:Boot} for $\epsilon$ sufficiently small by Lemmas \ref{lem:PEL_NLP120neq} and \ref{lem:SimplePEL}.  
The  remainder term is straightforward and is hence omitted. 
As mentioned above, the remaining $NLS1(j,0,\neq)$ terms are omitted as well as they are similar. 

Consider next the  $NLS1(j,\neq,0)$ terms. 
Notice that $j \neq 1$ by the nonlinear structure. 
The remaining contributions are not quite the same: due to the regularity imbalances in $A^3$, the case $j = 3$ is slightly harder (note this does not cancel with the other pressure/stretching terms). 
Hence, we treat this term and omit the $j = 2$ contribution. 
As usual, begin with a paraproduct and group the terms where the coefficients appear in low frequency with the remainder: 
\begin{align*}
NLS1(3,\neq,0) & = -\sum_{k \neq 0} \int A^3 Q^3_{k} A^3\left( (Q^3_{k})_{Hi} (\partial_Z U_0^3)_{Lo} \right) dV - \sum\int A^3 Q^3_{k} A^3\left( (Q^3_{k})_{Lo} (\partial_Z U_0^3)_{Hi} \right) dV \\  
& \quad - \sum_{k \neq 0}\int A^3 Q^3_{k} A^3\left( (Q^3_{k})_{Lo} ((\phi_z)_{Hi}\partial_Z + (\psi_z)_{Hi} \partial_Y)(U_0^3)_{Lo} \right) dV + S_{\mathcal{R},C} \\ 
& = S_{HL}  + S_{LH} + S_{C} + S_{\mathcal{R},C}. 
\end{align*}
For the first term, $S_{HL}$, from \eqref{ineq:AprioriU0} and \eqref{ineq:ABasic} we have
\begin{align*}
S_{HL} & \lesssim \epsilon\norm{A^3 Q^3}_2^2, 
\end{align*}
which is consistent with Proposition \ref{prop:Boot} for $c_0$ sufficiently small. 
For the second term, $S_{LH}$, we have by Lemma \ref{lem:ABasic}, Lemma \ref{dtw}, and \eqref{ineq:triQuadHL} (note that the zero frequency is never resonant and hence the $\chi^{NR,R}$ term disappears),  
\begin{align*}
S_{LH} & \lesssim \frac{\epsilon t^2}{\jap{\nu t^3}^{\alpha}} \sum_{k \neq 0} \int \abs{\widehat{Q^3_k}(\eta,l)} \jap{\frac{t}{\jap{\xi,l^\prime}}}^{-2}\\ 
& \quad\quad \times \left( \chi^{R,NR}\frac{t}{\abs{k} + \abs{\eta-kt}}\tilde{A}_k^3(\eta,l)\tilde{A}_0^3(\xi,l^\prime) + A_k^3(\eta,l) A_0^3(\xi,l^\prime) \right) \\ 
& \quad\quad \times \frac{\abs{l^\prime}}{\jap{\xi,l^\prime}^2} \abs{A^3 \jap{\grad}^2 \widehat{U^3_0}(\xi,l^\prime)} d\xi d\eta \\ 
& \lesssim \frac{\epsilon t^2}{\jap{\nu t^3}^{\alpha}} \norm{\left(\sqrt{\frac{\partial_t w}{w}}\tilde{A}^3 + \frac{\abs{\grad}^{s/2}}{\jap{t}^{s}}A^3\right) Q^3}_2 \norm{\left(\sqrt{\frac{\partial_t w}{w}}\tilde{A}^3 + \frac{\abs{\grad}^{s/2}}{\jap{t}^{s}}A^3\right) \jap{\grad}^2 U^3_0}_2 \\ 
& \quad + \frac{\epsilon t}{\jap{\nu t^3}^{\alpha}} \norm{A^3 Q^3}_2 \norm{A^3 \jap{\grad}^2 U^3_0}_2, 
\end{align*}
which, by Lemmas \ref{lem:PELCKZero} and \ref{lem:PELbasicZero}, is consistent with Proposition \ref{prop:Boot} for $\epsilon$ and $c_0$ sufficiently small.

\paragraph{Treatment of $NLS1(j,\neq,\neq)$} \label{sec:NLS1Q3_neqneq} 
All of these terms can be treated in a similar fashion, in fact, $j=3$ is the hardest due to the regularity losses together with a $\partial_Z$ (as opposed to $\partial_X$ as in $j=1$). 
Hence, let us just consider the case $j=3$ and omit the others for brevity. 
Expand the term with a paraproduct, as usual leaving the terms with coefficients in low frequency with the remainder,
\begin{align*}
NLS1(1,\neq,\neq) & = -\int A^3 Q^3_{\neq} A^3\left( (Q^3_{\neq})_{Hi} (\partial_Z U_{\neq}^3)_{Lo} \right) dV  - \int A^3 Q^3_{\neq} A^3\left( (Q^3_{\neq})_{Lo} (\partial_Z U_{\neq}^3)_{Hi} \right) dV \\
& \quad - \int A^3 Q^3_{\neq} A^3\left( (Q^3_{\neq})_{Lo} \left( \left( (\phi_z)_{Hi}\partial_Z + (\psi_z)_{Hi}\partial_Y \right) (U_{\neq}^3)_{Lo}\right) \right) dV + S_{\mathcal{R}} \\ 
& = S_{HL} + S_{LH} +  S_{\mathcal{R}}.
\end{align*}
By \eqref{ineq:AprioriUneq}, Lemma \ref{lem:ABasic}, and \eqref{ineq:triQuadHL} (the loss of $t$ is due to the regularity imbalances), 
\begin{align*}
S_{HL} & \lesssim \frac{\epsilon \jap{t}}{\jap{\nu t^3}^{\alpha}}\norm{A^3Q^3}_2^2, 
\end{align*}
which is consistent with Proposition \ref{prop:Boot} for $\epsilon$ sufficiently small by Lemma \ref{lem:SimplePEL}. 

For $S_{LH}$, we have to be a little more careful.
By \eqref{ineq:A3A3neqneq}, \eqref{ineq:Boot_ED} 
\begin{align*}
S_{LH} & \lesssim \frac{\epsilon \jap{t}^{2}}{\jap{\nu t^3}^{\alpha}} \sum_{k \neq 0} \int \abs{\widehat{Q^3_{k}}(\eta,l)} \frac{\abs{l^\prime}}{\abs{k^\prime}^2 + \abs{l^\prime}^2 + \abs{\xi-tk^\prime}^2}\\ 
& \quad\quad \times \left(\chi^{R,NR}\frac{t}{\abs{k} + \abs{\eta-kt}}\tilde{A}^3_k(\eta,l) \tilde{A}^3_{k^\prime}(\xi,l^\prime)  + \chi^{NR,R}\frac{\abs{k^\prime}  + \abs{\eta - k^\prime t}}{t} \tilde{A}^3_k(\eta,l) \tilde{A}^3_{k^\prime}(\xi,l^\prime) \right. \\ & \quad\quad\quad + \chi^{\ast;33} A^3_k(\eta,l) A^3_{k^\prime}(\xi,l^\prime)  \bigg) \abs{A^3\Delta_L\widehat{U^3_{k^\prime}}(\xi,l^\prime)_{Hi}} Low(k - k^\prime,\eta-\xi,l-l^\prime) d\eta d\xi. 
\end{align*}
Therefore by \eqref{ineq:A33PartX}, \eqref{ineq:AikDelLNoD}, and \eqref{ineq:triQuadHL}, there holds 
\begin{align*}
S_{LH} & \lesssim \frac{\epsilon \jap{t}^2}{\jap{\nu t^3}^{\alpha}}\norm{\left(\sqrt{\frac{\partial_t w}{w}}\tilde{A}^3 + \frac{\abs{\grad}^{s/2}}{\jap{t}^{s}}A^3\right) Q^3}_2\norm{\left(\sqrt{\frac{\partial_t w}{w}}\tilde{A}^3 + \frac{\abs{\grad}^{s/2}}{\jap{t}^{s}}A^3\right) \Delta_L U^3_{\neq}}_2 \\ 
& \quad + \frac{\epsilon \jap{t}}{\jap{\nu t^3}^{\alpha}}\norm{A^3 Q^3}_2 \norm{A^3 \Delta_L U^3_{\neq}}_2, 
\end{align*}
which is consistent with Proposition \ref{prop:Boot} after Lemmas \ref{lem:PEL_NLP120neq} and Lemma \ref{lem:SimplePEL}. 

The coefficient error term $S_C$ and remainder term $S_{\mathcal{R}}$ are both straightforward or easy variants of estimates already performed and hence are omitted for brevity. 

\paragraph{Treatment of $NLS2(i,j,0,\neq)$} 
Recall \eqref{def:Q3Enums} and notice that $i \neq 1$. 
These terms are treated in essentially the same way as  $NLS1(3,\neq,0)$ (or $NLS1(2,\neq,0)$) and hence we omit the treatment for brevity. 

\paragraph{Treatment of $NLS2(i,j,\neq,0)$} 
Recall \eqref{def:Q3Enums} and notice that neither $i$ nor $j$ can be $1$ in this case. 
These terms are very similar to $NLS1(2,0,\neq)$ and are hence omitted for brevity. 

\paragraph{Treatment of $NLS2(i,j,\neq,\neq)$}
First, notice that $i = j = 3$ cancels with the $NLS$ terms. 
The most difficult term is $i = 2$ and $j = 3$; let us briefly comment on this term and omit the others for brevity. 
Expanding with a paraproduct
\begin{align*}
NLS2(2,3,\neq,\neq) & =  -\int  A^3 Q^3_{\neq} A^3 \left( \left( (\partial_Y-t\partial_X) U^3_{\neq}\right)_{Lo} ( (\partial_Y-t\partial_X) \partial_Z U^3_{\neq})_{Hi}  \right) dV \\ 
& \quad - \int A^3 Q^3_{\neq} A^3 \partial_Z \left( \left( (\partial_Y - t\partial_X) U^3_{\neq}\right)_{Hi} (\partial_Z(\partial_Y - t\partial_X)U^3_{\neq})_{Lo}  \right) dV \\ 
& \quad + S_{C1} + S_{C2} + S_{C3} + S_{\mathcal{R},C} \\ 
& = S_{LH} + S_{HL} + S_{C1} + S_{C2} + S_{C3} + S_{\mathcal{R},C}, 
\end{align*}
where $S_{\mathcal{R},C}$ denotes the remainders and $S_{Ci}$ denote terms in which the coefficients appear in high frequency; these are very similar to many terms we have already treated and are hence omitted.
The leading order terms are treated in essentially the same manner; the $S_{LH}$ term is clearly the harder one, so let us just show the treatment of this one. 
For $LH$ term we have, by \eqref{ineq:3DEoneRegBalII},
\begin{align*}
S_{LH} & \lesssim \frac{\epsilon \jap{t}}{\jap{\nu t^3}^{\alpha}} \sum_{k \neq 0} \int \abs{\widehat{Q^3_{k}}(\eta,l)} \frac{\abs{\xi - tk^\prime} \abs{l^\prime}}{\abs{k^\prime}^2 + \abs{l^\prime}^2 + \abs{\xi-tk^\prime}^2}\\ 
& \quad\quad \times \left(\chi^{R,NR}\frac{t}{\abs{k} + \abs{\eta-kt}}\tilde{A}^3_k(\eta,l) \tilde{A}^3_{k^\prime}(\xi,l^\prime)  + \chi^{NR,R}\frac{\abs{k^\prime}  + \abs{\eta - k^\prime t}}{t} \tilde{A}^3_k(\eta,l) \tilde{A}^3_{k^\prime}(\xi,l^\prime) \right. \\ & \quad\quad\quad + \chi^{\ast;33} A^3_k(\eta,l) A^3_{k^\prime}(\xi,l^\prime)  \bigg) \abs{A^3\Delta_L\widehat{U^3_{k^\prime}}(\xi,l^\prime)_{Hi}} Low(k - k^\prime,\eta-\xi,l-l^\prime) d\eta d\xi \\ 
& \lesssim \frac{\epsilon t^2}{\jap{\nu t^3}^\alpha}\norm{\left(\sqrt{\frac{\partial_t w}{w}}\tilde{A}^3 + \frac{\abs{\grad}^{s/2}}{\jap{t}^{s}}A^3\right)Q^3}_2 \norm{\left(\sqrt{\frac{\partial_t w}{w}}\tilde{A}^3 + \frac{\abs{\grad}^{s/2}}{\jap{t}^{s}}A^3\right)\Delta_L U^3_{\neq}}_2 \\ 
& \quad + \frac{\epsilon t}{\jap{\nu t^3}^{\alpha}}\norm{A^3 Q^3}_2\norm{A^3 \Delta_L U^3_{\neq}}_2, 
\end{align*}
which is consistent with Proposition \ref{prop:Boot} by Lemmas \ref{lem:PEL_NLP120neq} and \ref{lem:SimplePEL}. 

\subsubsection{Transport nonlinearity $\mathcal{T}$} \label{sec:Q3_TransNon}
Begin with a paraproduct decomposition: 
 \begin{align*} 
 \mathcal{T}_{\neq}  & = -\int A^{3} Q^3_{\neq} A^{3} \left( \tilde U_{Lo} \cdot \grad Q^3_{Hi} \right) dV -\int A^{3} Q^3_{\neq} A^{3} \left( \tilde U_{Hi} \cdot \grad Q^3_{Lo} \right) dV + \mathcal{T}_{\mathcal{R}} \\  
& = \mathcal{T}_T + \mathcal{T}_{R} + \mathcal{T}_{\mathcal{R}}, 
\end{align*}
where $\mathcal{T}_{\mathcal{R}}$ includes the remainder (as above in \S\ref{sec:Q2_TransNon}, we use the terminology `transport' and `reaction' for the first two terms respectively).
There are two interesting challenges here. 
First, the additional $ + e^{\mu \abs{l}^{1/2}}$ was added in \eqref{def:A} because large regularity imbalances caused by $w^3$ would have been problematic at high $Z$ frequencies in the in the `transport' contribution.  
Second, we will see that the `reaction' contribution is significantly more difficult and, as predicted in \S\ref{sec:Toy}, we will need to take advantage of the regularity imbalances to close an estimate. 

Decompose the reaction terms based on the $X$ dependence of each factor:  
\begin{align*} 
\mathcal{T}_{R} & =  -\int A^{3} Q^3 A^{3} \left( (\tilde U_{\neq})_{Hi}  \cdot (\grad Q^3_0)_{Lo} \right) dV - \int A^{3} Q^3 A^{3} \left( g_{Hi} \cdot \partial_Y (Q^3_{\neq})_{Lo} \right) dV \\ & \quad - \int A^{3} Q^3 A^{3} \left( (\tilde U_{\neq})_{Hi} \cdot \grad (Q^3_{\neq})_{Lo} \right) dV \\ 
& = \mathcal{T}_{R;\neq 0} + \mathcal{T}_{R;0 \neq}+ \mathcal{T}_{R;\neq \neq}, 
\end{align*} 
and also the transport terms:
\begin{align*} 
\mathcal{T}_{T} & =  -\int A^{3} Q^3 A^{3} \left( (\tilde U_{\neq})_{Lo}  \cdot (\grad Q^3_0)_{Hi} \right) dV - \int A^{3} Q^3 A^{3} \left( g_{Lo} \partial_Y (Q^3_{\neq})_{Hi} \right) dV \\ & \quad - \int A^{3} Q^3 A^{3} \left( (\tilde U_{\neq})_{Lo} \cdot \grad (Q^3_{\neq})_{Hi} \right) dV \\ 
& = \mathcal{T}_{T;\neq 0} + \mathcal{T}_{T;0 \neq}+ \mathcal{T}_{T;\neq \neq}. 
\end{align*} 

\paragraph{Transport term $\mathcal{T}_{T;0 \neq}$} 
This term can be treated the same as the corresponding term in \S\ref{sec:Q2_TransNon}: because the velocity field is independent of $X$, there are no regularity losses associated with the regularity imbalances in the norm $A^3$ -- these only occur if one changes the $X$ frequency, as $\chi^{R,NR} = \chi^{NR,R} = 0$ if $k =k^\prime$ in Lemma \ref{lem:ABasic}. Hence, as above, 
\begin{align*}
\mathcal{T}_{T;0\neq} & \lesssim \epsilon^{3/2}\norm{\sqrt{-\Delta_L}A^3 Q^3}_2^2 + \frac{\epsilon^{1/2}}{t^{2}}\norm{A^3 Q^3}_2^2.
\end{align*}

\paragraph{Transport term $\mathcal{T}_{T;\neq 0}$}  
This is one of the terms where it is crucial that we include the $ + e^{\mu\abs{l}^{1/2}}$ correction to the norm. 
By Lemma \ref{lem:ABasic} and \eqref{ineq:jNRBasic}, we have by $\abs{\xi,l^\prime} \chi^{R,NR} \lesssim \abs{kt}\chi^{R,NR}$ (it is here we are using that regularity imbalances only occur for $\abs{\partial_Z} \lesssim \abs{\partial_Y}$),  
\begin{align*}
\mathcal{T}_{T;\neq 0} & \lesssim \frac{\epsilon}{\jap{\nu t^3}^\alpha}\sum \int \abs{A^3 \widehat{Q^3_{k}}(\eta,l) A^3_k(\eta,l) \abs{\xi,l^\prime} \widehat{Q^3_0}(\xi,l^\prime)_{Hi} Low(k,\eta-\xi,l-l^\prime)} d\eta d\xi \\ 
& \lesssim \frac{\epsilon}{\jap{\nu t^3}^\alpha}\sum \int \abs{\widehat{Q^3_{k}}(\eta,l)}\left(\frac{t \chi^{R,NR}}{\abs{k} + \abs{\eta-kt}} \tilde{A}^3_k(\eta,l)\tilde{A}^3_0(\xi,l^\prime)  + A^3_k(\eta,l) A^3_0(\xi,l^\prime) \right) \\ & \quad\quad \times \abs{\xi,l^\prime} \abs{\widehat{Q^3_0}(\xi,l^\prime)_{Hi}} Low(k,\eta-\xi,l-l^\prime) d\eta d\xi \\
& \lesssim \frac{\epsilon t^2}{\jap{\nu t^3}^{\alpha}} \norm{\left(\sqrt{\frac{\partial_t w}{w}}\tilde{A}^3 + \frac{\abs{\grad}^{s/2}}{\jap{t}^{s}}A^3 \right) Q^3}_2^2 + \frac{\epsilon^{1/2}}{\jap{\nu t^3}^{2\alpha}} \norm{A^3 Q^3_{\neq}}_2^2 +  \epsilon^{3/2}\norm{\grad A^3 Q_0^3}_2, 
\end{align*}
which is consistent with Proposition \ref{prop:Boot}. 

\paragraph{Transport term $\mathcal{T}_{T;\neq \neq}$}
We will again use crucially that we have the $ + e^{\mu\abs{l}^{1/2}}$ correction to the norm. 
By \eqref{ineq:ABasic} we have 
\begin{align*}
\mathcal{T}_{T;\neq \neq} & \lesssim \frac{\epsilon}{\jap{\nu t^3}^{\alpha-1}}\sum \int \abs{A^3 \widehat{Q^3_{k}}(\eta,l) A^3_k(\eta,l) \abs{k t^{\delta_1}, t^{-1}(\xi-k^\prime t), l^\prime} \widehat{Q^3_{k^\prime}}(\xi,l^\prime)_{Hi}} \\  &\quad\quad \times Low(k-k^\prime,\eta-\xi,l-l^\prime) d\eta d\xi \\ 
& \lesssim \frac{\epsilon}{\jap{\nu t^3}^{\alpha-1}}\sum \int \abs{\widehat{Q^3_{k}}(\eta,l)} \left(\frac{t \chi^{R,NR}}{\abs{k} + \abs{\eta-kt}} \tilde{A}^3_k(\eta,l)\tilde{A}^3_{k^\prime}(\xi,l^\prime)  + A^3_k(\eta,l) A^3_{k^\prime}(\xi,l^\prime) \right) \\ 
& \quad\quad \times \left(\abs{k^\prime t^{\delta_1}} + t^{-1}\abs{\xi-k^\prime t} +  \abs{l^\prime}\right)  \abs{\widehat{Q^3_{k^\prime}}(\xi,l^\prime)_{Hi} Low(k-k^\prime,\eta-\xi,l-l^\prime)} d\eta d\xi \\ 
& = \mathcal{T}_{T;\neq \neq}^{X} + \mathcal{T}_{T;\neq \neq}^{Y}  + \mathcal{T}_{T;\neq \neq}^{Z}. 
\end{align*}
Note that we have used the inviscid damping on $U^2$ and the inequality $\norm{C}_{\G^{\lambda,\gamma}}\norm{U^3_{\neq}}_{\G^{\lambda,\beta-2}} \lesssim \epsilon^2 t \jap{\nu t^3}^\alpha \lesssim \epsilon t^{-1} \jap{\nu t^3}^{\alpha - 1}$ (see \S\ref{sec:AprioriBoot}) to reduce the power in front of the $\partial_Y - t\partial_X$ derivative.
Due to the gain in $\abs{k}$ at the critical times from $\chi^{R,NR}\abs{k}^{-1}$, we have
\begin{align*}
\mathcal{T}_{T;\neq \neq}^{X} & \lesssim \frac{\epsilon t^{1+\delta_1}}{\jap{\nu t^3}^{\alpha-1}}\norm{A^3 Q^3}_2^2 + \frac{\epsilon t^{\delta_1}}{\jap{\nu t^3}^{\alpha-1}}\norm{A^3 Q^3}_2\norm{\sqrt{-\Delta_L}A^3 Q^3}_2,
\end{align*}
which is consistent with Proposition \ref{prop:Boot} for $\epsilon$ and $\delta_1$ sufficiently small. 
Due to the extra $t^{-1}$, there are no losses in the $Y$ term and hence we have 
\begin{align*}
\mathcal{T}_{T;\neq \neq}^{Y} & \lesssim \frac{\epsilon}{\jap{\nu t^3}^{\alpha}}\norm{A^3 Q^3}_2\norm{\sqrt{-\Delta_L}A^3 Q^3}_2 \lesssim \frac{\epsilon^{1/2}}{\jap{\nu t^3}^{2\alpha}}\norm{A^3 Q^3}_2^2  + \epsilon^{3/2}\norm{\sqrt{-\Delta_L}A^3 Q^3}_2^2, 
\end{align*}
which is also consistent with Proposition \ref{prop:Boot}. 
For the $Z$ term we use $\chi^{R,NR} \abs{l^\prime} \lesssim \abs{kt} \chi^{R,NR}$ (it is here we are using that the losses only occur for $\abs{\partial_Z} \lesssim \abs{\partial_Y}$ due to the $+e^{\mu\abs{l}^{1/2}}$ correction) and \eqref{ineq:jNRBasic} to deduce 
\begin{align*}
\mathcal{T}_{T;\neq \neq}^{Z} & \lesssim \frac{\epsilon t^2}{\jap{\nu t^3}^{\alpha}}\norm{\left(\sqrt{\frac{\partial_t w}{w}}\tilde{A}^3 + \frac{\abs{\grad}^{s/2}}{\jap{t}^{s}}A^3\right) Q^3}_2^2 + \frac{\epsilon}{\jap{\nu t^3}^{\alpha}}\norm{A^3 Q^3}_2\norm{\sqrt{-\Delta_L}A^3 Q^3}_2, 
\end{align*}
which is consistent with Proposition \ref{prop:Boot}. 

\paragraph{Reaction term $\mathcal{T}_{R;0 \neq}$}
Turn first to the easiest, $\mathcal{T}_{R;0 \neq}$. 
By\eqref{ineq:Boot_ED3} and Lemma \ref{lem:ABasic}, we get (also noting \eqref{def:tildeU2}):  
\begin{align*} 
\mathcal{T}_{R;0 \neq} & \lesssim \frac{\epsilon \jap{t}^2}{\jap{\nu t^3}^{\alpha}}\sum_{k \neq 0}\int \abs{A^{3} \hat{Q_k^3} (\eta,l) A^{3}_{k}(\eta,l) \widehat{g}(\xi,l^\prime)_{Hi}} Low(k,\eta-\xi,l-l^\prime) d\xi d\eta \\
& \lesssim \frac{\epsilon}{\jap{\nu t^3}^{\alpha}}\norm{A^3 Q^3_{\neq}}_2 \norm{Ag}_2, 
\end{align*} 
which is consistent with Proposition \ref{prop:Boot}. 

\paragraph{Reaction terms $\mathcal{T}_{R;\neq 0}$} \label{sec:Q3TRneq0}
Next consider $\mathcal{T}_{R;\neq 0}$.
In fact, since $Q^3_0$ is the same order of magnitude as $Q^2_0$, and $A^3 \lesssim A^2$, this term can be treated in the same fashion as was done in \S\ref{sec:Q2TRneq0}. Hence, we omit the details for brevity.

\paragraph{Reaction term $\mathcal{T}_{R;\neq \neq}$} \label{sec:Q3TRneqneq}
Turn next to $\mathcal{T}_{R;\neq \neq}$. This includes terms isolated in \S\ref{sec:Toy} as leading order contributions to the \textbf{(3DE)} nonlinear interactions (see \cite{BGM15I} and \S\ref{sec:NonlinHeuristics}) and these terms are one of the places where we will need the regularity imbalances in $A^3$. 
As in \S\ref{sec:Q2TRneqneq} above,  we further decompose in terms of frequency: 
\begin{align*} 
\mathcal{T}_{R;\neq \neq} & \lesssim \frac{\epsilon \jap{t}^2}{\jap{\nu t^3}^\alpha} \sum_{k,k^\prime}\int \mathbf{1}_{k k^\prime(k-k^\prime) \neq 0} \abs{A^{3} \hat{Q}^3_k(\eta,l) A^{3}_{k}(\eta,l) \hat{U}_{k^\prime}^1 (\xi,l^\prime)_{Hi}} Low(k-k^\prime, \eta-\xi, l - l^\prime) d\eta d\xi \\   
& \quad +  \frac{\epsilon \jap{t}^{3}}{\jap{\nu t^3}^\alpha} \sum_{k,k^\prime}\int \mathbf{1}_{k k^\prime(k-k^\prime) \neq 0} \abs{A^{3} \hat{Q}^3_k(\eta,l) A^{3}_{k}(\eta,l) \hat{U}_{k^\prime}^2 (\xi,l^\prime)_{Hi}} Low(k-k^\prime, \eta-\xi, l - l^\prime) d\eta d\xi \\  
& \quad +  \frac{\epsilon \jap{t}^2}{\jap{\nu t^3}^{\alpha-1}} \sum_{k,k^\prime}\int \mathbf{1}_{k k^\prime(k-k^\prime) \neq 0} \abs{A^{3} \hat{Q}^3_k(\eta,l) A^{3}_{k}(\eta,l) \hat{U}_{k^\prime}^3 (\xi,l^\prime)_{Hi}} Low(k-k^\prime, \eta-\xi, l - l^\prime) d\eta d\xi \\   
& \quad + \frac{\epsilon^2 \jap{t}^2}{\jap{\nu t^3}^{\alpha}} \sum_{k,k^\prime}\int \mathbf{1}_{k k^\prime(k-k^\prime) \neq 0} \abs{A^{3} \hat{Q}^3_k(\eta,l)} A^{3}_{k}(\eta,l) \left(\abs{\hat{\phi_z}(\xi,l^\prime)_{Hi}} + \abs{\hat{\psi_z}(\xi,l^\prime)_{Hi}} + \abs{\hat{\phi_y}(\xi,l^\prime)_{Hi}}\right) \\ & \quad\quad\quad \times Low(k-k^\prime, \eta-\xi, l - l^\prime) d\eta d\xi \\   
& \quad + \frac{\epsilon^2 \jap{t}^{3}}{\jap{\nu t^3}^{\alpha}} \sum_{k,k^\prime}\int \mathbf{1}_{k k^\prime(k-k^\prime) \neq 0} \abs{A^{3} \hat{Q}^3_k(\eta,l) A^{3}_{k}(\eta,l)\left(\hat{\psi_y}(\xi,l^\prime)_{Hi}\right)} Low(k-k^\prime, \eta-\xi, l - l^\prime) d\eta d\xi \\   
& \quad + \mathcal{T}_{R;\neq \neq;\mathcal{R}} \\ 
& = \mathcal{T}_{R;\neq\neq}^{1} + \mathcal{T}_{R;\neq\neq}^2 + \mathcal{T}_{R;\neq\neq}^3  + \mathcal{T}_{R;\neq\neq}^{C1} + \mathcal{T}_{R;\neq\neq}^{C2} + \mathcal{T}_{R;\neq\neq;\mathcal{R}}. 
\end{align*} 
Consider $\mathcal{T}^2_{R;\neq \neq}$, which is one of the terms in the toy model. 
In particular, we will use the regularity imbalance between $Q^2$ and $Q^3$ to reduce the power of $t$. 
By \eqref{ineq:A3A2neqneq}, 
\begin{align*}
\mathcal{T}_{R;\neq\neq}^{2} & \lesssim \frac{\epsilon \jap{t}^3}{\jap{\nu t^3}^{\alpha}} \sum_{k,k^\prime}\int \mathbf{1}_{k,k^\prime,k-k^\prime \neq 0} \abs{\hat{Q}^3_k(\eta,l)} \frac{1}{\abs{k^\prime}^2 + \abs{l^\prime}^2 + \abs{\xi - k^\prime t}^2} \\   
& \quad\quad \times \left(\sum_{r} \chi^{NR,r}\frac{\abs{r} + \abs{\eta-tr}}{t} \tilde{A}^3_k(\eta,l) \tilde{A}^2_{k^\prime}(\xi,l^\prime) + \chi^{\ast;32} A^3_k(\eta,l) A^2_{k^\prime}(\xi,l^\prime) \right) \\  
& \quad\quad \times \jap{\frac{t}{\jap{\xi,l^\prime}}}^{-1} \abs{ \Delta_L \hat{U}_{k^\prime}^2 (\xi,l^\prime)_{Hi}} Low(k-k^\prime, \eta-\xi, l - l^\prime) d\eta d\xi.   
\end{align*} 
Therefore, by \eqref{ineq:A3ReacGain} followed by \eqref{ineq:triQuadHL}, 
\begin{align*}
\mathcal{T}_{R;\neq\neq}^{2} & \lesssim \frac{\epsilon \jap{t}^2}{\jap{\nu t^3}^{\alpha}}\norm{\left(\sqrt{\frac{\partial_t w}{w}}\tilde{A}^3 + \frac{\abs{\grad}^{s/2}}{\jap{t}^{s}}A^3\right) Q^3}_2 \norm{\left(\sqrt{\frac{\partial_t w}{w}}\tilde{A}^2 + \frac{\abs{\grad}^{s/2}}{\jap{t}^{s}}A^2\right) \Delta_L U^2_{\neq}}_2 \\ 
& \quad + \frac{\epsilon \jap{t}}{\jap{\nu t^3}^{\alpha}}\norm{A^3 Q^3}_2 \norm{A^2 \Delta_L U^2_{\neq}}_2,  
\end{align*}
which, by Lemmas \ref{lem:PEL_NLP120neq} and \ref{lem:SimplePEL}, is consistent with Proposition \ref{prop:Boot}. 
The term $\mathcal{T}^1_{R;\neq \neq}$ is treated in essentially the same way (matching the intuition that $Q^1 \sim t Q^2$ near the critical times) and is hence omitted. 

Next, turn to the treatment of $\mathcal{T}^3_{R;\neq \neq}$. 
By \eqref{ineq:A3A3neqneq} we have
\begin{align*}
\mathcal{T}_{R;\neq\neq}^{3} & \lesssim \frac{\epsilon \jap{t}^2}{\jap{\nu t^3}^{\alpha-1}} \sum_{k,k^\prime}\int \mathbf{1}_{k k^\prime(k-k^\prime) \neq 0} \abs{\hat{Q}^3_k(\eta,l)} \frac{1}{\abs{k^\prime}^2 + \abs{l^\prime}^2 + \abs{\xi - k^\prime t}^2} \\   
& \quad\quad \times \left(\chi^{R,NR}\frac{t}{\abs{k} + \abs{\eta-kt}}\tilde{A}^3_k(\eta,l)\tilde{A}^3_{k^\prime}(\xi,l^\prime) + \chi^{NR,R}\frac{\abs{k^\prime} + \abs{\eta - k^\prime t}}{t} \tilde{A}^3_k(\eta,l)\tilde{A}^3_{k^\prime}(\xi,l^\prime) \right. \\ &  \quad\quad  + \chi^{\ast;33} A^3_k(\eta,l) A^3_{k^\prime}(\xi,l^\prime) \bigg) \abs{A^3 \Delta_L \hat{U}_{k^\prime}^3 (\xi,l^\prime)_{Hi}} Low(k-k^\prime, \eta-\xi, l - l^\prime) d\eta d\xi, 
\end{align*}
which by \eqref{ineq:A33ReacGain} and \eqref{ineq:triQuadHL} is 
\begin{align*}
\mathcal{T}_{R;\neq\neq}^{3} & \lesssim \frac{\epsilon \jap{t}}{\jap{\nu t^3}^{\alpha-1}}\norm{A^3 Q^3}_2 \norm{A^3 \Delta_L U^3_{\neq}}_2, 
\end{align*}
which is consistent with Proposition \ref{prop:Boot} by Lemma \ref{lem:SimplePEL}. 

Finally, turn to $\mathcal{T}_{R;\neq\neq}^{C1}$ and $\mathcal{T}_{R;\neq\neq}^{C2}$. By Lemma \ref{lem:ABasic} and \eqref{ineq:quadHL} (and Lemma \ref{lem:CoefCtrl}), we have
\begin{align*} 
\mathcal{T}_{R;\neq\neq}^{C1}+ \mathcal{T}_{R;\neq\neq}^{C2}  & \lesssim \frac{\epsilon^2 \jap{t}^2}{\jap{\nu t^3}^{\alpha-1}}\norm{A^3 Q^3}_2 \norm{AC}_2  \lesssim \frac{\epsilon \jap{t}}{\jap{\nu t^3}^{\alpha-1}}\norm{A^3 Q^3}^2_2 + \frac{\epsilon^3 \jap{t}^3}{\jap{\nu t^3}^{\alpha-1}} \norm{AC}_2^2,  
\end{align*}
which is consistent with Proposition \ref{prop:Boot} for $\alpha$ sufficiently large, $\epsilon$ sufficiently small, and $\delta > 0$. 
 This completes the treatment of $\mathcal{T}_{R;\neq\neq}$ and hence all of $\mathcal{T}$.

\subsubsection{Dissipation error terms  $\mathcal{D}$} \label{sec:DEneqQ3}
Due to the quadratic growth at low frequencies of $Q^3$ and the much larger size of $\epsilon$, 
these terms cannot be treated as they were in \cite{BGM15I}. 
However, we will adapt a treatment from \cite{BMV14} which treats the critical times with increased precision. 
Recalling the dissipation error terms and the short-hand \eqref{def:G}, we have 
\begin{align*} 
\mathcal{D}_E & = \nu\sum_{k \neq 0}\int A^3 Q^3_k A^3_k\left(G_{yy}(\partial_{Y} - t\partial_X)^2 Q^3_k + G_{yz}(\partial_Y - t \partial_X)\partial_{Z}Q^3_k + G_{zz}\partial_{ZZ}Q^3_k \right) dV \\ 
& = \mathcal{D}_E^1 + \mathcal{D}_E^2  + \mathcal{D}_E^3.  
\end{align*} 
We will only treat $\mathcal{D}_E^1$; $\mathcal{D}_E^2$ and $\mathcal{D}_E^3$ are slightly easier and are hence omitted. 
As usual, we expand with a paraproduct: 
\begin{align*} 
\mathcal{D}_E^1 & = \nu\sum_{k \neq 0}\int A^3 Q^3_k A^3_k \left( (G_{yy})_{Hi} (\partial_{Y} - t\partial_X)^2 (Q^3_k)_{Lo} \right) dV + \nu\sum_{k \neq 0}\int A^3 Q^3_k A^3_k \left((G_{yy})_{Lo} (\partial_{Y} - t\partial_X)^2 (Q^3_k)_{Hi} \right) dV \\ 
& \quad + \nu\sum_{k \neq 0}\int A^3 Q^3_k A^3_k \left( \left(G_{yy}(\partial_{Y} - t\partial_X)^2 Q^3_k \right)_{\mathcal{R}} \right) dV \\ 
& = \mathcal{D}_{E;HL}^1 + \mathcal{D}_{E;LH}^1 + \mathcal{D}_{E;\mathcal{R}}^1. 
\end{align*} 
As in \S\ref{sec:DEneqQ2} and \cite{BGM15I}, we can control the latter two terms by the dissipation; we omit the details for brevity. 
Next, turn to the treatment of $\mathcal{D}_{E;HL}^1$. 
By Lemma \ref{lem:ABasic}, there is some $c = c(s) \in (0,1)$ such that 
\begin{align*} 
\mathcal{D}_{E;HL}^{1} & \lesssim \nu \sum_{k \neq 0} \int \abs{A^3 \widehat{Q^3_k}(\eta,l) A^3_k(\eta,l) \widehat{G_{yy}}(\xi,l^\prime)_{Hi} (\eta-\xi - tk)^2 \widehat{Q^3_k}(\eta-\xi,l-l^\prime)_{Lo}} d\eta d\xi \\ 
& \lesssim \nu \sum_{k \neq 0} \int\left[\chi_R + \chi_{NR;k} \right]\abs{A^3 \widehat{Q^3_k}(\eta,l)  \frac{1}{\jap{\xi,l^\prime} \jap{t}} A \widehat{G_{yy}}(\xi,l^\prime)_{Hi} } \\ 
& \quad\quad \times \abs{(\eta - \xi - tk)^2 e^{c\lambda\abs{k,\eta-\xi,l-l^\prime}^s} \widehat{Q^3_k} (\eta-\xi,l-l^\prime)_{Lo} } d\eta d\xi, \\ 
& = \mathcal{D}_{E;HL}^{1;R} + \mathcal{D}_{E;HL}^{1;NR}, 
\end{align*}
where $\chi_{R;k} = \mathbf{1}_{t \in \I_{k,\eta}\cap \I_{k,\xi}} \mathbf{1}_{\abs{l} \leq \frac{1}{5}\abs{\eta}} \mathbf{1}_{\abs{l^\prime} \leq \frac{1}{5}\abs{\xi}}$ and $\chi_{NR;k} = 1-\chi_{R;k}$ is defined in \eqref{def:chiNR}.  
For the non-resonant term $\mathcal{D}_{E;HL}^{1;NR}$, since $\jap{t} \lesssim (\abs{k} + \abs{l} + \abs{\eta-kt})\jap{\eta-\xi,l-l^\prime}$ 
and $\abs{\eta-\xi-kt} \lesssim \jap{t}\jap{k,\eta-\xi}$ on the support of the integrand by \eqref{ineq:basicNR}, 
\begin{align*} 
\mathcal{D}_{E;HL}^{1;NR} & \lesssim \nu \sum_{k \neq 0} \int \chi_{NR;k} \abs{\sqrt{-\Delta_L} A^3 \widehat{Q^3_k}(\eta,l)  \frac{1}{\jap{\xi,l^\prime} \jap{t}^2} A \widehat{G_{yy}}(\xi,l^\prime)_{Hi}} \\ & \quad\quad \times \abs{(\eta - \xi - tk)^2 e^{c\lambda\abs{k,\eta-\xi,l-l^\prime}^s} \widehat{Q^3_k} (\eta-\xi,l-l^\prime)_{Lo} } d\eta d\xi, \\ 
& \lesssim \nu t  \norm{\jap{\grad}^{-1}AG}_{2}\norm{\sqrt{-\Delta_L} A^3 Q^3}_2 \jap{t}^{-2}\norm{\sqrt{-\Delta_L} Q^3_{\neq}}_{\G^{\lambda}}. 
\end{align*} 
It follows by \eqref{ineq:AnuDecay}, Lemma \ref{lem:CoefCtrl} and \eqref{ineq:Boot_ACC2}, we have
\begin{align*} 
\mathcal{D}_{E;HL}^{1;NR} & \lesssim \frac{\nu t}{\jap{\nu t^3}^{\alpha}} \norm{AC}_{2}\norm{\sqrt{-\Delta_L} A^3 Q^3}_2\norm{\sqrt{-\Delta_L} A^{\nu;3} Q^3}_2 \\ 
& \lesssim  \frac{\nu \epsilon t^2 \abs{\log c_0\epsilon^{-1}} }{\jap{\nu t^3}^{\alpha}} \norm{\sqrt{-\Delta_L} A^3 Q^3}_2\norm{\sqrt{-\Delta_L} A^{\nu;3} Q^3}_2 \\ 
& \lesssim  \epsilon^{\delta/4} \nu \norm{\sqrt{-\Delta_L} A^3 Q^3}_2^2 + \epsilon^{\delta/4} \nu \norm{\sqrt{-\Delta_L} A^{\nu;3} Q^3}_2^2,
\end{align*}
which is consistent with Proposition \ref{prop:Boot} by the bootstrap hypotheses for $\epsilon$ sufficiently small and $\delta > 0$. 
For the resonant term $\mathcal{D}_{E;HL}^{1;R}$ we have by Lemma \ref{lem:dtw} and  \eqref{ineq:quadHL} (also using that $A(\xi,l^\prime) \approx \tilde{A}(\xi,l^\prime)$ on the support of the integrand due to the definition of $\chi^{R}$ and $\abs{\eta-\xi-kt} \lesssim \jap{t} \jap{k,\eta-\xi}$),   
\begin{align*} 
\mathcal{D}_{E;HL}^{1;R} & \lesssim \nu \sum_{k \neq 0} \int \chi_{R;k} \left(\abs{k} + \abs{\eta-kt}\right)^{1/2}\abs{A^3 \widehat{Q^3_k}(\eta,l)  \frac{1}{\jap{\xi,l^\prime} \jap{t}} \sqrt{\frac{\partial_t w}{w}}\tilde{A}\widehat{G_{yy}}(\xi,l^\prime)_{Hi}} \\ & \quad\quad \times \jap{t}^{3/2}\jap{k,\eta-\xi}^{5/2}\abs{(\eta -\xi - tk)^{1/2} e^{c\lambda\abs{k,\eta-\xi,l-l^\prime}^s} \widehat{Q^3_k}(\eta-\xi,l-l^\prime)_{Lo}}  d\eta d\xi \\ 
& \lesssim \nu t^{1/2} \norm{\sqrt{-\Delta_L} A^3 Q^3}_2^{1/2} \norm{A^3 Q^3_{\neq}}_2^{1/2} \norm{Q^3_{\neq}}_{\G^\lambda}^{1/2}\norm{\sqrt{-\Delta_L} Q^3_{\neq}}_{\G^\lambda}^{1/2} \norm{\sqrt{\frac{\partial_t w}{w}}\jap{\grad}^{-1}\tilde{A}G_{yy}}_2.
\end{align*} 
Then, by \eqref{ineq:AnuDecay}, \eqref{def:A}, and \eqref{def:Anu}, followed by Lemma \ref{lem:CoefCtrl}, for some small $\delta^\prime > 0$ 
\begin{align*}  
& \lesssim \frac{\nu t^{5/2}}{\jap{\nu t^3}^{\alpha/2}} \norm{\sqrt{-\Delta_L} A^3 Q^3}_2 \norm{A^3 Q^3_{\neq}}_2^{1/2}\norm{A^{\nu;3} Q^3}_{2}^{1/2} \norm{\sqrt{\frac{\partial_t w}{w}}\jap{\grad}^{-1}\tilde{A}G_{yy}}_2 \\ 
& \lesssim \frac{\nu^{1/2} t \epsilon}{\jap{\nu t^3}^{\alpha/2-1}} \norm{\sqrt{-\Delta_L} A^3 Q^3}_2 \norm{\sqrt{\frac{\partial_t w}{w}}\jap{\grad}^{-1}\tilde{A}G_{yy}}_2 \\ 
& \lesssim \epsilon^{\delta^\prime}\nu\norm{\sqrt{-\Delta_L} A^3 Q^3}_2^2 +  \frac{\epsilon^{2-\delta^\prime} \jap{t}^4}{\jap{\nu t^3}^{\alpha-2}} \left(\frac{1}{\jap{t}^2}\norm{\left(\sqrt{\frac{\partial_t w}{w}}\tilde{A} + \frac{\abs{\grad}^{s/2}}{\jap{t}^{s}}A\right)C}_2^2 \right),  
\end{align*} 
which is now consistent with Proposition \ref{prop:Boot} for $\delta^\prime$ and $\epsilon$ small. 
Note that the hypothesis $\epsilon \lesssim \nu^{2/3+\delta}$ with $\delta > 0$ is essentially sharp for controlling this term.    
This completes the treatment of $\mathcal{D}_{E}^{1}$ and hence of the dissipation error terms.

\subsubsection{Linear stretching term $LS3$} \label{sec:LS30_Hi}
First separate into two parts (to be sub-divided further below), 
\begin{align*} 
LS3 & = -2\int A^{3} Q^3 A^{3}\partial_X(\partial_Y - t\partial_X) U^3  dV - 2\int A^{3} Q^3 A^{3} \partial_X \left(\psi_y(\partial_Y - t\partial_X) + \phi_y\partial_Z \right) U^3 dV \\ 
& = LS3^0 + LS3^{C}. 
\end{align*} 

\paragraph{Treatment of $LS3^C$} \label{sec:LS3C}
Expand with a paraproduct, 
\begin{align*} 
LS3^{C} & = -2\int A^{3} Q^3 A^{3} \partial_X \left((\psi_y)_{Hi}(\partial_Y - t\partial_X) + (\phi_y)_{Hi}\partial_Z \right)\left(U^3\right)_{Lo} dV \\ 
& \quad - 2\int A^{3} Q^3 A^{3} \partial_X \left( (\psi_y)_{Lo}(\partial_Y - t\partial_X) + (\phi_y)_{Lo}\partial_Z \right) \left(U^3\right)_{Hi} dV \\ 
& \quad - 2\int A^{3} Q^3 A^{3} \partial_X \left( \left(\psi_y(\partial_Y - t\partial_X)  + \phi_y\partial_Z \right) U^3\right)_{\mathcal{R}} dV \\ 
& = LS3^{C}_{HL} + LS3^{C}_{LH} + LS3^{C}_{\mathcal{R}}.  
\end{align*}
The main issue is $LS3^{C}_{HL}$, where the coefficients appear in `high frequency', so turn to this term first.    
By Lemma \ref{lem:ABasic}, \eqref{ineq:triQuadHL}, and Lemma \ref{lem:CoefCtrl},  
\begin{align*} 
LS3^C_{HL} & \lesssim \frac{\epsilon\jap{t}}{\jap{\nu t^3}^{\alpha}}\sum_{k \neq 0}\int \abs{A^{3} \widehat{Q^3_k}(\eta,l)}\frac{1}{\jap{\xi,l^\prime}\jap{t}}A\left( \abs{\widehat{\psi_y}(\xi,l^\prime)} +  \abs{\widehat{\phi_y}(\xi,l^\prime)}\right) Low(k,\eta-\xi,l-l^\prime) d\xi \\ 
& \lesssim \frac{\epsilon^{1/2}}{\jap{\nu t^3}^{\alpha}}\norm{A^3 Q^3}^2 +  \frac{\epsilon^{3/2}}{\jap{\nu t^3}^{\alpha}}\norm{AC}^2_2, 
\end{align*}
which is consistent with Proposition \ref{prop:Boot} by \eqref{ineq:Boot_ACC2} for $\epsilon$ sufficiently small and $\delta > 0$ (hence $\epsilon \lesssim \nu^{2/3+\delta}$ is essentially sharp here). 

Turn next to the $LS3^C_{LH}$, which is reminiscent of $NLP(1,3,0,\neq)$ in \S\ref{sec:NLP213}. 
Indeed, by Lemma \ref{lem:CoefCtrl}, \eqref{ineq:AiPartX}, and \eqref{ineq:triQuadHL} we have,   
\begin{align} 
LS3^C_{LH} & \lesssim \epsilon\jap{t} \sum \int \abs{ A^3 \widehat{Q^3_k}(\eta,l) A^3_k(\eta,l) \frac{k\abs{\xi - kt,l^\prime}}{k^2 + (l^\prime)^2 + \abs{\xi-kt}^2} \Delta_L \widehat{U^3_k}(\xi,l^\prime)} Low(\eta-\xi,l-l^\prime) d\eta dx \nonumber \\ 
& \lesssim c_{0}\norm{\left(\sqrt{\frac{\partial_t w}{w}}\tilde{A}^3 + \frac{\abs{\grad}^{s/2}}{\jap{t}^{s}}A^3 \right) Q^3}_2 \norm{\left(\sqrt{\frac{\partial_t w}{w}}\tilde{A}^3 + \frac{\abs{\grad}^{s/2}}{\jap{t}^{s}}A^3 \right) \Delta_L U^3_{\neq}}_2 \nonumber  \\ & \quad + \frac{\epsilon}{\jap{t}}\norm{\sqrt{-\Delta_L}A^3 Q^3}_2 \norm{A^3 \Delta_L U^3_{\neq}}_2, \label{ineq:LS3CLH} 
\end{align} 
which, after the application of Lemmas \ref{lem:PEL_NLP120neq} and \ref{lem:SimplePEL}, is consistent with Proposition \ref{prop:Boot} for $\epsilon$ and $c_0$ sufficiently small. 

The remainder $LS3^C_{\mathcal{R}}$ follows easily and is hence omitted. 

\paragraph{Leading order term, $LS3^0$} \label{sec:LS30}
As in \cite{BGM15I}, the $2$ in the leading order term is crucially important and cannot be altered; it is the origin of the quadratic growth of $Q^3$ at low (relative to time) frequencies and any alteration would cause faster growth and a collapse of the bootstrap. 
For this reason we have to treat this term more precisely. 
Begin by isolating the leading order contribution: by the definition of $\Delta_t$ (see \eqref{def:Deltat} and the shorthand \eqref{def:G}), 
\begin{align} 
LS3^0 & = -2\int A^{3} Q^3 A^{3}\partial_X(\partial_Y - t\partial_X) \Delta_{L}^{-1} \Delta_L \Delta_t^{-1}Q^3  dV \nonumber \\
& =  -2\int A^{3} Q^3 A^{3}\partial_X(\partial_Y - t\partial_X) \Delta_{L}^{-1} \left( Q^3 - G_{yy}(\partial_Y - t\partial_X)^2\Delta_t^{-1}Q^3  \right. \nonumber \\ 
& \quad \left. \quad - G_{yz}\partial_Z (\partial_Y - t\partial_X) U^3 - G_{zz}\partial_{ZZ}U^3 - \Delta_t C^1 (\partial_Y - t\partial_X) U^3 - \Delta_t C^2 \partial_Z U^3 \right)  dV \nonumber \\  
& = LS3^{0;0} +  \sum_{j = i}^5 LS3^{0;Ci}.      \label{eq:LS30} 
\end{align} 

The treatment of $LS3^{0;0}$ is essentially the same as in \cite{BGM15I}. 
The only minor difference is that one must separate high frequencies in $Z$ from high frequencies of $Y$ when using $CK_w^3$. 
Due to the uniform ellipticity in $Z$, this does not make a major difference and this contribution can be absorbed by the existing terms. 
Divide into long-time and short-time regimes 
\begin{align*}
LS3^{0;0} & = -2\int \left[\mathbf{1}_{t \leq 2\abs{\eta}}  + \mathbf{1}_{t > 2\abs{\eta}}  \right] \abs{A^{3} \widehat{Q^3_k}(\eta,l)}^2 \frac{k(\eta-kt)}{k^2 + l^2 + \abs{\eta-kt}^2} d\eta \\ 
& = LS3^{0;0,ST} + LS3^{0;0, LT}. 
\end{align*}
The long-time regime is treated the same as in \cite{BGM15I} (see therein for a proof), and hence for some universal $K > 0$:  
\begin{align*}
LS3^{0;0,LT} & \leq CK^3_{L} + \frac{\delta_\lambda}{10\jap{t}^{3/2}}\norm{\abs{\grad}^{s/2} A^{3}Q^3}_2^2 +  \frac{K}{\delta_\lambda^{\frac{1}{2s-1}}\jap{t}^{3/2}}\norm{ A^{3}Q^3}_2^2, 
\end{align*}
which, for $\delta_\lambda$ sufficiently small and $K_{H3}$ sufficiently large, is consistent with Proposition \ref{prop:Boot}. 
For the short-time regime we apply \eqref{ineq:CKwLS} to deduce for some $K > 0$,  
\begin{align*}
LS3^{0;0,ST} & \lesssim \kappa^{-1}\norm{\sqrt{\frac{\partial_t w}{w}} \tilde{A}^{3} Q^3}_2^2 + \frac{1}{\jap{t}^{3/2}}\norm{\abs{\grad}^{1/4} A^3 Q^3}_2^2 \\ 
& \leq K\kappa^{-1}\norm{\sqrt{\frac{\partial_t w}{w}} \tilde{A}^{3} Q^3}_2^2 + \frac{\delta_\lambda}{10\jap{t}^{3/2}} \norm{\abs{\grad}^{s/2} A^3 Q^3}_2^2 + \frac{K}{\delta_\lambda^{\frac{1}{2s-1}}\jap{t}^{3/2}} \norm{A^3 Q^3}_2^2,  
\end{align*}
which is consistent with Proposition \ref{prop:Boot} for $\kappa$ sufficiently large, $\delta_\lambda$ sufficiently small (so that the first term is absorbed by $CK_\lambda^3$) and $K_{H3}$ is sufficiently large. 

Consider the first error term in \eqref{eq:LS30}, $LS3^{0;C1}$; here we will need a more refined treatment than in \cite{BGM15I}.  
Expanding $LS3^{0;C1}$ gives 
\begin{align*} 
LS3^{0;C1} & = -2\int A^{3} Q^3 A^{3}\partial_X(\partial_Y - t\partial_X) \Delta_{L}^{-1} \left( (G_{yy})_{Hi} (\partial_Y - t\partial_X)^2 U^3_{Lo}\right) dV \\ 
& \quad -2\int A^{3} Q^3 A^{3}\partial_X(\partial_Y - t\partial_X) \Delta_{L}^{-1} \left( (G_{yy})_{Lo} (\partial_Y - t\partial_X)^2 U^3_{Hi}\right) dV \\ 
& \quad -2\int A^{3} Q^3 A^{3}\partial_X(\partial_Y - t\partial_X) \Delta_{L}^{-1} \left( (G_{yy}) (\partial_Y - t\partial_X)^2 U^3 \right)_{\mathcal{R}} dV \\ 
& = LS3^{0;C1}_{HL} + LS3^{0;C1}_{LH} + LS3^{0;C1}_{\mathcal{R}}.  
\end{align*} 
The most interesting contribution is the $HL$ term. 
By \eqref{ineq:AprioriUneq} and Lemma \ref{lem:ABasic}, we have 
\begin{align*}
LS3^{0;C1}_{HL} & \lesssim \frac{\epsilon\jap{t}^2}{\jap{\nu t^3}^{\alpha}}\sum_{l,l^\prime,k\neq 0}\int \abs{A^{3} \widehat{Q^3_k}(\eta,l)  \frac{\abs{\eta-kt}}{\left(k^2 + l^2 + \abs{\eta-kt}^2\right) \jap{t} \jap{\xi,l^\prime} }A\widehat{G_{yy}}(\xi,l^\prime)_{Hi}} \\ & \quad\quad \times Low(k,\eta-\xi,l-l^\prime) d\xi d\eta. 
\end{align*}
Therefore, by \eqref{ineq:AikDelLNoD} followed by \eqref{ineq:triQuadHL}, and Lemma \ref{lem:CoefCtrl}, we have 
\begin{align*}
LS3^{0;C1}_{HL} & \lesssim  \frac{\epsilon t}{\jap{\nu t^3}^\alpha } \norm{\left(\sqrt{\frac{\partial_t w}{w}}\tilde{A}^{3} + \frac{\abs{\grad}^{s/2}}{\jap{t}^{s}}A^{3}\right) Q^3}_2 \norm{\left(\sqrt{\frac{\partial_t w}{w}}\tilde{A} + \frac{\abs{\grad}^{s/2}}{\jap{t}^{s}}A\right) \jap{\grad}^{-1} G_{yy}}_2 \\ 
& \quad + \frac{\epsilon}{\jap{\nu t^3}} \norm{A^{3}Q^3}_2 \norm{\jap{\grad}^{-1}AG_{yy}}_2 \\ 
& \lesssim \frac{\epsilon t^2}{\jap{\nu t^3}^{\alpha}} \norm{\left(\sqrt{\frac{\partial_t w}{w}}\tilde{A}^3 + \frac{\abs{\grad}^{s/2}}{\jap{t}^{s}}A^3\right) Q^3}_2^2 + \frac{\epsilon}{\jap{\nu t^3}^{\alpha}} \norm{\left(\sqrt{\frac{\partial_t w}{w}}\tilde{A} + \frac{\abs{\grad}^{s/2}}{\jap{t}^{s}}A\right) C}_2^2 \\ 
& \quad + \frac{\epsilon \jap{t}}{\jap{\nu t^3}}\left(\norm{A^{3}Q^3}^2_2 +  \left(\frac{1}{\jap{t}}\norm{AC}_2\right)^2 \right). 
\end{align*} 
This is consistent with Proposition \ref{prop:Boot} by \eqref{ineq:Boot_ACC2} for $\delta > 0$ and $\epsilon$ sufficiently small. 

Turn to $LS3^{0;C1}_{LH}$, which by Lemma \ref{lem:CoefCtrl}, 
\begin{align*} 
LS3^{0;C1}_{LH} \lesssim \epsilon \jap{t}  \sum_{k,l}\int \abs{A^{3} \widehat{Q^3_k}(\eta,l) A^{3}_k(\eta,l) \frac{\abs{k}\abs{\eta-kt}}{k^2 + l^2 + \abs{\eta-kt}^2} \left(\Delta_L U^3_k\right)_{Hi}(\xi,l^\prime)} Low(\eta-\xi,l-l^\prime) d \eta d\xi.   
\end{align*}  
We can treat this term roughly like $NLP(1,3,0,\neq)$ on $Q^2$ in \S\ref{sec:NLP213}: by \eqref{ineq:AiPartX} and \eqref{ineq:triQuadHL},  
\begin{align*} 
LS3^{0;C1}_{LH} & \lesssim c_{0}\norm{\left(\sqrt{\frac{\partial_t w}{w}} \tilde{A}^{3} + \frac{\abs{\grad}^{s/2}}{\jap{t}^s}A^{3}\right) Q^3}_2^2 + c_{0}\norm{\left(\sqrt{\frac{\partial_t w}{w}} \tilde{A}^{3} + \frac{\abs{\grad}^{s/2}}{\jap{t}^s}A^{3}\right) \Delta_L U^3_{\neq}}_2^2 \\ 
& \quad + \epsilon\norm{A^3 Q^3_{\neq}}_2\norm{\Delta_L A^3 U^3_{\neq}}_2.
\end{align*}
By Lemmas \ref{lem:PEL_NLP120neq} and  \ref{lem:SimplePEL}, this is consistent with Proposition \ref{prop:Boot} by the bootstrap hypotheses. 
 
The remainder $LS3^{0;C1}_{\mathcal{R}}$ is straightforward and is omitted for the sake of brevity. 
This completes the first error term in \eqref{eq:LS30}, $LS3^{0;C1}$. 

The second and third error terms, $LS3^{0;C2}$ and $LS3^{0;C3}$, are similar to $LS3^{0;C1}$ but slightly easier, and yield similar contributions. 
Hence, we omit the treatment for brevity.   
 
The last two coefficient errors, $LS3^{0;C4}$ and $LS3^{0;C5}$, are also similar but require a slight adjustment. 
In particular, due to the two derivatives on the coefficients, we cannot gain any powers of time from $A^3$ as in the treatment of 
$LS3^{0;C1}$ above. However, this is balanced by the fact that there is one less power of $\partial_Y - t\partial_X$. Hence, the above treatment adapts in a straightforward manner and so we omit the details for brevity. 
 
This concludes the treatment of the linear stretching term $LS3$.

\subsubsection{Linear pressure term $LP3$} \label{ineq:LP3_Hi} 
As in $LS3$, we first separate the coefficient corrections and expand with a paraproduct:
\begin{align*} 
LP3 & = 2 \int A^{3} Q^3 A^{3} \partial_Z \partial_X U^2 dV + 2 \int A^{3} Q^3 A^{3} \left((\psi_{z})_{Lo}(\partial_Y - t\partial_X) + (\phi_z)_{Lo}\partial_Z \right)  \left(\partial_X U^2\right)_{Hi} dV \\
& \quad + 2\int A^{3} Q^3 A^{3} \left((\psi_{z})_{Hi}(\partial_Y - t\partial_X) + (\phi_z)_{Hi}\partial_Z \right)\left( \partial_X U^2\right)_{Lo} dV \\ 
& \quad + 2\int A^{3} Q^3 A^{3} \left( \left(\psi_{z}(\partial_Y - t\partial_X) + \phi_z\partial_Z \right) \partial_X  U^2\right)_{\mathcal{R}} dV \\ 
& = LP3^{0} + LP3^{C}_{LH} + LP3^{C}_{HL} + LP3^{C}_{\mathcal{R}}. 
\end{align*} 

\paragraph{Treatment of $LP3^{0}$}
As in \cite{BGM15I}, from \eqref{def:wL}, 
\begin{align*} 
LP3^{0} \leq \frac{1}{2\kappa}\norm{\sqrt{\frac{\partial_t w_L}{w_L}} A^{3} Q^3}_2^2 + \frac{1}{2\kappa}\norm{\sqrt{\frac{\partial_t w_L}{w_L}}A^{3} \Delta_L U^2_{\neq}}^2_2.    
\end{align*}
The first term is absorbed by the $CK_{wL}^3$ term in \eqref{ineq:AQ3_Evo}. For the latter term we apply Lemma \ref{lem:QPELpressureI}, which yields contributions which are integrable or are absorbed by the $CK$ terms. 

\paragraph{Treatment of $LP3^C$}
Turn first to $LP3^{C}_{HL}$, in which the coefficient is in `high frequency'. 
By \eqref{ineq:AprioriUneq}, Lemma \ref{lem:ABasic}, \eqref{ineq:triQuadHL}, and Lemma \ref{lem:CoefCtrl}, we have 
\begin{align*} 
LP3^{C}_{HL} &\lesssim \frac{\epsilon}{\jap{\nu t^3}^{\alpha}}\sum_{k,l}\int \abs{A^{3} \widehat{Q^3_k}(\eta,l)} \frac{1}{\jap{\xi,l^\prime}\jap{t}} A\left(\abs{\widehat{\psi_z}(\xi,l)_{Hi}} + \jap{t}^{-1}\abs{\widehat{\phi_z}(\xi,l)_{Hi}} \right) Low(k,\eta-\xi,l-l^\prime) d\eta d\xi \\
& \lesssim  \frac{\epsilon}{\jap{t} \jap{\nu t^3}^{\alpha}}\norm{A^{3}Q^3}_2 \norm{A C}_2,
\end{align*}  
which is consistent with Proposition \ref{prop:Boot} for $\epsilon$ sufficiently small.

Next turn to $LP3^{C}_{LH}$, which by Lemma \ref{lem:CoefCtrl}  and \eqref{ineq:ABasic}, is controlled via
\begin{align*} 
LP3^{C}_{LH}  & \lesssim \epsilon \jap{t} \sum_{k \neq 0,l}\int_\eta  \abs{A^{3} \widehat{Q^3_k}(\eta,l)} \frac{\abs{k}\abs{\xi-kt,l^\prime}}{\left(k^2 + (l^\prime)^2 + \abs{\xi-kt}^2\right) \jap{\frac{t}{\jap{\xi,l^\prime}}}} \abs{A^{2}\widehat{\left(\Delta_{L} U^2 \right)}_k(\xi,l^\prime)} Low(\eta-\xi,l-l^\prime) d\eta. 
\end{align*} 
We may treat this in a manner similar to the canonical $NLP(1,3,0,\neq)$ on $Q^2$ in \S\ref{sec:NLP213}.
Indeed, by \eqref{ineq:AiPartX} and \eqref{ineq:triQuadHL} we have,  
\begin{align*} 
LP3^{C}_{HL} & \lesssim c_{0}\norm{\left(\sqrt{\frac{\partial_t w}{w}} \tilde{A}^{3} + \frac{\abs{\grad}^{s/2}}{\jap{t}^s}A^{3}\right) Q^3}_2^2 + c_{0}\norm{\left(\sqrt{\frac{\partial_t w}{w}} \tilde{A}^{2}+ \frac{\abs{\grad}^{s/2}}{\jap{t}^s}A^{2}\right) \Delta_L U^2_{\neq}}_2^2 \\ 
& \quad + \epsilon\norm{A^3 Q^3_{\neq}}_2\norm{\Delta_L A^2 U^2_{\neq}}_2, 
\end{align*}  
which by Lemmas \ref{lem:PEL_NLP120neq} and  \ref{lem:SimplePEL}, is consistent with Proposition \ref{prop:Boot} by the bootstrap hypotheses. 
The remainder term $LP3^{\mathcal{R}}$ is straightforward and is omitted for the sake of brevity; this completes the treatment of $LP3$. 

\section{High norm estimate on $Q^1_0$} \label{sec:Q1Hi1}
As in \cite{BGM15I}, the improvement of \eqref{ineq:Boot_Q1Hi1} proceeds slightly differently than most other estimates we are making. 
The goal is to obtain exactly $O(\epsilon \jap{t})$ growth, rather than any logarithmic losses in $t$ or $\epsilon$. 
We will deduce an estimate like
\begin{align} 
\frac{1}{2}\frac{d}{dt}\norm{A^{1}Q^1_0}^2_2  & \leq -\frac{t}{\jap{t}^2}\norm{A^1 Q^1_0}_2^2 + \frac{1}{\jap{t}}\norm{A^{1}Q^1_0}_2\norm{A^2 Q^2_0}_2 + c_{0}\epsilon^2\mathcal{I}(t) \nonumber \\  
& \leq -\frac{t}{\jap{t}^2}\norm{A^1Q^1_0}_2^2 +  \frac{4\epsilon}{\jap{t}}\norm{A^{1}Q^1_0}_2 + c_{0}\epsilon^2\mathcal{I}(t),  \label{ineq:basic_Q1}
\end{align} 
where $\int_1^{c_0 \epsilon^{-1}} \mathcal{I}(t) dt = O(K_B)$ uniformly in $\epsilon$. 
This yields the desired bound by comparing $X(t) = \norm{A^1 Q^1_0(t)}_2^2$ to the super-solution of the inequality given by $Y(t) = \max(\frac{3}{2}K_{H10}, 6\sqrt{2})\epsilon + c_{0}\epsilon^2 \int_{1}^t\mathcal{I}(\tau) d\tau$ and choosing $c_0$ sufficiently small. 
Indeed (for $K_{H10}$ sufficiently large), 
\begin{align*} 
\partial_t Y(t) = c_{0}\epsilon^2\mathcal{I}(t) \geq \left(-\frac{t}{\jap{t}^2}Y(t) +  \frac{4\epsilon}{\jap{t}}\right) Y(t) + c_{0}\epsilon^2\mathcal{I}(t), 
\end{align*}
as the additional two terms on the RHS sum to something negative by the choice of $Y(t)$ (recall $t \geq 1$).
By Lemma \ref{lem:BootStart}, $X(1) < Y(1)$, and therefore by comparison and \eqref{ineq:basic_Q1},  $X(t) \leq Y(t)$ for all $t \in [1,T_\star)$. 

Therefore, improving \eqref{ineq:Boot_Q1Hi1} reduces to proving an estimate like \eqref{ineq:basic_Q1}.
From the evolution equation for $Q^1_0$, using enumerations analogous to \eqref{def:Q2Enums} and \eqref{def:Q3Enums} above,  
\begin{align} 
\frac{1}{2}\frac{d}{dt}\norm{A^{1} Q^1_0}_2^2 & \leq \dot{\lambda}\norm{\abs{\grad}^{s/2}A^{1} Q_0^1}_2^2 - \norm{\sqrt{\frac{\partial_t w}{w}} \tilde{A}^{1} Q_0^1}_2^2 - \frac{t}{\jap{t}^2}\norm{A^1 Q_0^1}_2^2 \nonumber \\ 
& \quad - \int A^{1}Q^1_0 A^1 Q^2_0 dV + \nu  \int A^{1} Q^{1}_0 A^{1} \left(\tilde{\Delta_t} Q^1_0\right) dV 
-  \int A^{1} Q^1_0 A^{1}\left(\tilde U_0 \cdot \grad Q^1_0\right) dV \nonumber \\  
& \quad -  \int A^{1} Q^1_0 A^{1} \left(Q^j_0 \partial_j^t U^1_0 + 2\partial_i^t U^j_0 \partial_{ij}^t U^1_0\right) dV \nonumber \\ 
& \quad -  \int A^{1} Q^1_0 A^{1} \left(Q^j_{\neq} \partial_j^t U^1_{\neq} + 2\partial_i^t U^j_{\neq} \partial_{ij}^t U^1_{\neq}\right)_0 dV \nonumber \\ 
& = -\mathcal{D}Q_0^1 + CK_L^1  + LU + \mathcal{D}_E + \mathcal{T}_0 + NLS1(j,0) + NLS2(i,j,0) + \mathcal{F}, \label{eq:AevoQ10}
\end{align} 
where we are denoting 
\begin{align*}
\mathcal{D}_E = \nu  \int A^{1} Q^{1}_0 A^{1} \left((\tilde{\Delta_t} - \Delta_L) Q^1_0\right) dV. 
\end{align*}
As above in \eqref{def:Q2Enums} and \eqref{def:Q3Enums}, we have decomposed the nonlinear terms based on the heuristics in \S\ref{sec:NonlinHeuristics}.   

Notice that, due to the $X$ average, the linear pressure and stretching terms both disappear along with the nonlinear pressure. 
Hence the main growth of $Q^1_0$ is caused by the lift-up effect term, $LU$.
This term is treated by Cauchy-Schwarz:
\begin{align*}
LU \leq \jap{t}^{-1} \norm{A^{1} Q^1_0}_2 \norm{A^{2} Q^2_0}_2,   
\end{align*}  
which, together with \eqref{ineq:Boot_Q2Hi} is responsible for the leading order linear term in \eqref{ineq:basic_Q1}. 
It remains to see how to control the nonlinear terms.

\subsection{Transport nonlinearity} 
By Lemma \ref{lem:AAiProd} (with \eqref{ineq:Boot_Ag} and \eqref{ineq:Boot_gLow}), 
\begin{align*}
\mathcal{T}_{0} & \lesssim \norm{g}_{\G^{\lambda,\gamma}}\norm{A^1 Q_0^1}_2\norm{\grad A^1 Q_0^1}_2 + \norm{Ag}_2 \norm{A^1 Q_0^1}_2^2 \lesssim \epsilon^{3/2}\norm{\grad A^1 Q_0^1}^2_2 +  \left(\frac{\epsilon^{1/2}}{\jap{t}^{4}} + \epsilon\right)\norm{A^1 Q_0^1}^2_2, 
\end{align*}
which is consistent with Proposition \ref{prop:Boot} for $c_0$ and $\epsilon$ sufficiently small.

\subsection{Nonlinear stretching}
This term is the analogue of those treated in \S\ref{sec:NLPSQ20} and corresponds to the nonlinear stretching effects on $Q_0^1$ involving only zero frequencies (the pressure disappears due to the $X$ average). 
The treatment of this term can be made in the same way as the corresponding treatment for $Q^2$ in \S\ref{sec:TransQ20} and \S\ref{sec:NLPSQ20}, although it is slightly easier here as we are permitting growth on $Q_0^1$, unlike $Q_0^2$ (in particular $A_0^1 \approx \jap{t}^{-1}A^2_0$). 
Hence, these contributions are omitted for brevity. 

\subsection{Forcing from non-zero frequencies}
In this section we consider interactions of type \textbf{(F)} (see \S\ref{sec:NonlinHeuristics}): the forcing of non-zero frequencies directly back onto $Q_0^1$. 
Recall from \eqref{eq:XavgCanc},
\begin{align*} 
\mathcal{F} & = -\int A^1 Q^1 A^1 \left(\partial_Y^t \partial_Y^t \partial_Y^t \left(U^2_{\neq} U^1_{\neq}\right)_{0} + \partial_Y^t \partial_Y^t \partial_Z^t \left(U^3_{\neq} U^1_{\neq}\right)_{0} \right) dV \\
& \quad - \int A^1 Q^1 A^1\left(\partial_Z^t \partial_Z^t \partial_Z^t \left(U^3_{\neq} U^1_{\neq}\right)_{0} + \partial_Z^t \partial_Z^t \partial_Y^t \left(U^2_{\neq} U^1_{\neq}\right)_{0}\right) dV \\
& = F^1 + F^2 + F^3 + F^4. 
\end{align*}
Let us begin with $F^2$ (corresponding to $i = 2$ and $j = 3$); the treatment is also essentially the same as $F^3$. Note the terms involving $U^3$ are expected to be the worst due to the regularity imbalances. 
Decompose the $F^2$ with a paraproduct; as usual we group contributions where the coefficients appear in low frequency with the remainder:
\begin{align}  
F^2 & = -\sum_{k\neq 0} \int A^{1}Q^1_0 A_0^{1} \partial_Y\partial_Y\partial_Z\left( \left(U^3_{-k}\right)_{Hi} \left( U^1_k\right)_{Lo}\right)dV \nonumber \\
& \quad - \sum_{k\neq 0} \int A^{1}Q^1_0 A_0^{1} \partial_Y \partial_Y\partial_Z\left( \left(U^3_{-k}\right)_{Lo} \left( U^1_k\right)_{Hi}\right) dV \nonumber \\
& \quad -\sum_{k\neq 0}  \int A^{1}Q^1_0 A_0^{1} \left(\left( (\psi_y)_{Hi} \partial_Y + (\phi_{y})_{Hi} \partial_Z \right)\partial_Y\partial_Z\left( \left(U^3_{-k}\right)_{Lo} \left( U^1_k\right)_{Lo}\right)\right) dV \nonumber \\
& \quad - \sum_{k\neq 0} \int A^{1}Q^1_0 A_0^{1} \partial_Y\left( \left( (\psi_y)_{Hi} \partial_Y + (\phi_{y})_{Hi} \partial_Z \right) \partial_Z\left( \left(U^3_{-k}\right)_{Lo} \left( U^1_k\right)_{Lo}\right)\right) dV \nonumber \\
& \quad - \sum_{k\neq 0} \int A^{1}Q^1_0 A_0^{1} \partial_Y \partial_Y \left( \left( (\psi_z)_{Hi} \partial_Y + (\phi_{z})_{Hi} \partial_Z \right)\left( \left(U^3_{-k}\right)_{Lo} \left( U^1_k\right)_{Lo}\right)\right) dV \nonumber \\
& \quad + F^2_{\mathcal{R},C} \nonumber \\
& = F^2_{HL} + F^2_{LH} + F^2_{C1} + F^2_{C2} + F^2_{C3} + F^2_{\mathcal{R},C}. \label{def:F2ppQ10}
\end{align} 
Turn first to $F^{2}_{HL}$. 
By \eqref{ineq:AprioriUneq}, Lemma \ref{lem:ABasic}, \eqref{ineq:AA3Z}, \eqref{ineq:AdelLij},  and \eqref{ineq:triQuadHL}, 
\begin{align*} 
F_{HL}^{2} & \lesssim \frac{\epsilon}{\jap{\nu t^3}^{\alpha}}   \jap{t}^{\delta_1}\sum_{l,l^\prime,k \neq 0} \int \abs{A^{1} \widehat{Q^1_0}(\eta,l) A^1_{0}(\eta,l) \frac{\abs{\eta}^2\abs{l}}{k^2 + (l^\prime)^2 + \abs{\xi-kt}^2} \Delta_L \widehat{U^3_{k}}(\xi,l^\prime)_{Hi}} \\ & \hspace{5cm} \times Low(-k,\eta-\xi,l-l^\prime) d\eta d\xi \\ 
& \lesssim \frac{\epsilon}{\jap{\nu t^3}^{\alpha}} \jap{t}^{\delta_1-1}\sum_{l,l^\prime,k \neq 0} \int \abs{\widehat{Q^1_0}(\eta,l)} \frac{\abs{\eta}^2\abs{l} \jap{\frac{t}{\jap{\xi,l^\prime}}}^2}{k^2 + (l^\prime)^2 + \abs{\xi-kt}^2}  \\ 
& \quad\quad \times \left(\sum_{r} \chi^{r,NR}\frac{t}{\abs{r} + \abs{\eta-tr}}\tilde{A}^1_0(\eta,l)\tilde{A}^3_k(\xi,l^\prime) + \chi^{\ast;23} A^1_0(\eta,l) A^3_k(\xi,l^\prime) \right) \\ 
& \quad\quad \times \abs{ \Delta_L A^{3}\widehat{U^3_{k}}(\xi,l^\prime)_{Hi}}Low(-k,\eta-\xi,l-l^\prime) d\eta d\xi \\ 
& \lesssim \frac{\epsilon \jap{t}^{1+\delta_1}}{\jap{\nu t^3}^{\alpha}}\norm{\left(\sqrt{\frac{\partial_t w}{w}}\tilde{A}^1 + \frac{\abs{\grad}^{s/2}}{\jap{t}^{s}}A^1\right) Q^1_0}_2 \norm{\left(\sqrt{\frac{\partial_t w}{w}}\tilde{A}^3 + \frac{\abs{\grad}^{s/2}}{\jap{t}^s}A^3 \right) \Delta_L U^3_{\neq}}_2
\\ & \quad + \frac{\epsilon}{t^{1-\delta_1}\jap{\nu t^3}^{\alpha}}\norm{\sqrt{-\Delta_L} A^{1}Q^1}_2 \norm{A^{3}\Delta_L U^3_{\neq}}_2,
\end{align*} 
which, after the application of Lemmas \ref{lem:PEL_NLP120neq} and \ref{lem:SimplePEL}, is consistent with \eqref{ineq:basic_Q1} for $\epsilon$ sufficiently small.  
 
Turn next to $F_{LH}^{2}$, which also by \eqref{ineq:ABasic}, \eqref{ineq:AdelLij} and \eqref{ineq:quadHL} we have 
\begin{align*} 
F_{LH}^{2} & = \frac{\epsilon}{\jap{\nu t^3}^{\alpha}} \sum_{l,l^\prime,k\neq 0} \int \abs{A^{1} \widehat{Q^1_0}(\eta,l) \frac{\abs{\eta}^2\abs{l} \jap{\frac{t}{\jap{\xi,l^\prime}}}^{1+\delta_1}}{k^2 + (l^\prime)^2 + \abs{\xi-kt}^2} \Delta_L A^{1}\widehat{U^1_{k}}(\xi,l^\prime)_{Hi}} \\ & \hspace{5cm} \times Low(-k,\eta-\xi,l-l^\prime) d\eta d\xi \\ s
& \lesssim \frac{\epsilon \jap{t}^2}{\jap{\nu t^3}^{\alpha}}\norm{\left(\sqrt{\frac{\partial_t w}{w}} + \frac{\abs{\grad}^{s/2}}{\jap{t}^{s}}\right) A^{1}Q^1_0}_2 \norm{\left(\sqrt{\frac{\partial_t w}{w}}\tilde{A}^1 + \frac{\abs{\grad}^{s/2}}{\jap{t}^s}A^1 \right) \Delta_L U^1_{\neq}}_2
\\ & \quad + \frac{\epsilon}{\jap{\nu t^3}^\alpha}\norm{\sqrt{-\Delta_L} A^{1}Q^1}_2 \norm{A^{1}\Delta_L U^1_{\neq}}_2,   
\end{align*} 
which, after the application of the Lemmas \ref{lem:PEL_NLP120neq} and \ref{lem:SimplePEL}, is consistent with \eqref{ineq:basic_Q1}. 

The most difficult coefficient error term in \eqref{def:F2ppQ10} is $F^2_{C3}$. 
By Lemma \ref{lem:ABasic}, \eqref{ineq:triQuadHL}, and Lemma \ref{lem:CoefCtrl},
\begin{align*} 
F^2_{C3} & \lesssim  \epsilon^2 \jap{t}^{\delta_1} \jap{\nu t^3}^{-2\alpha} \sum_{l,l^\prime,k\neq 0} \int \abs{A^{1} \widehat{Q^1_0}(\eta,l)} A_0^{1}(\eta,l) \abs{\eta}^2 \\ & \quad\quad \times\left( \abs{\widehat{\psi_z}(\xi,l^\prime)_{Hi}} + \abs{\widehat{\phi_z}(\xi,l^\prime)_{Hi}}\right) Low(k,\eta-\xi,l-l^\prime) d\eta d\xi \\ 
& \lesssim \epsilon^{3/2} \norm{\grad A^{1}Q^1_0}_2^2 + \frac{\epsilon^{5/2} \jap{t}^{2\delta_1}}{\jap{\nu t^3}^{4\alpha}}\norm{AC}^2_2,
\end{align*} 
which is consistent with \eqref{ineq:basic_Q1} for $c_{0}$ and $\epsilon$ sufficiently small by \eqref{ineq:Boot_ACC2}. 
The other coefficient terms in \eqref{def:F2ppQ10}, $F_{C1}^2$ and $F_{C2}^2$ are easier and give similar contributions. Hence, these are omitted for the sake of brevity.
The remainder term in \eqref{def:F2ppQ10}, $F_{\mathcal{R}}^2$, is similarly straightforward and is omitted as well. 
This completes the treatment of $F^2$. 
Despite appearing rather different, in fact the treatment of $F^3$ is essentially the same. 
Indeed, the regularity imbalances are restricted to where $\abs{\partial_Z} \lesssim \abs{\partial_Y}$ and hence, for frequencies where the regularity imbalances are occurring, $F^3$ looks roughly like $F^2$ and the same treatment applies. Outside of the regularity imbalances, one simply uses that $\Delta_L$ is uniformly elliptic in $Z$ in the same way non-resonance is used above in the treatment of $F^2$ (see \S\ref{sec:basicmult} for more details). As the details are exactly the same as above, we omit them for brevity.  

The other $\mathcal{F}$ terms, $F^1$ and $F^4$, are treated as in \cite{BGM15I}; $F^1$ is slightly harder. 
The main idea is similar to the treatment of $F^2$ above, however one instead 
uses \eqref{ineq:AdeYYY} for $F^1$ (and \eqref{ineq:AdeYZZ} for $F^4$)
and hence deduce 
\begin{align*}
F^1 + F^4 & \lesssim \frac{\epsilon \jap{t}^{2+\delta_1}}{\jap{\nu t^3}^{\alpha}}\norm{\left(\sqrt{\frac{\partial_t w}{w}}\tilde{A}^{1} + \frac{\abs{\grad}^{s/2}}{\jap{t}^{s}} A^{1} \right)Q^1_0}_2 \norm{\left(\sqrt{\frac{\partial_t w}{w}}\tilde{A}^2 + \frac{\abs{\grad}^{s/2}}{\jap{t}^s}A^2 \right) \Delta_L U^2_{\neq}}_2
\\ & \quad + \frac{\epsilon \jap{t}^{\delta_1}}{\jap{\nu t^3}^\alpha}\norm{\sqrt{-\Delta_L} A^{1}Q^1_0}_2 \norm{A^{2}\Delta_L U^2_{\neq}}_2 \\ 
& \quad + \frac{\epsilon \jap{t}^{2}}{\jap{\nu t^3}^{\alpha}}\norm{\left(\sqrt{\frac{\partial_t w}{w}}\tilde{A}^1 + \frac{\abs{\grad}^{s/2}}{\jap{t}^{s}}A^1 \right)Q^1_0}_2 \norm{\left(\sqrt{\frac{\partial_t w}{w}}\tilde{A}^1 + \frac{\abs{\grad}^{s/2}}{\jap{t}^s}A^1 \right) \Delta_L U^1_{\neq}}_2
\\ & \quad + \frac{\epsilon \jap{t}^{\delta_1}}{\jap{\nu t^3}^\alpha}\norm{\sqrt{-\Delta_L} A^{1}Q^1}_2 \norm{A^{1}\Delta_L U^1_{\neq}}_2,   
\end{align*}
which, after applying Lemmas \ref{lem:PEL_NLP120neq} and \ref{lem:SimplePEL},  is consistent with \ref{ineq:basic_Q1} under the bootstrap hypotheses for $c_{0}$ and $\epsilon$ chosen sufficiently small. 
This completes all of the forcing terms. 
  
\subsection{Dissipation error terms}
As in \cite{BGM15I}, these can be treated in the same manner as the dissipation error terms on $Q^2_0$ were treated in \S\ref{sec:DEQ02}. 
We omit the details for brevity: 
\begin{align} 
\mathcal{D}_E & \lesssim c_0\nu \norm{\sqrt{-\Delta_L}A^1 Q^1_0}_2^2 + \nu\epsilon^2 c_0^{-1} \norm{\grad A C}_{2}^2,  
\end{align}  
which for $c_{0}$ sufficiently small, is consistent with Proposition \ref{prop:Boot}. This completes the high norm estimate on $Q_0^1$. 

\section{High norm estimate on $Q^1_{\neq}$}
Consider from the evolution equation for $Q^1_{\neq}$: 
\begin{align} 
\frac{1}{2}\frac{d}{dt} \norm{A^{1} Q^1_{\neq}}_2^2  & \leq \dot{\lambda}\norm{\abs{\grad}^{s/2}A^{1} Q^1_{\neq}}_2^2 - \norm{\sqrt{\frac{\partial_t w}{w}} \tilde{A}^{1} Q^1_{\neq}}_2^2 \nonumber \\ 
& \quad - \norm{\sqrt{\frac{\partial_t w_L}{w_L}} A^{1} Q^1_{\neq}}_2^2
- \frac{t}{\jap{t}^{2}}\norm{A^{1}Q^1_{\neq}}_2^2 -\frac{(1+\delta_1)}{t} \norm{\mathbf{1}_{t > \jap{\grad_{Y,Z}}} A^{1} Q^1_{\neq}}_2^2 \nonumber \\
& \quad - \int A^{1}Q^1_{\neq} A^1 Q^2_{\neq} dV -2 \int A^{1} Q^1 A^{1} \partial_{YX}^t U^1_{\neq} dV + 2 \int A^{1} Q^1_{\neq} A^{1} \partial_{XX} U^2_{\neq} dV \nonumber \\  
 & \quad + \nu \int A^{1} Q^{1}_{\neq} A^{1} \left(\tilde{\Delta_t} Q^1_{\neq}\right) dV  - \int A^{1} Q^1_{\neq} A^{1}\left( \tilde U \cdot \grad Q^1  \right) dV \nonumber \\ 
& \quad -  \int A^{1} Q^1_{\neq} A^{1} \left[ Q^j \partial_j^t U^1 + 2\partial_i^t U^j \partial_{ij}^t U^1  - \partial_X\left(\partial_i^t U^j \partial_j^t U^i\right) \right] dV \nonumber \\ 
& = -\mathcal{D}Q^1_{\neq} - CK_{L1}^1 - (1+\delta_1)CK_{L2}^1 + LU + LS1 + LP1 \nonumber \\ & \quad + \mathcal{D}_E + \mathcal{T} + NLS1 + NLS2 + NLP,  \label{ineq:Q1HneqEvo}
\end{align}
where as usual 
\begin{align*}
\mathcal{D}_E = \int A^1 Q^1_{\neq} A^1 \left((\tilde{\Delta_t} - \Delta_L)Q^1_{\neq}\right) dV. 
\end{align*}
We define enumerations of the nonlinear terms analogous to those in \eqref{def:Q2Enums} and \eqref{def:Q3Enums}.   

\subsection{Linear stretching term $LS1$} \label{sec:LS1_Hi}
As discussed in \cite{BGM15I}, one of the difficulties in deducing the high norm estimate on $Q^1_{\neq}$ is the linear stretching term $LS1$. 
First separate into two parts (to be sub-divided further), 
\begin{align*} 
LS1 & = -2\int A^{1} Q^1 A^{1}\partial_X(\partial_Y - t\partial_X) U^1  dV - 2\int A^{1} Q^1 A^{1} \partial_X\left( (\psi_y)(\partial_Y - t\partial_X) + (\phi_{y})\partial_Z\right) U^1 dV \\ 
& = LS1^0 + LS1^{C}.   
\end{align*} 

\subsubsection{Treatment of $LS1^C$}
The $LS1^C$ term can be treated in essentially the same manner as the corresponding $LS3^C$ in \S\ref{sec:LS3C}. 
Hence, we omit the details for brevity.  

\subsubsection{Leading order term $LS1^0$}
As in \eqref{eq:LS30} of \S\ref{sec:LS30}, we first expand by writing out $\Delta_t^{-1}$ in terms of $\Delta_L$: 
\begin{align*} 
LS1^0 & =  -2 \int A^{1} Q^1 A^{1}\partial_X(\partial_Y - t\partial_X) \Delta_{L}^{-1} \left[Q^1 - G_{yy} (\partial_Y - t\partial_X)^2 U^1 - G_{yz}\partial_Z (\partial_Y - t\partial_X) U^1 \right. \\ & \quad\quad\quad \left. - G_{zz}\partial_{ZZ} U^1  -\Delta_t C^1 (\partial_Y - t\partial_X) U^1 -\Delta_t C^2 \partial_Z U^1 \right]  dV \\ 
& = LS1^{0;0}  + \sum_{i = 1}^5 LS1^{0;Ci}. 
\end{align*} 
The leading order term is treated as in \cite{BGM15I} (with the slight variation for large $Z$ frequencies as used in \S\ref{sec:LS30} above), and hence we omit the treatment and  conclude the following for some $K > 0$,  
\begin{align*}
LS1^{0,0} & \leq (1-\delta_1)CK_{L1}^1 + (1+\delta_1)CK^1_{L2} + \frac{\delta_\lambda}{10\jap{t}^{3/2}} \norm{\abs{\grad}^{s/2}A^{1}Q^1_{\neq}}_2^2 + \frac{K}{\kappa}\norm{\sqrt{\frac{\partial_t w}{w}}  \tilde{A}^{1} Q^1_{\neq}}_2^2 \\ & \quad + \frac{K}{\delta_\lambda^{\frac{1}{2s-1}} \jap{t}^{3/2}} \norm{A^{1}Q^1_{\neq}}_2^2 +  K\frac{1+\delta_1}{\jap{t}^2 t} \norm{A^1 Q^1_{\neq}}_2^2, 
\end{align*}
which is consistent with Proposition \ref{prop:Boot} under the bootstrap hypotheses for $K_{H1\neq}$ sufficiently large relative to $\exp(K\delta_\lambda^{-\frac{1}{2s-1}})$ (also, $\kappa$ must be chosen sufficiently large, but relative only to a universal constant).

The error terms $LS1^{0;Ci}$ are treated in a manner similar to the analogous terms in $LS3$ in \S\ref{sec:LS30} and hence the details are omitted for brevity (indeed $A^1_k$ is a weaker norm than $A^3_k$ due to the extra $\jap{t}^{-1}$ decay). This completes the treatment of the $LS1$ term. 

\subsection{Lift-up effect term $LU$} \label{sec:LUQhi2}
This follows as in \cite{BGM15I}, and hence we omit the details:  
\begin{align*} 
LU  & \leq \delta_1t\jap{t}^{-2}\norm{A^{1} Q^1_{\neq}}^2_2 + \frac{\delta_\lambda}{4\delta_1 t^{3/2}}\norm{\abs{\grad}^{s/2}A^2 Q^2_{\neq}}_2^2 + \frac{1}{4\delta_\lambda^{\frac{1}{2s-1}} t^{3/2}}\norm{A^2 Q^2_{\neq}}_2^2 +  \frac{1}{4\delta_1 t} \norm{\mathbf{1}_{t > \jap{\grad_{Y,Z}}}A^{2}Q^2_{\neq}}_2^2. 
\end{align*}  
The first term is absorbed by the remaining piece of $CK_{L1}^1$ left over in \eqref{ineq:Q1HneqEvo} from the treatment of $LS1$.
The others are consistent with Proposition \ref{prop:Boot} via \eqref{ineq:Boot_Q2Hi} for $K_{H1\neq}$ large relative to $\delta_1^{-1}$ and $\delta_\lambda^{-1}$. 
Hence, this suffices to treat $LU$.  

\subsection{Linear pressure term $LP1$}
The linear pressure term $LP3$ treated in \S\ref{ineq:LP3_Hi} is significantly harder than $LP1$ here, as here only $X$ derivatives are involved. 
Therefore, from Lemma \ref{dtw}, we get (the implicit constant is independent of $\kappa$), 
\begin{align*}
LP1 & \leq 2 \sum\int \abs{A^1\widehat{Q^1_k}(\eta,l) \frac{\abs{k}^2}{k^2 + l^2 + \abs{\eta-kt}^2}  A^1\Delta_L \widehat{U^2_k}(\eta,l)} d\eta \\ 
& \lesssim \kappa^{-1} \jap{t}^{-1} \norm{\left(\sqrt{\frac{\partial_t w}{w}}\tilde{A}^1 + \frac{\abs{\grad}^{s/2}}{\jap{t}^s}A^1\right)  Q^1_{\neq}}_2\norm{\left(\sqrt{\frac{\partial_t w}{w}}\tilde{A}^2 + \frac{\abs{\grad}^{s/2}}{\jap{t}^s}A^2\right)\Delta_L U_{\neq}^2}_2 \\ & \quad + \jap{t}^{-3}\norm{A^1Q^1}_2\norm{A^2\Delta_L U_{\neq}^2}_2.  
\end{align*}
Therefore for $\kappa$ and $K_{H1\neq}$ sufficiently large and $c_{0}$ sufficiently small, this is consistent with Proposition \ref{prop:Boot} by the bootstrap hypotheses after applying Lemmas \ref{lem:PEL_NLP120neq} and \ref{lem:SimplePEL}.
 
\subsection{Nonlinear pressure $NLP$}

After cancellations, none of the existing terms here are worse than those appearing in $Q^2$ in \S\ref{sec:NLPQ2} or $Q^3$ in \S\ref{sec:NLP3}. Moreover, on $Q^1$ we are imposing less control (since $A^1$ is weaker than $A^{2,3}$ at high frequencies due to $\jap{t}^{-1}$) and the leading derivative is an $X$ derivative, which is generally less dangerous than those associated with $Y$ and $Z$. 
Therefore, the treatment of the $NLP$ contributions here are an easy variant of the treatments in \S\ref{sec:NLPQ2} and \S\ref{sec:NLP3}. Accordingly, the details are omitted for the sake of brevity. 

\subsection{Nonlinear stretching $NLS$}
These terms can be slightly more dangerous than the corresponding $NLS$ terms in $Q^{2,3}$ due to the persistent presence of $U^1$, however, this will be naturally balanced by the allowed linear growth of $Q^1$ at high frequencies.
 
\subsubsection{Treatment of $NLS1$}
Consider first the $NLS1(j,\neq,0)$ terms. Note $j \neq 1$ due to the zero frequencies (a crucial nonlinear structure). The case $j = 3$ is worse than $j=2$ due to the large growth and regularity imbalances in $Q^3$. 
Hence, let us just focus on the case $j = 3$. 
As usual,  with a paraproduct and group any terms with coefficients in low frequencies in with the remainder: 
\begin{align*} 
NLS1(3,\neq,0) & = -\sum \int A^1 Q^1_k A^1\left( (Q^3_{k})_{Hi} (\partial_Z U_0^1)_{Lo} \right) dV - \sum \int A^1 Q^1_k A^1\left( (Q^3_{k})_{Lo} (\partial_Z U_0^1)_{Hi} \right) dV \\ 
& \quad - \sum \int A^1 Q^1_k A^1\left( (Q^3_{k})_{Lo} \left((\psi_z)_{Hi} \partial_Y + (\phi_z)_{Hi} \partial_Z \right) (U_0^1)_{Lo} \right) dV + S_{\mathcal{R},C} \\ 
& = S_{HL} + S_{LH} + S_{C} + S_{\mathcal{R},C}, 
\end{align*}
where $S_{\mathcal{R},C}$ includes the remainders from the paraproduct and the low frequency coefficient terms. 
By \eqref{ineq:AprioriU0}, Lemma \ref{lem:ABasic},
followed by \eqref{ineq:jNRBasic}, \eqref{ineq:teps12trick}, and \eqref{ineq:triQuadHL},
\begin{align*}
S_{HL} & \lesssim \epsilon \jap{t} \sum_{k} \int \abs{\widehat{Q^1_k}(\eta,l) \jap{t}^{-1} \jap{\frac{t}{\jap{\xi,l^\prime}}}^{1-\delta_1}} \\ 
& \quad\quad \times \left(\sum_{r}\chi^{r,NR}\frac{t}{\abs{r} + \abs{\eta-tr}}\tilde{A}^1_k(\eta,l) \tilde{A}^3_k(\xi,l^\prime) + \chi^{\ast;23} A^1_k(\eta,l) A^3_k(\xi,l^\prime) \right) \\ & \quad\quad \times \abs{\widehat{Q^3_{k}}(\xi,l^\prime)_{Hi}} Low(\eta-\xi,l-l^\prime) d\eta d\xi \\ 
& \lesssim \epsilon \jap{t}\norm{\left(\sqrt{\frac{\partial_t w}{w}}\tilde{A}^1 + \frac{\abs{\grad}^{s/2}}{\jap{t}^{s}} A^1 \right) Q^1}_2 \norm{\left(\sqrt{\frac{\partial_t w}{w}}\tilde{A}^3 + \frac{\abs{\grad}^{s/2}}{\jap{t}^{s}}A^3 \right) Q^3}_2 \\ 
& \quad + \epsilon^{3/2} \norm{\sqrt{-\Delta_L} A^1 Q^1_{\neq}}_2 + \mathbf{1}_{t \leq \epsilon^{-1/2+\delta/100}}\epsilon^{1/2} \norm{A^3 Q^3}_2^2 + \epsilon^{3/2-\delta/50}\norm{\sqrt{-\Delta_L} A^3 Q^3}_2^2, 
\end{align*}
which is consistent with Proposition \ref{prop:Boot} for $\epsilon$ and $\epsilon t \leq c_0$ sufficiently small. 

For $S_{LH}$ we use Lemma \ref{lem:ABasic} and \eqref{ineq:AprioriUneq} followed by \eqref{ineq:triQuadHL}, 
\begin{align*}
S_{LH} & \lesssim \frac{\epsilon \jap{t}^2 }{\jap{\nu t^3}^{\alpha}} \sum_{k} \int \abs{ A^1 \widehat{Q^1_k}(\eta,l) \frac{1}{\jap{\xi,l^\prime}} \jap{\frac{t}{\jap{\xi,l^\prime}}}^{-1-\delta_1} A^1 \jap{\grad}^{2} \widehat{U^1_{0}}(\xi,l^\prime)_{Hi} Low(k,\eta-\xi,l-l^\prime) } d\eta d\xi \\ 
& \lesssim \frac{\epsilon \jap{t}}{\jap{\nu t^3}^{\alpha}}\norm{A^1 Q^1}_2 \norm{A^1 \jap{\grad}^2 U_0^1}_2,
\end{align*}
which is consistent with Proposition \ref{prop:Boot} for $\epsilon$ and $c_0$ sufficiently small. 
Similarly, 
\begin{align*}
S_{C} & \lesssim \frac{\epsilon^2 \jap{t}}{\jap{\nu t^3}^{\alpha}}\norm{A^1 Q^1}_2 \norm{A C}_2 \lesssim \frac{\epsilon^{1/2}}{\jap{\nu t^3}^{\alpha}}\norm{A^1 Q^1}_2^2 +  \frac{\epsilon^{5/2} t^2}{\jap{\nu t^3}^{\alpha}} \norm{A C}^2_2,  
\end{align*}
which is consistent with Proposition \ref{prop:Boot} for $\delta > 0$ and $\epsilon$ sufficiently small.  
The remainder terms are similar to the above and are hence omitted for brevity. This completes the treatment of the $NLS1(3,\neq,0)$ terms; the other $j$ are simpler. 

Next consider the $NLS1(j,0,\neq)$ terms. 
The most difficult is naturally the case $j = 1$ (which does not cancel); the others are simpler and are hence omitted for brevity. 
Expand with a paraproduct, 
\begin{align*}
NLS1(1,0,\neq) & = -\sum \int A^1 Q^1_k A^1\left( (Q^1_{0})_{Hi} ( \partial_X U_k^1)_{Lo} \right) dV \\ 
& \quad - \sum \int A^1 Q^1_k A^1\left( (Q^2_{0})_{Lo} ( \partial_XU_k^1)_{Hi} \right) dV + S_{\mathcal{R},C} \\ 
& = S_{HL} + S_{LH} + S_{\mathcal{R},C}. 
\end{align*}
From Lemma \ref{lem:ABasic} and \eqref{ineq:AprioriUneq}, 
\begin{align*}
S_{HL} & \lesssim \frac{\epsilon \jap{t}^{\delta_1}}{\jap{\nu t^3}^{\alpha}}\sum_{k} \int \abs{ A^1 \widehat{Q^1_k}(\eta,l) \jap{\frac{t}{\jap{\xi,l^\prime}}}^{-1-\delta_1} A^1 \widehat{Q^1_{0}}(\xi,l^\prime)_{Hi} Low(\eta-\xi,l-l^\prime) } d\eta d\xi \\ 
& \lesssim \frac{\epsilon \jap{t}^{\delta_1}}{\jap{\nu t^3}^{\alpha}} \norm{A^1 Q^1_{\neq}}_2 \norm{A^1 Q^1_0}_2. 
\end{align*}
From \eqref{ineq:ABasic}, \eqref{ineq:AprioriU0}, and \eqref{ineq:AiPartX},  
\begin{align*}
S_{LH} & \lesssim \epsilon t \sum_{k} \int \abs{ A^1 \widehat{Q^1_k}(\eta,l) \frac{\abs{k}}{k^2 + (l^\prime)^2 + \abs{\xi - kt}^2} \Delta_L A^1 \widehat{U^1_{k}}(\xi,l^\prime)_{Hi} Low(\eta-\xi,l-l^\prime) } d\eta d\xi \\ 
& \lesssim \epsilon t \norm{\left(\sqrt{\frac{\partial_t w}{w}}\tilde{A}^1 + \frac{\abs{\grad}^{s/2}}{\jap{t}^{s}}A^1\right)Q^1}_2\norm{\left(\sqrt{\frac{\partial_t w}{w}}\tilde{A}^1 + \frac{\abs{\grad}^{s/2}}{\jap{t}^{s}}A^1\right)\Delta_L U^1_{\neq}}_2 \\ 
& \quad + \epsilon \norm{A^1 Q^1}_2\norm{A^1 \Delta_L U^1_{\neq}}_2,  
\end{align*}
which is consistent with Proposition \ref{prop:Boot} by Lemmas \ref{lem:PEL_NLP120neq} and \ref{lem:SimplePEL}.  
This completes the $NLS1(j,0,\neq)$ terms. 

Finally consider the $NLS1(j,\neq,\neq)$ terms. 
All these terms are treated similarly, hence, consider just $j = 3$. 
Expand as above 
\begin{align*}
NLS1(3,\neq,\neq) & = -\int A^1 Q^1_{\neq} A^1\left( (Q^3_{\neq})_{Hi} (\partial_Z U_{\neq}^1)_{Lo} \right) dV - \int A^1 Q^1_{\neq} A^1\left( (Q^3_{\neq})_{Lo} (\partial_Z U_{\neq}^1)_{Hi} \right) dV \\ 
& \quad - \sum \int A^1 Q^1_{\neq} A^1\left( (Q^3_{\neq})_{Lo} \left((\psi_z)_{Hi} (\partial_Y - t\partial_X) + (\phi_z)_{Hi}\partial_Z\right)(U_{\neq}^1)_{Lo} \right) dV \\ 
& \quad + S_{\mathcal{R},C} \\ 
& = S_{HL} + S_{LH} + S_{C} + S_{\mathcal{R},C}. 
\end{align*}
For $S_{HL}$, we have by \eqref{ineq:AprioriUneq}, Lemma \ref{lem:ABasic}, and \eqref{ineq:triQuadHL} (a power of $t$ is lost due to the regularity imbalances), 
\begin{align*}
S_{HL} & \lesssim \frac{\epsilon \jap{t}^{\delta_1}}{\jap{\nu t^3}^{\alpha}} \sum \int \mathbf{1}_{k k^\prime(k-k^\prime) \neq 0} \abs{A^1 \widehat{Q^1_k}(\eta,l)}
  \\ & \quad\quad \times \jap{\frac{t}{\jap{\xi,l^\prime}}}^{1-\delta_1} \abs{A^3 \widehat{Q^3_{k^\prime}}(\xi^\prime,l^\prime)_{Hi}} Low(k-k^\prime,\eta-\xi,l-l^\prime) d\eta d\xi \\ 
& \lesssim \frac{\epsilon t}{\jap{\nu t^3}^{\alpha}}\norm{A^1 Q^1_{\neq}}_2 \norm{A^3 Q^3}_2,
\end{align*} 
which is consistent with Proposition \ref{prop:Boot}. 
For $S_{LH}$ we have by \eqref{ineq:Boot_Hi},  \eqref{ineq:ABasic}, \eqref{ineq:AikDelLNoD}, and \eqref{ineq:triQuadHL}, 
\begin{align*} 
S_{LH} & \lesssim \frac{\epsilon \jap{t}^2}{\jap{\nu t^3}^{\alpha}} \sum_{k} \int \mathbf{1}_{k k^\prime(k-k^\prime) \neq 0}  \abs{ A^1 \widehat{Q^1_k}(\eta,l) \frac{1}{\abs{k^\prime} + \abs{l^\prime} + \abs{\eta-k^\prime t}} \Delta_L A^1 \widehat{U^1_{k^\prime}}(\xi,l^\prime)_{Hi}} \\ & \quad \quad \times Low(k-k^\prime, \eta-\xi,l-l^\prime) d\eta d\xi \\ 
& \lesssim \frac{\epsilon \jap{t}^2}{\jap{\nu t^3}^{\alpha}}\norm{\left(\sqrt{\frac{\partial_t w}{w}}\tilde{A}^1 + \frac{\abs{\grad}^{s/2}}{\jap{t}^{s}}A^1 \right) Q^1}_2\norm{\left(\sqrt{\frac{\partial_t w}{w}}\tilde{A}^1 + \frac{\abs{\grad}^{s/2}}{\jap{t}^{s}}A^1 \right) \Delta_L U_{\neq}^1}_2 \\  
& \quad + \frac{\epsilon \jap{t}}{\jap{\nu t^3}^{\alpha}}\norm{A^1 Q^1}_2 \norm{A^1 \Delta_L U_{\neq}^1}_2, 
\end{align*}
which by Lemmas \ref{lem:SimplePEL} and \ref{lem:PEL_NLP120neq}, is consistent with Proposition \ref{prop:Boot} for $\epsilon$ sufficiently small. 
The coefficient error terms are similar to those that arise in e.g. $NLP(i,j,\neq,\neq)$ and are hence omitted for brevity (although they require the hypothesis $\epsilon \lesssim \nu^{2/3+\delta}$ for $\delta > 0$).   
The remainder terms are either easier or similar to the above treatments and hence can also be omitted. 

As discussed above, the remaining $NLS1$ terms are similar or easier and hence are safely omitted. This completes the $NLS1$ terms. 

\subsubsection{Treatment of the $NLS2$ terms} 

Turn to the $NLS2$ terms. These terms are all treated via easy variants of the treatments of the $NLS1$ and $NLP$ terms. They are hence omitted for the sake of brevity. 

\subsection{Transport nonlinearity $\mathcal{T}$} 
In this section, we treat the \textbf{(SI)} and \textbf{(3DE)} contributions to the transport nonlinearity, given by $\mathcal{T}_{\neq}$. 
Begin with a paraproduct decomposition: 
 \begin{align*} 
 \mathcal{T}_{\neq} & = -\int A^{1} Q^1_{\neq} A^{1} \left( \tilde U_{Lo} \cdot \grad Q^3_{Hi} \right) dV -\int A^{3} Q^3 A^{3} \left( \tilde U_{Hi} \cdot \grad Q^3_{Lo} \right) dV + \mathcal{T}_{\mathcal{R}} \\ 
& = \mathcal{T}_T + \mathcal{T}_{R} + \mathcal{T}_{\mathcal{R}}, 
\end{align*}
where $\mathcal{T}_{\mathcal{R}}$ includes the remainder.  
Due to the lack of regularity imbalances in $A^1$, the transport and remainder contributions, $\mathcal{T}_T$ and $\mathcal{T}_{\mathcal{R}}$ respectively, 
are treated as in \S\ref{sec:Q2_TransNon}. Hence, we omit the treatments and conclude
\begin{align} 
\mathcal{T}_T + \mathcal{T}_{\mathcal{R}} & \lesssim \epsilon^{3/2}\norm{\sqrt{-\Delta_L}A^1 Q^1}_2^2 + \left(\frac{\epsilon^{1/2}}{\jap{t}^{2}} + \frac{\epsilon^{1/2} \jap{t}^{2\delta_1}}{\jap{\nu t^3}^{2\alpha}} \right)\norm{A^1 Q^1}_2^2.  
\end{align}
Turn to the reaction contribution. 
As in \S\ref{sec:Q2_TransNon} and \S\ref{sec:Q3_TransNon}, decompose the reaction term based on the $X$ dependence of each factor:  
\begin{align*} 
\mathcal{T}_{R} & =  -\int A^{1} Q^1 A^{1} \left( (\tilde U_{\neq})_{Hi}  \cdot (\grad Q^1_0)_{Lo} \right) dV - \int A^{1} Q^1 A^{1} \left( g_{Hi} \partial_Y (Q^1_{\neq})_{Lo} \right) dV \\ & \quad - \int A^{1} Q^1 A^{1} \left( (\tilde U_{\neq})_{Hi} \cdot (\grad Q^1_{\neq})_{Lo} \right) dV \\ 
& = \mathcal{T}_{R;\neq 0} + \mathcal{T}_{R;0 \neq}+ \mathcal{T}_{R;\neq \neq}. 
\end{align*} 

\subsubsection{Reaction term $\mathcal{T}_{R;0 \neq}$}
By\eqref{ineq:Boot_ED1} and Lemma \ref{lem:ABasic}, we get (also noting \eqref{def:tildeU2}):  
\begin{align*} 
\mathcal{T}_{R;0 \neq} & \lesssim \frac{\epsilon \jap{t}^{2+\delta_1}}{\jap{\nu t^3}^{\alpha}}\sum_{k \neq 0}\int \abs{A^{1} \hat{Q}^1_k (\eta,l) A^{1}_{k}(\eta,l) \widehat{g}(\xi,l^\prime)_{Hi}} Low(k,\eta-\xi,l-l^\prime) d\xi d\eta \\
& \lesssim \frac{\epsilon}{\jap{\nu t^3}^{\alpha}}\norm{A^1 Q^1_{\neq}}_2 \norm{Ag}_2, 
\end{align*} 
which is consistent with Proposition \ref{prop:Boot}. 

\subsubsection{Reaction term $\mathcal{T}_{R;\neq 0}$}
For this term we use a slight variant of the treatment found in \S\ref{sec:Q2TRneq0}.
Note that $Q^1_0$ is $O(t)$ larger than $Q^2_0$ but $A^1 \lesssim \jap{t}^{-1} A^2$, and hence the allowed growth in $A^1$ will balance the extra growth in these terms. 
Therefore, these can be treated in the same fashion as was done in \S\ref{sec:Q2TRneq0}. Hence, we omit the details for brevity.  
 
\subsubsection{Reaction term $\mathcal{T}_{R;\neq \neq}$} \label{sec:Q1TRneqneq}
This reaction term is slightly different than the analogous terms in \S\ref{sec:Q2TRneqneq} and \S\ref{sec:Q3TRneqneq}, as we are not allowing a norm imbalance like in \S\ref{sec:Q3TRneqneq} but instead are allowing a steady linear growth.
As in \S\ref{sec:Q2TRneqneq} above,  we further decompose in terms of frequency: 
\begin{align*} 
\mathcal{T}_{R;\neq \neq} & \lesssim \frac{\epsilon \jap{t}^{2+\delta_1}}{\jap{\nu t^3}^\alpha} \sum_{k,k^\prime}\int \mathbf{1}_{k k^\prime(k-k^\prime) \neq 0} \abs{A^{1} \hat{Q}^1_k(\eta,l) A^{1}_{k}(\eta,l) \hat{U}_{k^\prime}^1 (\xi,l^\prime)_{Hi}} Low(k-k^\prime, \eta-\xi, l - l^\prime) d\eta d\xi \\   
& \quad +  \frac{\epsilon \jap{t}^{3+\delta_1}}{\jap{\nu t^3}^\alpha} \sum_{k,k^\prime}\int \mathbf{1}_{k k^\prime(k-k^\prime) \neq 0} \abs{A^{1} \hat{Q}^1_k(\eta,l) A^{1}_{k}(\eta,l) \hat{U}_{k^\prime}^2 (\xi,l^\prime)_{Hi}} Low(k-k^\prime, \eta-\xi, l - l^\prime) d\eta d\xi \\  
& \quad +  \frac{\epsilon \jap{t}^{2+\delta_1}}{\jap{\nu t^3}^{\alpha-1}} \sum_{k,k^\prime}\int \mathbf{1}_{k k^\prime(k-k^\prime) \neq 0} \abs{A^{1} \hat{Q}^1_k(\eta,l) A^{1}_{k}(\eta,l) \hat{U}_{k^\prime}^3 (\xi,l^\prime)_{Hi}} Low(k-k^\prime, \eta-\xi, l - l^\prime) d\eta d\xi \\   
& \quad + \frac{\epsilon^2 \jap{t}^{2+\delta_1}}{\jap{\nu t^3}^{\alpha}} \sum_{k,k^\prime}\int \mathbf{1}_{k k^\prime(k-k^\prime) \neq 0} \abs{A^{1} \hat{Q}^1_k(\eta,l)} A^{1}_{k}(\eta,l) \left(\abs{\hat{\psi_y}(\xi,l^\prime)_{Hi}} + \abs{\hat{\phi_y}(\xi,l^\prime)_{Hi}} + \abs{\hat{\phi_z}(\xi,l^\prime)_{Hi}}\right) \\ & \quad\quad\quad \times Low(k-k^\prime, \eta-\xi, l - l^\prime) d\eta d\xi \\   
& \quad + \frac{\epsilon^2 \jap{t}^{3+\delta_1}}{\jap{\nu t^3}^{\alpha}} \sum_{k,k^\prime}\int \mathbf{1}_{k k^\prime(k-k^\prime) \neq 0} \abs{A^{1} \hat{Q}^1_k(\eta,l) A^{1}_{k}(\eta,l) \hat{\psi_z}(\xi,l^\prime)_{Hi} } Low(k-k^\prime, \eta-\xi, l - l^\prime) d\eta d\xi \\   
& \quad + \mathcal{T}_{R;\neq \neq;\mathcal{R}} \\ 
& = \mathcal{T}_{R;\neq\neq}^{1} + \mathcal{T}_{R;\neq\neq}^2 + \mathcal{T}_{R;\neq\neq}^3  + \mathcal{T}_{R;\neq\neq}^{C1} + \mathcal{T}_{R;\neq\neq}^{C2} + \mathcal{T}_{R;\neq\neq;\mathcal{R}}. 
\end{align*} 
Consider $\mathcal{T}^1_{R;\neq \neq}$. 
By \eqref{ineq:ABasic} followed by \eqref{ineq:AikDelLNoD} and \eqref{ineq:triQuadHL}, 
\begin{align*}
\mathcal{T}_{R;\neq\neq}^{1} & \lesssim \frac{\epsilon \jap{t}^{2+\delta_1}}{\jap{\nu t^3}^{\alpha}} \sum_{k,k^\prime}\int \mathbf{1}_{k k^\prime(k-k^\prime) \neq 0} \abs{A^{1} \hat{Q}^1_k(\eta,l)} \frac{1}{\abs{k^\prime}^2 + \abs{l^\prime}^2 + \abs{\xi - k^\prime t}^2}\abs{A^1 \Delta_L \hat{U}_{k^\prime}^1 (\xi,l^\prime)_{Hi}} \\ & \quad\quad \times Low(k-k^\prime, \eta-\xi, l - l^\prime) d\eta d\xi \\ 
& \lesssim \frac{\epsilon \jap{t}^{2+\delta_1}}{\jap{\nu t^3}^{\alpha}}\norm{\left(\sqrt{\frac{\partial_t w}{w}}\tilde{A}^1 + \frac{\abs{\grad}^{s/2}}{\jap{t}^{s}}A^1\right) Q^1}_2 \norm{\left(\sqrt{\frac{\partial_t w}{w}}\tilde{A}^1 + \frac{\abs{\grad}^{s/2}}{\jap{t}^{s}}A^1 \right) \Delta_L U^1_{\neq}}_2 \\ 
& \quad + \frac{\epsilon \jap{t}^{\delta_1}}{\jap{\nu t^3}^{\alpha}}\norm{A^1 Q^1}_2 \norm{A^1 \Delta_L U^1_{\neq}}_2, 
\end{align*} 
which by Lemmas \ref{lem:PEL_NLP120neq} and \ref{lem:SimplePEL}, is consistent with Proposition \ref{prop:Boot}. 
The term $\mathcal{T}^2_{R;\neq \neq}$ is treated in essentially the same way and is hence omitted (that $A^1 \lesssim \jap{t}^{-1}A^2$ recovers the additional power of $t$ from the low frequency factor in $\mathcal{T}^2_{R;\neq \neq}$). 

Next, turn to the treatment of $\mathcal{T}^3_{R;\neq \neq}$. 
By Lemma \ref{lem:ABasic} followed by \eqref{ineq:A1A3ReacGain} and \eqref{ineq:triQuadHL}, 
\begin{align*}
\mathcal{T}_{R;\neq\neq}^{3} & \lesssim \frac{\epsilon \jap{t}^{2+\delta_1}}{\jap{\nu t^3}^{\alpha}} \sum_{k,k^\prime}\int \mathbf{1}_{k k^\prime(k-k^\prime) \neq 0} \abs{A^{1} \hat{Q}^1_k(\eta,l)} \frac{1}{\abs{k^\prime}^2 + \abs{l^\prime}^2 + \abs{\xi - k^\prime t}^2} \\   
& \quad\quad \times  \left(\sum_{r}\chi^{r,NR}\frac{t}{\abs{r} + \abs{\eta-tr}} + \chi^{\ast;23} \right) \frac{1}{\jap{t}} \jap{\frac{t}{\jap{\xi,l^\prime}}}^{1-\delta_1} \\ & \quad\quad \times \abs{A^3 \Delta_L \hat{U}_{k^\prime}^3 (\xi,l^\prime)_{Hi}} Low(k-k^\prime, \eta-\xi, l - l^\prime) d\eta d\xi \\ 
& \lesssim \frac{\epsilon \jap{t}^{1 + \delta_1}}{\jap{\nu t^3}^{\alpha}} \norm{A^1 Q^1}_2 \norm{A^3 \Delta_L U^3_{\neq}}_2, 
\end{align*}
which is consistent by Lemma \ref{lem:SimplePEL}. 

The  coefficient and remainder terms can be treated in exactly the same manner as in \S\ref{sec:Q3TRneqneq} 
and are therefore omitted for the sake of brevity. 
This completes the treatment of $\mathcal{T}_{R;\neq\neq}$ and hence all of $\mathcal{T}$.

\subsection{Dissipation error terms $\mathcal{D}$}
These terms are treated in the same manner as the corresponding terms in $Q^3$, found in \S\ref{sec:DEneqQ3}.
The results are analogous to those found therein and are hence here omitted for brevity. 

This completes the high norm estimate on $Q^1_{\neq}$. 

\section{Coordinate system controls} 
In this section we prove the necessary controls on $C^i$ and the auxiliary unknown $g$.
  
\subsection{High norm estimate on $g$}
We will begin by improving \eqref{ineq:Boot_Ag}, which roughly measures the time-oscillations between $U_0^1$ and $C^1$, and hence measures the time-oscillations of the $y$ component of the shear. 
From \eqref{def:gPDE2} we have
\begin{align*} 
\frac{1}{2}\frac{d}{dt}\norm{Ag}_2^2 & = -CK^g_\lambda - CK^g_w - \frac{2}{t}\norm{Ag}_2^2 - \int Ag A\left(g \partial_Y g\right) dV \\ 
& \quad + \int Ag A\left(\tilde{\Delta_t}g\right) dV - \frac{1}{t}\sum_{k \neq 0}\int Ag A \left( U_{-k} \cdot \grad^t U^1_{k} \right) dV \\ 
& = -\mathcal{D}g + \mathcal{T} + \mathcal{D}_E + \mathcal{F}. 
\end{align*} 

\subsubsection{Transport nonlinearity} \label{sec:TransNong}
By Lemma \ref{lem:AAiProd} and \eqref{ineq:Boot_gLow}, we have 
\begin{align*}
\mathcal{T} & \lesssim \norm{Ag}_2^2 \norm{g}_{\G^{\lambda}} + \norm{Ag}_2\norm{g}_{\G^{\lambda}}\norm{\grad A g}_2 \lesssim \frac{\epsilon^{1/2}}{\jap{t}^{2}}\norm{Ag}_2^2 + \epsilon^{3/2}\norm{\grad A g}_2^2, 
\end{align*}
which is consistent with Proposition \ref{prop:Boot} for $\epsilon$ sufficiently small, 

\subsubsection{Dissipation error terms} \label{sec:DissErrg}
Recall that the dissipation error terms are of the form
\begin{align*} 
\mathcal{D}_E = \nu \int Ag A \left( G_{yy} \partial_Y^2 g + G_{yz}\partial_{YZ} g + G_{zz} \partial_{ZZ}g \right) dV. 
\end{align*}
We may treat these as in \cite{BGM15I} (for which we use essentially the same treatment as in \S\ref{sec:DEQ02}, despite the higher regularity of $A$). 
Using that approach we have,
\begin{align*}
\mathcal{D}_E \lesssim c_0\nu\norm{\grad A g}_2^2 + \nu\norm{Ag}_2 \norm{\grad AC}_2\norm{\grad Ag}_2 \lesssim c_0\nu\norm{\grad Ag}_2^2 + c_0^{-1} \epsilon^2 \nu \norm{\grad AC}_2^2,  
\end{align*}
which is consistent with Proposition \ref{prop:Boot} for $c_0$ sufficiently small. 

\subsubsection{Forcing from non-zero frequencies} \label{sec:g_Forcing}
Analogous to \eqref{eq:XavgCanc}, by the divergence-free condition we have,  
\begin{align*} 
\mathcal{F} & = -\sum_{k \neq 0}\frac{1}{t}\int A g A \left( \partial_Y^t \left( U^2_{-k} U^1_{k}\right) + \partial_Z^t \left( U^3_{-k} U^1_{k}\right) \right) dV  = F_Y + F_Z. 
\end{align*}  
Consider $F_Y$ first. 
Expand with a paraproduct and group terms where the coefficients appear in low frequency with the remainder:  
\begin{align*} 
F_Y & = - \sum_{k \neq 0}\frac{1}{t}\int A g  A \partial_Y \left( (U^2_{-k})_{Lo} (U^1_{k})_{Hi} \right) dV 
- \sum_{k \neq 0}\frac{1}{t}\int A g  A \partial_Y \left( (U^2_{-k})_{Hi} (U^1_{k})_{Lo} \right) dV  \\ 
& \quad - \sum_{k \neq 0}\frac{1}{t}\int A g  A \left( \left((\psi_y)_{Hi}\partial_Y + (\phi_y)_{Hi}\partial_Z \right) \left((U^2_{-k})_{Lo} (U^1_{k})_{Lo}\right) \right) dV  + F_{Y;\mathcal{R},C} \\ 
& = F_{Y;LH} + F_{Y;HL} + F_{Y;C} + F_{Y;\mathcal{R},C}. 
\end{align*}
By \eqref{ineq:AprioriUneq} and \eqref{ineq:ABasic} 
\begin{align*}
F_{Y;LH} & \lesssim \frac{\epsilon}{t\jap{t} \jap{\nu t^3}^{\alpha}} \sum_{k \neq 0} \sum_{l,l^\prime} \int \abs{A \hat{g}(\eta,l)} \frac{\jap{t} \abs{\eta} \jap{\eta,l}^2 \jap{\frac{t}{\jap{\eta,l}}}^{1+\delta_1}}{k^2 + (l^\prime)^2 + \abs{\xi-kt}^2} \\ & \quad\quad \times \abs{A^{1} \Delta_L \widehat{U^1_{k}}(\xi,l^\prime)_{Hi}} Low(-k,\eta-\xi,l-l^\prime) d\eta d\xi.
\end{align*}
By \eqref{ineq:AdelLij} and \eqref{ineq:triQuadHL}, we therefore have 
\begin{align*}
F_{Y;LH} & \lesssim \frac{\epsilon \jap{t}^{2}}{\jap{\nu t^3}^{\alpha}}
 \norm{\left(\sqrt{\frac{\partial_t w}{w}}\tilde{A} + \frac{\abs{\grad}^{s/2}}{\jap{t}^s}A \right) g}_2^2 +  \frac{\epsilon \jap{t}^{2}}{\jap{\nu t^3}^{\alpha}}\norm{\left(\sqrt{\frac{\partial_t w}{w}}\tilde{A}^1 + \frac{\abs{\grad}^{s/2}}{\jap{t}^s}A^1 \right) \Delta_L U^1_{\neq}}_2 \\ 
& \quad + \epsilon^{3/2}\norm{\sqrt{-\Delta_L}Ag}_2^2 + \frac{\epsilon^{1/2}}{\jap{t}^{2} \jap{\nu t^3}^{2\alpha}} \norm{A^{1} \Delta_L U^1_{\neq}}^2_2,
\end{align*}
which, by Lemmas \ref{lem:PEL_NLP120neq} and \ref{lem:SimplePEL}, is consistent with Proposition \ref{prop:Boot} for $\delta_1$  and $\epsilon$ sufficiently small.  

Turn next to $F_{Y;HL}$. Similar to $F_{Y;LH}$, we get from \eqref{ineq:AprioriUneq}, \eqref{ineq:ABasic}, \eqref{ineq:AdelLij}, and \eqref{ineq:triQuadHL} we get
\begin{align*} 
F_{Y;HL} & \lesssim \frac{\epsilon \jap{t}^{\delta_1}}{t\jap{\nu t^3}^{\alpha}} \sum_{k \neq 0}\sum_{l,l^\prime} \int \abs{ A \hat{g} \frac{\abs{\eta} \jap{\eta,l}^2 \jap{\frac{t}{\jap{\xi,l^\prime}}}}{k^2 + (l^\prime)^2 + \abs{\xi-kt}^2} A^{2} \Delta_L \widehat{U^2_{k}}(\xi,l^\prime)_{Hi}}  Low(-k,\eta-\xi,l-l^\prime) dV \\ 
& \lesssim \frac{\epsilon t^{2+\delta_1}}{\jap{\nu t^3}^\alpha}\norm{\left(\sqrt{\frac{\partial_t w}{w}}\tilde{A} + \frac{\abs{\grad}^{s/2}}{\jap{t}^s}A\right)  g}_2^2 + \frac{\epsilon t^{2+\delta_1}}{\jap{\nu t^3}^\alpha} \norm{\left(\sqrt{\frac{\partial_t w}{w}}\tilde{A}^2 + \frac{\abs{\grad}^{s/2}}{\jap{t}^s}A^2 \right)  \Delta_L U^2_{\neq}}_2^2 \\ & \quad + \frac{\epsilon }{t^{1-\delta_1}\jap{\nu t^3}^\alpha} \norm{\sqrt{-\Delta_L}Ag}_2 \norm{A^{2} \Delta_L  U^2_{\neq}}_2,   
\end{align*} 
which is consistent with Proposition \ref{prop:Boot} by Lemmas \ref{lem:SimplePEL} and \ref{lem:PEL_NLP120neq}. The remainder terms $F_{Y;\mathcal{R},C}$ are similar, but simpler, and and are hence omitted for brevity. 

Consider finally $F_{Y;C}$. 
By \eqref{ineq:AprioriUneq}, Lemma \ref{lem:ABasic}, and \eqref{ineq:triQuadHL} (and Lemma \ref{lem:CoefCtrl}),  
\begin{align*} 
F_{Y;C} & \lesssim \frac{\epsilon^2 }{t\jap{t}^{1-\delta_1}\jap{\nu t^3}^{2\alpha}}\sum_{l,l^\prime} \int\abs{A \hat{g}(\eta,l)}  A \left( \abs{\widehat{\psi_y}} + \abs{\widehat{\phi_z}}\right)(\xi,l^\prime)_{Hi} Low(-k,\eta-\xi,l-l^\prime) d\eta d\xi \\ 
& \lesssim  \frac{\epsilon^{1/2}}{t^{4-2\delta_1} \jap{\nu t^3}^{2\alpha}} \norm{Ag}_2^2 + \epsilon^{7/2}\norm{\grad A C}^2_2.
\end{align*}
which is consistent with Proposition \ref{prop:Boot} for $\epsilon$ sufficiently small. 
This completes the treatment of $F_Y$. 

Next turn to $F_Z$, which has additional complications due to the regularity imbalances implying $U^3$ has worse regularity than $U^2$ near the critical times. 
Expand with a paraproduct and as usual with terms in which the coefficients appear in low frequency included in the remainder:
\begin{align*} 
F_Z & = - \sum_{k \neq 0}\frac{1}{t}\int A g  A \partial_Z \left( (U^3_{-k})_{Lo} (U^1_{k})_{Hi} \right) dV - \sum_{k \neq 0}\frac{1}{t}\int A g  A \partial_Z \left( (U^3_{-k})_{Hi} (U^1_{k})_{Lo} \right) dV \\ 
& \quad - \sum_{k \neq 0}\frac{1}{t}\int A g  A\left( \left((\psi_z)_{Hi} \partial_Y + (\phi_z)_{Hi}\partial_Z\right) (U^3_{-k} U^1_{k})_{Lo} \right) dV  +  F_{Z;\mathcal{R},C} \\  
& = F_{Z;LH} + F_{Z;HL} + F_{Z;C} + F_{Z;\mathcal{R}}. 
\end{align*}   
Consider first $F_{Z;LH}$, which is similar to the analogous term above in $F_{Y}$. 
Indeed, by \eqref{ineq:AprioriUneq}, \eqref{ineq:ABasic}, \eqref{ineq:AdelLij}, and \eqref{ineq:triQuadHL}, 
\begin{align*} 
F_{Z;LH} & \lesssim \frac{\epsilon}{t\jap{\nu t^3}^{\alpha}}\sum_{k \neq 0} \sum_{l,l^\prime} \int \abs{A\hat{g}(\eta,l) \frac{\abs{l} \jap{\eta,l}^2 \jap{t} \jap{\frac{t}{\jap{\xi,l^\prime}}}^{1+\delta_1}}{k^2 + (l^\prime)^2 + \abs{\xi-kt}^2} A^{1} \Delta_L \widehat{U^1_{k}}(\xi,l^\prime)_{Hi}} Low(-k,\eta-\xi,l-l^\prime) d\eta d\xi \\ 
& \lesssim \frac{\epsilon t^2}{\jap{\nu t^3}^\alpha} \norm{\left(\sqrt{\frac{\partial_t w}{w}}\tilde{A} + \frac{\abs{\grad}^{s/2}}{\jap{t}^s}A\right) g}_2^2 +  \frac{\epsilon t^2}{\jap{\nu t^3}^\alpha} \norm{\left(\sqrt{\frac{\partial_t w}{w}}\tilde{A}^1 + \frac{\abs{\grad}^{s/2}}{\jap{t}^s} A^1 \right) \Delta_L U^1_{\neq}}_2^2  \\ 
& \quad + \frac{\epsilon }{\jap{\nu t^3}^\alpha} \norm{\sqrt{-\Delta_L}Ag}_2 \norm{A^{1} \Delta_L U^1_{\neq}}_2,  
\end{align*}
which is consistent with Proposition \ref{prop:Boot} for $\epsilon$ sufficiently small by Lemmas \ref{lem:PEL_NLP120neq} and \ref{lem:SimplePEL}. 
Turn next to $F_{Z;HL}$, which is complicated by the regularity imbalance in $A^3$. 
Indeed, by \eqref{ineq:AprioriUneq}, Lemma \ref{lem:ABasic}, followed by \eqref{ineq:AA3Z}, \eqref{ineq:AdelLij}, and \eqref{ineq:triQuadHL}, we have
\begin{align*} 
F_{Z;HL} & \lesssim \frac{\epsilon}{t^{1-\delta_1}\jap{\nu t^3}^{\alpha}}\sum_{k \neq 0} \sum_{l,l^\prime} \int \abs{\hat{g}(\eta,l)} \frac{\abs{l} \jap{\eta,l}^2 \jap{\frac{t}{\jap{\xi,l^\prime}}}^{2}}{k^2 + (l^\prime)^2 + \abs{\xi-kt}^2} \\ 
& \quad \quad \times \left(\sum_{r}\chi^{r,NR}\frac{t}{\abs{r} + \abs{\eta-tr}}\tilde{A}(\eta,l) \tilde{A}^3_k(\xi,l^\prime)  + \chi^{\ast;23} A(\eta,l) A^3_k(\xi,l^\prime) \right) \\ & \quad\quad \times \abs{\Delta_L \widehat{U^3_{k}}(\xi,l^\prime)_{Hi}} Low(-k,\eta-\xi,l-l^\prime) d\eta d\xi \\ 
& \lesssim \frac{\epsilon \jap{t}^{1+\delta_1}}{\jap{\nu t^3}^\alpha}\norm{\left(\sqrt{\frac{\partial_t w}{w}}\tilde{A} + \frac{\abs{\grad}^{s/2}}{\jap{t}^{s}}A\right) g}_2 \norm{\left(\sqrt{\frac{\partial_t w}{w}}\tilde{A}^3 + \frac{\abs{\grad}^{s/2}}{\jap{t}^{s}}A^3 \right) \Delta_L U^3_{\neq}}_2 \\ 
& \quad + \frac{\epsilon}{t^{1-\delta_1}\jap{\nu t^3}^{\alpha}} \norm{\sqrt{-\Delta_L}Ag}_2 \norm{A^3 \Delta_L U^3_{\neq}}_2, 
\end{align*}
 which by Lemmas \ref{lem:PEL_NLP120neq} and \ref{lem:SimplePEL} is consistent with Proposition \ref{prop:Boot} for $\epsilon$ sufficiently small. 
The coefficient and remainder terms can be treated as in $F_Y$; hence these are omitted for brevity. 
This completes the treatment of the forcing terms and hence of the entire high norm estimate on $g$.  

\subsection{Low norm estimate on $g$}
Computing the evolution of $\norm{g}_{\G^{\lambda,\gamma}}$ (denoting $A^S = e^{\lambda(t)\abs{\grad}^s} \jap{\grad}^{\gamma}$) from \eqref{def:gPDE2},
\begin{align} 
\frac{1}{2}\frac{d}{dt}\left( t^{4} \norm{g}_{\G^{\lambda,\gamma}}^2\right) & \leq \dot{\lambda}t^4 \norm{\abs{\grad}^{s/2}g}_{\G^{\lambda,\gamma}}^2 - t^{4}\int A^S g A^S\left(g \partial_Y g\right) dV \nonumber \\ 
& \quad + t^{4} \int A^S g A^S \left(\tilde{\Delta_t}g\right) dV - t^3\int A^S g A^S \left( U_{\neq} \cdot \grad^t U^1_{\neq} \right)_0 dV \nonumber \\ 
& = -CK_\lambda^{g,L} + \mathcal{T} + \mathcal{D} + \mathcal{F}. \label{eq:gLowEvo} 
\end{align} 
The treatment of the transport nonlinearity $\mathcal{T}$ and the dissipation error terms in $\mathcal{D}$ are essentially same as in the previous section (in fact easier), so are hence omitted. 
It remains to see why the forcing $\mathcal{F}$ can treated better at lower regularity. 
Following the treatments in the previous section and \S\ref{sec:NzeroForcing}, we can use the divergence free condition to write
\begin{align*} 
\mathcal{F} & = -t^{3} \int A^S g A^S \left( \partial_Y^t \left( U^2_{\neq} U^1_{\neq}\right)_0 + \partial_Z^t \left( U^3_{\neq} U^1_{\neq}\right)_0 \right) dV. 
\end{align*} 
The two terms can be treated  together. 
Indeed, by Lemmas \ref{lem:GevProdAlg}, Lemma \ref{lem:CoefCtrl}, the bootstrap hypotheses, as well as Lemma \ref{lem:LossyElliptic} and \eqref{ineq:AprioriUneq}, 
\begin{align*} 
\mathcal{F} & \lesssim t^{3} \norm{g}_{\G^{\lambda,\gamma}}(1 + \norm{C}_{\G^{\lambda,\gamma+1}})\left(\norm{U^2_{\neq}}_{\G^{\lambda,\gamma+1}}\norm{U^1_{\neq}}_{\G^{\lambda,3/2+}} + \norm{U^3_{\neq}}_{\G^{\lambda,3/2+}}\norm{U^1_{\neq}}_{\G^{\lambda,\gamma+1}} \right. \\ & \left. \quad\quad + \norm{U^2_{\neq}}_{\G^{\lambda,3/2+}}\norm{U^1_{\neq}}_{\G^{\lambda,\gamma+1}} + \norm{U^3_{\neq}}_{\G^{\lambda,\gamma+1}}\norm{U^1_{\neq}}_{\G^{\lambda,3/2+}} \right) \\ 
&  \lesssim t^{3} \norm{g}_{\G^{\lambda,\gamma}} \left(\frac{\epsilon^2 \jap{t}^{\delta_1}}{\jap{\nu t^3}^\alpha} \right) \\ 
& \lesssim \frac{\epsilon^{1/2} t^{4}}{\jap{\nu t^3}^{\alpha}} \norm{g}^2_{\G^{\lambda,\gamma}} + \frac{\epsilon^{7/2} t^{2+2\delta_1}}{\jap{\nu t^3}^{\alpha}}, 
\end{align*}
which, for $\delta_1$ and $\epsilon$ sufficiently small (and $\delta > 0$), is consistent  with Proposition \ref{prop:Boot}  

\subsection{Long time, high norm estimate on $C^i$} \label{sec:ACC}
Next, we improve \eqref{ineq:Boot_ACC}.  
Computing the evolution equation on $C^i$, \eqref{def:CReal}, we get 
\begin{align} 
\frac{1}{2}\frac{d}{dt}\norm{A C^i}_2^2 & = \dot{\lambda}\norm{\abs{\grad}^{s/2}A C^i}_2^2 - \norm{\sqrt{\frac{\partial_t w}{w}} \tilde{A} C^i}_2^2 + \nu \int A C^i  A\left(\tilde{\Delta_t} C^i \right) dV \nonumber  \\ & \quad -\int A C^i A\left( g\partial_Y C^i \right) dV + \mathcal{L}^i \nonumber \\ 
& = -\mathcal{D}C^i + \mathcal{D}_E  + \mathcal{T} + \mathcal{L}^i, \label{ineq:C1Evo} 
\end{align}
where 
\begin{align*}
\mathcal{D}_E & = \nu \int AC^i A\left( (\tilde{\Delta_t} - \Delta) C^i \right) dV.  
\end{align*}
and
\begin{subequations} 
\begin{align}
\mathcal{L}^1 & = \int A C^1  A g  dV - \int A C^1 A U_0^2  dV \label{def:L1LT} \\ 
\mathcal{L}^2 & = -\int A C^2 A U_0^3  dV. \label{def:L2LT} 
\end{align}
\end{subequations}

\subsubsection{Linear driving terms}
\paragraph{Treatment of $\mathcal{L}^1$} \label{sec:LD1_Longtime}
Consider the first term in \eqref{def:L1LT}.
For this it suffices to use 
\begin{align*}
\int A C^1  A g  dV & \leq \frac{\epsilon}{2 c_0} \norm{AC^1}_2^2 + \frac{c_0}{2\epsilon} \norm{Ag}_2^2, 
\end{align*}
which, for $K_{HC1} \gg 1$, is consistent with Proposition \ref{prop:Boot} (via integrating factors). 

Turn to the second term in \eqref{def:L1LT}. 
From Lemma \ref{lem:PELbasicZero} (for some $K$ depending on $s,\sigma$ and $\lambda$),  
 \begin{align*} 
-\int A C^1  A U_0^2  dV & \leq \frac{\epsilon}{2 c_0}\norm{A C^1}_2^2 + \frac{c_0}{2\epsilon}\norm{A U^2_0}_2^2 \\
& \leq \frac{\epsilon}{2 c_0}\norm{AC^1}_2^2 + K \epsilon \norm{A C}_2^2 +  \frac{Kc_0}{\epsilon}\norm{A^2 Q^2_0}_2^2 + \frac{Kc_0}{\epsilon}\norm{U_0^2}_2^2, 
\end{align*} 
which for $\epsilon$ and $c_0$ sufficiently small and $K_{HC1}$ sufficiently large, is consistent with Proposition \ref{prop:Boot} (again, via integrating factors). 

\paragraph{Treatment of $\mathcal{L}^2$} \label{sec:L2_Longer}
Now consider the case $i = 2$. 
The issue here is that we want to propagate higher regularity on $C^2$ than we have on $U_0^3$ due to the regularity imbalance in $A^3$. 
First we have the following, independently of $\kappa$ (see \eqref{def:wNR3}), 
\begin{align*}
\mathcal{L}^2 & \lesssim \sum \int \abs{\widehat{C^2}(\eta,l)}\left(\sum_{r}\mathbf{1}_{t \in \I_{r,\eta}} \frac{t}{\abs{r} + \abs{\eta-tr}}\tilde{A}(\eta,l)\tilde{A}^3_0(\eta,l) + \chi^{\ast} A(\eta,l) A^3(\eta,l) \right) \jap{\eta,l}^2 \abs{ \widehat{U^3_0}(\eta,l)} d\eta,  
\end{align*} 
where $\chi^\ast = 1 - \sum_{r \neq 0} \mathbf{1}_{t \in \I_{r,\eta}}$. 
Therefore, by Lemma \ref{dtw} and orthogonality, 
\begin{align*} 
\mathcal{L}^2 & \lesssim \frac{\jap{t}}{\kappa} \sum_{r \neq 0} \norm{\sqrt{\frac{\partial_t w}{w}} \mathbf{1}_{t \in \I_{r,\partial_Y}} \tilde{A} C^2}_2 \norm{\sqrt{\frac{\partial_t w}{w}} \mathbf{1}_{t \in \I_{r,\partial_Y}} \jap{\grad}^2 \tilde{A}^3 U_0^3}_2 + \norm{AC^2}_2 \norm{\jap{\grad}^2 A^3 U_0^3}_2 \\
& \lesssim \frac{1}{\kappa} \norm{\sqrt{\frac{\partial_t w}{w}} \tilde{A}C^2}_2^2 + \frac{\jap{t}^2}{\kappa} \norm{\sqrt{\frac{\partial_t w}{w}} \jap{\grad}^2 \tilde{A}^3 U_0^3}_2^2 + c_0^{-1} \epsilon\norm{AC^2}_2^2 + \frac{c_0}{\epsilon}  \norm{\jap{\grad}^2 A^3 U_0^3}_2^2.  
\end{align*}  
This is consistent with Proposition \ref{prop:Boot} for $K_{HC1} \gg K_{H3}$ (using $t \leq T_F < c_0\epsilon^{-1}$), $c_0$ and $\epsilon$ sufficiently small and $\kappa$ sufficiently large (the latter relative only to a universal constant independent of all other parameters). 

\subsubsection{Transport nonlinearity}\label{sec:TransNon_ACC}
By Lemma \ref{lem:AAiProd}, \eqref{ineq:Boot_gLow}, and \eqref{ineq:Boot_LowC}, 
\begin{align*}
\mathcal{T} & \lesssim \norm{AC^i}_{2}\left(\norm{AC^i}_{\G^{\lambda,\gamma}} \norm{Ag}_{2} + \norm{g}_{\G^{\lambda,\gamma}}\norm{\grad A C^i}_2\right) \\ 
& \lesssim  \left(\epsilon + \frac{\epsilon^{1/2}}{\jap{t}^{4}}\right) \norm{AC^i}_2^2 + \epsilon^{3/2} \norm{\grad A C^i}_2^2,    
\end{align*}
which is consistent with Proposition \ref{prop:Boot} for $\epsilon$ and $c_0$ sufficiently small.  

\subsubsection{Dissipation error terms} \label{sec:DissC}
For these terms, as in \cite{BGM15I}, we may use an easy variant of the treatment in \S\ref{sec:DissErrg}. 
We omit the details for brevity: 
\begin{align*} 
\mathcal{D}_E & \lesssim \nu \norm{AC}_{2} \norm{\grad A C^i}_2^2 + \nu \norm{A C^i}_{2} \norm{\grad A C}_{2} \norm{\grad C^i}_{\G^{\lambda,\gamma-1}} \lesssim c_{0} \nu \norm{\grad A C}_2^2, 
\end{align*} 
which is then absorbed by the dissipation by choosing $c_{0}$ sufficiently small.

\subsection{Shorter time, high norm estimate on $C^i$}
The improvement of \eqref{ineq:Boot_ACC2} is essentially the same as that of \eqref{ineq:Boot_ACC} with a few slight changes. 
From \eqref{def:CReal}, 
\begin{align} 
\frac{1}{2}\frac{d}{dt}\left(\jap{t}^{-2}\norm{A C^i}_2^2\right) & = -\frac{t}{\jap{t}^4}\norm{A C^i}_2^2 + \jap{t}^{-2}\dot{\lambda}\norm{\abs{\grad}^{s/2}A C^i}_2^2 - \jap{t}^{-2}\norm{\sqrt{\frac{\partial_t w}{w}}\tilde{A} C^i}_2^2 \nonumber \\ & \quad + \jap{t}^{-2}\nu \int A C^i  A \left(\tilde{\Delta_t} C^i\right) dv \nonumber 
-\jap{t}^{-2}\int A C^1 A\left( g \partial_Y C^i \right) dV + \mathcal{L}^i \nonumber \\
& = -CK_{L}^{C} - \jap{t}^{-2}\mathcal{D}C^i + \mathcal{D}_E + \mathcal{T} + \mathcal{L}^i, \label{ineq:CCEvo} 
\end{align}
where 
\begin{align*}
\mathcal{D}_E = \jap{t}^{-2} \int AC^i A\left((\tilde{\Delta_t} - \Delta)C^i\right) dV 
\end{align*}
and 
\begin{subequations} 
\begin{align}
\mathcal{L}^1 & = \jap{t}^{-2}\int A C^1  A g  dV - \jap{t}^{-2}\int A C^1 A U_0^2  dV \label{def:L1} \\ 
\mathcal{L}^2 & = - \jap{t}^{-2}\int A C^2 A U_0^3  dV. \label{def:L2} 
\end{align}
\end{subequations} 
The only real difference between the estimates \eqref{ineq:Boot_ACC2} versus \eqref{ineq:Boot_ACC} is in the linear driving terms $\mathcal{L}^i$. Hence, we omit the treatment of $\mathcal{T}$ and $\mathcal{D}_E$, as these can be treated in essentially the same manner as in the improvement of \eqref{ineq:Boot_ACC}.

\subsubsection{Linear driving terms}

\paragraph{Treatment of $\mathcal{L}^1$} \label{sec:LD1_shorttime}
Consider first the case $i =1$. 
By Cauchy-Schwarz, 
\begin{align*}
\jap{t}^{-2}\int A C^1  A g  dV & \leq \frac{t}{2\jap{t}^4}\norm{AC^1}_2^2 + \frac{1}{2 t}\norm{Ag}_2^2 \leq \frac{1}{2}CK_{L}^{C,1} + \frac{1}{4}CK_L^g. 
\end{align*}
Hence the first term is absorbed by the $CK_L^{C,1}$ term in \eqref{ineq:CCEvo} whereas the second term is controlled by \eqref{ineq:Boot_Ag} and hence this is consistent with Proposition \ref{prop:Boot} provided $K_{HC2}$ is sufficiently large.

Consider the second term in \eqref{def:L1}.
By a similar argument but now applying Lemma \ref{lem:PELbasicZero}, we have for some $K > 0$,  
\begin{align*}
-\jap{t}^{-2}\int A C^1  A U_0^2   dV & \leq  \frac{t}{10\jap{t}^4}\norm{A C^1 }_2^2 + \frac{5}{t}\norm{A U_0^2}_2^2 \\ 
& \leq  \frac{t}{10\jap{t}^4}\norm{A C^1}_2^2 + \frac{K}{\jap{t}}\norm{A^2 Q_0^2}_2^2 + \frac{K}{\jap{t}}\norm{U_0^2}_2^2 + \frac{K\epsilon^2}{\jap{t}}\norm{AC}_2^2. 
\end{align*}
Hence for $K_{HC2}$ sufficiently large relative to $K_{HC1}$, this is consistent with Proposition \ref{prop:Boot} for $c_0$ and $\epsilon$ sufficiently small.

\paragraph{Treatment of $\mathcal{L}^2$} 
As in \S\ref{sec:L2_Longer}, we have (again defining $\chi^\ast = 1 - \sum_{r \neq 0}\mathbf{1}_{t \in \I_{r,\eta}}$), 
\begin{align*}
-\jap{t}^{-2} \int A C^2  A U_0^3  dV & \lesssim  \jap{t}^{-2} \sum \int \abs{\widehat{C^2}(\eta,l)}\left(\sum_{r}\mathbf{1}_{t \in \I_{r,\eta}} \frac{t}{\abs{r} + \abs{\eta-tr}}\tilde{A}(\eta,l)\tilde{A}^3_0(\eta,l) + \chi^{\ast} A(\eta,l)A^3_0(\eta,l) \right) \\ & \quad\quad \times \jap{\eta,l}^2 \abs{ \widehat{U^3_0}(\eta,l)} d\eta \\  
& \lesssim \kappa^{-1}\jap{t}^{-1} \sum_{r \neq 0} \norm{\sqrt{\frac{\partial_t w}{w}} \mathbf{1}_{t \in \I_{r,\partial_Y}} \tilde{A} C^2}_2 \norm{\sqrt{\frac{\partial_t w}{w}} \mathbf{1}_{t \in \I_{r,\partial_Y}} \jap{\grad}^2 \tilde{A}^3 U_0^3}_2 \\ & \quad + \jap{t}^{-2}\norm{AC^2}_2 \norm{\jap{\grad}^2 A^3 U_0^3}_2 \\ 
& = T_1 + T_2. 
\end{align*}
To treat the first term we use orthogonality and Lemma \ref{lem:PELCKZero} to deduce the following (where $K$ is a universal constant depending only on $\lambda$ and $s$ and differs from line to line),  
\begin{align*}
T_1 & \leq \frac{1}{10}\jap{t}^{-2}\norm{\sqrt{\frac{\partial_t w}{w}} \tilde{A}C^2}^2_2 + K\norm{\sqrt{\frac{\partial_t w}{w}} \jap{\grad}^2 \tilde{A}^3 U_0^3}_2^2 \\ 
& \leq \frac{1}{10}\jap{t}^{-2}\norm{\sqrt{\frac{\partial_t w}{w}} \tilde{A}C^2}^2_2 + K\norm{\left(\sqrt{\frac{\partial_t w}{w}}\tilde{A}^3 + \frac{\abs{\grad}^{s/2}}{\jap{t}^{s}} A^3 \right)  Q_0^3}_2^2 \\ 
& \quad + \frac{K}{\jap{t}^{2s}}\norm{U_0^3}_2^2 + K\epsilon^2\norm{\left(\sqrt{\frac{\partial_t w}{w}}\tilde{A} + \frac{\abs{\grad}^{s/2}}{\jap{t}^{s}}A\right) C}_2^2,  
\end{align*}
which is consistent with Proposition \ref{prop:Boot} for $c_0$ and $\epsilon$ sufficiently small together with $K_{HC2} \gg K_{H3}$. 
Turn next to $T_2$, which is treated in the same manner as the second term in \eqref{def:L1} (where $K$ is a universal constant depending only on $\lambda$ and $s$ and differs from line to line), 
\begin{align*}
T_2  & \leq \frac{t}{10\jap{t}^4}\norm{AC^2}_2^2 + \frac{5}{2t}\norm{A^3 \jap{\grad}^2 U_0^3}_2^2 \\
& \leq \frac{t}{10\jap{t}^4}\norm{AC^2}_2^2 + \frac{K}{\jap{t}}\norm{A^3 Q_0^3}_2^2 + \frac{K}{\jap{t}}\norm{U_0^3}_2^2 + \frac{K\epsilon^2}{\jap{t}}\norm{AC}_2^2 \\ 
& \leq \frac{t}{10\jap{t}^4}\norm{AC}_2^2 + \frac{4K K_{H3}}{\jap{t}}\epsilon^2 + \frac{4 K_{HC1} K\epsilon^2 c_0^2}{\jap{t}},
\end{align*}
which is sufficient provided $c_0$ and $\epsilon$ are chosen small and $K_{HC1} \gg K_{H3}$.

\subsection{Low norm estimate on $C$} 
The improvement of \eqref{ineq:Boot_LowC} estimate is an easy variation of that applied to improve \eqref{ineq:Boot_ACC} and \eqref{ineq:Boot_ACC2} except 
one uses the super-solution method discussed in \S\ref{sec:Q1Hi1} used to improve \eqref{ineq:Boot_Q1Hi1}. 

\section{Enhanced dissipation estimates} \label{sec:ED}
In this section we improve the enhanced dissipation estimates \eqref{ineq:Boot_ED}. 
A recurring theme here will be the gain in $t$ from Lemma \ref{lem:AnuLossy} when $\partial_X$ derivatives are present, a kind of ``null'' structure. 

\subsection{Enhanced dissipation of $Q^3$} \label{sec:ED3}
We begin with $Q^3$.
Computing the time evolution of $\norm{A^{\nu;3}Q^3}_2$ we get
\begin{align} 
\frac{1}{2}\frac{d}{dt}\norm{A^{\nu;3} Q^3}_2^2 & \leq \dot{\lambda}\norm{\abs{\grad}^{s/2}A^{\nu;3} Q^3}_2^2
-\frac{2}{t}\norm{\mathbf{1}_{t > \jap{\grad_{Y,Z}}} \tilde{A}^{\nu;3} Q^3}_2^2 - \norm{\sqrt{\frac{\partial_t w_L}{w_L}}A^{\nu;3}Q^3}_2^2  + G^\nu \nonumber \\
& \quad -2 \int A^{\nu;3} Q^3 A^{\nu;3} \partial_{YX}^t U^3 dV + 2 \int A^{\nu;3} Q^3 A^{\nu;3} \partial_{ZX}^t U^2 dV \nonumber \\   
 & \quad + \nu \int A^{\nu;3} Q^{3} A^{\nu;3} \left(\tilde{\Delta_t} Q^3\right) dv -\int A^{\nu;3} Q^3 A^{\nu;3}\left( \tilde U \cdot \grad Q^3 \right) dV \nonumber \\ 
& \quad - \int A^{\nu;3} Q^3 A^{\nu;3} \left[Q^j \partial_j^t U^3 + 2\partial_i^t U^j \partial_{ij}^t U^3  - \partial_Z^t\left(\partial_i^t U^j \partial_j^t U^i\right) \right] dV \nonumber \\
& = -\mathcal{D}Q^{\nu;3} - CK_{L}^{\nu;3} + G^{\nu} \nonumber \\ & \quad + LS3 + LP3 + \mathcal{D}_E + \mathcal{T} + NLS1 + NLS2 + NLP,  \label{ineq:AnuEvo3}
\end{align}
where we write 
\begin{align*}
\mathcal{D}_E & = \nu \int A^{\nu;3} Q^3 A^{\nu;3}\left(\tilde{\Delta_t}Q^3 - \Delta_L Q^3\right) dV, 
\end{align*} 
and
\begin{align*}
G^\nu = \alpha \int A^{\nu;3} Q^3 \min\left(1, \frac{\jap{\nabla_{Y,Z}}^2}{t^2}\right) e^{\lambda(t)\abs{\grad}^s}\jap{\grad}^\beta\jap{D(t,\partial_Y)}^{\alpha-1} \frac{D(t,\partial_Y)}{\jap{D(t,\partial_Y)}} \partial_t D(t,\partial_Y) Q^3_{\neq} dV. 
\end{align*} 
First, we need to cancel the growing term $G^\nu$ in \eqref{ineq:AnuEvo3} using part of the dissipation term $\mathcal{D}$. 
As in \cite{BGM15I} (and essentially \cite{BMV14}), 
\begin{align*} 
G^{\nu} - \nu \norm{\sqrt{-\Delta_L} A^{\nu;3}Q^3}_2^2  & \leq \nu \sum_{k \neq 0} \sum_{l} \int \left(\frac{1}{8}t^2\mathbf{1}_{t \geq 2 \abs{\eta}} - \abs{k}^2 - \abs{l}^2 - \abs{\eta-kt}^2\right)   \abs{A^{\nu;3} \widehat{Q^3_k}(\eta,l)}^2 d\eta \\ 
& \leq -\frac{\nu}{8}\norm{\sqrt{-\Delta_L}A^{\nu;3} Q^{3}_{\neq}}_2^2. 
\end{align*}
Next we see how to control the remaining linear and nonlinear contributions.
 
\subsubsection{Linear stretching term $LS3$} 
First separate into two parts (to be sub-divided further), 
\begin{align*} 
LS3 & = -2\int A^{\nu;3} Q^3 A^{\nu;3}\partial_X(\partial_Y - t\partial_X) U^3  dV - 2\int A^{\nu;3} Q^3 A^{\nu;3} \partial_X\left(\psi_y(\partial_Y - t\partial_X) + \phi_{y}\partial_{Z}\right) U^3 dV \\ 
& = LS3^0 + LS3^{C}. 
\end{align*} 
Turn first to $LS3^C$. By \eqref{ineq:L2L2L1}, \eqref{ineq:AnuiDistri}, Lemma \ref{lem:AnuLossy}, and Lemma \ref{lem:CoefCtrl},  
\begin{align} 
LS3^{C} & \lesssim \norm{\sqrt{-\Delta_L} A^{\nu;3}Q^3}_2\norm{C}_{\G^{\lambda,\beta+3\alpha+4}} \norm{A^{\nu;3}U^3}_2 \nonumber \\ 
& \lesssim \norm{\sqrt{-\Delta_L} A^{\nu;3}Q^3}_2\norm{C}_{\G^{\lambda,\beta+3\alpha+4}}\frac{1}{\jap{t}^2}\left(\norm{A^{\nu;3}Q^3}_2 + \norm{A^3Q^3}_2\right) \nonumber \\ 
& \lesssim \epsilon^{3/2}\norm{\sqrt{-\Delta_L} A^{\nu;3}Q^3}^2_2 + \frac{\epsilon^{1/2}}{\jap{t}^2}\left(\norm{A^{\nu;3}Q^3}_2 + \norm{A^3Q^3}_2\right)^2, \label{ineq:ED_LS3C}
\end{align} 
which is consistent with Proposition \ref{prop:Boot} for $\epsilon$ sufficiently small. 

For $LS3^0$ we proceed similar to the  high norm estimate in \S\ref{sec:LS30}.  
As in \eqref{eq:LS30}, we expand $\Delta_L\Delta_t^{-1}$: 
\begin{align} 
LS3^0 & = -2\int A^{\nu;3} Q^3 A^{\nu;3}\partial_X(\partial_Y - t\partial_X) \Delta_{L}^{-1} \Delta_L \Delta_t^{-1}Q^3  dV \nonumber \\
& =  -2\int A^{\nu;3} Q^3 A^{\nu;3}\partial_X(\partial_Y - t\partial_X) \Delta_{L}^{-1} \left[Q^3 - G_{yy} (\partial_Y - t\partial_X)^2 U^3 - G_{yz} \partial_Z(\partial_Y - t\partial_X)U^3 \right. \nonumber \\ & \quad\quad \left. - G_{zz} \partial_{ZZ}U^3 - \Delta_tC^1 (\partial_Y - t\partial_X) U^3 - \Delta_t C^2 \partial_Z U^3\right]  dV \nonumber \\ 
& = LS3^{0;0} + \sum_{i =1}^5 LS3^{0;Ci} . \label{def:LS30nu} 
\end{align} 
The leading order term is treated as in \cite{BGM15I}, hence we omit the details and simply state the result; for some $K > 0$, 
\begin{align*} 
LS3^{0;0} & \leq CK^{\nu;3}_{L} + \frac{\delta_\lambda}{10\jap{t}^{3/2}}\norm{\abs{\grad}^{s/2} A^{\nu;3}Q^3}_2^2 +  \frac{K}{\delta_\lambda^{\frac{1}{2s-1}}\jap{t}^{3/2}}\norm{ A^{\nu;3}Q^3}_2^2 + \frac{K}{\jap{t}^2} \norm{A^3 Q^3_{\neq}}^2_2, 
\end{align*} 
which is consistent with Proposition \ref{prop:Boot} provided $K_{ED3}$ is sufficiently large relative to $K_{H3}$ and $\delta_\lambda$.  
 
Turn to the first error term in \eqref{def:LS30nu}, $LS3^{0;C1}$, which by \eqref{ineq:AnuHiLowSep} and $\beta + 3\alpha + 6 < \gamma$ is controlled via (using also Lemma \ref{lem:CoefCtrl}), 
\begin{align} 
LS3^{0;C1} & \leq 2\norm{A^{\nu;3} Q^3}_2 \norm{A^{\nu;3} \partial_X(\partial_Y - t\partial_X) \Delta_L^{-1} \left(G_{yy}(\partial_Y - t\partial_X)^2 U^3_{\neq}\right)}_2 \nonumber \\ 
& \lesssim \frac{1}{\jap{t}^5}\norm{A^{\nu;3} Q^3_{\neq}}_{2} \norm{G_{yy}}_{\G^{\lambda,\gamma-1}} \norm{\Delta_L U^3_{\neq}}_{\G^{\lambda,\gamma-1}} + \frac{1}{\jap{t}}\norm{A^{\nu;3} Q^3}_2 \norm{A^{\nu;3} \left(G_{yy}(\partial_Y - t\partial_X)^2 U^3_{\neq}\right)}_2 \nonumber \\ 
& \lesssim \frac{\epsilon}{\jap{t}^2}\norm{A^3 Q^3_{\neq}}_2 \norm{A^3 \Delta_L U^3_{\neq}}_2 + \frac{1}{\jap{t}}\norm{A^{\nu;3} Q^3}_2 \norm{A^{\nu;3} \left(G_{yy}(\partial_Y - t\partial_X)^2 U^3_{\neq}\right)}_2. \label{ineq:LS30C1_ED}
\end{align} 
The first term is controlled via Lemma \ref{lem:SimplePEL}. To control the second term we use \eqref{ineq:AnuiDistriDecay} and Lemma \ref{lem:CoefCtrl}, 
\begin{align*} 
\frac{1}{\jap{t}}\norm{A^{\nu;3} Q^3}_2 \norm{A^{\nu;3} \left(G_{yy}(\partial_Y - t\partial_X)^2 U^3_{\neq}\right)}_2 & \lesssim \frac{1}{\jap{t}}\norm{ A^{\nu;3} Q^3}_2 \norm{C}_{\G^{\lambda,\gamma}} \norm{A^{\nu;3}(\partial_Y - t\partial_X)^2 U_{\neq}^3}_2 \\ 
& \lesssim  \epsilon\norm{A^{\nu;3}Q^3}_2 \norm{A^{\nu;3}(\partial_Y - t\partial_X)^2 U_{\neq}^3}_2. 
\end{align*}
By \eqref{ineq:PEL_CKnuIII}, this is consistent with Proposition \ref{prop:Boot} for $c_0$ sufficiently small. 
All the other $LS3^{0;Ci}$ error terms are controlled similarly and are hence omitted. 

This completes the treatment of $LS3^0$. 

\subsubsection{Linear pressure term $LP3$} \label{ineq:LP3_ED} 
Begin by separating out the contribution of the coefficients, 
 \begin{align*} 
LP3 & = 2\int A^{\nu;3} Q^3 A^{\nu;3}\partial_X \partial_Z U_{\neq}^2  dV 
 + 2\int A^{\nu;3} Q^3 A^{\nu;3} \partial_X\left( \left((\psi_z)(\partial_Y - t\partial_X) + (\phi_{z})\partial_Z\right)U_{\neq}^2\right)  dV \\ 
& = LP3^0 + LP3^C.
\end{align*}                 
As in \cite{BGM15I}, Cauchy-Schwarz and \eqref{def:wL}, 
\begin{align*}
LP3^0 & \leq \frac{1}{2\kappa}\norm{\sqrt{\frac{\partial_t w_L}{w_L}} A^{\nu;3} Q^3_{\neq}}_2^2 + \frac{1}{2\kappa} \norm{\sqrt{\frac{\partial_t w_L}{w_L}} \Delta_L A^{\nu;3} U^2}_2^2, 
\end{align*}
which is consistent with Proposition \ref{prop:Boot} for $\kappa$ sufficiently large, $c_0$ sufficiently small, and $K_{ED3} \gg K_{ED2}$  by Lemma \ref{lem:AnuLossy_CKnu}. 

The coefficient error term, $LP3^C$, can be treated in the same manner as $LS3^C$ above in \eqref{ineq:ED_LS3C} and yields similar contributions. 
Hence we omit the treatment for brevity. 
This completes the treatment of the linear pressure term $LP3$. 

\subsubsection{Nonlinear pressure and stretching} \label{sec:NLPS_Q3ED}
Due to the regularity gap $\beta + 3\alpha +12 \leq \gamma$ and \eqref{ineq:AnuiDistri},  
the presence of the coefficients from the coordinate transform will not greatly impact the treatment of these terms. 
Moreover, Lemma \ref{lem:AnuLossy} shows there is not a significant difference between $\partial_Y - t\partial_X$ and $\partial_Z$ derivatives when making many estimates.   
Hence, for simplicity we will treat all $NLS$ and $NLP$ terms as if there were no variable coefficients. 
As in \cite{BGM15I}, we will enumerate the terms as follows for $i,j \in \set{1,2,3}$ and 
$a,b \in \set{0,\neq}$
\begin{subequations}  \label{def:Q3enumnu}
\begin{align}
NLP(i,j,a,b) & = \int A^{\nu;3} Q^3 A^{\nu;3} \partial_Z^t(\partial_j^t U^i_a \partial_i^t U^j_b  ) dV \\
NLS1(j,a,b) & = -\int A^{\nu;3} Q^3 A^{\nu;3} \left( Q^j_a \partial_j^t U^3_b  \right) dV \\
NLS2(i,j,a,b) & = -2\int A^{\nu;3} Q^3 A^{\nu;3} (\partial_i^t U^j_a \partial_i^t\partial_j^t U^3_b  ) dV.
\end{align}
\end{subequations}
We will use repeatedly the inequalities 
\begin{subequations} 
\begin{align}
A^{\nu; 3} & \lesssim t A^{\nu;1} \\ 
A^{\nu; 3} & \lesssim A^{\nu;2}. 
\end{align}
\end{subequations} 

\paragraph{Treatment of $NLP(i,j,0,\neq)$ terms} 
Recalling, \eqref{def:Q3enumnu}, note that by the usual null structure, we have $j \neq 1$. By \eqref{ineq:AnuiDistri} 
\begin{align*} 
NLP(i,j,\neq,0) & \lesssim \norm{A^{\nu;3}Q^3}_2 \norm{A^{\nu;3} \jap{\partial_{Z}} \partial^t_i U^j}_2 \norm{U_0^i}_{\G^{\lambda,\beta+3\alpha+5}}.  
\end{align*} 
From Lemma \ref{lem:AnuLossy}, we see that the loss of $t$ if $i=1$ on the third factor is balanced by a gain of $t$ on the second. 
On the other hand, if $i \neq 1$ then there is no loss of $t$ on the last factor 
but a loss of $t$ on the second. 
Therefore, after Lemma \ref{lem:AnuLossy} we get
\begin{align*} 
NLP(i,j,\neq,0) & \lesssim \epsilon \norm{A^{\nu;3}Q^3}_2\left(\norm{A^{j} Q^j_{\neq}}_2 + \norm{A^{\nu;j}Q^j}_2\right),
\end{align*} 
which is consistent with Proposition \ref{prop:Boot} for $c_0$ sufficiently small. 

\paragraph{Treatment of $NLS1(j,0,\neq)$ terms}\label{sec:NLS10neq_Q3ED} 
Next turn to the treatment of the $NLS1(j,0,\neq)$ terms (recalling \eqref{def:Q3enumnu}), which by \eqref{ineq:AnuiDistri} followed by \eqref{ineq:AnuLossyII} (noting a above that when $j = 1$, the loss of $t$ from the second factor is balanced by a gain of $t$ on the third factor),  
\begin{align*} 
NLS1(j,0,\neq) & \lesssim \norm{A^{\nu;3}Q^3}_2 \norm{Q^j_0}_{\G^{\lambda,\beta + 3\alpha + 4}}  \norm{A^{\nu;3}\partial_j^t U^3}_2 \lesssim \frac{\epsilon}{\jap{t}}\norm{A^{\nu;3}Q^3}_2\left(\norm{A^{\nu;3}Q^3}_2 + \norm{A^3 Q^3} \right) 
\end{align*} 
which is consistent with Proposition \ref{prop:Boot} for $c_0$ sufficiently small. 

\paragraph{Treatment of $NLS1(j,\neq,0)$ terms} \label{sec:ED3NSL1ijneq0}
Next turn to the treatment of the $NLS1(j,\neq,0)$ terms which by \eqref{ineq:AnuiDistri} followed by \eqref{ineq:AnuLossyII} (noting that $j \neq 1$), 
\begin{align*} 
NLS1(j,\neq,0) \lesssim \norm{A^{\nu;3}Q^3}_2 \norm{A^{\nu;3} Q^j}_2  \norm{U^3_0}_{\G^{\lambda,\beta + 3\alpha + 4}} \lesssim \epsilon \norm{A^{\nu;3}Q^3}_2 \norm{A^{\nu;j}Q^j}_2, 
\end{align*} 
which is consistent with Proposition \ref{prop:Boot} for $c_0$ sufficiently small. 

\paragraph{Treatment of $NLS2(i,j,\neq,0)$ terms} 
From \eqref{def:Q3enumnu} we see that that \emph{neither} $i$ nor $j$ can be one. 
Therefore, similar to \S\ref{sec:NLS10neq_Q3ED}, we get by \eqref{ineq:AnuiDistri}, 
\begin{align*}
NLS2(i,j,\neq,0) & \lesssim \frac{\epsilon}{\jap{t}}\norm{A^{\nu;3}Q^3}_2\left(\norm{A^{\nu;j}Q^j}_2 + \norm{A^{j}Q^j_{\neq}}_2 \right)
\end{align*}
which is consistent with Proposition \ref{prop:Boot} for $c_0$ sufficiently small. 

\paragraph{Treatment of $NLS2(i,j,0,\neq)$ terms}  
Next turn to the treatment of the $NLS1(i,j,\neq,0)$ terms,  
where now notice that $i$ cannot be one but $j$ can. 
However, if $j = 1$ then we will gain a power of $t$ on $\partial_X U^3_{\neq}$ using Lemma \ref{lem:AnuLossy}. 
Therefore, it follows from \eqref{ineq:AnuiDistri} and Lemma \ref{lem:AnuLossy} that, 
\begin{align*} 
NLS2(i,j,0,\neq) & \lesssim \epsilon \norm{A^{\nu;3}Q^3}_2\left(\norm{A^{3} Q^3_{\neq}}_2 + \norm{A^{\nu;3}Q^3}_2 \right). 
\end{align*} 

\paragraph{Treatment of $NLP(i,j,\neq,\neq)$} \label{sec:NLPneqneq_Q3ED}
Notice that we will lose a power of $t$ from $A^1$ if $j$ or $i$ is one, but in this case we would lose one less power of $t$ in Lemma \ref{lem:AnuLossy} due to the presence of $X$ derivatives. 
Hence regardless of the combination of $i$ and $j$, we will gain at least one power of $t$
Therefore, from \eqref{ineq:AnuiDistriDecay},  
\begin{align*}  
NLP(i,j,\neq,\neq) & \leq \norm{A^{\nu;3}Q^3}_2 \norm{A^{\nu;3}\partial_Z^t \left( \partial_i^t U^j_{\neq} \partial_j^t U^i_{\neq}\right)}_2 \\ 
& \lesssim \frac{t^2}{\jap{\nu t^3}^\alpha} \norm{A^{\nu;3}Q^3}_2 \left(\norm{A^{\nu;3}\partial^t_{Z} \partial_i^t U^j}_2\norm{ A^{\nu;3}\partial_{j}^t U^i}_2 + \norm{A^{\nu;3}\partial_{i}^t U^j}_2\norm{A^{\nu;3}\partial_{Z}^t \partial_j^t U^i}_2\right) \\ 
& \lesssim \frac{\epsilon^2 \jap{t}}{\jap{\nu t^3}^\alpha} \norm{A^{\nu;3}Q^3}_2 \lesssim \frac{\epsilon \jap{t}}{\jap{\nu t^3}^{\alpha}}\norm{A^{\nu;3}Q^3}_2^2 + \frac{\epsilon^3 \jap{t}}{\jap{\nu t^3}^{\alpha}},
\end{align*} 
which is consistent with Proposition \ref{prop:Boot} for $\epsilon$ sufficiently small. 

\paragraph{Treatment of $NLS1(j,\neq,\neq)$} \label{sec:NLS1neqneq_Q3ED}
These terms are all treated in essentially the same manner. 
Indeed, using as usual that $j = 1$ loses a power of $t$ from $A^{\nu;1}$ but gains a power from Lemma \ref{lem:AnuLossy}, we get from \eqref{ineq:AnuiDistriDecay} and \eqref{ineq:AnuLossyII}, 
\begin{align*}
NLS1(j,\neq,\neq) & \lesssim \frac{\jap{t}^2 }{\jap{\nu t^3}^{\alpha}} \norm{A^{\nu;3} Q^3}_2\norm{A^{\nu;3} Q^j}_2 \norm{A^{\nu;3} \partial_j^t U_{\neq}^3}_2 \\ 
& \lesssim \frac{\jap{t} }{\jap{\nu t^3}^{\alpha}} \norm{A^{\nu;3} Q^3}_2\norm{A^{\nu;j} Q^j}_2\left(\norm{A^{\nu;3} Q^3}_2 + \norm{A^{3} Q^3_{\neq}}_2 \right), 
\end{align*}
which is consistent with Proposition \ref{prop:Boot} for $\epsilon$ sufficiently small.

\paragraph{Treatment of $NLS2(i,j,\neq,\neq)$} \label{sec:NLS2neqneq_Q3ED}
The treatment of $NLS2$ is essentially the same as $NLP$, using again that the losses and gains balance regardless of the combination of $i$ and $j$, we get  from \eqref{ineq:AnuiDistriDecay} and Lemma \ref{lem:AnuLossy},   
\begin{align*} 
NLS2(i,j,\neq,\neq) 
& \lesssim \frac{t^2}{\jap{\nu t^3}^\alpha} \norm{A^{\nu;3}Q^3}_2 \norm{A^{\nu;3}\partial_{i} U^j}_2 \norm{ A^{\nu;3}\partial_{ij}^t U^3}_2 \\ 
& \lesssim \frac{\jap{t}}{\jap{\nu t^3}^\alpha} \norm{A^{\nu;3}Q^3}_2 \left(\norm{A^{\nu;3} Q^3}_2 + \norm{A^{3} Q^3_{\neq}}_2 \right)\left(\norm{A^{\nu;j} Q^j}_2 + \norm{A^{j} Q^j_{\neq}}_2 \right), 
\end{align*} 
which is consistent with Proposition \ref{prop:Boot} for $\epsilon$ sufficiently small. 

\subsubsection{Transport nonlinearity} \label{sec:Trans_ED_Q3}
Divide the transport nonlinearity:
\begin{align*} 
\mathcal{T} & = -\int A^{\nu;3}Q^3 A^{\nu;3}\left(g \partial_Y Q^3_{\neq}\right) dV - \int A^{\nu;3}Q^3 A^{\nu;3}\left(\tilde U_{\neq} \cdot \grad Q^3_{0}\right) dV -\sum \int A^{\nu;3}Q^3 A^{\nu;3}\left(\tilde U_{\neq} \cdot \grad Q^3_{\neq}\right) dV \\ 
& = \mathcal{T}_0 + \mathcal{T}_{\neq 0} + \mathcal{T}_{\neq \neq}
\end{align*}   
Consider first $\mathcal{T}_0$. 
By \eqref{ineq:AnuiDistri} and $\abs{\eta} \leq \abs{\eta-kt} + \abs{kt} \leq \jap{t}\left(\abs{\eta-kt} + \abs{k}\right)$, 
\begin{align*}  
\mathcal{T}_0 & \lesssim \norm{A^{\nu;3}Q^3}_2 \norm{g}_{\G^{\lambda,\gamma}}\jap{t} \norm{\sqrt{-\Delta_L} A^{\nu;3}Q^3}_2  \lesssim \frac{\epsilon^{1/2}}{\jap{t}^{2}}\norm{A^{\nu;3}Q^3}_2^2 + \epsilon^{3/2}\norm{\sqrt{-\Delta_L} A^{\nu;3}Q^3}_2^2. 
\end{align*}
where the last line followed from both \eqref{ineq:Boot_gLow} and \eqref{ineq:Boot_Ag}. 
Hence, for $\epsilon$ and $c_0$ sufficiently small, this is consistent with Proposition \ref{prop:Boot}. 

Turn next to $\mathcal{T}_{\neq 0}$, which reads 
\begin{align*}
\mathcal{T}_{\neq 0} & = \int A^{\nu;3}Q^3 A^{\nu;3}\left(\begin{pmatrix} (1 + \psi_y) U^2_{\neq} + \psi_zU^3_{\neq} \\ (1+\phi_{z})U^3_{\neq} + \phi_{y}U_{\neq}^2 \end{pmatrix}  \cdot \begin{pmatrix} \partial_{Y} Q_{0}^3 \\ \partial_Z Q_{0}^3 \end{pmatrix}\right) dV. 
\end{align*}
By \eqref{ineq:AnuiDistri}, Lemma \ref{lem:CoefCtrl}, and Lemma \ref{lem:AnuLossy}, we have 
\begin{align*}
\mathcal{T}_{\neq 0} & \lesssim \norm{A^{\nu;3}Q^3}_2\left(\norm{A^{\nu;3}U^2}_2 + \norm{A^{\nu;3}U^3}_2\right)\norm{\grad Q^3_0}_{\G^{\lambda,\gamma}} \\ 
& \lesssim \frac{\epsilon}{\jap{t}^2} \norm{A^{\nu;3}Q^3}_2\left(\norm{A^{\nu;2}Q^2}_2 + \norm{A^{2}Q^2_{\neq}}_2 + \norm{A^{\nu;3}Q^3}_2 + \norm{A^{3}Q^3_{\neq}}_2\right), 
\end{align*} 
which is consistent with Proposition \ref{prop:Boot}.

Turn next to $\mathcal{T}_{\neq \ne}$, which is the most subtle contribution. This is written 
\begin{align*} 
\mathcal{T}_{\neq \neq} & = \int A^{\nu;3}Q^3 A^{\nu;3}\left(\begin{pmatrix}U^1_{\neq} \\ (1 + \psi_y) U^2_{\neq} + \psi_zU^3_{\neq} \\ (1+\phi_{z})U^3_{\neq} + \phi_{y}U_{\neq}^2 \end{pmatrix}  \cdot \begin{pmatrix} \partial_X Q^3_{\neq} \\ (\partial_{Y} - t\partial_X)Q_{\neq}^3 \\ \partial_Z Q_{\neq}^3 \end{pmatrix}\right) dV. 
\end{align*}  
By Cauchy-Schwarz, \eqref{ineq:AnuiDistri}, Lemma \ref{lem:CoefCtrl} and \eqref{ineq:AnuiDistriDecay}, we get
\begin{align*} 
\mathcal{T}_{\neq \neq}  & \lesssim \norm{A^{\nu;3}Q^3}_2 \frac{\jap{t}^2}{\jap{\nu t^3}^{\alpha}} \left(\norm{A^{\nu;3}U^3}_2 + \norm{A^{\nu;3}U^2}_2\right) \norm{\sqrt{-\Delta_L} A^{\nu;3} Q^3}_{2} \\
& \quad + \norm{A^{\nu;3}Q^3}_2 \frac{\jap{t}^{2}}{\jap{\nu t^3}^\alpha}\left(\norm{\jap{\grad}^{2-\beta} A^{\nu;3}U^1}_2\norm{\sqrt{-\Delta_L} A^{\nu;3} Q^3}_{2} + \norm{A^{\nu;3}U^1}_2\norm{A^{\nu;3}Q^3}_2\right); 
\end{align*} 
note the extra precision applied to the treatment of $U^1$.  
By
\begin{align}
\frac{1}{\jap{\eta,l}} A^{\nu;3}_k(\eta,l) & \approx \frac{\jap{t}}{\jap{\eta,l} \jap{\frac{t}{\jap{\eta,l}}}^{1-\delta_1} } A^{\nu;1} \lesssim \jap{t}^{\delta_1}A^{\nu;1}, \label{ineq:A3nuA1nuRelation}
\end{align}
it follows that
\begin{align*}
\mathcal{T}_{\neq \neq}  & \lesssim \norm{A^{\nu;3}Q^3}_2 \frac{\jap{t}^2}{\jap{\nu t^3}^{\alpha}} \left(\norm{A^{\nu;3}U^3}_2 + \norm{A^{\nu;2}U^2}_2\right) \norm{\sqrt{-\Delta_L} A^{\nu;3} Q^3}_{2} \\
& \quad + \norm{A^{\nu;3}Q^3}_2 \frac{\jap{t}^{2+\delta_1}}{\jap{\nu t^3}^\alpha}\left(\norm{A^{\nu;1}U^1}_2\norm{\sqrt{-\Delta_L} A^{\nu;3} Q^3}_{2} + t\norm{A^{\nu;1}U^1}_2\norm{A^{\nu;3}Q^3}_2\right). 
\end{align*}  
Applying from \eqref{ineq:AnuHiLowSep2} to the $t\norm{A^{\nu;3}Q^3}_2$ in the last factor and Lemma \ref{lem:AnuLossy} to all factors (also \eqref{ineq:Boot_ED} with \eqref{ineq:Boot_Hi}) it follows that, 
\begin{align*} 
& \lesssim \norm{A^{\nu;3}Q^3}_2 \frac{1}{\jap{\nu t^3}^{\alpha}} \left(\norm{A^{\nu;3}Q^3}_2 + \norm{A^{3}Q^3_{\neq}}_2\right)  \norm{\sqrt{-\Delta_L} A^{\nu;3} Q^3}_{2} \\
& \quad + \norm{A^{\nu;3}Q^3}_2 \frac{1}{\jap{\nu t^3}^\alpha}\left(\norm{A^{\nu;2}Q^2}_2 + \norm{A^2 Q^2_{\neq}}_2\right) \norm{\sqrt{-\Delta_L} A^{\nu;3} Q^3}_{2} \\  
& \quad + \norm{A^{\nu;3}Q^3}_2 \frac{t^{\delta_1}}{\jap{\nu t^3}^\alpha}\left(\norm{A^{\nu;1}Q^1}_2 + \norm{A^1Q^1_{\neq}}_2\right)\left(\norm{\sqrt{-\Delta_L} A^{\nu;3} Q^3}_{2} + \norm{A^{3}Q^3_{\neq}}_2\right) \\
& \lesssim \frac{\epsilon t^{\delta_1}}{\jap{\nu t^3}^{\alpha}} \norm{A^{\nu;3}Q^3}_2\left(\norm{\sqrt{-\Delta_L} A^{\nu;3} Q^3}_{2} + \norm{A^{3}Q^3_{\neq}}_2\right) \\  
& \lesssim \epsilon^{3/2}\norm{\sqrt{-\Delta_L} A^{\nu;3} Q^3}_{2}^2 + \frac{\epsilon^{1/2} t^{2\delta_1}}{\jap{\nu t^3}^{\alpha}} \norm{A^{\nu;3}Q^3}^2_2 + \frac{\epsilon^{3/2}}{\jap{\nu t^3}^{\alpha}} \norm{A^3 Q^3_{\neq}}_2^2, 
\end{align*}
which is consistent with Proposition \ref{prop:Boot} for $\epsilon$, $\delta_1$, and $c_0$ sufficiently small (also $\delta > 0$). 

\subsubsection{Dissipation error terms} \label{sec:DE_ED_Q3}
The dissipation error terms are easily absorbed by the dissipation as in \cite{BMV14,BGM15I} using \eqref{ineq:AnuiDistri} together with the regularity gap between $A^{\nu;3}$ and the coefficient control in \eqref{ineq:Boot_LowC}. 
We hence omit the treatment for brevity.  

\subsection{Enhanced dissipation of $Q^2$} 
The enhanced dissipation of $Q^2$ is deduced in a manner very similar to $Q^3$, however, since we are imposing more control on $Q^2$, some nonlinear interactions must be handled with more precision. 

Computing the time evolution of $\norm{A^{\nu;2}Q^2}_2$ we get
\begin{align} 
\frac{1}{2}\frac{d}{dt}\norm{A^{\nu;2} Q^2}_2^2 & \leq \dot{\lambda}\norm{\abs{\grad}^{s/2}A^{\nu;2} Q^2}_2^2 -\frac{1}{t}\norm{\mathbf{1}_{t > \jap{\grad_{Y,Z}}} A^{\nu;2} Q^2}_2^2 - \norm{\sqrt{\frac{\partial_t w_L}{w_L}}A^{\nu;2}Q^2}_2^2 + G^\nu  \nonumber \\ %
 & \quad + \nu \int A^{\nu;2} Q^{2} A^{\nu;2} \left(\tilde{\Delta_t} Q^2\right) dV -\int A^{\nu;2} Q^2 A^{\nu;2}\left( \tilde U \cdot \grad Q^2 \right) dV \nonumber \\ 
& \quad - \int A^{\nu;2} Q^2 A^{\nu;2} \left[\left(Q^j \partial_j^t U^2\right) + 2\partial_i^t U^j \partial_{ij}^t U^2  - \partial_Y^t\left(\partial_i^t U^j \partial_j^t U^i\right) \right] dV \nonumber \\  
& = -\mathcal{D}Q^{\nu;2} - CK_{L}^{\nu;2}  + G^{\nu} + \mathcal{D}_E + \mathcal{T} + NLS1 + NLS2 + NLP,  \label{ineq:AnuEvo2}
\end{align} 
where as in \S\ref{sec:ED3}, we write 
\begin{align*}
\mathcal{D}_E & = \nu \int A^{\nu;2} Q^2 A^{\nu;2}\left(\tilde{\Delta_t}Q^2 - \Delta_L Q^2\right) dV, 
\end{align*} 
and
\begin{align*}
G^\nu = \alpha \int A^{\nu;2} Q^2 \min\left(1, \frac{\jap{\grad_{Y,Z}}}{t}\right) e^{\lambda(t)\abs{\grad}^s}\jap{\grad}^\beta\jap{D(t,\partial_v)}^{\alpha-1} \frac{D(t,\partial_v)}{\jap{D(t,\partial_v)}} \partial_t D(t,\partial_v) Q^2_{\neq} dV. 
\end{align*} 
As in \eqref{ineq:AnuEvo3} we have 
\begin{align*} 
-\nu \norm{\sqrt{-\Delta_L}A^{\nu;2} Q^{2}_{\neq}}_2^2 + G^{\nu}& \leq -\frac{\nu}{8}\norm{\sqrt{-\Delta_L}A^{\nu;2} Q^{2}}_2^2. 
\end{align*}

\subsubsection{Nonlinear pressure and stretching}
In this section we treat $NLS1$, $NLS2$ and $NLP$. 
As in \S\ref{sec:NLPS_Q3ED}, for simplicity we will treat all $NLS$ and $NLP$ terms as if there were no variable coefficients. 
We also recall the following enumeration from \cite{BGM15I}, for $i,j \in \set{1,2,3}$ and 
$a,b \in \set{0,\neq}$
\begin{subequations}  \label{def:Q2enumnu}
\begin{align}
NLP(i,j,a,b) & = \int A^{\nu;2} Q^2 A^{\nu;2} \partial_Y^t(\partial_j^t U^i_a \partial_i^t U^j_b  ) dV \\
NLS1(j,a,b) & = -\int A^{\nu;2} Q^2 A^{\nu;2} \left( Q^j_a \partial_j^t U^2_b  \right) dV \\
NLS2(i,j,a,b) & = -2\int A^{\nu;2} Q^2 A^{\nu;2} (\partial_i^t U^j_a \partial_i^t\partial_j^t U^2_b  ) dV.
\end{align}
\end{subequations}
We will use repeatedly the inequalities 
\begin{subequations} 
\begin{align}
A^{\nu; 2} & \lesssim t^{1+\delta_1}A^{\nu;1} \\ 
A^{\nu; 2} & \lesssim tA^{\nu;3}. 
\end{align}
\end{subequations}

\paragraph{Treatment of $NLP(i,j,0,\neq)$ terms} \label{sec:NLP0neq_Q2ED}
This includes terms identified in \S\ref{sec:Toy} as requiring that $Q^2$ grow linearly at low frequencies, and we will see that we will need this in order to estimate these terms. 
By \eqref{ineq:AnuiDistri}, 
\begin{align*}
NLP(i,j,0,\neq) & \lesssim \norm{A^{\nu;2} Q^2}_2 \norm{U_0^i}_{\G^{\lambda,\beta+3\alpha + 5}} \norm{A^{\nu;2} \jap{\partial_Y^t} \partial_i U^j}_2. 
\end{align*}
With \eqref{ineq:AnuLossyED} in mind, the power of $t$ lost from the derivatives or $j=1$ together is at most two and the powers of $t$ lost from the possibility that $j=3$ is also at most an additional one (also note $j \neq 1$), so at worst we get from Lemma \ref{lem:AnuLossy} (which recovers the powers of time), 
and \eqref{ineq:AprioriU0}, 
\begin{align*}
NLP(i,j,0,\neq) & \lesssim \epsilon\norm{A^{\nu;2} Q^2}_2 \left( \norm{\sqrt{-\Delta_L} A^{\nu;j} Q^j}_2 + \norm{\sqrt{-\Delta_L} A^{j} Q^j_{\neq}}_2\right) \\ 
& \lesssim \epsilon^{1/2}\norm{A^{\nu;2} Q^2}_2^{2} + \epsilon^{3/2}\left( \norm{\sqrt{-\Delta_L} A^{\nu;j} Q^j}^2_2 + \norm{\sqrt{-\Delta_L} A^{j} Q^j_{\neq}}^2_2\right). 
\end{align*}
For $\epsilon$ sufficiently small this is consistent with Proposition \ref{prop:Boot} for times until $t \sim \epsilon^{-1/2+\delta/100}$. 
At this point we can apply \eqref{ineq:AnuHiLowSep2} to the first term and deduce 
\begin{align}
NLP(i,j,0,\neq) & \lesssim \frac{\epsilon^{1/2}}{\jap{t}^2}\norm{A^{2} Q^2_{\neq}}_2^{2} + \frac{\epsilon}{\jap{t}^2}\norm{\sqrt{-\Delta_L} A^{\nu;2} Q^2}^2_2  + \epsilon^{3/2}\left( \norm{\sqrt{-\Delta_L} A^{\nu;j} Q^j}_2 + \norm{\sqrt{-\Delta_L} A^{j} Q^j_{\neq}}_2\right) \nonumber \\ 
& \lesssim  \frac{\epsilon^{1/2}}{\jap{t}^2}\norm{A^{2} Q^2_{\neq}}_2^{2} + \epsilon^{3/2-\delta/50} \norm{\sqrt{-\Delta_L} A^{\nu;2} Q^2}^2_2 \nonumber \\ & \quad  + \epsilon^{3/2}\left( \norm{\sqrt{-\Delta_L} A^{\nu;j} Q^j}_2 + \norm{\sqrt{-\Delta_L} A^{j} Q^j_{\neq}}_2\right), \label{ineq:NLPij0neq_Q2ED}
\end{align} 
which is consistent with Proposition \ref{prop:Boot} for all time for $\epsilon$ sufficiently small.  

\paragraph{Treatment of $NLS1(j,0,\neq)$ terms} \label{sec:NLS10neq_Q2ED}
These terms are straightforward by \eqref{ineq:AnuiDistri}, \eqref{ineq:Boot_Hi}, and \eqref{ineq:AnuLossyII}; we omit the details and conclude
\begin{align*}
NLS1(j,0,\neq) & \lesssim \frac{\epsilon}{\jap{t}}\norm{A^{\nu;2}Q^2}_2\left(\norm{A^{\nu;2}Q^2}_2 + \norm{A^{2}Q^2_{\neq}}_2\right). 
\end{align*}

\paragraph{Treatment of $NLS1(j,\neq,0)$ terms} \label{sec:NLS1neq0_Q2ED}
Due to the nonlinear structure, $j \neq 1$. Hence, the worst possibility is $j = 3$, where at most one power of time is lost -- notice that this also depends on the linear growth at low frequencies of $Q^2$. Hence, this term emphasizes this important difference with \cite{BGM15I}.  
Hence, by \eqref{ineq:AnuiDistri}, \eqref{ineq:AprioriU0}, and \eqref{ineq:AnuHiLowSep2},
\begin{align*}
NLS1(j,\neq,0) & \lesssim \epsilon\left(\norm{\sqrt{-\Delta_L} A^{\nu;2}Q^2}_2 + \norm{A^{2}Q^2_{\neq}}_2\right)\norm{A^{\nu;j}Q^{j}}_2 \\ 
& \lesssim \epsilon^{3/2}\left(\norm{\sqrt{-\Delta_L} A^{\nu;2}Q^2}_2^2 + \norm{A^{2}Q^2_{\neq}}_2^2\right) + \epsilon^{1/2}\norm{A^{\nu;j}Q^{j}}_2^2.      
\end{align*}
By applying \eqref{ineq:AnuHiLowSep2} for $t \gtrsim \epsilon^{-1/2+\delta/100}$ as in \eqref{ineq:NLPij0neq_Q2ED}, this is consistent with Proposition \ref{prop:Boot} for $\epsilon$ sufficiently small. 

\paragraph{Treatment of $NLS2(i,j,0,\neq)$ terms} 
These are treated similar to the analogous $NLS1$ terms in \S\ref{sec:NLS10neq_Q2ED}, yielding the following 
\begin{align*} 
NLS2(i,j,0,\neq)  \lesssim \epsilon\norm{A^{\nu;2}Q^2}\left(\norm{A^{\nu;2} Q^2} + \norm{A^2 Q^2_{\neq}}_2\right), 
\end{align*} 
which is consistent with Proposition \ref{prop:Boot} for $c_{0}$ sufficiently small. 

\paragraph{Treatment of $NLS2(i,j,\neq,0)$ terms} 
Again, due to the nonlinear structure, $j \neq 1$ and $i \neq 1$.  
By \eqref{ineq:AnuiDistri}, 
\begin{align*} 
NLS2(i,j,\neq,0) & \lesssim \norm{A^{\nu;2}Q^2}_2 \norm{\partial_i^t A^{\nu ;2} U^j_{\neq}}_2 \norm{U_0^2}_{\G^{\lambda,\beta+3\alpha + 7}}.
\end{align*} 
The worst case is $j=3$ and $i = 2$, however, even in this case Lemma \ref{lem:AnuLossy} recovers all of the time losses due to the permitted linear growth in $Q^2$ (also applying \eqref{ineq:AprioriU0}): 
\begin{align*} 
NLS2(i,j,\neq,0) & \lesssim \epsilon\norm{A^{\nu;2}Q^2}_2\left(\norm{A^{\nu;j}Q^j}_2 + \norm{A^{j}Q_{\neq}^j}_2 \right), 
\end{align*} 
which is consistent with Proposition \ref{prop:Boot} for $c_{0}$ sufficiently small. 

\paragraph{Treatment of $NLP(i,j,\neq,\neq)$} \label{sec:NLPneqneq_Q2ED}
Turn next to the nonlinear pressure interactions of two non-zero frequencies, which requires a careful treatment. 
First, observe that the case $i = j = 2$ cancels with the $NLS2$ term. 
By \eqref{ineq:AnuiDistriDecay},
\begin{align*} 
NLP(i,j,\neq,\neq) & \lesssim \norm{A^{\nu;2}Q^2}_2 \frac{\jap{t}}{\jap{\nu t^3}^\alpha} \left(\norm{\jap{\grad}^{2-\beta}A^{\nu;2} \partial_j^t U^i_{\neq}}_2 \norm{A^{\nu;2}\partial_Y^t \partial_i^tU^j_{\neq}}_2 \right.\\
 & \quad \left. + \norm{\jap{\grad}^{2-\beta} A^{\nu;2} \partial_Y^t \partial_j^t U^i_{\neq}}_2 \norm{A^{\nu;2}\partial_i^tU^j_{\neq}}_2 + \norm{A^{\nu;2} \partial_j^t U^i_{\neq}}_2 \norm{\jap{\grad}^{2-\beta} A^{\nu;2}\partial_Y^t \partial_i^tU^j_{\neq}}_2 \right.\\
 & \quad \left. + \norm{A^{\nu;2} \partial_Y^t \partial_j^t U^i_{\neq}}_2 \norm{\jap{\grad}^{2-\beta} A^{\nu;2}\partial_i^tU^j_{\neq}}_2\right). 
\end{align*} 
Each combination of $i$ and $j$ can be treated in a rather similar manner, each time using \eqref{ineq:AnuiDistri} and Lemma \ref{lem:AnuLossy}.  
As could be expected, $NLP(1,3,\neq,\neq)$ and $NLP(3,3,\neq,\neq)$ turn out to be the hardest.
Let us focus on the case $NLP(3,3,\neq,\neq)$ and omit the easier cases for brevity. 
Note that the inverse derivatives can recover losses associated with $\partial_Z$ but not $\partial_Y - t\partial_X$. 
They will also still work when considering $\partial_Z^t = (1 + \phi_z)\partial_Z + \psi_z(\partial_Y - t\partial_X)$, since it will introduce $O(\epsilon t^2)$ powers that are absorbed using $\epsilon t^2 \jap{\nu t^3}^{-1} \lesssim 1$. Hence, we can continue to ignore the coefficients. 
By Lemma \ref{lem:AnuLossy} and \eqref{ineq:Boot_ED} there holds,  
\begin{align*}
NLP(3,3,\neq,\neq) & \lesssim  \norm{A^{\nu;2}Q^2}_2 \frac{\jap{t}^{3}}{\jap{\nu t^3}^\alpha} \left(\norm{\jap{\grad}^{2-\beta}A^{\nu;3} \partial_Z^t U^3_{\neq}}_2 \norm{A^{\nu;3}\partial_Z^t \partial_Z^tU^3_{\neq}}_2 \right.\\
 & \quad \left. + \norm{\jap{\grad}^{2-\beta} A^{\nu;3} \partial_Z^t \partial_Z^t U^3_{\neq}}_2 \norm{A^{\nu;3}\partial_Z^tU^3_{\neq}}_2\right) \\  
& \lesssim  \frac{\epsilon\jap{t}}{\jap{\nu t^3}^\alpha}\left(1 + \epsilon t^2\right)\norm{A^{\nu;2}Q^2}_2\left(\norm{A^{\nu;3} Q^3}_2 + \norm{A^{3} Q^3_{\neq}}_2\right), 
\end{align*}
which is consistent with Proposition \ref{prop:Boot} for $\epsilon$ sufficiently small.
The other terms can be treated with a simple variation or easier arguments and are hence omitted.  

\paragraph{Treatment of $NLS1(j,\neq,\neq)$} \label{sec:NLS1neqneq_Q2ED}
By \eqref{ineq:AnuiDistriDecay}, \eqref{ineq:AnuLossyII}, and \eqref{ineq:Boot_ED}, we have the following (e.g. consider the worst case of $j = 3$),  
\begin{align*}
NLS1(j,\neq,\neq) & \lesssim \frac{\jap{t}}{\jap{\nu t^3}^{\alpha}}\norm{A^{\nu;2}Q^2}_2\norm{A^{\nu;2} Q^j}_2 \norm{A^{\nu;2} \partial_j^t U^2_{\neq}}_2 \\ 
& \lesssim \frac{\epsilon \jap{t}}{\jap{\nu t^3}^{\alpha}}\norm{A^{\nu;2}Q^2}_2\left(\norm{A^{\nu;2} Q^2}_2 + \norm{A^{2} Q^2_{\neq}}_2\right),
\end{align*}
which is consistent with Proposition \ref{prop:Boot}. 

\paragraph{Treatment of $NLS2(i,j,\neq,\neq)$} \label{sec:NLS2neqneq_Q2ED}
First, note that the $i = j = 2$ term cancels with $NLP$. 
For the remaining terms we again apply \eqref{ineq:AnuiDistriDecay} to deduce 
\begin{align*} 
NLS2(i,j,\neq,\neq)  & \lesssim \norm{A^{\nu;2}Q^2}_2 \frac{\jap{t}}{\jap{\nu t^3}^\alpha} \norm{A^{\nu;2} \partial_i^t U^j_{\neq}}_2 \norm{A^{\nu;2}\partial_{ij}^tU^2_{\neq}}_2. 
\end{align*} 
The most problematic term is $j = 3$, $i = 2$; however by \eqref{ineq:AnuLossyII} and \eqref{ineq:Boot_ED},   
\begin{align*} 
NLS2(2,3,\neq,\neq) & \lesssim \frac{\epsilon \jap{t}}{\jap{\nu t^3}^\alpha} \norm{ A^{\nu;2}Q^2}_2 \left(\norm{A^{\nu;2} Q^2}_2 + \norm{A^{2} Q^2_{\neq}}_2 \right),  
\end{align*} 
which is consistent with Proposition \ref{prop:Boot} for $\epsilon$ sufficiently small. 
The other cases can be treated similarly and are hence omitted for brevity. 
This completes the treatment of all of the nonlinear pressure and stretching terms. 

\subsubsection{Transport nonlinearity} \label{sec:Trans_ED_Q2}
These terms are easier than the analogous terms in \S\ref{sec:Trans_ED_Q3}. As noted in \cite{BGM15I}, this is consistent with the observation that the so-called ``reaction'' terms are stronger in $Q^3$ than $Q^2$ (note that $Q^3$ reaction terms are included in the toy model in \S\ref{sec:Toy} but the $Q^2$ reaction terms are not; see \cite{BGM15I} for more information).
 Write the transport nonlinearity as 
\begin{align*} 
\mathcal{T} & = -\int A^{\nu;2}Q^2 A^{\nu;2}\left(g \partial_Y Q^2_{\neq}\right) dV -  \int A^{\nu;2}Q^2  A^{\nu;2} \left(\tilde U_{\neq} \cdot \grad Q^2_{0}\right) dV \\  
& \quad - \int A^{\nu;2}Q^2  A^{\nu;2} \left(\tilde U_{\neq} \cdot \grad Q^2_{\neq}\right) dV \\  
& = \mathcal{T}_0 + \mathcal{T}_{\neq 0} + \mathcal{T}_{\neq \neq}. 
\end{align*}  
As in \S\ref{sec:Trans_ED_Q3}, we have 
\begin{align*} 
\mathcal{T}_0 & \lesssim \frac{\epsilon^{1/2}}{\jap{t}^{2}}\norm{A^{\nu;2}Q^2}_2^2 + \epsilon^{3/2}\norm{\sqrt{-\Delta_L} A^{\nu;2}Q^2}_2^2.
\end{align*} 
Similarly, we can treat $\mathcal{T}_{\neq 0}$ as we did in \S\ref{sec:Trans_ED_Q3}: \eqref{ineq:AnuiDistri}, Lemma \ref{lem:CoefCtrl}, and Lemma \ref{lem:AnuLossy}, we have 
\begin{align*}
\mathcal{T}_{\neq 0} & \lesssim \norm{A^{\nu;2}Q^2}_2\left(\norm{A^{\nu;2}U^2}_2 + \norm{A^{\nu;2}U^3}_2\right)\norm{\grad Q^2_0}_{\G^{\lambda,\gamma}} \\ 
& \lesssim \frac{\epsilon}{\jap{t}^2} \norm{A^{\nu;3}Q^3}_2\left(\norm{A^{\nu;2}Q^2}_2 + \norm{A^{2}Q^2_{\neq}}_2 + \jap{t}\left(\norm{A^{\nu;3}Q^3}_2 + \norm{A^{3}Q^3_{\neq}}_2\right)\right), 
\end{align*} 
which is consistent with Proposition \ref{prop:Boot}.

For $\mathcal{T}_{\neq \neq}$, we get from \eqref{ineq:AnuiDistriDecay},
\begin{align*} 
\mathcal{T}_{\neq \neq}  & \lesssim \norm{A^{\nu;2}Q^2}_2 \frac{\jap{t}}{\jap{\nu t^3}^\alpha} \left(\norm{A^{\nu;2}U^1}_2 + \norm{A^{\nu;2}U^2}_2 + \norm{A^{\nu;2}U^3}_2\right)\norm{\sqrt{-\Delta_L} A^{\nu;2} Q^2}_{2} \\
& \lesssim \norm{A^{\nu;2}Q^2}_2 \frac{\jap{t}}{\jap{\nu t^3}^\alpha} \left(\jap{t}^{1+\delta_1}\norm{A^{\nu;1}U^1}_2 + \norm{A^{\nu;2}U^2}_2 + \jap{t}\norm{A^{\nu;3}U^3}_2\right)\norm{\sqrt{-\Delta_L} A^{\nu;2} Q^2}_{2} \\
& \lesssim \norm{A^{\nu;2}Q^2}_2 \frac{\epsilon \jap{t}^{\delta_1}}{\jap{\nu t^3}^\alpha}\norm{\sqrt{-\Delta_L} A^{\nu;2} Q^2}_{2} \\
& \lesssim \epsilon^{3/2} \norm{\sqrt{-\Delta_L} A^{\nu;2} Q^2}_{2}^2 + \frac{\epsilon^{1/2} \jap{t}^{2\delta_1}}{\jap{\nu t^3}^{2\alpha}} \norm{A^{\nu;2}Q^2}^2_2,
\end{align*} 
which completes the treatment of $\mathcal{T}_{\neq \neq}$. 

\subsubsection{Dissipation error terms}
As in \S\ref{sec:DE_ED_Q3}, these terms are treated in the same manner as the analogous terms in \cite{BMV14,BGM15I}; the details are omitted for brevity. 

\subsection{Enhanced dissipation of $Q^1$}
Computing the time evolution of $\norm{A^{\nu;1}Q^1}_2$, we get
\begin{align} 
\frac{1}{2}\frac{d}{dt} \norm{A^{\nu;1} Q^1}_2^2 & \leq \dot{\lambda}\norm{\abs{\grad}^{s/2}A^{\nu;1} Q^1}_2^2  + G^\nu -\norm{\sqrt{\frac{\partial_t w_L}{w_L}} A^{\nu;1} Q^1}_2^2 \nonumber \\ 
& \quad - \frac{t}{\jap{t}^{2}}\norm{A^{\nu;1}Q^1}_2^2 -\frac{(1+\delta_1)}{t} \norm{\mathbf{1}_{t > \jap{\grad_{Y,Z}}} A^{\nu;1} Q^1}_2^2 \nonumber \\
& \quad - \int A^{\nu;1}Q^1 A^{\nu;1} Q^2 dV -2 \int A^{\nu;1} Q^1 A^{\nu;1} \partial_{YX}^t U^1 dV \nonumber \\
 & \quad + 2 \int A^{\nu;1} Q^1 A^{\nu;1} \partial_{XX} U^2 dV  + \nu\int A^{\nu;1} Q^{1} A^{\nu;1} \left(\tilde{\Delta_t} Q^1\right) dv \nonumber \\ 
& \quad -\int A^{\nu;1} Q^1 A^{\nu;1}\left( \tilde U \cdot \grad Q^2 \right) dv \nonumber \\ 
& \quad -\int A^{\nu;1} Q^1 A^{\nu;1} \left[\left(Q^j \partial_j^t U^1\right) + 2\partial_i^t U^j \partial_{ij}^t U^1  - \partial_X\left(\partial_i^t U^j \partial_j^t U^i\right) \right] dv \nonumber \\ 
& = -\mathcal{D}Q^{\nu;1} + G^\nu - CK_{L1}^{\nu;1} - (1+\delta_1) CK_{L2}^{\nu;1} \nonumber \\ & \quad + LU + LS1 + LP1  + \mathcal{D}_E + \mathcal{T} + NLS1 + NLS2 + NLP.  \label{ineq:AnuEvo1}
\end{align} 
where $G^\nu$ is analogous to the corresponding term in \eqref{ineq:AnuEvo3}. 
As in \S\ref{sec:ED3}, $G^\nu$ is absorbed by using the dissipation. 
Note that for $i \in \set{2,3}$, 
\begin{align}
A^{\nu; 1} & \lesssim A^{\nu;i}. 
\end{align}

\subsubsection{Linear terms} 
The treatment of $LU$ and $LS1$ can be made analogous to the linear terms treated in 
\S\ref{sec:ED3} combined with the $t^{\delta_1}$ tweak introduced for the improvement of \eqref{ineq:Boot_Q1Hi2} in \S\ref{sec:LUQhi2}. 
We omit the details for brevity and conclude for some $K > 0$, 
\begin{align*} 
LU & \leq \delta_1t\jap{t}^{-2}\norm{A^{\nu;1} Q^1_{\neq}}^2_2 + \frac{\delta_\lambda}{4\delta_1 t^{3/2}}\norm{\abs{\grad}^{s/2}A^{\nu;2} Q^2_{\neq}}_2^2 + \frac{K}{\delta_1 \delta_\lambda^{\frac{1}{2s-1}} t^{3/2}}\norm{A^{\nu;2} Q^2_{\neq}}_2^2  +  \frac{K}{\delta_1 t} \norm{\mathbf{1}_{t > \jap{\grad_{Y,Z}}} A^{\nu;2}Q^2}_2^2,  \\ 
& = \delta_1CK_{L1}^{\nu;1} + \frac{1}{4\delta_1}CK_{\lambda}^{\nu;2} + \frac{K}{\delta_1}CK_{L}^{\nu;2} + \frac{K}{\delta_1 \delta_\lambda^{\frac{1}{2s-1}} t^{3/2}}\norm{A^{\nu;2} Q^2_{\neq}}_2^2, 
\end{align*}
and, 
\begin{align*} 
LS1 & \leq  (1+\delta_1)CK^{\nu;1}_{L2} + (1-\delta_1)CK_{L1}^{\nu;1} + \frac{\delta_\lambda}{10\jap{t}^{3/2}}\norm{\abs{\grad}^{s/2} A^{\nu;1}Q^1}_2^2 + \frac{K\epsilon}{\jap{t}^2}\norm{A^{1}\Delta_L U^1_{\neq}}_2^2 \\ & \quad + \frac{K}{\jap{t}^2}\norm{A^1 Q^1_{\neq}}^2_2 +  \frac{K}{\delta_\lambda^{\frac{1}{2s-1}}\jap{t}^{3/2}} \norm{ A^{\nu;1}Q^1}_2^2 + \epsilon\norm{A^{\nu;1}Q^1}_2\norm{\Delta_L A^{\nu;1}U^1_{\neq}}_2, 
\end{align*} 
which, after Lemmas \ref{lem:SimplePEL} and \ref{lem:AnuLossy}, are both consistent with Proposition \ref{prop:Boot} provided $K_{ED1}$ is chosen large relative to both $K_{ED2}$ and $K_{H1}$ (and $\delta_\lambda$, $\delta_1^{-1}$, $K$ and universal constants). 

Next consider the linear pressure term $LP1$. 
We may directly apply Lemma \ref{lem:AnuLossy} to deduce 
\begin{align*} 
LP1  \leq 2\norm{A^{\nu;1}Q^1}_2 \norm{\partial_{XX} A^{\nu;1}U^2_{\neq}}_2 &\lesssim \jap{t}^{-3} \norm{A^{\nu;1}Q^1}_2\left(\norm{A^{\nu;2}Q^2_{\neq}}_2 + \norm{A^{2}Q^2_{\neq}}_2\right) \\ 
& \lesssim \frac{1}{\jap{t}^{3}}\norm{A^{\nu;1}Q^1}^2_2 + \frac{1 + K_{ED2}}{\jap{t}^{3}}\epsilon^2, 
\end{align*} 
which is consistent with Proposition \ref{prop:Boot} via integrating factors provided $K_{ED1} \gg K_{ED2}$. 

\subsubsection{Nonlinear pressure and stretching} 
These terms are treated in essentially the same manner as in \S\ref{sec:NLPS_Q3ED}; we only briefly sketch a few terms. 
We use enumerations analogous to those employed in \eqref{def:Q2enumnu}.  

\paragraph{Treatment of $NLP(i,j,0,\neq)$ terms} \label{sec:NLP0neq_Q1ED}
Notice that in this case $j \neq 1$. 
From \eqref{ineq:AnuiDistri}, Lemma \ref{lem:AnuLossy}, and \eqref{ineq:AprioriU0}, 
\begin{align*}                                
NLP(i,j,0,\neq) & \lesssim  \norm{A^{\nu;1}Q^1}_2 \norm{A^{\nu;1} \partial_X\partial_i^t U^j}_2\norm{U_0^i}_{\G^{\lambda,\beta + 3\alpha + 5}} \lesssim  \frac{\epsilon}{\jap{t}}\norm{A^{\nu;1}Q^1}_2 \left(\norm{A^{\nu;j} Q^j}_2 + \norm{A^{j}Q^j_{\neq}}_2\right).  
\end{align*} 

\paragraph{Treatment of $NLS1(j,0,\neq)$ terms} \label{sec:NLS10neq_Q1ED} 
From \eqref{ineq:AnuiDistri}, Lemma \ref{lem:AnuLossy}, and \eqref{ineq:AprioriU0}, 
\begin{align*}
NLS1(j,0,\neq) \lesssim \norm{A^{\nu;1}Q^1}_2 \norm{Q^j_0}_{\G^{\lambda,\beta + 3\alpha + 5}}  \norm{\partial_j^t A^{\nu;1}U^1}_2  \lesssim \frac{\epsilon}{\jap{t}}\norm{A^{\nu;1}Q^1}_2\left(\norm{A^{\nu;1}Q^1}_2 + \norm{A^{1}Q^1_{\neq}}_2 \right).
\end{align*}

\paragraph{Treatment of $NLS1(j,\neq,0)$ terms} \label{sec:NLS1neq0_Q1ED} 
Note in this case that $j \neq 1$. 
From \eqref{ineq:AnuiDistri}, Lemma \ref{lem:AnuLossy}, \eqref{ineq:AnuHiLowSep2}, and \eqref{ineq:AprioriU0}, 
\begin{align*}
NLS1(j,\neq,0) &  \lesssim \norm{A^{\nu;1}Q^1}_2 \norm{A^{\nu;1} Q^j}_2\norm{U_0^1}_{\G^{\lambda,\beta + 3\alpha + 5}} \\ 
& \lesssim \epsilon^{3/2}\left(\norm{\sqrt{-\Delta_L} A^{\nu;1}Q^1}^2_2 + \norm{A^{1}Q^1_{\neq}}_2^2 \right) + \epsilon^{1/2} \norm{A^{\nu;j} Q^j}_2^2,  
\end{align*}
which suffices for $t \lesssim \epsilon^{-1/2+\delta/100}$, after which we use again \eqref{ineq:AnuHiLowSep2} to deduce 
\begin{align*}
NLS1(j,\neq,0) & \lesssim \epsilon^{3/2}\left(\norm{\sqrt{-\Delta_L} A^{\nu;1}Q^1}^2_2 + \norm{A^{1}Q^1_{\neq}}_2^2 \right) + \epsilon^{3/2-\delta/50} \left(\norm{\sqrt{-\Delta_L} A^{\nu;j}Q^j}^2_2 + \norm{A^{j}Q^j_{\neq}}_2^2 \right),  
\end{align*}
which is consistent with Proposition \ref{prop:Boot} for $\epsilon$ sufficiently small. 

\paragraph{Treatment of $NLS2(i,j,\neq,0)$ terms} \label{sec:NLS2neq0_Q1ED} 
From \eqref{ineq:AnuiDistri}, Lemma \ref{lem:AnuLossy}, and \eqref{ineq:AprioriU0}, we have 
\begin{align*} 
NLS2(i,j,\neq,0) & \lesssim \norm{A^{\nu;1}Q^1}_2 \norm{A^{\nu;1} \partial_i^t U^j_{\neq}}_2 \norm{U_0^1}_{\G^{\lambda,\beta + 3\alpha +6}} \lesssim \epsilon \norm{A^{\nu;1}Q^1}_2 \left(\norm{A^{\nu;j}Q^j}_2 + \norm{A^{j}Q^j_{\neq}}_2\right). 
\end{align*} 

\paragraph{Treatment of $NLS2(i,j,0,\neq)$ terms} \label{sec:NLS20neq_Q1ED} 
From \eqref{ineq:AnuiDistri}, Lemma \ref{lem:AnuLossy}, and \eqref{ineq:AprioriU0}. we have (noting that $i \neq 1$):
\begin{align*} 
NLS2(i,j,0,\neq) & \lesssim \norm{A^{\nu;1}Q^1}_2 \norm{A^{\nu;1} \partial_{ij}^t U^1}_2 \norm{U_0^j}_{\G^{\lambda,\beta+3\gamma+5}} \lesssim \epsilon \norm{ A^{\nu;1}Q^1}_2 \left(\norm{A^{\nu;1}Q^1}_2 + \norm{A^{1}Q^1_{\neq}}_2\right). 
\end{align*} 
Notice that we again used the structure which for $j = 1$, balances the loss of $\jap{t}$ from the third factor with a gain of $\jap{t}^{-1}$ from the second factor.  

\paragraph{Treatment of $NLP(i,j,\neq,\neq)$, $NLS1(i,j,\neq,\neq)$, and $NLS2(i,j,\neq,\neq)$}

The nonlinear terms involving two non-zero frequencies can all be treated in essentially the same manner as 
in $Q^3$ in \S\ref{sec:NLPneqneq_Q3ED}, \S\ref{sec:NLS1neqneq_Q3ED} and \S\ref{sec:NLS1neqneq_Q3ED}. 
We omit the treatments for the sake of brevity. 

\subsubsection{Transport nonlinearity}
The transport nonlinearity, $\mathcal{T}$ in \eqref{ineq:AnuEvo1}, can be treated in the same manner as the transport nonlinearity in \S\ref{sec:Trans_ED_Q3}. We omit the details for brevity.

\subsubsection{Dissipation error terms}   
The dissipation error terms can be treated in same manner as those in \S\ref{sec:DE_ED_Q3} and \cite{BMV14,BGM15I}, and hence we omit the details for brevity. This completes the enhanced dissipation estimate on $Q^1$.

\section{Sobolev estimates} \label{sec:LowNrmVel}
In this section we improve the $H^{\sigma^\prime}$ estimates in \eqref{ineq:Boot_LowFreq}, which are more straightforward than the analogous estimates proved in \cite{BGM15I} (the main challenge in \cite{BGM15I} was getting good decay properties for $t \gtrsim \nu^{-1}$, which is irrelevant here). 
As in \cite{BGM15I}, these estimates are performed in the coordinate system given by $(X,y,z)$; see \S\ref{sec:RegCont}. In Lemma \ref{lem:intermedSob}, the a priori estimates from the bootstrap hypotheses in these coordinates are given.  
The estimates are performed on \eqref{eq:u0i} and then transferred back to the $(X,Y,Z)$ coordinates. 
Indeed, as long as the $C^i$ remain small, the coordinate change is uniformly bounded in Sobolev regularity, and hence by suitably adjusting the constants in \eqref{ineq:Boot_LowFreq}, one can prove these finite regularity estimates in whichever coordinate system is most convenient (see \cite{BGM15I} for more details). 

\subsection{Improvement of \eqref{ineq:Boot_U03_Low} and \eqref{ineq:Boot_U02_Low}} \label{sec:U023Low}
These estimates are best proved together using a standard energy method. Recall the notation $u_0 = (u_0^2, u_0^3)^{T}$. 
From \eqref{eq:u0i},  
\begin{align*} 
\frac{1}{2}\frac{d}{dt} \norm{u_0}_{H^{\sigma^\prime}}^2 & = -\nu \norm{\grad u_0}_{H^{\sigma^\prime}}^2 - \int \jap{\grad}^{\sigma^\prime} u_0^i \jap{\grad}^{\sigma^\prime} \left( u_0^j \cdot \partial_j u_0^i \right) dy dz \\ & \quad - \int \jap{\grad}^{\sigma^\prime} u_0^i \jap{\grad}^{\sigma^\prime} \partial_{i} p^{NL0} dy dz + \int \jap{\grad}^{\sigma^\prime} u_0^i \jap{\grad}^{\sigma^\prime} \mathcal{F}^i dy dz \\ 
& = -\nu \norm{\grad u_0}_{H^{\sigma^\prime}}^2 + \mathcal{T} + \mathcal{P} + \mathcal{F}. 
\end{align*} 
For the transport term $\mathcal{T}$, we use integration by parts (and the divergence free condition) to introduce the following commutator:  
\begin{align*}
\mathcal{T} = \int \jap{\grad}^{\sigma^\prime} u_0^i \left(u_0 \cdot \grad \jap{\grad}^{\sigma^\prime} u_0^i - \jap{\grad}^{\sigma^\prime} \left( u_0 \cdot \grad u_0^i\right)\right) dy dz. 
\end{align*}
Treating this commutator is by now classical and, in particular, by using that for $\abs{\eta,l} \approx \abs{\xi,l^\prime}$,  
\begin{align*}
\jap{\eta,l}^{\sigma^\prime} - \jap{\xi,l^\prime}^{\sigma^\prime} \lesssim \abs{\eta-\xi,l-l^\prime} \jap{\xi,l^\prime}^{\sigma^\prime-1},  
\end{align*}
one can show that 
\begin{align*}
\mathcal{T} & \lesssim \norm{\grad u_0}_{H^{1+}} \norm{u_0^i}_{H^{\sigma^\prime}}^2 + \norm{u_0}_{H^{\sigma^\prime}} \norm{u_0^i}_{H^{\sigma^\prime}}\norm{\grad u_0^i}_{H^{1+}} \\
& \lesssim \norm{u_0}_{H^{\sigma^\prime}}\norm{u_0^i}_{H^{\sigma^\prime}}^2 \lesssim \epsilon \norm{u_0}_{H^{\sigma^\prime}}^2, 
\end{align*}
(where we also used $\sigma^\prime > 2$, by \eqref{ineq:uzAPriori}) which is consistent with Proposition \ref{prop:Boot} for $c_0$ sufficiently small. 

For the pressure term $\mathcal{P}$, we simply use the divergence free condition: 
\begin{align*}
\mathcal{P} & = -\int \jap{\grad}^{\sigma^\prime} u_0^i \jap{\grad}^{\sigma^\prime} \partial_{i} p^{NL0} dy dz  = \int \jap{\grad}^{\sigma^\prime} \partial_i u_0^i \jap{\grad}^{\sigma^\prime} p^{NL0} dy dz  = 0. 
\end{align*}

The forcing term is straightforward from \eqref{ineq:Xyzubds}, indeed it follows immediately that 
\begin{align*} 
\int \jap{\grad}^{\sigma^\prime} u_0^i \jap{\grad}^{\sigma^\prime} \mathcal{F}^i dy dz \leq \frac{\epsilon^2}{\jap{\nu t^3}^{2\alpha}} \norm{u_0}_{H^{\sigma^\prime}}. 
\end{align*} 

Hence, the improvements to \eqref{ineq:Boot_U03_Low} and \eqref{ineq:Boot_U02_Low} follow for $\epsilon$ and $c_0$ sufficiently small. 

\subsection{Improvement of \eqref{ineq:Boot_U01_Low}} 
The improvement of \eqref{ineq:Boot_U01_Low} is very similar to those of \eqref{ineq:Boot_U03_Low} and \eqref{ineq:Boot_U02_Low} with the exception of the lift-up effect term. 
Indeed, by \eqref{eq:u0i}, 
\begin{align*} 
\frac{1}{2}\frac{d}{dt} \left(\jap{t}^{-2} \norm{u_0^1}_{H^{\sigma^\prime}}^2\right) & = -\frac{t}{\jap{t}^4}  \norm{u_0^1}_{H^{\sigma^\prime}}^2 -\nu \jap{t}^{-2} \norm{\grad u_0^1}_{H^{\sigma^\prime}}^2 - \jap{t}^{-2} \int \jap{\grad}^{\sigma^\prime} u_0^1 \jap{\grad}^{\sigma^\prime} \left( u_0 \cdot \grad u_0^1\right) dy dz \nonumber \\ & \quad - \jap{t}^{-2}\int\jap{\grad}^{\sigma^\prime} u_0^1 \jap{\grad}^{\sigma^\prime}u_0^2 dy dz  + \jap{t}^{-2}\int \jap{\grad}^{\sigma^\prime} u_0^1 \jap{\grad}^{\sigma^\prime} \mathcal{F}^1 dy dz.
\end{align*}  
All the terms are treated as in \S\ref{sec:U023Low} except of course the lift up effect term. 
For this  we use \eqref{ineq:Boot_LowFreq}, 
\begin{align*}
- \jap{t}^{-2}\int\jap{\grad}^{\sigma^\prime} u_0^1 \jap{\grad}^{\sigma^\prime}u_0^2 dy dz & \leq 4 \epsilon \jap{t}^{-2} \norm{u_0^1}_{H^{\sigma^\prime}}.  
\end{align*}
From here, one applies the super-solution method used in \S\ref{sec:Q1Hi1}. We omit the details for brevity as it follows the same.

\section*{Acknowledgments}
The authors would like to thank the following people for helpful discussions: Margaret Beck, Steve Childress, Michele Coti Zelati, Bruno Eckhardt, Pierre-Emmanuel Jabin, Susan Friedlander, Yan Guo, Alex Kiselev, Nick Trefethen, Mike Shelley, Vladimir Sverak, Vlad Vicol, and Gene Wayne. 
The authors would like to especially thank Tej Ghoul for encouraging us to focus our attention on finite Reynolds number questions. 
The work of JB was in part supported by NSF Postdoctoral Fellowship in Mathematical Sciences DMS-1103765 and NSF grant DMS-1413177, the work of PG was in part supported by a Sloan fellowship and the NSF grant DMS-1101269, while the work of NM was in part supported by the NSF grant DMS-1211806. 

\appendix

\section{Fourier analysis conventions, elementary inequalities, and Gevrey spaces} \label{apx:Gev}
We take the same Fourier analysis conventions as \cite{BGM15I}; we briefly recall them here for completeness. 
For $f(x,y,z)$ in the Schwartz space (or $(X,Y,Z)$), we define the Fourier transform $\hat{f}_k(\eta,l)$ where $(k,\eta,l) \in \Integer \times \Real \times \Integer$ and the inverse Fourier transform via 
\begin{align*} 
\hat{f}_k(\eta,l) & = \frac{1}{(2\pi)^{3/2}}\int_{\Torus \times \Real \times \Torus} e^{-i x k - iy\eta - ilz} f(x,y,z) dx dy dz \\ 
f(x,y,z) & = \frac{1}{(2\pi)^{3/2}}\sum_{k,l \in \Integer} \int_{\Real} e^{i x k + iy\eta + izl} \hat{f}_k(\eta,l) d\eta. 
\end{align*} 
With these conventions: 
\begin{align*} 
\int f(x,y,z) \overline{g}(x,y,z) dx dy dz & = \sum_{k}\int \hat{f}_k(\eta,l) \overline{\hat{g}}_{k}(\eta,l) d\eta \\ 
\widehat{fg} & = \frac{1}{(2\pi)^{3/2}}\hat{f} \ast \hat{g} \\ 
(\widehat{\grad f})_k(\eta,l) & = (ik,i\eta,il)\widehat{f_k}(\eta,l). 
\end{align*}
The paraproducts defined above in \S\ref{sec:paranote} are defined using the Littlewood-Paley dyadic decomposition (see e.g. \cite{BCD11} for more details).  
Let $\psi \in C_0^\infty(\Real_+;\Real_+)$ be such that $\psi(\xi) = 1$ for $\xi \leq 1/2$ and $\psi(\xi) = 0$ for $\xi \geq 3/4$ and define $\rho(\xi) = \psi(\xi/2) - \psi(\xi)$, supported in the range $\xi \in (1/2,3/2)$. 
Then we have the partition of unity for $\xi > 0$, 
\begin{align*} 
1 = \sum_{M \in 2^\Integers} \rho(M^{-1}\xi), 
\end{align*}  
where we mean that the sum runs over the dyadic integers $M = ...,2^{-j},...,1/4,1/2,1,2,4,...,2^{j},...$
 and we define the cut-off $\rho_M(\xi) = \rho(M^{-1}\xi)$, each supported in $M/2 \leq \xi \leq 3M/2$. 
For $f \in L^2(\Torus \times \Real \times \Torus)$ we define
\begin{align*} 
f_{M} = \rho_M(\abs{\grad})f, \quad\quad f_{< M} = \sum_{K \in 2^{\Integers}: K < M} f_K, 
\end{align*}
which defines the decomposition (in the $L^2$ sense)  
\begin{align*} 
f = \sum_{M \in 2^\Integers} f_M.  
\end{align*}
There holds the almost orthogonality and the approximate projection property 
\begin{subequations} \label{ineq:LPOrthoProject}
\begin{align} 
\norm{f}^2_2 & \approx \sum_{M \in 2^{\Integers}} \norm{f_M}_2^2 \\
 \norm{f_M}_2 & \approx  \norm{(f_{M})_{\sim M}}_2, 
\end{align}
\end{subequations}
where we make use of the notation 
\begin{align*} 
f_{\sim M} = \sum_{K \in 2^{\Integer}: \frac{1}{C}M \leq K \leq CM} f_{K}, 
\end{align*}
for some constant $C$ which is independent of $M$.
Generally the exact value of $C$ which is being used is not important; what is important is that it is finite and independent of $M$. 
Similar to \eqref{ineq:LPOrthoProject} but more generally, if $f = \sum_{j} D_j$ for any $D_j$ with $\frac{1}{C}2^{j} \subset \textup{supp}\, D_j \subset C2^{j}$ it follows that 
\begin{align} 
\norm{f}^2_2 \approx_C \sum_{j \in \Integers} \norm{D_j}_2^2. \label{ineq:GeneralOrtho}
\end{align}

Recall the following two lemmas. 
\begin{lemma}
Let $f(\xi),g(\xi) \in L_\xi^2(\Real^d)$, $\jap{\xi}^\sigma h(\xi) \in L_\xi^2(\Real^d)$ and $\jap{\xi}^\sigma b(\xi) \in L_\xi^2(\Real^d)$  for $\sigma > d/2$, 
Then we have 
\begin{align} 
\norm{f \ast h}_2 & \lesssim_{\sigma, d} \norm{f}_2\norm{\jap{\cdot}^\sigma h}_2, \label{ineq:L2L1}  \\
\int \abs{f(\xi) (g \ast h)(\xi)} d\xi & \lesssim_{\sigma,d} \norm{f}_2\norm{g}_2\norm{\jap{\cdot}^\sigma h}_2 \label{ineq:L2L2L1} \\ 
\int \abs{f(\xi) (g \ast h \ast b) (\xi)} d\xi & \lesssim_{\sigma,d} \norm{f}_2\norm{g}_2\norm{\jap{\cdot}^\sigma h}_2\norm{\jap{\cdot}^\sigma b}_2. \label{ineq:L2L2L1L1}  
\end{align}
Further iterates are applied for higher order nonlinear terms in Lemma \ref{lem:ParaHighOrder} and are similar to \eqref{ineq:L2L2L1L1} but are omitted here. 
\end{lemma}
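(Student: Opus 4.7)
The lemma is the standard convolution/Sobolev embedding estimate, and the proof reduces to two ingredients: Young's convolution inequality and the Sobolev embedding $\mathcal{G}^{0,\sigma} \hookrightarrow L^1$ on $\mathbb{R}^d$ for $\sigma > d/2$ (viewed as a statement about weighted $L^2$).

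The plan is as follows. First I would establish the key auxiliary bound
\[
\|h\|_1 \;\lesssim_{\sigma,d}\; \|\jap{\cdot}^\sigma h\|_2,
\]
which follows by writing $|h(\xi)| = \jap{\xi}^{-\sigma}\bigl(\jap{\xi}^{\sigma}|h(\xi)|\bigr)$ and applying Cauchy--Schwarz, using that $\jap{\cdot}^{-\sigma} \in L^2(\Real^d)$ precisely when $2\sigma > d$ (this is where the condition $\sigma > d/2$ enters, and it is the only place).

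With that in hand, \eqref{ineq:L2L1} is immediate from Young's inequality $\norm{f\ast h}_2 \leq \norm{f}_2 \norm{h}_1$. For \eqref{ineq:L2L2L1} I would first apply Cauchy--Schwarz in $\xi$ to get $\int |f(g\ast h)| \leq \norm{f}_2 \norm{g\ast h}_2$, then Young's inequality as above to replace $\norm{g\ast h}_2$ by $\norm{g}_2 \norm{h}_1$, and finally use the auxiliary bound. For \eqref{ineq:L2L2L1L1} the same chain works with one more application of Young (two factors in $L^1$), giving $\int |f(g\ast h \ast b)| \leq \norm{f}_2 \norm{g}_2 \norm{h}_1 \norm{b}_1$, after which the auxiliary bound is applied to both $h$ and $b$.

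There is no real obstacle here; the entire content is the $L^2 \hookrightarrow L^1$ step with a weight. The only mildly subtle point is making sure that the constants depend only on $\sigma$ and $d$ (through $\|\jap{\cdot}^{-\sigma}\|_2$), which is automatic. Higher-order iterates used in Lemma \ref{lem:ParaHighOrder} follow by additional applications of Young and the same weighted Cauchy--Schwarz, as the paper notes.
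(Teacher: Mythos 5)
Your proof is correct: the weighted Cauchy--Schwarz step $\norm{h}_1 \leq \norm{\jap{\cdot}^{-\sigma}}_2 \norm{\jap{\cdot}^{\sigma}h}_2$ (valid exactly when $\sigma > d/2$) combined with Young's inequality and one application of Cauchy--Schwarz in the outer integral gives all three estimates. The paper states this lemma without proof as a standard fact, and your argument is precisely the standard one it implicitly relies on, so there is nothing to compare beyond noting agreement.
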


\begin{lemma}
Let $0 < s < 1$, $x,y>0$, and $K>1$. 
\begin{itemize} 
\item[(i)] There holds
\begin{align} 
\abs{x^s - y^s} \leq s \max(x^{s-1},y^{s-1})\abs{x-y}. \label{ineq:TrivDiff}
\end{align}
so that if $|x-y|<\frac{x}{K}$,
\begin{align} 
\abs{x^s - y^s} \leq \frac{s}{(K-1)^{1-s}}\abs{x-y}^s. \label{lem:scon}
\end{align} 
Note $\frac{s}{(K-1)^{1-s}} < 1$ as soon as $s^{\frac{1}{1-s}} + 1 < K$. 
\item[(ii)] There holds 
\begin{align} 
\abs{x + y}^s \leq \left(\frac{\max(x,y)}{x+y}\right)^{1-s}\left(x^s + y^s\right), \label{lem:smoretrivial}
\end{align}  
so that, if $\frac{1}{K}y \leq x \leq Ky$,
\begin{align} 
\abs{x + y}^s \leq \left(\frac{K}{1 + K}\right)^{1-s}\left(x^s + y^s\right). \label{lem:strivial}
\end{align} 
\end{itemize}
\end{lemma}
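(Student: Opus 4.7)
For part (i), the plan is to apply the mean value theorem to $f(t)=t^s$ on the interval between $x$ and $y$: there exists $\xi$ with $\min(x,y)\le \xi\le \max(x,y)$ such that $x^s-y^s=s\xi^{s-1}(x-y)$. Since $0<s<1$ makes $s-1<0$, the map $t\mapsto t^{s-1}$ is decreasing on $(0,\infty)$, so $\xi^{s-1}\le\min(x,y)^{s-1}=\max(x^{s-1},y^{s-1})$, which yields \eqref{ineq:TrivDiff}. For \eqref{lem:scon}, the hypothesis $|x-y|<x/K$ forces $\min(x,y)\ge x(K-1)/K$, hence $\max(x^{s-1},y^{s-1})\le (x(K-1)/K)^{s-1}$. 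Splitting $|x-y|=|x-y|^{1-s}|x-y|^s\le (x/K)^{1-s}|x-y|^s$ and multiplying by $s$ times the previous bound gives $|x^s-y^s|\le s\,(K-1)^{s-1}|x-y|^s$, which is exactly \eqref{lem:scon}; the terminal remark follows by solving $s<(K-1)^{1-s}$.

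For part (ii), the plan is to assume without loss of generality $x\ge y$, so that $\max(x,y)=x$. The claimed inequality \eqref{lem:smoretrivial} is then equivalent, after multiplying both sides by $(x+y)^{1-s}/x^{1-s}$, to $x+y\le x+x^{1-s}y^s$, i.e.\ to $y^{1-s}\le x^{1-s}$, which holds because $1-s>0$ and $y\le x$. The inequality \eqref{lem:strivial} is then an immediate consequence once one shows $\max(x,y)/(x+y)\le K/(K+1)$ under $y/K\le x\le Ky$: again assuming $x\ge y$, one has $x+y\ge x+x/K=x(K+1)/K$, so $x/(x+y)\le K/(K+1)$, and substitution into \eqref{lem:smoretrivial} finishes the proof.

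There is no real obstacle here; both parts are elementary and reduce to one use of the mean value theorem and one rearrangement. The only minor point of care is tracking which of $x^{s-1}$ and $y^{s-1}$ is larger (it is the one coming from the smaller of $x,y$, since $s-1<0$), which is used implicitly in identifying $\max(x^{s-1},y^{s-1})=\min(x,y)^{s-1}$.
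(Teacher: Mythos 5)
Your proof is correct: the mean value theorem argument for (i), the splitting $|x-y|=|x-y|^{1-s}|x-y|^s$ with the lower bound $\min(x,y)\ge x(K-1)/K$, and the rearrangement for (ii) all check out, including the symmetry reduction to $x\ge y$. The paper states this lemma without proof (it is a standard elementary estimate quoted from earlier work), and your argument is precisely the expected one, so there is nothing to reconcile.
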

Gevrey and Sobolev regularities can be related with the following two inequalities:  
\begin{itemize}
\item[(i)] For all $x \geq 0$, $\alpha > \beta \geq 0$, $C,\delta > 0$, 
\begin{align} 
e^{Cx^{\beta}} \leq e^{C\left(\frac{C}{\delta}\right)^{\frac{\beta}{\alpha - \beta}}} e^{\delta x^{\alpha}};  \label{ineq:IncExp}
\end{align}
\item[(ii)] For all $x \geq 0$, $\alpha,\sigma,\delta > 0$, 
\begin{align} 
e^{-\delta x^{\alpha}} \lesssim \frac{1}{\delta^{\frac{\sigma}{\alpha}} \jap{x}^{\sigma}}. \label{ineq:SobExp}
\end{align}
\end{itemize}
Together these inequalities show that for $\alpha > \beta \geq 0$, $\norm{f}_{\mathcal{G}^{C,\sigma;\beta}} \lesssim_{\alpha,\beta,C,\delta,\sigma} \norm{f}_{\mathcal{G}^{\delta,0;\alpha}}$.

\section{Definition and analysis of the norms} \label{sec:def_nrm}  

\subsection{Definition and analysis of $w$} \label{sec:Defw}
As mentioned above in \S\ref{sec:Toy}, the multipliers we use are variants of those used in \cite{BM13,BMV14,BGM15I}, and we build on those constructions. 
We first begin by defining $\bar{w}(t,\eta)$, which is used to construct $w(t,\eta)$ and $w^3(t,k,\eta)$.
For $\bar{w}$ and $w$ we use the same multipliers as \cite{BGM15I}, however, we include the constructions here for completeness and also to make the explanation of $w^3(t,k,\eta)$ more natural.  

In what follows fix $k,\eta > 0$; we will see that the norms do not depend on the sign of $k$ and $\eta$. 
Further, recall the definitions in \S\ref{sec:Notation}. 
The multiplier is built backwards in time, which makes resonance counting easier. 
Let $t \in I_{k,\eta}$. Let $\bar{w}(t,\eta)$ be a non-decreasing function of time with $\bar{w}(t,\eta) = 1  $ for $t \geq  2\eta $. 
For $ k \geq 1$, we assume that $\bar{w}(t_{k-1,\eta})  $ was computed.  
To compute $\bar{w}$ on the interval $I_{k,\eta} $, we use the behavior predicted by the toy model in \eqref{def:unstablesuper}.  
For a parameter $\kappa > 1$ fixed sufficiently large depending on a universal constant determined by the proof,
for $k=1,2,3,..., E(\sqrt{\eta}) $, we define
\begin{subequations} \label{def:wNR}
\begin{align}
\bar{w}(t,\eta) &=   \Big( \frac{k^2}{\eta}   \left[ 1 + b_{k,\eta} |t-\frac{\eta}k | \right]   \Big)^{\kappa}  \bar{w} (t_{k-1,\eta}),  \quad& 
  \quad  \forall t \in  I^R_{k,\eta} =  \left[ \frac{\eta}k ,t_{k-1,\eta}  \right], \\ 
\bar{w}(t,\eta) &=   \Big(1 + a_{k,\eta} |t-\frac{\eta}k |   \Big)^{-1-\kappa}  \bar{w} \left(\frac{\eta}k\right),  \quad& 
  \quad \forall  t \in  I^L_{k,\eta} =  \left[ t_{k,\eta}  , \frac{\eta}k   \right].  
\end{align} 
\end{subequations}
The constant $b_{k,\eta}  $   is chosen to ensure that $ \frac{k^2}{\eta}   \left[ 1 + b_{k,\eta} |t_{k-1,\eta}-\frac{\eta}k | \right]  =1$, hence for $k \geq2$, we have 
\begin{align} \label{bk} 
 b_{k,\eta} = \frac{2(k-1)}{k} \left(1 - \frac{k^2}{\eta} \right)
\end{align} 
and $b_{1,\eta} = 1 - 1/\eta$. 
Similarly,  $a_{k,\eta}$ is chosen to ensure $ \frac{k^2}{\eta}\left[ 1 + a_{k,\eta} |t_{k,\eta}-\frac{\eta}k | \right] = 1$, which implies
\begin{align} \label{ak} 
 a_{k,\eta} = \frac{2(k+1)}{k} \left(1 - \frac{k^2}{\eta} \right). 
\end{align} 
Hence, we have $ \bar{w}(\frac{\eta}k) = \bar{w} (t_{k-1,\eta})  \Big( \frac{k^2}{\eta} \Big)^{\kappa}$  
and  $\bar{w} ( t_{k,\eta} ) = \bar{w} (t_{k-1,\eta})  \Big( \frac{k^2}{\eta} \Big)^{1+ 2\kappa}$. 
For earlier times $[0, t_{E(\sqrt{\eta}),\eta }] $, we take $\bar{w}$ to be constant. 
Next, we will impose additional losses in time on $\bar{w}$:  
\begin{align}
w(t,\eta) = \bar{w}(t,\eta) \exp\left[-\kappa \int_{t}^\infty \mathbf{1}_{\tau \leq 2\sqrt{\eta}} d\tau  - \kappa \int_{t}^\infty \mathbf{1}_{\sqrt{\abs{\eta}} \leq \tau \leq 2\abs{\eta}} \frac{\abs{\eta}}{\tau^2} d\tau \right]. \label{def:wextraloss}
\end{align}

Next, we define $w^3_k(t,\eta)$. Suppose $t \in I_{k,\eta}$ then, for $k^\prime \neq k$, 
\begin{subequations} \label{def:wNR3} 
\begin{align}
w^3_{k^\prime}(t,,\eta) &= \frac{\eta}{k^2\left(1 + b_{k,\eta}\abs{t-\frac{\eta}{k}}\right)}w(t,\eta)   \quad  \forall t \in  I^R_{k,\eta} =  \left[ \frac{\eta}k ,t_{k-1,\eta}  \right], \\ 
w^3_{k^\prime}(t,,\eta) &= \frac{\eta}{k^2\left(1 + a_{k,\eta}\abs{t-\frac{\eta}{k}}\right)}w(t,\eta)   \quad \forall  t \in  I^L_{k,\eta} =  \left[ t_{k,\eta}  , \frac{\eta}k   \right].  \\ 
w^3_k(t,\eta) & = w(t,\eta) \quad  \forall t \in  I_{k,\eta}, 
\end{align} 
\end{subequations}
and we take $w^3_k(t,\eta) = w(t,\eta)$ if $t\not\in I_{j,\eta}$ for any $j$.

The following lemma is essentially Lemma 3.1 in \cite{BM13} and  shows that $w(t,\eta)^{-1}$ loses some fixed radius of Gevrey-2 regularity. The proof is omitted for brevity.  
\begin{lemma} \label{lem:totalGrowthw} 
There is a constant $\mu$ (depending on $\kappa$) and a constant $p > 0$ such that for all $\abs{\eta} > 1$, we have 
\begin{align*} 
 \frac{1}{w(t,\eta)} & \leq \frac{1}{w(1,\eta)}  \sim \eta^{-p} e^{\frac{\mu}{2} \sqrt{\eta} } \\  
\frac{1}{w^3_k(t,\eta)} & \leq \frac{1}{w^3_k(1,\eta)} \sim \eta^{-p} e^{\frac{\mu}{2} \sqrt{\eta} },  
\end{align*} 
where `$\sim$' is in the sense of asymptotic expansion (up to a multiplicative constant) as $\eta \rightarrow \infty$. 
\end{lemma}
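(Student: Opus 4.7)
The plan has two ingredients: monotonicity in $t$ (to reduce to $t=1$) and an explicit asymptotic computation of $w(1,\eta)$ and $w^3_k(1,\eta)$.

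First I would establish monotonicity. By construction $\bar w(\cdot,\eta)$ is non--decreasing: it equals $1$ for $t\geq 2\eta$ and is built backward through the resonant intervals $I_{k,\eta}$, being multiplied on each interval by a factor which is $\leq 1$ (since $k^2/\eta\leq 1$ for $k\leq E(\sqrt{\eta})$). The extra exponential correction in \eqref{def:wextraloss} is $\exp[-\kappa\int_t^\infty (\cdots)d\tau]$, which is manifestly non--decreasing in $t$. Hence $w(\cdot,\eta)$ is non--decreasing in $t$, which gives the first half of the estimate once we have bounded $1/w(1,\eta)$. For $w^3_k$, I would observe that on each $I_{j,\eta}$ with $j\ne k$ the ratio $w^3_k/w$ equals $\eta/\bigl(j^2(1+b_{j,\eta}|t-\eta/j|)\bigr)$ (or the analogous factor with $a_{j,\eta}$ on $I^L_{j,\eta}$), which by the choice of $a_{j,\eta},b_{j,\eta}$ ranges in $[1,\eta/j^2]$, and equals $1$ at the endpoints $t_{j,\eta},t_{j-1,\eta}$; off the critical intervals $w^3_k=w$ by definition. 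In particular $w^3_k(t,\eta)\geq w(t,\eta)$ for every $t$, and since $t=1$ lies in the constant region below $t_{N,\eta}$ (for $N=E(\sqrt\eta)$ large, because $t_{N,\eta}\sim\sqrt\eta$) we have $w^3_k(1,\eta)=w(1,\eta)$. Hence $w^3_k(t,\eta)\geq w(t,\eta)\geq w(1,\eta)=w^3_k(1,\eta)$ for all $t\geq 1$, which is the second monotonicity assertion.

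Next I would compute $w(1,\eta)$ asymptotically. The values of $\bar w$ at the endpoints telescope: from the computations
\[
\bar w(\eta/k)=(k^2/\eta)^\kappa\,\bar w(t_{k-1,\eta}),\qquad \bar w(t_{k,\eta})=(\eta/k^2)^{-1-\kappa}\bar w(\eta/k),
\]
(using the defining identities $k^2\eta^{-1}(1+b_{k,\eta}|t_{k-1,\eta}-\eta/k|)=1$ and $k^2\eta^{-1}(1+a_{k,\eta}|t_{k,\eta}-\eta/k|)=1$), one deduces $\bar w(t_{k,\eta})=(k^2/\eta)^{1+2\kappa}\bar w(t_{k-1,\eta})$. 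Iterating down from $\bar w(t_{0,\eta})=\bar w(2\eta)=1$ yields, with $N=E(\sqrt{\eta})$,
\[
\bar w(t_{N,\eta},\eta)=\left(\frac{(N!)^2}{\eta^N}\right)^{1+2\kappa},
\]
and $\bar w(1,\eta)=\bar w(t_{N,\eta},\eta)$ since $1<t_{N,\eta}$ for large $\eta$ and $\bar w$ is constant there. Stirling gives $(N!)^2\sim 2\pi N(N/e)^{2N}$, and since $N^2\leq\eta<(N+1)^2$ the quantity $(N^2/\eta)^N$ is bounded above and below by positive constants; therefore $\bar w(1,\eta)\sim N^{1+2\kappa}e^{-2(1+2\kappa)N}\sim \eta^{(1+2\kappa)/2}e^{-2(1+2\kappa)\sqrt\eta}$ up to multiplicative constants. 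Finally, evaluating the two integrals in \eqref{def:wextraloss} at $t=1$ gives $\int_1^\infty\mathbf 1_{\tau\leq 2\sqrt\eta}d\tau\sim 2\sqrt\eta$ and $\int_{\sqrt\eta}^{2\eta}\eta\tau^{-2}d\tau\sim\sqrt\eta$, producing an extra factor $\exp(-3\kappa\sqrt\eta)$ up to bounded corrections. Combining,
\[
\frac{1}{w(1,\eta)}\sim \eta^{-p}\,e^{\mu\sqrt\eta/2},\qquad p=\tfrac{1+2\kappa}{2},\quad \mu=2(2+7\kappa),
\]
as asymptotic expansions, as required. The identity $w^3_k(1,\eta)=w(1,\eta)$ from the first paragraph then yields the same bound for $w^3_k$.

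There is no genuine obstacle here; the only thing needing care is the bookkeeping at the boundaries of the critical intervals (to check that $w^3_k$ matches $w$ there and that the telescoping product is exact), together with the standard Stirling estimate. I expect the mildly delicate point to be verifying the interval--matching identities for the prefactors $a_{k,\eta},b_{k,\eta}$, but these are immediate from their definitions.
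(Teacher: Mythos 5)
Your proposal is correct and follows essentially the intended argument (the paper omits the proof, deferring to Lemma 3.1 of \cite{BM13}): monotonicity of $w$ in $t$, the telescoped endpoint relation $\bar w(t_{k,\eta}) = (k^2/\eta)^{1+2\kappa}\bar w(t_{k-1,\eta})$ iterated from $\bar w(2\eta)=1$ combined with Stirling, and the explicit evaluation of the integrals in \eqref{def:wextraloss}, giving $p=(1+2\kappa)/2$ and $\mu/2 = 2+7\kappa$. The only small point is that the identity $w^3_k(1,\eta)=w(1,\eta)$ needs $t=1$ to lie below $t_{E(\sqrt{|\eta|}),\eta}$, which fails for $|\eta|$ near $1$ (e.g. $1<\eta<4/3$, where $1\in I_{1,\eta}$); there the claimed inequality still follows from the monotonicity of $w^3_{k}$ itself (or costs only a bounded multiplicative constant), so the asymptotics as $\eta\to\infty$ are unaffected.
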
 

The following lemma is from \cite{BM13}, and shows how to use the well-separation of critical times. 
\begin{lemma} \label{lem:wellsep}
Let $\xi,\eta$ be such that there exists some $K \geq 1$ with $\frac{1}{K}\abs{\xi} \leq \abs{\eta} \leq K\abs{\xi}$ and let $k,n$ be such that $t \in I_{k,\eta}$ and $t \in I_{n,\xi}$  (note that $k \approx n$).  
Then at least one of following holds:
\begin{itemize} 
\item[(a)] $k = n$ (almost same interval); 
\item[(b)] $\abs{t - \frac{\eta}{k}} \geq \frac{1}{10 K}\frac{\abs{\eta}}{k^2}$ and $\abs{t - \frac{\xi}{n}} \geq \frac{1}{10 K}\frac{\abs{\xi}}{n^2}$ (far from resonance);
\item[(c)] $\abs{\eta - \xi} \gtrsim_K \frac{\abs{\eta}}{\abs{n}}$ (well-separated). 
\end{itemize}
\end{lemma}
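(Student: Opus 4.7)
\medskip

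\noindent\textbf{Proof plan for Lemma~\ref{lem:wellsep}.}

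First I would verify the parenthetical ``$k \approx n$'' in the statement. Since $I_{k,\eta}$ and $I_{n,\xi}$ are nonempty only when $\eta k \ge 0$ and $\xi n \ge 0$, I may assume without loss of generality $\eta,\xi,k,n > 0$. The definition of $t_{k,\eta}$ forces
$$\frac{\eta}{k+1} \;\le\; t_{k,\eta} \;\le\; t \;\le\; t_{k-1,\eta}\; \le\; \frac{\eta}{k-1} \qquad (k\ge 2),$$
with the analogous bound for $t \in I_{n,\xi}$. Hence $t \approx \eta/k \approx \xi/n$, and combined with the hypothesis $|\eta| \approx_K |\xi|$ this pins down $k \approx_K n$, with an explicit constant $K_0 = K_0(K)$ so that $k/n, n/k \in [K_0^{-1},K_0]$. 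The $k=1$ regime (where $I_{1,\eta}$ runs up to $t_{0,\eta} = 2\eta$) is handled separately but by the same reasoning, since the tolerance $|\eta|/(10K)$ in (b) is far smaller than the interval size there.

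Assuming (a) fails so that $|n-k|\ge 1$, the core computation is the algebraic identity
$$\frac{\eta}{k} - \frac{\xi}{n} \;=\; \frac{(n-k)\xi + n(\eta-\xi)}{kn},$$
from which the reverse triangle inequality gives
$$\left|\frac{\eta}{k} - \frac{\xi}{n}\right| \;\ge\; \frac{|\xi| - n\,|\eta-\xi|}{kn}.$$
Choosing the implicit constant in ``$\gtrsim_K$'' in (c) small enough (concretely, smaller than $1/(2K_0)$) ensures that the negation of (c) reads $|\eta-\xi| \le |\xi|/(2n)$, which upgrades the above into the clean lower bound
\begin{equation}\label{eq:lwsep-lower}
\left|\frac{\eta}{k} - \frac{\xi}{n}\right| \;\ge\; \frac{|\xi|}{2kn}.
\end{equation}

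Finally, I would show that under the failure of (a) and (c), both estimates in (b) are forced. Suppose toward contradiction that one of them fails; by the obvious symmetry in $(k,\eta)\leftrightarrow(n,\xi)$ I may assume $|t - \eta/k| < |\eta|/(10K k^2)$. Since $t \in I_{n,\xi}$, the definition of the interval gives the a priori bound $|t - \xi/n| \le |\xi|/(n(n-1)) \le C|\xi|/n^2$ for a universal $C$. The triangle inequality then yields
$$\left|\frac{\eta}{k} - \frac{\xi}{n}\right| \;\le\; \frac{|\eta|}{10 K k^2} + \frac{C|\xi|}{n^2},$$
and using $|\xi| \le K|\eta|$ and $n \le K_0 k$ the right-hand side is bounded by a constant (depending on $K$) times $|\eta|/k^2$. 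Comparing with \eqref{eq:lwsep-lower}, rewritten via $|\xi|\ge|\eta|/K$ and $n\le K_0 k$ as $|\eta/k - \xi/n| \ge |\eta|/(2KK_0 k^2)$, gives a strict inequality in the wrong direction once the constant in the $\gtrsim_K$ of (c) is chosen sufficiently small in terms of $K$ and $C$. The contradiction proves (b).

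The only real obstacle is the bookkeeping of interlocking constants between (b) and (c); the argument itself is an elementary triangle inequality, and the choice of the universal ``$1/(10K)$'' in (b) is precisely what leaves enough room to select a compatible ``$\gtrsim_K$'' constant in (c).
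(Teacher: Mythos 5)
There is a genuine gap at the heart of your argument (note the paper itself does not prove this lemma; it imports it from \cite{BM13}, so your proof has to stand on its own). The contradiction you aim for in the last step never materializes: after negating (c) your lower bound is $\bigl|\tfrac{\eta}{k}-\tfrac{\xi}{n}\bigr|\ge \tfrac{|\xi|}{2kn}$, while your upper bound is $\tfrac{|\eta|}{10Kk^2}+\tfrac{|\xi|}{2n(n-1)}$ (the half-width of $I_{n,\xi}$ to the right of $\xi/n$ is exactly $\tfrac{|\xi|}{2n(n-1)}$). These two quantities are of the same order $|\xi|/n^2$ since $k\approx n$, and the constants go the wrong way: if $n\ge k+1$ then $\tfrac{1}{2n(n-1)}\ge\tfrac{1}{2nk}$ with equality at $n=k+1$, and if $n\le k-1$ then $\tfrac{1}{2n(n-1)}>\tfrac{1}{2nk}$ as well, so in every case with $k\ne n$ the second term of your upper bound alone already matches or exceeds your lower bound, and adding the $\tfrac{|\eta|}{10Kk^2}$ term makes the inequality strict in the direction opposite to what you need. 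Crucially, shrinking the implicit constant in (c) cannot help: that constant only enters through the subtracted term $n|\eta-\xi|$ in the lower bound, whereas the offending term $\tfrac{|\xi|}{2n(n-1)}$ in the upper bound is independent of it. So the "bookkeeping of interlocking constants" you defer is not bookkeeping — it is where the proof fails, precisely in the most relevant case $|k-n|=1$ of adjacent critical intervals.

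The information your argument throws away is \emph{where} $t$ sits inside $I_{n,\xi}$; bounding $|t-\xi/n|$ by the full half-width is too lossy because the distance between the centers $\eta/k$ and $\xi/n$ of adjacent intervals is itself comparable to that half-width. A working argument uses the tiling structure: for fixed frequency the intervals $I_{j,\eta}$ have disjoint interiors and consecutive ones share an endpoint, and the endpoints satisfy $|t_{j,\xi}-t_{j,\eta}|=|\xi-\eta|\bigl(\tfrac{1}{j}-\tfrac{1}{2j(j+1)}\bigr)\lesssim |\eta-\xi|/j$, i.e.\ negating (c) (with a small $K$-dependent constant, after first disposing of the trivial regime $|\eta-\xi|\gtrsim|\eta|$, which forces $\eta\approx\xi$) moves each relevant endpoint by only $O\bigl(c_K|\eta|/n^2\bigr)\ll |\eta|/n^2$. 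Consequently $I_{k,\eta}\cap I_{n,\xi}$ is empty when $|k-n|\ge 2$ and, when $|k-n|=1$, is confined to a small neighborhood of the common endpoint of $I_{k,\eta}$ and $I_{n,\eta}$, which lies at distance $\approx \tfrac{|\eta|}{2k(k\mp 1)}\approx\tfrac{|\eta|}{2k^2}$ from \emph{both} resonant times $\eta/k$ and $\xi/n$; this is what yields both inequalities in (b) with room to spare over the threshold $\tfrac{1}{10K}\le\tfrac1{10}$. Your reduction $k\approx n$ and the identity $\tfrac{\eta}{k}-\tfrac{\xi}{n}=\tfrac{(n-k)\xi+n(\eta-\xi)}{kn}$ are fine and reusable, but the concluding triangle-inequality comparison must be replaced by this endpoint-perturbation argument.
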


The next lemma tells us how to take advantage of the time derivative of $w$ and hence the $CK_w$ terms. 

\begin{lemma}[Time derivatives near the critical times] \label{lem:dtw}
If $t \leq 2\sqrt{\eta}$, then there holds 
\begin{align} 
\frac{\partial_t w(t,\eta)}{w(t,\eta)}  = \frac{\partial_t w^3_k(t,\eta)}{w^3_k(t,\eta)}  & \approx \kappa.  
\end{align}
If we instead have $t \in \I_{r,\eta}$ for some $r$, then the following holds
\begin{align} 
\frac{\partial_t w(t,\eta)}{w(t,\eta)} \approx \frac{\partial_t w^3_k(t,\eta)}{w^3_k(t,\eta)}  & \approx \frac{\kappa}{1 + \abs{\frac{\eta}{r} - t}} + \frac{\kappa \abs{\eta}}{t^2} \approx \frac{\kappa}{1 + \abs{\frac{\eta}{r} - t}} + \frac{\kappa \abs{r}}{t}  \label{dtw}
\end{align}
\end{lemma}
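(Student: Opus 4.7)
\medskip

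\noindent\textbf{Proof proposal.} The strategy is to directly differentiate the piecewise explicit formulas defining $\bar{w}$, $w$ and $w^3_k$ and track the two distinct sources of growth: the "spike" factors $(1 + b_{r,\eta}|t-\eta/r|)^{\pm}$ concentrated on the resonant intervals $I_{r,\eta}$, and the steady Gevrey-$2$ losses encoded in the exponential factor in \eqref{def:wextraloss}. Writing $L(t,\eta)$ for the integral appearing in the exponent of \eqref{def:wextraloss}, we have
\[
\frac{\partial_t w}{w} \;=\; \frac{\partial_t \bar{w}}{\bar{w}} \;+\; \kappa\,\mathbf{1}_{t\leq 2\sqrt{|\eta|}} \;+\; \kappa\,\mathbf{1}_{\sqrt{|\eta|}\leq t\leq 2|\eta|}\,\frac{|\eta|}{t^2},
\]
so the whole argument reduces to evaluating $\partial_t \bar{w}/\bar{w}$ on each piece of the construction and recalling the elementary bounds $b_{k,\eta},a_{k,\eta}\in[c,C]$ for $1\leq k\leq \tfrac12\sqrt{|\eta|}$ that follow from \eqref{bk}--\eqref{ak}.

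For $t\leq 2\sqrt{|\eta|}$, the definition makes $\bar{w}$ constant (it is frozen on $[0,t_{E(\sqrt{|\eta|}),\eta}]$ and only the spike factor at $k=E(\sqrt{|\eta|})$ can fail to be exactly $1$, but it is still $O(1)$), so $\partial_t\bar{w}/\bar{w}=0$. The first indicator in $L$ contributes exactly $\kappa$; the second indicator contributes at most $\kappa\,|\eta|/t^2\in[\kappa/4,\kappa]$ on the overlap region $\sqrt{|\eta|}\leq t\leq 2\sqrt{|\eta|}$ and $0$ otherwise. Summing gives $\partial_t w/w\approx \kappa$ as claimed. Since $w^3_k=w$ whenever $t\notin\bigcup_r I_{r,\eta}$ (and in particular on $[0,2\sqrt{|\eta|}]$), the statement for $w^3_k$ follows by the identity.

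For $t\in \I_{r,\eta}$ the first indicator in $L$ vanishes (by definition $2\sqrt{|\eta|}\leq t_{r,\eta}\leq t$), while the second indicator is $1$ and contributes $\kappa|\eta|/t^2\approx \kappa r/t$. It remains to compute $\partial_t\bar{w}/\bar{w}$. On $I^R_{r,\eta}$, using $\bar{w}(t,\eta)\propto (1+b_{r,\eta}(t-\eta/r))^{\kappa}$ I obtain
\[
\frac{\partial_t\bar{w}}{\bar{w}} \;=\; \frac{\kappa\, b_{r,\eta}}{1+b_{r,\eta}(t-\eta/r)} \;\approx\; \frac{\kappa}{1+|t-\eta/r|},
\]
and on $I^L_{r,\eta}$ the analogous formula with exponent $-(1+\kappa)$ gives $\partial_t\bar{w}/\bar{w}=(1+\kappa)a_{r,\eta}/(1+a_{r,\eta}(\eta/r-t))\approx \kappa/(1+|t-\eta/r|)$. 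Both contributions are positive (consistent with $w$ being non-decreasing, as required for the $CK_w$ terms), so adding them yields the claimed estimate for $\partial_t w/w$.

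Finally, for $w^3_k$ with $t\in I_{r,\eta}$: if $k=r$ then $w^3_k=w$ and there is nothing to do. If $k\neq r$, \eqref{def:wNR3} gives $w^3_k=\frac{|\eta|}{r^2(1+b_{r,\eta}|t-\eta/r|)}w$ on $I^R_{r,\eta}$ (and similarly with $a_{r,\eta}$ on $I^L_{r,\eta}$), so taking the logarithmic derivative introduces an extra term $\mp b_{r,\eta}/(1+b_{r,\eta}(t-\eta/r))$. On $I^R_{r,\eta}$ the sign is negative and partially cancels the spike piece, but the net coefficient is $(\kappa-1)b_{r,\eta}/(1+b_{r,\eta}|t-\eta/r|)$, which for $\kappa$ chosen sufficiently large (as already required elsewhere in the paper) is still $\approx \kappa/(1+|t-\eta/r|)$; on $I^L_{r,\eta}$ the signs align and give $(\kappa+2)a_{r,\eta}/(1+a_{r,\eta}|t-\eta/r|)$. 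In either case, adding the $\kappa|\eta|/t^2$ piece from $L$ yields $\partial_t w^3_k/w^3_k \approx \partial_t w/w$, completing the proof. The only delicate point, and really the only thing to be careful about, is this last cancellation on $I^R_{r,\eta}$: it is the reason the lemma forces the choice of $\kappa$ large relative to a universal constant, and it is crucial to check that the surviving coefficient $\kappa-1$ is positive (not just bounded in absolute value), since the $CK_w$ terms require $\partial_t w^3_k/w^3_k\geq 0$.
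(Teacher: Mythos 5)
Your strategy is the intended one: the paper gives no proof of Lemma \ref{lem:dtw} (it is a variant of the constructions in the cited earlier works), and the expected argument is precisely the direct logarithmic differentiation of the piecewise formulas \eqref{def:wNR}, \eqref{def:wextraloss}, \eqref{def:wNR3} that you carry out. Your treatment of the main case $t\in\I_{r,\eta}$ is correct, including the constants $\kappa b_{r,\eta}$, $(1+\kappa)a_{r,\eta}$ from \eqref{bk}--\eqref{ak}, the identification $\kappa\abs{\eta}/t^2\approx\kappa\abs{r}/t$, and the bookkeeping for $w^3_k$ with $k\neq r$, where the ratio factor shifts the coefficient to $\kappa-1$ on $I^R_{r,\eta}$ and $\kappa+2$ on $I^L_{r,\eta}$; your remark that positivity of the surviving coefficient is what forces $\kappa$ large is exactly the right point to emphasize.

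However, your handling of the regime $t\le 2\sqrt{\abs{\eta}}$ rests on two assertions that are false on the fringe $\sqrt{\abs{\eta}}\lesssim t\le 2\sqrt{\abs{\eta}}$. First, $\bar{w}$ is not constant there: it is frozen only for $t\le t_{E(\sqrt{\abs{\eta}}),\eta}\approx\sqrt{\abs{\eta}}$, and any $t$ in the fringe lies in some $I_{j,\eta}$ with $\tfrac12\sqrt{\abs{\eta}}\lesssim j\le E(\sqrt{\abs{\eta}})$ (not only $j=E(\sqrt{\abs{\eta}})$), where $\partial_t\bar{w}/\bar{w}$ equals $\kappa b_{j,\eta}/(1+b_{j,\eta}\abs{t-\eta/j})$ or the analogous $a$-expression and can be of size $\kappa$ (e.g.\ at $t=\eta/j$ for $j\approx 0.7\sqrt{\abs{\eta}}$), so it is not zero. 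Second, and for the same reason, $w^3_k$ need not equal $w$ on $[0,2\sqrt{\abs{\eta}}]$: definition \eqref{def:wNR3} activates the ratio factor on every $I_{j,\eta}$, not only on the restricted intervals $\I_{j,\eta}$, so the literal equality in the first display only holds off the fringe. Neither error is fatal: the neglected spike term satisfies $0\le\partial_t\bar{w}/\bar{w}\lesssim\kappa$ (since $b_{j,\eta}\le 2$, $a_{j,\eta}\le 4$), which together with the exact contribution $\kappa$ of the first indicator in \eqref{def:wextraloss} still gives $\partial_t w/w\approx\kappa$, and the log-derivative of the ratio $w^3_k/w$ is $O(1)$, so $\partial_t w^3_k/w^3_k\approx\kappa$ as well for $\kappa$ large. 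You should replace the claim ``$\partial_t\bar{w}/\bar{w}=0$'' by this two-sided bound and drop the parenthetical claim that $w^3_k=w$ on all of $[0,2\sqrt{\abs{\eta}}]$; with those repairs the proof is complete.
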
 

The next lemma is from \cite{BGM15I} and is a variant of Lemma 3.4 in \cite{BM13}.  
It is important for estimating nonlinear terms where we need to be able to compare $CK_w$ multipliers of different frequencies. 

\begin{lemma} \label{lem:WtFreqCompare}
\begin{itemize}
\item[(i)] For $t \gtrsim 1$, and $\eta,\xi$ such that $t < 2  \min( \abs{\xi},  \abs{\eta})    $,
\begin{align} \label{dtw-xi} 
 \frac{\partial_t w(t,\eta)}{w(t,\eta)}\frac{w(t,\xi)}{\partial_t w(t,\xi)}
 \lesssim \jap{\eta - \xi}^2
\end{align}
\item[(ii)] For all $t \gtrsim 1$, and $\eta,\xi$, such that for some $K \geq 1$, $\frac{1}{K}\abs{\xi} \leq \abs{\eta} \leq K\abs{\xi}$,  
\begin{align}
\sqrt{\frac{\partial_t w(t,\xi)}{w(t,\xi)}} \lesssim_K \left[\sqrt{\frac{\partial_t w(t,\eta)}{w(t,\eta)}} + \frac{\abs{\eta}^{s/2}}{\jap{t}^{s}}\right]\jap{\eta-\xi}^2. \label{ineq:partialtw_endpt}  
\end{align}
\end{itemize}
By Lemma \ref{lem:dtw}, these hold also for $w^3$ (and we do not need to make a distinction). 
\end{lemma}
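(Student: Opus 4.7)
The plan is to deduce both parts from the explicit expression in Lemma \ref{lem:dtw}. Writing $\phi(t,\eta) := \partial_t w(t,\eta)/w(t,\eta)$, one has $\phi \approx \kappa$ on $\{t \leq 2\sqrt{|\eta|}\}$, $\phi \approx \kappa[(1+|t-\eta/r|)^{-1}+|r|/t]$ on $\{t \in \mathbf{I}_{r,\eta}\}$, and (from the explicit cutoffs in \eqref{def:wextraloss}) $\phi \equiv 0$ on $\{t > 2|\eta|\}$. One may restrict to $|\eta|, |\xi| \geq 1$, since otherwise both $\phi$'s are bounded by a universal constant and the claims are immediate.

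For part (i), the hypothesis $t < 2\min(|\eta|, |\xi|)$ keeps $\phi(t,\eta), \phi(t,\xi) > 0$ and $|\eta|, |\xi| > t/2$. If either $t \leq 2\sqrt{|\eta|}$ or $t \leq 2\sqrt{|\xi|}$, then one of $\phi(t,\eta), \phi(t,\xi)$ is $\approx \kappa$ while the other is $\lesssim \kappa$, giving ratio $O(1)$. Otherwise $t \in \mathbf{I}_{r,\eta} \cap \mathbf{I}_{n,\xi}$. If $|\eta|, |\xi|$ are not comparable (so $|\eta-\xi| \gtrsim \max(|\eta|,|\xi|) \gtrsim t$), then $\langle\eta-\xi\rangle^2 \gtrsim t^2$ while ratio $\leq \phi(t,\eta)/(|n|/t) \lesssim t$, closing the bound. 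When $\frac{1}{K}|\xi| \leq |\eta| \leq K|\xi|$, I would invoke Lemma \ref{lem:wellsep}. In case (a) $r=n$, the triangle inequality $|t-\xi/r| \leq |t-\eta/r| + |\eta-\xi|/|r|$ and $|r| \geq 1$ give $1 + |t-\xi/r| \leq (1 + |t-\eta/r|)\langle\eta-\xi\rangle$; then splitting $\phi(t,\eta) = A + B$ with $A = (1+|t-\eta/r|)^{-1}$, $B = |r|/t$ against $\phi(t,\xi) \gtrsim A/\langle\eta-\xi\rangle + B$ and comparing which summand dominates yields ratio $\lesssim \langle\eta-\xi\rangle$. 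In case (b), both $\phi$'s are $\approx r^2/|\eta| + |r|/t \approx n^2/|\xi| + |n|/t$, so ratio $\approx 1$. In case (c), $\phi(t,\xi) \gtrsim |n|/t$ combined with $\langle\eta-\xi\rangle \gtrsim_K |\xi|/|n|$ (using $|\eta|\approx_K|\xi|$) yields ratio $\lesssim t/|n| \lesssim |\xi|/|n| \lesssim \langle\eta-\xi\rangle$.

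For part (ii), the subcase $t < 2\min(|\eta|,|\xi|)$ reduces immediately to (i). If $t \geq 2|\xi|$, then $\phi(t,\xi) = 0$ and the claim is trivial. The only genuinely new case is $t \geq 2|\eta|$ with $t < 2|\xi|$, which combined with $\frac{1}{K}|\xi| \leq |\eta| \leq K|\xi|$ forces $t \approx_K |\eta| \approx_K |\xi|$. Writing $t \in \mathbf{I}_{n,\xi}$, the bound $t \approx |\xi|/n \geq 2|\eta| \geq 2|\xi|/K$ gives $n \leq K/2$, so $\phi(t,\xi) \lesssim_K (1+|t-\xi/n|)^{-1} + 1/t$. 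The $\sqrt{1/t}$ contribution satisfies $t^{-1/2} \leq t^{-s/2} \approx |\eta|^{s/2}/t^s$ since $s < 1$. For the $(1+|t-\xi/n|)^{-1/2}$ piece one needs $\langle\eta-\xi\rangle^2 \gtrsim_K t^s$: in the near-resonant sub-region $|t-\xi/n| \lesssim 1$, the relation $t \approx \xi/n$ with $t \geq 2|\eta|$ gives $|\xi| \geq 2n|\eta| \geq 2|\eta|$, so $|\eta-\xi| \gtrsim_K |\eta| \approx t$ and $\langle\eta-\xi\rangle^2 \gtrsim t^2 \geq t^s$; outside this sub-region the piece is already $\lesssim t^{-1/2} \leq t^{-s/2}$.

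The main technical obstacle is subcase (a) of Lemma \ref{lem:wellsep} in part (i), where the two summands of $\phi$ (the resonant $(1+|t-\cdot/r|)^{-1}$ and the $|r|/t$ floor) must both be tracked through the triangle inequality to produce an effective lower bound on $\phi(t,\xi)$; the quadratic $\langle\eta-\xi\rangle^2$ on the right (instead of linear) leaves comfortable slack for this case analysis.
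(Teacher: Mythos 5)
The paper does not actually prove this lemma here (it is quoted from \cite{BGM15I}, itself a variant of Lemma 3.4 of \cite{BM13}), and your overall route --- read $\partial_t w/w$ off Lemma \ref{lem:dtw}, split according to whether $t$ is in the early-time regime or in a resonant interval, and use Lemma \ref{lem:wellsep} in the comparable-frequency case --- is exactly the standard argument behind those references; the $A+B$ splitting in case (a) and your treatments of cases (b), (c) are correct. However, two of your steps are false as written. In part (i), in the subcase $t\le 2\sqrt{\abs{\eta}}$ but $t> 2\sqrt{\abs{\xi}}$, the claim ``one of $\phi(t,\eta),\phi(t,\xi)$ is $\approx\kappa$ while the other is $\lesssim\kappa$, giving ratio $O(1)$'' only works when the factor that is $\approx\kappa$ is the \emph{denominator}. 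Here the numerator is $\approx\kappa$ while the denominator can be as small as $\approx \kappa n^2/\abs{\xi}$: take $\eta=N^2$, $\xi=N$, $t=2N-1$, for which the ratio is $\approx N$, not $O(1)$. The estimate still holds, but you need the floor $\partial_t w(t,\xi)/w(t,\xi)\gtrsim \kappa\abs{\xi}/t^2$ (the second correction in \eqref{def:wextraloss}) together with $t^2\le 4\abs{\eta}$, giving ratio $\lesssim \abs{\eta}/\abs{\xi}\le 1+\abs{\eta-\xi}$ since $\abs{\xi}\ge 1$. (The same repair covers the boundary slivers where $t\in I_{r,\eta}\setminus \I_{r,\eta}$, and note that your one-line dismissal of small frequencies in (i) also needs a \emph{lower} bound on the denominator, which follows because $t<2\abs{\xi}\le 2$ forces $t\le 2\sqrt{\abs{\xi}}$.)

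In part (ii), in the case $t\ge 2\abs{\eta}$, $t<2\abs{\xi}$, your dichotomy at $\abs{t-\xi/n}\lesssim 1$ does not close: outside that region the factor $(1+\abs{t-\xi/n})^{-1/2}$ is merely $O(1)$, not $\lesssim t^{-1/2}$ (take $\abs{t-\xi/n}=10$ with $t$ huge), so the range $1\ll \abs{t-\xi/n}\ll t$ is left uncovered by your argument. The correct threshold is comparable to $\abs{\xi}$ (equivalently $t$): if $\abs{t-\xi/n}\le \abs{\xi}/2$, then $\abs{\eta}\le t/2\le \frac{\abs{\xi}}{2n}+\frac{\abs{t-\xi/n}}{2}$ forces $\abs{\xi}-\abs{\eta}\ge \frac{\abs{\xi}-\abs{t-\xi/n}}{2}\ge \abs{\xi}/4\gtrsim t$, hence $\jap{\eta-\xi}^2\gtrsim t^{2}$ and the right-hand side dominates $1$ (which bounds the resonant factor); if instead $\abs{t-\xi/n}>\abs{\xi}/2\gtrsim t$, then the factor is $\lesssim t^{-1/2}\le t^{-s/2}\approx_K \abs{\eta}^{s/2}/\jap{t}^{s}$. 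With these two local repairs your proof is complete and coincides in spirit with the cited one.
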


The next lemma from \cite{BGM15I} and is an easy variant of the analogous [Lemma 3.5, \cite{BM13}].  
It is of crucial importance for estimating nonlinear terms we need to be able to compare ratios.

\begin{lemma}[Ratio estimates for nonlinear interactions] \label{lem:wRat}
There exists a $K > 0$ such that for all $\eta,\xi$, 
\begin{align}
\frac{w(t,\eta)}{w(t,\xi)} & \lesssim e^{K\abs{\eta-\xi}^{1/2}}. 
\end{align} 
\end{lemma}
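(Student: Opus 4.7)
My plan is to reduce the estimate to the two regimes where $\eta,\xi$ are either well-separated or comparable, and handle each by exploiting the global growth bound of Lemma \ref{lem:totalGrowthw} together with the concavity of $x \mapsto \sqrt{x}$. Without loss of generality assume both $\eta,\xi > 0$ (the case of opposite signs or zero frequencies is handled by $w(t,\eta) = w(t,|\eta|)$ and the fact that $w$ is bounded below and above by $O(1)$ for $|\eta|$ bounded).

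First I would treat the \emph{well-separated} case $|\eta - \xi| \geq \tfrac{1}{2} \max(|\eta|,|\xi|)$. Here $w(t,\eta) \leq 1$ by construction (since $\bar w \leq 1$ and the exponential correction in \eqref{def:wextraloss} is bounded by $1$), so
\begin{align*}
\frac{w(t,\eta)}{w(t,\xi)} \leq \frac{1}{w(t,\xi)} \leq \frac{1}{w(1,\xi)} \lesssim |\xi|^{-p} e^{\tfrac{\mu}{2}\sqrt{|\xi|}}
\end{align*}
by Lemma \ref{lem:totalGrowthw}, and since $\sqrt{|\xi|} \leq \sqrt{2|\eta-\xi|}$, this is absorbed into $e^{K|\eta-\xi|^{1/2}}$ for $K$ slightly larger than $\mu/\sqrt{2}$ (the polynomial factor is harmless via \eqref{ineq:IncExp}).

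Next I would handle the \emph{comparable} case $\tfrac{1}{2}|\xi| \leq |\eta| \leq 2|\xi|$. The key observation is that $\log(1/w(t,\eta))$ is, up to polynomial corrections, essentially $F(t,\eta) := \tfrac{\mu}{2}\sqrt{\eta} \cdot \Phi(t,\eta)$ for a bounded profile $\Phi$ built from summing the logarithms $\log(k^2/\eta)^{\kappa}$ and $\log(1 + a_{k,\eta}|t - \eta/k|)^{1+\kappa}$ across the critical intervals visited from $2\eta$ down to $t$, plus the contributions of the two extra-loss integrals in \eqref{def:wextraloss}. So it suffices to bound
\begin{align*}
\bigl|\log w(t,\eta) - \log w(t,\xi)\bigr| \lesssim |\eta - \xi|^{1/2}.
\end{align*}
By \eqref{lem:scon} (with $s=1/2$) applied to comparable $\eta,\xi$, we have $|\sqrt{\eta} - \sqrt{\xi}| \lesssim |\eta-\xi|^{1/2}$, which handles the leading $\tfrac{\mu}{2}\sqrt{\eta}$ envelope. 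The remaining terms require comparing the critical-interval structure of $\eta$ with that of $\xi$: the indices $k$ for which $I_{k,\eta}$ and $I_{k,\xi}$ both contain $t$ yield ratios of the form $(1 + a_{k,\eta}|t-\eta/k|)/(1+a_{k,\xi}|t-\xi/k|)$, which are $O(1)$ bounded since $|a_{k,\eta} - a_{k,\xi}| \lesssim |\eta-\xi|/\eta$ and $|\eta/k - \xi/k| \lesssim |\eta-\xi|/k$; similarly for the $b$-intervals.

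The main technical obstacle lies at the \emph{boundary indices}, namely when $t$ lies in $I_{k,\eta}$ but not in $I_{k,\xi}$ (or vice versa), i.e., the shifted critical intervals straddle a common $t$. Here one picks up extra $(k^2/\eta)^{\kappa}$ factors from one endpoint that are not present for the other. The count of such boundary indices is $O(1)$ by Lemma \ref{lem:wellsep}, and each contributes at most $\log(k^2/\eta) \lesssim \log \eta$ from indices with $k \lesssim \sqrt{\eta}$; using $\log \eta \ll \eta^{1/4} \leq |\eta-\xi|^{1/2} \cdot \eta^{1/4}/|\eta-\xi|^{1/2}$ and that at the boundary $|\eta-\xi| \gtrsim \eta/k^2$ from Lemma \ref{lem:wellsep}(c), one sees these discrepancies are bounded by $|\eta-\xi|^{1/2}$ up to an absorbable logarithmic factor. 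This is the same mechanism as in \cite{BM13}; the modification for our setting is purely cosmetic since $w$ here differs from the 2D multiplier only by the bounded factor \eqref{def:wextraloss}, whose difference between $\eta$ and $\xi$ is controlled by the same square-root argument. Combining all estimates yields the claim with $K$ depending on $\mu,\kappa$.
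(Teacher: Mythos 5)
Your overall skeleton --- splitting into the well-separated regime $\abs{\eta-\xi}\gtrsim \max(\abs{\eta},\abs{\xi})$, handled via $w\leq 1$ and the total-growth bound of Lemma \ref{lem:totalGrowthw}, and the comparable regime, handled by comparing the accumulated growth factors interval by interval --- is the same as in the argument this lemma is quoted from ([Lemma 3.5, \cite{BM13}]; the present paper only cites it). The well-separated case is correct. In the comparable case, however, there are two problems. The smaller one: you never actually estimate the bulk term, namely the ratio of the \emph{full}-interval factors $\prod_k(\eta/k^2)^{1+2\kappa}\big/\prod_k(\xi/k^2)^{1+2\kappa}=(\eta/\xi)^{(1+2\kappa)O(\sqrt{\eta})}$ over the common indices; it is easy ($\lesssim \exp(C\kappa\abs{\eta-\xi}/\sqrt{\eta})\lesssim \exp(C\abs{\eta-\xi}^{1/2})$), but it is the dominant contribution and is not covered by your ``$\sqrt{\eta}$ envelope plus \eqref{lem:scon}'' remark, which only controls $(\sqrt{\eta}-\sqrt{\xi})\Phi$ and says nothing about $\Phi(t,\eta)-\Phi(t,\xi)$.

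The genuine gap is the boundary-index step. Lemma \ref{lem:wellsep} gives its trichotomy only when $t$ lies in a critical interval of \emph{both} frequencies, and its conclusion (c) reads $\abs{\eta-\xi}\gtrsim \abs{\eta}/\abs{n}$, not $\eta/k^2$; more importantly, in the scenario you invoke it for --- $t\in I_{k,\eta}$ but $t\notin I_{k,\xi}$ with $\abs{\eta-\xi}$ small --- the relevant alternative is typically (b) (far from resonance), so no lower bound on $\abs{\eta-\xi}$ is available at all: take $\abs{\eta-\xi}=1$, $k\ll\sqrt{\eta}$, and $t$ within $O(1/k)$ of a common interval endpoint, and your claimed separation $\abs{\eta-\xi}\gtrsim\eta/k^2$ fails. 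In that regime the discrepancy is also not an ``extra $(k^2/\eta)^{\kappa}$ factor'': since $a_{k,\eta},b_{k,\eta}$ are chosen so that the growth factor equals $1$ at the interval endpoints, $w(\cdot,\eta)$ and $w(\cdot,\xi)$ are pinned to matching values near a mismatched boundary, and the discrepancy there is $O(1)$ per index by an endpoint-matching/continuity argument, not a separation argument. The separation mechanism is only needed, and only available, when $t$ sits well inside a resonant interval for one frequency while the other frequency's structure is genuinely different; then case (c) gives $\abs{\eta-\xi}\gtrsim \eta/k$ and one concludes via $\log(\eta/k^2)\lesssim(\eta/k)^{1/2}\lesssim\abs{\eta-\xi}^{1/2}$. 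Finally, the chain ``$\log\eta\ll\eta^{1/4}\leq\abs{\eta-\xi}^{1/2}\cdot\eta^{1/4}/\abs{\eta-\xi}^{1/2}$'' is a tautology, and ``up to an absorbable logarithmic factor'' is not legitimate: the target bound is multiplicative, and a stray $\log\eta$ cannot be absorbed into $e^{K\abs{\eta-\xi}^{1/2}}$ when $\abs{\eta-\xi}=O(1)$. The delicate case therefore has to be redone as an honest case analysis (matched intervals; mismatched but near endpoints, via endpoint matching; mismatched and genuinely resonant, via the separation of Lemma \ref{lem:wellsep}).
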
 

Next, we want to write the analogue of Lemma \ref{lem:wRat} for $w^3$, which is somewhat trickier.
Instead of Lemma \ref{lem:wRat}, we have the following, which is analogous to [Lemma 3.6, \cite{BM13}] (although here easier due to the simpler $k$ dependence). 

\begin{lemma}\label{lem:Jswap}
There is a universal $K > 0$ such that in general we have 
\begin{align}
\frac{w^3_{k^\prime}(\eta)}{w^3_{k}(\xi)} \lesssim \frac{t}{\abs{k}+ \abs{\eta-kt}} e^{K{\mu}\abs{k-k^\prime,\eta - \xi}^{1/2}}. \label{ineq:WFreqCompRes}
\end{align}  
If any one of the following holds: ($t \not\in \I_{k,\eta}$) or ($k = k^\prime$) or ($t \in \I_{k,\eta}$, $t \not\in \I_{k,\xi}$) then we have the improved estimate  
\begin{align} 
\frac{w^3_{k^\prime}(\eta)}{w^3_{k}(\xi)} \lesssim e^{K{\mu}\abs{k-k^\prime,\eta - \xi}^{1/2}}. \label{ineq:BasicJswap} 
\end{align}
Finally if $t \in \I_{k^\prime,\xi}$ and $k \neq k^\prime$, then 
\begin{align} 
\frac{w^3_{k^\prime}(\eta)}{w^3_{k}(\xi)} \lesssim \frac{\abs{k^\prime} + \abs{\xi - k^\prime t}}{t}e^{K{\mu}\abs{k-k^\prime,\eta - \xi}^{1/2}}. \label{ineq:WFreqCompNRGain}
\end{align}
\end{lemma}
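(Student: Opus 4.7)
The plan is to decompose the ratio through the unmodified weight $w$:
$$\frac{w^3_{k'}(t,\eta)}{w^3_{k}(t,\xi)} = \frac{w^3_{k'}(t,\eta)}{w(t,\eta)} \cdot \frac{w(t,\eta)}{w(t,\xi)} \cdot \frac{w(t,\xi)}{w^3_{k}(t,\xi)}.$$
The middle factor is immediately controlled by Lemma \ref{lem:wRat}, which gives $e^{K\mu|\eta-\xi|^{1/2}}$; when $|\eta|$ and $|\xi|$ are incomparable, $|\eta-\xi|$ itself is comparable to $|\eta|+|\xi|$ and the exponential absorbs any polynomial growth in the remaining factors via \eqref{ineq:IncExp}. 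For the outer factors I would read the explicit form in \eqref{def:wNR3}: the first equals $1$ unless $t \in I_{m,\eta}$ for some $m \neq k'$, in which case it is comparable to $t/(|m|+|\eta-mt|)$; the third is either $1$ or of the reciprocal form $(|n|+|\xi-nt|)/t$ for some $n \neq k$ with $t \in I_{n,\xi}$.

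Once this is set up, all three inequalities reduce to case analysis combined with Lemma \ref{lem:wellsep}. The third factor is \emph{always} bounded by a universal constant: on $I_{n,\xi}$ one has $n \leq \sqrt{|\xi|}$ and $|\xi-nt| \lesssim |\xi|/n \sim t$, so $(|n|+|\xi-nt|)/t \lesssim 1$. For the general bound \eqref{ineq:WFreqCompRes}, I would show the first factor is dominated by $t/(|k|+|\eta-kt|)$ up to the exponential: the case $m = k$ is exact, while $m \neq k$ follows from the triangle estimates $|\eta-kt| \leq |\eta-mt|+|k-m|t$ and $|k-m| \leq |k-k'|+|k'-m|$ together with the absorption of polynomial growth into $e^{K\mu|k-k',\eta-\xi|^{1/2}}$. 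For the improved bound, each of the three hypotheses is treated as follows. In the subcase $k = k'$, pairing the two non-trivial outer factors (when they share the same interval index $m$ for both $\eta$ and $\xi$) yields $(|m|+|\xi-mt|)/(|m|+|\eta-mt|) \leq 1 + |\eta-\xi|$, absorbed by $e^{K\mu|\eta-\xi|^{1/2}}$; when the interval indices differ, Lemma \ref{lem:wellsep} forces either $|t-\eta/m| \gtrsim |\eta|/m^2$ (collapsing the first factor to $O(1)$) or $|\eta-\xi| \gtrsim |\eta|/m$ (absorbed by the exponential). The subcase $t \notin \I_{k,\eta}$ is handled similarly once one notes $w^3_{k'}/w = 1$ if $t$ is outside all resonant intervals for $\eta$. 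In the subcase $t \in \I_{k,\eta}, t \notin \I_{k,\xi}$, the $k$-th critical intervals of $\eta$ and $\xi$ differ by the shift $|\eta-\xi|/|k|$, so their disjointness forces either $|\eta-\xi| \gtrsim |\xi|/|k|$ (exponential absorption) or that $t$ lies in the outer portion of $\I_{k,\eta}$ where the first factor is already $O(1)$. Finally, \eqref{ineq:WFreqCompNRGain} is the straightforward reading of the third factor when $t \in \I_{k',\xi}$ with $k \neq k'$, together with the general bound on the other two factors.

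The main obstacle I expect is the systematic bookkeeping in the improved bound, in particular tracking how the interval indices $m, n$ for $\eta, \xi$ relate to the frequencies $k, k'$ and verifying that every cross-combination is absorbed either via Lemma \ref{lem:wellsep} or via $e^{K\mu|k-k',\eta-\xi|^{1/2}}$; making this rigorous requires choosing $K$ and $\mu$ uniformly across the cases and enlarging $\mu$ using \eqref{ineq:IncExp} whenever a polynomial needs to be swallowed by the exponential. A minor technical point is that the definition \eqref{def:wNR3} applies on all critical intervals $I_{k,\eta}$ while the hypotheses of the lemma concern the slightly smaller resonant intervals $\I_{k,\eta}$; this is harmless since on $I_{k,\eta} \setminus \I_{k,\eta}$ one has $t \lesssim \sqrt{|\eta|}$, a regime in which $w^3/w$ is already uniformly controlled by a universal constant.
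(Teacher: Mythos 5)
Your overall strategy is the intended one: the paper gives no proof of this lemma (it defers to the analogue [Lemma 3.6, \cite{BM13}]), and the natural argument is precisely your factorization $\frac{w^3_{k'}(\eta)}{w^3_k(\xi)} = \frac{w^3_{k'}(\eta)}{w(\eta)}\cdot\frac{w(\eta)}{w(\xi)}\cdot\frac{w(\xi)}{w^3_k(\xi)}$, with Lemma \ref{lem:wRat} for the middle factor, the explicit form \eqref{def:wNR3} for the outer factors, the uniform bound $\frac{w(\xi)}{w^3_k(\xi)}\lesssim 1$, and a case analysis driven by Lemma \ref{lem:wellsep} together with absorption of polynomial factors into $e^{K\mu|\cdot|^{1/2}}$; your closing remark about $I_{k,\eta}\setminus\I_{k,\eta}$ (where $\abs{\eta}/k^2\lesssim 1$, so the correction is $O(1)$) is also correct and needed.

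There is, however, a step that fails. You claim the first factor $w^3_{k'}(\eta)/w(\eta)$ is always dominated by $\frac{t}{\abs{k}+\abs{\eta-kt}}$ up to the exponential, handling $m\neq k$ (where $t\in I_{m,\eta}$) via $\abs{\eta-kt}\le\abs{\eta-mt}+\abs{k-m}t$ and $\abs{k-m}\le\abs{k-k'}+\abs{k'-m}$. When $k=k'$ and $t$ is near the center of $I_{m,\eta}$ with $m\neq k$, the term $\abs{k-m}t\gtrsim t$ cannot be absorbed: the exponential in $\abs{k-k'}$ equals $1$ while $\abs{m}+\abs{\eta-mt}\approx m\ll t$. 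Indeed \eqref{ineq:WFreqCompRes}, read literally for all parameters, is false in such configurations (take $\eta=\xi$, $t=\eta\in I_{1,\eta}$, $k=k'=E(\sqrt{\eta})$: the left side equals $1$ while $\frac{t}{\abs{k}+\abs{\eta-kt}}\approx\eta^{-1/2}$); it is only needed, and only true, in the case complementary to \eqref{ineq:BasicJswap}, namely $t\in\I_{k,\eta}\cap\I_{k,\xi}$ and $k\neq k'$ (cf.\ Remark \ref{rmk:GainLoss} and the combined form $\lesssim\bigl(1+\frac{t}{\abs{k}+\abs{\eta-kt}}\mathbf{1}_{t\in\I_{k,\eta}\cap\I_{k,\xi}}\mathbf{1}_{k\neq k'}\bigr)e^{K\mu\abs{\eta-\xi}^{1/2}}$ actually invoked in the proof of Lemma \ref{lem:ABasic}), where your observation that ``$m=k$ is exact'' plus Lemma \ref{lem:wRat} already finishes the proof; the triangle-inequality extension should simply be dropped. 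Relatedly, your one-line justification of \eqref{ineq:WFreqCompNRGain} as ``the general bound on the other two factors'' inherits this defect: the correct short reason is that when $t\in\I_{k',\xi}$, $k\neq k'$, and $\abs{\eta-\xi}$ is too small to absorb everything, $t$ lies in (or at the edge of) $I_{k',\eta}$, so the numerator's subscript coincides with the resonant index and $w^3_{k'}(\eta)=w(\eta)$ up to wellsep/absorption edge cases, the gain $\frac{\abs{k'}+\abs{\xi-k't}}{t}$ coming solely from the denominator. A smaller bookkeeping point: Lemma \ref{lem:wellsep} presupposes $t$ lies in critical intervals of both $\eta$ and $\xi$, so the mixed cases (e.g.\ $t$ in some $I_{m,\eta}$ but in no $I_{n,\xi}$, or $\I_{k,\xi}=\emptyset$ while $t\in I_{k,\xi}$) must be closed by the scale-separation/absorption argument you sketch at the start, not by the lemma itself.
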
 

\begin{remark} \label{rmk:GainLoss}
In the case $t \in \I_{k,\eta} \cap \I_{k,\xi}$, $k \neq k^\prime$,  the only case where \eqref{ineq:WFreqCompRes} is needed, we also have $\abs{\eta} \approx \abs{\xi}$ and from \eqref{ineq:WFreqCompRes}, the definition \eqref{def:wNR3}, and \eqref{dtw}, Lemma \ref{lem:WtFreqCompare} and \eqref{ineq:SobExp} implies that there is a $K > 0$ such that (see \cite{BM13} for more information) 
\begin{align} 
\frac{w_k^3(\eta)}{w_{k^\prime}^3(\xi)} \lesssim \frac{t}{\abs{k}}\sqrt{\frac{\partial_t w_k(t,\eta)}{w_k(t,\eta)}}\sqrt{\frac{\partial_t w_l(t,\xi)}{w_l(t,\xi)}}e^{K\mu\abs{k-l,\eta-\xi}^{1/2}}. \label{ineq:RatJ2partt}
\end{align}
\end{remark} 

\begin{remark} 
Notice the appearance of $\I_{k,\eta}$ as opposed to $I_{k,\eta}$. Each are defined in \S\ref{sec:Notation}. The use of $\I$ is to rule out the end case $t \approx \sqrt{\abs{\eta}}$, for example, we see that \eqref{ineq:BasicJswap} holds if $t \approx \sqrt{\abs{\eta}}$ even if $t \in I_{k,\eta}$ and hence inequalities like \eqref{ineq:RatJ2partt} will not be necessary. 
\end{remark}

\subsection{The design and analysis of $w_L$} \label{sec:Nmult}
We also recall the definition of the multiplier $w_L$ from \cite{BGM15I}. 
We define $w_L$ such that it solves the following: 
\begin{subequations}  \label{def:wL}
\begin{align} 
\partial_tw_L(t,k,\eta,l) & = \kappa \frac{\abs{k} \jap{l} }{k^2 + l^2 + \abs{\eta - kt}^2} w_L(t,k,\eta,l) \quad\quad t \geq 1 \\ 
w_L(1,k,\eta,l) & = 1. 
\end{align}
\end{subequations} 
Since the following holds uniformly in $k,l,\eta$: 
\begin{align} 
\int_0^\infty \frac{\abs{k} \jap{l}}{k^2 + l^2 + \abs{\eta - kt}^2} dt \approx 1, \label{ineq:unifN}
\end{align}
the multiplier $w_L$ is $O(1)$ and hence will have very little effect on most estimates. 

\section{Elliptic estimates} \label{sec:Elliptic} 
In this section, we group and discuss all of the necessary ``elliptic'' estimates on $\Delta_t^{-1}$. 
We will need the estimates from \cite{BGM15I} as well as a number of new estimates specific to the above threshold case. 

\subsection{Lossy estimates} \label{sec:Lossy}
First, recall the lossy elliptic lemma [Lemma C.1, \cite{BGM15I}]. 

\begin{lemma}[Lossy elliptic lemma] \label{lem:LossyElliptic}
Under the bootstrap hypotheses, for $c_0$ chosen sufficiently small, then for any function $h$ and $a \leq \sigma$, there holds 
\begin{align*} 
\norm{\Delta^{-1}_t  h_{\neq}}_{\G^{\lambda,a-2}} \lesssim \frac{1}{\jap{t}^2}\norm{h_{\neq}}_{\G^{\lambda,a}}. 
\end{align*} 
\end{lemma}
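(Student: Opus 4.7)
The plan is to reduce $\Delta_t^{-1}$ to $\Delta_L^{-1}$ via a perturbative argument. Writing $\Delta_t = \Delta_L + \mathcal{E}$, where using \eqref{def:G} and \eqref{def:Deltat} the error operator schematically takes the form
\begin{align*}
\mathcal{E} = G_{yy}(\partial_Y-t\partial_X)^2 + G_{yz}(\partial_Y-t\partial_X)\partial_Z + G_{zz}\partial_{ZZ} - \Delta_t C^1(\partial_Y-t\partial_X) - \Delta_t C^2 \partial_Z,
\end{align*}
the defining relation $u = \Delta_t^{-1}h_{\neq}$ becomes $u = \Delta_L^{-1}(h_{\neq} - \mathcal{E} u)$. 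The proof then reduces to two ingredients: a lossy $\jap{t}^{-2}$ estimate on $\Delta_L^{-1}$ restricted to modes with $k \neq 0$, and the smallness of $\mathcal{E}$ acting on the high norm, which together close by absorption.

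For the first ingredient, the central pointwise Fourier-multiplier inequality to establish is that for every $k \in \Integer \setminus \{0\}$ and all $\eta, l \in \Real$, $t \geq 0$,
\begin{align*}
\jap{t}^2 \lesssim \jap{k,\eta,l}^2\bigl(k^2 + (\eta - kt)^2 + l^2\bigr).
\end{align*}
This follows from the triangle inequality $|\eta| + |\eta-kt| \geq |kt| \geq |t|$ via a two-case split: if $\eta^2 \gtrsim t^2$ one uses $\jap{k,\eta,l}^2 \gtrsim \jap{t}^2$ together with $k^2+(\eta-kt)^2+l^2 \geq k^2 \geq 1$, and otherwise $(\eta-kt)^2 \gtrsim t^2$ so $k^2+(\eta-kt)^2+l^2 \gtrsim \jap{t}^2$ while $\jap{k,\eta,l}^2 \geq 1$. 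Since $\Delta_L$ acts on the Fourier side as multiplication by $-(k^2+(\eta-kt)^2+l^2)$, squaring and rearranging this inequality gives $(k^2+(\eta-kt)^2+l^2)^{-2} \lesssim \jap{k,\eta,l}^4 \jap{t}^{-4}$, which integrates to $\norm{\Delta_L^{-1} f_{\neq}}_{\G^{\lambda,a-2}} \lesssim \jap{t}^{-2}\norm{f_{\neq}}_{\G^{\lambda,a}}$.

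For the second ingredient, I will combine Lemma \ref{lem:GevProdAlg} with Lemma \ref{lem:CoefCtrl} and the trivial Fourier-side bound $\norm{(\partial_Y-t\partial_X)^2 u}_{\G^{\lambda,a}} + \norm{\partial_{ZZ}u}_{\G^{\lambda,a}} \lesssim \norm{\Delta_L u}_{\G^{\lambda,a}}$ to obtain a product-type estimate of the form $\norm{\mathcal{E} u}_{\G^{\lambda,a}} \lesssim \norm{\grad C}_{\G^{\lambda,\gamma}} \norm{\Delta_L u}_{\G^{\lambda,a}}$. Substituting $\Delta_L u = h_{\neq} - \mathcal{E} u$ on the right and using the bootstrap bound \eqref{ineq:Boot_LowC} together with $\epsilon t \leq c_0$ on $[1,T_F]$, which yields $\norm{\grad C}_{\G^{\lambda,\gamma}} \lesssim K_{LC}\epsilon\jap{t} \lesssim K_{LC}c_0$, the perturbation can be absorbed for $c_0$ chosen sufficiently small.

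The main obstacle is an apparent tension in the second step: the operator $\mathcal{E}$ carries the symbol $(\partial_Y - t\partial_X)^2$, which at fixed frequency grows like $t^2$ and therefore naively cancels precisely the $\jap{t}^{-2}$ gain extracted from $\Delta_L^{-1}$. The resolution, and the reason for the qualifier ``lossy,'' is that $G_{yy}$ is linear in $C$ and hence of size $O(\epsilon\jap{t})$ in high norm, so the restriction $\epsilon t \leq c_0$ on the bootstrap interval is precisely what renders the perturbation a small operator uniformly in time.
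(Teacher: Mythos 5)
Your proposal is correct and follows essentially the route the paper intends: Lemma \ref{lem:LossyElliptic} is quoted here without proof from the companion paper, and the argument there is exactly this perturbative scheme — the pointwise bound $\jap{t}^2 \lesssim \jap{k,\eta,l}^2(k^2+(\eta-kt)^2+l^2)$ for $k\neq 0$ giving the lossy $\jap{t}^{-2}$ estimate on $\Delta_L^{-1}$ restricted to non-zero modes, followed by absorption of the coefficient errors, which are measured against $\norm{\Delta_L u}_{\G^{\lambda,a}}$ (so no power of $t$ is actually lost) and are $O(c_0)$ small.

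One caveat on your second ingredient: the estimate $\norm{\mathcal{E}u}_{\G^{\lambda,a}} \lesssim \norm{\grad C}_{\G^{\lambda,\gamma}}\norm{\Delta_L u}_{\G^{\lambda,a}}$ does not follow from \eqref{ineq:GProduct} alone when $\gamma < a \leq \sigma$, since the product lemma also produces the term with the coefficient in the high-regularity slot, e.g. $\norm{\Delta_L u}_{\G^{c\lambda,3/2+}}\norm{G_{yy}}_{\G^{\lambda,a}}$, and $\norm{G_{yy}}_{\G^{\lambda,a}}$ is not controlled by the low norm \eqref{ineq:Boot_LowC}. This is harmless but must be said: by Lemma \ref{lem:CoefCtrl} at regularity $a\leq\sigma$ together with the high-norm bootstrap \eqref{ineq:Boot_ACC} (note $A \gtrsim e^{\lambda\abs{\cdot}^s}\jap{\cdot}^{\sigma+2}$), one has $\norm{G_{yy}}_{\G^{\lambda,a}} + \norm{\Delta_t C}_{\G^{\lambda,a}} \lesssim \norm{AC}_2 \lesssim c_0$, so both contributions are $O(c_0)\norm{\Delta_L u}_{\G^{\lambda,a}}$ and the absorption closes exactly as you describe.
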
  

We also need the enhanced dissipation lossy elliptic lemma [Lemma C.2, \cite{BGM15I}]. 

\begin{lemma}[Lossy elliptic lemma II] \label{lem:AnuLossy}
If $C$ satisfies the bootstrap assumptions~\eqref{ineq:Boot_CgHi}, then for $c_0$ sufficiently small, for any function $h$, and $\gamma^\prime = \beta + 3\alpha + 5$,  
\begin{subequations} \label{ineq:AnuLossyII} 
\begin{align} 
\norm{ A^{\nu;i} \Delta^{-1}_t h}_{2} + \norm{\partial_X A^{\nu;i} \Delta^{-1}_t h}_{2} & \lesssim \frac{1}{\jap{t}^2}\left(\norm{A^{\nu;i} \phi}_2 + \jap{t}^{-3}\norm{h_{\neq}}_{\G^{\lambda,\gamma^\prime}} \right) \\ 
\norm{ \partial_Z A^{\nu;i} \Delta^{-1}_t h}_{2} + \norm{ (\partial_Y - t \partial_X) A^{\nu;i} \Delta^{-1}_t h}_{2} & \lesssim \frac{1}{\jap{t}} \left(\norm{A^{\nu;i} h}_2 + \jap{t}^{-3}\norm{h_{\neq}}_{\G^{\lambda,\gamma^\prime}} \right) \\ 
\norm{\partial_{m}^t \partial_n^t A^{\nu;i} \Delta_t^{-1} h}_2 & \lesssim \frac{1}{\jap{t}^{b}}\left(\norm{A^{\nu;i} h}_2 + \jap{t}^{-3}\norm{h_{\neq}}_{\G^{\lambda,\gamma^\prime}} \right), 
\end{align} 
\end{subequations}
where $b = 0$ if $n,m \neq 1$, $b = 1$ if exactly one of $m$ or $n$ equals one, and $b = 2$ if $m = n = 1$. 
Moreover, 
\begin{subequations} \label{ineq:AnuLossyED} 
\begin{align} 
\norm{ A^{\nu;i} \Delta^{-1}_t h}_{2} + \norm{\partial_X A^{\nu;i} \Delta^{-1}_t h}_{2} & \lesssim \frac{1}{\jap{t}^3}\left(\norm{ \sqrt{-\Delta_L}A^{\nu;i} h}_2 + \jap{t}^{-3}\norm{h_{\neq}}_{\G^{\lambda,\gamma^\prime}} \right) \\ 
\norm{ \partial_Z A^{\nu;i} \Delta^{-1}_t h}_{2} + \norm{ (\partial_Y - t \partial_X) A^{\nu;i} \Delta^{-1}_t h}_{2} & \lesssim \frac{1}{\jap{t}^2} \left(\norm{\sqrt{-\Delta_L} A^{\nu;i} h}_2 + \jap{t}^{-3}\norm{h_{\neq}}_{\G^{\lambda,\gamma^\prime}} \right) \\  
\norm{\partial_{m}^t \partial_n^t A^{\nu;i} \Delta_t^{-1} h}_2 & \lesssim \frac{1}{\jap{t}^{1+b}}\left(\norm{\sqrt{-\Delta_L} A^{\nu;i} h}_2 + \jap{t}^{-3}\norm{h_{\neq}}_{\G^{\lambda,\gamma^\prime}} \right). 
\end{align} 
\end{subequations}
Finally, we have 
\begin{align} 
\norm{A^{\nu;i} \Delta_L \Delta^{-1}_t h}_2 & \lesssim \norm{A^{\nu;i} h}_2. \label{ineq:PEL_CKnuIII} 
\end{align}  
\end{lemma}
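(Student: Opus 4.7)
The plan is to prove this as a perturbation of the analogous estimates for the frozen-frame Laplacian $\Delta_L$, using the smallness of $C$ in the Gevrey coordinate norm to run a Neumann expansion in $A^{\nu;i}$. Write
\begin{align*}
\Delta_t = \Delta_L + \mathcal{E}, \qquad \mathcal{E} = G_{yy}(\partial_Y - t\partial_X)^2 + G_{yz}\partial_Z(\partial_Y-t\partial_X) + G_{zz}\partial_{ZZ} + (\Delta_t C^1)(\partial_Y - t\partial_X) + (\Delta_t C^2)\partial_Z,
\end{align*}
where $G_{yy},G_{yz},G_{zz}$ are the coefficients from \eqref{def:G}. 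By Lemma \ref{lem:CoefCtrl} together with \eqref{ineq:Boot_LowC}, all of these satisfy $\|\cdot\|_{\mathcal{G}^{\lambda,\gamma-1}} \lesssim \epsilon\langle t\rangle$, so $\mathcal{E}\Delta_L^{-1}$ is a small operator measured in the $A^{\nu;i}$ norm by \eqref{ineq:AnuiDistri} (the regularity gap $\gamma > \beta + 3\alpha + 12$ is what makes this work). The Neumann series
\[
\Delta_t^{-1} = \Delta_L^{-1}\sum_{n\ge 0}(-\mathcal{E}\Delta_L^{-1})^n
\]
converges for $c_0$ sufficiently small, and so it is enough to prove all the stated estimates for the leading term $\Delta_L^{-1}$, then absorb the tail through the product lemma and the bootstrap hypothesis.

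The core step is the estimate for $\Delta_L^{-1}$ applied to frequencies where $k\neq 0$. Since $\Delta_L$ is a Fourier multiplier with symbol $-(k^2 + (\eta - kt)^2 + l^2)$, and $A^{\nu;i}$ is itself a Fourier multiplier, they commute. I would split the frequency support using \eqref{ineq:AnuHiLowSep2}: on the post-critical region $\{t \ge 2|\eta|\}$ we have $|\eta - kt|\ge |kt|/2\gtrsim \langle t\rangle$, hence the symbol bound $|k|^2 + |\eta-kt|^2 + l^2 \gtrsim \langle t\rangle^2(k^2+l^2)+ (|\eta-kt|)^2$ gives directly the desired $\langle t\rangle^{-2}$ gain, with the derivative versions $\partial_X,\partial_Z,\partial_Y - t\partial_X$ producing the anisotropic gains ($\langle t\rangle^{-b}$ with $b=0,1,2$ depending on how many $\partial_X$ land). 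On the pre-critical region $\{t < 2|\eta|\}$, the first term in \eqref{ineq:AnuHiLowSep2} lets us trade the missing powers of $\langle t\rangle$ for a loss of $\gamma^\prime = \beta+3\alpha+5$ derivatives, producing the tail $\langle t\rangle^{-3}\|h_{\neq}\|_{\mathcal{G}^{\lambda,\gamma^\prime}}$. The enhanced variant \eqref{ineq:AnuLossyED} follows from exactly the same argument after peeling off one extra factor of $\sqrt{k^2+(\eta-kt)^2+l^2} = \sqrt{-\Delta_L}$ from the symbol.

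For the last inequality \eqref{ineq:PEL_CKnuIII}, I would use the algebraic identity $\Delta_L\Delta_t^{-1} = I - \mathcal{E}\Delta_t^{-1}$; the identity piece is trivially bounded, while $\|A^{\nu;i}\mathcal{E}\Delta_t^{-1}h\|_2$ can be distributed by \eqref{ineq:AnuiDistri} onto one factor of $C$ (small) times products of $\partial^t_j\partial^t_k\Delta_t^{-1}h$, which are themselves controlled by the estimates we have just established (the case $b=2$ of the derivative bounds matches exactly the $(\partial_Y-t\partial_X)^2\Delta_t^{-1}$ term appearing in $\mathcal{E}\Delta_t^{-1}$).

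The main obstacle I anticipate is organizing the book-keeping in the Neumann series cleanly: each iterate $(\mathcal{E}\Delta_L^{-1})^n$ mixes derivatives of $C$ with $\partial^t_j\partial^t_k \Delta_L^{-1}$ factors, and one needs to ensure the sum of regularity losses stays within the gap $\gamma - (\beta+3\alpha+5)$ allowed by the bootstrap, and that the time weights from each iterate telescope rather than accumulate. Once the first iterate is bounded (essentially reducing to the $A^{\nu;i}$ product lemma applied to $G_{yy}(\partial_Y - t\partial_X)^2\Delta_L^{-1}h$ and the other $\mathcal{E}$ pieces), the subsequent iterates gain a factor of $c_0$ each time, so geometric summation closes the argument.
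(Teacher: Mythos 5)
Your perturbative skeleton ($\Delta_t=\Delta_L+\mathcal{E}$, smallness of the coefficients through Lemma \ref{lem:CoefCtrl} and $\epsilon\jap{t}\lesssim c_0$, commuting $A^{\nu;i}$ with Fourier multipliers, and the product lemma \eqref{ineq:AnuiDistri}) is the right mechanism — note the paper itself gives no proof here, quoting the lemma from \cite{BGM15I}, but this is exactly the structure used for the other precision elliptic lemmas in Appendix \ref{sec:Elliptic}. However, two steps do not close as written. First, the enhanced-dissipation half \eqref{ineq:AnuLossyED}: with $\gamma^\prime=\beta+3\alpha+5$, the trade in \eqref{ineq:AnuHiLowSep}--\eqref{ineq:AnuHiLowSep2} can convert derivatives into at most $\jap{t}^{-5}$ of decay against $\norm{h_{\neq}}_{\G^{\lambda,\gamma^\prime}}$, whereas the first line of \eqref{ineq:AnuLossyED} (and the $m=n=1$ case of the last line) requires $\jap{t}^{-3}\cdot\jap{t}^{-3}=\jap{t}^{-6}$ on that term; and ``peeling off one factor of $\sqrt{-\Delta_L}$ from the symbol'' gains nothing on the pre-critical region near the critical times, where $k^2+(\eta-kt)^2+l^2=O(1)$. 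The missing power of $\jap{t}$ has to come from elsewhere: one first disposes of the frequencies with $\abs{k,\eta-kt,l}\gtrsim\jap{t}$ using the $\sqrt{-\Delta_L}A^{\nu;i}h$ term, and on the complement (where $\abs{\eta}\lesssim\jap{t}^{2}$) one exploits the structure of $D$ in \eqref{def:D}, e.g.\ $\jap{D}^{\alpha}\lesssim\jap{\nu\abs{\eta}^3}^{\alpha}\lesssim\abs{\eta}^{3\alpha-1}$ because $\nu^{\alpha}\abs{\eta}\lesssim\nu^{\alpha-2}\lesssim 1$ under the standing assumptions $\nu\lesssim\epsilon$, $t\le c_0\epsilon^{-1}$. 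This input is genuinely beyond the symbol of $\Delta_L^{-1}$ and beyond \eqref{ineq:AnuHiLowSep2}, and your accounting as stated does not reach the $\jap{t}^{-6}$ lines.

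Second, your route to \eqref{ineq:PEL_CKnuIII} via $\Delta_L\Delta_t^{-1}=I-\mathcal{E}\Delta_t^{-1}$ together with the derivative bounds you just proved yields an extra term $c_0\jap{t}^{-3}\norm{h_{\neq}}_{\G^{\lambda,\gamma^\prime}}$ on the right-hand side, and this is not dominated by $\norm{A^{\nu;i}h}_2$ (compare the weights at $\abs{\eta}\approx t$ with $\nu t^3\lesssim 1$), so you prove strictly less than the stated clean inequality. The clean bound requires measuring the error against $\Delta_L\Delta_t^{-1}h$ itself rather than against $h$: every term of $\mathcal{E}$ is an $x$-independent coefficient times a multiplier whose symbol is dominated by that of $\Delta_L$, so \eqref{ineq:AnuiDistri} gives $\norm{A^{\nu;i}\mathcal{E}\Delta_t^{-1}h}_2\lesssim\epsilon\jap{t}\norm{A^{\nu;i}\Delta_L\Delta_t^{-1}h}_2$, and one concludes by absorption for $c_0$ small (equivalently, by writing your Neumann series as $\Delta_L\Delta_t^{-1}=\sum_{n\ge0}(-\mathcal{E}\Delta_L^{-1})^{n}$ and bounding each iterate by $(Cc_0)^{n}\norm{A^{\nu;i}h}_2$), with no pre/post-critical splitting and no Gevrey tail. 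This absorption structure is the same one used in the proofs of Lemmas \ref{lem:PELED} and \ref{lem:PEL_NLP120neq}, and adopting it also eliminates the book-keeping across Neumann iterates that you flag as the main obstacle.
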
  

Also recall the following lemma [Lemma C.3, \cite{BGM15I}]. 

\begin{lemma} [$CK^\nu_{wL}$ elliptic lemma] \label{lem:AnuLossy_CKnu}
Under the bootstrap hypotheses, for $c_0$ sufficiently small we have for any function $h$, 
\begin{align}
\norm{\sqrt{\frac{\partial_t w_L}{w_L}} A^{\nu;i} \Delta_L \Delta^{-1}_t h}_{2} & \lesssim \norm{\sqrt{\frac{\partial_t w_L}{w_L}} A^{\nu;i} h}_2. \label{ineq:PEL_CKnuII} 
\end{align}   
\end{lemma}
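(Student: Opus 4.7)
The plan is to reduce the estimate to the identity $\Delta_L \Delta_t^{-1} h = h + (\Delta_L - \Delta_t)\Delta_t^{-1} h$, and then to treat the second term perturbatively using the bootstrap control on the coefficients $C^i$. Writing $U = \Delta_t^{-1} h$ and expanding $\Delta_t$ via \eqref{def:Deltat} together with the shorthand \eqref{def:G}, we obtain
\begin{align*}
\Delta_L \Delta_t^{-1} h = h - G_{yy}(\partial_Y - t\partial_X)^2 U - G_{yz}\partial_Z(\partial_Y - t\partial_X) U - G_{zz}\partial_{ZZ} U - \Delta_t C^1(\partial_Y - t\partial_X) U - \Delta_t C^2 \partial_Z U.
\end{align*}
Applying $\sqrt{\partial_t w_L/w_L}\,A^{\nu;i}$ to both sides, the leading term $h$ contributes exactly the right-hand side of \eqref{ineq:PEL_CKnuII}, so it remains to estimate each coefficient error term with a factor of $c_0$ to spare.

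The key step is to distribute the weight $\sqrt{\partial_t w_L/w_L}$ across the product (coefficient)$\times$(derivative of $U$) via a paraproduct. Since every one of the coefficients $G_{yy},G_{yz},G_{zz},\Delta_t C^1,\Delta_t C^2$ is independent of $X$, when we paraproduct-split a typical term $G \cdot D U$ the $X$-frequency $k$ is carried entirely by the $DU$ factor. This is precisely the situation for which \eqref{ineq:dtNBasic} of Lemma \ref{lem:CKwFreqRat} applies, giving
\begin{align*}
\sqrt{\tfrac{\partial_t w_L(t,k,\eta,l)}{w_L(t,k,\eta,l)}} \lesssim \sqrt{\tfrac{\partial_t w_L(t,k,\xi,l')}{w_L(t,k,\xi,l')}}\jap{\eta-\xi,l-l'}^{3/2},
\end{align*}
so that the weight can always be routed onto the $DU$ factor while the coefficient sits in $\mathcal{G}^{\lambda,\gamma}$ (the $\jap{\eta-\xi,l-l'}^{3/2}$ factor is harmlessly absorbed by the regularity gap, since $\gamma \gg \beta + 3\alpha$). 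Combining this with the product structure of $A^{\nu;i}$ in Lemma \ref{lem:AnuProd} and using Lemma \ref{lem:CoefCtrl} together with \eqref{ineq:Boot_ACC} gives the schematic bound
\begin{align*}
\norm{\sqrt{\tfrac{\partial_t w_L}{w_L}} A^{\nu;i}\!\left(G \cdot \partial_m^t \partial_n^t U\right)}_2 \lesssim \norm{G}_{\mathcal{G}^{\lambda,\gamma-1}}\norm{\sqrt{\tfrac{\partial_t w_L}{w_L}} A^{\nu;i} \partial_m^t \partial_n^t U}_2 \lesssim c_0 \norm{\sqrt{\tfrac{\partial_t w_L}{w_L}} A^{\nu;i} \partial_m^t \partial_n^t U}_2,
\end{align*}
and similarly for the $\Delta_t C^i$ terms using \eqref{ineq:CCDeltatC}.

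The final ingredient is to bound the weighted derivatives of $U = \Delta_t^{-1} h$ by the weighted norm of $h$. This is where we run Lemma \ref{lem:AnuLossy}: inspecting its proof (which proceeds by writing $\Delta_t = \Delta_L(I + \Delta_L^{-1}(\Delta_t - \Delta_L))$ and Neumann-expanding), the exact same argument carries through verbatim with the extra multiplier $\sqrt{\partial_t w_L/w_L}$ inserted, since this multiplier is harmless under $\Delta_L^{-1}\Delta_L$ and the frequency-ratio inequality \eqref{ineq:dtNBasic} again allows it to commute past the low-frequency coefficient factors appearing in each step of the Neumann expansion. The net result is
\begin{align*}
\norm{\sqrt{\tfrac{\partial_t w_L}{w_L}} A^{\nu;i} \partial_m^t \partial_n^t \Delta_t^{-1} h}_2 \lesssim \norm{\sqrt{\tfrac{\partial_t w_L}{w_L}} A^{\nu;i} h}_2 + (\text{lower-order error}),
\end{align*}
and summing over the five coefficient error terms and absorbing a factor $c_0$ closes the estimate.

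The main obstacle is the third step: verifying that \eqref{ineq:dtNBasic} really does permit us to insert the $\sqrt{\partial_t w_L/w_L}$ multiplier throughout the Neumann expansion underlying Lemma \ref{lem:AnuLossy} without accumulating unacceptable losses in regularity. The crucial point that makes this go through is that every occurrence of a coefficient factor is $X$-independent, so in every paraproduct split the $X$-frequency is preserved between the weighted input and weighted output, and \eqref{ineq:dtNBasic} only costs a polynomial factor in $\jap{\eta-\xi,l-l'}$ which is dominated by the regularity gap $\gamma - \beta - 3\alpha \gg 3/2$.
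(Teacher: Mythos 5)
Your overall plan is sound, and in fact the paper itself gives no proof of this statement at all: it simply cites [Lemma C.3, BGM15I], so there is no in-paper argument to compare against. Your decomposition $\Delta_L\Delta_t^{-1}h = h - G_{yy}(\partial_Y-t\partial_X)^2U - \dots - \Delta_tC^2\partial_Z U$, the transfer of the weight via \eqref{ineq:dtNBasic} (which is legitimate precisely because the coefficients are $X$-independent, so the $k$-index is preserved), the product structure of $A^{\nu;i}$ as in Lemma \ref{lem:AnuProd}, and the use of $\norm{C}_{\G^{\lambda,\gamma}}\lesssim\epsilon\jap{t}\le c_0$ together with the regularity gap $\gamma\gg\beta+3\alpha$ are exactly the ingredients used for the precision elliptic lemmas that \emph{are} proved in this paper (compare the proof of Lemma \ref{lem:PELED}), so your reconstruction is the right one in spirit.

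The one step you should tighten is the last one. Appealing to "the proof of Lemma \ref{lem:AnuLossy} carried through verbatim with the extra multiplier" is shaky here: that proof is not reproduced in this paper either, and re-running it is unnecessary and leaves you with an unspecified "lower-order error" that has no counterpart in the clean statement \eqref{ineq:PEL_CKnuII}. The simpler and self-contained closure is absorption into the left-hand side: every error term contains at most two flat derivatives $(\partial_Y-t\partial_X)$, $\partial_Z$ of $U$, and since $A^{\nu;i}$ carries $\mathbf{1}_{k\neq 0}$ one has $k^2+l^2+\abs{\eta-kt}^2\geq 1$, so both one and two such derivatives are dominated, as Fourier multipliers, by $-\Delta_L$. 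Hence after the weighted product estimate each error term is bounded by $c_0\norm{\sqrt{\partial_t w_L/w_L}\,A^{\nu;i}\Delta_L\Delta_t^{-1}h}_2$, which is absorbed for $c_0$ small, yielding \eqref{ineq:PEL_CKnuII} with no residual error terms. With that replacement your argument is complete.
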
 

\subsection{Precision lemmas} \label{sec:PEL}
As in \cite{BGM15I}, the so-called `precision elliptic lemmas' (PEL) are variations on the common theme of using $\Delta_L^{-1}$ as an approximate inverse. 
We will need those found in \cite{BGM15I} and several more as well. 

\subsubsection{Zero mode PELs}
The first PEL is essentially [Lemma C.4, \cite{BGM15I}], and puts $U_0^i$ in the high norm.  
\begin{lemma}[Zero mode PEL] \label{lem:PELbasicZero}
Under the bootstrap hypotheses, for $c_0$ and $\epsilon$ sufficiently small there holds, 
\begin{subequations} \label{ineq:AU0PEL} 
\begin{align} 
\norm{A U_0^1}_2^2 & \lesssim \jap{t}^2\norm{A^1 Q^1_0}_2^2 + \norm{U_0^1}_2^2 + \epsilon^2\jap{t}^2\norm{AC}_2^2 \\ 
\norm{\jap{\grad}^2 A^1 U_0^1}_2^2 & \lesssim \norm{A^1 Q^1_0}_2^2 + \jap{t}^{-2}\norm{U_0^1}_2^2 + \epsilon^2\norm{AC}_2^2 \\ 
\norm{A U_0^2}_2^2 & \lesssim \norm{A^2 Q^2_0}_2^2 + \norm{U_0^2}_2^2 + \epsilon^2 \norm{AC}_2^2 \\
\norm{\jap{\grad}^2 A^3 U_0^3}_2^2 & \lesssim \norm{A^3 Q^3_0}_2^2 + \norm{U_0^3}_2^2 + \epsilon^2 \norm{AC}_2^2. \label{ineq:AU03PEL} 
\end{align} 
\end{subequations} 
Moreover, we have
\begin{subequations} \label{ineq:gradAU0i}  
\begin{align} 
\norm{\grad \jap{\grad}^2 A^1 U_0^1}_2^2 & \lesssim \norm{\grad A^1 Q^1_0}_2^2 + \jap{t}^{-2}\norm{\grad U_0^1}_2^2 + \epsilon^2 \norm{AC}_2^2 \\ 
\norm{\grad A U_0^2}_2^2 & \lesssim \norm{\grad A^2 Q_0^2}_2^2 + \norm{\grad U_0^2}_2^2 + \epsilon^2\norm{\grad AC}_2^2. \label{ineq:gradAU02_PEL} \\
\norm{\grad \jap{\grad}^2 A^3 U_0^3}_2^2 & \lesssim \norm{\grad A^3 Q^3_0}_2^2 + \norm{\grad U_0^3}_2^2 + \epsilon^2 \norm{\grad AC}_2^2. 
\end{align}
\end{subequations} 
\end{lemma}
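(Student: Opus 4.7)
The plan is to use that at zero $X$ frequency the operator $\Delta_t$ reduces to the flat Laplacian in $(Y,Z)$ modulo terms controlled by the coordinate system. Using \eqref{def:Deltat} and \eqref{eq:tildeDeltaf} at $k=0$, write
\begin{align*}
\Delta U_0^i = Q_0^i - E^i, \qquad
E^i := G_{yy}\partial_{YY}U_0^i + G_{yz}\partial_{YZ}U_0^i + G_{zz}\partial_{ZZ}U_0^i + \Delta_t C^1\,\partial_Y U_0^i + \Delta_t C^2\,\partial_Z U_0^i,
\end{align*}
where $G_{yy},G_{yz},G_{zz}$ are the shorthands from \eqref{def:G}. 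On the Fourier side this reads $-(\eta^2+l^2)\hat U_0^i = \hat Q_0^i - \hat E^i$, which will be used to trade $\jap{\eta,l}^2 U_0^i$ against $Q_0^i$ plus an error that is morally quadratic in $C$.

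First I would dispose of the multiplier algebra. From \eqref{def:A}, at $k=0$ the multipliers satisfy $A = \jap{\eta,l}^2 A_0^Q$, $A_0^2 = A_0^Q$, $A_0^1 = \jap{t}^{-1}A_0^Q$, and $A_0^3 \leq A_0^Q$ (since $w \leq w_0^3$ by \eqref{def:wNR3}). Splitting $\jap{\eta,l}^2 = \mathbf{1}_{|\eta,l|\leq 1}\jap{\eta,l}^2 + \mathbf{1}_{|\eta,l|> 1}\jap{\eta,l}^2$, the low frequency piece contributes $\norm{U_0^i}_2^2$ (the multipliers are bounded on $|\eta,l|\lesssim 1$), and on the high frequency piece we replace $\jap{\eta,l}^2$ by $(\eta^2+l^2)$ with harmless loss. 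The identity above then gives, for $i=2$,
\begin{align*}
\norm{AU_0^2}_2 \lesssim \norm{A_0^2 (\eta^2+l^2)\hat U_0^2}_{L^2_\eta \ell^2_l} + \norm{U_0^2}_2 \lesssim \norm{A^2 Q_0^2}_2 + \norm{A E^2}_2 + \norm{U_0^2}_2,
\end{align*}
with the analogous identity for $i=1$ producing a factor of $\jap{t}$ out front (from $A^Q_0 = \jap{t}A^1_0$), and for $i=3$ using $\jap{\grad}^2 A^3 U_0^3$ with $A^3_0 \leq A^Q_0$ to land on $A^3 Q^3_0 + A^3 E^3$.

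Next I would estimate the coefficient error $\norm{A E^i}_2$. Each summand has the form (coefficient)$\cdot$(two derivatives of $U_0^i$) or $(\Delta_t C^j)\cdot$(one derivative of $U_0^i$). Expand by paraproduct and invoke Lemma \ref{lem:AAiProd} together with Lemma \ref{lem:CoefCtrl} (which controls $G$ and $\Delta_t C^j$ by $C$). When the coefficient sits in high frequency one gets a factor $\norm{AC}_2$ paired with $\norm{U_0^i}_{\G^{c\lambda,3/2+}}$; when the velocity sits in high frequency the resulting derivatives of $U_0^i$ are reabsorbed into the main term $\norm{A^i Q_0^i}_2$ using the same identity bootstrapped one more time (or, more cleanly, by appealing to Lemma \ref{lem:LossyElliptic} on the low-frequency side). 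The a priori estimates \eqref{ineq:uzAPriori} give $\norm{U_0^1}_{\G^{c\lambda,3/2+}} \lesssim \epsilon\jap{t}$ and $\norm{U_0^{2,3}}_{\G^{c\lambda,3/2+}} \lesssim \epsilon$, which is exactly what produces the $\epsilon^2 \jap{t}^2\norm{AC}_2^2$ term for $i=1$ and the $\epsilon^2\norm{AC}_2^2$ terms for $i=2,3$. Small-constant factors from $c_0,\epsilon$ allow any residual $\norm{AU_0^i}_2$ on the right to be absorbed.

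Finally, for \eqref{ineq:gradAU0i} I would apply $\grad$ to the identity above and repeat the procedure verbatim; the derivative costs nothing because it commutes with the Fourier multipliers, and the extra $\grad$ on the low-frequency $\norm{U_0^i}_2$ simply gets replaced by $\norm{\grad U_0^i}_2$. The only real obstacle is organizing the coordinate-error accounting without introducing circularity between $\norm{AU_0^i}_2$ appearing on both sides; this is handled by the standard maneuver of pulling out the small factor $\norm{C}_{\G^{\lambda,\gamma}}\lesssim \epsilon \jap{t}$ together with the smallness of $\epsilon$ (and $c_0$) to absorb such terms on the left.
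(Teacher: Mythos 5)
Your proposal is correct and takes essentially the same route as the paper: this lemma is deferred to the companion work, but the paper's proof of the closely analogous Lemma \ref{lem:PELCKZero} uses exactly your strategy --- write $\Delta U_0^i = Q_0^i - \left(G_{yy}\partial_{YY}U_0^i + G_{yz}\partial_{YZ}U_0^i + G_{zz}\partial_{ZZ}U_0^i + \Delta_t C^1\partial_Y U_0^i + \Delta_t C^2\partial_Z U_0^i\right)$ at zero $X$ frequency, split off the low frequencies as $\norm{U_0^i}_2$, and estimate the coefficient errors by paraproduct via Lemmas \ref{lem:AAiProd} and \ref{lem:CoefCtrl}, putting coefficient-high pieces into $\epsilon^2\norm{AC}_2^2$ and absorbing velocity-high pieces on the left using $\norm{C}_{\G^{\lambda,\gamma}}\lesssim \epsilon\jap{t}\leq c_0$. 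One notational slip worth fixing: the error must be measured with the multiplier $A^i_0$ (as your display $A^2_0(\eta^2+l^2)\hat U_0^2$ correctly indicates), not with $A$ itself; written literally as $\norm{AE^i}_2$ the velocity-high contribution would carry two extra derivatives beyond the left-hand side and could not be absorbed.
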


The next PEL is specific to this work and has no analogue in \cite{BGM15I}. 
This is due to the increased precision at which we need to understand the regularity of the zero mode of the velocity field in the \textbf{(2.5NS)} terms.  

\begin{lemma}[Zero mode $CK$ PEL] \label{lem:PELCKZero}
Under the bootstrap hypotheses for $t \geq 1$, for $c_0$ and $\epsilon$ sufficiently small, for $i \in \set{2,3}$, there holds 
\begin{align}
\norm{\left(\sqrt{\frac{\partial_t w}{w}}\tilde{A}^i + \frac{\abs{\grad}^{s/2}}{\jap{t}^{s}}A^i \right)\jap{\grad}^2  U_0^i}_2^2  & \lesssim \norm{\left(\sqrt{\frac{\partial_t w}{w}}\tilde{A}^i + \frac{\abs{\grad}^{s/2}}{\jap{t}^{s}}A^i \right) Q_0^i}_2^2 + \frac{1}{\jap{t}^{2s}}\norm{U_0^i}_2^2 \nonumber \\ & \quad + \epsilon^2\norm{\left(\sqrt{\frac{\partial_t w}{w}}\tilde{A} + \frac{\abs{\grad}^{s/2}}{\jap{t}^{s}}A\right) C}_2^2. \label{ineq:PELCKZero}
\end{align}
\end{lemma}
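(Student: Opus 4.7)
The plan is to use $\Delta$ as an approximate inverse for $\Delta_t$, exploiting that both sides of the estimate involve only the zero $X$-mode, on which the regularity imbalances in $A^3$ are inactive. Introduce the shorthand $M^i := \sqrt{\partial_t w/w}\,\tilde A^i + (|\grad|^{s/2}/\jap{t}^s)A^i$. Since $U_0^i$ is independent of $X$, we have $\Delta_L U_0^i = \Delta U_0^i$ and $\Delta_t U_0^i = Q_0^i$. Because $\Delta^{-1}$ is singular at zero frequency, I would first split $U_0^i = P_L U_0^i + P_H U_0^i$ using a smooth Littlewood--Paley cutoff with $P_L = \mathbf{1}_{|\grad_{Y,Z}| < 1/2}$. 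For frequencies with $|\eta|,|l|<1$, the critical intervals $I_{k,\eta}$ of \S\ref{sec:Notation} are empty (there is no integer $k$ with $1\le|k|\le E(\sqrt{|\eta|})$), so $w(t,\eta)\equiv 1$ and $\partial_t w/w\equiv 0$ on this sector. Hence on $P_L$ the multiplier collapses to $M^i = (|\grad|^{s/2}/\jap{t}^s)A^i$, and since $A^i$, $|\grad|^{s/2}$, and $\jap{\grad}^2$ are all $O(1)$ at low frequency, this yields $\|M^i\jap{\grad}^2 P_L U_0^i\|_2\lesssim \jap{t}^{-s}\|U_0^i\|_2$, producing the $\jap{t}^{-2s}\|U_0^i\|_2^2$ term in the claim.

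On the high-frequency range $P_H$, the multiplier $\jap{\grad}^2/\Delta$ is a bounded symbol, and the identity $\Delta U_0^i = Q_0^i - (\Delta_t - \Delta) U_0^i$ gives
\[ \jap{\grad}^2 P_H U_0^i \;=\; \frac{\jap{\grad}^2}{\Delta} P_H Q_0^i \;-\; \frac{\jap{\grad}^2}{\Delta}P_H\!\Bigl(G_{yy}\partial_{YY}U_0^i + G_{yz}\partial_{YZ}U_0^i + G_{zz}\partial_{ZZ}U_0^i + \Delta_t C^1\,\partial_Y U_0^i + \Delta_t C^2\,\partial_Z U_0^i\Bigr). \]
The leading term contributes $\|M^i Q_0^i\|_2$ directly. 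For each of the coefficient error terms I would expand with a paraproduct and apply Lemma~\ref{lem:AAiProd}. This is the decisive step, and it works uniformly in $i\in\{2,3\}$ because $G$, $\Delta_t C^j$, and $U_0^i$ are all independent of $X$, so the caveat following Lemma~\ref{lem:AAiProd} (that $A^3$-distribution fails for $X$-dependent inputs due to the regularity imbalances) does not apply. After commuting $M^i$ past the coefficient, using Lemma~\ref{lem:CoefCtrl} to bound $\|M^i \jap{\grad}^{-1}G\|_2$ and $\|M^i\jap{\grad}^{-2}\Delta_t C^j\|_2$ by $\|M C\|_2$, and using Lemma~\ref{lem:intermedSob} (which gives $\|U_0^i\|_{\G^{c\lambda,3/2+}}\lesssim\epsilon$ for $i\in\{2,3\}$) together with \eqref{ineq:Boot_LowC} to bound $\|G\|_{\G^{c\lambda,3/2+}}\lesssim\|C\|_{\G^{c\lambda,\gamma}}\lesssim\epsilon\jap{t}$ (which is tolerable after combining with the $\jap{\grad}^{-2}$ from $\jap{\grad}^2/\Delta$), each error term is controlled by $\epsilon\|M^i \jap{\grad}^2 U_0^i\|_2 + \epsilon\|MC\|_2$.

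The main obstacle is that the first of these contributions has the same form as the left-hand side, and must be absorbed. This is harmless for $\epsilon$ and $c_0$ sufficiently small (as we are already assuming in the bootstrap), but the argument must be set up so that the constant in front of $\|M^i\jap{\grad}^2 U_0^i\|_2$ is truly $O(\epsilon)$ rather than $O(1)$ -- this is precisely where one uses that $C^j$ enters only through $G_{yy}, G_{yz}, G_{zz}$ and $\Delta_t C^j$ (all of which carry at least one factor of $\grad C$ by Lemma~\ref{lem:CoefCtrl}) and that the paraproduct places the high-frequency factor on whichever side is favorable. A minor secondary check is that $\jap{\grad}^2\Delta^{-1}P_H$ is a Fourier multiplier bounded on $L^2$ and commuting with $M^i$ (since both are Fourier multipliers depending only on $(\eta,l)$), so no additional regularity is lost in passing it through the paraproduct. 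Squaring and collecting terms gives \eqref{ineq:PELCKZero}.
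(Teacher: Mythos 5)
Your proposal is correct and follows essentially the same route as the paper's proof: split off the low-frequency sector where $\partial_t w/w$ vanishes (yielding the $\jap{t}^{-2s}\norm{U_0^i}_2^2$ term), write $\left(\Delta_L U_0^i\right)_{\geq 1/2} = (Q_0^i)_{\geq 1/2} - (G_{yy}\partial_{YY}U_0^i + \dots + \Delta_t C^j \partial_j U_0^i)_{\geq 1/2}$, apply the multiplier, and treat the coefficient errors by paraproduct with the coefficient-high terms giving $\epsilon^2\norm{\mathcal{M}C}_2^2$ and the velocity-high terms absorbed into the left-hand side. The only cosmetic differences are that you package the commuting step via Lemma \ref{lem:AAiProd} (legitimate here since all factors are $X$-independent) where the paper invokes Lemma \ref{lem:ABasic} with \eqref{ineq:dtwBasic} directly, and that the absorption constant is really $O(\epsilon\jap{t})\lesssim O(c_0)$ via \eqref{ineq:Boot_LowC} (not $O(\epsilon)$, and not recovered by the $\jap{\grad}^{-2}$ gain) — harmless, since you assume $c_0$ small exactly as the paper does.
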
 
\begin{proof} 
First observe that 
\begin{align*}
\partial_t w(t,\eta) \mathbf{1}_{t \geq 1} \mathbf{1}_{\abs{\eta} \leq 1/2} = 0. 
\end{align*}
Hence, 
\begin{align}
\norm{\left(\sqrt{\frac{\partial_t w}{w}}\tilde{A}^i + \frac{\abs{\grad}^{s/2}}{\jap{t}^{s}}A^i \right) \jap{\grad}^2 U_0^i}_2^2 & \lesssim \norm{\left(\sqrt{\frac{\partial_t w}{w}}\tilde{A}^i + \frac{\abs{\grad}^{s/2}}{\jap{t}^{s}} A^i \right) \left(\Delta_L U_0^i\right)_{\geq 1/2}}_2^2 + \frac{1}{\jap{t}^{2s}}\norm{U_0^i}_2^2. \label{ineq:lowfreqdecay} 
\end{align}
Therefore, similar to the proof of Lemma \ref{lem:PELbasicZero} (see \cite{BGM15I}), it suffices to control the higher frequencies. 
Next, write $\Delta_L U_0^3$ using the formula for $\Delta_t U_0^3$ and projecting both sides of the equation to frequencies larger than $1/2$: 
\begin{align}
\left(\Delta_L U_0^i\right)_{\geq 1/2} & = (Q_0^i)_{\geq 1/2} - \left(G_{yy}\partial_{YY}U_0^i + G_{zy}\partial_{YZ}U_0^i + G_{zz}\partial_{ZZ}U_0^i + \Delta_t C^1 \partial_{Y}U_0^i + \Delta_t C^2 \partial_{Z}U_0^i\right)_{\geq 1/2} \nonumber \\ 
& = (Q_0^i)_{\geq 1/2} + \sum_{j = 1}^5 \mathcal{E}_i.\label{def:PELCKU0i} 
\end{align}
Apply
\begin{align*}
\mathcal{M} = \left(\sqrt{\frac{\partial_t w}{w}}\tilde{A}^i + \frac{\abs{\grad}^{s/2}}{\jap{t}^{s}}A^i \right)
\end{align*}
to both sides of \eqref{def:PELCKU0i} and deduce 
\begin{align}
\norm{\mathcal{M} \left(\Delta_L U_0^i\right)_{\geq 1/2}}_2^2 \lesssim \norm{\mathcal{M} \left(Q_0^i\right)_{\geq 1/2}}_2^2 + \sum_{j = 1}^5 \norm{\mathcal{M} \mathcal{E}_i}_2^2. \label{ineq:PELCKU0iEstimate} 
\end{align} 
The error terms will be divided into pieces which will either be absorbed by the LHS of \eqref{ineq:PELCKU0iEstimate}  or will appear on the RHS of \eqref{ineq:PELCKZero}. 
The latter two error terms are the most difficult and they are also very similar, hence it suffices to treat only $\mathcal{E}_5$. 
First, expand with a paraproduct 
\begin{align}
\mathcal{M}\mathcal{E}_5 & = - \mathcal{M}\left( (\Delta_t C^2)_{Hi} (\partial_{Z}U_0^i)_{Lo} \right)_{\geq 1/2} - \mathcal{M}\left( (\Delta_t C^2)_{Lo} (\partial_{Z}U_0^i)_{Hi} \right)_{\geq 1/2} - \mathcal{M}\left( (\Delta_t C^2 \partial_{Z}U_0^i)_{\mathcal{R}} \right)_{\geq 1/2} \nonumber \\ 
& = \mathcal{M}\mathcal{E}_{5;HL} + \mathcal{M}\mathcal{E}_{5;LH} + \mathcal{M}\mathcal{E}_{5;\mathcal{R}}. \label{eq:ME5}
\end{align}
For the high-low term we use Lemma \ref{lem:ABasic} and \eqref{ineq:dtwBasic}, 
\begin{align*}
\mathcal{M}\mathcal{E}_{5;HL} & \lesssim \epsilon \sum\int \frac{\mathbf{1}_{\abs{\eta,l} \geq 1/2}}{\jap{\xi,l^\prime}^2} \left(\sqrt{\frac{\partial_t w(t,\xi)}{w(t,\xi)}}\tilde{A}(\xi,l^\prime) + \frac{\abs{\xi,l^\prime}^{s/2}}{\jap{t}^{s}}A(\xi,l^\prime) \right) \abs{\widehat{\Delta_t C^2}(\xi,l^\prime)_{Hi}} Low(\eta-\xi,l-l^\prime) d\xi. 
\end{align*}
Hence, by \eqref{ineq:quadHL} and Lemma \ref{lem:CoefCtrl} we have 
\begin{align*}
\norm{\mathcal{M}\mathcal{E}_{5;HL}}^2_2 & \lesssim \epsilon^2\norm{\left(\sqrt{\frac{\partial_t w}{w}}\tilde{A} + \frac{\abs{\grad}^{s/2}}{\jap{t}^{s}}A\right)C}_2^2, 
\end{align*}
which appears on the RHS of \eqref{ineq:PELCKZero}. 
To treat the low-high term in  \eqref{eq:ME5}, we use a similar method to deduce 
\begin{align*}
\norm{\mathcal{M}\mathcal{E}_{5;LH}}^2_2 & \lesssim c_0^2 \norm{\mathcal{M} \partial_Z U_0^i}_2 \\ 
& \lesssim c_0^2 \left(\norm{\mathcal{M} (\partial_Z U_0^i)_{\geq 1/2}}_2 + \norm{\mathcal{M}(\partial_Z U_0^i)_{\geq 1/2}}_2^2\right) \\ 
& \lesssim c_0^2 \left(\norm{\mathcal{M} \left(\Delta_L U_0^i\right)_{\geq 1/2}}_2 + \frac{1}{\jap{t}^{2s}}\norm{U_0^i}_2^2\right)
\end{align*}
where the last line followed as in \eqref{ineq:lowfreqdecay}. 
The first term is absorbed on the LHS of \eqref{ineq:PELCKU0iEstimate} whereas the second term appears on the RHS of \eqref{ineq:PELCKZero}.  
The remainder term is straightforward and can be treated in essentially the same way as the low-high term; see the proof of [Lemma 4.9 \cite{BGM15I}] for a similar argument. 
As the other error terms are essentially the same, this completes the proof of \eqref{ineq:PELCKZero}. 
\end{proof} 

\subsubsection{Non-zero mode PELs}

The next PEL is an easy variant of the analogous [Lemma C.5, \cite{BGM15I}]. The proof is a slight variation of that in \cite{BGM15I}. Here we need to deal with the large $Z$ frequencies but this is straightforward due to the inequalities derived in \S\ref{sec:basicmult} and hence the details are omitted here. 

\begin{lemma}[$CK$ PEL] \label{lem:PEL_NLP120neq}
Let $h$ be given such that $\norm{h}_{\G^{\lambda}}\lesssim \epsilon \jap{t}^{b} \jap{\nu t^3}^{-a}$ for some $a \geq 0$ and $b \geq 0$. 
Then, under the bootstrap hypotheses, for $c_0$ and $\epsilon$ sufficiently small, there holds, 
\begin{subequations} \label{ineq:PEL_NLP120neq}
\begin{align} 
\norm{\left(\sqrt{\frac{\partial_t w}{w}}\tilde{A}^{i} + \frac{\abs{\grad}^{s/2}}{\jap{t}^s}A^{i}\right)  \Delta_L \Delta_t^{-1} h_{\neq}}_2^2 
  & \lesssim \norm{\left(\sqrt{\frac{\partial_t w}{w}}\tilde{A}^{i} + \frac{\abs{\grad}^{s/2}}{\jap{t}^s}A^{i} \right) h_{\neq}}_2^2 \nonumber \\ 
 & \quad + \frac{\epsilon^2 \jap{t}^{2b-2} }{\jap{\nu t^3}^a} \norm{\left(\sqrt{\frac{\partial_t w}{w}}\tilde{A} + \frac{\abs{\grad}^{s/2}}{\jap{t}^s}A \right) C}_2^2.   \label{ineq:PEL_NLP120neq1}
\end{align} 
\end{subequations}
\end{lemma}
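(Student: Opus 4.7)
The plan is to proceed in direct analogy with Lemma~\ref{lem:PELCKZero}, using the identity
\[
\Delta_L \Delta_t^{-1} h_{\neq} \;=\; h_{\neq} \;-\; (\Delta_t - \Delta_L) \Delta_t^{-1} h_{\neq}.
\]
Writing $U_{\neq} := \Delta_t^{-1} h_{\neq}$ and recalling from \eqref{eq:tildeDeltaf} and \eqref{def:G} that $\Delta_t - \Delta_L$ is expressible via the coefficients $G_{yy}, G_{yz}, G_{zz}$ and $\Delta_t C^1, \Delta_t C^2$, this yields
\[
\Delta_L U_{\neq} = h_{\neq} - \sum_{j=1}^{5} \mathcal{E}_j,
\]
with the $\mathcal{E}_j$ being $G_{yy}(\partial_Y - t\partial_X)^2 U_{\neq}$, $G_{yz}\partial_Z(\partial_Y - t\partial_X)U_{\neq}$, $G_{zz}\partial_{ZZ}U_{\neq}$, $\Delta_t C^1 (\partial_Y - t\partial_X) U_{\neq}$, $\Delta_t C^2 \partial_Z U_{\neq}$. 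Applying the multiplier $\mathcal{M}^i := \sqrt{\partial_t w/w}\,\tilde{A}^i + \jap{t}^{-s}\abs{\grad}^{s/2} A^i$ and squaring in $L^2$, it suffices to bound each $\norm{\mathcal{M}^i \mathcal{E}_j}_2^2$ by the RHS of \eqref{ineq:PEL_NLP120neq1} plus a term that can be absorbed into the LHS for $c_0$ small.

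I would then expand each $\mathcal{E}_j$ with a paraproduct into three pieces: coefficient-high (HL), coefficient-low (LH), and remainder. For the low-high pieces, applying Lemma~\ref{lem:AAiProd} distributes $\mathcal{M}^i$ onto the $\partial^2 U_{\neq}$ factor, giving
\[
\norm{\mathcal{M}^i \mathcal{E}_{j;LH}}_2 \lesssim \norm{C}_{\G^{c\lambda,3/2+}} \, \norm{\mathcal{M}^i \partial^2 U_{\neq}}_2 \lesssim c_0 \, \norm{\mathcal{M}^i \Delta_L U_{\neq}}_2 + \text{l.o.t.},
\]
where $\norm{C}_{\G^{c\lambda,3/2+}} \lesssim c_0$ by \eqref{ineq:Boot_ACC} and Lemma~\ref{lem:CoefCtrl}, and where the $\partial^2$ has been replaced by $\Delta_L$ up to a low-order remainder (controlled by the decay of $U_{\neq}$ through Lemma~\ref{lem:LossyElliptic} applied to $h_{\neq}$). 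For $c_0$ small this term absorbs into the LHS of the overall estimate.

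For the high-low pieces, the key step is to transfer the multiplier: at the output frequency $(k,\eta,l)$ the coefficient factor sits at frequency $\approx (\xi,l^\prime)$ with $\abs{k,\eta,l} \approx \abs{\xi,l^\prime}$, so Lemma~\ref{lem:ABasic} (together with Lemma~\ref{lem:CKwFreqRat} to compare the $\sqrt{\partial_t w/w}$ and $\abs{\grad}^{s/2}/\jap{t}^s$ parts) gives $\mathcal{M}^i_k(\eta,l) \lesssim \mathcal{M}(\xi,l^\prime) e^{c\lambda\abs{\eta-\xi,l-l^\prime}^s}$. The low-frequency factor is $\partial^2 U_{\neq}$, which by Lemma~\ref{lem:LossyElliptic} satisfies $\norm{\partial^2 U_{\neq}}_{\G^{c\lambda,3/2+}} \lesssim \jap{t}^{-2}\norm{h_{\neq}}_{\G^{\lambda}} \lesssim \epsilon \jap{t}^{b-2}\jap{\nu t^3}^{-a}$ using the hypothesis on $h$. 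Pairing via \eqref{ineq:triQuadHL} yields
\[
\norm{\mathcal{M}^i \mathcal{E}_{j;HL}}_2 \lesssim \epsilon \jap{t}^{b-2}\jap{\nu t^3}^{-a} \norm{\mathcal{M} C}_2,
\]
which is exactly the second term on the RHS of \eqref{ineq:PEL_NLP120neq1}. The remainder pieces are handled by Lemma~\ref{lem:Arem} in the standard way and are strictly easier.

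The main obstacle is the multiplier transfer in the high-low term: one must verify that the presence of the regularity imbalance in $\tilde{A}^i$ versus $A^i$ (visible only in the sector $\abs{l} \gtrsim \abs{\eta}$ via $w^3$) does not spoil the comparison with $\mathcal{M}$, which is built from the $X$-independent multiplier $A$. Since the coefficient $C$ is $X$-independent (so $k = k^\prime$), the worst columns of Lemma~\ref{lem:ABasic}, namely $\chi^{R,NR}$ and $\chi^{NR,R}$, vanish; only the $\chi^{\ast;33}$-type contribution remains, and there $A^i \lesssim A$ modulo harmless polynomial factors. This is the only place where the structure of the norms enters nontrivially, and once handled the rest of the argument is mechanical.
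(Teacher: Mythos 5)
Your proposal is correct and is essentially the paper's own approach: the paper omits the proof of this lemma (citing the analogue in \cite{BGM15I}), but the scheme you describe --- the identity $\Delta_L\Delta_t^{-1}h_{\neq}=h_{\neq}-(\Delta_t-\Delta_L)\Delta_t^{-1}h_{\neq}$, the paraproduct split, absorption of the coefficient-low pieces into the left-hand side for $c_0$ small, and the lossy elliptic lemma combined with the multiplier transfer (Lemmas~\ref{lem:ABasic}, \ref{lem:CKwFreqRat}, \ref{lem:CoefCtrl}) for the coefficient-high pieces --- is exactly what the paper carries out in detail for the sibling Lemmas~\ref{lem:PELCKZero} and~\ref{lem:PELED}. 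One small correction to your closing remark: the observation ``$k=k^\prime$'' pertains to the coefficient-low/velocity-high pairing, where the high-frequency factor shares the output's $X$-frequency so that the $\chi^{R,NR}$ and $\chi^{NR,R}$ contributions of Lemma~\ref{lem:ABasic} indeed vanish and the absorption is loss-free; in the coefficient-high pairing the high factor sits at $k^\prime=0\neq k$, but there the comparison is against the $w$-based multiplier on $C$ and is in the favorable direction since $w^3_k\geq w$ (cf.\ the gain factor $\chi^{NR,r}$ in \eqref{ineq:A3A2neqneq}), so your conclusion is unaffected.
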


The next PEL is also basically  [Lemma C.6, \cite{BGM15I}] and is slightly simpler than Lemma \ref{lem:PEL_NLP120neq}. 
\begin{lemma}[Zero order PEL] \label{lem:SimplePEL} 
Let $h$ be given such that $\norm{h}_{\G^{\lambda}} \lesssim \epsilon\jap{t}^{b} \jap{\nu t^3}^{-a}$ for $a,b \geq 0$. 
Then, for $c_0$ and $\epsilon$ sufficiently small, under the bootstrap hypotheses we have for all $i \in \set{1,2,3}$,  
\begin{align}
\norm{A^{i} \Delta_L \Delta_t^{-1} h_{\neq}}_2^2 & \lesssim \norm{A^{i} h_{\neq}}_2^2 + \frac{\epsilon^2 \jap{t}^{2b-2} }{\jap{\nu t^3}^{2a}}\norm{A C}_2^2, \label{ineq:SimplePEL}
\end{align} 
\end{lemma}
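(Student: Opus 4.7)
I would prove Lemma \ref{lem:SimplePEL} by treating $\Delta_t$ as a perturbation of $\Delta_L$, exploiting the identity
\begin{align*}
\Delta_L \Delta_t^{-1} h_{\neq} = h_{\neq} + (\Delta_L - \Delta_t)\Delta_t^{-1} h_{\neq}.
\end{align*}
The first term directly produces the $\norm{A^i h_{\neq}}_2^2$ contribution on the RHS of \eqref{ineq:SimplePEL}, so the task reduces to controlling $\norm{A^i (\Delta_t - \Delta_L) \Delta_t^{-1} h_{\neq}}_2$. Expanding $\partial_Y^t$ and $\partial_Z^t$ out, the difference $\Delta_t - \Delta_L$ is a second-order differential operator whose top-order part reads $G_{yy}(\partial_Y - t\partial_X)^2 + G_{yz}(\partial_Y - t\partial_X)\partial_Z + G_{zz}\partial_{ZZ}$, with lower-order corrections whose coefficients are first derivatives of the Jacobian factors $\psi_y,\psi_z,\phi_y,\phi_z$.

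For each such term I would expand with a paraproduct in coefficient-vs-differentiated-$f$, where $f := \Delta_t^{-1} h_{\neq}$. In the coefficient-low-frequency piece, e.g.\ $G_{Lo}(D^2 f)_{Hi}$, the coefficient satisfies $\norm{G}_{\G^{c\lambda,3/2+}} \lesssim \norm{C}_{\G^{\lambda,\gamma}} \lesssim c_0$ by Lemma \ref{lem:CoefCtrl} and \eqref{ineq:Boot_LowC}, while $(D^2 f)_{Hi}$ is pointwise in Fourier dominated by $\abs{\Delta_L f}$ since each of $\partial_X$, $\partial_Y - t\partial_X$, $\partial_Z$ is controlled by $\sqrt{-\Delta_L}$. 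Lemma \ref{gevreyparaproductlemma} then yields a contribution $\lesssim c_0 \norm{A^i \Delta_L \Delta_t^{-1} h_{\neq}}_2$, which is absorbed into the LHS for $c_0$ small. For the coefficient-high-frequency piece $G_{Hi}(D^2 f)_{Lo}$, I would transfer $A^i$ onto the coefficient factor via Lemma \ref{lem:ABasic}, picking up a $\jap{t}^{-1}$ gain from the decaying $\min(1,\jap{\eta,l}^m/t^m)$ prefactor present in $A^i$ for $k\neq 0$ when the low-frequency factor is at bounded spatial frequency. Lemma \ref{lem:CoefCtrl} replaces $\norm{A G_{Hi}}_2$ by $\norm{AC}_2$, and the remaining low-frequency factor is bounded in $\G^{c\lambda,3/2+}$ via Lemma \ref{lem:LossyElliptic} and the hypothesis: $\norm{(D^2 f)_{Lo}}_{\G^{c\lambda,3/2+}} \lesssim \norm{h_{\neq}}_{\G^\lambda} \lesssim \epsilon \jap{t}^b \jap{\nu t^3}^{-a}$. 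Squaring produces exactly the claimed $\epsilon^2 \jap{t}^{2b-2}\jap{\nu t^3}^{-2a}\norm{AC}_2^2$ contribution.

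Paraproduct remainders are handled by Lemma \ref{lem:Arem} and are strictly easier, as are the first-order-in-$f$ pieces of $\Delta_t - \Delta_L$ (one derivative less on $f$, same $\partial G$ structure on the coefficient). The main technical obstacle is bookkeeping uniformly across $i \in \set{1,2,3}$: one must verify for each component that the $\jap{t}^{-1}$ gain from $A^i$ versus $A$ is genuinely available, and that the $(\partial_Y - t\partial_X)$-factors in $D^2$, which at low $(\eta,l)$ can introduce a $\jap{kt}$ growth, are absorbed by the $\jap{t}^{-2}$ elliptic gain of $\Delta_t^{-1}$ furnished by Lemma \ref{lem:LossyElliptic}. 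This case-by-case verification runs along the same lines as the (strictly harder) proof of Lemma \ref{lem:PEL_NLP120neq} given in \cite{BGM15I}, with the simplification here that no $CK_w$ multipliers need to be tracked.
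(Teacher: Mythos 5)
Your proposal is correct and follows essentially the same route as the paper: writing $\Delta_L\Delta_t^{-1}h_{\neq}=h_{\neq}+(\Delta_L-\Delta_t)\Delta_t^{-1}h_{\neq}$, paraproduct-splitting each error term, absorbing the coefficient-low-frequency pieces into the left side via $c_0$-smallness, and bounding the coefficient-high-frequency pieces by $\epsilon\jap{t}^{b-1}\jap{\nu t^3}^{-a}\norm{AC}_2$ using Lemmas \ref{lem:ABasic}, \ref{lem:CoefCtrl} and \ref{lem:LossyElliptic} is exactly the argument the paper sketches for Lemma \ref{lem:PELED} and invokes from \cite{BGM15I} for this lemma. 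The only point worth noting is that for $i=3$ the regularity imbalance in $w^3$ causes no trouble here because the coefficients are $X$-independent, so the multiplier comparison is at fixed $k$ where $\chi^{R,NR}=\chi^{NR,R}=0$; your "bookkeeping" step is where this observation belongs.
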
  
 
Finally, from [Lemma C.7, \cite{BGM15I}] is the following PEL for treating the linear pressure term $LP3$ in the $Q^3$ equation.  

\begin{lemma}[PEL for $CK_{wL}$] \label{lem:QPELpressureI}
Let $h$ be given such that $\norm{h}_{\G^{\lambda}} \lesssim \epsilon \jap{t}^b \jap{\nu t^3}^{-a}$ for $a,b \geq 0$ and suppose $C$ satisfies the bootstrap hypotheses.
Then for $c_0$ and $\epsilon$ sufficiently small, there holds
\begin{align} 
\norm{\sqrt{\frac{\partial_t w_L}{w_L}} A^{3} \Delta_{L} \Delta_t^{-1} h_{\neq}}_2^2  & \lesssim \norm{\sqrt{\frac{\partial_t w_L}{w_L}} A^{3} h_{\neq}}_2^2 + \frac{\epsilon^{2}\jap{t}^{2b-2}}{\jap{\nu t^3}^{2a}} \norm{\left(\sqrt{\frac{\partial_t w}{w}}\tilde{A} + \frac{\abs{\grad}^{s/2}}{\jap{t}^s}A\right)C}_2^2. \label{ineq:PELpressureI}
\end{align} 
\end{lemma}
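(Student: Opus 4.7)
\emph{Proof plan.} The argument will follow the template developed for the earlier PELs (notably Lemma~\ref{lem:PEL_NLP120neq}), with an additional bookkeeping step to bridge the $w_L$-weight appearing on the LHS of~\eqref{ineq:PELpressureI} and the $w$-weight on the RHS. I begin by writing
\begin{align*}
\Delta_L \Delta_t^{-1} h_{\neq} = h_{\neq} + (\Delta_L - \Delta_t)\Delta_t^{-1} h_{\neq},
\end{align*}
so that applying $\sqrt{\partial_t w_L/w_L}\,A^{3}$ and taking $L^2$ norms immediately yields the first term on the RHS of~\eqref{ineq:PELpressureI}. It remains to bound $\sqrt{\partial_t w_L/w_L}\,A^{3}(\Delta_L - \Delta_t)U$ with $U = \Delta_t^{-1} h_{\neq}$. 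By \eqref{eq:tildeDeltaf} and~\eqref{def:G}, the error is a sum of five terms of the form $G\cdot\partial_i^L\partial_j^L U$ with $G\in\set{G_{yy},G_{yz},G_{zz},\Delta_t C^1,\Delta_t C^2}$, the derivatives built from $\partial_X,\partial_Y-t\partial_X,\partial_Z$. The crucial structural fact is that every such $G$ is $X$-independent. I focus on the leading representative term $G_{yy}(\partial_Y - t\partial_X)^2 U$; the others are easy variants.

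Expand with a paraproduct. For the $LH$ piece $(G_{yy})_{Lo}((\partial_Y - t\partial_X)^2 U)_{Hi}$, Lemma~\ref{lem:CoefCtrl} together with the bootstrap~\eqref{ineq:Boot_LowC} gives $\norm{G_{yy}}_{\G^{c\lambda,3/2+}} \lesssim \epsilon\jap{t}$, and the high-frequency factor can be rewritten as $\Delta_L U$ minus lower-order contributions, handled exactly as in Lemma~\ref{lem:LossyElliptic}. Hence this contribution is controlled by $\epsilon\jap{t}\norm{\sqrt{\partial_t w_L/w_L}\,A^{3}\Delta_L\Delta_t^{-1}h_{\neq}}_2$; since $t\leq T_F = c_0\epsilon^{-1}$, the prefactor is $\lesssim c_0$ and the term absorbs into the LHS of~\eqref{ineq:PELpressureI} for $c_0$ small. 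The paraproduct remainder is handled by the same argument with the added smallness from symmetric placement of high/low caps.

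The $HL$ piece $(G_{yy})_{Hi}((\partial_Y - t\partial_X)^2 U)_{Lo}$ is the main obstacle. The essential simplification is that $G_{yy}$ is $X$-independent, so the $X$-frequency $k$ is preserved across the paraproduct: Lemma~\ref{lem:ABasic} reduces to its $k = k^\prime$ form, free of the $w^3/w$ regularity imbalances, and moves $A^{3}$ onto $A$ acting on the $G_{yy}$ factor. Inequality~\eqref{ineq:dtNBasic} likewise transfers $\sqrt{\partial_t w_L/w_L}$ across the paraproduct at the cost of an $\jap{\eta-\xi,l-l^\prime}^{3/2}$ correction, absorbed by the Sobolev surplus in the low-frequency norm. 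Lemma~\ref{lem:LossyElliptic} bounds the resulting low-frequency $\partial_i^L\partial_j^L \Delta_t^{-1} h_{\neq}$ factor by $\jap{t}^{-1}\norm{h_{\neq}}_{\G^{\lambda}}$, which by the hypothesis on $h$ is $\lesssim \epsilon\jap{t}^{b-1}\jap{\nu t^3}^{-a}$. The final task is to bound the $\sqrt{\partial_t w_L/w_L}$-weight now sitting on the coefficient side by the weight $\left(\sqrt{\partial_t w/w}\tilde A + \abs{\grad}^{s/2}/\jap{t}^{s}A\right)C$ appearing on the RHS; this is a pointwise-in-frequency comparison derived from the explicit formula~\eqref{def:wL}, Lemma~\ref{lem:dtw}, and~\eqref{ineq:SobExp}, where the $\abs{\grad}^{s/2}/\jap{t}^{s}$ contribution picks up the regime where $\partial_t w/w$ is degenerate (outside critical intervals). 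The four other error contributions (involving $G_{yz}, G_{zz}, \Delta_t C^1, \Delta_t C^2$) reduce to identical HL/LH splittings and are omitted; the main technical challenge throughout is the delicate weight conversion in the HL step, which is only feasible thanks to the $X$-independence of the coefficients.
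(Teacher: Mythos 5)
Your overall architecture is the right one and matches the paper's standard PEL template (the paper itself defers this lemma to [Lemma C.7, \cite{BGM15I}], but the pattern is exactly that of the proof of Lemma \ref{lem:PELED} given here): write $\Delta_L\Delta_t^{-1}h_{\neq}=h_{\neq}+(\Delta_L-\Delta_t)\Delta_t^{-1}h_{\neq}$, split the five coefficient errors with a paraproduct, absorb the coefficient-low pieces into the left-hand side using $\norm{\grad C}_{\G^{\lambda,\textup{low}}}\lesssim\epsilon\jap{t}\lesssim c_0$ together with \eqref{ineq:dtNBasic}, \eqref{ineq:TriTriv} and the $k=k^\prime$ case of Lemma \ref{lem:ABasic} (your observation that $X$-independence of the coefficients kills the $w^3/w$ imbalances is correct), and send the coefficient-high pieces to the $C$-term on the right. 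The weight conversion you describe at the end (comparing $\partial_t w_L/w_L$ with $\partial_t w/w+\abs{\grad}^{s}/\jap{t}^{2s}$ via \eqref{def:wL}, Lemma \ref{lem:dtw}, \eqref{ineq:SobExp}) is also the right idea and is what produces the $CK_w$-weighted norm of $C$ rather than $\norm{AC}_2$.

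There is, however, a concrete gap in your $HL$ step, and it occurs precisely for the representative term you chose. You claim that Lemma \ref{lem:LossyElliptic} bounds the low-frequency factor $\partial_i^L\partial_j^L\Delta_t^{-1}h_{\neq}$ by $\jap{t}^{-1}\norm{h_{\neq}}_{\G^{\lambda}}$, and this is where your factor $\jap{t}^{2b-2}$ comes from. For $\partial_i^L\partial_j^L=(\partial_Y-t\partial_X)^2$ this is false: Lemma \ref{lem:LossyElliptic} gives only $\jap{t}^{-2}$ on $\Delta_t^{-1}h_{\neq}$ itself, which the two $(\partial_Y-t\partial_X)$ derivatives (each costing $\jap{t}$) exactly consume, so the low-frequency factor is merely $\lesssim\epsilon\jap{t}^{b}\jap{\nu t^3}^{-a}$ — near the critical times $\Delta_L$ is not elliptic in $\partial_Y-t\partial_X$ and no power of $t$ can be gained there. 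As written, your $HL$ bound therefore only yields $\epsilon^{2}\jap{t}^{2b}\jap{\nu t^3}^{-2a}\norm{\cdot\,C}_2^2$, which is weaker than \eqref{ineq:PELpressureI} by $\jap{t}^{2}$. The missing power must instead come from the multiplier ratio between $A^{3}_k$ at the (non-zero-$k$) output frequency and $A=\jap{\eta,l}^2A^Q_0$ on the $X$-independent coefficient: the prefactor $\min(1,\jap{\eta,l}^2/t^2)$ in $A^3_k$ against the $\jap{\xi,l^\prime}^2$ in $A$ produces, after Lemma \ref{lem:ABasic}, factors of the form $\jap{\xi,l^\prime}^{-2}\jap{t/\jap{\xi,l^\prime}}^{-1}\lesssim\jap{t}^{-1}\jap{\xi,l^\prime}^{-1}$ — exactly as in the displayed estimate for $\mathcal{E}_{1;C}$ in the paper's proof of Lemma \ref{lem:PELED}. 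With that correction (which uses tools you already invoke), the rest of your plan goes through.
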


The last PEL is unique to this work (it was not necessary in \cite{BGM15I}).
It is needed here to gain additional precision for times $t \gtrsim \epsilon^{-1/2}$.
It is used in, e.g. \eqref{ineq:teps12trick} above. 

\begin{lemma}[Enhanced dissipation PEL] \label{lem:PELED}
Let $h$ be given such that $\norm{h}_{\G^{\lambda}}\lesssim \epsilon \jap{t}^{b} \jap{\nu t^3}^{-a}$ for some $a \geq 0$ and $b \geq 0$. 
Then, under the bootstrap hypotheses, for $c_0$ and $\epsilon$ sufficiently small there holds
\begin{align} 
\norm{\sqrt{-\Delta_L} A^{i} \Delta_L \Delta_t^{-1} h_{\neq}}_2^2 
  & \lesssim \norm{\sqrt{-\Delta_L} A^{i} h_{\neq}}_2^2 + \frac{\epsilon^2 \jap{t}^{2b-2}}{\jap{\nu t^3}^{2a}}\norm{\grad AC}_2^2 + \frac{\epsilon^2 \jap{t}^{2b}}{\jap{\nu t^3}^{2a}}\norm{AC}_2^2.      \label{ineq:PELED}
\end{align} 
\end{lemma}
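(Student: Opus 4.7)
The strategy is a resolvent-type expansion: writing $U := \Delta_t^{-1} h_{\neq}$ and using \eqref{eq:tildeDeltaf} together with the shorthand \eqref{def:G}, one obtains
\begin{align*}
\Delta_L U = h_{\neq} - E_2(U) - E_1(U),
\end{align*}
where $E_2(U) = G_{yy}(\partial_Y - t\partial_X)^2 U + G_{yz}(\partial_Y - t\partial_X)\partial_Z U + G_{zz}\partial_{ZZ} U$ collects the second-order coefficient errors and $E_1(U) = \Delta_t C^1 (\partial_Y - t\partial_X) U + \Delta_t C^2 \partial_Z U$ collects the first-order corrections produced by the chain rule. Applying $\sqrt{-\Delta_L}A^i$ and the triangle inequality, \eqref{ineq:PELED} reduces to controlling $\norm{\sqrt{-\Delta_L}A^i E_j(U)}_2$ for $j=1,2$ by the three terms on the right of \eqref{ineq:PELED}.

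Each $E_j(U)$ is then decomposed via the paraproducts of \S\ref{sec:paranote} into high-low, low-high, and remainder pieces. For the \emph{low-high} contributions, where the coefficient is at low frequency, Lemma \ref{lem:ABasic} together with \eqref{ineq:Boot_LowC} and Lemma \ref{lem:CoefCtrl} gives a factor $c_0$, so that the full $\sqrt{-\Delta_L}A^i$ on the second derivative of $U$ can be reabsorbed on the left-hand side via a bootstrap on the identity $\Delta_L U = h_{\neq} - E_2(U) - E_1(U)$ itself (chosen $c_0$ small enough).

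For the \emph{high-low} pieces the roles are reversed: the frequency ratios of Lemma \ref{lem:ABasic} let us keep only a $\G^{\lambda,3/2+}$-norm on $U$ (bounded via the lossy elliptic Lemma \ref{lem:LossyElliptic}, which combined with the hypothesis $\norm{h}_{\G^\lambda}\lesssim \epsilon\jap{t}^b\jap{\nu t^3}^{-a}$ gives $\norm{U}_{\G^{\lambda,3/2+}}\lesssim \epsilon\jap{t}^{b-2}\jap{\nu t^3}^{-a}$) and put the full $\sqrt{-\Delta_L}A^i$ weight on the coefficient factor. Lemma \ref{lem:CoefCtrl} then yields $\norm{\jap{\grad}^{-1}\sqrt{-\Delta_L}A G_{\alpha\beta}}_2 \lesssim \norm{\grad A C}_2$ for the $E_2$ contributions and $\norm{\jap{\grad}^{-2}\sqrt{-\Delta_L}A \Delta_t C^\alpha}_2 \lesssim \norm{\grad A C}_2 + \norm{A C}_2$ for the $E_1$ contributions; multiplying by the $U$ bound produces exactly the two $C$-dependent terms of \eqref{ineq:PELED}. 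The remainder terms are handled by an easier variant using \eqref{ineq:ARemainderBasic}.

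The main technical obstacle is the presence of the strong weight $\sqrt{-\Delta_L}$, which is stronger than the $\sqrt{\partial_t w/w}\tilde A^i + \abs{\grad}^{s/2}\jap{t}^{-s}A^i$ weight used in the closely related Lemma \ref{lem:PEL_NLP120neq}: it forces us, in the high-low pieces, to spend a full extra derivative on $AC$ rather than a $CK_w$-type multiplier, which is precisely why $\norm{\grad AC}_2$ (and not a finer $CK_w$ quantity) appears on the right. A secondary subtlety is the regularity imbalance in $A^3$; however, since all coefficients are independent of $X$, the $X$-frequency is preserved by $\Delta_t^{-1}$, so in Lemma \ref{lem:ABasic} one has $k=k'$ and $\chi^{R,NR}=\chi^{NR,R}=0$, collapsing the imbalance terms and making the estimates for $i=3$ essentially identical to those for $i\in\{1,2\}$.
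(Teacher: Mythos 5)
Your overall strategy coincides with the paper's: write $\Delta_L U = h_{\neq}$ minus the coefficient errors, expand with paraproducts, absorb the coefficient-low-frequency pieces using the $O(c_0)$ smallness of $\grad C$ (together with \eqref{ineq:TriTriv}), and treat the coefficient-high-frequency pieces via the lossy elliptic Lemma \ref{lem:LossyElliptic} and Lemma \ref{lem:CoefCtrl}. Your remark that the $X$-independence of the coefficients forces $k=k'$ in Lemma \ref{lem:ABasic}, so $\chi^{R,NR}=\chi^{NR,R}=0$ and the $i=3$ case causes no extra trouble, is also correct and is exactly how the paper handles it.

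There is, however, a genuine gap in your high-low (coefficient-high) estimate of the second-order errors $E_2$. First, you cannot ``put the full $\sqrt{-\Delta_L}A^i$ weight on the coefficient'': the coefficient sits at $X$-frequency zero, where $\sqrt{-\Delta_L}$ is just $\abs{\grad_{Y,Z}}$, whereas the output multiplier is $\abs{k,\eta-kt,l}$, and the part of $\abs{\eta-kt}$ carried by the low-frequency $U$ factor can be as large as $\jap{t}$ times its frequency; that part cannot be transferred to $C$. Second, even granting your inequality $\norm{\jap{\grad}^{-1}\sqrt{-\Delta_L}AG_{\alpha\beta}}_2\lesssim\norm{\grad AC}_2$, the time powers do not close: the low-frequency factor in $E_2$ carries two $(\partial_Y-t\partial_X)$-type derivatives, so with $\norm{U}_{\G^{c\lambda,3/2+}}\lesssim\epsilon\jap{t}^{b-2}\jap{\nu t^3}^{-a}$ your product is $\epsilon\jap{t}^{b}\jap{\nu t^3}^{-a}\norm{\grad AC}_2$, i.e.\ $\epsilon^2\jap{t}^{2b}\jap{\nu t^3}^{-2a}\norm{\grad AC}_2^2$ after squaring, which is $\jap{t}^2$ larger than the $\grad AC$ term in \eqref{ineq:PELED} and is not dominated by the $\jap{t}^{2b}\norm{AC}_2^2$ term (recall $\norm{\grad AC}_2$ is only controlled in a $\nu$-weighted, time-integrated sense). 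The paper obtains the stated powers by splitting $\abs{k,\eta-kt,l}\lesssim\left(\abs{\xi,l'}+\jap{t}\right)\jap{k,\eta-\xi,l-l'}$ and using the full comparison $A^i_k(\eta,l)\lesssim\jap{\tfrac{t}{\jap{\xi,l'}}}^{-1}\jap{\xi,l'}^{-2}A(\xi,l')e^{c\lambda\abs{k,\eta-\xi,l-l'}^s}$ from Lemma \ref{lem:ABasic}: in the piece where the derivative lands on the coefficient one has $\abs{\xi,l'}\jap{\xi,l'}^{-2}\jap{\tfrac{t}{\jap{\xi,l'}}}^{-1}\lesssim\jap{t}^{-1}\abs{\xi,l'}\jap{\xi,l'}^{-1}$, which supplies the missing $\jap{t}^{-1}$ and yields $\jap{t}^{2b-2}\norm{\grad AC}_2^2$, while the piece carrying the $\jap{t}$ from the $-kt$ shift is estimated with no derivative on $C$ and yields $\jap{t}^{2b}\norm{AC}_2^2$. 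Your $E_1$ bookkeeping happens to close because its low-frequency factor carries only one derivative, but for $E_2$ this finer splitting is essential.
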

\begin{proof} 
The proof is very similar to the proof of Lemma \ref{lem:PEL_NLP120neq} (the proof of which is found in \cite{BGM15I}). 
Let us briefly sketch the argument.  
Write $P = \Delta_t^{-1}h_{\neq}$ 
\begin{align} 
\Delta_L P & = h_{\neq} - G_{yy}(\partial_Y - t\partial_X)^2 P - G_{yz}(\partial_Y - t\partial_X) \partial_Z P + G_{zz}\partial_{ZZ}P  - \Delta_tC^1 (\partial_Y - t\partial_X) P - \Delta_t C^2 \partial_Z P \nonumber \\ 
& = h_{\neq} + \sum_{i = 1}^5 \mathcal{E}_i. \label{def:PkPEL}
\end{align} 
We apply $\sqrt{-\Delta_L}A^i$ to both sides of \eqref{def:PkPEL} and estimate the terms on the RHS. 
Hence we get
\begin{align} 
\norm{\sqrt{-\Delta_L}A^i \Delta_L P}_2^2 \lesssim \norm{\sqrt{-\Delta_L}A^ih_{\neq}}_2^2 + \sum_{i = 1}^5 \norm{\sqrt{-\Delta_L}A^i \mathcal{E}_i}_2^2. \label{ineq:DeltaPk} 
\end{align}
For example, consider the first error term and expand with a paraproduct: 
\begin{align*} 
\sqrt{-\Delta_L}A^i\mathcal{E}_1 & = \sqrt{-\Delta_L}A^i\left( (G_{yy})_{Hi}(\partial_Y - t\partial_X)^2 P_{Lo}\right) + \sqrt{-\Delta_L}A^i\left( (G_{yy})_{Lo}(\partial_Y - t\partial_X)^2 P_{Hi}\right) + \mathcal{E}_{1;\mathcal{R}} \\ 
& := \mathcal{E}_{1;C} + \mathcal{E}_{1;P} + \mathcal{E}_{1;\mathcal{R}}. 
\end{align*}
By \eqref{ineq:ABasic}, \eqref{ineq:TriTriv}, \eqref{ineq:dtwBasic}, \eqref{ineq:quadHL}, and Lemma \ref{lem:CoefCtrl} it follows that
\begin{align*} 
\norm{\sqrt{-\Delta_L}A^i \mathcal{E}_{1;P}}_2^2 & \lesssim c_{0}^2\norm{\sqrt{-\Delta_L}A^i \Delta_L P}_2^2, 
\end{align*} 
which can hence be absorbed on the LHS of \eqref{ineq:DeltaPk} by choosing $c_{0}$ sufficiently small.
The remainder is treated $\mathcal{E}_{1;\mathcal{R}}$ is treated similarly. 
Consider next $\mathcal{E}_{1;C}$ for which, by the hypotheses, Lemma \ref{lem:ABasic}, and Lemma \ref{lem:LossyElliptic}, we have
\begin{align*} 
\mathcal{E}_{1;C} & \lesssim \frac{\epsilon \jap{t}^{b}}{\jap{\nu t^3}^{a}}\sum_{l} \int_\xi \abs{k,\eta-kt,l} \frac{1}{\jap{\xi,l^\prime}^2}\jap{\frac{t}{\jap{\xi,l^\prime}}}^{-1} A\abs{\widehat{G_{yy}}(\xi,l^\prime)_{Hi}} Low(k,\eta-\xi,l-l^\prime) d\xi; 
\end{align*}  
the extra $\jap{t}^2$ from $(\partial_Y - t\partial_X)^2$ was canceled by the $\Delta_t^{-1}$ in the definition of $P$ and Lemma \ref{lem:LossyElliptic}.  
It follows from \eqref{ineq:quadHL} and Lemma \ref{lem:CoefCtrl} that 
\begin{align*}
\norm{\sqrt{-\Delta_L}A^i \mathcal{E}_{1;C}}_2^2 & \lesssim \frac{\epsilon^2 \jap{t}^{2b-2}}{\jap{\nu t^3}^{2a}}\norm{\grad AC}_2^2 + \frac{\epsilon^2 \jap{t}^{2b}}{\jap{\nu t^3}^{2a}}\norm{AC}_2^2,  
\end{align*}
which suffices. This completes the treatment of $\mathcal{E}_{1}$. The error terms $\mathcal{E}_{2}$ and $\mathcal{E}_{3}$ are treated exactly the same. 
In treating the error terms $\mathcal{E}_{4}$ and $\mathcal{E}_{5}$, note that there is an extra derivative on $C^i$. 
As a result, we cannot recover a power of time from Lemma \ref{lem:ABasic} using the low-frequency growth. However, there is one less power of $t$ on $P$ and hence there is a balance and a similar proof as that used on $\mathcal{E}_1$ will adapt in a straightforward manner to the last two error terms. We omit the details for brevity.  
\end{proof}

\bibliographystyle{plain} \bibliography{eulereqns_vlad,IDnLD}

\end{document}